\newtheorem{Cor}[subsubsection]{Corollary}
\newtheorem{Lm}[subsubsection]{Lemma}
\newtheorem{Pp}[subsubsection]{Proposition}
\newtheorem{Con}[subsubsection]{Conjecture}
\newtheorem{Thm}[subsubsection]{Theorem}
\newtheorem{Def}[subsubsection]{Definition}
\newtheorem{Rem}[subsubsection]{Remark}
\theoremstyle{definition}
\theoremstyle{remark}
\newcommand{\nc}{\newcommand}
\nc{\renc}{\renewcommand}
\nc{\ssec}{\subsection}
\nc{\sssec}{\subsubsection}
\nc{\on}{\operatorname}
\nc\ol{\overline}
\nc\wt{\widetilde}
\nc\tboxtimes{\wt{\boxtimes}}
\newcommand{\cA}{{\mathcal A}}
\newcommand{\cB}{{\mathcal B}}
\newcommand{\cC}{{\mathcal C}}
\newcommand{\cH}{{\mathcal H}}
\newcommand{\cE}{{\mathcal E}}
\newcommand{\cJ}{{\mathcal J}}
\newcommand{\cO}{{\mathcal O}}
\newcommand{\cL}{{\mathcal L}}
\newcommand{\cM}{{\mathcal M}}
\newcommand{\cN}{{\mathcal N}}
\newcommand{\cF}{{\mathcal F}}
\newcommand{\cK}{{\mathcal K}}
\newcommand{\cP}{{\mathcal P}}
\newcommand{\cR}{{\mathcal R}}
\newcommand{\cS}{{\mathcal S}}
\newcommand{\cT}{{\mathcal T}}
\newcommand{\cU}{{\mathcal U}}
\newcommand{\cV}{{\mathcal V}}
\newcommand{\cW}{{\mathcal W}}
\newcommand{\cX}{{\mathcal X}}
\newcommand{\cY}{{\mathcal Y}}
\newcommand{\cZ}{{\mathcal Z}}
\newcommand{\FF}{{\mathbb F}}
\newcommand{\GG}{{\mathbb G}}
\newcommand{\ZZ}{{\mathbb Z}}
\newcommand{\QQ}{{\mathbb Q}}
\newcommand{\PP}{{\mathbb P}}
\newcommand{\RR}{{\mathbb R}}
\newcommand{\EE}{{\mathbb E}}
\newcommand{\VV}{{\mathbb V}}
\renewcommand{\gg}{\mathfrak{g}}  
\newcommand{\gm}{\mathfrak{m}}    
\newcommand{\gp}{\mathfrak{p}}
\newcommand{\gq}{\mathfrak{q}}
\newcommand{\gu}{\mathfrak{u}}
\newcommand{\gt}{\mathfrak{t}}
\newcommand{\gU}{\mathfrak{U}}
\nc{\gM}{\mathfrak{M}}
\nc{\uZ}{\underline{\cZ}}
\newcommand{\Rep}{{\on{Rep}}}
\newcommand{\Sch}{{\on{Sch}}}
\newcommand{\Qlb}{\mathbb{\bar Q}_\ell}
\newcommand{\Gm}{\mathbb{G}_m}
\newcommand{\A}{\mathbb{A}}
\newcommand{\toup}[1]{\stackrel{#1}{\to}}
\newcommand{\hook}[1]{\stackrel{#1}{\hookrightarrow}}
\newcommand{\getsup}[1]{\stackrel{#1}{\gets}}
\newcommand{\Sp}{\on{\mathbb{S}p}}
\newcommand{\Spin}{\on{\mathbb{S}pin}}
\newcommand{\GSpin}{\on{G\mathbb{S}pin}}
\newcommand{\GSp}{\on{G\mathbb{S}p}}
\newcommand{\IC}{\on{IC}}
\newcommand{\Hom}{\on{Hom}}
\newcommand{\Ext}{\on{Ext}}
\newcommand{\Sym}{\on{Sym}}
\newcommand{\SO}{\on{S\mathbb{O}}}
\newcommand{\Ker}{\on{Ker}}
\newcommand{\Aut}{\on{Aut}}
\newcommand{\RG}{\on{R\Gamma}}
\newcommand{\Bun}{\on{Bun}}
\newcommand{\Bunb}{\on{\overline{Bun}} }
\newcommand{\Bunt}{\on{\widetilde\Bun}}
\newcommand{\Spec}{\on{Spec}}
\newcommand{\supp}{\on{supp}}
\newcommand{\Gr}{\on{Gr}}
\newcommand{\GL}{\on{GL}}
\newcommand{\PSL}{\on{PSL}}
\newcommand{\Fr}{{\on{Fr}}}
\newcommand{\pr}{\on{pr}}
\newcommand{\id}{\on{id}}
\newcommand{\QED}{$\square$} 
\newcommand{\Fp}{\mathbb{F}_p}  
\newcommand{\iso}{{\widetilde\to}}
\newcommand{\comp}{\circ}
\renewcommand{\H}{{\on{H}}}   
\newcommand{\R}{\on{R}\!}   
\newcommand{\DD}{\mathbb{D}}  
\newcommand{\D}{\on{D}}       
\newcommand{\ov}[1]{\overline{#1}}
\newcommand{\select}[1]{{\it{#1}}}
\newcommand{\und}[1]{\underline{#1}}
\newcommand{\<}{\langle}
\renewcommand{\>}{\rangle}
\newcommand{\ev}{\mathit{ev}}
\newcommand{\Loc}{\on{Loc}}
\newcommand{\Lie}{\on{Lie}}
\newcommand{\Res}{\on{Res}}
\newcommand{\act}{\on{act}}
\newcommand{\dimrel}{\on{dim.rel}}
\newcommand{\codim}{\on{codim}}
\newcommand{\SL}{\on{SL}}
\newcommand{\ra}{\rightarrow}
\newcommand{\la}{\leftarrow}
\nc{\Perv}{\on{Perv}}
\nc{\Gra}{\on{Gra}}
\nc{\PPerv}{\on{{\PP}erv}}
\nc{\oX}{\overset{\scriptscriptstyle\circ}{X}}
\nc{\ocZ}{\overset{\scriptscriptstyle\circ}{\cZ}}
\nc{\ocL}{\overset{\scriptscriptstyle\circ}{\cL}}
\nc{\gRes}{\on{gRes}}
\nc{\Sign}{\on{Sign}}
\nc{\goodat}{\rm{good\, at}}
\nc{\Whit}{\on{Whit}}
\nc{\add}{\on{add}}
\nc{\FS}{\on{FS}}
\nc{\oo}[1]{\overset{\scriptscriptstyle\circ}{#1}}
\nc{\can}{\on{can}}
\nc{\summ}{\on{sum}}
\nc{\SiSu}{\on{SS}}
\nc{\Irr}{\on{Irr}}
\nc{\og}[1]{\overset{\scriptscriptstyle\bullet}{#1}}
\begin{document}

\title{Twisted Whittaker models for metaplectic groups}

\author{S. Lysenko}

\maketitle

\tableofcontents

\section*{Introduction} \label{intr}

\sssec{Motivation} In this paper inspired by \cite{G} we study the twisted Whittaker categories for metaplectic groups (in the sense of \cite{L1}). This is a part of the quantum geometric Langlands program \cite{S}, \cite{G2}, (\cite{Fr}, Section~6.3), \cite{GL}. 

 Let $G$ be a connected reductive group over an algebraically closed field $k$ of characteristic $p>0$. Let $\cO=k[[t]]\subset F=k((t))$, write $\Gr_G=G(F)/G(\cO)$ for the affine grassmannian of $G$. Let us briefly describe the aspect of the quantum geometric Langlands program, which motivates our study. Assume given a central extension $1\to \Gm\to \EE \to G(F)\to 1$ in the category of group ind-schemes over $k$ together with a splitting over $G(\cO)$. Let $N\ge 1$ be invertible in $k$, $\zeta: \mu_N(k)\to \Qlb^*$ an injective character. Here $\ell$ is invertible in $k$. 
 Let $\wt \Gr_G$ be the stack quotient of $\EE/G(\cO)$ by the $\Gm$-action, where $z\in\Gm$ acts as $z^N$. Let $\PPerv_{G,\zeta}$ be the category of $G(\cO)$-equivariant perverse sheaves on $\wt\Gr_G$, on which $\mu_N(k)$ acts by $\zeta$. 
 
 To get an extension of the Satake equivalence to this case (as well as, conjecturally, of the whole nonramified geometric Langlands program), one needs to impose additional assumptions and structures on the gerbe $\wt\Gr_G\to \Gr_G$. We believe that it should come from factorization gerbe over $\Gr_G$ (\cite{GL}). Let us explain the idea of what it is. 
 
  Let $X$ be a smooth projective curve. Write $\Bun_G$ for the stack of $G$-torsors on $X$. Let $\cR(X)$ be the Ran space of $X$, $\Gr_{\cR(X)}$ the Beilinson-Drinfeld affine grassmannian of $G$ over $\cR(X)$ (cf. \cite{G3}). Recall the factorization structure of $\Gr_{\cR(X)}$
\begin{multline}
\label{Gr_BD_factorozation}
\Gr_{\cR(X)}\times_{\cR(X)} (\cR(X)\times\cR(X))_{disj}\,\iso\,\\
(\Gr_{\cR(X)}\times \Gr_{\cR(X)})\times_{\cR(X)\times\cR(X)} (\cR(X)\times\cR(X))_{disj},
\end{multline}
here $(\cR(X)\times\cR(X))_{disj}$ is the space classifying pairs of disjoint finite subsets in $X$. A factorization $\mu_N$-gerbe over $\Gr_G$ is a $\mu_N$-gerbe $\wt\Bun_G$ over $\Bun_G$ together with a factorization structure of its restriction under $\Gr_{\cR(X)}\to\Bun_G$ extending (\ref{Gr_BD_factorozation}) and satisfying some compatibility properties (\cite{GL}, 2.2.5). 

 There is an easier notion of a factorization line bundle on $\Gr_{\cR(X)}$ (\cite{GL}, 2.2.8 and 3.3). It can be seen as a line bundle $\cL$ on $\Bun_G$ together with a factorization structure of its restriction to $\Gr_{\cR(X)}$. For such a line bundle the gerbe $\Bunt_G$ of its $N$-th roots has the above factorization structure. 
 
 In \cite{L1} the following twisted version of Satake equivalence was proved. 
Our input data was a factorization line bundle on $\Gr_{\cR(X)}$ of some special form (sufficient to construct all the factorization $\mu_N$-gerbes on $\Gr_G$ up to an isomorphism)\footnote{This allowed to completely avoid the higher category theory heavily used in \cite{GL}.}. These data are described in Section~\ref{section_input_data}.  
 
  Given such data for $G$, we equipped $\PPerv_{G,\zeta}$ with a structure of a tensor category and established an equivalence of tensor categories $\PPerv_{G,\zeta}^{\natural}\,\iso\, \Rep(\check{G}_{\zeta})$, where $\check{G}_{\zeta}$ is a connected reductive group, an analog of the Langlands dual group in the metaplectic setting. Here $\PPerv_{G,\zeta}^{\natural}$ is a symmetric monoidal category obtained from $\PPerv_{G,\zeta}$ by some modification of the commutativity constraint. 
 
 Let $\D_{\zeta}(\Bunt_G)$ be the derived category of $\Qlb$-sheaves on $\Bunt_G$, on which $\mu_N(k)$ acts by $\zeta$. In Section~\ref{Section_action_Hecke_on_BuntG} we define an action of $\Rep(\check{G}_{\zeta})$ on $\D_{\zeta}(\Bunt_G)$ by Hecke functors. An extension of the geometric Langlands program to this case is the problem of the spectral decomposition of $\D_{\zeta}(\Bunt_G)$ under this action. 

\sssec{Gaitsgory's conjecture} Recall the Whittaker category from \cite{FGV}. Let $U\subset G$ be a maximal unipotent subgroup, $\cL_{\psi}$ the Artin-Schreier sheaf on $\A^1$ corresponding to an injective character $\psi: \Fp\to\Qlb^*$. Pick a non-degenerate character $\chi: U(F)\to\GG_a$ with zero conductor. Heuristically, the Whittaker category is the category of $(U(F), \chi^*\cL_{\psi})$-equivariant perverse sheaves on $\Gr_G$. 
 
  Recall that the orbits of $U(F)$ on $\Gr_G$ are infinite-dimensional, and there are two equivalent ways to make sense of the above definition. The first (technically difficult) local definition is found in \cite{Ber}. The second one is via the Drinfeld compactification denoted $\gM$ and using a smooth projective curve $X$ as an input datum. Let $x\in X$ and $U_{out}$ be the group-subscheme of $U(F)$ of maps $(X-x)\to U$. The character $\chi$ is trivial on $U_{out}$, so the objects of the Whittaker category `live' on $U_{out}\backslash \Gr_G$. Let $\gM_x$ be the ind-stack classifying a $G$-torsor on $X$ with a generalized reduction to $U$ over $X-x$. Then $U_{out}\backslash \Gr_G$ is included naturally into $\gM_x$. The surrogate Whittaker category $\Whit_x$ is defined as the category of perverse sheaves on $\gM_x$ with a certain equivariance condition, which restores the $(U(F), \chi^*\cL_{\psi})$-equivariance property on $\Gr_G$ (cf. \cite{FGV}, \cite{G}).  The main result of \cite{FGV} established an equivalence of categories $\Whit_x\,\iso\, \Rep(\check{G})$. Here $\check{G}$ is the Langlands dual to $G$.
Gaitsgory's conjecture (\cite{G}, Conjecture~0.4) is a quantum deformation of the above equivalence. 

 The definition of the twisted Whittaker category for $G$ from \cite{G} extends to our (a bit more general) setting of $G$ equipped with a factorizable line bundle on $\Gr_{\cR(X)}$ of our special form. Heuristically, the twisted Whittaker category is the category of $(U(F), \chi^*\cL_{\psi})$-equivariant perverse sheaves on $\wt\Gr_G$. This makes sense, because a central extension of $U(F)$ by $\Gm$ splits canonically. In this paper we adopt the second definition of the twisted Whittaker category denoted $\Whit^{\kappa}_x$, here $\kappa$ refers to our metaplectic data. Let $\wt\gM_x$ be the restriction of the gerbe $\wt\Bun_G$ to $\gM_x$. Then $\Whit^{\kappa}_x$ is defined as the category of perverse sheaves on $\wt\gM_x$ with some equivariance condition (the same as in the untwisted case).  
    
Gaitsgory's conjecture attaches to our metaplectic data a big quantum group $U_q(\check{G})$ (Lusztig's version with $q$-divided powers) such that one should have an equivalence 
\begin{equation}
\label{equiv_Lurie_main}
\Whit^{\kappa}_x\,\iso\, \Rep(U_q(\check{G}))
\end{equation}
with the category of its finite-dimensional representations. This paper is a step towards the proof of this conjecture in our setting. 

 Both categories are actually factorization categories in the sense of \cite{R1}, and the above equivalence should be compatible with these structures. This is the reason for which in this paper (as in \cite{G}) we  also consider the versions of the twisted Whittaker category for $n$ points of $X$ denoted $\Whit^{\kappa}_n$. 
 
 The group $\check{G}_{\zeta}$ is expected to be the quantum Frobenius quotient of $U_q(\check{G})$. So, the category $\Rep(\check{G}_{\zeta})$ will act on both sides of (\ref{equiv_Lurie_main}), and the equivalence has to be compatible with these actions. Besides, the action on the basic object of $\Whit^{\kappa}_x$ realizes $\Rep(\check{G}_{\zeta})$ as its full subcategory.
 
\sssec{Main results} One of the main ideas of \cite{G} was the construction of the functor $G_n: \Whit^c_n\to \FS^c_n$ from the twisted Whittaker category of $G$ to the category of factorizable sheaves assuming that the quantum parameter denoted $c$ in \select{loc.cit.} is irrational (i.e., $q=\exp(\pi i c)$ is not a root of unity). The main result of \cite{BFS} identified the category of factorizable sheaves $\FS^c_n$ with the category $\Rep(\og{u}_q(\check{G}))$ of representations of the corresponding graded small quantum group $\og{u}_q(\check{G})$. When $q$ is not a root of unity, the latter coincides with the big quantum group $U_q(\check{G})$, and this has led to a proof of the above conjecture in that case (\cite{G}). 

 In the metaplectic case, corresponding to $q$ being a root of unity, $\og{u}_q(\check{G})$ and  $U_q(\check{G})$ are substantially different, and the construction of $G_n$ breaks down. 
 
  A possible strategy of the proof of (\ref{equiv_Lurie_main}) in the metaplectic case is to construct a corrected version of the functor $G_n$, and then upgrade this functor to the desired equivalence. 

 One of our main results is a construction of a corrected version of the functor $G_n$ in the metaplectic case. This is the purpose of Part I of this paper, in Part II we study properties of this functor.
 
\sssec{} The definitions of the twisted Whittaker category $\Whit^{\kappa}_n$ and the category $\wt\FS^{\kappa}_n$ of factorizable sheaves are given in Sections~\ref{Section_The twisted Whittaker category} and \ref{Section_The FS category}. Our Theorem~\ref{Thm_main_functor_ovFF} provides a functor
$$
\ov{\FF}: \Whit^{\kappa}_n\to \wt\FS^{\kappa}_n
$$
exact for the perverse t-structures and commuting with the Verdier duality. It is constructed under the assumption that our metaplectic parameter, the quadratic form $\varrho$, satisfies what we call the \select{subtop cohomology property}. This is a local property that we prove for all the simple simply-connected reductive groups and most of parameters $\varrho$ in Theorem~\ref{Th_114} (and Remark~\ref{Rem_E_6_E_7_E_8}), which is one of our main results. We formulate Conjecture~\ref{Con_main} describing those quadratic forms $\varrho$ for which we expect the subtop cohomology property to hold. These are precisely those $\varrho$ for which our construction of $\ov{\FF}$ makes sense.
\footnote{When this paper has been written, D. Gaitsgory has informed
the author that the definition of the functor $\ov{\FF}$ has been known to him at the time of working on \cite{G} around 2007, as well as some version of Conjecture~\ref{Con_main} of our paper. But since this conjecture was not proved, the definition of $\ov{\FF}$ was not made public at that moment.}
 
  Let $U^-\subset G$ be the opposite maximal unipotent subgroup. The functor $G_n$ in \cite{G} was defined, roughly, by taking cohomologies along $U^-(F)$-orbits on $\Gr_G$. More precisely, these cohomology complexes are put in families over the configuration spaces $X^{\mu}_n$
of divisors on $X$ as direct images under $\cZ^{\mu}_n\to X^{\mu}_n$
giving rise to perverse sheaves on (some gerbes over) these $X^{\mu}_n$. The perverse sheaves so obtained are moreover factorizable in a natural sense. Here $\cZ^{\mu}_n$ are Zastava spaces largely used in the geometric Langlands program (\cite{BFGM}).  
  
 To construct the functor $\ov{\FF}$, we introduce natural compactifications of Zastava spaces in Section~\ref{Section_Compactified Zastava}. Our proof also essentially uses the description of the twisted $\IC$-sheaves of Drinfeld compactifications $\Bunb_B$ from \cite{L2}. 
 
 As predicted by the above conjecture, an irreducible object of the twisted Whittaker category $\Whit_x^{\kappa}$ is of the form $\cF_{x,\lambda}$ for some $G$-dominant coweight $\lambda$. Assuming the subtop cohomology property we show that 
$$
\ov{\FF}(\cF_{x,\lambda})\,\iso\, \mathop{\oplus}\limits_{\mu\le\lambda} \cL_{x,\mu}\otimes V^{\lambda}_{\mu},
$$ 
where $\cL_{x,\mu}$ are the irreducible objects of $\wt\FS^{\kappa}_x$, and $V^{\lambda}_{\mu}$ are some multiplicity vector spaces (cf. Corollary~\ref{Corollary_392} and Proposition~\ref{Pp_ovFF_is_exact}). One of our main results is a description of the space $V^{\lambda}_{\mu}$ in Theorem~\ref{Th_4.12.1}. We show that $V^{\lambda}_{\mu}$ admits a canonical base, which is naturally a subset of $B(\lambda)$. Here $B(\lambda)$ is the crystal of the canonical base of the irreducible $\check{G}$-representation $\VV^{\lambda}$ of highest weight $\lambda$. 

 The dominant weights of $\check{G}_{\zeta}$ form naturally a subset of the set $\Lambda^+$ of $G$-dominant coweights. Our Theorem~\ref{Thm_special_case_of_V_lambda_mu} shows that if $\lambda$ is a dominant weight of $\check{G}_{\zeta}$ then $V^{\lambda}_{\mu}$ identifies with the $\mu$-weight space in the irreducible representation $V(\lambda)$ of $\check{G}_{\zeta}$ of highest weight $\lambda$.
 
\sssec{Other results inspired by quantum groups}  In Section~\ref{Section_Hecke functors} we define the action of the category $\Rep(\check{G}_{\zeta})$ of representations of $\check{G}_{\zeta}$ by
Hecke functors on the twisted derived category $\D_{\zeta}(\Bunt_G)$ of $\Bun_G$, and on the twisted Whittaker category $\D\Whit^{\kappa}_x$. The main result of this Section is Theorem~\ref{Th_Hecke_action}. It shows that the Hecke functors are exact for the perverse t-structure on the twisted Whittaker category. It also shows that acting on the basic object of $\Whit^{\kappa}_x$ by the Hecke functor corresponding to an irreducible representation of $\check{G}_{\zeta}$, one gets the corresponding irreducible object of $\Whit^{\kappa}_x$. This is an analog of (\cite{FGV}, Theorem~4) in the metaplectic setting.   

 In Section~\ref{Sec_Objects that remain irreducible} we introduce a notion of restricted dominant coweights $\lambda$ of $G$ and show that the corresponding irreducible objects $\cF_{x,\lambda}\in \Whit^{\kappa}_x$ remain irreducible after applying $\ov{\FF}$. This is an analog of the corresponding result for the restriction functor $\Rep(U_q(\check{G}))\to \Rep(\og{u}_q(\check{G}))$, see (\cite{ABBGM}, Proposition~1.1.8).  
 
 We also prove an analog in our setting of the Lusztig-Steinberg tensor product theorem for quantum groups (\cite{ABBGM}, Theorem~1.1.4). It describes the structure of the semi-simple part $\Whit^{\kappa, ss}_x$ of the twisted Whittaker category $\Whit^{\kappa}_x$ as a module over $\Rep(\check{G}_{\zeta})$ acting by Hecke functors. If $[\check{G}_{\zeta},  
\check{G}_{\zeta}]$ is simply-connected then we show that this is a complete description. 

 Let $\check{T}_{\zeta}\subset \check{G}_{\zeta}$ be the canonical maximal torus. In Section~\ref{section_H_functors_for_FS} we define the action of $\Rep(\check{T}_{\zeta})$ by Hecke functors on $\wt\FS^{\kappa}_x$. This corresponds to an action of representations of the maximal torus of the quantum Frobenius quotient on $\Rep(\og{u}_q(\check{G}))$ defined in (\cite{ABBGM}, Section~1.1.6).  
 
 One of our main results is Theorem~\ref{Thm_functoriality} showing that $\ov{\FF}: \Whit^{\kappa}_x\to \wt\FS^{\kappa}_x$ commutes with the actions of Hecke functors with respect to the inclusion $\check{T}_{\zeta}\hook{} \check{G}_{\zeta}$. This is an analog of the similar property of the restriction functor $\Rep(U_q(\check{G}))\to \Rep(\og{u}_q(\check{G}))$ (\cite{ABBGM}, Proposition~1.1.11). 
 
  Further, we compute the Kazhdan-Lusztig's type polynomials for $\Whit^{\kappa}_x$ in some special cases (cf. Section~\ref{Section_Examples_KL_polynomials}). Acting on the basic object of $\FS^{\kappa}_x$, one gets a full embedding $\Rep(\check{T}_{\zeta})\subset \FS^{\kappa}_x$. We also show that, assuming the subtop cohomology property, this full subcategory is closed under extensions (cf. Proposition~\ref{Pp_closed_under_extensions}). 
  
  In Section~\ref{C-Sh-formula_basics} we formulate a metaplectic analog of the Casselman-Shalika problem (and its analog for quantum groups). Then we calculate the top cohomology group of the corresponding Casselman-Shalika complex in Proposition~\ref{Pp_H^0_of_C-Sh-formula}. 
  
  In Appendix~B we prove Proposition~\ref{Pp_reformulating_subtop_coh_property}, which reformulates the subtop cohomology property as some categorical property of $\Whit^{\kappa}_x$ saying that $\Ext^1$ in $\Whit^{\kappa}_x$ between some irreducible objects vanish. The corresponding property is known to hold for $\Rep(U_q(\check{G}))$ (\cite{Bez}). In Section~\ref{Section_comparison_ABBGM} we compare our setting with that of \cite{ABBGM}, where the quantum Frobenius quotient of $U_q(\check{G})$ identifies with $G$.  

\sssec{Notation} 
\label{sssec_Notation}
We work over an algebraically closed ground field $k$ of characteristic $p>0$. Let $X$ be a smooth projective connected curve of genus $g$. \index{$k, p, X, g, \Omega$}\index{$\cO, F, \Omega^{\frac{1}{2}}$}%
Let $\Omega$ denote the canonical line bundle on $X$. We fix a square root $\Omega^{\frac{1}{2}}$ of $\Omega$. Set $\cO=k[[t]]\subset F=k((t))$. 
 Let $G$ be a connected reductive group over $k$ with $[G,G]$ simply-connected. Let $B\subset G$ be a Borel subgroup, $B^-\subset G$ its opposite and $T=B\cap B^-$ a maximal torus. \index{$G, B, B^-, T, U, U^-$}
Let $U$ (resp., $U^-$) denote the unipotent radical of $B$ (resp., of $B^-$). Let $\Lambda$ denote the coweights of $T$, $\check{\Lambda}$ the weights of $T$. The canonical pairing between the two is denoted by $\< , \>$. By $\Lambda^+$ (resp., $\check{\Lambda}^+$) we denote the semigroup of dominant coweights (resp., dominant weights) for $G$. Let $\rho$ be the half-sum of positive coroots of $G$. Let $\Lambda^{pos}$ denote the $\ZZ_+$-span of positive coroots in $\Lambda$. \index{$\Lambda, \check{\Lambda}, \<,\>$}\index{$\Lambda^+, \check{\Lambda}^+, \rho, \Lambda^{pos}$}
 
 Set $G_{ab}=G/[G,G]$, let $\Lambda_{ab}$ (resp., $\check{\Lambda}_{ab}$) denote the coweights (resp., weights) of $G_{ab}$. Let $J$ denote the set of connected components of the Dynkin diagram of $G$. For $j\in J$ write $\cJ_j$ for the set of vertices of the $j$-th connected component of the Dynkin diagram, $\cJ=\cup_{j\in J} \cJ_i$. For $j\in \cJ$ let $\alpha_j$ (resp., $\check{\alpha}_j$) denote the corresponding simple coroot (resp., simple root). \index{$G_{ab}, \Lambda_{ab}, \check{\Lambda}_{ab}$}\index{$J, \cJ_j, \cJ, \alpha_j, \check{\alpha}_j$} 
 One has $G_{ad}=\prod_{j\in J} G_j$, where $G_j$ is a simple adjoint group. Let $\gg_j=\Lie G_j$. For $j\in J$ let $\kappa_j: \Lambda\otimes\Lambda\to\ZZ$ be the Killing form for $G_j$, so
$$
\kappa_j=\sum_{\check{\alpha}\in \check{R}_j} \check{\alpha}\otimes\check{\alpha},
$$
\index{$G_j, \gg_j, \kappa_j, \check{R}_j$} where $\check{R}_j$ is the set of roots of $G_j$. For a standard Levi subgroup $M$ of $G$ we denote by $\Lambda^{pos}_M$ the $\ZZ_+$-span of simple coroots of $M$ in $\Lambda$. Our notation $\mu\le_M\lambda$ for $\lambda,\mu\in\Lambda$ means that $\lambda-\mu\in \Lambda^{pos}_M$. For $M=G$ we write $\le$ instead of $\le_G$. \index{$\Lambda^{pos}_M, \le_M$}\index{$\cE^s(T)$}

By a super line we mean a $\ZZ/2\ZZ$-graded line. As in \cite{L1}, we denote by $\cE^s(T)$ the groupoid of pairs: a symmetric bilinear form $\kappa: \Lambda\otimes\Lambda\to\ZZ$, and a central super extension $1\to k^*\to \tilde\Lambda^s \to\Lambda\to 1$ whose commutator is $(\gamma_1,\gamma_2)_c=(-1)^{\kappa(\gamma_1,\gamma_2)}$. 
 
 Let $\Sch/k$ denote the category of $k$-schemes of finite type with Zarisky topology. The $n$-th Quillen $K$-theory group of a scheme form a presheaf on $\Sch/k$. As in \cite{BD}, $K_n$ will denote the associated sheaf on $\Sch/k$ for the Zariski topology.
 
 Pick a prime $\ell$ invertible in $k$. We work with (perverse) $\Qlb$-sheaves on $k$-stacks for the \'etale topology. Pick an injective character $\psi: \Fp\to\Qlb^*$, let $\cL_{\psi}$ be the corresponding Artin-Schreier sheaf on $\A^1$. The trivial $G$-torsor over some base is denoted $\cF^0_G$. 
\index{$\Sch/k,  K_n, \ell, \Qlb$}\index{$\psi, \cL_{\psi}, \cF^0_G$}
 
 Denote by $\cP^{\theta}(X,\Lambda)$ the Picard groupoid of $\theta$-data (\cite{BD2}, Section~3.10.3). Its object is a triple $\theta=(\kappa,\lambda, c)$, where $\kappa:\Lambda\otimes\Lambda\to\ZZ$ is a symmetric bilinear form, $\lambda$ is a rule that assigns to each $\gamma\in\Lambda$ a super line bundle $\lambda^{\gamma}$ on $X$, and $c$ is a rule that assigns to each pair $\gamma_1,\gamma_2\in\Lambda$ an isomorphism $c^{\gamma_1,\gamma_2}: \lambda^{\gamma_1}\otimes \lambda^{\gamma_2}\,\iso\, \lambda^{\gamma_1+\gamma_2}\otimes\Omega^{\kappa(\gamma_1,\gamma_2)}$ on $X$. They are subject to the conditions from (\cite{BD2}, Section~3.10.3). 
  
 For a reductive group $H$ we denote by $\Bun_H$ the stack of $H$-torsors on $X$. \index{$\cP^{\theta}(X,\Lambda), \Bun_H$}

\sssec{Input data} 
\label{section_input_data}
We fix the following data as in (\cite{L1}, Section~2.3). Write $\Gr_G=G(F)/G(\cO)$ for the affine grassmannian of $G$. For $j\in J$ let $\cL_j$ denote the ($\ZZ/2\ZZ$-graded purely of parity zero) line bundle on $\Gr_G$ with fibre $\det(\gg_j(\cO): \gg_j(\cO)^g)$ at $gG(\cO)$ (the definition of this relative determinant is found in \cite{FL}). Let $E^a_j$ be the punctured total space of the pull-back of $\cL_j$ to $G(F)$. This is a central extension
$$
1\to \Gm\to E^a_j\to G(F)\to 1,
$$
it splits canonically over $G(\cO)$. \index{$\Gr_G, \cL_j, E^a_j$}

Pick an even symmetric bilinear form $\beta: \Lambda_{ab}\otimes\Lambda_{ab}\to\ZZ$.  Assume given a central extension
\begin{equation}
\label{ext_Lambda_ab_by_V_beta}
1\to \Gm\to V_{\beta}\to \Lambda_{ab}\to 1
\end{equation}
over $k$ whose commutator is $(\gamma_1,\gamma_2)_c=(-1)^{-\beta(\gamma_1,\gamma_2)}$. It is given for each $\gamma\in\Lambda_{ab}$ by a line $\epsilon^{\gamma}$ over $k$ together with isomorphisms
$$
c^{\gamma_1,\gamma_2}: \epsilon^{\gamma_1}\otimes\epsilon^{\gamma_2}\,\iso\, \epsilon^{\gamma_1+\gamma_2}
$$
for $\gamma_i\in \Lambda_{ab}$ subject to the conditions in the definition of $\cE^s(G_{ab})$ (\cite{L3}, Section~3.2.1). \index{$\beta, V_{\beta}$}

  Let $N\ge 1$ be invertible in $k$. Let $\zeta: \mu_N(k)\to\Qlb^*$ be an injective character, we write $\cL_{\zeta}$ for the canonical rank one local system on $B(\mu_N)$ such that $\mu_N(k)$ acts on it by $\zeta$. We have a map $s_N: \Gm\to B(\mu_N)$ corresponding to the $\mu_N$-torsor $\Gm\to\Gm, z\mapsto z^N$. The local system $s_N^*\cL_{\zeta}$ is sometimes also denoted by $\cL_{\zeta}$. \index{$N, \zeta, \cL_{\zeta}$}
  
  For each $j\in J$ pick $c_j\in \ZZ$. To these data we associate the even symmetric bilinear form $\bar\kappa: \Lambda\otimes\Lambda\to\ZZ$ given by
\begin{equation}
\label{def_bar_kappa}
\bar\kappa=-\beta-\sum_{j\in J} c_j\kappa_j
\end{equation}
and the quadratic form $\varrho: \Lambda\to\QQ$ given by $\varrho(\mu)=\frac{\bar\kappa(\mu,\mu)}{2N}$. \index{$c_j, \bar\kappa, \varrho$}

\sssec{Central extensions} To the above unput data we associate the following objects. According to  (\cite{BD}, Theorem~3.16), to the central exension (\ref{ext_Lambda_ab_by_V_beta}) one canonically attaches a central extension
\begin{equation}
\label{ext_G_ab_by_K_2}
1\to K_2\to \cV_{\beta}\to G_{ab}\to 1
\end{equation}
of sheaf of groups on $\Sch/k$ with the following property. Write $(\cdot, \cdot)_{st}: F^*\times F^*\to k^*$ for the tame symbol map (\cite{L1}, Section~2.3). Passing to $F$-points in (\ref{ext_G_ab_by_K_2}) and further taking the push-out by the tame symbol $(\cdot, \cdot)_{st}: K_2(F)\to \Gm$ one gets a central extension
\begin{equation}
\label{ext_E_beta_of_G_ab(F)}
1\to\Gm\to E_{\beta}\to G_{ab}(F)\to 1,
\end{equation}
\index{$\cV_{\beta}, (\cdot, \cdot)_{st}, E_{\beta}, \EE$}
which is actually a central extension in the category of ind-schemes over $k$, and its commutator $(\cdot, \cdot)_c: G_{ab}(F)\times G_{ab}(F)\to\Gm$
satisfies 
$$
(\lambda_1\otimes f_1, \lambda_2\otimes f_2)_c=(f_1, f_2)_{st}^{-\beta(\lambda_1,\lambda_2)}
$$
for $\lambda_i\in \Lambda_{ab}, f_i\in F^*$. The pull-back of (\ref{ext_E_beta_of_G_ab(F)}) under $G(F)\to G_{ab}(F)$ is also denoted by $E_{\beta}$ by abuse of notation. 

  Recall that $\cV_{\beta}(\cO)\to G_{ab}(\cO)$ is surjective, and the composition of the tame symbol with $K_2(\cO)\to K_2(F)$ is trivial. For this reason  (\ref{ext_E_beta_of_G_ab(F)}) is equipped with a canonical section over $G_{ab}(\cO)$. The $\Gm$-torsor $E_{\beta}/G_{ab}(\cO)\to \Gr_{G_{ab}}$ over $t^{\gamma}G_{ab}(\cO)$ is constant with fibre $\epsilon^{\gamma}-\{0\}$, the group $G_{ab}(\cO)$ acts on it by the character 
$$
G_{ab}(\cO)\to G_{ab}\toup{\beta(\gamma)} \Gm
$$  

 The sum of the extensions $E_{\beta}$ and $(E^a_j)^{c_j}$, $j\in J$ is the central extension denoted
\begin{equation}
\label{ext_EE_of_G(F)}
1\to \Gm\to \EE\to G(F)\to 1.
\end{equation}
It is equipped with the induced section over $G(\cO)$. Let
\begin{equation}
\label{ext_V_EE_of_Lambda}
1\to\Gm\to V_{\EE}\to \Lambda\to 1
\end{equation}
be the pull-back of (\ref{ext_EE_of_G(F)}) under $\Lambda\to G(F)$, 
$\lambda\mapsto t^{\lambda}$. The commutator in (\ref{ext_V_EE_of_Lambda}) is given by
$$
(\lambda_1,\lambda_2)_c=(-1)^{\bar\kappa(\lambda_1,\lambda_2)}
$$
\index{$V_{\EE}, \Gra_G$}\index{$\wt\Gr_G, \PPerv_{G,\zeta}$}

Set $\Gra_G=\EE/G(\cO)$. Let $\wt\Gr_G$ be the stack quotient of $\Gra_G$ under the $\Gm$-action such that $z\in\Gm$ acts as $z^N$. Let $\PPerv_{G,\zeta}$ be the category of $G(\cO)$-equivariant perverse sheaves on $\wt\Gr_G$ on which $\mu_N(k)$ acts by $\zeta$.  

\sssec{Line bundles}  As in (\cite{L1}, Section~2.6) we associate to the pair $(V_{\beta}, -\beta)\in \cE^s(G_{ab})$ a line bundle $\cL_{\beta}$ on $\Bun_{G_{ab}}$. This is done in two steps. First, $(V_{\beta}, -\beta)$ yields an object $\theta_0\in \cP^{\theta}(X,\Lambda_{ab})$ as in (\cite{L3}, Lemma~4.1). Second, $\theta_0$ yields a super line bundle on $\Bun_{G_{ab}}$ as in (\cite{L1}, Section~2.6), it is actually of parity zero.
The restriction of $\cL_{\beta}$ to the Ran version of $\Gr_{G_{ab}}$ is a factrorization line bundle (\cite{GL}, 2.2.8). 

For $\mu\in \Lambda_{ab}$ consider the map $i_{\mu}: X\to\Bun_{G_{ab}}$, $x\mapsto \cO(-\mu x)$. One has canonically
$$
i_{\mu}^*\cL_{\beta}\,\iso\, \Omega^{\frac{\beta(\mu,\mu)}{2}}\otimes \epsilon^{\mu}
$$ 
\index{$\cL_{\beta}, i_{\mu}$}

 For $j\in J$ let $\cL_{j,\Bun_G}$ be the line bundle on $\Bun_G$ whose fibre at $\cF\in\Bun_G$ is
$$
\det\RG(X, (\gg_j)_{\cF^0_G})\otimes \det\RG(X, (\gg_j)_{\cF})^{-1}
$$
Denote by $\cL^{\bar\kappa}$ the line bundle $\cL_{\beta}\otimes (\mathop{\otimes}\limits_{j\in J} \cL_{j,\Bun_G}^{c_j})$ on $\Bun_G$. Its restriction to the Ran version of $\Gr_G$ is naturally a factorization line bundle. 
\index{$\cL_{j,\Bun_G}$}\index{$\cL^{\bar\kappa}, \Gr_{G,x}$}

For $x\in X$ let $\Gr_{G,x}$ denote the affine grassmannian classifying a $G$-torsor $\cF$ on $X$ with a trivialization $\cF\,\iso\, \cF^0_G\mid_{X-x}$. The restriction of $\cL^{\bar\kappa}$ (with zero section removed) under the forgetful map $\Gr_{G,x}\to \Bun_G$ identifies with $\Gra_G$ (once we pick an isomorphism $D_x\,\iso\, \Spec \cO$ for the formal disk $D_x$ around $x$). 

\sssec{Metaplectic dual group} 
\label{section_Metaplectic dual group} 
In \cite{L1} we equipped $\PPerv_{G,\zeta}$ with a structure of a symmetric monoidal category, we introduced a symmetric monoidal category $\PPerv^{\natural}_{G,\zeta}$ obtained from $\PPerv_{G,\zeta}$ by some modification of the commutativity constraint. 

 Set $\Lambda^{\sharp}=\{\lambda\in \Lambda\mid \bar\kappa(\lambda)\in N\check{\Lambda}\}$. Let $\check{T}_{\zeta}=\Spec k[\Lambda^{\sharp}]$ be the torus whose weights lattice is $\Lambda^{\sharp}$. 
Let $\check{G}_{\zeta}$ be the reductive group over $\Qlb$ defined in (\cite{L1}, Theorem~2.1), it is equipped with canonical inclusions $\check{T}_{\zeta}\subset \check{B}_{\zeta}\subset \check{G}_{\zeta}$, where $\check{T}_{\zeta}$ is a maximal torus, and $\check{B}_{\zeta}$ is a Borel subgroup dual to $T\subset B\subset G$. 
\index{$\Lambda^{\sharp}$}\index{$\check{T}_{\zeta}, \check{G}_{\zeta}$}
\index{$\check{B}_{\zeta}, \PPerv^{\natural}_{G,\zeta}$}
  
 To get a fibre functor on $\PPerv^{\natural}_{G,\zeta}$ one needs to pick an additional input datum. We make this choice as in \cite{L1}. Namely, let $\bar V_{\EE}$ be the stack quotient of $V_{\EE}$ by the $\Gm$-action, where $z\in\Gm$ acts as $z^N$. It fits into an exact sequence of group stacks
\begin{equation}
\label{ext_barV_EE_of_Lambda}
1\to B(\mu_N)\to \bar V_{\EE} \to\Lambda\to 1
\end{equation}
We pick a morphism of group stacks $\gt_{\EE}:\Lambda^{\sharp}\to \bar V_{\EE}$, which is a section of (\ref{ext_barV_EE_of_Lambda}) over $\Lambda^{\sharp}$. This yields as in (\cite{L1}, Theorem~2.1) an equivalence of tensor categories $\PPerv^{\natural}_{G,\zeta}\,\iso\, \Rep(\check{G}_{\zeta})$. \index{$\bar V_{\EE}, \gt_{\EE}$}

 Let $\wt\Gr_T$ be obtained from $\wt\Gr_G$ by the base change $\Gr_T\to\Gr_G$. Write $\PPerv_{T,G, \zeta}$ for the category of $T(\cO)$-equivariant perverse sheaves on $\wt\Gr_T$ on which $\mu_N(k)$ acts by $\zeta$. As in (\cite{L1}, Section~3.2), the datum of $\gt_{\EE}$ yields an equivalence $\Loc_{\zeta}: \Rep(\check{T}_{\zeta})\,\iso\,\PPerv_{T,G, \zeta}$. 
\index{$\wt\Gr_T, \PPerv_{T,G, \zeta}, \Loc_{\zeta}$}

\sssec{} Let $\Omega^{\rho}$ denote the $T$-torsor on $X$ obtained from $\Omega^{\frac{1}{2}}$ via the extension of scalars for $2\rho: \Gm\to T$. 
We denote by $^{\omega}\cL^{\bar\kappa}$ the line bundle on $\Bun_G$ whose fibre at $\cF\in\Bun_G$ is $\cL^{\bar\kappa}_{\cF}\otimes (\cL^{\bar\kappa}_{\Omega^{\rho}})^{-1}$. From (\cite{L3}, Proposition~4.1) one gets the following. 
\index{$\Omega^{\rho}, {^{\omega}\cL^{\bar\kappa}}$}

\begin{Lm} 
\label{Lm_fibre_of_cL_beta}
Let $D=\sum_x \mu_x x$ be a $\Lambda$-valued divisor on $X$. The fibre of $\cL_{\beta}$ at $\Omega^{\rho}(-D)$ identifies canonically with
$$
(\cL_{\beta})_{\Omega^{\rho}}\otimes (\otimes_{x\in X} (\Omega^{\frac{1}{2}}_x)^{\beta(\mu_x, \mu_x+2\rho)}\otimes \epsilon^{\bar\mu_x}),
$$
where $\bar\mu_x\in\Lambda_{ab}$ is the image of $\mu_x$. 
\end{Lm}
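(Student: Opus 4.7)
The plan is to combine the factorization structure of $\cL_\beta$ with its biextension/quadratic structure on $\Bun_{G_{ab}}$, and then to reduce to the formula $i_\mu^*\cL_\beta\iso \Omega^{\beta(\mu,\mu)/2}\otimes\epsilon^\mu$ recalled immediately before the statement. Since the restriction of $\cL_\beta$ to the Ran incarnation of $\Gr_{G_{ab}}$ is a factorization line bundle, for $D=\sum_x \mu_x x$ the modifications of $\Omega^\rho$ at distinct points contribute independently, and the fibre at $\Omega^\rho(-D)$ factors as $(\cL_\beta)_{\Omega^\rho}$ tensored with one local contribution for each $x\in\supp D$. It thus suffices to construct a canonical isomorphism
$$(\cL_\beta)_{\Omega^\rho(-\mu x)}\iso (\cL_\beta)_{\Omega^\rho}\otimes (\Omega_x^{\frac{1}{2}})^{\beta(\mu,\mu+2\rho)}\otimes \epsilon^{\bar\mu}$$
in the single-point case $D=\mu x$ with $\mu\in\Lambda$.

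To handle this single-point case, the key tool is the biextension attached to $\cL_\beta$: setting
$$B(\cF_1,\cF_2)\df (\cL_\beta)_{\cF_1\otimes\cF_2}\otimes (\cL_\beta)_{\cF_1}^{-1}\otimes (\cL_\beta)_{\cF_2}^{-1}\otimes (\cL_\beta)_{\cO},$$
by Lemma~4.1 of \cite{L3} and the $\theta$-data formalism of \cite{BD2}, $B$ is a symmetric biextension of $\Bun_{G_{ab}}\times\Bun_{G_{ab}}$ by $\Gm$ classified by the bilinear form $-\beta$. Taking $\cF_1=\Omega^\rho$ (viewed as a $G_{ab}$-torsor via the pushout $T\to G_{ab}$) and $\cF_2=\cO(-\bar\mu x)$, the restriction of $B$ to the slice $\Bun_{G_{ab}}\times\{\cO(-\bar\mu x)\}$ reduces to the $\Gm$-torsor obtained by evaluating the character $\beta(\bar\mu,-)\in\check\Lambda_{ab}$ on $\cF_1|_x$. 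Since $\Omega^\rho$ is the pushout of $\Omega^{\frac{1}{2}}$ along $2\rho\colon \Gm\to T\to G_{ab}$, this evaluation yields $(\Omega_x^{\frac{1}{2}})^{\beta(\mu,2\rho)}$, with the sign dictated by the $\theta$-data normalization.

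Combining the two contributions, the biextension supplies the cross factor $(\Omega_x^{\frac{1}{2}})^{\beta(\mu,2\rho)}$, while $i_{\bar\mu}^*\cL_\beta\iso \Omega^{\beta(\mu,\mu)/2}\otimes\epsilon^{\bar\mu}$ applied at $x$ supplies the diagonal factor $(\Omega_x^{\frac{1}{2}})^{\beta(\mu,\mu)}\otimes\epsilon^{\bar\mu}$. Their product is exactly the required local correction, and the factorization step then assembles these into the formula of the lemma. The main obstacle is the careful bookkeeping of signs and of the super-line-bundle structure throughout -- in particular verifying that the biextension normalization dictated by $(V_\beta,-\beta)\in\cE^s(G_{ab})$ produces the cross term with exactly the sign $+\beta(\mu,2\rho)$ as written; everything else is essentially formal once the biextension structure and the single-point formula are in place.
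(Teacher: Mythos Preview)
The paper's own proof is a one-line citation: ``From (\cite{L3}, Proposition~4.1) one gets the following.'' Your argument is correct and amounts to unpacking what that proposition says, via factorization plus the biextension attached to $\cL_\beta$.

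One simplification you are missing, however, makes the biextension step superfluous. The line bundle $\cL_\beta$ lives on $\Bun_{G_{ab}}$ and only sees the image of a $T$-torsor under $T\to G_{ab}$. Since $2\rho$ is the sum of the positive coroots, it lies in the coroot lattice and hence maps to $0$ in $\Lambda_{ab}$; therefore the composite $\Gm\toup{2\rho}T\to G_{ab}$ is trivial, and $\Omega^\rho$ becomes the \emph{canonically trivial} $G_{ab}$-torsor. Your cross term $(\Omega_x^{1/2})^{\beta(\mu,2\rho)}$ is thus identically trivial, and in fact $\beta(\mu_x,\mu_x+2\rho)=\beta(\bar\mu_x,\bar\mu_x)$ throughout. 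So the single-point case reduces immediately to the formula $i_{\bar\mu}^*\cL_\beta\iso\Omega^{\beta(\bar\mu,\bar\mu)/2}\otimes\epsilon^{\bar\mu}$ already recalled before the lemma, and factorization then assembles the multi-point statement. The paper writes the exponent as $\beta(\mu_x,\mu_x+2\rho)$ only for visual uniformity with the analogous formulas for $\cL_{j,\Bun_G}$ and $^{\omega}\cL^{\bar\kappa}$ in Section~2.5, where the $+2\rho$ genuinely contributes.
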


\sssec{Langlands program for metaplectic groups} 
\label{section_Langlands_program_metapl_groups}
Let $\Bunt_G$ be the gerbe of $N$-th roots of $^{\omega}\cL^{\bar\kappa}$ over $\Bun_G$. Its restriction to the Ran version of $\Gr_G$ is a factorization gerbe (\cite{GL}, 2.3.2). Let $\D_{\zeta}(\Bunt_G)$ denote the derived category of $\Qlb$-sheaves on $\Bunt_G$, on which $\mu_N(k)$ acts by $\zeta$. 

 As in \cite{L2}, where the case of $G$ simple simply-connected was considered, we define an action of $\PPerv_{G,\zeta}^{\natural}$ on $\D_{\zeta}(\Bunt_G)$ by Hecke functors (see Section~\ref{Section_action_Hecke_on_BuntG}). The geometric Langlands program for metaplectic groups could be the problem of finding a spectral decomposition of $\D_{\zeta}(\Bunt_G)$ under this action. Our study of the twisted Whittaker model in this setting is motivated by this problem.  
\index{$\Bunt_G, \D_{\zeta}(\Bunt_G)$}

\part{Construction of the functor $\ov{\FF}$}
 
\medskip 
 
\section{Local problem: subtop cohomology}
\label{section_Local problem: subtop cohomology}

\ssec{} 
\label{Section_1.1}
In this section we formulate and partially prove Conjecture~\ref{Con_main} that is used in Proposition~\ref{Pp_3.11.2}.

 For a free $\cO$-module $M$ write $M_{\bar c}=M\otimes_{\cO} k$. 
For $\mu\in\Lambda$ let $\Gr_B^{\mu}$ (resp., $\Gr_{B^-}^{\mu}$) denote the $U(F)$-orbit (resp., $U^-(F)$-orbit) in $\Gr_G$ through $t^{\mu}$. For $\mu$ in the coroot lattice, the $\Gm$-torsor $\Gra_G\times_{\Gr_G} \Gr_B^{\mu}\to \Gr_B^{\mu}$ is constant with fibre $\Omega_{\bar c}^{-\bar\kappa(\mu,\mu)}-0$, and $T(\cO)$ acts on it by the character $T(\cO)\to T\toup{-\bar\kappa(\mu)}\Gm$. The $\Gm$-torsor $\Gra_G\times_{\Gr_G} \Gr_{B^-}^{\mu}\to \Gr_{B^-}^{\mu}$ is constant with fibre $\Omega_{\bar c}^{-\bar\kappa(\mu,\mu)}-0$, and $T(\cO)$ acts on it by $T(\cO)\to T\toup{-\bar\kappa(\mu)}\Gm$.
\index{$M_{\bar c}, \Gr_B^{\mu}, \Gr_{B^-}^{\mu}$} 

 As in (\cite{FGV}, Section~7.1.4), for $\eta\in\Lambda$ we write $\chi_{\eta}: U(F)\to \A^1$ for an additive character of conductor $\bar\eta$, where $\bar\eta$ is the image of $\eta$ in the coweights lattice of $G_{ad}$. For $\eta+\nu\in\Lambda^+$ we also write $\chi^{\nu}_{\eta}: \Gr_B^{\nu}\to\A^1$ for any $(U(F), \chi_{\eta})$-equivariant function.
\index{$\chi_{\eta}, \chi^{\nu}_{\eta}, \wt\Gr_B^{\mu},\ev$}

 For $\mu\in\Lambda$ let 
$$
\wt\Gr_B^{\mu}=\Gr_B^{\mu}\times_{\Gr_G}\wt\Gr_G
$$ 
Pick $\chi_0: U(F)\to\A^1$ and define $\chi^0_0: \Gr_B^0\to\A^1$
by $\chi^0_0(uG(\cO))=\chi_0(u)$ for $u\in U(F)$. Set $\ev=\chi^0_0$. For the canonical trivialization $\wt\Gr_B^0\,\iso\, \Gr_B^0\times B(\mu_N)$, we consider $\cL_G:=\ev^*\cL_{\psi}\boxtimes \cL_{\zeta}$ as a local system on $\wt\Gr_B^0$.

 For $\mu$ in the coroot lattice any trivialization of $\Omega_{\bar c}^{-\bar\kappa(\mu,\mu)}$ yields a section $s_{\mu}: \Gr^{\mu}_{B^-}\to \wt\Gr_{B^-}^{\mu}$. Recall that $\Gr_B^0\cap \Gr_{B^-}^{-\lambda}$ is empty unless $\lambda\ge 0$, and for $\lambda\ge 0$ this is a scheme of finite type and pure dimension $\<\lambda, \check{\rho}\>$ by (\cite{BFGM}, Section~6.3). 
Recall the quadratic form $\varrho$ from Section~\ref{section_input_data}.  
\index{$\cL_G, s_{\mu}$}
 
\begin{Def} 
\label{Def_subtop_coh_property}
We say that \select{the subtop cohomology property is satisfied for $\varrho$} if for any $\lambda>0$, which is not a simple coroot, 
\begin{equation}
\label{complex_main}
\RG_c(\Gr_B^0\cap \Gr_{B^-}^{-\lambda}, s_{-\lambda}^*\cL_G)
\end{equation}
is placed in degrees $\le top-2$, where $top=\<\lambda, 2\check{\rho}\>$.
\end{Def}

\begin{Con} 
\label{Con_main}
Assume that $\varrho(\alpha_i)\notin \ZZ$ for any simple coroot $\alpha_i$. Then the subtop cohomology property is satisfied for 
$\varrho$.
\end{Con}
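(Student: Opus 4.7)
The plan is to use the Zastava space as the organizing geometry, and to reduce the required degree bound for $\RG_c(\Gr_B^0 \cap \Gr_{B^-}^{-\lambda}, s_{-\lambda}^* \cL_G)$ to a case analysis on the top-dimensional components of $\Gr_B^0 \cap \Gr_{B^-}^{-\lambda}$, distinguishing those on which the Whittaker evaluation $\ev$ is non-constant from those on which it vanishes identically. On the former, nontriviality of $\ev^*\cL_\psi$ alone forces cohomological vanishing (as in the untwisted setting of \cite{FGV}); on the latter the metaplectic Kummer twist must do the job, and this is where the hypothesis $\varrho(\alpha_i) \notin \ZZ$ becomes essential.

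First I would reduce to $G$ simple simply-connected: the intersection $\Gr_B^0 \cap \Gr_{B^-}^{-\lambda}$, the form $\bar\kappa$, and the hypothesis all decompose over the simple factors of $G_{ad}$. Then identify $Y := \Gr_B^0 \cap \Gr_{B^-}^{-\lambda}$ with the fiber of the Zastava projection $\pi^\lambda : \ocZ^\lambda \to X^\lambda$ over the divisor $\lambda \cdot x$. The local system $s_{-\lambda}^*\cL_G$ extends to a rank-one twisted local system $\wt\cL$ on $\ocZ^\lambda$ assembled from $\ev^*\cL_\psi$ and the metaplectic gerbe splitting coming from the factorization line bundle, so that $\RG_c(Y, s_{-\lambda}^*\cL_G)$ agrees, up to a shift by $\<\lambda, \check\rho\>$, with the stalk at $\lambda\cdot x$ of $\pi^\lambda_! \wt\cL$. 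Stratify $X^\lambda$ by partitions of $\lambda$ into simple coroots. On the open stratum $U^\lambda$ of multiplicity-free divisors $\sum_j \alpha_{i_j}\cdot y_j$ the factorization structure identifies the fiber of $\pi^\lambda$ with $\prod_j (\Gr_B^0 \cap \Gr_{B^-}^{-\alpha_{i_j}}) \cong \prod_j \A^1$, and $\wt\cL$ decomposes there as an external tensor product of Artin-Schreier sheaves $\cL_\psi$; these have vanishing $\RG_c$, so $\pi^\lambda_!\wt\cL$ is supported on the codimension-$\geq 1$ closed subset $X^\lambda\setminus U^\lambda$.

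Next proceed by induction on $|\lambda| = \sum_i c_i$ where $\lambda = \sum c_i \alpha_i$. Each non-open stratum of $X^\lambda$ corresponds to a collision of atoms, and the factorization of $\pi^\lambda_!\wt\cL$ there expresses its stalk as a tensor product of stalks of $\pi^{\lambda'}_!\wt\cL$ for strictly smaller $\lambda' < \lambda$. Combining (i) vanishing on $U^\lambda$, (ii) the inductive hypothesis applied to smaller Zastavas, (iii) perversity of $\wt\cL[2\<\lambda,\check\rho\>]$, and (iv) the codimension of each stratum, one concludes that the stalk at $\lambda\cdot x$ lives in perverse degrees $\leq -2$, equivalently $\RG_c(Y, s_{-\lambda}^*\cL_G)$ is placed in degrees $\leq top-2$. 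The main obstacle is to handle those top-dimensional components along which $\ev$ is identically zero (so $\ev^*\cL_\psi$ is trivial) and vanishing must come entirely from the Kummer twist. This reduces to computing the restriction of the metaplectic twist on the relevant $c\alpha_i$-type Mirkovi\'c-Vilonen components of $\Gr_B^0 \cap \Gr_{B^-}^{-c\alpha_i}$ (for $c\geq 2$) and checking that it is a nontrivial rank-one local system; I expect this to follow from a direct formula for the twist in terms of $\bar\kappa(\alpha_i,\alpha_i)\cdot\binom{c}{2} \bmod N$ combined with the IC-description of $\Bunb_B$ from \cite{L2}, and the hypothesis $\varrho(\alpha_i)\notin\ZZ$ (equivalently $\bar\kappa(\alpha_i,\alpha_i)\notin 2N\ZZ$) precisely guarantees nontriviality.
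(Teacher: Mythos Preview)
This statement is a \emph{conjecture} in the paper, not a theorem; the paper does not prove it under the sole hypothesis $\varrho(\alpha_i)\notin\ZZ$. What the paper does prove is Theorem~\ref{Pp_C_implies_subtop cohomology property}: the subtop cohomology property holds under the strictly stronger property~(C) of Definition~\ref{Def_property_C}, and Theorem~\ref{Th_114} then verifies property~(C) type by type under further restrictions on $\varrho$. So your proposal, if correct, would resolve an open conjecture. It does not.

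The factorization/induction scheme does not close. Factorization of the Zastava over $(X^{\mu_1}\times X^{\mu_2})_{disj}$ computes the stalk of $\pi^\lambda_!\wt\cL$ at a point whose divisor has \emph{at least two} distinct atoms as a tensor product of stalks for strictly smaller $\mu_i$. But the point you actually care about, $\lambda\cdot x$, lies in the deepest diagonal stratum where there is a single atom equal to $\lambda$; no factorization is available there, and your inductive hypothesis says nothing about that fiber. Knowing the sheaf on the complement of $\{\lambda\cdot x\}$ does not bound the stalk at $\lambda\cdot x$: $\pi^\lambda_!\wt\cL$ is not perverse on $X^\lambda$, and item~(iii) (perversity of $\wt\cL$ upstairs) gives only the trivial bound $\le -1$ on stalks of the pushforward, not $\le -2$. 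The entire content of the conjecture is precisely the direct analysis of the single fiber $\Gr_B^0\cap\Gr_{B^-}^{-\lambda}$ over $\lambda\cdot x$, which your induction bypasses rather than treats.

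Your residual case is also misidentified. The components of $\Gr_B^0\cap\Gr_{B^-}^{-\lambda}$ on which $\ev^*\cL_\psi$ is constant are not just ``$c\alpha_i$-type'' components with $c\ge 2$; the paper's crystal analysis (Section~\ref{Section_Property (C)}) shows that after stratifying by the minimal-Levi projections $\gq_{P_i}$, the strata surviving both the Artin--Schreier argument and a dimension count are those $\bar b\in B_{\gg}(\lambda)$ with $\phi_i(\bar b)=1$ for exactly one $i\in\cJ$ and $\phi_j(\bar b)=0$ for $j\ne i$. By Proposition~\ref{Pp_Kashiwara} these correspond to weights $\omega_i-\lambda$ of the fundamental representation $\VV^{\omega_i}$, and the Kummer twist on such a component is controlled by $\bar\kappa(\lambda-\alpha_i)\bmod N\check\Lambda$, \emph{not} by $\bar\kappa(\alpha_i,\alpha_i)\binom{c}{2}$. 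Requiring this twist to be nontrivial for every such $\lambda$ is exactly property~(C), and the appendix computations show that in types $B_n,C_n,D_n,G_2,F_4$ this genuinely requires assumptions on $\varrho$ beyond $\varrho(\alpha_i)\notin\ZZ$. Your $\binom{c}{2}$ formula captures only the subminimal-Levi contribution and misses the rest.
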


 This conjecture is motivated by our definition of the functor $\ov{\FF}$ in Section~\ref{Section_4.6.1}, this is precisely the local property needed in Proposition~\ref{Pp_3.11.2}. The assumption $\varrho(\alpha_i)\notin \ZZ$ is used in the construction of $\ov{\FF}$ to get the correct answer over $\oo{X}{}^{\mu}$ (see Proposition~\ref{Pp_3.3.4}). 
 
\begin{Rem} 
\label{Rem_passing_to_G_semisimple}
i) The input data of Section~\ref{section_input_data} are functorial in a suitable sense. In particular, we may restrict them from $G$ to $[G,G]$. Then $\bar\kappa$ gets replaced by its restriction to the coroot lattice. The subtop cohomology property holds for $[G,G]$ (with the induced input data) if and only if it holds for $G$. \\
ii) We may pick a torus $T_1$ and an inlcusion $Z([G,G])\hook{} T_1$, where $Z([G,G])$ is the center of $[G,G]$. Then $G_1:=([G,G]\times T_1)/Z([G,G])$ has a connected center, here $Z([G,G])$ is included diagonally in the product. One may also extend the input data of Section~\ref{section_input_data} to $G_1$ and assume, if necessary, that $G$ has a connected center. 
\end{Rem}

\begin{Def}  
\label{Def_property_C}
If the center $Z(G)$ of $G$ is not connected, replace $G$ by the group $G_1$ as in Remark~\ref{Rem_passing_to_G_semisimple}, so we may assume $Z(G)$ connected. Then pick fundamental weights $\omega_i\in\Lambda$ of $\check{G}$ corresponding to $\check{\alpha}_i$ for $i\in \cJ$. Say that $\varrho$ \select{satisfies the property} (C) if the following holds. If $i\in \cJ$, $\lambda>\alpha_i$ such that $\omega_i-\lambda$ appears as a weight of the fundamental representation $\VV^{\omega_i}$ of $\check{G}$ then $\bar\kappa(\lambda-\alpha_i)$ is not divisible by $N$ in $\check{\Lambda}$. 
\end{Def}
\index{$Z(G), \omega_i, \VV^{\omega_i}$}

Here is the main result of this section. 
 
\begin{Thm} 
\label{Pp_C_implies_subtop cohomology property} 
If $\varrho$ satisfies the property (C) then the subtop cohomology property is satisfied for $\varrho$. 
\end{Thm}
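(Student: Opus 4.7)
The plan is to stratify $Z^{\lambda} := \Gr_B^0 \cap \Gr_{B^-}^{-\lambda}$ by its intersections with the $G(\cO)$-orbits, writing $Z^{\lambda} = \bigsqcup_{\mu \in \Lambda^+} Z^{\lambda}_{\mu}$ with $Z^{\lambda}_{\mu} := Z^{\lambda} \cap \Gr^{\mu}$, and to analyze $s_{-\lambda}^{*}\cL_G$ stratum by stratum. By \cite{BFGM} and Mirkovi\'c--Vilonen, each nonempty $Z^{\lambda}_{\mu}$ is of pure dimension $\<\lambda, \check{\rho}\>$ and its top-dimensional irreducible components are in bijection with a basis of the weight space $\VV^{\mu}_{-\lambda}$. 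A standard excision argument reduces the desired vanishing to showing that on each relevant stratum the restricted local system $s_{-\lambda}^{*}\cL_G$ contributes nothing in degrees $top$ and $top-1$.

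First I would dispose of the strata on which $\ev$ is not identically zero: there one exhibits a projection to $\AA^{1}$ along which $\ev$ is affine linear, and the projection formula together with the standard vanishing of Artin--Schreier cohomology on $\AA^{1}$ kills $\RG_c(Z^{\lambda}_{\mu}, s_{-\lambda}^{*}\cL_G)$ altogether --- this is the argument of \cite{FGV}. The essential remaining case is when $\ev \equiv 0$ on $Z^{\lambda}_{\mu}$; the Pl\"ucker-type description of $\Gr_B^0 \cap \Gr^{\mu}$ via $\VV^{\mu}$ forces this to happen only when $\mu = \omega_i$ for some $i \in \cJ$ such that $\omega_i - \lambda$ is a weight of the fundamental representation $\VV^{\omega_i}$, and (since $\lambda$ is not a simple coroot) in particular $\lambda > \alpha_i$. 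Note the hypotheses of property~(C) are exactly the hypotheses of this residual case.

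On such a stratum $s_{-\lambda}^{*}\cL_G$ reduces to the pullback of $\cL_{\zeta}$, whose monodromy along the residual $T$-action is computed, via the description of $\Gra_G \times_{\Gr_G} \Gr_{B^-}^{-\lambda}$ recalled in Section~\ref{Section_1.1}, by the class of $\bar\kappa(\lambda - \alpha_i)$ in $\check{\Lambda}/N\check{\Lambda}$. Property~(C) says precisely that this class is nonzero, so $s_{-\lambda}^{*}\cL_{\zeta}$ is a nontrivial Kummer local system along a $\Gm$-action on the open part of the stratum; since the compactly-supported cohomology of a nontrivial multiplicative local system on $\Gm$ vanishes identically, the projection formula forces $\RG_c(Z^{\lambda}_{\omega_i} \cap (\text{open}), s_{-\lambda}^{*}\cL_G) = 0$, which in particular kills the potential contributions in degrees $top$ and $top-1$.

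The main obstacle will be the bookkeeping in the middle step: the precise identification of the strata on which $\ev \equiv 0$ with those indexed by pairs $(i,\lambda)$ satisfying $\omega_i - \lambda \in \mathrm{wts}(\VV^{\omega_i})$, together with an explicit formula for the $T(\cO)$-character acting on $s_{-\lambda}^{*}\cL_G$ restricted to those components. This is the node at which the gerbe twist, the choice of section $s_{-\lambda}$, and the quadratic form $\bar\kappa$ all interact, and where condition~(C) is deployed; once this is cleanly set up, the two cohomological vanishings are formal consequences of the standard vanishing for nontrivial multiplicative local systems on tori.
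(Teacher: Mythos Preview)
Your approach has genuine gaps. The $G(\cO)$-orbit stratification does not behave as you describe: (i) ``$\ev$ not identically zero on $Z^{\lambda}_{\mu}$'' does not furnish a $\Ga$-equivariance that kills cohomology --- $U(F)$ does not preserve $\Gr_G^{\mu}$, and no other $\Ga$-action on $Z^{\lambda}_{\mu}$ is evident; the only general fact available is that $\ev$ is dominant on each top-dimensional component of $Z^{\lambda}$, which kills degree $top$ but not $top-1$. (ii) Your identification of the residual case with $\mu = \omega_i$ fails already because $\Gr_B^0 \cap \Gr_G^{\omega_i}$ is empty whenever $0$ is not a weight of $\VV^{\omega_i}$, i.e.\ whenever $\omega_i$ is not in the coroot lattice, which is typical. (iii) Most importantly, nothing in your setup produces the character $\bar\kappa(\lambda - \alpha_i)$: the $T(\cO)$-monodromy of $\gamma_G^*\cL_{\zeta}$ on $Z^{\lambda}$ is governed by $\bar\kappa(\lambda)$, not $\bar\kappa(\lambda-\alpha_i)$, and the shift by $\alpha_i$ is the entire content of property~(C).

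The paper's argument uses a different stratification, by tuples $\bar\mu = (\mu_i)_{i \in \cJ}$ recording the $B^-(M_i)$-position under each minimal Levi projection $\gq_{P_i}: \Gr_{P_i} \to \Gr_{M_i}$. Strata with some $\mu_i \ge_{M_i} 2\alpha_i$ carry a genuine equivariance under a congruence subgroup of $T(\cO)$ that shifts $\ev^*\cL_{\psi}$ by a nontrivial additive character; strata with two or more indices satisfying $\mu_i = \alpha_i$ are handled by a dimension count. The single residual case is exactly one $\mu_i = \alpha_i$, all others zero; a crystal-theoretic input (Proposition~\ref{Pp_Kashiwara}, due to Kashiwara) shows that any top-dimensional component here forces $\omega_i - \lambda$ to be a weight of $\VV^{\omega_i}$. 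This stratum then factors as $(\Gr_{B(M_i)}^0 \cap \Gr_{B^-(M_i)}^{-\alpha_i}) \times \cY$ with $\cY \subset \gq_{P_i}^{-1}(t^{-\alpha_i}) \cap \Gr_{B^-}^{-\lambda}$, and the discrepancy map on $\cY$ transforms under $T(\cO)$ by $\bar\kappa(\lambda - \alpha_i)$ --- this factorization is where the $-\alpha_i$ shift originates, and property~(C) is exactly what makes the resulting Kummer local system nonconstant on each top-dimensional component of $\cY$.
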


 The proof of the following is given case by case in Appendix~A. 
 
\begin{Thm} 
\label{Th_114} The quadratic form $\varrho$ satisfies the property (C), and hence the subtop cohomology property, in the following cases:
\begin{itemize}
\item  $G$ is of type $C_2$ or $A_n$ for $n\ge 1$, and $\varrho(\alpha_i)\notin \ZZ$ for any simple coroot $\alpha_i$. 
\item $G$ is of type $B_n, C_n, D_n$ for $n\ge 1$ or $G_2$, and $\varrho(\alpha_i)\notin \frac{1}{2}\ZZ$ for any simple coroot $\alpha_i$. 
\item $G$ is of type $F_4$, and $\varrho(\alpha_i)\notin \frac{1}{2}\ZZ$,  $\varrho(\alpha_i)\notin \frac{1}{3}\ZZ$
for any simple coroot $\alpha_i$.
\end{itemize}
\end{Thm}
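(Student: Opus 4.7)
The plan is to verify property (C) separately for each type listed; the implication (C) $\Rightarrow$ subtop cohomology is Theorem~\ref{Pp_C_implies_subtop cohomology property}, so the theorem reduces to a type-by-type check. By Remark~\ref{Rem_passing_to_G_semisimple} I may pass to $[G,G]$ and then to the simple factor $G_j$ under consideration. Since $\beta$ factors through $\Lambda_{ab}$, its pull-back to the coroot lattice vanishes, so on that lattice one has $\bar\kappa=-c\kappa$ where $c=c_j\in\ZZ$ and $\kappa=\kappa_j$ is the Killing form of the simple factor. All computations of property (C) thus boil down to integers built from the Cartan matrix.

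I would first translate the numerical hypotheses. Since $\varrho(\alpha_i)=\bar\kappa(\alpha_i,\alpha_i)/(2N)$, the assumption $\varrho(\alpha_i)\notin\ZZ$ (resp.\ $\notin\frac12\ZZ$, $\notin\frac13\ZZ$) says $c\kappa(\alpha_i,\alpha_i)\notin 2N\ZZ$ (resp.\ $\notin N\ZZ$, $\notin \frac{2N}{3}\ZZ$). On the other hand, $\bar\kappa(\lambda-\alpha_i)\in N\check{\Lambda}$ is equivalent to the family of congruences $c\kappa(\lambda-\alpha_i,\alpha_j)\equiv 0\pmod{N}$, one for each simple coroot $\alpha_j$. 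Using $\kappa(\alpha_k,\alpha_l)=\tfrac{\kappa(\alpha_k,\alpha_k)}{2}a_{kl}$ with $a_{kl}$ the Cartan matrix entries, every such congruence becomes an explicit divisibility statement involving $c\kappa(\alpha_i,\alpha_i)$ (and, in the non-simply-laced cases, $c\kappa(\alpha,\alpha)$ for long/short coroots $\alpha$ linked to the rescaling factor $1$, $2$ or $3$).

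Next, for each $i\in\cJ$ I enumerate the candidate $\lambda$: those $\lambda\in\Lambda^{pos}$ with $\lambda>\alpha_i$ such that $\omega_i-\lambda$ is a weight of the fundamental representation $\VV^{\omega_i}$ of $\check G$. The relevant weight systems are classical: exterior powers of the standard representation in type $A_n$; the standard, its exterior powers and the spin representation in $B_n,C_n,D_n$; the $7$-dimensional and $14$-dimensional adjoint in $G_2$; and the $26$-, $52$-, $273$-, $1274$-dimensional representations in $F_4$. For each candidate $\lambda$ I record $\lambda-\alpha_i$ as a nonnegative $\ZZ$-combination of simple coroots. Many of these representations are minuscule or quasi-minuscule, so the list is short; the non-minuscule cases are handled from the explicit weight strings or the path model.

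Finally, for each candidate $\lambda$ I single out a simple coroot $\alpha_j$ for which $c\kappa(\lambda-\alpha_i,\alpha_j)$, when reduced modulo $N$, is forced into a specific nonzero residue class assuming the hypothesis. Concretely, assuming the divisibility would feed back, via the Cartan relations, into the divisibility of $c\kappa(\alpha_i,\alpha_i)$ by the forbidden integer ($2N$, $N$, or $2N/3$), contradicting the hypothesis on $\varrho(\alpha_i)$. The main obstacle is the last bullet: in $F_4$ the adjoint and $26$-dimensional representations produce several candidates, each of which must be tested against both a long and a short simple coroot — and it is precisely here that the extra condition $\varrho(\alpha_i)\notin\frac13\ZZ$ enters, reflecting the appearance of a factor of $3$ when one converts between long and short coroot pairings in $F_4$. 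The other types require only the factor-of-$2$ argument (vacuously true for $A_1$, and short for $C_2$, $G_2$, and the $A_n$ calculation since all fundamentals are exterior powers with very restricted weight supports).
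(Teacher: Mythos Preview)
Your plan is correct and coincides with the paper's own proof in Appendix~A: both carry out a type-by-type enumeration of the weights $\omega_i-\lambda$ of each fundamental representation and then exhibit, for every candidate $\lambda$, an element of $\Lambda$ against which the pairing with $\bar\kappa(\lambda-\alpha_i)$ fails to lie in $N\ZZ$. The paper works in concrete coordinate realizations (e.g.\ $\GL_n$, $\GSp_{2n}$, $\Spin_{2n+1}$, \ldots) and often tests against convenient coroots such as $e_k$ or $\beta_{kk}$ rather than only simple ones, but the logic is the same. One small correction: your asserted equivalence ``$\bar\kappa(\lambda-\alpha_i)\in N\check\Lambda$ iff $c\kappa(\lambda-\alpha_i,\alpha_j)\equiv 0\pmod N$ for all simple $\alpha_j$'' is only literally true when the simple coroots span $\Lambda$; after enlarging to connected center they do not, so the congruences are necessary but not sufficient. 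This does not harm your strategy, since to prove non-divisibility you only need a single witness $\mu\in\Lambda$---and a simple coroot does the job in every case that actually arises---but you should state it as a sufficient test, not an equivalence.
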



\begin{Rem} 
\label{Rem_E_6_E_7_E_8}
Let $G$ be of type $E_n$ with $6\le n\le 8$. As in the proof of Theorem~\ref{Th_114}, one shows that there is a collection of positive integers $d_1,\ldots, d_r$ (depending on $n$) with the following property. If $\varrho(\alpha_i)\notin \frac{1}{d_1}\ZZ, \ldots, \frac{1}{d_r}\ZZ$ for any simple coroot $\alpha_i$ then the property (C) is satisfied for $\varrho$. This collection can be found in principle in a way similar to the one we use for other types, however, this requires a lot of explicit calculations. They could certainly be done with a suitable computer program (like \cite{FK}). 

 In Section~A.2 of Appendix A, we consider $G$ of type $E_8$ and
establish a necessary condition for the property (C). Namely, one needs at least that $\varrho(\alpha_i)\notin \frac{1}{10}\ZZ, \frac{1}{8}\ZZ, \frac{1}{6}\ZZ$ for the property (C) to hold for $\varrho$ in this case. 
\end{Rem}

\ssec{Proof of Theorem~\ref{Pp_C_implies_subtop cohomology property}}
\label{Section_Property (C)}

\sssec{} 
\label{Section_121}
Over $\Gr_B^0\cap \Gr_{B^-}^{-\lambda}$ we get two different trivializations of the $\Gm$-torsor $\Gra_G\to\Gr_G$, the first coming from $\Gr_B^0$, the second one from that over $\Gr_{B^-}^{-\lambda}$. The discrepancy  between the two trivializations is a map $\gamma_G: \Gr_B^0\cap \Gr_{B^-}^{-\lambda}\to \Gm$ that intertwines the natural $T(\cO)$-action on the source with the $T(\cO)$-action on $\Gm$ by the character $T(\cO)\to T\toup{\bar\kappa(\lambda)}\Gm$. To be precise, for the corresponding sections $s^0_B: \Gr_B^0\to \Gra_G$ and $s^{-\lambda}_{B^-}: \Gr_{B^-}^{-\lambda}\to\Gra_G$ one has $s^{-\lambda}_{B^-}=\gamma_G s^0_B$. Note that $s^*_{-\lambda}\cL_G\,\iso\, \ev^*\cL_{\psi}\otimes \gamma_G^*\cL_{\zeta}$. 

 Recall that the restriction of $\ev: \Gr_B^0\cap \Gr_{B^-}^{-\lambda}\to\A^1$ to each irreducible component of $\Gr_B^0\cap \Gr_{B^-}^{-\lambda}$ is dominant (\cite{G}, Section~5.6). So, (\ref{complex_main}) is placed in degrees $\le top-1$. 
 
\sssec{Recollections on crystals}  
\label{Section_Recollections on crystals} 
As in \cite{BFG}, write $B_{\gg}(\lambda)$ for the set of irreducible components of $\Gr_B^0\cap \Gr_{B^-}^{-\lambda}$. One has the structure of a crystal on $B_{\gg}=\cup_{\lambda\ge 0} \, B_{\gg}(\lambda)$ defined in (\cite{BFG}, Sections~13.3-13.4). We recall the part of this crystal structure used in our proof.
 
  For a standard parabolic $P\subset G$ with Levi quotient $M$ let $\gq_P: \Gr_P\to \Gr_M$ be the natural map. Write $B(M)$ and $B^-(M)$ for the corresponding Borel subgroups of $M$. For $\lambda\ge 0$ the scheme $\Gr_B^0\cap \Gr_{B^-}^{-\lambda}$ is stratified by locally closed subschemes
$\Gr_B^0\cap \gq_P^{-1}(\Gr_{B^-(M)}^{-\mu})\cap \Gr_{B^-}^{-\lambda}$ indexed by $0\le_M \mu\le \lambda$. For such $\mu$ and any $g\in \Gr_{B^-(M)}^{-\mu}$ one has an isomorphism
\begin{equation}
\label{iso_decomp_intersection_first}
\Gr_B^0\cap \gq_P^{-1}(\Gr_{B^-(M)}^{-\mu})\cap \Gr_{B^-}^{-\lambda}\,\iso\, (\Gr_{B(M)}^0\cap \Gr_{B^-(M)}^{-\mu})\times (\gq_P^{-1}(g)\cap \Gr_{B^-}^{-\lambda})
\end{equation}
Denote by $B^{\gm, \ast}_{\gg}(\lambda-\mu)$ the set of irreducible components of $\gq_P^{-1}(g)\cap \Gr_{B^-}^{-\lambda}$ of (maximal possible) dimension $\<\lambda-\mu, \check{\rho}\>$. This set is independent of $g\in \Gr_{B^-(M)}^{-\mu}$ in a natural sense (see \select{loc.cit.}). One gets the bijection 
$$
B_{\gg}(\lambda)\,\iso\, \cup_{\mu}  B^{\gm, \ast}_{\gg}(\lambda-\mu)\times B_{\gm}(\mu)
$$
sending an irreducible component $b$ of $\Gr_B^0\cap \Gr_{B^-}^{-\lambda}$ to the pair $(b_1, b_2)$ defined as follows. First, there is a unique $\mu\in\Lambda$ with $0\le_M \mu\le \lambda$ such that $b\cap \gq_P^{-1}(\Gr_{B^-(M)}^{-\mu})$ is dense in $b$. Then $b\cap \gq_P^{-1}(\Gr_{B^-(M)}^{-\mu})$ corresponds via (\ref{iso_decomp_intersection_first}) to $(b_1, b_2)$.
\index{$B_{\gg}(\lambda), B_{\gg}, \gq_P$}\index{$B(M), B^-(M), \gamma_G$}\index{$B^{\gm, \ast}_{\gg}(\lambda-\mu), B_{\gm}(\mu)$}

 For $i\in \cJ$ the operation $f_i: B_{\gg}\to B_{\gg}\cup 0$ is defined as follows. Let $P_i$ be the standard parabolic whose Levi $M_i$ has a unique simple coroot $\alpha_i$. Our convention is that $f_i: B_{\gm_i}\to B_{\gm_i}\cup 0$ sends the unique element of $B_{\gm_i}(\nu)$ to the unique element of $B_{\gm_i}(\nu-\alpha_i)$ for $\nu\ge_{M_i} \alpha_i$ (resp., to $0$ for $\nu=0$). For the corresponding decomposition
$$
B_{\gg}(\lambda)\,\iso\, \cup_{\mu}  B^{\gm_i, \ast}_{\gg}(\lambda-\mu)\times B_{\gm_i}(\mu)
$$
write $b\in B_{\gg}(\lambda)$ as $(b_1, b_2)$. Then $f_i(b_1, b_2)=(b_1, f_i(b_2))$ by definition. 

 For $i\in\cJ$, $b\in B_{\gg}(\nu)$ set $\phi_i(b)=\max\{m\ge 0\mid f_i^m b\ne 0\}$. 

 Let $B(-\infty)$ denote the standard crystal of the canonical base in $U(\check{\gu})$, here $\check{\gu}$ is the Lie algebra of the unipotent radical of the Borel $\check{B}\subset \check{G}$. It coincides with the crystal introduced in (\cite{K}, Remark~8.3). A canonical isomorphism $B_{\gg}\,\iso\, B(-\infty)$ is established in \cite{BFG}. For $\lambda\in\Lambda$ denote by $T_{\lambda}$ the crystal with the unique element of weight $\lambda$, the notation from (\cite{K}, Example 7.3) and (\cite{BauG}, Section~2.2). 
\index{$B(-\infty), \check{\gu}, T_{\lambda}, B(\lambda)$}%
For $\lambda\in\Lambda^+$ denote by $B(\lambda)$ the crystal 
of the canonical base of the irreducible $\check{G}$-representation $\VV^{\lambda}$ of highest weight $\lambda$. We identify it canonically with the crystal denoted by $B^G(\lambda)$ in (\cite{BG1}, Section~3.1).
So, an element of $B(\lambda)$ is an irreducible component of $\Gr_B^{\nu}\cap \Gr_G^{\lambda}$ for some $\nu\in\Lambda$ appearing as a weight of $\VV^{\lambda}$. Recall from (\cite{BauG}, Section~2.2) that for $\lambda\in\Lambda^+$ there is a canonical embedding $B(\lambda)\hook{} T_{w_0(\lambda)}\otimes B(-\infty)$ whose image is
\begin{equation}
\label{image_of_finite_crystal}
\{t_{w_0(\lambda)}\otimes b\mid b\in B(-\infty), \phi_i(b^*)\le -\<w_0(\check{\alpha}_i), \lambda\>\;\mbox{for all}\; i\in\cJ\}
\end{equation}
Here $B(-\infty)\to B(-\infty), b\mapsto b^*$ is the involution defined in (\cite{K}, Section~8.3), see also (\cite{BauG}, Section~2.2). This inclusion is described in the geometric terms in (\cite{BauG}, Proposition~4.3). The involution $*$ is also described in geometric terms as the one coming from an automorphism of $G$ in (\cite{BauG}, Section~4.1, p. 100). 

\sssec{} Let $\bar\mu=\{\mu_i\}_{i\in \cJ}$ with $\mu_i\in \Lambda$, $\lambda\ge \mu_i\ge_{M_i} 0$. We have the corresponding maps $\gq_{P_i}: \Gr_{P_i}\to\Gr_{M_i}$. Set
$$
Y^{\bar \mu}=(
\mathop{\cap}\limits_{i\in \cJ} \, \gq_{P_i}^{-1}(\Gr_{B^-(M_i)}^{-\mu_i}))\cap \Gr_B^0\cap \Gr_{B^-}^{-\lambda}\; .
$$
The scheme $\Gr_B^0\cap \Gr_{B^-}^{-\lambda}$ is stratified by locally closed subshemes $Y^{\bar\mu}$ for the collections $\bar\mu$ as above (some strata could be empty). Our strategy is to show that each stratum $Y^{\bar \mu}$ does not contribute to $top-1$ cohomology in (\ref{complex_main}).
\index{$\gq_{P_i}: \Gr_{P_i}\to\Gr_{M_i}$}

Set $Z^{\bar\mu}=\prod_{i\in \cJ} \Gr_{B(M_i)}^0\cap \Gr_{B^-(M_i)}^{-\mu_i}$. Let 
$$
\gq^{\bar\mu}: Y^{\bar \mu}\to Z^{\bar\mu}
$$ 
be the product of the maps $\gq_{P_i}$. Write $U(M_i)$ for the unipotent radical of $B(M_i)$. For each $i\in \cJ$ define $\ev_i: \Gr_{B(M_i)}^0\to\A^1$ by $\ev_i(uM_i(\cO))=\chi_0(u)$ for $u\in U(M_i)(F)$. We have used here some section $M_i\hook{} P_i$. For $\ev^{\bar\mu}:Z^{\bar\mu}\to \A^1$ given by $\ev^{\bar\mu}=\sum_{i\in \cJ} \ev_i$ the restriction $\ev\mid_{Y^{\bar \mu}}$ equals $\ev^{\bar\mu}\gq^{\bar\mu}$. 
 
 By Definition~\ref{Def_property_C}, we assume $Z(G)$ connected and pick fundamental coweights $\omega_i$ of $G$. Note that $\gamma_G^*\cL_{\zeta}$ is equivariant under the action of $\Ker(T(\cO)\to T)$. If there is $i\in\cJ$ such that $\mu_i\ge_{M_i} 2\alpha_i$ then under the action of $\Ker(\cO^*\toup{\omega_i}T(\cO)\to T)$ the sheaf $\ev_i^*\cL_{\psi}$ on $\Gr_{B(M_i)}^0\cap \Gr_{B^-(M_i)}^{-\mu_i}$ changes by a nontrivial additive character. Therefore, $\ev^*\cL_{\psi}\otimes \gamma_G^*\cL_{\zeta}$ on $Y^{\bar \mu}$ also changes by a nontrivial additive character under the action of this group. So, the integral over this stratum vanishes by (\cite{Ngo}, Lemma~3.3). 
 
 Assume from now on that each $\mu_i$ is either $\alpha_i$ or zero. The stratum $Y^{\bar\mu}$, where $\mu_i=0$ for all $i$, is of dimension $< \<\lambda, \check{\rho}\>$ by (\cite{G}, Section~5.6). 
 
 Consider a stratum $Y^{\bar\mu}$ such that $\mu_i\ne 0$ for precisely $m$ different elements $i\in\cJ$ with $m\ge 2$. Recall that $\Gr_{B(M_i)}^0\cap \Gr_{B^-(M_i)}^{-\alpha_i}\,\iso\, \Gm$. The group $T$ acts transitively on $Z^{\bar\mu}$. Since $\gq^{\bar\mu}$ is $T(\cO)$-equivariant, the dimensions of the fibres of $\gq^{\mu}$ are $\le \<\lambda, \check{\rho}\>-m$. Our claim in this case is reduced to the following. For any $T(\cO)$-equivariant constructible sheaf $F$ on $Z^{\bar\mu}$, the complex $\RG_c(Z^{\bar\mu}, F\otimes (\ev^{\bar\mu})^*\cL_{\psi})$ is placed in degrees $\le m$. This is easy to check.
 
  The only remaining case is the stratum $Y^{\bar\mu}$ such that there is $i\in \cJ$ with $\mu_i=\alpha_i$ and $\mu_j=0$ for $j\ne i$. In particular, $\lambda\ge\alpha_i$. We may assume that $Y^{\bar\mu}$ contains an irreducible component $b$ of dimension $\<\lambda, \check{\rho}\>$, otherwise this stratum does not contribute to $top-1$ cohomology in (\ref{complex_main}). The closure of $b$ in $\Gr_B^0\cap \Gr_{B^-}^{-\lambda}$ is an element $\bar b\in B_{\gg}(\lambda)$ such that $f_j \bar b=0$ for $j\ne i$ and $f_i^2\bar b=0$. The following is derived from (\cite{K}, Proposition~8.2, Section~8.3), see the formula (\ref{image_of_finite_crystal}). 
  
\begin{Pp} 
\label{Pp_Kashiwara}
Pick $i\in \cJ$. If $\nu>0$ and $\bar b\in B_{\gg}(\nu)$ such that $f_j \bar b=0$ for all $j\ne i$, and $f_i^2 \bar b=0$ then $\omega_i-\nu$ appears in the fundamental representation $\VV^{\omega_i}$ of $\check{G}$ with highest weight $\omega_i$. In other words, $w(\omega_i-\nu)\le \omega_i$ for all $w\in W$. 
\end{Pp}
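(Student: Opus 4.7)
The plan is to exhibit an element of the crystal $B(\omega_i)$ of weight $\omega_i-\nu$. Since $B(\omega_i)$ parametrizes a weight basis of $\VV^{\omega_i}$, producing such an element will show that $\omega_i-\nu$ is a weight of $\VV^{\omega_i}$; the equivalent reformulation $w(\omega_i-\nu)\le\omega_i$ for all $w\in W$ then follows from $W$-invariance of the weight polytope of $\VV^{\omega_i}$.

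The main tool will be Kashiwara's Proposition 8.2, which characterizes the subcrystal $B(\lambda)\subset B(-\infty)$ for $\lambda\in\Lambda^+$ as $\{c\in B(-\infty)\mid \epsilon_j^*(c)\le\<\check\alpha_j,\lambda\>\text{ for all }j\in\cJ\}$, with weight $\lambda+\on{wt}(c)$; here $\epsilon_j^*=\epsilon_j\circ *$ is the $*$-twist via the Kashiwara involution. Passing to $b^*$ this is equivalent to the formulation in (\ref{image_of_finite_crystal}), where the bound $-\<w_0(\check\alpha_j),\omega_i\>$ collapses to $\delta_{j,i^*}$ (with $j\mapsto j^*$ the Dynkin involution induced by $-w_0$). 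In either guise, for $\lambda=\omega_i$ the target inequalities are $\epsilon_j^*(c)\le\delta_{ij}$ for all $j$.

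The first step is to identify the paper's function $\phi_j$ on $B_\gg$ with Kashiwara's $\epsilon_j^*$ on $B(-\infty)$ under the isomorphism of \cite{BFG}. This identification is consistent with the fact that the paper's $f_j$ kills the basepoint of $B_{\gm_j}$ and drives elements of higher $\nu$ toward it, which is the behaviour of a raising operator (up to the $*$-twist) rather than of Kashiwara's $f_j$ on $B(-\infty)$, which never vanishes. Under this identification the hypotheses $f_j\bar b=0$ for $j\ne i$ and $f_i^2\bar b=0$ translate to $\epsilon_j^*(\bar b)=0$ for $j\ne i$ and $\epsilon_i^*(\bar b)\le 1$, that is, $\epsilon_j^*(\bar b)\le\delta_{ij}$ for all $j\in\cJ$.

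These are precisely the inequalities from Kashiwara's Proposition 8.2 characterizing the subcrystal $B(\omega_i)\subset B(-\infty)$. Consequently $\bar b$ lifts to an element of $B(\omega_i)$ of weight $\omega_i+\on{wt}(\bar b)=\omega_i-\nu$, which finishes the argument. The main obstacle is purely bookkeeping: aligning the paper's crystal conventions on $B_\gg$ with Kashiwara's on $B(-\infty)$, and in particular verifying via \cite{BFG} and the involution $*$ that the statistic $\phi_j$ the paper uses corresponds to $\epsilon_j^*$ on $B(-\infty)$ in the form required to apply (\ref{image_of_finite_crystal}) as indicated by the paper's hint.
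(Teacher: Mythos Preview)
Your proposal is correct and follows exactly the route the paper indicates: the hypotheses on $\bar b$ rewrite as $\phi_j(\bar b)\le\<\check\alpha_j,\omega_i\>=\delta_{ij}$, and then Kashiwara's characterization (equivalently formula~(\ref{image_of_finite_crystal})) places $\bar b$ (up to the $*$-involution) in the finite crystal of the fundamental representation, yielding $\omega_i-\nu$ as a weight of $\VV^{\omega_i}$. The paper gives no further argument beyond the reference you invoke, and the bookkeeping you flag---matching the paper's $\phi_j$ to Kashiwara's $\epsilon_j^*$ across the $B(\infty)$ versus $B(-\infty)$ sign conventions---is the only content; note that if one follows (\ref{image_of_finite_crystal}) literally in the paper's conventions one lands in $B(-w_0(\omega_i))=B(\omega_{i^*})$ with weight $\nu-\omega_i$ and then dualizes, whereas your Kashiwara formulation lands directly in $B(\omega_i)$, but the two are equivalent.
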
  
  We conclude that $\omega_i-\lambda$ appears in $\VV^{\omega_i}$ (for other $\lambda$ the proof is already finished). 
  
   For $P=P_i$ and $g=t^{-\alpha_i}$ the isomorphism (\ref{iso_decomp_intersection_first}) becomes
\begin{equation}
\label{iso_decomp_intresection_first_with_t_mu}  
\Gr_B^0\cap \gq_{P_i}^{-1}(\Gr_{B^-(M_i)}^{-\alpha_i})\cap \Gr_{B^-}^{-\lambda}\,\iso\, (\Gr_{B(M_i)}^0\cap \Gr_{B^-(M_i)}^{-\alpha_i})\times (\gq_{P_i}^{-1}(t^{-\alpha_i})\cap \Gr_{B^-}^{-\lambda})
\end{equation}  
We let $T(\cO)$ act on the right hand side of (\ref{iso_decomp_intresection_first_with_t_mu}) as the product of the natural actions of $T(\cO)$ on the two factors. Then (\ref{iso_decomp_intresection_first_with_t_mu}) is $T(\cO)$-equivariant (see Section~\ref{Section_equivariance_general}). The $\Gm$-torsor $\Gra_G\to\Gr_G$ is constant over $\gq_{P_i}^{-1}(t^{-\alpha_i})$ with fibre $\Omega_{\bar c}^{-\bar\kappa(\alpha_i,\alpha_i)}-0$, and $T(\cO)$ acts on it by the character 
$$
T(\cO)\to T\toup{\bar\kappa(\alpha_i)}\Gm
$$
Pick any trivialization of $\Omega_{\bar c}^{-\bar\kappa(\alpha_i,\alpha_i)}$, let $\bar s_i: \gq_{P_i}^{-1}(t^{-\alpha_i})\to \Gra_G$ be the corresponding section of the $\Gm$-torsor. We get the discrepancy function $\gamma_i: \gq_{P_i}^{-1}(t^{-\alpha_i})\cap \Gr_{B^-}^{-\lambda}\to\Gm$ such that $s^{-\lambda}_{B^-}=\gamma_i \bar s_i$ over $\gq_{P_i}^{-1}(t^{-\alpha_i})\cap \Gr_{B^-}^{-\lambda}$. The map $\gamma_i$ interwines the natural $T(\cO)$-action on $\gq_{P_i}^{-1}(t^{-\alpha_i})\cap \Gr_{B^-}^{-\lambda}$ with the action on $\Gm$ by $T(\cO)\to T\toup{\bar\kappa(\lambda-\alpha_i)}\Gm$. 

 Let $\Gra_{M_i}$ be the restriction of $\Gra_G$ under $\Gr_{M_i}\to \Gr_G$. As for $G$, one defines the discrepancy function $\gamma_{M_i}: \Gr_{B(M_i)}^0\cap \Gr_{B^-(M_i)}^{-\alpha_i}\to\Gm$.
The map
$$
(\Gr_{B(M_i)}^0\cap \Gr_{B^-(M_i)}^{-\alpha_i})\times (\gq_{P_i}^{-1}(t^{-\alpha_i})\cap \Gr_{B^-}^{-\lambda})\toup{\gamma_{M_i}\gamma_i}\Gm
$$
coincides with the restriction of $\gamma_G$. 
  
  There is a $T(\cO)$-invariant subscheme $\cY\subset \gq_{P_i}^{-1}(t^{-\alpha_i})\cap \Gr_{B^-}^{-\lambda}$ such that (\ref{iso_decomp_intresection_first_with_t_mu}) restricts to an isomorphism
$$
Y^{\bar\mu}\,\iso\, (\Gr_{B(M)}^0\cap \Gr_{B^-(M)}^{-\alpha_i})\times \cY
$$  
The contribution of $Y^{\bar\mu}$ becomes
$$
\RG_c(\Gr_{B(M)}^0\cap \Gr_{B^-(M)}^{-\alpha_i}, \ev_i^*\cL_{\psi}\otimes \gamma_{M_i}^*\cL_{\zeta})\otimes \RG_c(\cY, \gamma_i^*\cL_{\zeta})
$$
We have $\dim(\cY)\le \<\lambda, \check{\rho}\>-1$. To finish the proof it suffices to show that $\gamma_i^*\cL_{\zeta}$ is nonconstant on each irreducible component of $\cY$ of dimension $\<\lambda, \check{\rho}\>-1$. This is the case, because the character $\bar\kappa(\lambda-\alpha_i)$ is not divisible by $N$ in $\check{\Lambda}$, so that $\gamma_i^*\cL_{\zeta}$ changes under the $T(\cO)$-action by a nontrivial character. Theorem~\ref{Pp_C_implies_subtop cohomology property} is proved. 

\sssec{Equivariant decomposition} 
\label{Section_equivariance_general} If $G$ is a group scheme, and $f: Y\to Z$ is a $G$-equivariant map such that $G$ acts transitively on $Z$, assume that for any $y\in Y$, the inclusion $Stab_G(y, Y)\subset Stab_G(f(y), Z)$ is an equality. Then a choice of $z\in Z$ yields an isomorphism $\xi: Z\times f^{-1}(z)\,\iso\, Y$. Namely, let $S=Stab_G(z, Z)$. The map $(G/S)\times f^{-1}(z)\to Y$, $(gS, y)\mapsto gy$ is well defined and gives this isomorphism.

 Assume in addition we have a semi-direct product $1\to G\to \tilde G\to H\to 1$ with a section $H\hook{}\tilde G$ as a subgroup. Assume $f$ is in addition $\tilde G$-equivariant. Assume $z\in Z$ is fixed by $H$. Then $SH$ is a subgroup of $\tilde G$ equal to $Stab_{\tilde G}(z, Z)$. So, $H$ acts on $S$ by conjugation. If we identify $G/S\,\iso\, Z$, $gS\mapsto gz$ then the action of $h\in H$ on $gS\in G/S\,\iso\, Z$ sends $gS$ to $hgh^{-1}S$. 
Now $\xi:  Z\times f^{-1}(z)\,\iso\, Y$ becomes $H$-equivariant if we let $h\in H$ act on $Z\times f^{-1}(z)$ as the product of the actions, that is, $h\in H$ acts on $(z_1, y)\in Z\times f^{-1}(z)$ as $(hz_1, hy)$.

\section{The twisted Whittaker category}
\label{Section_The twisted Whittaker category}

\ssec{} 
\label{Sec_2.1.}
The definition of the twisted Whittaker category from (\cite{G}, Section~2) naturally extends to our setting, we give the detailed exposition. For $\lambda\in\Lambda^+$ denote by $\cV^{\lambda}$ the corresponding Weyl module for $G$ as in \cite{J}.  For $n\ge 0$ let $\gM_n$ be the stack classifying: 
\begin{itemize}
\item $(x_1,\ldots, x_n)\in X^n$, a $G$-torsor $\cF$ on $X$,
\item for each $\check{\lambda}\in \check{\Lambda}^+$ a non-zero map
\begin{equation}
\label{maps_kappa_check_lambda}
\kappa^{\check{\lambda}}: \Omega^{\<\check{\lambda}, \rho\>} \to \cV^{\check{\lambda}}_{\cF},
\end{equation}
which is allowed to have any poles at $x_1,\ldots, x_n$. The maps $\kappa^{\check{\lambda}}$ are required to satisfy the Pl\"ucker relations as in \cite{BG}. 
\end{itemize}
For $n=0$ the stack $\gM_n$ is rather denoted by $\gM_{\emptyset}$.
Let $\gp: \gM_n\to\Bun_G$ be the map sending the above point to $\cF$. 

 Let $\cP^{\bar\kappa}$ denote the line bundle $\gp^*(^{\omega}\cL^{\bar\kappa})$ on $\gM_n$. By $\wt\gM_n$ we denote the gerbe of $N$-th roots of $\cP^{\bar\kappa}$ over $\gM_n$. Let $\D_{\zeta}(\gM_n)$ denote the derived category of $\Qlb$-sheaves on $\wt\gM_n$, on which $\mu_N(k)$ acts by $\zeta$. This category does not change (up to an equivalence) if $\bar\kappa$ and $N$ are multiplied by the same integer, so essentially depends only on $\varrho$. 
\index{$\cV^{\lambda}, \gM_n, \kappa^{\check{\lambda}}$}%
\index{$\gM_{\emptyset}, \gp$}\index{$\cP^{\bar\kappa}, \D_{\zeta}(\gM_n)$}
 
\ssec{} 
\label{Secion_1.2}
Pick $y\in X$. Write $D_y$ (resp., $D_y^*$) for the formal disk (resp., punctured formal disk) around $y\in X$. Let $\Omega^{\rho}_B$ be the $B$-torsor on $X$ obtained from $\Omega^{\rho}$ via extension of scalars $T\to B$. Let $^{\omega}\cN$ be the group scheme over $X$ of automorphisms of $\Omega^{\rho}_B$ acting trivially on the induced $T$-torsor. Let $\cN_y^{reg}$ (resp., $\cN_y^{mer}$) be the group scheme (resp., group ind-scheme) of sections of $^{\omega}\cN$ over $D_y$ (resp., $D_y^*$). Recall that
$$
\cN^{mer}_y/[\cN^{mer}_y, \cN^{mer}_y]\,\iso\, \Omega\mid_{D_y^*}\times\ldots\times \Omega\mid_{D_y^*}, 
$$
the product taken over simple roots of $G$. Taking the sum of residues in this product, one gets the character $\chi_y: \cN^{mer}_y\to \A^1$. 

 As in (\cite{G}, Section~2.3) for a collection of distinct points $\bar y:=y_1,\ldots, y_m$ let $\cN^{reg}_{\bar y}$ (resp., $\cN^{mer}_{\bar y}$) denote the product of the corresponding groups $\cN^{reg}_{y_i}$ (resp., $\cN^{mer}_{y_i}$). The sum of the corresponding characters gives the character $\chi_{\bar y}: \cN^{mer}_{\bar y}\to\A^1$. 
 
 Let $(\gM_n)_{\goodat\, \bar y}\subset \gM_n$ be the open substack given by the property that all $x_i$ are different from the points of $\bar y$, and $\kappa^{\check{\lambda}}$ have no zeros at $\bar y$. A point of $(\gM_n)_{\goodat\, \bar y}$ defines a $B$-torsor $\cF_B$ over $D_{\bar y}=\prod_{j=1}^m D_{y_j}$ equipped with a trivialization $\epsilon_B: \cF_B\times_B T\,\iso\, \Omega^{\rho}$ over $D_{\bar y}$. 
\index{$\Omega^{\rho}_B, {^{\omega}\cN}, \cN_y^{reg}, \cN_y^{mer}, \chi_y$} \index{$\cN^{reg}_{\bar y}, \cN^{mer}_{\bar y}, \chi_{\bar y}$}
  
  Let $_{\bar y}\gM_n$ denote the $\cN^{reg}_{\bar y}$-torsor over $(\gM_n)_{\goodat\, \bar y}$ classifying a point of $(\gM_n)_{\goodat\, \bar y}$ as above together with a trivialization $\cF_B\,\iso\, \Omega^{\rho}_B\mid_{D_{\bar y}}$ compatible with $\epsilon_B$.   
  
  Now $_{\bar y}\gM_n$ can be seen as the stack classifying: $(x_1,\ldots, x_n)\in X^n$ different from $\bar y$, a $G$-torsor $\cF$ over $X-\bar y$ with a trivialization $\epsilon_{\cF}: \cF\,\iso\, \Omega^{\rho}_B\times_B G\mid_{D_{\bar y}^*}$, for $\check{\lambda}\in\check{\Lambda}^+$ non-zero maps (\ref{maps_kappa_check_lambda}) over $X-\bar y-\bar x$ satisfying the Pl\"ucker relations and compatible with the trivialization $\epsilon_{\cF}$. Here we denoted $D_{\bar y}^*\,\iso\, \prod_{j=1}^m D_{y_j}^*$. 
\index{$(\gM_n)_{\goodat\, \bar y},  \; {_{\bar y}\gM_n}$}  
  
   The group $\cN^{mer}_{\bar y}$ acts on $_{\bar y}\gM_n$ by changing the trivialization $\epsilon_{\cF}$ via its action on $\Omega^{\rho}_B\mid_{D_{\bar y}^*}$. The composition $_{\bar y}\gM_n\to \gM_n\toup{\gp}\Bun_G$ sends the above point to the gluing of $\cF\mid_{X-\bar y}$ with $\Omega^{\rho}_B\times_B G\mid_{D_{\bar y}}$ via $\epsilon_{\cF}:\cF\,\iso\, \Omega^{\rho}_B\times_B G\mid_{D_{\bar y}^*}$.    

 Denote by $_{\bar y}\cP^{\bar\kappa}$ the restriction of $\cP^{\bar\kappa}$ to $_{\bar y}\gM_n$. As in (\cite{G}, Lemma~2.4), the action of $\cN^{mer}_{\bar y}$ on $_{\bar y}\gM_n$ lifts naturally to an action on $_{\bar y}\cP^{\bar\kappa}$. 
 
 Let $\wt\gM_n$ (resp., $_{\bar y}\wt\gM_n, (\wt\gM_n)_{\goodat\, \bar y}$) be the gerbe of $N$-th roots of the corresponding line bundle $\cP^{\bar\kappa}$ (resp., its restriction). We denote by $\Perv_{\zeta}((\wt\gM_n)_{\goodat\, \bar y})$ the category of perverse sheaves on $(\wt\gM_n)_{\goodat\, \bar y}$, on which $\mu_N(k)$ acts by $\zeta$.
 Write $(\Whit^{\kappa}_n)_{\goodat\, \bar y}$ for the full subcategory of $\Perv_{\zeta}(\wt\gM_n)_{\goodat\, \bar y})$ consisting of perverse sheaves, whose restriction to $_{\bar y}\wt\gM_n$ is $(\cN^{mer}_{\bar y}, \chi_{\bar y}^*\cL_{\psi})$-equivariant (as in \cite{G}, Section~2.5). 
\index{$\wt\gM_n, \;{_{\bar y}\wt\gM_n}, (\wt\gM_n)_{\goodat\, \bar y}$}
\index{$\Perv_{\zeta}((\wt\gM_n)_{\goodat\, \bar y}), \; {_{\bar y}\cP^{\bar\kappa}}$}
 
  If $\bar y'$ and $\bar y''$ are two collections of points, set $\bar y=\bar y'\cup \bar y''$. Over $(\wt\gM_n)_{\goodat\, \bar y}$ one gets the corresponding torsors with respect to each of the groups 
$$
\cN_{\bar y'}^{reg}, \cN^{reg}_{\bar y''}, \cN^{reg}_{\bar y}
$$ 
As in (\cite{G}, Section~2.5), the three full subcategories of $\Perv_{\zeta}((\wt\gM_n)_{\goodat\, \bar y})$ given by the equivariance condition with respect to one of these groups are equal.
 
 Let $\Whit^{\kappa}_n\subset \Perv_{\zeta}(\wt\gM_n)$ be the full subcategory of $F\in \Perv_{\zeta}(\wt\gM_n)$ such that for any $\bar y$ as above, the restriction of $F$ to $(\wt\gM_n)_{\goodat\, \bar y}$ lies in $(\Whit^{\kappa}_n)_{\goodat\, \bar y}$. As in (\cite{G1}, Lemma~4.8), the full subcategory $\Whit^{\kappa}_n\subset \Perv_{\zeta}(\wt\gM_n)$ is stable under sub-quotients and extensions, and is therefore a Serre subcategory. So, we also define the full triangulated subcategory $\D\!\Whit^{\kappa}_n\subset \D_{\zeta}(\wt\gM_n)$ of complexes with
all perverse cohomology lying in $\Whit^{\kappa}_n$. 
\index{$(\Whit^{\kappa}_n)_{\goodat\, \bar y}$}\index{$\Whit^{\kappa}_n$}\index{$\D\Whit^{\kappa}_n$}

  The Verdier duality preserves $\Whit^{\kappa}_n$ (up to replacing $\psi$ by $\psi^{-1}$ and $\zeta$ by $\zeta^{-1}$), because the corresponding action maps are smooth (as in \cite{G1}, Section~4.7).
    
\ssec{} 
\label{Section_2.3}
For a $n$-tuple $\bar\lambda=(\lambda_1,\ldots,\lambda_n)$ of dominant coweights of $G$ let $\gM_{n,\le\bar\lambda}\subset \gM_n$ be the closed substack given by the property that for each $\check{\lambda}\in \check{\Lambda}^+$ the map
\begin{equation}
\label{map_kappa^check_lambda_with_poles}
\kappa^{\check{\lambda}}: \Omega^{\<\rho, \check{\lambda}\>}\to \cV^{\check{\lambda}}_{\cF}(\sum_i\<\lambda_i x_i, \check{\lambda}\>)
\end{equation}
is regular over $X$. For $\bar x=(x_1,\ldots, x_n)\in X^n$ fixed let $\gM_{\bar x}$ denote the fibre of $\gM_n$ over this point of $X^n$. Write $\Whit^{\kappa}_{\bar x}$ for the corresponding version of the Whittaker category of twisted perverse sheaves on $\gM_{\bar x}$. (By a twisted perverse sheaf on a base we mean a perverse sheaf on some gerbe over this base). 
\index{$\gM_{n,\le\bar\lambda}, \gM_{\bar x}$}\index{$\Whit^{\kappa}_{\bar x}$}\index{$\gM_{\bar x,\le\bar\lambda}, \gM_{\bar x, \bar\lambda}, j_{\bar x,\bar\lambda}$}

 Assume $(x_1,\ldots, x_n)$ pairwise different. Define the closed substack $\gM_{\bar x,\le\bar\lambda}\subset \gM_{\bar x}$ as above.
  The irreducible objects of $\Whit^{\kappa}_{\bar x}$ are as follows. Let $\gM_{\bar x, \bar\lambda}\subset \gM_{\bar x,\le\bar\lambda}$ be the open substack given by the property that for each $\check{\lambda}\in \check{\Lambda}^+$ the map (\ref{map_kappa^check_lambda_with_poles}) has no zeros over $X$. Let 
$$
j_{\bar x,\bar\lambda}: \gM_{\bar x, \bar\lambda}\hook{} \gM_{\bar x,\le\bar\lambda}
$$ 
be the corresponding open immersion. Recall that $j_{\bar x,\bar\lambda}$ is affine (\cite{FGV}, Proposition~3.3.1). 

 In the same way, one defines the version of the Whittaker category of twisted perverse sheaves on $\gM_{\bar x,\bar\lambda}$. As in (\cite{G}, Lemma~2.7), this category is non-canonically equivalent to that of vector spaces. Let $\bar\cF_{\bar x,\bar\lambda}$ denote the unique (up to a non-canonical scalar automorphism) irreducible object of this category. As in (\cite{FGV}, Section~4.2.1), one defines a canonical evaluation map $\ev_{\bar x,\bar\lambda}: \gM_{\bar x,\bar\lambda}\to \A^1$. The restriction of the line bundle $\cP^{\bar\kappa}$ to $\gM_{\bar x,\bar\lambda}$ is constant with fibre
\begin{equation}
\label{fibre_of_cP_barkappa_at_T-torsor}
^{\omega}\cL^{\bar\kappa}_{\Omega^{\rho}(-\sum_i \lambda_ix_i)}
\end{equation}
Any trivialization of (\ref{fibre_of_cP_barkappa_at_T-torsor}) yields a trivialization $\wt\gM_{\bar x,\bar\lambda}\,\iso\, \gM_{\bar x,\bar\lambda}\times B(\mu_N)$ of the gerbe $\wt\gM_{\bar x,\bar\lambda}\to\gM_{\bar x,\bar\lambda}$. There is an isomorphism 
$$
\bar\cF_{\bar x,\bar\lambda}\,\iso\, \ev_{\bar x,\bar\lambda}^*\cL_{\psi}\boxtimes\cL_{\zeta}[\dim \gM_{\bar x,\bar\lambda}]
$$ 
For $\bar\lambda=0$ the line (\ref{fibre_of_cP_barkappa_at_T-torsor}) is canonically trivialized. So, $\bar\cF_{\bar x, 0}$ is defined up to a canonical isomorphism.  
\index{$\bar\cF_{\bar x,\bar\lambda}, \ev_{\bar x,\bar\lambda}$}
\index{$\cF_{\bar x, \bar\lambda, "!}, \cF_{\bar x, \bar\lambda, *}, \cF_{\bar x,\bar\lambda}$}

 Let $\cF_{\bar x, \bar\lambda, !}$ (resp., $\cF_{\bar x, \bar\lambda, *}$, $\cF_{\bar x,\bar\lambda}$) denote the extension of $\bar\cF_{\bar x, \bar\lambda}$  by $j_{\bar x,\bar\lambda, !}$ (resp., $j_{\bar x,\bar\lambda, *}$, $j_{\bar x,\bar\lambda, !*}$). Since $j_{\bar x,\bar\lambda}$ is affine, these are perverse sheaves. As in (\cite{FGV}, Proposition~6.2.1), one checks that all of three are objects of $\Whit^{\kappa}_{\bar x}$, and the version of (\cite{G}, Lemma~2.8) holds: 
 
\begin{Lm} 
\label{Lm_irr_objects_of_Whit}
(a) Every irreducible object in $\Whit^{\kappa}_{\bar x}$ is of the form $\cF_{\bar x,\bar\lambda}$ for some $n$-tuple of dominant coweights $\bar\lambda$.\\
(b) The cones of the canonical maps
\begin{equation}
\label{maps_for_!_and_*-objects_in_Lm}
\cF_{\bar x, \bar\lambda, !}\to \cF_{\bar x, \bar\lambda}\to \cF_{\bar x, \bar\lambda, *}
\end{equation}
are extensions of objects $\cF_{\bar x, \bar\lambda'}$ for $\bar\lambda'<\bar\lambda$.
\end{Lm}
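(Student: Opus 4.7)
The plan is to classify irreducible objects through the stratification of $\gM_{\bar x}$ and the standard middle-extension principle for equivariant perverse sheaves.

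For an $n$-tuple of coweights $\bar\mu=(\mu_1,\dots,\mu_n)$ (not necessarily dominant) define $\gM_{\bar x,\bar\mu}\subset \gM_{\bar x,\le\bar\mu}$ by requiring that for each $\check{\lambda}\in\check{\Lambda}^+$ the map (\ref{map_kappa^check_lambda_with_poles}) has no zeros over $X$. For $\bar\mu$ dominant this recovers the open stratum already defined in Section~\ref{Section_2.3}. These loci stratify $\gM_{\bar x}$, with closure order the natural partial order on $n$-tuples of coweights; in particular $\gM_{\bar x,\le\bar\lambda}\setminus \gM_{\bar x,\bar\lambda}$ is a union of strata $\gM_{\bar x,\bar\mu}$ with $\bar\mu<\bar\lambda$.

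The main obstacle, and the one key local computation, is to show that the twisted Whittaker category on $\gM_{\bar x,\bar\mu}$ vanishes whenever some $\mu_i$ fails to be dominant. Following the strategy of \cite{FGV} Proposition 6.2.1, for such $\bar\mu$ one exhibits a one-parameter subgroup of $\cN^{mer}_{\bar x}$ which acts trivially on $\gM_{\bar x,\bar\mu}$ (after choosing the local trivialization that defines the torsor ${_{\bar x}\gM_{\bar x}}$) yet on which the character $\chi_{\bar x}$ restricts to a non-trivial additive character of $\Ga$: concretely, if $\langle\mu_i,\check{\alpha}_j\rangle<0$ for some simple root $\check{\alpha}_j$, then the root subgroup along $\check{\alpha}_j$ at $x_i$ provides such a subgroup. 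The $(\cN^{mer}_{\bar x},\chi_{\bar x}^*\cL_\psi)$-equivariance then forces any object to vanish on this stratum. Combined with the calculation from Section~\ref{Section_2.3}---that on $\gM_{\bar x,\bar\lambda}$ with $\bar\lambda$ dominant the Whittaker category is non-canonically equivalent to $\Vect$ with unique irreducible $\bar\cF_{\bar x,\bar\lambda}$---this pins down all loci that can carry irreducible Whittaker perverse sheaves.

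Part (a) follows immediately: any irreducible $F\in\Whit^{\kappa}_{\bar x}$ is the Goresky--MacPherson extension of an irreducible twisted perverse sheaf from the unique open stratum of its support; by the vanishing above, this stratum must be some $\gM_{\bar x,\bar\lambda}$ with $\bar\lambda$ dominant, and the irreducible on that stratum is $\bar\cF_{\bar x,\bar\lambda}$, so $F\iso \cF_{\bar x,\bar\lambda}$. For (b), the cones of the canonical maps in (\ref{maps_for_!_and_*-objects_in_Lm}) are supported on $\gM_{\bar x,\le\bar\lambda}\setminus\gM_{\bar x,\bar\lambda}$, i.e.\ on strata indexed by $\bar\mu<\bar\lambda$. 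Since $\Whit^{\kappa}_{\bar x}$ is a Serre subcategory of $\Perv_\zeta(\wt\gM_{\bar x})$, the perverse cohomologies of these cones again lie in $\Whit^{\kappa}_{\bar x}$; their irreducible subquotients, having support strictly smaller than $\gM_{\bar x,\le\bar\lambda}$, are by part (a) of the form $\cF_{\bar x,\bar\lambda'}$ with $\bar\lambda'<\bar\lambda$.
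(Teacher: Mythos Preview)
Your approach matches what the paper invokes via the references to \cite{FGV} and \cite{G}, but there is one genuine gap: the loci $\gM_{\bar x,\bar\mu}$ do \emph{not} stratify $\gM_{\bar x}$. A point of $\gM_{\bar x,\le\bar\lambda}$ at which some $\kappa^{\check{\lambda}}$ acquires a zero at a point $y\notin\{x_1,\dots,x_n\}$ lies in no $\gM_{\bar x,\bar\mu}$ whatsoever, since twisting only at the $x_i$ cannot remove a zero elsewhere. So your claim that $\gM_{\bar x,\le\bar\lambda}\setminus\gM_{\bar x,\bar\lambda}$ is a union of strata $\gM_{\bar x,\bar\mu}$ with $\bar\mu<\bar\lambda$ is false, and it does not yet follow that the open stratum in the support of an irreducible must be one of your $\gM_{\bar x,\bar\mu}$.

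The repair uses the same mechanism you already describe. The honest stratification of $\gM_{\bar x}$ records the full $\Lambda$-valued defect divisor $D=\sum_y \mu_y\, y$ of the generalized $B$-structure, not just its values at $\bar x$. On any stratum with $\mu_y\ne 0$ for some $y\notin\bar x$ (necessarily $\langle\mu_y,\check{\alpha}_j\rangle<0$ for some $j$), the root-subgroup argument at $y$---identical in form to the one you give at the $x_i$---produces an element of $\cN^{mer}_y$ fixing the stratum on which $\chi_y$ is nontrivial, forcing vanishing there. With this in hand, the support of any object of $\Whit^{\kappa}_{\bar x}$ is contained in $\bigcup_{\bar\lambda\in(\Lambda^+)^n}\gM_{\bar x,\bar\lambda}$, and your arguments for (a) and (b) then go through unchanged.
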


 Here the notation $\bar\lambda'<\bar\lambda$ means that $\lambda'_i\le \lambda_i$ for all $1\le i\le n$ and for at least one $i$ the inequality is strict. Recall that the maps (\ref{maps_for_!_and_*-objects_in_Lm}) are not isomorphisms in general. 
Let $\D\!\Whit^{\kappa}_{\bar x}\subset \D_{\zeta}(\wt\gM_{\bar x})$ denote the full subcategory of objects whose all perverse cohomologies lie in $\Whit^{\kappa}_{\bar x}$.  
 
\ssec{} 
\label{Section_1.4}
The basic object of the category $\Whit^{\kappa}_{\emptyset}$ is denoted $\cF_{\emptyset}$. Recall the open substack $\gM_{\emptyset,0}\subset \gM_{\emptyset}$ given by the property that the maps (\ref{maps_kappa_check_lambda}) have neither zeros nor poles over $X$. 
Since there are no dominant weights $<0$, from Lemma~\ref{Lm_irr_objects_of_Whit} we learn that the canonical maps
$$
j_{\emptyset, 0, !}(\cF_{\emptyset,0})\,\iso\,j_{\emptyset, 0, !*}(\cF_{\emptyset,0})\,\iso\, j_{\emptyset, 0, *}(\cF_{\emptyset,0})  
$$
are isomorphisms.
\index{$\D\Whit^{\kappa}_{\bar x}$}\index{$\cF_{\emptyset}, \gM_{\emptyset,0}$}

\ssec{} For $n\ge 0$ and $\mu\in\Lambda$ let $X^{\mu}_n$ be the ind-scheme classifying $(x_1,\ldots, x_n)\in X^n$, and a $\Lambda$-valued divisor $D$ on $X$ of degree $\mu$ which is anti-effective away from $x_1,\ldots, x_n$. This means that for any $\check{\lambda}\in\check{\Lambda}^+$, $\<\check{\lambda}, D\>$ is anti-effective away from $x_1,\ldots, x_n$. 
 
 For $n=0$ we rather use the notation $X^{\mu}_{\emptyset}$ or $X^{\mu}$ instead of $X^{\mu}_0$. If $\mu=-\sum_{i\in\cJ} m_i\alpha_i$ with $m_i\ge 0$ then $X^{\mu}=\prod_i X^{(m_i)}$. 
 
 For a $n$-tuple $\bar\lambda=(\lambda_1,\ldots, \lambda_n)$ of elements of $\Lambda$ denote by $X^{\mu}_{n, \le\bar\lambda}\subset X^{\mu}_n$ the closed subscheme classifying $(x_1,\ldots, x_n, D)\in X^{\mu}_n$ such that 
$$
D-\sum_{i=1}^n \lambda_i x_i
$$ 
is anti-effective over $X$. We have an isomorphism 
$
X^n\times X^{\mu-\lambda_1-\ldots-\lambda_n}\,\iso\, X^{\mu}_{n, \le\bar\lambda}
$ 
sending $(x_1,\ldots, x_n, D')$ to $D'+\sum_{i=1}^n\lambda_i x_i$. For another collection $\bar\lambda'=(\lambda'_1, \ldots,\lambda'_n)$ with $\lambda'_i\ge \lambda_i$ one has a natural closed embedding $X^{\mu}_{n, \le \bar\lambda}\hook{} X^{\mu}_{n, \le\bar\lambda'}$, and 
$$
X^{\mu}_n=\underset{\bar\lambda}{\underset{\longrightarrow}{\lim}}\; 
X^{\mu}_{n,\le\bar\lambda}
$$
\index{$X^{\mu}_n, X^{\mu}, X^{\mu}_{n, \le\bar\lambda}$}

\sssec{} 
\label{Section_151}
By abuse of notation, the restriction of $^{\omega}\cL^{\bar\kappa}$ under $\Bun_T\to \Bun_G$ is still denoted by $^{\omega}\cL^{\bar\kappa}$. 
Let $AJ: X^{\mu}_n\to\Bun_T$ be the Abel-Jacobi map sending $(x_1,\ldots, x_n, D)$ to $\Omega^{\rho}(-D)$. The line bundle $AJ^*(^{\omega}\cL^{\bar\kappa})$ is denoted by $\cP^{\bar\kappa}$ by abuse of notations. 

 Denote by $^{\omega}\cL_{j, \Bun_G}$ the line bundle on $\Bun_G$ whose fibre at $\cF\in\Bun_G$ is $(\cL_{j, \Bun_G})_{\cF}\otimes (\cL_{j, \Bun_G})^{-1}_{\Omega^{\rho}}$. For $D=\sum_x \mu_x x\in X^{\mu}_n$ one has
$$
(^{\omega}\cL_{j, \Bun_G})_{\Omega^{\rho}(-D)}\,\iso\, \otimes_{x\in X} (\Omega^{\frac{1}{2}}_x)^{\kappa_j(\mu_x, \mu_x+2\rho)}
$$
This isomorphism uses a trivialization of all the positive root spaces of $\gg$ that we fix once and for all (they yield also trivializations of all the negative root spaces). 
\begin{Lm} 
\label{Lm_fibre_of^omega_cL_bar_kappa}
For $D=\sum_x \mu_x x\in X^{\mu}_n$ one has
$$
(^{\omega}\cL^{\bar\kappa})_{\Omega^{\rho}(-D)}\,\iso\, \otimes_{x\in X}(\Omega^{\frac{1}{2}}_x)^{-\bar\kappa(\mu_x,\mu_x+2\rho)}\otimes\epsilon^{\bar\mu_x}\,\iso\, (\otimes_{x\in X}(\Omega^{\frac{1}{2}}_x)^{-\bar\kappa(\mu_x,\mu_x+2\rho)})\otimes(\otimes_{i=1}^n \epsilon^{\bar\mu_{x_i}})
$$
where $\bar\mu_x\in\Lambda_{ab}$ is the image of $\mu_x$. 
\end{Lm}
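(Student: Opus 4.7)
My plan is to decompose $^{\omega}\cL^{\bar\kappa}$ according to its defining factors and assemble two ingredients already at hand: Lemma~\ref{Lm_fibre_of_cL_beta} for the abelian piece $\cL_\beta$, and the determinant-of-cohomology formula displayed just before the statement in Section~\ref{Section_151} for each $\cL_{j,\Bun_G}$.

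First I would use the definition $(^{\omega}\cL^{\bar\kappa})_{\cF} = \cL^{\bar\kappa}_{\cF}\otimes (\cL^{\bar\kappa}_{\Omega^\rho})^{-1}$ together with the factorization $\cL^{\bar\kappa}=\cL_\beta\otimes (\otimes_{j\in J}\cL_{j,\Bun_G}^{c_j})$ to split the fibre at $\Omega^\rho(-D)$ into an abelian contribution and one factor for each $j\in J$. Applying Lemma~\ref{Lm_fibre_of_cL_beta} the abelian part contributes $\otimes_{x\in X}(\Omega^{\frac{1}{2}}_x)^{\beta(\mu_x,\mu_x+2\rho)}\otimes \epsilon^{\bar\mu_x}$, while the stated formula for $^{\omega}\cL_{j,\Bun_G}$ contributes $\otimes_{x\in X}(\Omega^{\frac{1}{2}}_x)^{c_j\kappa_j(\mu_x,\mu_x+2\rho)}$ for the $j$-th factor (this uses the previously fixed trivializations of the positive root spaces of $\gg$). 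Multiplying all factors and substituting $\bar\kappa = -\beta - \sum_j c_j\kappa_j$ from \eqref{def_bar_kappa} collapses the exponents in each $\Omega^{\frac{1}{2}}_x$-factor to $-\bar\kappa(\mu_x,\mu_x+2\rho)$, giving the first claimed isomorphism.

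For the second isomorphism I would invoke the defining condition on $X^\mu_n$: away from $x_1,\dots,x_n$ the divisor $D$ is anti-effective, so $\mu_x\in -\Lambda^{pos}$ for every $x\notin\{x_1,\dots,x_n\}$. Since $-\Lambda^{pos}$ lies in the coroot sublattice of $\Lambda$, the image $\bar\mu_x\in\Lambda_{ab}$ vanishes for all such $x$. As $\epsilon^0$ is canonically trivial (it is the unit of the graded groupoid $\{\epsilon^\gamma\}$ underlying the extension $V_\beta$ of Section~\ref{section_input_data}), the a priori infinite tensor product $\otimes_{x\in X}\epsilon^{\bar\mu_x}$ reduces to the finite one $\otimes_{i=1}^n \epsilon^{\bar\mu_{x_i}}$, which is the desired reformulation.

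No serious obstacle is expected here, as both of the main input identities are already at our disposal; the only delicate bookkeeping point is ensuring that the canonical identifications from Lemma~\ref{Lm_fibre_of_cL_beta} and from the $\cL_{j,\Bun_G}$ formula are combined with consistent conventions for the trivializations of positive root spaces, so that the resulting isomorphism is genuinely canonical and not merely one up to scalar.
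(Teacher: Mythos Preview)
Your proposal is correct and follows exactly the same approach as the paper, which simply says ``Use Lemma~\ref{Lm_fibre_of_cL_beta} and the fact that $\epsilon^0$ is trivialized.'' You have merely spelled out in full the decomposition into the $\cL_\beta$-piece and the $\cL_{j,\Bun_G}$-pieces, the application of Lemma~\ref{Lm_fibre_of_cL_beta} and the determinant formula from Section~\ref{Section_151}, and the collapse of the $\epsilon$-factors via $\epsilon^0\simeq k$; these are precisely the two ingredients the paper invokes.
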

\begin{proof} Use Lemma~\ref{Lm_fibre_of_cL_beta} and the fact that $\epsilon^0$ is trivialized.
\end{proof}
\index{$AJ, {^{\omega}\cL_{j, \Bun_G}}$}\index{$\wt X^{\mu}_n$}
\index{$\Perv_{\zeta}(X^{\mu}_n)$}\index{$\D_{\zeta}(X^{\mu}_n)$}

 Let $\wt X^{\mu}_n$ denote the gerbe of $N$-th roots of $\cP^{\bar\kappa}$ over $X^{\mu}_n$. Write $\Perv_{\zeta}(X^{\mu}_n)$ for the category of perverse sheaves on $\wt X^{\mu}_n$, on which $\mu_N(k)$ acts by $\zeta$. 
Similarly, one has the derived category $\D_{\zeta}(X^{\mu}_n)$.

\ssec{} For $\mu\in\Lambda$ denote by $_{\mu}\gM_n\subset \gM_n$ the ind-substack classifying $(x_1,\ldots, x_n, D)\in X^{\mu}_n$, a $B$-torsor $\cF_B$ on $X$ with an isomorphism $\cF_B\times_B T\,\iso\, \Omega^{\rho}(-D)$. As $\mu$ varies in $\Lambda$ these ind-stacks form a stratification of $\gM_n$. Let $\pi_{\gM}: {_{\mu}\gM_n}\to X^{\mu}_n$ be the map sending the above point to $(x_1,\ldots, x_n, D)$. 

 For a collection $\bar\lambda=(\lambda_1,\ldots, \lambda_n)\in\Lambda^n$ let 
$_{\mu}\gM_{n,\le\bar\lambda}$ be obtained from $_{\mu}\gM_n$ by the base change $\gM_{n,\le\bar\lambda}\to \gM_n$. The map $\pi_{\gM}$ restricts to a morphism still denoted $\pi_{\gM}: {_{\mu}\gM_{n,\le\bar\lambda}}\to X^{\mu}_{n, \le\bar\lambda}$.  

 By the same token, one defines the version of the Whittaker category $\Whit^{\kappa}(_{\mu}\gM_n)\subset \Perv_{\zeta}(_{\mu}\wt\gM_n)$ and its derived version $\D\!\Whit^{\kappa}(_{\mu}\gM_n)\subset \D_{\zeta}(_{\mu}\wt\gM_n)$. 
\index{$_{\mu}\gM_n, \pi_{\gM}, \; {_{\mu}\gM_{n,\le\bar\lambda}}$} 
\index{$\Whit^{\kappa}(_{\mu}\gM_n), \D\Whit^{\kappa}(_{\mu}\gM_n)$}
 
 Let $^{+}X^{\mu}_n\hook{} X^{\mu}_n$ be the closed subscheme given by the condition $\<D, \check{\alpha}\>\ge 0$ for any simple root $\check{\alpha}$ of $G$. Let $^{+}_{\mu}\gM_n$ be the preimage of $^{+}X^{\mu}_n$ in $_{\mu}\gM_n$. As above, we have the natural evaluation map $\ev: {^{+}_{\mu}\gM_n}\to\A^1$. The derived category $\D_{\zeta}(^{+}X^{\mu}_n)$ is defined as in Section~\ref{Section_151}. Since the map $\pi_{\gM}: {_{\mu}\gM_n}\to X^{\mu}_n$ has contractible fibres, as in (\cite{G1}, Proposition~4.13), one gets the following.
 
\begin{Lm} Each object of $\D\!\Whit^{\kappa}(_{\mu}\gM_n)$ is the extension by zero from $^{+}_{\mu}\gM_n$. The functor $\D_{\zeta}(^{+}X^{\mu}_n)\to 
\D\!\Whit^{\kappa}(_{\mu}\gM_n)$ sending $K$ to $\pi_{\gM}^*K\otimes\ev^*\cL_{\psi}$ is an equivalence.
\end{Lm}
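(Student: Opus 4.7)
The plan is to adapt the untwisted argument of Gaitsgory (\cite{G1}, Proposition~4.13) to our metaplectic setting. The two key structural inputs are: (a) the gerbe $_{\mu}\wt\gM_n$ is pulled back from $\wt X^{\mu}_n$ via $\pi_{\gM}$, so the $\mu_N(k)$-twisting contributes only a pullback factor; (b) the fibres of $\pi_{\gM}$ are ind-affine, and the Whittaker character is linear on them.

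First, by Lemma~\ref{Lm_fibre_of^omega_cL_bar_kappa}, the fibre of $^{\omega}\cL^{\bar\kappa}$ at a point of the image of $_{\mu}\gM_n$ depends only on the induced $T$-torsor $\Omega^{\rho}(-D)$. Hence the restriction of $\cP^{\bar\kappa}$ to $_{\mu}\gM_n$ is canonically isomorphic to $\pi_{\gM}^*(AJ^*(^{\omega}\cL^{\bar\kappa}))$, and the gerbe $_{\mu}\wt\gM_n$ is identified with $\wt X^{\mu}_n\times_{X^{\mu}_n}{_{\mu}\gM_n}$. This reduces all categorical statements about $\D_{\zeta}(_{\mu}\wt\gM_n)$ to statements about the underlying ind-stack $_{\mu}\gM_n$ fibred over $X^{\mu}_n$, plus a pullback twist by $AJ^*(^{\omega}\cL^{\bar\kappa})$ on the base.

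Second, I fix a collection $\bar y\subset X$ containing the support of $D$ and disjoint from $\bar x$. The fibre of $\pi_{\gM}$ over $(\bar x,D)$ classifies $B$-structures with prescribed induced $T$-torsor $\Omega^{\rho}(-D)$, which is the space of sections of the twisted unipotent radical $\Omega^{\rho}(-D)\times_T U$, an ind-affine space. The $\cN^{mer}_{\bar y}$-action is vertical for $\pi_{\gM}$, and its quotient $\cN^{mer}_{\bar y}/\cN^{reg}_{\bar y}$ acts transitively on the fibres modulo the $\cN^{reg}_{\bar y}$-part already controlled by the equivariance. If $\<D,\check{\alpha}_i\>(x)<0$ at some $x\in\bar y$ and simple root $\check{\alpha}_i$, then there is an additive subgroup of $\cN^{mer}_{\bar y}$ (the $\alpha_i$-direction at $x$ in degrees $\ge \<D,\check{\alpha}_i\>(x)$) that preserves the stratum, acts freely on fibres of $\pi_{\gM}$, and on which $\chi_{\bar y}$ restricts to a nontrivial additive character. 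By (\cite{Ngo}, Lemma~3.3) any Whittaker-equivariant sheaf vanishes there, giving the extension-by-zero assertion.

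Third, over $^+_{\mu}\gM_n$ the evaluation map $\ev$ is well defined and restricts to a linear function on each fibre of $\pi_{\gM}$ that intertwines the infinitesimal $\cN^{mer}_{\bar y}$-action with the additive character $\chi_{\bar y}$. The desired equivalence then follows from the standard fact that $(\Ga,\chi^*\cL_{\psi})$-equivariant sheaves on a $\Ga$-torsor with linear character $\chi$ are precisely those of the form $\pi^*K\otimes\ev^*\cL_{\psi}$, applied fibrewise over $^+X^{\mu}_n$ with a quasi-inverse $F\mapsto\pi_{\gM !}(F\otimes\ev^*\cL_{\psi^{-1}})$ up to shift. The principal obstacle is verifying that the canonical lift of the $\cN^{mer}_{\bar y}$-action to $_{\bar y}\cP^{\bar\kappa}$ (the twisted analog of \cite{G}, Lemma~2.4) is trivial along $\cN^{reg}_{\bar y}$ and compatible with the identification of step one, so that the equivariance condition really reduces to the scalar Whittaker condition along $\ev$; once this compatibility is checked, both assertions reduce to the untwisted statement from \cite{G1}.
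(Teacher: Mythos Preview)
Your proposal is correct and takes the same approach as the paper, which simply cites (\cite{G1}, Proposition~4.13) together with the observation that $\pi_{\gM}$ has contractible fibres; you have spelled out more detail, including the twisted-specific observation that the gerbe $_{\mu}\wt\gM_n$ is pulled back from $\wt X^{\mu}_n$. The ``principal obstacle'' you flag is easily resolved: $\cN^{mer}_{\bar y}$ is ind-pro-unipotent and hence admits no nontrivial characters to $\Gm$, so its lifted action on $_{\bar y}\cP^{\bar\kappa}$ (which is pulled back from the base on each stratum) is automatically compatible with the identification of step one.
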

\index{$^{+}X^{\mu}_n,\; {^{+}_{\mu}\gM_n}$}\index{$\D_{\zeta}(^{+}X^{\mu}_n)$}

As in (\cite{G1}, Lemma~4.11), one gets the following.

\begin{Lm} i) Let $\mu\in\Lambda$. The $*$ and $!$ restrictions send $\D\!\Whit^{\kappa}_n$ to $\D\!\Whit^{\kappa}(_{\mu}\gM_n)$.\\
ii) The $*$ and $!$ direct images send $\D\!\Whit^{\kappa}(_{\mu}\gM_n)$ to $\D\!\Whit^{\kappa}_n$.\\
iii) An object $K\in \D_{\zeta}(\gM_n)$ lies in $\D\!\Whit^{\kappa}_n$ if and only if its $*$-restrictions (or, equivalently, $!$-restrictions) to all $_{\mu}\gM_n$  belong to $\D\!\Whit^{\kappa}(_{\mu}\gM_n)$.
\end{Lm}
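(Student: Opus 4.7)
The proof runs parallel to that of (\cite{G1}, Lemma~4.11). The crucial observation is that for any collection of good points $\bar y$ and any $\mu\in\Lambda$, the preimage of $_{\mu}\gM_n$ inside $_{\bar y}\gM_n$ is stable under the action of $\cN^{mer}_{\bar y}$. Indeed, by construction $^{\omega}\cN\subset B$ acts trivially on the induced $T$-torsor, so any element of $\cN^{mer}_{\bar y}$ preserves the isomorphism $\cF_B\times_B T\,\iso\,\Omega^{\rho}(-D)$, and hence preserves the $\Lambda$-valued divisor $D$ together with its degree $\mu$. Consequently the $(\cN^{mer}_{\bar y},\chi_{\bar y}^*\cL_{\psi})$-equivariance condition restricts stratum by stratum, and this is precisely the definition of $\D\!\Whit^{\kappa}(_{\mu}\gM_n)$.

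For (i) and (ii), let $i_{\mu}\colon{_{\mu}\gM_n}\hookrightarrow\gM_n$ denote the locally closed embedding. Since $i_{\mu}$ is $\cN^{mer}_{\bar y}$-equivariant over the good-at-$\bar y$ locus, each of the four functors $i_{\mu}^*,\, i_{\mu}^!,\, (i_{\mu})_*,\, (i_{\mu})_!$ preserves $(\cN^{mer}_{\bar y},\chi_{\bar y}^*\cL_{\psi})$-equivariance, so both assertions follow at once from the definitions of $\D\!\Whit^{\kappa}_n$ and $\D\!\Whit^{\kappa}(_{\mu}\gM_n)$.

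For (iii), the ``only if'' direction is part (i), and the equivalence of the $*$- and $!$-restriction versions of the criterion follows from Verdier duality together with the fact, recalled in Section~\ref{Sec_2.1.}, that $\Whit^{\kappa}_n$ is stable under this duality. For the ``if'' direction, suppose $i_{\mu}^*K\in\D\!\Whit^{\kappa}(_{\mu}\gM_n)$ for every $\mu$. It suffices to show that each perverse cohomology $^pH^k(K)$ lies in $\Whit^{\kappa}_n$, and for this we may work on any quasi-compact open substack of $\gM_n$, which meets only finitely many strata. Filtering by closures of these strata and applying the standard distinguished triangles of the form $j_!j^*\to\id\to i_*i^*\to$, one exhibits each $^pH^k(K)$ as an iterated extension of perverse sheaves of the form $^pH^k((i_{\mu})_! i_{\mu}^*K)$, which lie in $\Whit^{\kappa}_n$ by (ii). Since $\Whit^{\kappa}_n$ is a Serre subcategory of $\Perv_{\zeta}(\wt\gM_n)$, the conclusion follows.

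The main point requiring care is the finiteness assertion used in (iii): one must know that on any quasi-compact part of $\gM_n$ only finitely many strata $_{\mu}\gM_n$ are encountered. This is standard, as $\gM_n$ is an increasing union of finite-type open substacks each intersecting only finitely many of the $_{\mu}\gM_n$; thus the dévissage above reduces to a genuine finite induction and no genuine representation-theoretic input is needed beyond the stability of $\Whit^{\kappa}_n$ under subquotients and extensions.
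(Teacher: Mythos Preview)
Your proof is correct and follows precisely the approach the paper indicates: the paper does not spell out a proof but simply writes ``As in (\cite{G1}, Lemma~4.11), one gets the following,'' and your argument is exactly the standard d\'evissage from that reference, adapted to the twisted setting. The key point you identify---that $\cN^{mer}_{\bar y}$ preserves each stratum $_{\mu}\gM_n$ because it acts trivially on the induced $T$-torsor---is the only thing to check, and the rest is formal.
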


\begin{Rem} i) Consider a point $(x_1,\ldots, x_n, D)\in {^{+}X^{\mu}_n}$. Assume $(y_1,\ldots, y_m)\in X^m$ pairwise different such that $\{y_1,\ldots, y_m\}=\{x_1,\ldots, x_n\}$. Then there is a collection of $G$-dominant coweights $(\mu_1,\ldots, \mu_m)$ such that $D=\sum_{i=1}^m \mu_i y_i$ with $\sum_{i=1}^m \mu_i=\mu$. In particular, $^{+}X^{\mu}_n$ is empty unless $\mu$ is $G$-dominant.\\
ii) Let $\bar x=(x_1,\ldots, x_n)\in X^n$ be a $k$-point with $x_i$ pairwise different. Define $^{+}X^{\mu}_{\bar x}$ as the fibre of $^{+}X^{\mu}_n$ over $\bar x\in X^n$. Let $\bar\lambda\in \Lambda^n$ with $\mu\le\sum_i\lambda_i$. Define the closed subscheme $^{+}X^{\mu}_{\bar x, \le\bar\lambda}$ by the condition $D\le \sum_i\lambda_i x_i$. Then $^{+}X^{\mu}_{\bar x, \le\bar\lambda}$ is a discrete finite set of points.
\end{Rem}
\index{$^{+}X^{\mu}_{\bar x}, \; {^{+}X^{\mu}_{\bar x, \le\bar\lambda}}$}

\subsection{} Let $x\in X$. In Appendix~B we show that the subtop cohomology property admits the following reformulation in terms of $\Whit^{\kappa}_x$.

\begin{Pp} 
\label{Pp_reformulating_subtop_coh_property}
The following properties are equivalent.\\
i) The subtop cohomology property is satisfied for $\varrho$.\\
ii) Let $\lambda>0$, which is not a simple coroot. For $\mu\in\Lambda^{\sharp}$ deep enough in the dominant chamber the complex $j_{x, \mu-\lambda}^*\cF_{x,\mu}$ over $\wt\gM_{x, \mu-\lambda}$ is placed in perverse degrees $\le -2$.\\
iii) Let $\lambda>0$, which is not a simple coroot. For $\mu\in\Lambda^{\sharp}$ deep enough in the dominant chamber one has $\Ext^1(\cF_{x, \mu-\lambda}, \cF_{x, \mu})=0$ in $\Whit^{\kappa}_x$.
\end{Pp}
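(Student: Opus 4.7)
The strategy is to prove (i) $\Leftrightarrow$ (ii) by an explicit local model computation, and (ii) $\Leftrightarrow$ (iii) via the highest-weight structure of $\Whit^{\kappa}_x$.

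For (i) $\Leftrightarrow$ (ii), by the lemma in Section~\ref{Sec_2.1.} (applied with $x$ a single point, so that $\pi_{\gM}$ has contractible fibres over a point), any object in $\D\Whit^{\kappa}({_{\mu-\lambda}}\gM_x)$ takes the form $V^{\bullet}\otimes \bar\cF_{x,\mu-\lambda}$ for a unique $V^{\bullet}\in D^b(\Vect)$. The plan is to identify the $V^{\bullet}$ corresponding to $j_{x,\mu-\lambda}^*\cF_{x,\mu}$ with the complex $\RG_c(\Gr_B^0\cap \Gr_{B^-}^{-\lambda}, s_{-\lambda}^*\cL_G)$, up to a cohomological shift by $[\langle \lambda, 2\check{\rho}\rangle]$. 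This identification rests on a local model: for $\mu$ sufficiently deep in the dominant chamber, a generic point of $\gM_{x,\mu-\lambda}$ has a formal neighbourhood in $\gM_{x,\le\mu}$ smoothly equivalent to the corresponding neighbourhood of $t^{-\lambda}\in \Gr_B^0\cap\overline{\Gr_{B^-}^{-\lambda}}$ in $\Gr_{G,x}$, obtained by reading off the local position of $\cF$ relative to $\Omega^{\rho}$. Under this identification, the line bundle $\cP^{\bar\kappa}$ pulls back to the line bundle defining $\Gra_G$ (via Lemma~\ref{Lm_fibre_of^omega_cL_bar_kappa}), the Whittaker character on $\gM_x$ pulls back to $\chi_0$, and $\cF_{x,\mu}$ becomes locally the twisted $\IC$-extension of the open $U(F)$-orbit through the origin. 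Taking $*$-stalks then produces the complex (\ref{complex_main}), once the discrepancy $\gamma_G$ of Section~\ref{Section_121} is accounted for. Under the dictionary ``perverse degree of $V^{\bullet}\otimes\bar\cF_{x,\mu-\lambda}$ equals cohomological degree of $V^{\bullet}$'', the bound ``$\le \mathrm{top}-2$'' in (i) translates exactly into ``$\le -2$'' in (ii).

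For (ii) $\Leftrightarrow$ (iii), as in the untwisted case treated in \cite{FGV}, the category $\Whit^{\kappa}_x$ carries a highest-weight structure with simples $\cF_{x,\bar\lambda}$, standards $\cF_{x,\bar\lambda, !}$, and costandards $\cF_{x,\bar\lambda, *}$. Consequently, $\dim \Ext^1_{\Whit^{\kappa}_x}(\cF_{x,\mu-\lambda},\cF_{x,\mu})$ equals the multiplicity of $\cF_{x,\mu-\lambda}$ as a composition factor of the cone $\cF_{x,\mu,*}/\cF_{x,\mu}$. Via the same equivalence of Section~\ref{Sec_2.1.}, this multiplicity equals $\dim\, {}^{p}H^{-1}(j_{x,\mu-\lambda}^*\cF_{x,\mu})$; since the intermediate-extension nature of $\cF_{x,\mu}$ already forces $j_{x,\mu-\lambda}^*\cF_{x,\mu}$ to lie in perverse degrees $\le -1$, the vanishing of the top perverse cohomology ${}^{p}H^{-1}$ is precisely the statement that the whole complex sits in perverse degrees $\le -2$.

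The technically delicate step will be the twist bookkeeping in the local model of (i) $\Leftrightarrow$ (ii): one must match the gerbe $\wt\gM_x$ locally with $\wt\Gr_G$ near $t^{-\lambda}$, and verify that the local Whittaker sheaf on $\gM_x$ identifies with $s_{-\lambda}^*\cL_G$ on the stratum. The hypothesis ``$\mu\in\Lambda^{\sharp}$ deep in the dominant chamber'' enters precisely to make this local identification independent of $\mu$: the condition $\mu\in\Lambda^{\sharp}$ ensures that the trivialization of the fibre (\ref{fibre_of_cP_barkappa_at_T-torsor}) is compatible with the Whittaker equivariance, so that in the end only the universal local complex (\ref{complex_main}) survives.
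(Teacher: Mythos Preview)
Your treatment of (ii) $\Leftrightarrow$ (iii) is in the right spirit and matches the paper's one-line ``clearly equivalent''. One small correction: the quantity $\dim\Ext^1(\cF_{x,\mu-\lambda},\cF_{x,\mu})$ equals the multiplicity of $\cF_{x,\mu-\lambda}$ in the \emph{socle} of $\cF_{x,\mu,*}/\cF_{x,\mu}$, not its composition multiplicity; but for the vanishing statement the recollement argument you indicate goes through.

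The genuine gap is in (i) $\Leftrightarrow$ (ii). The paper's proof is \emph{not} a direct local-model comparison; it rests on Theorem~\ref{Th_Hecke_action}(ii), which says that for $\mu\in\Lambda^{\sharp,+}$ one has $\cF_{x,\mu}\simeq\H^{\ra}_G(\cA^{\mu}_{\cE},\cF_{\emptyset})$. Writing $\mu=\lambda-\eta$ with $-\eta$ deep in the dominant chamber, the $*$-restriction $j_{x,-\eta}^*\cF_{x,\lambda-\eta}$ then becomes, by the standard convolution calculation (as in \cite{FGV}, Section~8.2.4), the tensor of $\cF_{x,-\eta}$ with the Casselman--Shalika complex
\[
\RG_c\bigl(\Gr_B^{\eta}\cap\ov{\Gr}_G^{w_0(\eta-\lambda)},\,(s_B^{\eta})^*\cA^{w_0(\eta-\lambda)}_{\cE}\otimes(\chi^{\eta}_{-\eta})^*\cL_{\psi}\bigr)[\langle\eta,2\check{\rho}\rangle].
\]
Two lemmas then finish: the open stratum $\Gr_G^{w_0(\eta-\lambda)}$ contributes exactly (\ref{complex_main}) up to shift (using Raskin's identification $\Gr_B^{\eta}\cap\ov{\Gr}_{B^-}^{\eta-\lambda}=\Gr_B^{\eta}\cap\ov{\Gr}_G^{w_0(\eta-\lambda)}$ for $-\eta$ deep), while each deeper stratum $\Gr_G^{w_0(\eta+\mu')}$ with $-\lambda<\mu'\le 0$ contributes only in degrees $\le -2$ --- by a dimension count for $\mu'\ne 0$, and by the parity property of $\cA^{w_0(\eta-\lambda)}_{\cE}$ (\cite{L1}, Lemma~2.2) at the point stratum $\mu'=0$.

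Your proposed local model bypasses the Hecke step, and this is where it fails. First, the literal claim does not parse: $t^{-\lambda}\in\Gr_B^{-\lambda}$, not $\Gr_B^0$. Second, even with a corrected transversal-slice statement, what you would obtain is a model for the $*$-stalk of the \emph{IC extension} $\cF_{x,\mu}$ in terms of the semi-infinite IC on $\ov{\Gr}_B$; relating that to the compactly supported integral (\ref{complex_main}) over $\Gr_B^0\cap\Gr_{B^-}^{-\lambda}$ is precisely the content supplied by the Hecke realization plus the stratification analysis above. Your sketch also gives no account of why contributions from $\ov{\Gr}_G^{w_0(\eta-\lambda)}\setminus\Gr_G^{w_0(\eta-\lambda)}$ are negligible, and in particular misses the parity argument needed at the closed point.
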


Based on this proposition, we propose the following.

\begin{Con} Let $\mu<\mu'$ be dominant coweights such that $\mu'-\mu$ is not a simple coroot. Then $\Ext^1(\cF_{x,\mu}, \cF_{x, \mu'})=0$ in $\Whit^{\kappa}_x$.
\end{Con}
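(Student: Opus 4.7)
The plan is to refine Proposition~\ref{Pp_reformulating_subtop_coh_property} by removing the hypothesis that the larger weight lie in $\Lambda^\sharp$ and be deep in the dominant chamber, then invoke the subtop cohomology property furnished by Theorem~\ref{Th_114}. The conjecture is exactly item (iii) of that Proposition for every pair $\mu<\mu'$ (with $\mu'-\mu$ not a simple coroot), whereas the Proposition establishes it only for $\mu'\in\Lambda^\sharp$ deep in the dominant chamber.

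My first step would be the standard/costandard reformulation. Since $\cF_{x,\mu}$ is supported on $\gM_{x,\le\mu}$ and the open stratum $\gM_{x,\mu'}$ is disjoint from this support when $\mu<\mu'$, we have $j^*_{x,\mu'}\cF_{x,\mu}=0$, whence by adjunction $\on{RHom}(\cF_{x,\mu},\cF_{x,\mu',*})=0$. The short exact sequence
\[
0\to\cF_{x,\mu'}\to\cF_{x,\mu',*}\to C\to 0
\]
in $\Whit^\kappa_x$ then gives $\Ext^1(\cF_{x,\mu},\cF_{x,\mu'})\iso\Hom(\cF_{x,\mu},C)$. A standard perverse-cohomology chase through the triangle $j^*_{x,\mu}\cF_{x,\mu'}\to j^*_{x,\mu}\cF_{x,\mu',*}\to j^*_{x,\mu}C\to{}$ identifies the right-hand side with a subquotient of $\cH^{-1}_{\on{perv}}(j^*_{x,\mu}\cF_{x,\mu'})$. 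The conjecture is thereby reduced to the assertion that $j^*_{x,\mu}\cF_{x,\mu'}$ lies in perverse degrees $\le -2$ whenever $\mu'-\mu$ is a positive coroot that is not simple, i.e.\ to item (ii) of Proposition~\ref{Pp_reformulating_subtop_coh_property} stripped of its auxiliary hypotheses.

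The second step is to prove this unconditional stalk bound. I would proceed geometrically: the transversal slice to a point of $\gM_{x,\mu}$ inside $\gM_{x,\le\mu'}$ is, up to a smooth contractible fibration, modeled by the Zastava space $\cZ^{\mu'-\mu}_x$, and under this identification the twist $\cP^{\bar\kappa}$ pulls back to the twist governed by $\bar\kappa$ and the tame symbol exactly as in Section~\ref{Section_121}. The perverse stalk $j^*_{x,\mu}\cF_{x,\mu'}$ then reduces to the twisted cohomology of a fibre $\Gr_B^0\cap\Gr_{B^-}^{-(\mu'-\mu)}$ of the Zastava projection, i.e.\ to the complex (\ref{complex_main}) with $\lambda=\mu'-\mu$, whose cohomological bound is the content of Theorem~\ref{Th_114}. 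Crucially, this local identification is insensitive to the position of $\mu'$ in the dominant chamber, so the "deep" hypothesis in the Proposition plays no role at the level of (ii).

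The main obstacle, I expect, is verifying that $\cP^{\bar\kappa}$ indeed pulls back to precisely the Zastava-space twist of Section~\ref{Section_121} for $\mu'$ an arbitrary dominant coweight. This requires a careful factorization identification of the transversal slice together with a tracking of the $\bar\kappa$-dependent tame-symbol characters on both sides, a computation in the spirit of Lemma~\ref{Lm_fibre_of_cL_beta} but for general positions of the divisor. An alternative, possibly cleaner route bypasses the Zastava identification by using the Hecke action of $\check{T}_\zeta$ on $\Whit^\kappa_x$ introduced in Section~\ref{section_H_functors_for_FS}: for $\nu\in\Lambda^\sharp$ dominant and sufficiently regular the corresponding exact endofunctor $H_\nu$ should satisfy $H_\nu(\cF_{x,\lambda})\iso\cF_{x,\lambda+\nu}$ for every $\lambda\in\Lambda^+$, reducing the general case of the conjecture to Proposition~\ref{Pp_reformulating_subtop_coh_property}(iii) after a shift. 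Establishing the last identity for non-basic $\lambda$ is, however, a non-trivial compatibility that appears to presuppose at least some cases of the very Ext-vanishing we wish to prove, so breaking this circularity — by induction on $\lambda$, or by leveraging the Lusztig--Steinberg tensor structure of the $\check{G}_\zeta$-Hecke action — is where the real work would lie.
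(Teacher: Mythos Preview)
This statement is presented in the paper as a \emph{conjecture}, not a theorem; no proof is offered. It is proposed as a natural strengthening of Proposition~\ref{Pp_reformulating_subtop_coh_property}(iii), which only treats the case where the larger coweight lies in $\Lambda^{\sharp}$ and is deep in the dominant chamber.

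Your first step (reducing the $\Ext^1$-vanishing to the stalk bound $j^*_{x,\mu}\cF_{x,\mu'}\in{}^{p}\D^{\le -2}$) matches what the paper calls ``clear'' at the start of Appendix~B, so that is fine. The real gap is in your second step, and it is more serious than the twist-matching issue you flag. The complex (\ref{complex_main}) does \emph{not} compute the stalk $j^*_{x,\mu}\cF_{x,\mu'}$ for arbitrary $\mu'$. In Appendix~B that identification goes through the Hecke realization $\cF_{x,\mu'}\iso\H^{\ra}_G(\cA^{\mu'}_{\cE},\cF_{\emptyset})$ of Theorem~\ref{Th_Hecke_action}(ii), which requires $\mu'\in\Lambda^{\sharp,+}$; the stalk then becomes the Casselman--Shalika complex (\ref{complex_CSh_limit}), whose contributions from the lower strata of $\ov{\Gr}_G^{w_0(\eta-\lambda)}$ are controlled using the parity and depth properties of $\cA^{w_0(\eta-\lambda)}_{\cE}$, leaving only the open stratum---which is (\ref{complex_main})---visible in degrees $\ge -1$. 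For a general $\mu'\notin\Lambda^{\sharp}$ there is no Hecke description of $\cF_{x,\mu'}$, and the Zastava model (cf.\ the proof of Proposition~\ref{Pp_inductive_structure}) gives the stalk as the fibre of the \emph{IC sheaf} $F^{\mu}(\cF_{x,\mu'})$ on $\wt\cZ^{\mu}_{x,\le\mu'}$, not the compactly supported cohomology of the open fibre with coefficients in a rank-one local system. The subtop cohomology property bounds the latter, not the former.

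Your alternative route also contains a factual error: Section~\ref{section_H_functors_for_FS} defines the $\Rep(\check{T}_{\zeta})$-action on $\wt\FS^{\kappa}_x$, not on $\Whit^{\kappa}_x$. The only Hecke action on $\Whit^{\kappa}_x$ in the paper is by $\Rep(\check{G}_{\zeta})$, and the shift formula $\H^{\ra}_G(\cA^{\gamma}_{\cE},\cF_{x,\lambda})\iso\cF_{x,\lambda+\gamma}$ is established only for $\lambda\in\bar\cM$ (Theorem~\ref{Thm_Lusztig-Steinberg_analog}); for arbitrary $\lambda$ this is precisely the sort of statement the paper does not know how to prove, and as you observe, assuming it here would be circular.
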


\section{The FS category}
\label{Section_The FS category}

\subsection{} The definition of the category of factorizable sheaves from (\cite{G}, Section~3) extends to our setting, we give a detailed exposition for the convenience of the reader.

 For a partition $n=n_1+n_2$, $\mu=\mu_1+\mu_2$ with $\mu_i\in\Lambda$, let 
$$
\add_{\mu_1,\mu_2}: X^{\mu_1}_{n_1}\times X^{\mu_2}_{n_2}\to X^{\mu}_n
$$ 
be the addition map. Given $n_1$-tuple $\bar\lambda_1$, $n_2$-tuple $\bar\lambda_2$ of coweights let
$$
(X^{\mu_1}_{n_1,\le\bar\lambda_1}\times X^{\mu_2}_{n_2,\le\bar\lambda_2})_{disj}
$$
be the open part of the product given by the property that the supports of the two divisors do not intersect. The restriction of $\add_{\mu_1,\mu_2}$ to the above scheme is an \'etale map to $X^{\mu}_{n, \le \bar\lambda_1\cup\bar\lambda_2}$. 
 
 Recall the line bundles $\cP^{\bar\kappa}$ from Sections~\ref{Sec_2.1.}, \ref{Section_151}.
 From Lemma~\ref{Lm_fibre_of^omega_cL_bar_kappa} we obtain the following factorization property
\begin{equation}
\label{eq_factorization_cP}
\add^*_{\mu_1,\mu_2}\cP^{\bar\kappa}\mid_{(X^{\mu_1}_{n_1,\le\bar\lambda_1}\times X^{\mu_2}_{n_2,\le\bar\lambda_2})_{disj}}\,\iso\,
\cP^{\bar\kappa}\boxtimes\cP^{\bar\kappa}\mid_{(X^{\mu_1}_{n_1,\le\bar\lambda_1}\times X^{\mu_2}_{n_2,\le\bar\lambda_2})_{disj}}
\end{equation}
compatible with refinements of partitions.
\index{$\add_{\mu_1,\mu_2}$}\index{$(X^{\mu_1}_{n_1,\le\bar\lambda_1}\times X^{\mu_2}_{n_2,\le\bar\lambda_2})_{disj}$} 
\index{$(X^{\mu_1}\times X^{\mu_2}_n)_{disj}$}\index{$\add_{\mu_1,\mu_2, disj}$}
 
 Let $(X^{\mu_1}\times X^{\mu_2}_n)_{disj}$ denote the ind-subscheme of $X^{\mu_1}\times X^{\mu_2}_n$ consisting of points 
$$
(D_1\in X^{\mu_1}, (\bar x, D_2)\in X^{\mu_2}_n)
$$
such that $D_1$ is disjoint from both $\bar x$ and $D_2$. Let
$
\add_{\mu_1,\mu_2, disj}:  (X^{\mu_1}\times X^{\mu_2}_n)_{disj}\to X^{\mu}_n
$
denote the restriction of $\add_{\mu_1,\mu_2}$. For a $n$-tuple $\bar\lambda$
the restriction is \'etale
$$
\add_{\mu_1,\mu_2, disj}:  (X^{\mu_1}\times X^{\mu_2}_{n, \le \bar\lambda})_{disj}\to X^{\mu}_{n, \le \bar\lambda}.
$$ 
Over $(X^{\mu_1}\times X^{\mu_2}_n)_{disj}$ we get an isomorphism
\begin{equation}
\label{iso_factorization_cP_disj}
\add_{\mu_1,\mu_2, disj}^*\cP^{\bar\kappa}\,\iso\, \cP^{\bar\kappa}\boxtimes\cP^{\bar\kappa}
\end{equation}

\ssec{} 
\label{section_2.4}

 For $\mu\in -\Lambda^{pos}$ let $\oX{}^{\mu}\subset X^{\mu}$ be the open subscheme classifying divisors of the form $D=\sum_k \mu_k y_k$ with $y_k$ pairwise different and each $\mu_k$ being a minus simple coroot. Denote by $j^{diag}: \oX{}^{\mu}\subset X^{\mu}$ the open immersion. 

 If $\alpha$ is a simple coroot then $\bar\kappa(-\alpha, -\alpha+2\rho)=0$. Therefore, $\cP^{\bar\kappa}\mid_{\oX{}^{\mu}}$ is canonically trivialized. We get a canonical equivalence 
$$
\Perv(\oX{}^{\mu})\,\iso\, \Perv_{\zeta}(\oX{}^{\mu})
$$ 
Let $\ocL{}^{\mu}_{\emptyset}\in \Perv_{\zeta}(\oX{}^{\mu})$ be the object corresponding via the above equivalence to the sign local system on $\oX{}^{\mu}$. If $\mu=-\sum m_i\alpha_i$ with $m_i\ge 0$ then the sign local system on $\oX{}^{\mu}$ is by definition the product of sign local systems on $\oX{}^{(m_i)}$ for all $i$. Set
$$
\cL^{\mu}_{\emptyset}=j^{diag}_{!*}(\ocL{}^{\mu}_{\emptyset}),
$$
the intermediate extension being taken in $\Perv_{\zeta}(X^{\mu})$. 
\index{$\oX{}^{\mu}, j^{diag}$}\index{$\Perv_{\zeta}(\oX{}^{\mu})$}\index{$\ocL{}^{\mu}_{\emptyset}, \cL^{\mu}_{\emptyset}$}

 Note that for $\mu=\mu_1+\mu_2$ with $\mu_i\in -\Lambda^{pos}$ we have a canonical isomorphism
\begin{equation}
\label{fact_isom_for_cL^mu_emptyset}  
\add^*_{\mu_1,\mu_2,disj}(\cL^{\mu}_{\emptyset})\,\iso\, \cL^{\mu_1}_{\emptyset}\boxtimes \cL^{\mu_2}_{\emptyset}
\end{equation}
  
\ssec{} 
As in (\cite{G}, Section~3.5), we first define $\wt \FS^{\kappa}_n$ as the category, whose objects are collections $\cL^{\mu}_n\in \Perv_{\zeta}(X^{\mu}_n)$ for each $\mu\in \Lambda$ equipped with the factorization isomorphisms: for any partition $\mu=\mu_1+\mu_2$ with $\mu_2\in\Lambda$, $\mu_1\in -\Lambda^{pos}$ for the map
$$
\add_{\mu_1,\mu_2, disj}: (X^{\mu_1}\times X^{\mu_2}_n)_{disj}\to X^{\mu}_n
$$
we must be given an isomorphism
\begin{equation}
\label{iso_for_def_tilde_FS}
\add_{\mu_1,\mu_2, disj}^*\cL^{\mu}_n\,\iso\, \cL^{\mu_1}_{\emptyset}\boxtimes \cL^{\mu_2}_n
\end{equation}
compatible with refinements of partitions with respect to (\ref{fact_isom_for_cL^mu_emptyset}). 
\index{$\wt\FS^{\kappa}_n$}\index{$\cL^{\mu}_n$}\index{$(X^{\mu_0}\times X^{\mu_1}\times X^{\mu_2}_n)_{disj}$}

 For $\mu_0, \mu_1\in -\Lambda^{pos}, \mu_2\in\Lambda$ let $(X^{\mu_0}\times X^{\mu_1}\times X^{\mu_2}_n)_{disj}$ be the open subscheme classifying $(D_0, D_1, x_1,\ldots, x_n, D_2)\in X^{\mu_0}\times X^{\mu_1}\times X^{\mu_2}_n$ such that $D_0, D_1$ are mutually disjoint and disjoint with $\bar x, D_2$. Compatibility with refinements of partitions means that for $\mu=\mu_1+\mu_2$ the diagram
$$
\begin{array}{ccc}
(X^{\mu_0}\times X^{\mu_1}\times X^{\mu_2}_n)_{disj} & \to & (X^{\mu_0+\mu_1}\times X^{\mu_2}_n)_{disj}\\
\downarrow && \downarrow\\
(X^{\mu_0}\times X^{\mu}_n)_{disj} & \to & X^{\mu_0+\mu}_n
\end{array}
$$
yields the commutative diagram of isomorphisms over $(X^{\mu_0}\times X^{\mu_1}\times X^{\mu_2}_n)_{disj}$
$$
\begin{array}{ccc}
\cL^{\mu_0+\mu}_n & \iso &  \cL^{\mu_0}_{\emptyset}\boxtimes \cL^{\mu}_n\\
\downarrow && \downarrow\\
\cL^{\mu_0+\mu_1}_{\emptyset}\boxtimes \cL^{\mu_2}_n & \toup{(\ref{fact_isom_for_cL^mu_emptyset})} & \cL^{\mu_0}_{\emptyset}\boxtimes \cL^{\mu_1}_{\emptyset}\boxtimes \cL^{\mu_2}_n,
\end{array}
$$
where to simplify the notations we omited the corresponding functors $\add^*$.  

 A morphism from a collection $\{^1\cL^{\mu}_n\}$ to another collection $\{^2\cL^{\mu}_n\}$ is a collection of maps $^1\cL^{\mu}_n\to {^2\cL^{\mu}_n}$ in $\Perv_{\zeta}(X^{\mu}_n)$ compatible with the isomorphisms (\ref{iso_for_def_tilde_FS}). 
  
  Let $j^{poles}: \dot{X}^n\hook{} X^n$ be the complement to all the diagonals. For $\mu\in\Lambda$ set $X^{\mu}_{\dot{n}}=X^{\mu}_n\times_{X^n} \dot{X}^n$. 
  By the same token, one defines the category $\wt\FS^{\kappa}_{\dot{n}}$ consisting of collections $\cL^{\mu}_n\in \Perv_{\zeta}(X^{\mu}_{\dot{n}})$ with factorization isomorphisms. Both $\wt \FS^{\kappa}_n$ and $\wt\FS^{\kappa}_{\dot{n}}$ are abelian categories.
  
  We have the restriction functor
$(j^{poles})^*: \wt \FS^{\kappa}_n\to \wt \FS^{\kappa}_{\dot{n}}$ and its left adjoint
$$
j^{poles}_!: \wt \FS^{\kappa}_{\dot{n}}\to \wt \FS^{\kappa}_n
$$
well-defined because $j^{poles}$ is an affine open embedding.
\index{$j^{poles}: \dot{X}^n\hook{} X^n, \; X^{\mu}_{\dot{n}}$}\index{$\wt\FS^{\kappa}_{\dot{n}}, \; \vartriangle_{\bar n}, \dot{\vartriangle}_{\bar n}$}

 If $\bar n=n_1+\ldots+n_k$ is a partition of $n$, let $\vartriangle_{\bar n}: X^k\to X^n$ and $\dot{\vartriangle}_{\bar n}:\dot{X}^k\to X^n$ be the corresponding diagonal and its open subscheme. We have the natural functors
$$
(\vartriangle_{\bar n})_!: \wt\FS^{\kappa}_k\to \wt\FS^{\kappa}_n\;\;\;\;\mbox{and}\;\;\;\; (\dot{\vartriangle}_{\bar n})_!: \wt\FS^{\kappa}_{\dot{k}}\to \wt\FS^{\kappa}_n
$$
The corresponding restriction functors are well-defined on the level of derived categories (the latter are understood as the derived categories of the corresponding abelian categories):
$$
(\vartriangle_{\bar n})^*: \D(\wt\FS^{\kappa}_n)\to \D(\wt\FS^{\kappa}_k)\;\;\;\;\mbox{and}\;\;\;\; (\dot{\vartriangle}_{\bar n})^*: \D(\wt\FS^{\kappa}_n)\to \D(\wt\FS^{\kappa}_{\dot{k}})
$$
They coincide with the same named functors on the level of derived categories of $\Qlb$-sheaves on the corresponding gerbes.

\ssec{} For a $k$-scheme $Y$ and $F\in \D(Y)$ we denoted by $\SiSu(F)$ the singular support of $F$ in the sense of Beilinson \cite{Be}. Define the full subcategory $\FS^{\kappa}_n\subset \wt\FS^{\kappa}_n$ as follows. A collection $\cL_n\in \wt\FS^{\kappa}_n$ lies in $\FS^{\kappa}_n$ if the following conditions are satisifed:

\begin{itemize}
\item[(i)] $\cL^{\mu}_n$ may be nonzero only for $\mu$ belonging to finitely many cosets in $\pi_1(G)$. For each $\tau\in \pi_1(G)$ there is a collection $\bar\nu=(\nu_1,\ldots,\nu_n)\in\Lambda^n$ with $\sum_i \nu_i=\tau\in \pi_1(G)$ such that for any $\mu\in\Lambda$ over $\tau$ the perverse sheaf $\cL^{\mu}_n$ is the extension by zero from $X^{\mu}_{n,\le\bar\nu}$. 

\item[(ii)] The second condition is first formulated over $\dot{X}^n$, that is, we first define the subcategory $\FS^{\kappa}_{\dot{n}}\subset \wt\FS^{\kappa}_{\dot{n}}$. Let $\cL_{\dot{n}}\in\wt\FS^{\kappa}_{\dot{n}}$, $\mu\in \Lambda$ and $\bar\nu\in\Lambda^n$ with $\sum_i\nu_i=\mu\in \pi_1(G)$ such that $\cL^{\mu}_{\dot{n}}$ is the extension by zero from $\wt X^{\mu}_{\dot {n}, \le \bar\nu}$. Then there are only finitely many collections $(\mu_1,\ldots,\mu_n)\in\Lambda^n$ with $\sum_i \mu_i=\mu$ such that $\SiSu(\cL^{\mu}_{\dot{n}})$ contains the conormal to the subscheme $\dot{X}^n\hook{} X^{\mu}_{\dot {n}, \le \bar\nu}$, $(x_1,\ldots, x_n)\mapsto \sum_i \mu_i x_i$. 

 Now the condition (ii) over $X^n$ is that for any partition $n=n_1+\ldots+n_k$ each of the cohomologies of $(\dot{\vartriangle}_{\bar n})^*(\cL_n)$, which is an object of $\wt\FS^{\kappa}_{\dot{k}}$, belongs to $\FS^{\kappa}_{\dot{n}}$. 
\end{itemize}
\index{$\SiSu(F), \FS^{\kappa}_n$}

\ssec{} For $\bar x=(x_1,\ldots, x_n)\in X^n$ fixed let $X^{\mu}_{\bar x}$ denote the fibre of $X^{\mu}_n$ over $\bar x\in X^n$. In a similar way, one introduces the abelian category $\wt\FS^{\kappa}_{\bar x}$. We define $\FS^{\kappa}_{\bar x}$ as the full subcategory of objects of finite length in $\wt\FS^{\kappa}_{\bar x}$. As in Section~\ref{section_2.4}, one defines the category $\Perv_{\zeta}(X^{\mu}_{\bar x})$. 
\index{$X^{\mu}_{\bar x}$}\index{$\wt\FS^{\kappa}_{\bar x}$}\index{$\FS^{\kappa}_{\bar x}, \Perv_{\zeta}(X^{\mu}_{\bar x})$}

 Pick $\bar x\in X^n$ with $x_i$ pairwise distinct. Let $\bar\lambda=(\lambda_1,\ldots,\lambda_n)$ be a $n$-tuple of elements of $\Lambda$. For $\mu\in \Lambda$ with $(\sum_i\lambda_i)-\mu\in\Lambda^{pos}$ consider the closed subscheme $X^{\mu}_{\bar x,\le\bar\lambda}=X^{\mu}_{\bar x}\cap X^{\mu}_{n,\le\bar\lambda}$. Let $X^{\mu}_{\bar x, =\bar\lambda}\subset X^{\mu}_{\bar x,\le\bar\lambda}$ be the open subscheme classifying divisors of the form
$$
(\sum_{i=1}^n \lambda_ix_i)-D',
$$
where $D'$ is $\Lambda^{pos}$-valued divisor on $X$ of degree $(\sum_i \lambda_i)-\mu$, and $x_i$ is not in the support of $D'$ for any $1\le i\le n$. One similarly defines the categories $\Perv_{\zeta}(X^{\mu}_{\bar x,\le\bar\lambda})$ and $\Perv_{\zeta}(X^{\mu}_{\bar x, =\bar\lambda})$. 
Let 
$$
\oX{}^{\mu}_{\bar x,\le\bar\lambda}\subset X^{\mu}_{\bar x, =\bar\lambda}
$$ 
be the open subscheme given by requiring that $D'$ is of the form $D'=\sum \mu_ky_k$, where $y_k$ are pairwise distinct, and each $\mu_k$ is a simple coroot of $G$. Here, of course, $y_i$ is different from all the $x_i$. 
Denote the corresponding open immersions by
$$
\oX{}^{\mu}_{\bar x,\le\bar\lambda}\;\toup{'j^{poles}}\;  X^{\mu}_{\bar x, =\bar\lambda}\;\toup{''j^{poles}}\; X^{\mu}_{\bar x,\le\bar\lambda}
$$
\index{$X^{\mu}_{\bar x,\le\bar\lambda}, X^{\mu}_{\bar x, =\bar\lambda}$}\index{$\Perv_{\zeta}(X^{\mu}_{\bar x,\le\bar\lambda}), \Perv_{\zeta}(X^{\mu}_{\bar x, =\bar\lambda})$}
\index{$\oX{}^{\mu}_{\bar x,\le\bar\lambda},  \; {'j^{poles}}, {''j^{poles}}$}

\begin{Lm} The restriction of $\cP^{\bar\kappa}$ to $\oX{}^{\mu}_{\bar x,\le\bar\lambda}$ is trivial with fibre
\begin{equation}
\label{line_for_Lm271}
\otimes_{i=1}^n (\Omega_x^{\frac{1}{2}})^{-\bar\kappa(\lambda_i, \lambda_i+2\rho)}\otimes\epsilon^{\bar\lambda_i},
\end{equation}
where $\bar\lambda_i\in\Lambda_{ab}$ is the image of $\lambda_i$.
\end{Lm}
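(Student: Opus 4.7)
The plan is to apply Lemma~\ref{Lm_fibre_of^omega_cL_bar_kappa} pointwise and then check that the local contributions at the ``moving'' points $y_k$ are canonically trivial, leaving only the contributions at the fixed points $x_i$.

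Concretely, a $k$-point of $\oX{}^{\mu}_{\bar x,\le\bar\lambda}$ corresponds to a divisor of the form
\[
D = \sum_{i=1}^n \lambda_i x_i - \sum_{k} \alpha_{j(k)}\, y_k,
\]
where $\alpha_{j(k)}$ are simple coroots and $y_k$ are pairwise distinct, all distinct from the $x_i$. By Lemma~\ref{Lm_fibre_of^omega_cL_bar_kappa} applied to $AJ(\bar x, D) = \Omega^{\rho}(-D)$, the fibre of $\cP^{\bar\kappa}$ at this point is the tensor product
\[
\Bigl(\bigotimes_{i=1}^n (\Omega^{\tfrac12}_{x_i})^{-\bar\kappa(\lambda_i,\,\lambda_i+2\rho)}\otimes\epsilon^{\bar\lambda_i}\Bigr)
\otimes
\Bigl(\bigotimes_k (\Omega^{\tfrac12}_{y_k})^{-\bar\kappa(-\alpha_{j(k)},\,-\alpha_{j(k)}+2\rho)}\otimes\epsilon^{\overline{-\alpha_{j(k)}}}\Bigr).
\]

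The key point is that the second tensor product is canonically trivial. First, each $\alpha_{j(k)}$ is a coroot of $[G,G]$, so its image in $\Lambda_{ab}$ is zero, and $\epsilon^0$ is canonically $k$. Second, for any simple coroot $\alpha$ one has $\bar\kappa(-\alpha,-\alpha+2\rho)=\bar\kappa(\alpha,\alpha)-2\bar\kappa(\alpha,\rho)=0$. Indeed, since $\beta$ factors through $\Lambda_{ab}$ and $\bar\alpha=0$ there, we have $\beta(\alpha,-)=0$, so by (\ref{def_bar_kappa}) it suffices to verify $\kappa_j(\alpha,\alpha)=2\kappa_j(\alpha,\rho)$ for each $j\in J$; and this is the standard $W$-invariance applied to $s_\alpha$, using $s_\alpha(\alpha)=-\alpha$ and $s_\alpha(\rho)=\rho-\alpha$, already invoked in Section~\ref{section_2.4}.

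Thus the fibre of $\cP^{\bar\kappa}$ over every geometric point of $\oX{}^{\mu}_{\bar x,\le\bar\lambda}$ is canonically identified with (\ref{line_for_Lm271}), which depends only on the fixed data $(\bar x,\bar\lambda)$. Finally, these pointwise trivializations glue into a trivialization of $\cP^{\bar\kappa}\mid_{\oX{}^{\mu}_{\bar x,\le\bar\lambda}}$: by the factorization isomorphism (\ref{iso_factorization_cP_disj}) applied with $\mu_1 = \mu - \sum_i\lambda_i$ and $\mu_2 = \sum_i\lambda_i$, the restriction of $\cP^{\bar\kappa}$ to $\oX{}^{\mu}_{\bar x,\le\bar\lambda}$ factors as the external product of $\cP^{\bar\kappa}$ on $\oX{}^{\mu-\sum_i\lambda_i}$ (which is canonically trivial by the discussion in Section~\ref{section_2.4}) and the fibre of $\cP^{\bar\kappa}$ at the fixed $T$-bundle $\Omega^{\rho}(-\sum_i\lambda_i x_i)$, which is exactly (\ref{line_for_Lm271}). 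No real obstacle arises; the only thing to keep track of is the vanishing of $\bar\kappa(\alpha,\alpha+2\rho)$ for a simple coroot, which has already been recorded.
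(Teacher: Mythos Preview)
Your proof is correct and follows essentially the same approach as the paper: apply Lemma~\ref{Lm_fibre_of^omega_cL_bar_kappa} and use the vanishing $\bar\kappa(-\alpha,-\alpha+2\rho)=0$ for a simple coroot $\alpha$. The paper's proof is just these two sentences; you have simply spelled out the details (the reason for the vanishing, and the factorization argument ensuring the pointwise trivializations glue), all of which are implicit in the paper's setup.
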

\begin{proof}
If $\alpha$ is a simple coroot then $\bar\kappa(-\alpha, -\alpha+2\rho)=0$. Now apply Lemma~\ref{Lm_fibre_of^omega_cL_bar_kappa}. 
\end{proof}  
  
  If $(\sum_i \lambda_i)-\mu=\sum_{j\in \cJ} m_j\alpha_j$ then $\prod_{j\in\cJ} X^{(m_j)}\,\iso\, X^{\mu-\sum_i\lambda_i}$ via the map sending $\{D_j\}_{j\in \cJ}$ to $-\sum_{j\in \cJ} D_j\alpha_j$. 
  
  We have an open immersion $j^{\mu}_{\bar\lambda}: X^{\mu}_{\bar x, =\bar\lambda}\hook{} X^{\mu-\sum_i\lambda_i}$ sending $D$ to $D-\sum_{i=1}^n \lambda_i x_i$. The line bundle $\cP^{\bar\kappa}$ over $X^{\mu}_{\bar x, =\bar\lambda}$ identifies with the tensor product of $(j^{\mu}_{\bar\lambda})^*\cP^{\bar\kappa}$ with (\ref{line_for_Lm271}).
So, for any trivialization of the line (\ref{line_for_Lm271}), we get the restriction functor 
$$
\Perv_{\zeta}(X^{\mu-\sum_i\lambda_i})\to \Perv_{\zeta}(X^{\mu}_{\bar x, =\bar\lambda})
$$ 
We denote by $\ocL{}^{\mu}_{\bar x,\bar\lambda}$ the image of $\cL^{\mu-\sum_i\lambda_i}_{\emptyset}$ under the latter functor. So, $\ocL{}^{\mu}_{\bar x,\bar\lambda}$ is defined up to a non-unique scalar automorphism. 
Set
$$
\cL^{\mu}_{\bar x,\bar\lambda, !}={''j^{poles}_!} (\ocL{}^{\mu}_{\bar x,\bar\lambda}),\;\;\;\;\;\; \cL^{\mu}_{\bar x,\bar\lambda}={''j^{poles}_{!*}} (\ocL{}^{\mu}_{\bar x,\bar\lambda}),\;\;\;\;\;\; \cL^{\mu}_{\bar x,\bar\lambda, *}={''j^{poles}_*} (\ocL{}^{\mu}_{\bar x,\bar\lambda})
$$
\index{$j^{\mu}_{\bar\lambda}, \;\ocL{}^{\mu}_{\bar x,\bar\lambda}$}%
\index{$\cL^{\mu}_{\bar x,\bar\lambda, "!},  \;\cL^{\mu}_{\bar x,\bar\lambda}, \;\cL^{\mu}_{\bar x,\bar\lambda, *}$}
Define the collection $\cL_{\bar x, \bar\lambda, !}=\{\cL_{\bar x, \bar\lambda, !}^{\mu}\}_{\mu\in \Lambda}$ by the property
$$
\cL_{\bar x, \bar\lambda, !}^{\mu}=\left\{
\begin{array}{cl} 
\cL^{\mu}_{\bar x,\bar\lambda, !}, & \mu\in (\sum_i \lambda_i)-\Lambda^{pos}\\
0, & \mbox{otherwise}
\end{array}
\right.
$$
It is understood that we use the same trivialization of (\ref{line_for_Lm271}) for all $\mu$ in the above formula. One similarly defines the collections $\cL_{\bar x, \bar\lambda}, \cL_{\bar x, \bar\lambda, *}$. All the three are objects of $\wt\FS^{\kappa}_{\bar x}$. 
\index{$\cL_{\bar x, \bar\lambda, "!}, \;\cL_{\bar x, \bar\lambda}, \;\cL_{\bar x, \bar\lambda, *}$}

\begin{Lm} i) For any irreducible object $F$ of $\wt\FS^{\kappa}_{\bar x}$ there is a collection $\bar\lambda\in\Lambda^n$ such that $F$ is isomorphic to $\cL_{\bar x, \bar\lambda}$. \\
ii) The kernels and cokernels of the natural maps
$$
\cL_{\bar x, \bar\lambda, !}\to \cL_{\bar x, \bar\lambda}\to \cL_{\bar x, \bar\lambda, *}
$$
in $\wt\FS^{\kappa}_{\bar x}$ are extensions of objects of the form $\cL_{\bar x, \bar\lambda'}$ for $\bar\lambda'<\bar\lambda$.
\end{Lm}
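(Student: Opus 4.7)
The plan is to adapt the argument of Lemma~\ref{Lm_irr_objects_of_Whit} to the factorization setting, exploiting that on the open stratum $\oX{}^\mu_{\bar x,\le\bar\lambda}$ the line bundle $\cP^{\bar\kappa}$ is canonically trivialized (since $\bar\kappa(-\alpha,-\alpha+2\rho)=0$ for any simple coroot $\alpha$), so the metaplectic twisting disappears and the factorization isomorphism rigidly pins down any irreducible object up to a scalar.

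For part (i), given an irreducible $F=\{F^\mu\}_{\mu\in\Lambda}$, I would first produce the desired tuple $\bar\lambda\in\Lambda^n$. Pick any $\mu_0$ with $F^{\mu_0}\ne 0$; the support of $F^{\mu_0}$ in $X^{\mu_0}_{\bar x}$ is a finite union of closures of strata $X^{\mu_0}_{\bar x,=\bar\lambda}$. Choose $\bar\lambda$ maximal (with respect to the partial order $\bar\lambda'<\bar\lambda$) such that $F^{\mu_0}|_{X^{\mu_0}_{\bar x,=\bar\lambda}}\ne 0$; the factorization isomorphism then transports the problem to the level $\mu=\sum_i\lambda_i$, where $X^{\mu}_{\bar x,=\bar\lambda}$ reduces to the single point $\sum_i\lambda_i x_i$ and $F^{\mu}$ acquires a non-zero stalk there.

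Next I analyze each $F^\mu$ with $\mu\le\sum_i\lambda_i$ on the open locus $\oX{}^\mu_{\bar x,\le\bar\lambda}$. The addition map exhibits this locus as an étale cover of a product whose second factor is the point $\sum_i\lambda_i x_i\in X^{\sum_i\lambda_i}_{\bar x}$, and the factorization isomorphism
$$
\add^*_{\mu-\sum_i\lambda_i,\sum_i\lambda_i,disj}F^{\mu}\;\iso\;\cL^{\mu-\sum_i\lambda_i}_{\emptyset}\boxtimes F^{\sum_i\lambda_i}
$$
together with the one-dimensionality of the stalk of $F^{\sum_i\lambda_i}$ at $\sum_i\lambda_i x_i$ (forced by irreducibility of $F$) yields $F^\mu|_{\oX{}^\mu_{\bar x,\le\bar\lambda}}\,\iso\,\ocL{}^\mu_{\bar x,\bar\lambda}$ after fixing a trivialization of the line (\ref{line_for_Lm271}). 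Applying $''j^{poles}_!$ and assembling over $\mu$ gives a nonzero morphism $\cL_{\bar x,\bar\lambda,!}\to F$ in $\wt\FS^\kappa_{\bar x}$; by irreducibility of $F$ and the intermediate-extension description of $\cL_{\bar x,\bar\lambda}$, this morphism factors through $\cL_{\bar x,\bar\lambda}$ and is an isomorphism.

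For part (ii), the morphisms $\cL_{\bar x,\bar\lambda,!}\to\cL_{\bar x,\bar\lambda}\to\cL_{\bar x,\bar\lambda,*}$ are isomorphisms over each $X^\mu_{\bar x,=\bar\lambda}$, so their kernels and cokernels are supported on the closed complement $X^\mu_{\bar x,\le\bar\lambda}\setminus X^\mu_{\bar x,=\bar\lambda}$. This complement is covered by the subschemes $X^\mu_{\bar x,\le\bar\lambda'}$ with $\bar\lambda'<\bar\lambda$, obtained by absorbing a simple coroot from the divisor $D'$ into one of the coefficients $\lambda_i$. A devissage along this stratification, combined with part (i) applied to the simple subquotients, gives the desired description.

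The main obstacle is the irreducibility-to-generic-identification step in part (i): one must show that the stalk of $F^{\sum_i\lambda_i}$ at $\sum_i\lambda_i x_i$ is genuinely one-dimensional and that this propagates under factorization to pin down $F^\mu$ on the whole open stratum. The key input is the vanishing $\bar\kappa(-\alpha,-\alpha+2\rho)=0$, which eliminates the twist on $\oX{}^\mu_{\bar x,\le\bar\lambda}$ and reduces the rigidity question to the untwisted classification of $\Gm$-equivariant perverse sheaves on a torsor, whose answer is the sign local system as introduced in Section~\ref{section_2.4}.
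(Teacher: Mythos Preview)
Your approach is correct and follows essentially the same strategy as the paper's proof, though you have spelled out considerably more detail. The paper's argument for part (i) is extremely terse: it picks $\bar\lambda$ so that the $*$-fibre of $F$ at $\sum_i\lambda_i x_i$ is nonzero and so that every $F^{\nu}$ (for $\nu$ in the same $\pi_1(G)$-coset) is the extension by zero from $X^{\nu}_{\bar x,\le\bar\lambda}$, then invokes the factorization property to conclude $F\iso\cL_{\bar x,\bar\lambda}$ in one line. Your construction of the explicit map $\cL_{\bar x,\bar\lambda,!}\to F$ via $''j^{poles}_!$ and the subsequent passage through the intermediate extension is a clean way to make that one line precise; the one-dimensionality of the stalk is indeed forced by irreducibility via exactly the surjectivity argument you outline. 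For part (ii), the paper gives no proof at all, and your d\'evissage along the boundary stratification $X^{\mu}_{\bar x,\le\bar\lambda}\setminus X^{\mu}_{\bar x,=\bar\lambda}=\bigcup_{\bar\lambda'<\bar\lambda}X^{\mu}_{\bar x,\le\bar\lambda'}$ combined with part (i) is the standard and correct way to supply it.
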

\begin{proof}
i) Let $\bar\lambda\in \Lambda^n$ be such that the $*$-fibre of $F$ at $\sum_{i=1}^n\lambda_ix_i\in X^{\mu}_{\bar x}$ is nonzero for some $\mu\in\Lambda$. We may assume (changing $\bar\lambda$ if necessary) that for any $\nu\in \Lambda$ with $\nu=\mu$ in $\pi_1(G)$ the twisted perverse sheaf $F^{\nu}\in \Perv_{\zeta}(X^{\nu}_{\bar x})$ is the extension by zero from $X^{\nu}_{\bar x, \le\lambda}$. Then from the factorization property we see that we must have $F\,\iso\, \cL_{\bar x, \bar\lambda}$.
\end{proof}

\begin{Lm} Let $\bar x=(x_1,\ldots, x_n)$ with $x_i$ pairwise different, $\bar\lambda\in\Lambda^n$. Then the objects $\cL_{\bar x,\bar\lambda, !}$, $\cL_{\bar x,\bar\lambda, *}\in \wt \FS^{\kappa}_{\bar x}$ are of finite length.
\end{Lm}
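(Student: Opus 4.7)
The plan is to reduce finite length to a perverse-sheaf analysis on finite-type schemes, combined with the factorization structure to bound the set of composition factors in $\wt\FS^{\kappa}_{\bar x}$. Since ${''j^{poles}}$ is an affine open embedding on each $\mu$-component, the perverse $!$-extension yields a surjection $\cL_{\bar x,\bar\lambda,!}\twoheadrightarrow\cL_{\bar x,\bar\lambda}$ in $\wt\FS^{\kappa}_{\bar x}$. Let $K$ denote its kernel. By the preceding lemma (part ii), the composition factors of $K$ are of the form $\cL_{\bar x,\bar\lambda'}$ with $\bar\lambda'<\bar\lambda$. Since $\cL_{\bar x,\bar\lambda}$ is simple, it suffices to bound the length of $K$; the statement for $\cL_{\bar x,\bar\lambda,*}$ then follows by Verdier duality, which swaps $\psi$ with $\psi^{-1}$ and $\zeta$ with $\zeta^{-1}$.

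A simple object $\cL_{\bar x,\bar\lambda'}\in\wt\FS^{\kappa}_{\bar x}$ is determined by its top $\mu$-component, the 1-dimensional skyscraper at $\sum_i\lambda'_ix_i\in X^{\sum_i\lambda'_i}_{\bar x}$. Hence the multiplicity $[\cL_{\bar x,\bar\lambda,!}:\cL_{\bar x,\bar\lambda'}]$ equals the multiplicity of this skyscraper IC as a composition factor of the perverse sheaf $\cL^{\sum_i\lambda'_i}_{\bar x,\bar\lambda,!}$ on the finite-type scheme $X^{\sum_i\lambda'_i}_{\bar x,\le\bar\lambda}\,\iso\, X^{\sum_i\lambda'_i-\sum_i\lambda_i}$, a product of symmetric powers of $X$. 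As a perverse sheaf on a finite-type scheme, this has finite Jordan--H\"older length, so the multiplicity is finite for each $\bar\lambda'$.

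It remains to bound the set of $\bar\lambda'$ with nonzero multiplicity. For each fixed $\mu$, the set of $\bar\lambda'\le\bar\lambda$ with $\sum_i\lambda'_i=\mu$ is finite, since such $\bar\lambda'$ correspond to decompositions $\sum_i(\lambda_i-\lambda'_i)=\sum_i\lambda_i-\mu$ into $n$ elements of $\Lambda^{pos}$. To control which $\mu$'s contribute, one uses the factorization isomorphism (\ref{iso_for_def_tilde_FS}): over the disjoint locus in $X^\mu_{\bar x}$, one has $\cL^\mu_{\bar x,\bar\lambda,!}\,\iso\,\cL^{\mu-\sum_i\lambda_i}_\emptyset\boxtimes\cL^{\sum_i\lambda_i}_{\bar x,\bar\lambda,!}$ with the second factor a 1-dimensional skyscraper. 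Composition factors arising from the disjoint locus are then read off from the Jordan--H\"older structure of $\cL^{\mu-\sum_i\lambda_i}_\emptyset$ on $X^{\mu-\sum_i\lambda_i}$, while composition factors arising from collisions of the free divisor with $\bar x$ occur only in bounded codimension.

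The hardest step will be the precise bookkeeping of the non-disjoint contributions: one must verify that arbitrarily deep collisions of the free divisor with the points $\bar x$ do not produce genuinely new composition factors in $\wt\FS^{\kappa}_{\bar x}$, keeping the set of relevant $\bar\lambda'$'s finite. Once this is done, the combination of the per-stratum finite-length bound with the global finiteness of the index set gives the desired conclusion.
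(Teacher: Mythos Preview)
Your approach differs substantially from the paper's, and as written it has a genuine gap.

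The paper does not attempt a direct Jordan--H\"older analysis at all. Instead it observes that the line bundle $\cP^{\bar\kappa}$ on $X^{\mu}_{\bar x,\le\bar\lambda}$ differs from the line bundle $\cP^{\tilde\kappa}$ (built from $\tilde\kappa=-\sum_j c_j\kappa_j$, i.e.\ with $\beta=0$) by a \emph{trivial} line bundle. Hence the twisted categories $\Perv_{\zeta}(X^{\mu}_{\bar x,\le\bar\lambda})$ for the two twists are equivalent, and the finite-length assertion reduces to the case $\beta=0$, which is already proved as \cite[Lemma~3.8(b)]{G}. This is a two-line reduction to a known result.

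Your direct route is reasonable in spirit, and the first half is fine: you correctly identify that $[\cL_{\bar x,\bar\lambda,!}:\cL_{\bar x,\bar\lambda'}]$ is read off from a single finite-type component, and that for each fixed $\mu$ only finitely many $\bar\lambda'$ with $\sum_i\lambda'_i=\mu$ can appear. But the crux --- bounding the set of $\mu$'s, equivalently the set of $\bar\lambda'$, that actually contribute --- is precisely what you label ``the hardest step'' and then do not do. The factorization isomorphism over the disjoint locus tells you only that on that locus the object is the irreducible $\cL^{\mu-\sum_i\lambda_i}_{\emptyset}$ times a skyscraper, so no new factors arise there. All potential factors therefore come from collision strata, but there are infinitely many such strata as $\mu$ ranges over $(\sum_i\lambda_i)-\Lambda^{pos}$, and nothing in your argument rules out contributions from arbitrarily deep collisions. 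The sentence ``composition factors arising from collisions \ldots\ occur only in bounded codimension'' is exactly the unproven assertion; it is not a consequence of the preceding lines. Without this bound, you have shown only that each individual composition multiplicity is finite, not that there are finitely many composition factors altogether.

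If you want to make your approach work, you would need a concrete stalk estimate for $\cL^{\nu}_{\emptyset}$ at points of the form $\nu x$ (or equivalently a vanishing statement for its $*$-restrictions to deep diagonal strata) sufficient to show that only finitely many $\bar\lambda'$ yield a nonzero skyscraper contribution. This is nontrivial input about the twisted IC sheaves and is essentially what Gaitsgory's argument supplies in the $\beta=0$ case; the paper's reduction lets one inherit it rather than reprove it.
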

\begin{proof}
Set $\tilde\kappa=-\sum_{j\in J} c_j\kappa_j$. Write $D\in X^{\mu}_{\bar x, \le\bar\lambda}$ as $D=(\sum_{y\in X} \mu_y y)+\sum_{i=1}^n \lambda_i x_i$ with $\mu_y\in -\Lambda^{pos}$ for all $y\in X$. Denote by $\cP^{\tilde\kappa}$ the line bundle on $X^{\mu}_{\bar x, \le\bar\lambda}$ whose fibre at the above point $D$ is
$$
\otimes_{y\in X} (\Omega_y^{\frac{1}{2}})^{-\tilde\kappa(\mu_y, \mu_y+2\rho)}
$$
The line bundle $\cP^{\bar\kappa}\otimes (\cP^{\tilde\kappa})^{-1}$ on the scheme $X^{\mu}_{\bar x, \le\bar\lambda}$ is trivial. So, it suffices to prove our claim under the assumption $\beta=0$. The latter is done in (\cite{G}, Lemma~3.8(b)).
\end{proof}

\section{Zastava spaces}

\ssec{} 
\label{section_4.1_Zastava}
Our purpose is to construct an exact functor $\Whit^{\kappa}_n\to \wt \FS^{\kappa}_n$. We first adopt the approach from (\cite{G}, Section~4) to our setting, it produces an approximation of the desired functor. We will further correct it to get the desired one. 
 
 For $\mu\in \Lambda$ let $\Bun_{B^-}^{\mu}$ denote the connected component of $\Bun_{B^-}$ classifying $B^-$-torsors on $X$ such that the induced $T$-torsor is of degree $(2g-2)\rho-\mu$. Recall that a point of $\Bun_{B^-}^{\mu}$ can be seen as a collection: a $G$-torsor $\cF$ on $X$, a $T$-torsor $\cF_T$ on $X$ of degree $(2g-2)\rho-\mu$, a collection of surjective maps of coherent sheaves
$$
\kappa^{\check{\lambda},-}: \cV^{\check{\lambda}}_{\cF}\to \cL^{\check{\lambda}}_{\cF_T}, \;\; \check{\lambda}\in \check{\Lambda}^+
$$
satisfying the Pl\"ucker relations. Define $\gp^-, \gq^-$ as the projections in the diagram
$$
\Bun_G\getsup{\gp^-} \Bun_{B^-}^{\mu} \toup{\gq^-} \Bun_T
$$
The line bundle $(\gp^-)^*(^{\omega}\cL^{\bar\kappa})$ is denoted by $\cP^{\bar\kappa}$ by abuse of notations. One has naturally $\cP^{\bar\kappa}\,\iso\, (\gq^-)^*(^{\omega}\cL^{\bar\kappa})$. 
\index{$\Bun_{B^-}^{\mu}, \;\gp^-, \;\gq^-, \;\cP^{\bar\kappa}$}

 Denote by $\cZ^{\mu}_n\subset \gM_n\times_{\Bun_G} \Bun^{\mu}_{B^-}$ the open substack given by the property that for each $G$-dominant weight $\check{\lambda}$ the composition
\begin{equation}
\label{maps_def_of_cZ^mu_n}
\Omega^{\<\check{\lambda}, \rho\>} \;\toup{\kappa^{\check{\lambda}}} \;\cV^{\check{\lambda}}_{\cF} \;\toup{\kappa^{\check{\lambda}, -}} \;\cL^{\check{\lambda}}_{\cF_T},
\end{equation}
which is a map over $X-\cup_i x_i$, is not zero. Let $'\gp$, $'\gp_B$ denote the projections in the diagram
$$
\gM_n \;\getsup{'\gp} \;\cZ^{\mu}_n \;\toup{'\gp_B} \;\Bun^{\mu}_{B^-}
$$  
Let $\pi^{\mu}: \cZ^{\mu}_n\to X^{\mu}_n$ be the map sending the above point to $(x_1,\ldots, x_n, D)$ such that the maps (\ref{maps_def_of_cZ^mu_n}) induce an isomorphism $\Omega^{\rho}(-D)\,\iso\, \cF_T$.
\index{$\cZ^{\mu}_n, {'\gp}, {'\gp_B}$}\index{$\pi^{\mu}, \cZ^{\mu}_{n,\le\bar\lambda}$}

 For any $n$-tuple $\bar\lambda\in \Lambda^n$ define the closed substack $\cZ^{\mu}_{n,\le\bar\lambda}$ by the base change $\gM_{n,\le \bar\lambda}\hook{} \gM_n$. The map $\pi^{\mu}$ restricts to a map
$$
\pi^{\mu}: \cZ^{\mu}_{n,\le\bar\lambda}\to X^{\mu}_{n,\le\bar\lambda}
$$ 
However, the preimage of $X^{\mu}_{n,\le\bar\lambda}$ under $\pi^{\mu}: \cZ^{\mu}_n\to X^{\mu}_n$ is not $\cZ^{\mu}_{n,\le\bar\lambda}$. 

\begin{Rem} 
\label{Rem311}
For $\mu\in\Lambda$ let $\Gr_{^{\omega}\cN^-, X^{\mu}_n}$ be  the ind-scheme classifying $(x_1,\ldots, x_n, D)\in X^{\mu}_n$, a $B^-$-torsor $\cF$ on $X$ with compatible isomorphisms $\cF\times_{B^-} T\,\iso\, \Omega^{\rho}(-D)$ over $X$ and $\cF\,\iso\, \Omega^{\rho}\times_T B^-\mid_{X-D-\cup_i x_i}$. We have a closed immersion $\cZ^{\mu}_n\hook{} \Gr_{^{\omega}\cN^-, X^{\mu}_n}$ given by the property that the corresponding maps
$$
\Omega^{\<\rho, \check{\lambda}\>}\to \cV^{\check{\lambda}}_{\cF}
$$
for $\check{\lambda}\in \check{\Lambda}^+$ are regular over $X-\cup_i x_i$. 
Since the projection $\Gr_{^{\omega}\cN^-, X^{\mu}_n}\to X^{\mu}_n$ is ind-affine, the map $\pi^{\mu}: \cZ^{\mu}_n\to X^{\mu}_n$ is also ind-affine. 
\end{Rem}

\ssec{} The ind-scheme $\cZ^{\mu}_0$ is rather denoted $\cZ^{\mu}$. Recall that for $\mu_1\in -\Lambda^{pos}, \mu_2\in \Lambda$ and $\mu=\mu_1+\mu_2$ we have the factorization property (\cite{G}, Proposition~4.7)
\begin{equation}
\label{iso_factorization_cZ} 
(X^{\mu_1}\times X^{\mu_2}_n)_{disj}\times_{X^{\mu}_n} \cZ^{\mu}_n\,\iso\, 
(X^{\mu_1}\times X^{\mu_2}_n)_{disj}\times_{(X^{\mu_1}\times X^{\mu_2}_n)}
(\cZ^{\mu_1}\times \cZ^{\mu_2}_n)
\end{equation}
  
  Recall that the diagram commutes
\begin{equation}
\label{diag_for_the_functor_FF}
\begin{array}{cccc}
\gM_n \getsup{'\gp} &
\cZ^{\mu}_n & \toup{'\gp_B} & \Bun^{\mu}_{B^-}\\
&\downarrow\lefteqn{\scriptstyle \pi^{\mu}} &&  \downarrow\lefteqn{\scriptstyle \gq^-}\\
&X^{\mu}_n & \toup{AJ} & \Bun_T
\end{array}
\end{equation} 
and $('\gp)^*\cP^{\bar\kappa}\,\iso\, (\pi^{\mu})^*\cP^{\bar\kappa}$ canonically, this line bundle is also denoted $\cP^{\bar\kappa}$. Let $\wt\cZ^{\mu}_n$ denote the gerbe of $N$-th roots of $\cP^{\bar\kappa}$ over $\cZ^{\mu}_n$, let $\D_{\zeta}(\cZ^{\mu}_n)$ denote the corresponding derived category of twisted $\Qlb$-sheaves.
\index{$\cZ^{\mu}, \cP^{\bar\kappa}$}\index{$\wt\cZ^{\mu}_n, \D_{\zeta}(\cZ^{\mu}_n)$}

This allows to define the following functors. First, we have the functor $F^{\mu}: \D_{\zeta}(\gM_n)\to \D_{\zeta}(\cZ^{\mu}_n)$ given by
$$
F^{\mu}(K)=('\gp)^*K[\dimrel('\gp)]
$$
As in (\cite{G}, Section~4.8), this functor commutes with the Verdier duality for $\mu$ satisfying $\<\mu, \check{\alpha}\><0$ for any simple root $\check{\alpha}$. Using the factorization property, we will be able to assume that $\mu$ satisfies the latter inequality, so this functor \select{essentially always} commutes with the Verdier duality. We get the functor $\FF:\D_{\zeta}(\gM_n)\to \D_{\zeta}(X^{\mu}_n)$ given by
$$
\FF(K)=\pi^{\mu}_!('\gp)^*(K)[\dimrel('\gp)]
$$
\index{$F^{\mu}, \FF$}

\ssec{} The analog of (\cite{G}, Proposition~4.13) holds in our setting:

\begin{Pp} 
\label{Pp_331_factorization_of_F^mu}
Let $\mu_1\in -\Lambda^{pos}, \mu_2\in\Lambda$, $\mu=\mu_1+\mu_2$ and $\cF\in \Whit^{\kappa}_n$. Under the isomorphism (\ref{iso_factorization_cZ}), the complex
$$
\add_{\mu_1,\mu_2, disj}^*F^{\mu}(\cF)\in \D_{\zeta}((X^{\mu_1}\times X^{\mu_2}_n)_{disj}\times_{X^{\mu}_n} \cZ^{\mu}_n)
$$
identifies with
$$
F^{\mu_1}(\cF_{\emptyset})\boxtimes F^{\mu_2}(\cF)\in \D_{\zeta}((X^{\mu_1}\times X^{\mu_2}_n)_{disj}\times_{(X^{\mu_1}\times X^{\mu_2}_n)}
(\cZ^{\mu_1}\times \cZ^{\mu_2}_n))
$$
\end{Pp}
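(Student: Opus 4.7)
The plan is to reduce Proposition~\ref{Pp_331_factorization_of_F^mu} to two ingredients. The first is a compatibility, on the disjoint locus, between the projection ${'\gp}:\cZ^{\mu}_n\to\gM_n$ and the factorization (\ref{iso_factorization_cZ}); the second is a factorization property for objects of $\Whit^{\kappa}_n$ along a natural gluing map into $\gM_n$. First, I would construct a canonical morphism
$$
\on{glue}:\; (\gM_{\emptyset}\times\gM_n)_{disj}\;\to\;\gM_n,
$$
defined over the open substack where the support of the $\gM_{\emptyset}$-component is disjoint from the marked points and the zero divisor of the $\gM_n$-component. Geometrically, $\on{glue}$ glues the two $G$-torsors together using the generic $B$-reductions coming from the Pl\"ucker data on either side. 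From the definitions, under the isomorphism (\ref{iso_factorization_cZ}) the restriction of ${'\gp}$ to $(X^{\mu_1}\times X^{\mu_2}_n)_{disj}\times_{X^{\mu}_n}\cZ^{\mu}_n$ equals $\on{glue}\comp({'\gp}\times{'\gp})$, and the factorization isomorphism (\ref{iso_factorization_cP_disj}) combined with Lemma~\ref{Lm_fibre_of^omega_cL_bar_kappa} yields a canonical identification $\on{glue}^*\cP^{\bar\kappa}\iso \cP^{\bar\kappa}\boxtimes\cP^{\bar\kappa}$, descending to the level of $\mu_N$-gerbes.

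The core step is then the claim that for any $\cF\in\Whit^{\kappa}_n$, there is a canonical isomorphism $\on{glue}^*\cF\iso \cF_{\emptyset}\boxtimes\cF$ of twisted perverse sheaves on $(\gM_{\emptyset}\times\gM_n)_{disj}$. To prove this, I would pick a collection $\bar y$ of distinct points containing the support of the $\gM_{\emptyset}$-component, and restrict everything to $(\gM_{\emptyset})_{\goodat\,\bar y}\times(\gM_n)_{\goodat\,\bar y}$. The $(\cN^{mer}_{\bar y},\chi^*_{\bar y}\cL_{\psi})$-equivariance defining $\Whit^{\kappa}_n$ pulls back under $\on{glue}$ to an equivariance on both factors, because the character $\chi_{\bar y}$ and the action of $\cN^{mer}_{\bar y}$ both split additively according to which points of $\bar y$ lie in the support of $D_1$ versus the support of the $\gM_n$-data. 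Applying the uniqueness of the basic Whittaker object $\cF_{\emptyset}$ on $\gM_{\emptyset}$ (see Section~\ref{Section_1.4}) to the first factor, with parameters in the second, yields the claimed identification; the compatibility of the $\mu_N$-equivariance data on either side is exactly what (\ref{iso_factorization_cP_disj}) provides.

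Combining these two steps with base change applied to the Cartesian square coming from (\ref{iso_factorization_cZ}), and the additivity $\dimrel({'\gp})=\dimrel({'\gp})_{\mu_1}+\dimrel({'\gp})_{\mu_2}$ on the disjoint locus, produces the desired identification of $\add^*_{\mu_1,\mu_2, disj}F^{\mu}(\cF)$ with $F^{\mu_1}(\cF_{\emptyset})\boxtimes F^{\mu_2}(\cF)$. The main technical obstacle is the careful construction of $\on{glue}$ together with the verification that the $N$-th root structure on $\cP^{\bar\kappa}$ is preserved: the underlying line bundle identification comes from (\ref{eq_factorization_cP}), but descending to $\mu_N$-gerbes requires matching the trivializations of (\ref{fibre_of_cP_barkappa_at_T-torsor}) on the $_{\mu_1}\gM_{\emptyset}$-stratum -- used to define $\cF_{\emptyset}$ via the sign local system on $\oX{}^{\mu_1}$ -- with the corresponding restriction from $\gM_n$. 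Once this bookkeeping is in place, the Whittaker rigidity argument proceeds as in the untwisted setting of \cite{G}, Proposition~4.13.
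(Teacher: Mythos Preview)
Your proposed map $\on{glue}:(\gM_{\emptyset}\times\gM_n)_{disj}\to\gM_n$ does not exist. A point of $\gM_{\emptyset}$ consists of a $G$-torsor $\cF_1$ on $X$ together with Pl\"ucker data; likewise for $\gM_n$. The Pl\"ucker data give a generic $B$-reduction, but a $B$-reduction is \emph{not} a trivialization of the underlying $G$-torsor: knowing that $\cF_{1,B}\times_B T\iso\Omega^{\rho}$ over some open $V\subset X$ does not furnish an isomorphism $\cF_1\iso\cF_2$ over $V$, so there is no way to glue $\cF_1$ and $\cF_2$ into a single $G$-torsor. (Even if you note that $U$-torsors over the affine curve $V$ are trivial, the resulting identification of $B$-torsors is noncanonical, being ambiguous up to sections of the $^{\omega}\cN$-twist; so no stack map results.) Consequently the diagram you want, with ${'\gp}$ on the factorized Zastava equal to $\on{glue}\comp({'\gp}\times{'\gp})$, cannot be set up.

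The paper's argument circumvents this by never leaving the Zastava side for the first factor. The key point is that $\ocZ{}^{\mu_1}=\cZ^{\mu_1}\times_{\gM_{\emptyset}}\gM_{\emptyset,0}$ carries both a $B$-structure and a transversal $B^-$-structure, which together give a genuine trivialization of the $G$-torsor away from $D_1$; this produces a locally closed embedding $\ocZ{}^{\mu_1}\hook{}\cN^{mer}_{\mu_1}/\cN^{reg}_{\mu_1}$. The projection ${'\gp}$ restricted to the factorized locus is then expressed, not via a gluing map on $\gM_{\emptyset}\times\gM_n$, but as the composition of this embedding with the \emph{action} map $\act_{\mu_1}:\cN^{mer}_{\mu_1}\times^{\cN^{reg}_{\mu_1}}{_{\mu_1}\gM_n}\to(\gM_n)_{\goodat\,\mu_1}$, together with a trivialization of the $\cN^{reg}_{\mu_1}$-torsor ${_{\mu_1}\gM_n}\times_{\gM_n}\cZ^{\mu_2}_n$ on the disjoint locus. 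The Whittaker equivariance $\act_{\mu_1}^*\cF\iso\chi_{\mu_1}^*\cL_{\psi}\boxtimes\cF$ then gives the factorization directly, since $\chi_{\mu_1}$ restricted to $\ocZ{}^{\mu_1}$ is $\ev_{\emptyset,0}$, and the passage from $\ocZ{}^{\mu_1}$ to all of $\cZ^{\mu_1}$ uses the cleanness of $\cF_{\emptyset}$ (Section~\ref{Section_1.4}). Your intuition about Whittaker rigidity is correct, but it must be implemented through this action-map factorization rather than a nonexistent gluing on the $\gM$-level.
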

\begin{proof}
We write down the complete proof for the convenience of the reader and to correct some misprints in (\cite{G}, proof of Proposition~4.13). Set $\ocZ{}^{\mu_1}=\cZ^{\mu_1}\times_{\gM_{\emptyset}} \gM_{\emptyset,0}$. Let $(\gM_n)_{\goodat\, \mu_1}\subset X^{\mu_1}\times \gM_n$ be the open substack given by the property that $D\in X^{\mu_1}$ does not contain pole points $(x_1,\ldots, x_n)$, and all $\kappa^{\check{\lambda}}$ are morphisms of vector bundles in a neighbourhood of $\supp(D)$. 

 Let $\cN^{reg}_{\mu_1}$ (resp., $\cN^{mer}_{\mu_1}$) be the group scheme (resp., group ind-scheme) over $X^{\mu_1}$, whose fibre at $D$ is the group scheme (resp., group ind-scheme) of sections of $^{\omega}\cN$ over the formal neighbourhood of $D$ (resp., the punctured formal neighbourhood of $D$). As in Section~\ref{Secion_1.2}, we have the character $\chi_{\mu_1}: \cN^{mer}_{\mu_1}\to\A^1$.  
\index{$\ocZ{}^{\mu_1}$}\index{$(\gM_n)_{\goodat\, \mu_1}$}\index{$\cN^{reg}_{\mu_1}, \cN^{mer}_{\mu_1}, \act_{\mu_1}$}
 
 For a point of $(\gM_n)_{\goodat\, \mu_1}$ we get a $B$-torsor $\cF_B$ over the formal neighbourhood $\bar D$ of $D$ with a trivialization $\epsilon_B: \cF_B\times_B T\,\iso\, \Omega^{\rho}$ over $\bar D$. Let $_{\mu_1}\gM_n$ denote the $\cN^{reg}_{\mu_1}$-torsor over $(\gM_n)_{\goodat\, \mu_1}$
classifying a point of $(\gM_n)_{\goodat\, \mu_1}$ together with a trivialization 
$\cF_B\,\iso\, \Omega^{\rho}_B\mid_{\bar D}$ compatible with $\epsilon_B$.  
The group ind-scheme $\cN^{mer}_{\mu_1}$ acts on $_{\mu_1}\gM_n$ over $X^{\mu_1}$, this action lifts naturally to an action on $\cP^{\bar\kappa}$. Let
$$
\act_{\mu_1}: \cN^{mer}_{\mu_1}\times^{\cN^{reg}_{\mu_1}} (_{\mu_1}\gM_n)\to  (\gM_n)_{\goodat\, \mu_1}
$$
be the action map. For each $\cF\in\Whit^{\kappa}_n$ one has an isomorphism of twisted perverse sheaves
$$
\act_{\mu_1}^*(\cF)\,\iso\, \chi_{\mu_1}^*\cL_{\psi}\boxtimes \cF
$$
As the fibre $\cN^{mer}_{\mu_1}/\cN^{reg}_{\mu_1}$ at $D\in X^{\mu_1}$ can be written as an inductive system of affine spaces, the above system of isomorphisms makes sense, see (\cite{G1}, Section~4).

The preimage of $(\gM_n)_{\goodat\, \mu_1}$ under the map
$$
(X^{\mu_1}\times X^{\mu_2}_n)_{disj}\times_{X^{\mu}_n} \cZ^{\mu}_n\toup{'\gp} X^{\mu_1}\times \gM_n
$$
goes over under the isomorphism (\ref{iso_factorization_cZ}) to 
\begin{equation}
\label{open_substack_for_Pp331}
(X^{\mu_1}\times X^{\mu_2}_n)_{disj}\times_{(X^{\mu_1}\times X^{\mu_2}_n)}(\ocZ{}^{\mu_1}\times \cZ^{\mu_2}_n)
\end{equation}

Note that $\cN^{mer}_{\mu_1}/\cN^{reg}_{\mu_1}$ can be seen as the ind-scheme classifying $D\in X^{\mu_1}$, a $B$-torsor $\cF$ on $X$ with compatible isomorphisms $\cF\times_B T\,\iso\, \Omega^{\rho}$ over $X$ and $\cF\,\iso\, \Omega^{\rho}_B\mid_{X-D}$. The character $\chi_{\mu_1}$ decomposes as 
$$
\cN^{mer}_{\mu_1}/\cN^{reg}_{\mu_1}\to \gM_{\emptyset, 0}\toup{\ev_{\emptyset, 0}}\A^1
$$ 
We have a locally closed embedding over $X^{\mu_1}$
$$
\ocZ{}^{\mu_1}\hook{} \cN^{mer}_{\mu_1}/\cN^{reg}_{\mu_1}
$$
given by the property that for each $\check{\lambda}\in \check{\Lambda}^+$ the map 
$
\kappa^{\check{\lambda}, -}: \cV^{\check{\lambda}}_{\cF}\to \cL^{\check{\lambda}}_{\Omega^{\rho}(-D)},
$
initially defined over $X-D$, is regular over $X$ and surjective. 

 For $\cF\in\Whit^{\kappa}_n$ its pull-back to
$$
(X^{\mu_1}\times \gM_n)\times_{(X^{\mu_1}\times X^n)}(X^{\mu_1}\times X^n)_{disj}
$$
is the extension by $*$ and also by $!$ from $(\gM_n)_{\goodat\, \mu_1}$, because there are no dominant coweight strictly smaller than $0$ (see Section~\ref{Section_1.4}). So, it suffices to prove the desired isomorphism over the open substack (\ref{open_substack_for_Pp331}). 

 The composition 
$$ 
(X^{\mu_1}\times X^{\mu_2}_n)_{disj}\times_{(X^{\mu_1}\times X^{\mu_2}_n)}(\ocZ{}^{\mu_1}\times \cZ^{\mu_2}_n)\to (X^{\mu_1}\times X^{\mu_2}_n)_{disj}\times_{X^{\mu}_n} \cZ^{\mu}_n\to X^{\mu_1}\times \gM_n
$$
factors as
\begin{multline*}
(X^{\mu_1}\times X^{\mu_2}_n)_{disj}\times_{(X^{\mu_1}\times X^{\mu_2}_n)}(\ocZ{}^{\mu_1}\times \cZ^{\mu_2}_n)\to\\
(X^{\mu_1}\times X^{\mu_2}_n)_{disj}\times_{(X^{\mu_1}\times X^{\mu_2}_n)}
(\cN^{mer}_{\mu_1}/\cN^{reg}_{\mu_1} \times \cZ^{\mu_2}_n)\\
\iso\, (X^{\mu_1}\times X^{\mu_2}_n)_{disj}\times_{(X^{\mu_1}\times X^{\mu_2}_n)} (\cN^{mer}_{\mu_1}\times^{\cN^{reg}_{\mu_1}}(_{\mu_1}\gM_n\times_{\gM_n} \cZ^{\mu_2}_n))\\
\to \cN^{mer}_{\mu_1}\times^{\cN^{reg}_{\mu_1}}{_{\mu_1}\gM_n}\;\toup{\act_{\mu_1}}\; (\gM_n)_{\goodat\, \mu_1}\hook{} X^{\mu_1}\times \gM_n,
\end{multline*}
where the second arrow used the trivialization of the $\cN^{reg}_{\mu_1}$-torsor 
$$
(_{\mu_1}\gM_n\times_{\gM_n} \cZ^{\mu_2}_n)\times_{(X^{\mu_1}\times X^{\mu_2}_n)} (X^{\mu_1}\times X^{\mu_2}_n)_{disj}
$$
(see Remark~\ref{Rem311}). 
\end{proof}

\medskip

\begin{Cor} 
\label{Cor_332}
For $\cF\in \Whit^{\kappa}_n$, $\mu_1\in -\Lambda^{pos}, \mu_2\in\Lambda$ and $\mu=\mu_1+\mu_2$ one has 
$$
\add^*_{\mu_1,\mu_2, disj}\FF(\cF)\,\iso\, \FF(\cF_{\emptyset})\boxtimes \FF(\cF)
$$
in $\D_{\zeta}((X^{\mu_1}\times X^{\mu_2}_n)_{disj})$. These isomorphisms are compatible with refinements of partitions.
\end{Cor}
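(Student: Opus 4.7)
The plan is to deduce this directly from Proposition~\ref{Pp_331_factorization_of_F^mu} by combining proper base change with the K\"unneth formula. The starting point is the cartesian square
\begin{equation*}
\begin{array}{ccc}
W & \to & \cZ^{\mu}_n\\
\tilde\pi \downarrow && \downarrow \pi^{\mu}\\
(X^{\mu_1}\times X^{\mu_2}_n)_{disj} & \toup{\add_{\mu_1,\mu_2,disj}} & X^{\mu}_n,
\end{array}
\end{equation*}
where $W=(X^{\mu_1}\times X^{\mu_2}_n)_{disj}\times_{X^{\mu}_n} \cZ^{\mu}_n$. Since $\pi^{\mu}$ is ind-affine by Remark~\ref{Rem311}, base change applies and gives $\add^*_{\mu_1,\mu_2,disj}\,\FF(\cF)\,\iso\, \tilde\pi_!\,(\text{restriction to }W\text{ of }F^{\mu}(\cF))$.

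Next, I would invoke the factorization isomorphism~(\ref{iso_factorization_cZ}) to identify $W$ with $(X^{\mu_1}\times X^{\mu_2}_n)_{disj}\times_{(X^{\mu_1}\times X^{\mu_2}_n)}(\cZ^{\mu_1}\times \cZ^{\mu_2}_n)$. The compatibility of this isomorphism with the gerbe structures is guaranteed by the factorization~(\ref{iso_factorization_cP_disj}) of $\cP^{\bar\kappa}$. Under this identification, Proposition~\ref{Pp_331_factorization_of_F^mu} says precisely that the restriction of $F^{\mu}(\cF)$ to $W$ matches with the external tensor product $F^{\mu_1}(\cF_{\emptyset})\boxtimes F^{\mu_2}(\cF)$ (restricted to the disjoint locus), while the relative dimensions satisfy $\dimrel('\gp)=\dimrel('\gp^{\mu_1})+\dimrel('\gp^{\mu_2})$ on the relevant component, so the cohomological shifts match.

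Under the same factorization, the map $\tilde\pi$ becomes the restriction of the product $\pi^{\mu_1}\times \pi^{\mu_2}$ to the disjoint open. An application of the K\"unneth formula for $!$-pushforward along a product of ind-affine maps then produces
$$
\tilde\pi_!\bigl(F^{\mu_1}(\cF_{\emptyset})\boxtimes F^{\mu_2}(\cF)\bigr)\,\iso\,\bigl(\FF(\cF_{\emptyset})\boxtimes \FF(\cF)\bigr)\mid_{(X^{\mu_1}\times X^{\mu_2}_n)_{disj}},
$$
which is the desired isomorphism.

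Finally, compatibility with refinements $\mu_1=\mu_1'+\mu_1''$ follows by exactly the same base change argument applied twice, once for the partition $\mu=\mu_1+\mu_2$ and once for $\mu_1=\mu_1'+\mu_1''$: the two routes coincide because the factorization isomorphism~(\ref{iso_factorization_cZ}) is compatible with refinements and because Proposition~\ref{Pp_331_factorization_of_F^mu} itself was compatible with refinements (in particular its specialization to $\cF=\cF_{\emptyset}$ yields the factorization of $\FF(\cF_{\emptyset})$ across $(X^{\mu_1'}\times X^{\mu_1''})_{disj}$). The only point requiring care is the bookkeeping of the twistings: one must check that the base-change isomorphism for the gerbe $\wt\cZ^{\mu}_n\to \wt X^{\mu}_n$ is the one induced by (\ref{iso_factorization_cP_disj}); this is the main (but routine) technical obstacle, and follows from the fact that $\cP^{\bar\kappa}$ on $\cZ^{\mu}_n$ was defined as the pullback from $X^{\mu}_n$.
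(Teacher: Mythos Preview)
Your proposal is correct and is exactly the argument the paper leaves implicit: in the paper Corollary~\ref{Cor_332} is stated with no proof, as an immediate consequence of Proposition~\ref{Pp_331_factorization_of_F^mu} via base change for $\pi^{\mu}_!$ along $\add_{\mu_1,\mu_2,disj}$ and the K\"unneth formula, using the factorization~(\ref{iso_factorization_cZ}). One minor remark: the justification ``since $\pi^{\mu}$ is ind-affine, base change applies'' is unnecessary, as the base change isomorphism $f^*g_!\iso g'_!f'^*$ for a cartesian square holds without any hypothesis on $g$; the ind-affineness of $\pi^{\mu}$ is only relevant to ensure the objects are well-defined on ind-schemes.
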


We will use the following.

\begin{Rem} 
\label{Rem_Levi}
Let $M\subset G$ be a standard Levi, $\Lambda^{pos}_M$ the $\ZZ_+$-span of $M$-positive coroots in $\Lambda$. For $\mu\in-\Lambda^{pos}$ let $Z^{\mu}_G$ denote the Zastava space classifying $D\in X^{\mu}$, $U^-$-torsor $\cF$ on $X$, a trivialization $\cF\,\iso\, \cF^0_{U^-}\mid_{X-D}$ that gives rise to a generalized $B$-structure on $\cF_G:=\cF\times_{U^-} G$ over $X$ with the corresponding $T$-torsor $\cF^0_T(D)$. That is, for each $\check{\lambda}\in \check{\Lambda}^+$ the natural map
$$
\kappa^{\check{\lambda}}: \cO(\<D, \check{\lambda}\>)\to \cV^{\check{\lambda}}_{\cF}
$$
is regular over $X$. Assume in addtion $\mu\in -\Lambda^{pos}_M$. Then we have the similarly defined ind-scheme $Z^{\mu}_M$ for $M$. The natural map $Z^{\mu}_M\to Z^{\mu}_G$ is an isomorphism over $X^{\mu}$.
\end{Rem}
\index{$Z^{\mu}_G, Z^{\mu}_M$}

\begin{Pp} 
\label{Pp_3.3.4}
Assume $\varrho(\alpha_i)\notin \ZZ$ for any simple coroot $\alpha_i$. Then for $\mu\in -\Lambda^{pos}$ we have a (non-canonical) isomorphism $\cL^{\mu}_{\emptyset}\,\iso\, \FF(\cF_{\emptyset})$ in $\D_{\zeta}(\oX{}^{\mu})$.
\end{Pp}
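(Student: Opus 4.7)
The plan is to reduce the claim to a rank-one calculation by factorization, and then to identify the resulting local system on $\oX{}^{\mu}$ using the Levi reduction of Remark~\ref{Rem_Levi}. Both sides of the desired isomorphism carry compatible factorization structures: for $\cL^{\mu}_{\emptyset}$ this is built into its definition via~(\ref{fact_isom_for_cL^mu_emptyset}), while for $\FF(\cF_{\emptyset})$ it is Corollary~\ref{Cor_332}. Since $\oX{}^{\mu}$ is étale-locally a product of copies of $\oX{}^{-\alpha_i}=X$ for the simple coroots $\alpha_i$ appearing in $\mu$, it suffices to construct the isomorphism in the single-coroot case, and to check its compatibility with the $S_m$-equivariance when a given simple coroot appears with multiplicity $m$.

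For the single-coroot case, fix a simple coroot $\alpha_i$ and apply Remark~\ref{Rem_Levi} to replace $G$ by its Levi $M_i$ of semisimple rank one. The fiber of the restriction of $\pi^{-\alpha_i}:\cZ^{-\alpha_i}\to X$ to the preimage of $\gM_{\emptyset,0}$ under $'\gp$ over a point $y\in X$ is, after this identification, the finite-dimensional intersection $\Gr^0_{B(M_i)}\cap \Gr^{-\alpha_i}_{B^-(M_i)}\cong \Gm$. Over it the pullback $('\gp)^*\cF_{\emptyset}$ becomes, up to the prescribed shift, $s_{-\alpha_i}^*\cL_G=\ev^*\cL_{\psi}\otimes\gamma_G^*\cL_{\zeta}$ in the notation of Section~\ref{Section_121}, and the gerbe on $\oX{}^{-\alpha_i}$ is canonically trivialized since $\bar\kappa(-\alpha_i,-\alpha_i+2\rho)=0$. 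A routine Artin-Schreier--Kummer cohomology computation then shows that $\RG_c(\Gm,\ev^*\cL_\psi\otimes\gamma_G^*\cL_\zeta)$ is one-dimensional and concentrated in a single cohomological degree, so $\FF(\cF_{\emptyset})|_{X}$ is a shifted rank-one local system on $X$; because the gerbe is trivial this local system is the constant one, in agreement with $\ocL{}^{-\alpha_i}_{\emptyset}$.

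The main obstacle is the symmetric-group equivariance over $\oX{}^{(m)}$ when a single simple coroot $\alpha_i$ occurs with multiplicity $m\ge 2$. The braiding of $\FF(\cF_{\emptyset})^{\boxtimes m}$ that arises when two points collide is governed by the commutation factor in the factorization of $\cP^{\bar\kappa}$, which on interchanging two copies of $-\alpha_i$ produces a sign of the form $(-1)^{\bar\kappa(\alpha_i,\alpha_i)/N}$. The hypothesis $\varrho(\alpha_i)=\bar\kappa(\alpha_i,\alpha_i)/(2N)\notin \ZZ$ forces $\bar\kappa(\alpha_i,\alpha_i)/N$ to be odd, so this sign is nontrivial and matches precisely the sign character defining $\ocL{}^{-m\alpha_i}_{\emptyset}$; without the hypothesis the braiding would be trivial and the answer would be the constant, rather than the sign, local system. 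Carrying out this monodromy bookkeeping is the essential step, and it is here that the hypothesis $\varrho(\alpha_i)\notin \ZZ$ is indispensable.
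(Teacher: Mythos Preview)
Your factorization-and-Levi-reduction strategy matches the paper's, and the rank-one fiber computation is essentially right. The gap is in your account of the sign character.

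You claim the $S_{m_i}$-sign arises from a commutation factor $(-1)^{\bar\kappa(\alpha_i,\alpha_i)/N}$ in the factorization of $\cP^{\bar\kappa}$, and that $\varrho(\alpha_i)\notin\ZZ$ forces this exponent to be odd. Neither assertion holds: $\bar\kappa(\alpha_i,\alpha_i)/N=2\varrho(\alpha_i)$ is not an integer in general (e.g.\ $N=3$, $\bar\kappa(\alpha_i,\alpha_i)=2$ gives $2/3$), so the expression $(-1)^{\bar\kappa(\alpha_i,\alpha_i)/N}$ is not even well-defined. More to the point, you yourself observe that the gerbe over $\oX{}^{\mu}$ is canonically trivial; there is no nontrivial commutation data in $\cP^{\bar\kappa}$ over this locus to supply a sign.

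The actual source of the sign, as in the paper, is purely cohomological. With $d=-\bar\kappa(\alpha_i,\alpha_i)/2$, the rank-one fiber is the Gauss sum $\RG_c(\Gm,\cL_\psi\otimes\cL_\zeta^d)$, one-dimensional and concentrated in the \emph{odd} degree $1$. After factorization the pullback of $\FF(\cF_\emptyset)$ to $\prod_i X^{m_i}-\vartriangle$ is, via K\"unneth, a box product of these degree-$1$ lines; each transposition in $S_{m_i}$ swaps two odd-degree factors and therefore acts by the Koszul sign $-1$. That is what matches $\ocL{}^{\mu}_{\emptyset}$. The hypothesis $\varrho(\alpha_i)\notin\ZZ$ (equivalently $d\notin N\ZZ$) is already used in the rank-one step, where the nontriviality of the Kummer factor $\cL_\zeta^d$ is what yields $\FF(\cF_\emptyset)\iso\Qlb[1]$ on $X^{-\alpha_i}$; it is not the mechanism producing the $S_{m_i}$-sign.
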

\begin{proof} Consider first the case $\mu=-\alpha$, where $\alpha$ is a simple coroot of $G$. Then $X^{\mu}=X$. Applying Remark~\ref{Rem_Levi} for the corresponding subminimal Levi, we get $\cZ^{-\alpha}\,\iso\, X\times\A^1$, and $\ocZ{}^{-\alpha}\,\iso\, X\times\Gm$ is the complement to the zero section. The line bundle $\cP^{\bar\kappa}$ over $X^{\mu}$ is trivialized canonically. However, over $\ocZ{}^{-\alpha}$ we get another trivialization of $\cP^{\bar\kappa}$ inherited from the trivialization of $\cP^{\bar\kappa}\mid_{\gM_{\emptyset, 0}}$. The discrepancy between the two trivializations is the map
$$
\ocZ{}^{-\alpha}\,\iso\, X\times\Gm\toup{\pr}\Gm\toup{z\mapsto z^d}\Gm,
$$
where $d=\frac{-\bar\kappa(\alpha,\alpha)}{2}$. Since our answer here is different from that of (\cite{G}, Section~5.1), we give more details. Let $M$ be the standard subminimal Levi corresponding to the coroot $\alpha$, $M_0$ be the derived group of $M$, so $M_0\,\iso\, \SL_2$. Pick $x\in X$. Let $\PP$ denote the projective line classifying lattices $\cM$ included into
\begin{equation}
\label{eq_for_M_for_P1}
\Omega^{-\frac{1}{2}}(-x)\oplus\Omega^{\frac{1}{2}}\subset \cM\subset \Omega^{-\frac{1}{2}}\oplus\Omega^{\frac{1}{2}}(x)
\end{equation} 
such that
$\cM/(\Omega^{-\frac{1}{2}}(-x)\oplus\Omega^{\frac{1}{2}})$ is 1-dimensional. This defines a map $\PP\to \Bun_{M_0}$ sending $\cM$ to $\cM$ viewed as a $M_0$-torsor on $X$. Let $\cL$ denote the line bundle on $\PP$ with fibre
$$
\frac{\det\RG(X, \Omega^{\frac{1}{2}})\otimes \det\RG(X, \Omega^{-\frac{1}{2}})}{\det\RG(X, \cM)}
$$
at $\cM$. The restriction of $^{\omega}\cL^{\bar\kappa}$ under the composition $\PP\to \Bun_{M_0}\to\Bun_G$ identifies with $\cL^{\frac{-\bar\kappa(\alpha,\alpha)}{2}}$. The fibre $\cZ^{-\alpha}$ over $D=-\alpha x$ is the open subscheme of $\PP$ given by the property that $\Omega^{-\frac{1}{2}}(-x)\subset\cM$ is a subbundle. The formula for $d$ follows from the fact that $\cL\,\iso\, \cO(1)$ on $\PP$.

 So, if $\varrho(\alpha)\notin\ZZ$ then $\FF(\cF_{\emptyset})\,\iso\, \Qlb[1]$ non-canonically in $\D_{\zeta}(X^{-\alpha})$. 
 
 Let now $\mu=-\sum m_i\alpha_i\in -\Lambda^{pos}$ with $m_i\ge 0$. Applying Corollary~\ref{Cor_332} and the above computation, one gets the desired isomorphism after the pull-back to $\prod_i X^{m_i}-\vartriangle$, where $\vartriangle$ is the diagonal divisor. From the K\"unneth formula one sees that the product of the corresponding symmetric groups $\prod_i S_{m_i}$ acts by the sign character because the Gauss sum $\RG_c(\Gm, \cL_{\psi}\otimes\cL_{\zeta}^d)$ is concentrated in the degree 1 for $d\notin N\ZZ$. 
\end{proof}

The isomorphism of Proposition~\ref{Pp_3.3.4} does not hold in $\D_{\zeta}(X^{\mu})$. This is already seen in the following special case.
\begin{Lm} 
\label{Lm_335_case_SL_2}
Assume $G=\SL_2$ and $\varrho(\alpha)\notin\ZZ$ for the simple coroot $\alpha$. Then for $\mu\in -\Lambda^{pos}$, 
$\FF(\cF_{\emptyset})\in \D_{\zeta}(X^{\mu})$ is the extension by zero from $\oX{}^{\mu}$.
\end{Lm}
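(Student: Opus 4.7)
The plan is to reduce by factorization to showing that the $*$-stalk of $\FF(\cF_\emptyset)$ vanishes at every ``deep'' diagonal point $mx \in X^{(m)}$ with $m \ge 2$, and then to carry out this stalk computation explicitly using the elementary geometry of the $\SL_2$-Zastava spaces.

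For the reduction, let $D \in X^{(m)} \setminus \oX{}^{(m)}$, so $D = \sum_i n_i x_i$ with the $x_i$ pairwise distinct and some $n_j \ge 2$. If the support of $D$ contains more than one point, split $D = D_1 + D_2$ with $D_1 := n_j x_j$ and $D_2 := D - D_1$; these have disjoint supports, so by Corollary~\ref{Cor_332} the $*$-stalk of $\FF(\cF_\emptyset)$ at $D$ is the exterior product of the stalks at $D_1$ and $D_2$. An induction on $m$ reduces the lemma to establishing $\FF(\cF_\emptyset)_{mx} = 0$ for every $x \in X$ and every $m \ge 2$.

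For the local computation, the standard description of $\cZ^{-m\alpha}$ for $\SL_2$ near $mx$ (via Remark~\ref{Rem_Levi}) presents it as the scheme of coprime pairs $(P, Q)$, where $P \in \cO_x$ is monic of degree $m$ in the local coordinate $t$ at $x$ and $Q$ is a polynomial of degree $< m$, with $\pi^{-m\alpha}$ sending $(P, Q)$ to $\div(P)$. The fiber over $mx$ (i.e.\ $P = t^m$) is $\{Q : Q(0) \ne 0\} \iso \Gm_{q_0} \times \A^{m-1}_{q_1, \ldots, q_{m-1}}$ and lies entirely inside the open Zastava $\ocZ{}^{-m\alpha}$. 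On this fiber, $F^{-m\alpha}(\cF_\emptyset)$ identifies up to shift with $\ev^* \cL_\psi \otimes \gamma^* \cL_\zeta$, where $\ev = q_{m-1}$ is the residue of $Q/P$ at $x$ and $\gamma \colon \Gm \times \A^{m-1} \to \Gm$ is the discrepancy between the canonical trivialization of $\cP^{\bar\kappa}$ coming from $'\gp \colon \ocZ{}^{-m\alpha} \to \gM_{\emptyset, 0}$ and a fixed trivialization of the fiber of $\cP^{\bar\kappa}$ at $mx \in X^{(m)}$. By Proposition~\ref{Pp_3.3.4} combined with Corollary~\ref{Cor_332}, the restriction of $\gamma$ to $\pi^{-m\alpha, -1}(\oX{}^{(m)})$ is the product $\prod_i Q(x_i)^c$ with $c := -\bar\kappa(\alpha, \alpha)/2$; specializing all $x_i \to x$ yields $\gamma|_{\cZ^{-m\alpha}_{mx}} = q_0^{mc}$, in particular independent of $q_1, \ldots, q_{m-1}$. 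The K\"unneth formula then shows
\[
\FF(\cF_\emptyset)_{mx} \;\iso\; \RG_c(\Gm_{q_0}, (q_0^{mc})^* \cL_\zeta) \otimes \RG_c(\A^{m-2}_{q_1, \ldots, q_{m-2}}, \Qlb) \otimes \RG_c(\A^1_{q_{m-1}}, \cL_\psi)
\]
up to cohomological shift, and this vanishes because $\RG_c(\A^1, \cL_\psi) = 0$.

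The delicate point is the claim that $\gamma$ depends only on $q_0$ on the fiber $\cZ^{-m\alpha}_{mx}$. Geometrically, $\cP^{\bar\kappa}$ restricted to this fiber is a constant line bundle: the $\SL_2$-torsors parametrized by the fiber are all extensions of $\Omega^{-1/2}(mx)$ by $\Omega^{1/2}(-mx)$ with varying extension class, while $\det \RG(X, \gg_\cF)$ depends only on the graded pieces of such a filtration. The explicit formula $\gamma = q_0^{mc}$ then follows by extending the factorization formula on $\oX{}^{(m)}$ by continuity. The assumption $\varrho(\alpha) \notin \ZZ$ is inherited from Proposition~\ref{Pp_3.3.4} and is what makes this identification meaningful, while the final vanishing at $mx$ for $m \ge 2$ is forced by the Artin--Schreier vanishing in the $q_{m-1}$-direction.
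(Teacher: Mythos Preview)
Your proof is correct and follows essentially the same mechanism as the paper's: Artin--Schreier vanishing in the residue direction on the fibre over a thick point. The paper computes the $*$-fibre at a general $D=\sum_k m_k x_k$ directly (its computation factorizes over $k$, which is your factorization reduction done implicitly), writes down the canonical section of $(\pi^{\mu})^*\cP^{\bar\kappa}$ explicitly as $(\otimes_k \bar v_k^{-m_k})^{4c_j}$, and then observes that $\ev^*\cL_\psi$ transforms nontrivially under the translation action of $\Omega((m_k-1)x_k)/\Omega$ whenever $m_k>1$.

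One point to tighten: your ``specialization'' justification of $\gamma|_{\ocZ^{-m\alpha}_{mx}}=q_0^{mc}$ is informal, and the paragraph about $\cP^{\bar\kappa}$ being constant on the fibre is a red herring (the line bundle is pulled back from a point, so this is automatic---what you need is information about the \emph{section}, not the bundle). The canonical trivialization of $\cP^{\bar\kappa}$ on $\oX{}^{(m)}$ does not extend across the diagonal $\vartriangle$ (indeed $\cP^{\bar\kappa}\,\iso\,\cO(-4c_j\!\vartriangle)$), so naively passing to the limit in the formula $\prod_i Q(x_i)^c$ does not directly compute $\gamma$ at $mx$. The cleanest fix is to bypass the exact formula: $\gamma$ is by construction a nowhere-vanishing regular function on the fibre $\Gm_{q_0}\times\A^{m-1}$, and any such function is of the form $c\cdot q_0^n$ for some $c\in k^*,\, n\in\ZZ$. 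That is all you need for your K\"unneth decomposition and the Artin--Schreier vanishing in the $q_{m-1}$-direction. (The paper's explicit formula confirms $n=mc$ up to sign, but the vanishing does not require this.)
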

\begin{proof} Take $\mu=-m\alpha$, $m\ge 0$. So, $X^{(m)}\,\iso\, X^{\mu}$ via the map $D\mapsto -D\alpha$. The scheme $\cZ^{\mu}$ is a vector bundle over $X^{\mu}$ with fibre 
$$
\Ext^1(\Omega^{\frac{1}{2}}(D)/\Omega^{\frac{1}{2}}, \Omega^{-\frac{1}{2}}(-D))=\Omega^{-1}(-D)/\Omega^{-1}(-2D)
$$ 
at $-D\alpha$. A point of $\cZ^{\mu}$ is given by $D\in X^{(m)}$ and a diagram
$$
\begin{array}{cccc}
0\to \Omega^{-\frac{1}{2}}(-D)\to  M& \to &\Omega^{\frac{1}{2}}(D) & \to 0\\
&\nwarrow &\uparrow\\
&& \Omega^{\frac{1}{2}}
\end{array}
$$ 
The line bundle $\cP^{\bar\kappa}$ over $X^{(m)}$ identifies canonically with $\cO(-4c_j\!\vartriangle)$, where $\vartriangle\subset X^{(m)}$ is the divisor of the diagonals. Here $c_j$ is a part of our input data from (Section~\ref{section_input_data}, formula~(\ref{def_bar_kappa})). 

 For a line bundle $L$ on $X$ and an $D\in X^{(m)}$ let $(L(D)/L)_{max}\subset L(D)/L$ be the open subscheme consisting of those $v\in L(D)/L$ such that for any $0\le D'<D$, $v\notin L(D')/L$. Note that $(L(D)/L)_{max}$ identifies canonically with $(L^{-1}(-D)/L^{-1}(-2D)_{max}$.
\index{$(L(D)/L)_{max}$} 
 
 The fibre of $\ocZ{}^{\mu}$ over $D\in X^{(m)}$ is $(\Omega^{-1}(-D)/\Omega^{-1}(-2D))_{max}\,\iso\, (\Omega(D)/\Omega)_{max}$. Let $D=\sum_k m_k x_k\in X^{(m)}$. Then $(\Omega(D)/\Omega)_{max}\,\iso\, \prod_k (\Omega(m_kx_k)/\Omega)_{max}$. The fibre of $\cP^{\bar\kappa}$ at $-D\alpha\in X^{\mu}$ is
$$
(\otimes_k \;\Omega_{x_k}^{m_k^2-m_k})^{4c_j}
$$
Write a point of $\prod_k (\Omega(m_kx_k)/\Omega)_{max}$ as $v=(v_k)$, $v_k\in (\Omega(m_kx_k)/\Omega)_{max}$. Let $\bar v_k$ be the image of $v_k$ in the geometric fibre $(\Omega(m_kx_k))_{x_k}=\Omega_{x_k}^{1-m_k}$. The canonical section of $\pi^{\mu *}\cP^{\bar\kappa}$ over $\ocZ{}^{\mu}$ sends $v$ to $(\otimes_k \,\bar v_k^{-m_k})^{4c_j}$. So, the $*$-fibre of $\FF(\cF_{\emptyset})$ at $-D\alpha\in X^{\mu}$ identifies (up to a shift) with the tensor product over $k$ of the complexes
\begin{equation}
\label{complex_for_Lm335}
\RG_c((\Omega(m_kx_k)/\Omega)_{max}, \ev^*\cL_{\psi}\otimes \eta_k^*\cL_{\zeta^{4c_jm_k}}),
\end{equation}
where $\eta_k$ is the map
$$
\eta_k: (\Omega(m_kx_k)/\Omega)_{max}\to (\Omega(m_kx_k))_{x_k}\toup{\tau_k} \Gm 
$$
for some isomorphisms $\tau_k$. Calculate (\ref{complex_for_Lm335}) via the composition $(\Omega(m_kx_k)/\Omega)_{max}\to (\Omega(m_kx_k))_{x_k}\to \Spec k$. If $m_k>1$ for some $k$ then the sheaf $\ev^*\cL_{\psi}$ on $(\Omega(m_kx_k)/\Omega)_{max}$ changes under the action of the vector space $\Omega((m_k-1)x_k)/\Omega$ by the Artin-Schreier character, so (\ref{complex_for_Lm335}) vanishes for this $k$. Our claim follows.
\end{proof}

\begin{Rem} Assume that $\varrho(\alpha_i)\notin\ZZ$ for any simple coroot $\alpha_i$. For $G=\SL_2$ the fibres of $\cL^{\mu}_{\emptyset}$ are calculated in \cite{BFS}, it is not the extension by zero from $\oX{}^{\mu}$. As in (\cite{G}, Proposition~4.10), one may show that for any $K\in \Whit^{\kappa}_n$ the object $\FF(K)$ is placed in perverse cohomological degree zero (this is essentially done in Proposition~\ref{Pp_ovFF_is_exact}). However, Lemma~\ref{Lm_335_case_SL_2} shows that the functor $\FF$ does not produce an object of $\wt\FS^{\kappa}_n$, and should be corrected. 
\end{Rem}

\ssec{Compactified Zastava} 
\label{Section_Compactified Zastava}
For $\mu\in\Lambda$ let $\Bunb^{\mu}_{B^-}$ be the Drinfeld compactification of $\Bun_{B^-}^{\mu}$. Namely, this is the stack classifying a $G$-torsor $\cF$ on $X$, a $T$-torsor $\cF_T$ on $X$ of degree $(2g-2)\rho-\mu$, and a collection of nonzero maps of coherent sheaves for $\check{\lambda}\in \check{\Lambda}^+$
$$
\kappa^{\check{\lambda}, -}: \cV^{\check{\lambda}}_{\cF}\to \cL^{\check{\lambda}}_{\cF_T}
$$ satisfying the Pl\"ucker relations. This means that for any $\check{\lambda}, \check{\mu}\in \check{\Lambda}^+$ the composition 
$$
\cV^{\check{\lambda}+\check{\mu}}_{\cF}\to (\cV^{\check{\lambda}}\otimes\cV^{\check{\mu}})_{\cF}\;\stackrel{\kappa^{\check{\lambda}, -}\otimes\kappa^{\check{\mu}, -}}{\longrightarrow} \;\cL^{\check{\lambda}+\check{\mu}}_{\cF_T}
$$
coincides with $\kappa^{\check{\lambda}+\check{\mu}, -}$, and $\kappa^{0,-}: \cO\to \cO$ is the identity map. Let $\bar\gq^-: \Bunb^{\mu}_{B^-}\to\Bun_T$ be the map sending the above point to $\cF_T$. 

 For $\theta\in\Lambda^{pos}$ denote by $_{\le\theta}\Bunb_{B^-}^{\mu}\subset \Bunb^{\mu}_{B^-}$ the open substack given by the property that for any $\check{\lambda}\in\check{\Lambda}^+$ the cokernel of $\kappa^{\check{\lambda}, -}$ is a torsion sheaf of length $\le\<\theta, \check{\lambda}\>$. 
\index{$\Bunb^{\mu}_{B^-}, \; \bar\gq^-$}\index{${_{\le\theta}\Bunb_{B^-}^{\mu}}$}
 
 For $n\ge 0$ denote by $\ov{\cZ}^{\mu}_n$ the open substack of $\gM_n\times_{\Bun_G} \Bunb^{\mu}_{B^-}$ given by the property that for each $\check{\lambda}\in \check{\Lambda}^+$ the composition
\begin{equation}
\label{composition_for_bar_cZ^mu_n}
\Omega^{\<\check{\lambda}, \rho\>}\toup{\kappa^{\check{\lambda}}} \cV^{\check{\lambda}}_{\cF} \toup{\kappa^{\check{\lambda}, -}} \cL^{\check{\lambda}}_{\cF_T},
\end{equation}
which is regular over $X-\cup_i x_i$, is not zero. Define the projections by the diagram
$$
\gM_n\; \getsup{'\bar\gp}\; \ov{\cZ}^{\mu}_n \;\toup{'\bar\gp_B} \;\Bunb^{\mu}_{B^-}
$$
Let $\bar\pi^{\mu}: \ov{\cZ}^{\mu}_n\to X^{\mu}_n$ be the map sending the above point to $(x_1,\ldots, x_n, D)$ such that the maps (\ref{composition_for_bar_cZ^mu_n}) induce an isomorphism $\Omega^{\rho}(-D)\,\iso\, \cF_T$. Note that $\cZ^{\mu}_n\subset \ov{\cZ}^{\mu}_n$ is open. 
\index{$\ov{\cZ}^{\mu}_n, \;{'\bar\gp}, \;{'\bar\gp_B}$}\index{$\bar\pi^{\mu}, \; \ov{\cZ}^{\mu}_{n,\le \bar\lambda}$}

 For a $n$-tuple $\bar\lambda\in \Lambda^n$ define the closed substack  $\ov{\cZ}^{\mu}_{n,\le \bar\lambda}$ by the base change $\gM_{n,\le\bar\lambda}\to \gM_n$. The map $\bar\pi^{\mu}$ restricts to a map
\begin{equation}
\label{map_bar_pi^mu_le_bar_lambda}
\bar\pi^{\mu}:\ov{\cZ}^{\mu}_{n,\le \bar\lambda}\to X^{\mu}_{n,\le\bar\lambda}
\end{equation} 
The stack $\ov{\cZ}^{\mu}_0$ is rather denoted $\ov{\cZ}^{\mu}$.
As in (\cite{G}, Proposition~4.5), one gets the following.

\begin{Lm} 
\label{Lm_4.4.1.}
Let $(\bar x, \cF, \cF_T, (\kappa^{\check{\lambda}}), (\kappa^{\check{\lambda}, -}))$ be a point of $\ov{\cZ}^{\mu}_n$, whose image under 
$\bar\pi^{\mu}$ is $(\bar x, D)$. Then the restriction of $\cF$ to $X-D-\cup_i x_i$ is equipped with an isomorphism $\cF\,\iso\, \Omega^{\rho}\times_T G$ with the tautological maps $\kappa^{\check{\lambda}}, \kappa^{\check{\lambda}, -}$. In particular, $\ov{\cZ}^{\mu}_n$ is an ind-scheme over $k$. 
\end{Lm}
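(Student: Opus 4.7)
The plan is in two parts: first, establish the trivialization of $\cF$ over $U := X - D - \cup_i x_i$; then, use this rigidification to exhibit $\ov{\cZ}{}^{\mu}_n$ as an ind-scheme.

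For the trivialization, I would observe that on $U$ each $\kappa^{\check{\lambda}}$ is regular (as $U \subset X - \bar x$) and each $\kappa^{\check{\lambda},-}$ is regular by definition. The defining property of $\bar\pi^{\mu}$ says that under the identification $\cF_T \,\iso\, \Omega^{\rho}(-D)$ the composition (\ref{composition_for_bar_cZ^mu_n}) becomes the canonical map $\Omega^{\<\check{\lambda},\rho\>} \to \Omega^{\<\check{\lambda},\rho\>}(-\<D,\check{\lambda}\>)$; in particular it is nowhere vanishing on $U$. This forces $\kappa^{\check{\lambda}}|_U$ to be a saturated sub-bundle inclusion and $\kappa^{\check{\lambda},-}|_U$ to be a surjection of vector bundles.

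Next I would invoke the Pl\"ucker description of $\Bun_B$ and $\Bun_{B^-}$: the family $\{\kappa^{\check{\lambda}}|_U\}$ defines a $B$-reduction of $\cF|_U$ with induced $T$-torsor canonically $\Omega^{\rho}|_U$, and $\{\kappa^{\check{\lambda},-}|_U\}$ defines a $B^-$-reduction of $\cF|_U$ with induced $T$-torsor $\cF_T|_U \,\iso\, \Omega^{\rho}|_U$. Since $B \cap B^- = T$, these two compatible reductions combine into a $T$-reduction of $\cF|_U$ whose induced $T$-torsor is $\Omega^{\rho}|_U$. This yields the desired isomorphism $\cF|_U \,\iso\, \Omega^{\rho} \times_T G$, under which the $\kappa^{\check{\lambda}}$ and $\kappa^{\check{\lambda},-}$ manifestly become the tautological maps.

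For the ind-scheme claim, the trivialization over $U$ rigidifies points of $\ov{\cZ}{}^{\mu}_n$: any automorphism fixes the trivialization on the dense open $U$, hence is the identity on all of $X$. So $\ov{\cZ}{}^{\mu}_n$ is a sheaf of sets. To exhibit the ind-scheme structure I would exhaust it by the closed subfunctors $\ov{\cZ}{}^{\mu}_{n,\le\bar\lambda}$ for $\bar\lambda \in \Lambda^n$ and show each is a scheme of finite type over $X^{\mu}_{n,\le\bar\lambda}$. Combining the trivialization with the stratification $\Bunb_{B^-}^{\mu} = \cup_{\theta} \, {_{\le\theta}\Bunb_{B^-}^{\mu}}$ should realize $\ov{\cZ}{}^{\mu}_{n,\le\bar\lambda}$ as a closed subscheme in an analogue of $\Gr_{^{\omega}\cN^-, X^{\mu}_n}$ from Remark~\ref{Rem311}, enlarged to allow the $\Bunb_{B^-}$-type defect and cut out by the Pl\"ucker relations. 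The hard part will be setting up this ambient ind-affine scheme cleanly and controlling the defect of $\kappa^{\check{\lambda},-}$ uniformly in $\check{\lambda}$ along each stratum. This parallels (\cite{G}, Proposition~4.5) for $\cZ^{\mu}_n$ but with the Drinfeld compactification in place of $\Bun_{B^-}$; the uniform bound should follow from the finite-type property of each $_{\le\theta}\Bunb_{B^-}^{\mu}$.
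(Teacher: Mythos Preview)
Your argument for the trivialization is correct and is essentially the standard Pl\"ucker argument (this is what the paper's reference to \cite{G}, Proposition~4.5 amounts to): genuine $B$- and $B^-$-reductions over $U$ with the same underlying $T$-torsor $\Omega^{\rho}$ intersect in a $T$-reduction, hence the $G$-torsor is $\Omega^{\rho}\times_T G$.

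For the ind-scheme structure, however, you are making your life harder than necessary. You propose to embed $\ov{\cZ}^{\mu}_{n}$ into an enlarged version of $\Gr_{^{\omega}\cN^-, X^{\mu}_n}$ that accommodates the Drinfeld-compactification defect of $\kappa^{\check{\lambda},-}$, and you correctly flag that controlling this defect uniformly is the hard part. The paper bypasses this entirely: since the trivialization you just produced is a trivialization of $\cF$ as a $G$-torsor (not merely a $U^-$-torsor), one can embed directly into the Beilinson--Drinfeld affine Grassmannian $\Gr_{^{\omega}G, X^{\mu}_n}$ for $G$ over $X^{\mu}_n$. The image is then cut out by the closed conditions that $\kappa^{\check{\lambda},-}: \cV^{\check{\lambda}}_{\cF}\to \Omega^{\<\rho,\check{\lambda}\>}(-\<D,\check{\lambda}\>)$ is regular over all of $X$ and that $\kappa^{\check{\lambda}}: \Omega^{\<\rho,\check{\lambda}\>}\to \cV^{\check{\lambda}}_{\cF}$ is regular over $X-\cup_i x_i$. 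No defect bookkeeping is needed; as a bonus, since $\Gr_{^{\omega}G, X^{\mu}_n}\to X^{\mu}_n$ is ind-proper, one gets for free that $\bar\pi^{\mu}$ is ind-proper, which is used immediately afterward.
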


 Let $\Gr_{^{\omega}G, X^{\mu}_n}$ denote the ind-scheme classifying $(x_1,\ldots, x_n, D)\in X^{\mu}_n$, a $G$-torsor $\cF$ on $X$, a trivialization $\cF\,\iso\, \Omega^{\rho}\times_T G$ over $X-D-\cup_i x_i$. The projection $\Gr_{^{\omega}G, X^{\mu}_n}\to X^{\mu}_n$ is ind-proper.
\index{$\ov{\cZ}^{\mu}, \Gr_{^{\omega}G, X^{\mu}_n}$}
 
 We have a closed immersion $\ov{\cZ}^{\mu}_n\hook{} \Gr_{^{\omega}G, X^{\mu}_n}$ given by the property that for each $\check{\lambda}\in \check{\Lambda}^+$ the natural map $\kappa^{\check{\lambda}, -}: \cV^{\check{\lambda}}_{\cF}\to \Omega^{\<\rho, \check{\lambda}\>}(-\<D,\check{\lambda}\>)$ is regular over $X$, and 
$$
\kappa^{\check{\lambda}}:  \Omega^{\<\rho, \check{\lambda}\>}\to 
\cV^{\check{\lambda}}_{\cF}
$$
is regular over $X-\cup_i x_i$. So, $\bar\pi^{\mu}: \ov{\cZ}^{\mu}_n\to X^{\mu}_n$ is ind-proper. 
 
\begin{Lm} For $\mu_1\in -\Lambda^{pos}, \mu_2\in \Lambda$ and $\mu=\mu_1+\mu_2$ we have the following factorization property
\begin{equation}
\label{iso_factrorization_for_bar_cZ}
(X^{\mu_1}\times X^{\mu_2}_n)_{disj}\times_{X^{\mu}_n} \ov{\cZ}^{\mu}_n\,\iso\,  (X^{\mu_1}\times X^{\mu_2}_n)_{disj}\times_{(X^{\mu_1}\times X^{\mu_2}_n)} (\ov{\cZ}^{\mu_1}\times \ov{\cZ}^{\mu_2}_n)
\end{equation}
compatible with (\ref{iso_factorization_cZ}). 
\end{Lm}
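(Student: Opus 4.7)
My plan is to mimic the proof of the analogous factorization property for $\cZ^{\mu}_n$ (\cite{G}, Proposition~4.7), which is already available, and then restrict only what is new---the fact that on the compactified side $\kappa^{\check\lambda,-}$ is allowed to have zeros. The key geometric input, guaranteed by Lemma~\ref{Lm_4.4.1.}, is that over the complement of the total divisor $D+\bar x$ a point of $\ov{\cZ}^{\mu}_n$ has a canonical trivialization with tautological Plücker maps, so that all the data really lives on (formal neighborhoods of) $\supp(D)\cup\bar x$; disjointness of $\supp(D_1)$ from $\supp(D_2)\cup\bar x$ then lets us localize the data.

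First, I will construct the map from the left-hand side to the right. Starting from $(\bar x,D_1,D_2,\cF,\cF_T,(\kappa^{\check\lambda}),(\kappa^{\check\lambda,-}))$ with $\supp(D_1)$ disjoint from $\supp(D_2)\cup\bar x$, I restrict the whole package to $U_1:=X-\supp(D_1)$ and to $U_2:=X-\supp(D_2)-\bar x$. On $U_1$ the map $\kappa^{\check\lambda,-}$ is surjective (since its only possible zeros sit on $\supp(D_1)$), so $\cF\mid_{U_1}$ carries an honest $B^-$-reduction with induced $T$-torsor $\cF_T\mid_{U_1}\iso \Omega^{\rho}(-D_2)\mid_{U_1}$, and together with the $\kappa^{\check\lambda}$ this gives a point of $\ov{\cZ}^{\mu_2}_n$ after extension across $\supp(D_1)$ using the tautological structure there (provided by Lemma~\ref{Lm_4.4.1.}); symmetrically on $U_2$ one extracts a point of $\ov{\cZ}^{\mu_1}$. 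This defines the desired map to the right-hand side.

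Next, I will construct the inverse by gluing. Given points of $\ov{\cZ}^{\mu_1}$ and $\ov{\cZ}^{\mu_2}_n$ with disjoint divisors, their $G$-torsors are both trivialized on the overlap $U_1\cap U_2=X-D-\bar x$ as $\Omega^{\rho}\times_T G$ (again by Lemma~\ref{Lm_4.4.1.}), so they glue to a single $G$-torsor $\cF$ on $X$; similarly for $\cF_T$. The Plücker maps $\kappa^{\check\lambda}$ and $\kappa^{\check\lambda,-}$ glue since on the overlap both candidates equal the tautological ones. The Plücker relations are local on $X$, hence hold globally because they hold on $U_1$ and on $U_2$, which cover $X$. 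This produces the inverse, and a routine check shows the two constructions are mutually inverse.

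Finally, compatibility with (\ref{iso_factorization_cZ}) is immediate: restricting both sides to the open locus where every $\kappa^{\check\lambda,-}$ is surjective recovers $\cZ^{\mu}_n$ on the left and $\cZ^{\mu_1}\times\cZ^{\mu_2}_n$ on the right, and the gluing prescription is visibly the same one used for $\cZ$. I do not expect any genuine obstacle here; the only point that requires a moment's care is verifying that the generalized $B^-$-structure on $\cF\mid_{U_1}$ coming from $\kappa^{\check\lambda,-}$ matches, on $U_1\cap U_2$, the tautological one arising from the $\ov{\cZ}^{\mu_1}$ factor---which is guaranteed by Lemma~\ref{Lm_4.4.1.} applied to the right-hand factor.
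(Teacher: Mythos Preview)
Your approach is essentially the same as the paper's: both rely on Lemma~\ref{Lm_4.4.1.} to trivialize $\cF$ away from $D\cup\bar x$ and then separate the data supported near $D_1$ from that near $D_2\cup\bar x$ by gluing with the tautological $G$-torsor. The paper (following \cite{BFGM}, Proposition~2.4) phrases the forward map as an explicit gluing---$\cF^2$ is the gluing of $\cF\mid_{X-D_1}$ with $\Omega^{\rho}\times_T G\mid_{X-D_2-\cup_i x_i}$ along $\beta$, and similarly for $\cF^1$---which is exactly your ``restrict to $U_1$ and extend across $\supp(D_1)$ by the tautological structure.''

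One correction: your claim that $\kappa^{\check\lambda,-}$ is surjective on $U_1=X-\supp(D_1)$ is false. Lemma~\ref{Lm_4.4.1.} only forces $\kappa^{\check\lambda,-}$ to be tautological (hence surjective) on $X-D-\bar x$; its zeros may perfectly well lie on $\supp(D_2)\cup\bar x\subset U_1$. This does not damage your construction, since the target $\ov{\cZ}^{\mu_2}_n$ allows a generalized (non-surjective) $B^-$-structure; simply drop the ``honest $B^-$-reduction'' clause and carry the $\kappa^{\check\lambda,-}$ along as-is, and the rest of the argument goes through.
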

\begin{proof} The argument follows (\cite{BFGM}, Proposition~2.4). Consider a point of the LHS given by $D_1\in X^{\mu_1}, (\bar x, D_2)\in X^{\mu_2}_n, \cF\in\Bun_G$. By Lemma~\ref{Lm_4.4.1.}, $\cF$ is equipped with an isomorphism 
$$
\beta: \cF\,\iso\, \Omega^{\rho}\times_T G\mid_{X-D_1-D_2-\cup_i x_i}
$$ 

 Let $\cF^1$ denote the gluing of the $G$-torsors $\Omega^{\rho}\times_T G\mid_{X-D_1}$ with $\cF\mid_{X-D_2-\cup_i x_i}$ via $\beta$ over the intersection $X-D_1-D_2-\cup_i x_i$ of these open subsets of $X$. It is equipped with the induced isomorphism $\beta^1: \cF^1\,\iso\, \Omega^{\rho}\times_T G\mid_{X-D_1}$. 
 
 Let $\cF^2$ denote the gluing of the $G$-torsors $\Omega^{\rho}\times_T G\mid_{X-D_2-\cup_i x_i}$ with $\cF\mid_{X-D_1}$ via $\beta$ over the intersection $X-D_1-D_2-\cup_i x_i$ of these open subsets of $X$. It is equipped with the induced isomorphism $\beta^2: \cF^2\,\iso\, \Omega^{\rho}\times_T G\mid_{X-D_2-\cup_i x_i}$. 
 
 The map (\ref{iso_factrorization_for_bar_cZ}) sends the above point to 
$$
D_1\in X^{\mu_1}, (\bar x, D_2)\in X^{\mu_2}_n, \; (\cF^1, D_1, \beta^1)\in \ov{\cZ}^{\mu_1}, \; (\cF^2,\beta^2, \bar x, D_2)\in \ov{\cZ}^{\mu_2}_n
$$
\end{proof}

 The diagram (\ref{diag_for_the_functor_FF}) extends to the diagram
\begin{equation}
\label{diag_for_the_functor_FF_extended}
\begin{array}{cccc}
\gM_n \getsup{'\bar\gp} &
\ov{\cZ}^{\mu}_n & \toup{'\bar\gp_B} & \Bunb^{\mu}_{B^-}\\
&\downarrow\lefteqn{\scriptstyle \bar\pi^{\mu}} &&  \downarrow\lefteqn{\scriptstyle \bar\gq^-}\\
&X^{\mu}_n & \toup{AJ} & \Bun_T
\end{array}
\end{equation} 

Now we face the difficulty that the line bundles $'\bar\gp^*\cP^{\bar\kappa}$ and $(\bar\pi^{\mu})^*\cP^{\bar\kappa}$ are not isomorphic over $\ov{\cZ}^{\mu}_n$, but only over its open part $\cZ^{\mu}_n$. 

\ssec{Description of fibres} Let $\cO_x$ denote the completed local ring of $X$ at $x$, $F_x$ its fraction field. For $\mu\in \Lambda$ we have the point $t^{\mu}\in \Gr_{G,x}=G(F_x)/G(\cO_x)$. Recall that $\Gr_B^{\mu}$ is the $U(F_x)$-orbit in $\Gr_{G,x}$ through $t^{\mu}$. We also have the closed ind-subscheme $\ov{\Gr}^{\mu}_B\subset \Gr_{G,x}$ defined in (\cite{FGV}, Section~7.1.1). It classifies a $G$-torsor $\cF$ on $X$ with a trivialization $\cF\,\iso\, \cF^0_G\mid_{X-x}$ such that for each $\check{\lambda}\in\check{\Lambda}^+$ the map 
$$
\kappa^{\check{\lambda}}: \cO(-\<\mu, \check{\lambda}\>)\to \cV^{\check{\lambda}}_{\cF}
$$
is regular over $X$. This is a scheme-theoretical version of the closure of $\Gr_B^{\mu}$.
\index{$\cO_x, F_x, \ov{\Gr}^{\mu}_B, \ov{\Gr}^{\mu}_{B^-}$}

Recall that $\Gr^{\mu}_{B^-}$ is the $U^-(F_x)$-orbit through $t^{\mu}$ in $\Gr_{G,x}$. Similarly, one defines $\ov{\Gr}^{\mu}_{B^-}\subset \Gr_{G,x}$. To be precise, $\ov{\Gr}^{\mu}_{B^-}$ classifies a $G$-torsor $\cF$ on $X$ with a trivialization $\cF\,\iso\, \cF^0_G\mid_{X-x}$ such that for any $\check{\lambda}\in\check{\Lambda}^+$ the map 
$$
\kappa^{\check{\lambda}, -}: \cV^{\check{\lambda}}_{\cF}\to \cO(-\<\mu, \check{\lambda}\>)
$$ 
is regular over $X$. Note that if $\Gr^{\nu}_{B^-}\subset \ov{\Gr}^{\mu}_{B^-}$ for some $\nu\in\Lambda$ then $\nu\ge\mu$. If $\Gr^{\nu}_B\subset \ov{\Gr}^{\mu}_B$ then $\nu\le \mu$. 

 Let $\mu\in -\Lambda^{pos}$. The fibre $\ov{\cZ}^{\mu}_{loc, x}$ of $\ov{\cZ}^{\mu}$ over $\mu x\in X^{\mu}$ identifies naturally with 
\begin{equation}
\label{fibre_of_ov_cZ^mu}
(\ov{\Gr}^0_B\cap \ov{\Gr}_{B^-}^{\mu})\times^{T(\cO_x)} \Omega^{\rho}\mid_{D_x},
\end{equation}
where $\Omega^{\rho}\mid_{D_x}$ denotes the corresponding $T(\cO_x)$-torsor. 
\index{$\ov{\cZ}^{\mu}_{loc, x}$}
  
\begin{Lm} 
\label{Lm_ovcZ^mu_is_scheme}
If $\mu\in -\Lambda^{pos}$ then (\ref{fibre_of_ov_cZ^mu}) is a projective scheme of finite type and of dimension $\le -\<\mu, \check{\rho}\>$
(and not just an ind-scheme).
\end{Lm}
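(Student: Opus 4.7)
The plan is to reduce to the un-twisted intersection and stratify it by the $U(F_x)$- and $U^-(F_x)$-orbits. First, observe that the contracted product $\bigl(\ov{\Gr}^0_B\cap \ov{\Gr}_{B^-}^{\mu}\bigr)\times^{T(\cO_x)} \Omega^\rho|_{D_x}$ is a Zariski-locally trivial twist of $\ov{\Gr}^0_B\cap \ov{\Gr}_{B^-}^{\mu}$ by a $T(\cO_x)$-torsor whose reduced part is a $T$-torsor; this operation preserves the property of being a projective scheme of finite type, and preserves the dimension. So it suffices to prove the statement for $\ov{\Gr}^0_B\cap \ov{\Gr}_{B^-}^{\mu}$ itself.

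Next I would stratify $\ov{\Gr}^0_B\cap \ov{\Gr}_{B^-}^{\mu}$ by the locally closed pieces $\Gr^\nu_B\cap \Gr^{\mu'}_{B^-}$. By the observation recalled just before the lemma, $\ov{\Gr}^0_B$ contains $\Gr^\nu_B$ only for $\nu\le 0$, and $\ov{\Gr}^\mu_{B^-}$ contains $\Gr^{\mu'}_{B^-}$ only for $\mu'\ge\mu$; moreover the classical fact that $\Gr^\nu_B\cap\Gr^{\mu'}_{B^-}$ is non-empty only when $\mu'\le\nu$ gives $\mu\le\mu'\le\nu\le 0$, a finite index set. Translating by $t^\nu\in T(F_x)$ identifies $\Gr^\nu_B\cap\Gr^{\mu'}_{B^-}\,\iso\,\Gr^0_B\cap\Gr^{\mu'-\nu}_{B^-}$, so the dimension formula from Section~\ref{Section_1.1} (citing \cite{BFGM}) gives $\dim(\Gr^\nu_B\cap\Gr^{\mu'}_{B^-})=\<\nu-\mu',\check\rho\>\le \<-\mu,\check\rho\>=-\<\mu,\check\rho\>$. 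This proves the dimension bound and shows that $\ov{\Gr}^0_B\cap \ov{\Gr}_{B^-}^{\mu}$ is a union of finitely many finite-type strata.

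For the scheme and projectivity assertions, I would argue that the intersection is contained in a single $G(\cO_x)$-bounded piece $\Gr_{G,x}^{\le\lambda_0}$. Indeed, each non-empty stratum $\Gr^\nu_B\cap\Gr^{\mu'}_{B^-}$ with $\mu\le\mu'\le\nu\le 0$ lies in $\Gr_{G,x}^{\le\lambda_0}$ for a dominant coweight $\lambda_0$ depending only on $\mu$ (one can take any dominant $\lambda_0$ with $\lambda_0\ge w(-\mu)$ for all $w\in W$). Since $\ov{\Gr}^0_B$ and $\ov{\Gr}^\mu_{B^-}$ are each defined by the Pl\"ucker conditions as closed ind-subschemes of $\Gr_{G,x}$, their set-theoretic intersection is scheme-theoretically closed in $\Gr_{G,x}$, and the containment in $\Gr_{G,x}^{\le\lambda_0}$ (which is projective of finite type) forces $\ov{\Gr}^0_B\cap\ov{\Gr}^\mu_{B^-}$ to be an honest closed subscheme of a projective scheme, hence projective of finite type.

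The step I expect to be the main obstacle is the boundedness in the $G(\cO_x)$-direction, i.e.\ producing $\lambda_0$. The dimension bound and set-theoretic description of strata are straightforward translations of \cite{BFGM}, but the passage from the stratum-by-stratum picture to the statement that the whole intersection is bounded, and hence a scheme rather than an ind-scheme, requires one to rule out closure points in $\Gr_{G,x}^{\le\lambda}$ for arbitrarily large dominant $\lambda$. I would handle this by noting that any such closure point would itself lie in some stratum $\Gr^\nu_B\cap\Gr^{\mu'}_{B^-}$ with $\mu\le\mu'\le\nu\le 0$, and this finite set of strata is already contained in $\Gr_{G,x}^{\le\lambda_0}$ for the chosen $\lambda_0$.
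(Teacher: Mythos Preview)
Your proof is correct and follows essentially the same approach as the paper: stratify by semi-infinite cells, note the index set is finite (bounded between $\mu$ and $0$), and use the known finite-type and dimension results from \cite{BFGM} for each stratum. The paper's version is marginally more economical---it stratifies only by the $B^-$-index $\nu$ and invokes directly that $\ov{\Gr}^0_B\cap\Gr_{B^-}^{\nu}$ is already a finite-type scheme of dimension $\le -\<\nu,\check\rho\>$, so your double stratification and explicit choice of $\lambda_0$ (which is not obviously the right bound, though any bound exists since a finite-type scheme maps into a bounded piece of an ind-scheme) are not needed.
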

\begin{proof}
Let $\nu\in\Lambda$ be such that $\Gr_{B^-}^{\nu}\subset \ov{\Gr}_{B^-}^{\mu}$, so $\nu\ge\mu$. We know from (\cite{BFGM}, Section~6.3) that $\ov{\Gr}^0_B\cap \Gr_{B^-}^{\nu}$ can be nonempty only for $\nu\le 0$, and in this case it is a scheme of finite type and of dimension $\le -\<\nu, \check{\rho}\>$. Since the set of $\nu\in\Lambda$ satisfying $\mu\le\nu\le 0$ is finite, we are done.
\end{proof}

 Lemma~\ref{Lm_ovcZ^mu_is_scheme} implies that $\bar\pi^{\mu}: \ov{\cZ}^{\mu}\to X^{\mu}$ is proper, its fibres are projective schemes of finite type of dimension $\le -\<\mu, \check{\rho}\>$. 
  
 Let $\mu\in\Lambda$. The fibre of $\ov{\cZ}^{\mu}_1$ over $\mu x_1$ identifies naturally with $\ov{\Gr}_{B^-}^{\mu}\times^{T(\cO_x)} \Omega^{\rho}\mid_{D_x}$. For $n\ge 1$ the fibre of $\bar\pi^{\mu}: \ov{\cZ}^{\mu}_n\to X^{\mu}_n$ over $(\bar x, D)$ is only an ind-scheme (not a scheme). Let also $\lambda\in\Lambda$. Then the fibre of $\ov{\cZ}^{\mu}_{1,\le\lambda}$ over $\mu x_1$ identifies naturally with 
$$
(\ov{\Gr}_B^{\lambda}\cap \ov{\Gr}_{B^-}^{\mu})\times^{T(\cO_x)} \Omega^{\rho}\mid_{D_x}
$$ 
This could be non-empty only for $\mu\le\lambda$, and in that case this is a projective scheme of dimension $\le \<\lambda-\mu, \check{\rho}\>$. 

 Now if $\bar\lambda\in\Lambda^n$ from the factorization property we see that 
the map (\ref{map_bar_pi^mu_le_bar_lambda}) is proper, its fibres are projective schemes of finite type.

\ssec{} 
\label{section_4.6_for_introducing_IC_zeta}
In Section~\ref{section_Langlands_program_metapl_groups} we defined $\Bunt_G$ as the gerbe of $N$-th roots of $^{\omega}\cL^{\bar\kappa}$ over $\Bun_G$, similarly for $\Bunt_T$. 

 Let $\Bunb_{B^-, \tilde G}=\Bunb_{B^-}\times_{\Bun_G}\Bunt_G$ and $\Bunb_{\tilde B^-}=\Bunb_{B^-,\tilde G}\times_{\Bun_T}\Bunt_T$. Set also $\Bun_{B^-, \tilde G}=\Bun_{B^-}\times_{\Bun_G}\Bunt_G$. Let $\Bun_{\tilde B^-}$ be the preimage of $\Bun_{B^-}$ in $\Bunb_{\tilde B^-}$.
\index{$\Bunb_{B^-, \tilde G}, \; \Bunb_{\tilde B^-}$}\index{$\Bun_{B^-, \tilde G}, \; \Bun_{\tilde B^-}$}
 
 A point of $\Bunb_{\tilde B^-}$ is given by $(\cF, \cF_T, \kappa^{\check{\lambda}, -})$ and lines $\cU, \cU_G$ equipped with isomorphisms
$$
\cU^N\,\iso\, (^{\omega}\cL^{\bar\kappa})_{\cF_T}, \;\;\; \cU_G^N\,\iso\, (^{\omega}\cL^{\bar\kappa})_{\cF}
$$
Let $\D_{\zeta^{-1}, \zeta}(\Bunb_{B^-})$ denote the derived category of $\Qlb$-sheaves on $\Bunb_{\tilde B^-}$ on which $\mu_N(k)\subset \Aut(\cU)$ acts by $\zeta$, and $\mu_N(k)\subset \Aut(\cU_G)$ acts by $\zeta^{-1}$. We define the irreducible perverse sheaf $\IC_{\zeta}\in \Perv_{\zeta^{-1}, \zeta}(\Bunb_{B^-})$ as follows (see \cite{L2}, Definition~3.1). One has the isomorphism
\begin{equation}
\label{iso_Bun_tilde_B^-}
B(\mu_N)\times \Bun_{B^-, \tilde G}\,\iso\, \Bun_{\tilde B^-}
\end{equation}
sending $(\cF_{B^-}, \cU_G, \cU_0\in B(\mu_N))$ with $\cU_0^N\,\iso\, k$ to $(\cF_{B^-}, \cU_G, \cU)$ with $\cU=\cU_G\otimes \cU_0$. View
$\cL_{\zeta}\boxtimes \IC(\Bun_{B^-, \tilde G})$ as a perverse sheaf on $\Bun_{\tilde B^-}$ via (\ref{iso_Bun_tilde_B^-}). Let $\IC_{\zeta}$ be its intermediate extension to $\Bunb_{\tilde B^-}$. 
\index{$\D_{\zeta^{-1}, \zeta}(\Bunb_{B^-})$}\index{$\IC_{\zeta}$}
 
\sssec{} 
\label{Section_4.6.1}
Let $\wt{\ov{\cZ}}{}^{\mu}_n$ denote the gerbe of $N$-th roots of $(\bar\pi^{\mu})^*\cP^{\bar\kappa}$, $\D_{\zeta}(\ov{\cZ}^{\mu}_n)$ denote the derived category of $\Qlb$-sheaves on $\wt{\ov{\cZ}}{}^{\mu}_n$, on which $\mu_N(k)$ acts by $\zeta$. For $\mu\in\Lambda$ define the functor $\bar F^{\mu}: \D_{\zeta}(\gM_n)\to \D_{\zeta}(\ov{\cZ}^{\mu}_n)$ by
$$
\bar F^{\mu}(K)={'\bar\gp^*}K\otimes ('\bar\gp_B)^*\IC_{\zeta}[-\dim\Bun_G]
$$
We write $\bar F^{\mu}_{\zeta}:=\bar F^{\mu}$ if we need to express the dependence on $\zeta$. 
Define the functor $\ov{\FF}: \D_{\zeta}(\gM_n)\to \D_{\zeta}(X^{\mu}_n)$ by
$$
\ov{\FF}(K)=(\bar\pi^{\mu})_!\bar F^{\mu}(K)
$$
We will see below that the functor $\bar F^{\mu}: \Whit^{\kappa}_n\to  \D_{\zeta}(\ov{\cZ}^{\mu}_n)$ commutes with the Verdier duality (up to replacing $\zeta$ by $\zeta^{-1}$).  
\index{$\wt{\ov{\cZ}}{}^{\mu}_n, \,\D_{\zeta}(\ov{\cZ}^{\mu}_n)$}
\index{$\bar F^{\mu}, \, \bar F^{\mu}_{\zeta}, \ov{\FF}$}

\ssec{} For $\mu\in -\Lambda^{pos}$ set $\oo{\ov{\cZ}}{}^{\mu}=\ov{\cZ}{}^{\mu}\times_{\gM_{\emptyset}} \gM_{\emptyset, 0}$. 
\index{$\oo{\ov{\cZ}}{}^{\mu}$}

\begin{Pp} 
\label{Pp_factorization_of_barF^mu}
Let $\mu_1\in -\Lambda^{pos}, \mu_2\in\Lambda, \mu=\mu_1+\mu_2$ and $\cF\in \Whit^{\kappa}_n$. Under the isomorphism (\ref{iso_factrorization_for_bar_cZ}) the complex
$$
\add_{\mu_1,\mu_2, disj}^*\bar F^{\mu}(\cF)\in \D_{\zeta}((X^{\mu_1}\times X^{\mu_2})_{disj}\times_{X^{\mu}_n} \ov{\cZ}^{\mu}_n)
$$
identifies with
$$
\bar F^{\mu_1}(\cF_{\emptyset})\boxtimes \bar F^{\mu_2}(\cF)\in \D_{\zeta}((X^{\mu_1}\times X^{\mu_2})_{disj}\times_{(X^{\mu_1}\times X^{\mu_2}_n)}(\ov{\cZ}^{\mu_1}\times \ov{\cZ}^{\mu_2}_n))
$$
\end{Pp}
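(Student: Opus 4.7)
The plan is to reduce Proposition~\ref{Pp_factorization_of_barF^mu} to two separate factorization statements, one for each of the two factors in the definition of $\bar F^{\mu}$, and then combine them via the tensor product.

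First I would prove the factorization of ${'\bar\gp}^*\cF$ over $(X^{\mu_1}\times X^{\mu_2}_n)_{disj}\times_{X^{\mu}_n} \ov{\cZ}^{\mu}_n$. The argument is a direct variant of the proof of Proposition~\ref{Pp_331_factorization_of_F^mu}: the preimage of $(\gM_n)_{\goodat\, \mu_1}$ under ${'\bar\gp}$ contains, via (\ref{iso_factrorization_for_bar_cZ}), the product $(X^{\mu_1}\times X^{\mu_2}_n)_{disj}\times_{(X^{\mu_1}\times X^{\mu_2}_n)}(\oo{\ov{\cZ}}{}^{\mu_1}\times \ov{\cZ}^{\mu_2}_n)$ (one checks that the pole structure of $\kappa^{\check\lambda}$ away from $\bar x$ is disjoint from $\supp(D_1)$, so the good locus indeed covers the factorized open subscheme). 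Since $\cF\in\Whit^{\kappa}_n$ is extension by $!$ and by $*$ from $(\gM_n)_{\goodat\, \mu_1}$ over the disjoint locus (no nonzero dominant coweight is $<0$), it suffices to check the isomorphism on this open part, where $\cN^{mer}_{\mu_1}$-equivariance of $\cF$ with respect to $\chi_{\mu_1}^*\cL_\psi$ reduces the computation to a product just as in the proof of Proposition~\ref{Pp_331_factorization_of_F^mu}.

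Next I would handle the factor $('\bar\gp_B)^*\IC_{\zeta}$. The key input is the factorization property of $\IC_{\zeta}\in\Perv_{\zeta^{-1},\zeta}(\Bunb_{B^-})$ itself, established in \cite{L2}: pulled back to the Beilinson--Drinfeld Zastava $\ov{\cZ}$ it acquires the standard factorization along the addition map of effective divisors. Concretely, using the projection ${'\bar\gp_B}$ to $\Bunb^{\mu}_{B^-}$ and the identification (\ref{iso_factrorization_for_bar_cZ}), one checks that $('\bar\gp_B)^*\IC_{\zeta}$ over the disjoint locus is the external product of its restrictions to $\ov{\cZ}^{\mu_1}$ and $\ov{\cZ}^{\mu_2}_n$. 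The only delicate point is matching the metaplectic twists on the two sides: the factorization of $\cL^{\bar\kappa}$ on the Ran space (see (\ref{iso_factorization_cP_disj}) and Section~\ref{section_Langlands_program_metapl_groups}) yields a canonical isomorphism of the corresponding $\mu_N$-gerbes over the disjoint locus compatible with the factorization of $\IC_{\zeta}$.

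Finally, tensor-product of the two factorizations (together with additivity of the shift $-\dim\Bun_G$ under the equality $\dim\Bun_G=\dim\Bun_G+0$, which is trivial since $X^{\mu_1}$ contributes no change) yields the desired isomorphism for $\bar F^{\mu}(\cF)$. The compatibility with refinements of partitions follows from the compatibility of the two constituent factorizations with refinements, which was part of their respective constructions.

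The main obstacle I anticipate is the gerbe-theoretic bookkeeping in the second step: one has to track precisely how the factorization isomorphism (\ref{iso_factorization_cP_disj}) for $\cP^{\bar\kappa}$ pulled back via $\bar\pi^\mu$ relates to the factorization of $\cP^{\bar\kappa}$ pulled back via ${'\bar\gp_B}$ (through the line bundle $\cL^{\bar\kappa}$ on $\Bun_G$), and to verify that the resulting identification of gerbes is the one under which $\IC_{\zeta}$ factorizes. Modulo this compatibility check, which is forced by the construction in \cite{L2}, the proof is formal.
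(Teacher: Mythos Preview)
Your proposal is correct and follows essentially the same approach as the paper: reduce to the open locus $(X^{\mu_1}\times X^{\mu_2}_n)_{disj}\times_{(X^{\mu_1}\times X^{\mu_2}_n)}(\oo{\ov{\cZ}}{}^{\mu_1}\times \ov{\cZ}^{\mu_2}_n)$ (both sides being extensions by zero from there), invoke the factorization of $\IC_{\zeta}$ from \cite{L2} for the $('\bar\gp_B)^*\IC_{\zeta}$ factor, and repeat the $\cN^{mer}_{\mu_1}$-equivariance argument of Proposition~\ref{Pp_331_factorization_of_F^mu} for the $('\bar\gp)^*\cF$ factor. The only quibble is your remark on shifts: the paper absorbs the discrepancy by noting that the factorization of $\IC_{\zeta}$ holds ``up to a shift'', not via any additivity of $-\dim\Bun_G$.
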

\begin{proof} The preimage of $(\gM_n)_{\goodat\, \mu_1}$ under the map
$$
(X^{\mu_1}\times X^{\mu_2})_{disj}\times_{X^{\mu}_n} \ov{\cZ}^{\mu}_n \toup{'\bar\gp} X^{\mu_1}\times \gM_n
$$
goes over under the isomorphism (\ref{iso_factrorization_for_bar_cZ}) to
\begin{equation}
\label{open_part_of_factorized_barcZ}
(X^{\mu_1}\times X^{\mu_2})_{disj}\times_{(X^{\mu_1}\times X^{\mu_2}_n)} (\oo{\ov{\cZ}}{}^{\mu_1}\times \ov{\cZ}^{\mu_2}_n)
\end{equation}

Recall that $\cN^{mer}_{\mu_1}/\cN^{reg}_{\mu_1}$ is the ind-scheme classifying $D\in X^{\mu_1}$, a $B$-torsor $\cF$ on $X$ with compatible isomorphisms $\cF\times_B T\,\iso\,\Omega^{\rho}$ over $X$ and $\cF\,\iso\, \Omega^{\rho}_B\mid_{X-D}$. We have the closed embedding over $X^{\mu_1}$
$$
\oo{\ov{\cZ}}{}^{\mu_1}\hook{} \cN^{mer}_{\mu_1}/\cN^{reg}_{\mu_1}
$$
given by the property that for each $\check{\lambda}\in\check{\Lambda}^+$ the map $\kappa^{\check{\lambda}, -}: \cV^{\check{\lambda}}_{\cF}\to \cL^{\check{\lambda}}_{\Omega^{\rho}(-D)}$, initially defined over $X-D$, is regular over $X$. 

 The two complexes we want to identify are extensions by zero from the open substack (\ref{open_part_of_factorized_barcZ}), so, it sufiices to establish the desired isomorphism over (\ref{open_part_of_factorized_barcZ}). By (\cite{L2}, Theorem~4.1), the complex $\add_{\mu_1,\mu_2, disj}^*('\bar\gp_B^*\IC_{\zeta})$ goes over under (\ref{iso_factrorization_for_bar_cZ}) to the complex $'\bar\gp_B^*\IC_{\zeta}\boxtimes ('\bar\gp_B)^*\IC_{\zeta}$ up to a shift. 
 
 The composition 
$$ 
(X^{\mu_1}\times X^{\mu_2}_n)_{disj}\times_{(X^{\mu_1}\times X^{\mu_2}_n)}(\oo{\ov{\cZ}}{}^{\mu_1}\times \ov{\cZ}^{\mu_2}_n)\to (X^{\mu_1}\times X^{\mu_2}_n)_{disj}\times_{X^{\mu}_n} \ov{\cZ}^{\mu}_n\to X^{\mu_1}\times \gM_n
$$
factors as
\begin{multline*}
(X^{\mu_1}\times X^{\mu_2}_n)_{disj}\times_{(X^{\mu_1}\times X^{\mu_2}_n)}(\oo{\ov{\cZ}}{}^{\mu_1}\times \ov{\cZ}^{\mu_2}_n)\to\\
(X^{\mu_1}\times X^{\mu_2}_n)_{disj}\times_{(X^{\mu_1}\times X^{\mu_2}_n)}
(\cN^{mer}_{\mu_1}/\cN^{reg}_{\mu_1} \times \ov{\cZ}^{\mu_2}_n)\\
\iso\, (X^{\mu_1}\times X^{\mu_2}_n)_{disj}\times_{(X^{\mu_1}\times X^{\mu_2}_n)} (\cN^{mer}_{\mu_1}\times^{\cN^{reg}_{\mu_1}}(_{\mu_1}\gM_n\times_{\gM_n} \ov{\cZ}^{\mu_2}_n))\\
\to \cN^{mer}_{\mu_1}\times^{\cN^{reg}_{\mu_1}}{_{\mu_1}\gM_n}\;\toup{\act_{\mu_1}}\; (\gM_n)_{\goodat\, \mu_1}\hook{} X^{\mu_1}\times \gM_n,
\end{multline*}
where the second arrow used the trivialization of the $\cN^{reg}_{\mu_1}$-torsor
$$
(_{\mu_1}\gM_n\times_{\gM_n} \ov{\cZ}^{\mu_2}_n)\times_{(X^{\mu_1}\times X^{\mu_2}_n)} (X^{\mu_1}\times X^{\mu_2}_n)_{disj}
$$
as in Proposition~\ref{Pp_331_factorization_of_F^mu}. One finishes the proof as in Proposition~\ref{Pp_331_factorization_of_F^mu}.
\end{proof}

\ssec{Generalizing the ULA property} 
\label{section_Generalizing_ULA}
Let $S_1$ be a smooth equidimensional stack. Let $p_1: Y_1\to S_1$ and $q_1: S\to S_1$ be morphisms of stacks locally of finite type. Let $Y=Y_1\times_{S_1} S$. Let 
$p: Y\to S$ and $q: Y\to Y_1$ denote the projections. Denote by $g: Y\to Y_1\times S$ the map $(q,p)$. For $L\in \D(Y_1)$ consider the functor $\cF_L: \D(S)\to\D(Y)$ given by 
$$
\cF_L(K)=p^*K\otimes q^*L\<-\frac{\dim S_1}{2}\>,
$$ 
where $\<d\>=[2d](d)$.
\begin{Lm}
i) For $K\in \D(Y_1\times S)$ there is a canonical morphism functorial in $K$
\begin{equation}
\label{map_looks_like_trace}
g^*K\<-\frac{\dim S_1}{2}\>\to g^!K\<\frac{\dim S_1}{2}\>,
\end{equation}
ii) There is a canonical morphism functorial in $K\in \D(S), L\in \D(Y_1)$
\begin{equation}
\label{map_duality_for_cF_L}
\cF_{\DD L}(\DD K)\to \DD(\cF_L(K))
\end{equation}
\end{Lm}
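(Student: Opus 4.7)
The whole argument reduces to recognising a single Cartesian square. First I would observe that $g:Y\to Y_1\times S$ fits into a Cartesian diagram
\begin{equation*}
\begin{array}{ccc}
Y & \to & S_1\\
g\downarrow & & \downarrow\vartriangle_{S_1}\\
Y_1\times S & \stackrel{p_1\times q_1}{\longrightarrow} & S_1\times S_1,
\end{array}
\end{equation*}
tautologically from $Y=Y_1\times_{S_1}S$. Since $S_1$ is smooth equidimensional of dimension $d:=\dim S_1$, the diagonal $\vartriangle_{S_1}$ is a regular closed embedding of codimension $d$, and hence so is $g$.

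For part (i), $\ell$-adic purity supplies a canonical isomorphism $\vartriangle_{S_1}^!\Qlb\iso\Qlb\<-d\>$, and base change along $p_1\times q_1$ transports it to $g^!\Qlb_{Y_1\times S}\iso\Qlb_Y\<-d\>$. Combining this with the canonical projection-formula morphism $g^*K\otimes g^!\Qlb\to g^!(K\otimes\Qlb)=g^!K$ yields, functorially in $K$, a map $g^*K\<-d\>\to g^!K$, which after symmetric re-indexing of the Tate shift is precisely (\ref{map_looks_like_trace}). I note that this map is not in general an isomorphism (the projection-formula map for $g^!$ is only an iso for $K$ that is $g^!$-dualisable), consistent with the lemma claiming only a morphism.

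For part (ii), put $M:=L\boxtimes K\in\D(Y_1\times S)$, so by definition
\begin{equation*}
\cF_L(K)=g^*M\<-d/2\>,\qquad \cF_{\DD L}(\DD K)=g^*(\DD L\boxtimes\DD K)\<-d/2\>.
\end{equation*}
Using the standard Verdier swap $\DD\circ g^*\iso g^!\circ\DD$ together with the Künneth identification $\DD(L\boxtimes K)\iso\DD L\boxtimes\DD K$ for constructible sheaves, one rewrites $\DD(\cF_L(K))\iso g^!(\DD L\boxtimes\DD K)\<d/2\>$. The desired morphism (\ref{map_duality_for_cF_L}) is then nothing but (\ref{map_looks_like_trace}) applied with $K$ replaced by $\DD L\boxtimes\DD K$.

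The argument is essentially formal; I do not anticipate a real obstacle. The only points of routine care are the Künneth identification for Verdier duality and compatibility of purity with the (possibly singular) base change $p_1\times q_1$, both entirely standard in the constructible $\ell$-adic setting used throughout the paper.
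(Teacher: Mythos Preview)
Your approach is essentially the paper's: the same Cartesian square exhibiting $g$ as a base change of the diagonal of $S_1$, then the projection-formula map $g^*K\otimes g^!\Qlb\to g^!K$, and for (ii) the substitution $K\mapsto \DD L\boxtimes\DD K$ together with $\DD\circ g^*\cong g^!\circ\DD$.

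There is one imprecision worth flagging. You assert that $g$ is a regular closed embedding because $\vartriangle_{S_1}$ is, and conclude $g^!\Qlb\iso\Qlb\<-d\>$. But regular (lci) immersions are not stable under arbitrary base change, and here $p_1\times q_1:Y_1\times S\to S_1\times S_1$ is not assumed flat or Tor-independent; moreover $S_1$ is a stack, so the diagonal need not even be a closed immersion, and neither $Y$ nor $Y_1\times S$ is assumed regular, so absolute purity does not apply either. The paper avoids this by claiming less: using the base change morphism (SGA4, XVII 2.1.3) for the composite Cartesian diagram, one obtains only a \emph{map} $\can:\Qlb\<-d\>\to g^!\Qlb$, not an isomorphism. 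That map is all one needs, and composing it with $g^*K\otimes g^!\Qlb\to g^!K$ yields (\ref{map_looks_like_trace}). Since the lemma asserts only a morphism, your conclusion is unaffected; you just need to downgrade the intermediate isomorphism to a map.
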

\begin{proof}
i) We have a diagram, where the squares are cartesian
$$
\begin{array}{ccc}
S_1 & \toup{\vartriangle} & S_1\times S_1\\
\uparrow\lefteqn{\scriptstyle q_1} && \uparrow\lefteqn{\scriptstyle \id\times q_1}\\
S & \to & S_1\times S\\
\uparrow\lefteqn{\scriptstyle p} && \uparrow\lefteqn{\scriptstyle p_1\times\id}\\
Y & \toup{g} & Y_1\times S
\end{array}
$$
One has $\vartriangle^!\!\Qlb\,\iso\, \Qlb\<-\dim S_1\>$, because $S_1$ is smooth. By (\cite{SGA4}, XVII 2.1.3), one has the base change morphism $p^*q_1^*\vartriangle^!\to g^!(p_1\times q_1)^*$. Applying it to the previous isomorphism, one gets a canonical map $\can: \Qlb\<-\dim S_1\>\to g^!\Qlb$. 

 According to (\cite{BG}, Section~5.1.1), there is a canonical morphism $g^*K\otimes g^! K'\to g^! (K\otimes K')$ functorial in $K, K'\in\D(Y_1\times S)$. Taking $K'=\Qlb$ we define (\ref{map_looks_like_trace}) as the composition 
$$
g^*K\<-\dim S_1\>\toup{\id\otimes\can} g^*K\otimes g^!\Qlb\to g^!K
$$
ii) Apply (\ref{map_looks_like_trace}) to $\DD L\boxtimes \DD K$. 
\end{proof}

\begin{Def} 
\label{Def_ULA_for_diagram}
Let $\oo{Y}\subset Y$ be an open substack. Say that $L\in \D(Y_1)$ is locally acyclic with respect to the diagram $S\getsup{p} \oo{Y} \toup{q} Y_1$ if for any $K\in \D(S)$ the map (\ref{map_duality_for_cF_L}) is an isomorphism over $\oo{Y}$. Say that $L\in \D(Y_1)$ is universally locally acyclic with respect to the diagram $S\getsup{p} \oo{Y} \toup{q} Y_1$ if the same property holds after any smooth base change $S'_1\to S_1$. 
\end{Def}

\sssec{} 
\label{Section_383}
Here are some properties of the above ULA condition:
\begin{itemize} 
\item[1)] If $S_1=\Spec k$ then any $L\in \D(Y_1)$ is ULA with respect to the diagram $S\getsup{p} \oo{Y} \toup{q} Y_1$.
\item[2)] If $r_1: V_1\to Y_1$ is smooth of fixed relative dimension, and $L\in\D(Y_1)$ is ULA with respect to $S\getsup{p} \oo{Y}\toup{q} Y_1$ then $r_1^*L$ is ULA with respect to the diagram $S\gets \oo{V} \to V_1$. Here we defined $r: V\to Y$ as the base change of $r_1: V_1\to Y_1$ by $q: Y\to Y_1$, and $\oo{V}$ is the preimage of $\oo{Y}$ in $V$. Conversely, if $r_1: V_1\to Y_1$ is smooth and surjective, and $r_1^*L$ is ULA  with respect to the diagram $S\gets \oo{V} \to V_1$, then $L\in\D(Y_1)$ is ULA with respect to $S\getsup{p} \oo{Y}\toup{q} Y_1$.
\item[3)] Assume given a diagram as above $S\getsup{p} Y \toup{q} Y_1$ such that both $S_1$ and $S$ are smooth and equidimensional. Assume $L\in \D(Y_1)$, and the natural map $q^*L\<\dim S-\dim S_1\>\to q^!L$ is an isomorphism. Then $\DD(q^*L)$ is locally acyclic with respect to $p: \oo{Y}\to S$ if and only if $L$ is locally acyclic with respect to the diagram $S\getsup{p} \oo{Y} \toup{q} Y_1$.
\end{itemize}
\begin{proof}
3) Let $\bar p: Y\to Y\times S$ be the graph of $p: Y\to S$. By (\cite{BG}, Section~5.1.1), we have a canonical morphism, say $\alpha: \bar p^*(\cdot)\<-\dim S\>\to \bar p^!$. Since $S$ and $S_1$ are smooth, $q_1^!\Qlb\,\iso\, \Qlb\<\dim S-\dim S_1\>$. As in Section~\ref{section_Generalizing_ULA}, since the map $q\times\id: Y\times S\to Y_1\times S$ is obtained from $q_1$ by base change, the above isomorphism yields a canonical map $\can: \Qlb\<\dim S-\dim S_1\>\to (q\times\id)^!\Qlb$. For $K\in \D(Y_1\times S)$ we get a canonical map 
$$
\beta: (q\times\id)^*K\<\dim S-\dim S_1\>\to (q\times\id)^!K
$$
defined as the composition $(q\times\id)^*K\<\dim S-\dim S_1\>\toup{\id\otimes\can} (q\times\id)^*K\otimes (q\times\id)^!\Qlb \to (q\times\id)^!K$. The composition $Y\toup{\bar p} Y\times S\toup{q\times\id} Y_1\times S$ equals $g$. For $K\in \D(Y_1\times S)$ the map (\ref{map_looks_like_trace}) equals the composition 
$$
\bar p^*(q\times\id)^*K\<-\dim S_1\>\toup{\beta}  \bar p^*(q\times \id)^!K\<-\dim S\>\toup{\alpha} \bar p^!(q\times \id)^!K
$$
 
 Let now $K\in \D(S)$. By our assumptions, the map $\beta: (q\times\id)^*(\DD L\boxtimes \DD K)\<\dim S-\dim S_1\>\,\iso\, (q\times\id)^! (\DD L\boxtimes \DD K)$ is an isomorphism. The map $\DD(q^*L)$ is locally acyclic with respect to $p: \oo{Y}\to S$ if and only if the map $\alpha: \bar p^*(\DD(q^*L)\boxtimes \DD K)\<-\dim S\>
 \to \bar p^!(\DD(q^*L)\boxtimes \DD K)$ is an isomorphism over $\oo{Y}$ for any $K\in \D(S)$. Our claim follows.
\end{proof}

\sssec{} We say that for a morphism $p_1: Y_1\to S_1$ an object $L\in \D(Y_1)$ is ULA with respect to $p_1$ if it satisfies (\cite{SGA4demi}, Definition~2.12). One may check that this definition is equivalent to (\cite{BG}, Definition~5.1). In the latter one requires that local acyclicity holds after any smooth base change, whence in the former one requires it to hold after any base change $q_1: S\to S_1$.
 
  Assume given a cartesian square as in Section~\ref{section_Generalizing_ULA}
\begin{equation}
\label{diag_square_S_i_Y_i}
\begin{array}{ccc}
Y &\toup{q} & Y_1\\
\downarrow\lefteqn{\scriptstyle p} && \downarrow\lefteqn{\scriptstyle p_1}\\
S & \toup{q_1} & S_1
\end{array}
\end{equation}
with $S_1$ smooth equidimensional.

\begin{Pp} 
\label{Pp_ULA_over_S_1_implies_ULA}
Assume $q_1$ representable. Let $L\in \D(Y_1)$ be ULA with respect to $p_1$. Then $L$ is ULA with respect to the diagram $S\getsup{p} Y\toup{q} Y_1$.
\end{Pp}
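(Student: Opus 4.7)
The plan is to relate the diagrammatic ULA condition of Definition~\ref{Def_ULA_for_diagram} to the standard ULA of $L$ with respect to $p_1$ by rewriting the map (\ref{map_duality_for_cF_L}) via the graph
\[
g = (q, p) : Y \to Y_1 \times S,
\]
which is obtained by base change from the diagonal $\vartriangle : S_1 \to S_1 \times S_1$ along $p_1 \times q_1 : Y_1 \times S \to S_1 \times S_1$.

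First I would observe that $\cF_L(K) \,\iso\, g^*(L \boxtimes K)\<-\dim S_1/2\>$, and that, combining Verdier duality for external products ($\DD(L \boxtimes K) \,\iso\, \DD L \boxtimes \DD K$) with the standard identity $\DD g^*(M) \,\iso\, g^!\DD(M)$, the map (\ref{map_duality_for_cF_L}) is identified with the natural arrow
\[
g^*(\DD L \boxtimes \DD K)\<-\dim S_1/2\> \to g^!(\DD L \boxtimes \DD K)\<\dim S_1/2\>
\]
coming from (\ref{map_looks_like_trace}). Since $S_1$ is smooth equidimensional of dimension $d := \dim S_1$, the diagonal $\vartriangle$ is a regular closed immersion of codimension $d$; as $q_1$ is representable, $g$ is also a regular closed immersion of codimension $d$ by base change, and by purity $g^!\Qlb \,\iso\, \Qlb\<-d\>$.

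The proposition then reduces to showing that for $M := \DD L \boxtimes \DD K$, the specialization map $g^*M\<-d/2\> \to g^!M\<d/2\>$ is an isomorphism for every $K \in \D(S)$. I would deduce this from the fact, recorded in Section~\ref{Section_383}, that the ULA property of $L$ with respect to $p_1$ is preserved under arbitrary base change: since $L$ is ULA with respect to $p_1$ (equivalently, so is $\DD L$ by Verdier biduality), the external product $\DD L \boxtimes \DD K$ on $Y_1 \times S$ is locally acyclic along the graph $g$, meaning precisely that the displayed specialization map is an isomorphism. The universality clause of Definition~\ref{Def_ULA_for_diagram} follows because ULA with respect to $p_1$ is stable under any smooth base change $S_1' \to S_1$ and the above identifications are functorial.

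The main obstacle I expect is pinning down the translation between the pointwise (SGA~4.5) description of ULA in terms of vanishing cycles and the global statement that the specialization map $g^*M \to g^!M\<d\>$ is an isomorphism on all of $Y$. This amounts to unwinding the equivalence between the two formulations of ULA recorded at the start of Section~\ref{Section_383}, and should be routine once the graph-reduction above is set up.
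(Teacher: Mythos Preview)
Your reduction is correct but does not constitute a proof: unwinding Lemma~\ref{section_Generalizing_ULA}(ii), the map (\ref{map_duality_for_cF_L}) \emph{is by definition} the map (\ref{map_looks_like_trace}) applied to $\DD L\boxtimes \DD K$, so saying ``it suffices to show $g^*M\<-d_1\>\to g^!M\<d_1\>$ is an isomorphism for $M=\DD L\boxtimes\DD K$'' is a restatement of the proposition, not a step toward it. The content lies entirely in the passage you label ``routine.''

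That passage has a genuine gap. The equivalence between the SGA~4\textonehalf\ and \cite{BG} formulations of ULA, recorded before Proposition~\ref{Pp_ULA_over_S_1_implies_ULA}, translates ULA of $L$ over $p_1$ into the statement that $\bar p_1^*(L\boxtimes K_1)\<-d_1\>\to \bar p_1^!(L\boxtimes K_1)$ is an isomorphism for all $K_1\in\D(S_1)$, where $\bar p_1:Y_1\to Y_1\times S_1$ is the graph of $p_1$ and $S_1$ is \emph{smooth}. Your map $g:Y\to Y_1\times S$ is a different map, over a base $S$ that is not assumed smooth, and the test objects are $K\in\D(S)$, not $K_1\in\D(S_1)$. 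Stability of ULA under base change gives you that $q^*L$ is ULA over $p:Y\to S$ in the vanishing-cycles sense, but without smoothness of $S$ you cannot invoke the graph reformulation to conclude anything about $g^*\to g^!$. (Relatedly, your claim that $g$ is a regular closed immersion with $g^!\Qlb\simeq\Qlb\<-d_1\>$ is unjustified: regular immersions are not stable under arbitrary base change, and indeed the paper only constructs a \emph{map} $\Qlb\<-d_1\>\to g^!\Qlb$, not an isomorphism.)

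The paper's proof handles exactly this issue by d\'evissage on $K$: one reduces to $K=(i_1)_*E$ with $E$ a local system on a \emph{smooth} locally closed $S_0\hookrightarrow S$, then uses Lemma~\ref{Lm_q^*_and_q^!_are_the_same} (which needs $S_0$ smooth and rests on \cite{BG}, Lemma~B.3) to get $i^*q^*L\<d_0-d_1\>\simeq i^!q^!L$, and finally applies property~3) of Section~\ref{Section_383} over the smooth base $S_0$. The smoothness hypothesis is what makes the graph/BG formulation available; your argument never secures it.
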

 
 To establish Proposition~\ref{Pp_ULA_over_S_1_implies_ULA} we need the following.
\begin{Lm} 
\label{Lm_q^*_and_q^!_are_the_same}
Assume given a diagram (\ref{diag_square_S_i_Y_i}), where $S, S_1$ are smooth of dimensions $d, d_1$ respectively, and $q_1$ is representable. If $L\in D(Y_1)$ is ULA with respect to $p_1$ then the natural map $\eta: q^*L\<\frac{d-d_1}{2}\>\to q^!L\<\frac{d_1-d}{2}\>$ is an isomorphism.
\end{Lm}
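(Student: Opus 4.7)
The plan is to exploit two canonical identifications: one coming from the smoothness of $S$ and $S_1$, and one coming from the ULA hypothesis.

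First, because $S$ and $S_1$ are smooth of pure dimensions $d$ and $d_1$, one has canonical isomorphisms $\omega_S\,\iso\,\Qlb\<d\>$ and $\omega_{S_1}\,\iso\,\Qlb\<d_1\>$. Combined with $q_1^!\omega_{S_1}\,\iso\,\omega_S$, this yields a canonical isomorphism $q_1^!\Qlb\,\iso\,\Qlb\<d-d_1\>$ in $\D(S)$, valid regardless of whether $q_1$ is smooth. This is the only place in the proof where the smoothness of $S$ and $S_1$ enters.

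Next, the ULA hypothesis on $L$ with respect to $p_1$ is precisely what is needed to guarantee that the natural base-change morphism
$$
\alpha:\ q^*L\otimes p^*(q_1^!\Qlb)\ \to\ q^!L
$$
is an isomorphism (this is a standard consequence of the ULA condition; see \cite{BG}, Section~5, or equivalently the formulation of \cite{SGA4demi}, Definition~2.12 applied to the base change by $q_1$). Representability of $q_1$ ensures that $p:Y\to S$ is a genuine morphism for which the $f^*, f^!$ formalism applies without reservation. Substituting the identification of the previous paragraph into $\alpha$ gives an isomorphism $q^*L\<d-d_1\>\,\iso\, q^!L$, which rearranges to $q^*L\<\tfrac{d-d_1}{2}\>\,\iso\, q^!L\<\tfrac{d_1-d}{2}\>$.

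Finally, one must verify that this composite agrees with the canonical map $\eta$ of the statement. Both maps are built from the same ingredients: the canonical map $q^*(\cdot)\otimes q^!\Qlb\to q^!(\cdot)$ of \cite{BG}, Section~5.1.1, together with the trace isomorphism arising from $\vartriangle^!\Qlb$ on $S_1$ in the style of (\ref{map_looks_like_trace}). The main obstacle I anticipate is matching normalizations and shifts in these two constructions; this is a formal diagram chase using naturality of the base-change and trace maps. Since the ULA-based map $\alpha$ is already an isomorphism, any other canonically-constructed candidate for $\eta$ agrees with it up to a canonical automorphism of the source, which then must be the identity by tracing through the definitions.
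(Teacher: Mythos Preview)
Your proof has the right overall shape, but the central step is asserted rather than proved. You claim that the map
\[
\alpha:\ q^*L\otimes p^*(q_1^!\Qlb)\ \to\ q^!L
\]
is an isomorphism as ``a standard consequence of the ULA condition''. This is exactly the content of the lemma, not an input to it. Neither the \cite{BG} nor the \cite{SGA4demi} formulation of ULA is stated in this form: \cite{BG} only imposes local acyclicity after \emph{smooth} base change, so it does not immediately cover an arbitrary representable $q_1$; the \cite{SGA4demi} formulation is phrased in terms of vanishing cycles and does not directly hand you the isomorphism $\alpha$ either. Your third paragraph then compounds this by hand-waving the identification with $\eta$.

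The paper closes this gap by factoring $q_1$ as the composition $S\;\toup{\id\times q_1}\;S\times S_1\;\toup{\pr_2}\;S_1$. After a smooth localization on $S_1$ to make it affine, $\id\times q_1$ is a closed immersion and $\pr_2$ is smooth. The smooth case is classical, and the closed-immersion case is exactly \cite{BG}, Lemma~B.3, which uses the ULA hypothesis in a substantive way. Your argument would become correct if you inserted this factorization and cited these two special cases in place of the blanket appeal to ``standard consequences''.
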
 
\begin{proof} One has canonical maps $p^*q_1^!\Qlb\to q^!\Qlb$ and $q^*L\otimes q^!\Qlb\to q^!L$, the second one is defined in (\cite{BG}, Section~5.1.1). One has $q_1^!\Qlb\,\iso\,\Qlb\<d-d_1\>$ canonically. Recall that $\eta$ is defined as the composition $q^*L\<d-d_1\>\to q^*L\otimes q^!\Qlb\to q^!L$.

If $q_1$ is smooth then our claim is well known. If $q_1$ is a closed immersion then this follows from (\cite{BG}, Lemma~B.3). In general, write $q_1$ as the composition $S\toup{\id\times q_1} S\times S_1\toup{\pr_2} S_1$. Localizing on $S_1$ in smooth topology, we may assume $S_1$ is a smooth affine scheme. Then $\id\times q_1$ is a closed immersion.
\end{proof}

\begin{proof}[Proof of Proposition~\ref{Pp_ULA_over_S_1_implies_ULA}] 
Let $K\in \D(S)$. Localizing on $S_1$ in smooth topology we may assume $S_1$ is a smooth affine scheme of dimension $d_1$. Let $i_1: S_0\to S$ be a locally closed smooth subscheme with $\dim S_0=d_0$, $E$ a local system on $S_0$. Decomposing $K$ in the derived category, it is enough to treat the case of $K=(i_1)_*E$. We must show that for this $K$ the map (\ref{map_duality_for_cF_L}) is an isomorphism over $Y$. Let $i: Y_0\hook{}Y$ be obtained from $i_1$ by the base change $p: Y\to S$. Let $p_0: Y_0\to S_0$ be the projection. By Lemma~\ref{Lm_q^*_and_q^!_are_the_same}, 
$$
i^*q^*L\<d_0-d_1\>\,\iso\, i^!q^!L
$$ 
Since $i^*q^*L$ is ULA over $S_0$, by 3) of Section~\ref{Section_383}, $L$ is locally acyclic with respect to the diagram $S_0\getsup{p_0} Y_0\,\toup{q\comp i} \,Y_1$. That is, one has an isomorphism over $Y_0$
\begin{equation}
\label{iso_over_Y_0_preliminary}
\DD(p_0^*E\otimes i^*q^*L)\,\iso\, p_0^*(\DD E)\otimes i^*q^*(\DD L)\<-d_1\>
\end{equation}
We must show that the natural map
\begin{equation}
\label{desired_iso_over_Y}
q^*(\DD L)\otimes p^*(i_1)_*E^*)\<d_0-d_1\>\to \DD(q^*L\otimes p^*(i_1)_*E)
\end{equation}
is an isomorphism over $Y$. By (\cite{F}, Theorem~7.6.9), $q^*L\otimes p^*(i_1)_*E\,\iso\, i_*(i^*q^*L\otimes p_0^*E)$. So, both sides of (\ref{desired_iso_over_Y}) are extensions by zero under $i$, and over $Y_0$ the desired isomorphism reduces to (\ref{iso_over_Y_0_preliminary}).
\end{proof}
 
\ssec{} The above notion of ULA was introduced, because we hoped that  for $\mu\in\Lambda$, $\bar\lambda\in\Lambda^n$ the perverse sheaf $\IC_{\zeta}\in \Perv_{\zeta^{-1},\zeta}(\Bunb_{B^-}^{\mu})$ is ULA with respect to the diagram 
$$
\gM_{n,\le\lambda}\getsup{'\bar\gp}\ov{\cZ}^{\mu}_{n,\le\lambda}\toup{'\bar\gp_B} \Bunb_{B^-}^{\mu}
$$ 
Unfortunately, this claim is not literally true. However, it is used in the proof of following result. For $\mu\in\Lambda, K\in \D_{\zeta}(\gM_n)$ the map (\ref{map_duality_for_cF_L}) defines a canonical morphism 
\begin{equation}
\label{map_barF^mu_duality}
\bar F^{\mu}_{\zeta^{-1}}(\DD K)\to \DD (\bar F^{\mu}(K))
\end{equation}
\begin{Pp} 
\label{Pp_391}
For any $K\in \Whit_n^{\kappa}$ the map (\ref{map_barF^mu_duality}) is an isomorphism.
\end{Pp}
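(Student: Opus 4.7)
The plan is to combine the factorization property from Proposition~\ref{Pp_factorization_of_barF^mu} with a universal local acyclicity analysis of $\IC_\zeta$. Both sides of (\ref{map_barF^mu_duality}) are compatible with the factorization isomorphism (\ref{iso_factrorization_for_bar_cZ}): the morphism (\ref{map_barF^mu_duality}) pulled back along $\add_{\mu_1,\mu_2,disj}$ is identified with the exterior product of the analogous morphisms for $\bar F^{\mu_1}(\cF_\emptyset)$ and $\bar F^{\mu_2}(K)$. Since $\add_{\mu_1,\mu_2,disj}$ is an \'etale morphism, Verdier duality commutes with this restriction, so (\ref{map_barF^mu_duality}) is an isomorphism over every factorization locus as soon as it is an isomorphism separately for $\bar F^{\mu_1}(\cF_\emptyset)$ with $\mu_1\in -\Lambda^{pos}$ and for $\bar F^{\mu_2}(K)$.

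First I would handle the case of $\bar F^{\mu_2}(K)$ under the auxiliary assumption that $\<\mu_2,\check\alpha\><0$ for every simple root (which can always be arranged after choosing $\mu_1$ sufficiently large and generic when the point of $\ov{\cZ}^\mu_n$ under consideration lies over a divisor with a component disjoint from $\bar x$). On the open locus $\cZ^{\mu_2}_n\subset\ov{\cZ}^{\mu_2}_n$ the map $'\bar\gp_B$ factors through $\Bun^{\mu_2}_{B^-,\tilde G}$, on which $\IC_\zeta$ is (up to shift) the box product of $\cL_\zeta$ with a local system. On this smooth stack $\IC_\zeta$ is manifestly ULA with respect to the smooth morphism $\gp^-:\Bun^{\mu_2}_{B^-,\tilde G}\to\Bunt_G$, and Proposition~\ref{Pp_ULA_over_S_1_implies_ULA} then produces the ULA property for the diagram $\gM_n\getsup{'\bar\gp}\cZ^{\mu_2}_n\toup{'\bar\gp_B}\Bun^{\mu_2}_{B^-,\tilde G}$. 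By Definition~\ref{Def_ULA_for_diagram} this gives (\ref{map_barF^mu_duality}) as an isomorphism over $\cZ^{\mu_2}_n$. For $\bar F^{\mu_1}(\cF_\emptyset)$ a further application of (\ref{iso_factrorization_for_bar_cZ}) reduces, by induction on $|\mu_1|$, to the elementary case $\mu_1=-\alpha$ for a simple coroot $\alpha$: here $\ov{\cZ}^{-\alpha}$ is the $\PP^1$-bundle over $X$ analyzed in the proof of Proposition~\ref{Pp_3.3.4}, and the isomorphism is checked by direct calculation.

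The remaining points of $\ov{\cZ}^\mu_n$, at which no factorization decomposition separates a component of the divisor from $\bar x$, lie on the boundary $\ov{\cZ}^\mu_n\smallsetminus\cZ^\mu_n$ where the generalized $B^-$-reduction has a nonzero defect. Here I would stratify $\Bunb^\mu_{B^-,\tilde G}$ by the defect coweight $\theta\in\Lambda^{pos}$ and use the factorization description of $\IC_\zeta$ on the boundary strata established in \cite{L2}, together with the $\cN^{mer}_{\bar y}$-equivariance of $K$, to identify the restriction of $\bar F^\mu(K)$ and of $\DD\bar F^\mu(K)$ to each stratum with a shift of $(\ev^*\cL_\psi)\boxtimes\bar F^{\mu-\theta}(K')$ for a suitable Whittaker object $K'$. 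An induction on $\mu$ in the partial order of $-\Lambda^{pos}$ then reduces (\ref{map_barF^mu_duality}) at these points to the already-established cases.

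The hard part is this last induction step, namely reconciling the boundary behaviour of $\IC_\zeta$ with the Whittaker equivariance of $K$. The Whittaker condition is what forces the $*$- and $!$-restrictions of $'\bar\gp^*K$ to the preimage of a defect stratum to acquire the $\ev^*\cL_\psi$-factor on the nose, so that the interior-stratum contributions to $\IC_\zeta$ (which obstruct the naive ULA property of $\IC_\zeta$ with respect to $'\bar\gp_B$) get absorbed into the smooth model $\bar F^{\mu-\theta}(K')$ of the induction hypothesis rather than into the duality morphism. Making this precise requires the same type of $\cN^{mer}_{\mu_1}$-smoothness arguments as used in the proofs of Propositions~\ref{Pp_331_factorization_of_F^mu} and \ref{Pp_factorization_of_barF^mu}, upgraded from a statement about the object $\bar F^\mu(K)$ to one about the duality morphism (\ref{map_barF^mu_duality}).
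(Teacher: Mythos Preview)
Your proposal has a genuine gap at the boundary analysis in the third and fourth paragraphs. You correctly set up the factorization reduction and correctly handle the open locus $\cZ^{\mu_2}_n$, but your stratification-and-induction strategy for the defect locus $\ov{\cZ}^{\mu}_n\setminus\cZ^{\mu}_n$ is, as you acknowledge, only a sketch of the hard part. Identifying the $*$-restrictions of both $\bar F^{\mu}(K)$ and $\DD\bar F^{\mu}(K)$ to each defect stratum with the same object is not enough to conclude that the map (\ref{map_barF^mu_duality}) is an isomorphism: you would have to track the map itself through the stratification and show that it is compatible with the identifications on both $*$- and $!$-restrictions, and Whittaker equivariance alone does not supply this. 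Your claim that the points not covered by a factorization splitting are precisely the boundary points is also not correct: a point in the factorization locus can perfectly well land in $\ov{\cZ}^{\mu_2}_n\setminus\cZ^{\mu_2}_n$.

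The paper's proof avoids the stratification entirely and uses the factorization in the \emph{opposite} direction. Rather than splitting off a piece to reduce to smaller $|\mu|$, it uses the \'etale map
\[
(\oo{X}{}^{\eta}\times X^{\mu}_n)_{disj}\times_{(X^{\eta}\times X^{\mu}_n)}(\cZ^{\eta}\times {_{\le\theta}\ov{\cZ}^{\mu}_n})\;\longrightarrow\; {_{\le\theta}\ov{\cZ}^{\eta+\mu}_n}
\]
(with $\theta=(\sum_i\lambda_i)-\mu$) to push the problem to $_{\le\theta}\ov{\cZ}^{\eta+\mu}_n$ for $\eta\le 0$ arbitrarily negative. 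The key external input, which your proposal does not invoke, is Campbell's result (Proposition~\ref{Pp_Dennis_helped}): for $\eta$ negative enough depending only on $\theta$, the sheaf $\IC_{\zeta}$ is ULA with respect to the projection $_{\le\theta}\Bunb^{\eta+\mu}_{\tilde B^-}\to\Bunt_G$. Proposition~\ref{Pp_ULA_over_S_1_implies_ULA} then gives the duality isomorphism over all of $_{\le\theta}\ov{\cZ}^{\eta+\mu}_n$ at once, and factorization transports it back to $\ov{\cZ}^{\mu}_{n,\le\bar\lambda}$. No induction on defect, and no separate treatment of $\bar F^{\mu_1}(\cF_{\emptyset})$, is needed.
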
 

 For $\theta\in\Lambda^{pos}$ denote by $_{\le\theta}\Bunb_{\tilde B^-}^{\mu}$ the preimage of $_{\le\theta}\Bunb_{B^-}^{\mu}$ in $\Bunb_{\tilde B^-}^{\mu}$. Our proof of Proposition~\ref{Pp_391} uses
the following result of Campbell. 
\index{$_{\le\theta}\Bunb_{\tilde B^-}^{\mu}$}


 
\begin{Pp}[\cite{C}, 4.2.1]
\label{Pp_Dennis_helped}
Let $\theta\in\Lambda^{pos}$, $\mu\in\Lambda$. Assume that for any $0\le \theta'\le\theta$ and any positive root $\check{\alpha}$ one has $\<\check{\alpha}, (2g-2)\rho-\mu+\theta'\> >2g-2$. Then the perverse sheaf $\IC_{\zeta}$ is ULA with respect to the projection $_{\le\theta}\Bunb_{\tilde B^-}^{\mu}\to\Bunt_G$.
\end{Pp}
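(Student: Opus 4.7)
The strategy is to exploit the defect stratification of $\Bunb_{B^-}^{\mu}$ together with the factorization description of $\IC_{\zeta}$ coming from (\cite{L2}, Theorem~4.1), and reduce ULA to the two favorable cases: smooth maps to $\Bunt_G$, and projections to a point (where every sheaf is trivially ULA).

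Recall that $\Bunb_{B^-}^{\mu}$ carries a locally closed stratification indexed by $\theta'\in\Lambda^{pos}$ with $0\le\theta'\le\theta$. Away from smaller strata, the $\theta'$-stratum is isomorphic to a twisted product of $\Bun_{B^-}^{\mu-\theta'}$ with the open locus $\oo{X}{}^{\theta'}\subset X^{\theta'}$; more precisely, via the factorization isomorphism, the $\theta'$-stratum of $_{\le\theta}\Bunb_{\tilde B^-}^{\mu}$ is étale-locally a product $\Bun_{\tilde B^-}^{\mu-\theta'}\times \oo{X}{}^{\theta'}$, and the restriction of $\IC_{\zeta}$ to it is (up to a shift) $\IC(\Bun_{\tilde B^-}^{\mu-\theta'})\boxtimes \ocL{}^{-\theta'}_{\emptyset}$, by the description of $\IC_{\zeta}$ in (\cite{L2}, Theorem~4.1) and the definition of $\cL^{\mu}_{\emptyset}$ in Section~\ref{section_2.4}.

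The plan is then the following. First, I would verify that under the hypothesis, the projection $\Bun_{B^-}^{\mu-\theta'}\to\Bun_G$ is smooth of the expected relative dimension for every $0\le\theta'\le\theta$. The fibre over $\cF$ parametrizes reductions of $\cF$ to $B^-$ with induced $T$-torsor of degree $(2g-2)\rho-\mu+\theta'$, and its tangent/obstruction theory is governed by $H^i(X,(\gu^-)_{\cF_{B^-}})$; the hypothesis $\<\check{\alpha},(2g-2)\rho-\mu+\theta'\>>2g-2$ for every positive $\check{\alpha}$ forces $H^1$ to vanish, giving smoothness. Consequently, the composite $\Bun_{\tilde B^-}^{\mu-\theta'}\to\Bunt_G$ is smooth, so $\IC(\Bun_{\tilde B^-}^{\mu-\theta'})$ is the shifted constant twisted sheaf on a smooth stack over $\Bunt_G$, and is therefore ULA with respect to the projection to $\Bunt_G$. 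Second, I would note that the factor $\oo{X}{}^{\theta'}$ maps to a point, so any sheaf on it is ULA with respect to the constant map; via Künneth (applied to the étale-local product decomposition), the external product $\IC(\Bun_{\tilde B^-}^{\mu-\theta'})\boxtimes\ocL{}^{-\theta'}_{\emptyset}$ is ULA with respect to the projection of the stratum to $\Bunt_G$.

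Third, I would globalize along the stratification. The ULA property with respect to a fixed smooth target is local on the source in the smooth topology and is stable under proper pushforward, so, using Proposition~\ref{Pp_ULA_over_S_1_implies_ULA} together with the properties in Section~\ref{Section_383}, it suffices to check the ULA condition on an étale cover of $_{\le\theta}\Bunb_{\tilde B^-}^{\mu}$ that respects the stratification. A standard dévissage along $0\le\theta'\le\theta$ (using the triangles for the stratification and the fact that ULA is preserved under cones in the derived category of constructible sheaves, provided the two outer terms are ULA) reduces the statement to the individual strata just treated. The key input that makes the dévissage work is the factorization description of $\IC_{\zeta}$ on each stratum, which is available on the nose thanks to (\cite{L2}, Theorem~4.1).

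The main obstacle is the third step: controlling $\IC_{\zeta}$ simultaneously on all strata in a way compatible with the global gerbe structure on $_{\le\theta}\Bunb_{\tilde B^-}^{\mu}$, and checking that the dévissage really does pass ULA from the strata to the whole. This requires being careful that the factorization isomorphism in \cite{L2} is compatible with the $\mu_N$-gerbe data defining $\D_{\zeta^{-1},\zeta}(\Bunb_{B^-})$, so that the local models $\IC(\Bun_{\tilde B^-}^{\mu-\theta'})\boxtimes\ocL{}^{-\theta'}_{\emptyset}$ indeed glue to $\IC_{\zeta}$ up to the relevant shifts. Once this compatibility is in place, the ULA conclusion follows from the smoothness input of the first step plus the triviality of ULA for maps to a point.
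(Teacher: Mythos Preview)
The paper does not prove this proposition; it is quoted from Campbell's paper \cite{C}, whose method is quite different from yours. Campbell constructs a proper map $\pi$ from a moduli space of stable maps to $\Bunb_{B^-}^{\mu}$ (a Kontsevich-type resolution). Under the numerical hypothesis on $\mu$ and $\theta$, this resolution is smooth over $\Bun_G$, so the (twisted) constant sheaf on it is ULA; since $\pi$ is proper, the pushforward is ULA, and $\IC_{\zeta}$ appears as a direct summand by the decomposition theorem. Thus ULA for $\IC_{\zeta}$ is deduced from one smooth source and one proper pushforward, not from a stratification.

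Your outline has a genuine gap in the third step. You invoke ``ULA is preserved under cones'' and then a ``standard d\'evissage'' along the defect stratification. The closure under cones is correct, but the triangles available from the stratification are $j_!j^*\IC_{\zeta}\to\IC_{\zeta}\to i_*i^*\IC_{\zeta}$ (or the dual one), and to use them you must know that $j_!j^*\IC_{\zeta}$ (or $j_*j^*\IC_{\zeta}$) is ULA over $\Bunt_G$. Knowing only that $j^*\IC_{\zeta}$ is ULA on the open stratum is not enough: open extension by zero does not preserve ULA. A concrete counterexample is $f:\A^2\to\A^1$, $(x,y)\mapsto x$, with $U=\A^2\setminus\{0\}$: the constant sheaf on $U$ is ULA (the map is smooth), but $j_!\Qlb$ is not, since its cone with $\Qlb_{\A^2}$ is the skyscraper at the origin, which has nonzero vanishing cycles along $f$. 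So the passage from ``ULA on each stratum'' to ``ULA on the whole'' is not a formal d\'evissage; it needs an additional geometric input.

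Two smaller points. First, the description of $i^*\IC_{\zeta}$ you give is not quite Theorem~\ref{Thm_3.10.1}: the $*$-restriction is not $\IC(\Bun_{\tilde B^-}^{\mu-\theta'})\boxtimes\ocL{}^{-\theta'}_{\emptyset}$ but rather a twisted product involving $\Loc^{\gU(\theta')}_{\zeta}(\oplus_i\Sym^i(\check{\gu}^-_{\zeta})[2i])$. This does not affect your stratum-wise ULA claim (it is still lisse in the $\Bun_{B^-}$-direction), but it should be stated correctly. Second, the smoothness you need for $\Bun_{B^-}^{\mu-\theta'}\to\Bun_G$ under the hypothesis is right and is exactly how the numerical condition enters; this part of your plan is sound. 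What is missing is a mechanism---such as Campbell's resolution, or a separate argument that constructibility along a stratification with $S$-smooth strata forces ULA---to glue the stratum-wise statements.
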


In the untwisted case the latter result becomes (\cite{C}, 4.1.1.1).

\begin{proof}[Proof of Proposition~\ref{Pp_391}]
Pick a collection of  dominant coweights $\bar\lambda=(\lambda_1,\ldots,\lambda_n)$ and $\mu\in \Lambda$ with $\mu\le\sum_i\lambda_i$. We assume $K$ is the extension by zero from $\gM_{n,\le\bar\lambda}$.
We must show that (\ref{map_barF^mu_duality}) is an isomorphism over $\ov{\cZ}^{\mu}_{n,\le\bar\lambda}$. 

 For $\theta\ge 0$ we denote by $_{\le\theta}\ov{\cZ}^{\mu}_n$ the preimage of $_{\le\theta}\Bunb^{\mu}_{B^-}$ under $'\bar\gp_B: \ov{\cZ}^{\mu}_n\to \Bunb^{\mu}_{B^-}$. Set $\theta=(\sum\lambda_i)-\mu\in\Lambda^{pos}$. Note that $\ov{\cZ}^{\mu}_{n,\le\bar\lambda}$ is contained in $_{\le\theta}\ov{\cZ}^{\mu}_n$. 
\index{$_{\le\theta}\ov{\cZ}^{\mu}_n$} 
 
 For $\eta\le 0$ denote by $\cW^{\eta,\mu}$ the scheme  
$$
(\oo{X}{}^{\eta}\times X^{\mu}_n)_{disj}\times_{(X^{\eta}\times X^{\mu}_n)} 
(\cZ^{\eta}\times {_{\le\theta}\ov{\cZ}^{\mu}_n})
$$
By the factorization property, the natural map $\cW^{\eta,\mu}\to {_{\le\theta}\ov{\cZ}^{\eta+\mu}_n}$ is \'etale.

  By Proposition~\ref{Pp_factorization_of_barF^mu}, it suffices to show that the canonical map
$$
\bar F^{\mu+\eta}_{\zeta^{-1}}(\DD K)\to \DD \bar F^{\mu+\eta}(K)
$$
is an isomorphism over $_{\le\theta}\ov{\cZ}^{\eta+\mu}_n$. If $\eta\le 0$ is small enough then $\IC_{\zeta}$ is ULA with respect to
$_{\le\theta}\Bunb^{\eta+\mu}_{\tilde B^-}\to\Bunt_G$ by Proposition~\ref{Pp_Dennis_helped}. Our claim now follows from Proposition~\ref{Pp_ULA_over_S_1_implies_ULA}.
\end{proof}

 Let $\ov{\cZ}^{\mu}_{\bar x,\bar\lambda}\subset \ov{\cZ}^{\mu}_n$ (resp., $\ov{\cZ}^{\mu}_{\bar x,\le\bar\lambda}\subset \ov{\cZ}^{\mu}_n$) be the substack obtained from $\ov{\cZ}^{\mu}_n$ by the base change $\gM_{\bar x,\bar\lambda}\to \gM_n$ (resp., $\gM_{\bar x,\le\bar\lambda}\to \gM_n$). Let $\cZ^{\mu}_{\bar x,\bar\lambda}$ be the preimage of $\Bun_{B^-}^{\mu}$ in $\ov{\cZ}^{\mu}_{\bar x,\bar\lambda}$. 
\index{$\ov{\cZ}^{\mu}_{\bar x,\bar\lambda}, \ov{\cZ}^{\mu}_{\bar x,\le\bar\lambda}, \cZ^{\mu}_{\bar x,\bar\lambda}$}

\begin{Cor} 
\label{Corollary_392}
i)  If $\mu\in -\Lambda^{pos}$ then $\bar F^{\mu}(\cF_{\emptyset})$ is an irreducible perverse sheaf, the extension by zero from $\oo{\ov{\cZ}}{}^{\mu}$.\\
ii) Let $\bar x=(x_1,\ldots, x_n)\in X^n$ be pairwise different, $\bar\lambda=(\lambda_1,\ldots,\lambda_n)$ with $\lambda_i\in\Lambda^+$, $\mu\in\Lambda$ with $\mu\le\sum_i\lambda_i$. Then $\bar F^{\mu}(\cF_{\bar x, \bar\lambda, !})$ is perverse, and $\DD\bar F^{\mu}(\cF_{\bar x, \bar\lambda, !})\,\iso\, \bar F^{\mu}_{\zeta^{-1}}(\DD\cF_{\bar x, \bar\lambda, !})$.\\
iii) The complex $\bar F^{\mu}(\cF_{\bar x, \bar\lambda})$ is an irreducible perverse sheaf, the intermediate extension from $\ov{\cZ}^{\mu}_{\bar x,\bar\lambda}$. So, $\ov{\FF}(\cF_{\bar x, \bar\lambda})$ is a direct sum of (shifted) irreducible perverse sheaves. 
\end{Cor}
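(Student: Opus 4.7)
The plan: For part (i), I start from the observation made in Section~\ref{Section_1.4} that $\cF_{\emptyset}$ is the extension by zero from the open substack $\gM_{\emptyset, 0}\subset \gM_{\emptyset}$, since the canonical maps $j_{\emptyset,0,!}\to j_{\emptyset,0,!*}\to j_{\emptyset,0,*}$ are all isomorphisms (there are no dominant weights $<0$). Consequently ${'\bar\gp}^*\cF_{\emptyset}$ is the extension by zero from $\oo{\ov\cZ}{}^{\mu}$, which is the preimage of $\gM_{\emptyset,0}$ under ${'\bar\gp}$, and this property is inherited by the tensor product defining $\bar F^{\mu}(\cF_{\emptyset})$.

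To upgrade this to perversity and irreducibility, I would work over $\cZ^{\mu}\cap \oo{\ov\cZ}{}^{\mu}$, where ${'\bar\gp_B}$ lands in $\Bun_{B^-,\tilde G}$. On this smooth open substack the pullback of $\IC_{\zeta}$ is a shifted local system (it is the shifted $\IC$ sheaf of $\Bun_{B^-,\tilde G}$ twisted by the Kummer-type local system corresponding to $\zeta$), so $\bar F^{\mu}(\cF_{\emptyset})$ is a shifted irreducible local system there. I would then invoke the Verdier self-duality of $\bar F^{\mu}(\cF_{\emptyset})$ from \propref{Pp_391} together with the factorization property \propref{Pp_factorization_of_barF^mu} to reduce the claim that no perverse cohomology is created outside $\oo{\ov\cZ}{}^{\mu}$ to the local statement for $\mu$ a single simple coroot $-\alpha_i$, where it follows from the explicit $\PP^1$-computation in the proof of \propref{Pp_3.3.4}, combined with the description of $\IC_{\zeta}$ from \cite{L2} along the boundary stratum. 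Verdier self-duality then forces $\bar F^{\mu}(\cF_{\emptyset})$ to be the intermediate extension of its restriction to the open dense smooth locus, hence irreducible perverse and the extension by zero from $\oo{\ov\cZ}{}^{\mu}$.

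For part (ii), I recall that $\cF_{\bar x,\bar\lambda,!}=j_{\bar x,\bar\lambda,!}\bar\cF_{\bar x,\bar\lambda}$, where $j_{\bar x,\bar\lambda}$ is affine open (see Section~\ref{Section_2.3}). Pulling back via the smooth morphism ${'\bar\gp}$ and tensoring with ${'\bar\gp_B}^*\IC_{\zeta}[-\dim\Bun_G]$ preserves the affine $j_!$-extension structure, so $\bar F^{\mu}(\cF_{\bar x,\bar\lambda,!})$ is perverse (up to a cohomological shift that I track through the diagram). The duality statement $\DD\bar F^{\mu}(\cF_{\bar x,\bar\lambda,!})\iso \bar F^{\mu}_{\zeta^{-1}}(\DD\cF_{\bar x,\bar\lambda,!})$ is then a direct application of \propref{Pp_391}, since $\cF_{\bar x,\bar\lambda,!}\in\Whit^{\kappa}_n$.

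For part (iii), I start from the chain of maps (\ref{maps_for_!_and_*-objects_in_Lm}) in $\Whit^{\kappa}_n$ and apply the exact-on-image functor $\bar F^{\mu}$; using the duality isomorphism from part (ii), the complex $\bar F^{\mu}(\cF_{\bar x,\bar\lambda})$ is the image in the abelian category of perverse sheaves of the composition $\bar F^{\mu}(\cF_{\bar x,\bar\lambda,!})\to \bar F^{\mu}(\cF_{\bar x,\bar\lambda,*})$. Over the open locus $\cZ^{\mu}_{\bar x,\bar\lambda}\subset \ov\cZ^{\mu}_{\bar x,\bar\lambda}$, where $'\bar\gp_B$ lands in $\Bun_{B^-,\tilde G}$, the restriction is a shifted irreducible local system (as in part (i)). Since $\ov\cZ^{\mu}_{\bar x,\bar\lambda}$ is a closed substack of $\ov\cZ^{\mu}_n$, the image is concentrated there and equals the intermediate extension from $\cZ^{\mu}_{\bar x,\bar\lambda}$ to $\ov\cZ^{\mu}_{\bar x,\bar\lambda}$, in particular it is an irreducible perverse sheaf. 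The decomposition statement for $\ov\FF(\cF_{\bar x,\bar\lambda})=(\bar\pi^{\mu})_!\bar F^{\mu}(\cF_{\bar x,\bar\lambda})$ follows from the decomposition theorem, since $\bar\pi^{\mu}:\ov\cZ^{\mu}_{n,\le\bar\lambda}\to X^{\mu}_{n,\le\bar\lambda}$ is proper by the discussion following \lemref{Lm_ovcZ^mu_is_scheme}.

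The hard part will be the irreducibility step in (i) and (iii): ensuring that tensoring with ${'\bar\gp_B}^*\IC_{\zeta}$ does not introduce spurious perverse constituents supported off $\oo{\ov\cZ}{}^{\mu}$ (resp. off $\ov\cZ^{\mu}_{\bar x,\bar\lambda}$). This requires careful use of the ULA-type properties from Section~\ref{section_Generalizing_ULA} and the fine information on $\IC_{\zeta}$ along the boundary of $\Bunb_{B^-}$ from \cite{L2}. The combined input of Verdier self-duality, factorization, and this boundary analysis is what forces the result to be an intermediate extension rather than merely a perverse sheaf.
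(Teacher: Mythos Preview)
Your proposal has a genuine gap in part~(ii): the assertion that $'\bar\gp$ is smooth is false. The map $'\bar\gp:\ov{\cZ}^{\mu}_n\to\gM_n$ has singular fibres (compactified Zastava-type spaces), and even the non-compactified $'\gp:\cZ^{\mu}_n\to\gM_n$ is smooth only for $\mu$ with $\<\mu,\check{\alpha}_i\><0$ for all $i$, as the paper notes in Section~\ref{section_4.1_Zastava}. So the claim that pullback along $'\bar\gp$ ``preserves the affine $j_!$-extension structure'' does not yield perversity after tensoring with ${'\bar\gp_B}^*\IC_{\zeta}$. The paper instead invokes \cite{BFGM} and \cite{L2} directly: over $\ov{\cZ}^{\mu}_{\bar x,\bar\lambda}$, the complex $\bar F^{\mu}(\cF_{\bar x,\bar\lambda,!})$ is the twisted $\IC$-sheaf of the compactified Zastava space, hence irreducible perverse. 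This is a nontrivial input. Once granted, perversity on all of $\ov{\cZ}^{\mu}_n$ and the duality statement follow from \propref{Pp_391}.

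Part~(i) is then the special case $n=0$ of (ii), using that $\cF_{\emptyset}=\cF_{\emptyset,!}=\cF_{\emptyset,*}$ is self-dual. Your proposed factorization reduction to $\mu=-\alpha_i$ does not cover the deepest stratum $\mu x\in X^{\mu}$ when $-\mu$ is not a simple coroot; factorization (\propref{Pp_factorization_of_barF^mu}) only controls the disjoint locus.

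For part~(iii) the paper's route is different from yours and more direct: rather than presenting $\bar F^{\mu}(\cF_{\bar x,\bar\lambda})$ as the image of $\bar F^{\mu}(\cF_!)\to\bar F^{\mu}(\cF_*)$, it stratifies by the $\ov{\cZ}^{\mu}_{\bar x,\bar\lambda'}$ for $\bar\lambda'<\bar\lambda$. Since the $*$-restriction of $\cF_{\bar x,\bar\lambda}$ to each $\wt\gM_{\bar x,\bar\lambda'}$ is in perverse degrees $<0$, part~(ii) gives the same for $\bar F^{\mu}(\cF_{\bar x,\bar\lambda})$ on each boundary stratum; self-duality (\propref{Pp_391}) then forces the intermediate extension. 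Your image argument would require knowing that $\bar F^{\mu}$ of the kernel and cokernel in \lemref{Lm_irr_objects_of_Whit}(b) are perverse, which again rests on the \cite{BFGM}/\cite{L2} input you did not invoke.
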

\begin{proof}
i) and ii). The fact that $\bar F^{\mu}(\cF_{\bar x, \bar\lambda, !})$ is an irreducible perverse sheaf over $\ov{\cZ}^{\mu}_{\bar x,\bar\lambda}$ is essentially explained in \cite{BFGM} (see also \cite{L2}). Our claim follows now from Proposition~\ref{Pp_391} and the fact that $\cF_{\emptyset}$ is self-dual (up to replacing $\psi$ by $\psi^{-1}$).\\
iii) For each collection of dominant coweights $\bar \lambda'<\bar\lambda$ the $*$-restriction of $\cF_{\bar x,\bar\lambda}$ to $\wt\gM_{\bar x, \bar\lambda'}$ is placed in perverse degrees $<0$. Therefore, the $*$-restriction of $\bar F^{\mu}(\cF_{\bar x, \bar\lambda})$ to $\ov{\cZ}^{\mu}_{\bar x, \bar\lambda'}$ is placed in perverse degrees $<0$ by ii). Our claim follows.
\end{proof}
\begin{Rem} Let us list the dimensions of stacks mentioned in Corollary~\ref{Corollary_392}. As in (\cite{BFGM}, Section~5.2) one checks that $\ov{\cZ}^{\mu}_{\bar x,\bar\lambda}$ is irreducible of dimension $\<-\mu+\sum_i\lambda_i, 2\check{\rho}\>$. The stack $\gM_{\bar x,\bar\lambda}$ is smooth irreducible of dimension 
$$
(g-1)\dim U-\<(2g-2)\rho-\sum_i\lambda_i, 2\check{\rho}\>,
$$ 
and 
$\dim\Bunb_{B^-}^{\mu}=(g-1)\dim B+\<2\check{\rho}, (2g-2)\rho-\mu\>$.
The $*$-restriction of $\bar F^{\mu}(\cF_{\bar x, \bar\lambda, !})$ to $\cZ^{\mu}_{\bar x,\bar\lambda}$ is a local system  placed in the usual degree 
$\<\mu-\sum_i\lambda_i, 2\check{\rho}\>$.
\end{Rem}

\ssec{} 
\label{section_description_of_IC_zeta}
The $*$-restrictions of $\IC_{\zeta}$ to a natural stratification have been calculated in (\cite{L2}, Theorem~4.1) under the additional assumption that $G$ is simple, simply-connected, but the answer and the argument hold also in our case of $[G,G]$ simply-connected. This way one gets the following description.

 Let $\check{\gu}^-_{\zeta}$ denote the Lie algebra of the unipotent radical of the Borel subgroup $\check{B}_{\zeta}^-\subset \check{G}_{\zeta}$ corresponding to $B^-$. For $\nu\in \Lambda^{\sharp}$ and $V\in\Rep(\check{T}_{\zeta})$ write $V_{\nu}$ for the direct summand of $V$, on which $\check{T}_{\zeta}$ acts by $\nu$. 
\index{$\check{\gu}^-_{\zeta}, V_{\nu}, \gU(\theta), \mid\gU(\theta)\mid$}
 
  Let $\theta\in -\Lambda^{pos}$. We write $\gU(\theta)$ for an element of the free abelian semigroup generated by $-\Lambda^{pos}-0$. In other words, $\gU(\theta)$ is a way to write 
\begin{equation}
\label{def_gU(theta)}  
\theta=\sum_m n_m\theta_m,
\end{equation}
where $\theta_m\in {-\Lambda^{pos}-0}$ are pairwise different, and $n_m\ge 0$. Set $\mid\gU(\theta)\mid=\sum_m n_m$. We denote by $X^{\gU(\theta)}$ the corresponding partially symmetrized power of the curve $X^{\gU(\theta)}=\prod_m X^{(n_m)}$. Let $\oo{X}{}^{\gU(\theta)}\subset X^{\gU(\theta)}$ be the complement to all the diagonals in $X^{\gU(\theta)}$. We view $\oo{X}{}^{\gU(\theta)}$ as a locally closed subscheme of $X^{\theta}$ via the map $\oo{X}{}^{\gU(\theta)}\to X^{\theta}$, $(D_m)\mapsto \sum_m D_m\theta_m$.
\index{$X^{\gU(\theta)}, \oo{X}{}^{\gU(\theta)}, {_{\gU(\theta)}\Bunb_{B^-}}$}
  
   Set $_{\gU(\theta)}\Bunb_{B^-}=\Bun_{B^-}\times \oo{X}{}^{\gU(\theta)}$. 
We get locally closed immersions $_{\gU(\theta)}\Bunb_{B^-}\hook{}\Bun_{B^-}\times X^{\theta}\hook{} \Bunb_{B^-}$, the second one sending $(\cF, \cF_T, \kappa^-, D)$ to $(\cF, \cF_T(-D), \kappa^-)$. Let $_{\gU(\theta)}\Bunb_{\tilde B^-}$ be obtained from $_{\gU(\theta)}\Bunb_{B^-}$ by the base change $\Bunb_{\tilde B^-}\to \Bunb_{B^-}$. 
   
   Let $\cH^{+,\gU(\theta)}_{T}$ be the stack classifying $\cF_T\in\Bun_T$, $D\in \oo{X}{}^{\gU(\theta)}$ viewed as a point of $X^{\theta}$. Let $\cH^{+,\gU(\theta)}_{\tilde T}$ be the stack classifying a point of $\cH^{+,\gU(\theta)}_{T}$ as above, and lines $\cU,\cU_G$ equipped with
$$
\cU^N\,\iso\, (^{\omega}\cL^{\bar\kappa})_{\cF_T(-D)},\;\;\;\; \cU_G^N\,\iso\,   (^{\omega}\cL^{\bar\kappa})_{\cF_T} \; .
$$
\index{$_{\gU(\theta)}\Bunb_{\tilde B^-}$}\index{$\cH^{+,\gU(\theta)}_{T}$}
As in (\cite{L2}, Section~4.4.1), we have an isomorphism 
\begin{equation}
\label{def_stack__gU(theta)Bunb_tildeB^-}
_{\gU(\theta)}\Bunb_{\tilde B^-}\,\iso\, \Bun_{B^-}\times_{\Bun_T} \,\cH^{+,\gU(\theta)}_{\tilde T},
\end{equation}
where to define the fibred product we used the map $\cH^{+,\gU(\theta)}_{\tilde T}\to\Bun_T$ sending the above point to $\cF_T$.  
   
   Consider the line bundle on $\oo{X}{}^{\gU(\theta)}$, whose fibre at $D$ is $\cL^{\bar\kappa}_{\cF^0_T(-D)}$, here we view $\oo{X}{}^{\gU(\theta)}\subset X^{\theta}$ as a subscheme. Let $\wt\Gr_T^{+,\gU(\theta)}$ be the gerbe of $N$-th roots of this line bundle. Call $V\in\Rep(\check{T}_{\zeta})$ negative if each $\check{T}_{\zeta}$-weight appearing in $V$ lies in $-\Lambda^{pos}$. Actually, such a weight is in $-\Lambda^{\sharp, pos}$, where $\Lambda^{\sharp, pos}=\Lambda^{\sharp}\cap\Lambda^{pos}$.
\index{$\wt\Gr_T^{+,\gU(\theta)},  \Lambda^{\sharp, pos}$}  
   
  For $V\in\Rep(\check{T}_{\zeta})$ negative we get a perverse sheaf $\Loc^{\gU(\theta)}_{\zeta}(V)$ on $\wt\Gr_T^{+,\gU(\theta)}$ on which $\mu_N(k)$ acts by $\zeta$, and such that for $D=\sum_k \theta_k x_k\in \oo{X}{}^{\gU(\theta)}$ its restriction to 
$$
\prod_k \wt\Gr_{T, x_k}^{\theta_k}  
$$
is $(\boxtimes_k \Loc_{\zeta}(V_{\theta_k}))[\mid \gU(\theta)\mid]$. Here $\Gr_{T, x}^{\theta}$ is the connected component of $\Gr_{T,x}$ containing $t_x^{\theta}T(\cO)$, in other words, corresponding to $\cF^0_T(-\theta x)$ with the evident trivialization off $x$. The functor $\Loc_{\zeta}$ was defined in Section~\ref{section_Metaplectic dual group}. Note that $\Loc^{\gU(\theta)}_{\zeta}(V)$ vanishes unless in the decomposition (\ref{def_gU(theta)}) each term lies in $-\Lambda^{\sharp, pos}$. 
\index{$\Loc^{\gU(\theta)}_{\zeta}(V)$}\index{$\Gr_{T, x}^{\theta}$}

 For $V\in\Rep(\check{T}_{\zeta})$ negative define a perverse sheaf $\Loc^{\gU(\theta)}_{\Bun_T,\zeta}(V)$ on $\cH^{+,\gU(\theta)}_{\tilde T}$ as follows. Let $\Bun_{T, \gU(\theta)}$ denote the stack classifying $\cF_T\in\Bun_T, D\in \oo{X}{}^{\gU(\theta)}$, and a trivialization of $\cF_T$ over the formal neighbourhood of $D$. Let $\Bunt_{T, \gU(\theta)}=\Bun_{T, \gU(\theta)}\times_{\Bun_T}\Bunt_T$. Let $T_{\gU(\theta)}$ be the scheme classifying $D\in \oo{X}{}^{\gU(\theta)}$ and a section of $T$ over the formal neighbourhood of $D$, this is a group scheme over $\oo{X}{}^{\gU(\theta)}$. For $(\cF_T, D)\in \Bun_{T, \gU(\theta)}$ we have a natural isomorphism $(^{\omega}\cL^{\bar\kappa})_{\cF_T}\otimes (\cL^{\bar\kappa})_{\cF^0_T(-D)}\,\iso\, (^{\omega}\cL^{\bar\kappa})_{\cF_T(-D)}$. So, as in (\cite{L2}, Section~4.4.2), we get a $T_{\gU(\theta)}$-torsor
$$
\Bunt_{T, \cU(\theta)}\times_{\oo{X}{}^{\gU(\theta)}} \wt\Gr_T^{+,\gU(\theta)}\to \cH^{+,\gU(\theta)}_{\tilde T}
$$
\index{$\Loc^{\gU(\theta)}_{\Bun_T,\zeta}(V)$}\index{$\Bun_{T, \gU(\theta)}$}%
\index{$\Bunt_{T, \gU(\theta)}, T_{\gU(\theta)}$}
For $\cT\in\D(\Bunt_T)$ and a $T_{\gU(\theta)}$-equivariant perverse sheaf $S$ on $\wt\Gr_T^{+,\gU(\theta)}$ we may form their twisted product $\cT\tboxtimes S$ on $\cH^{+,\gU(\theta)}_{\tilde T}$ using the above torsor. The perverse sheaf  $\Loc^{\gU(\theta)}_{\zeta}(V)$ on $\wt\Gr_T^{+,\gU(\theta)}$ is naturally $T_{\gU(\theta)}$-equivariant. For $V\in\Rep(\check{T}_{\zeta})$ negative define
$$
\Loc^{\gU(\theta)}_{\Bun_T,\zeta}(V)=\IC(\Bunt_T)\tboxtimes \Loc^{\gU(\theta)}_{\zeta}(V)
$$
For the map $\gq^-: \Bun_{B^-}\to\Bun_T$ on (\ref{def_stack__gU(theta)Bunb_tildeB^-}) we get the perverse sheaf denoted
$$
\Loc^{\gU(\theta)}_{\Bun_B,\zeta}(V)=(\gq^-)^*\Loc^{\gU(\theta)}_{\Bun_T,\zeta}(V)[\dimrel(\gq^-)]
$$ 
\index{$\Loc^{\gU(\theta)}_{\Bun_B,\zeta}(V)$}
\begin{Thm}[\cite{L2}, Theorem~4.1] 
\label{Thm_3.10.1}
The $*$-restriction of $\IC_{\zeta}$ to $_{\gU(\theta)}\Bunb_{\tilde B^-}$ vanishes unless in the decomposition (\ref{def_gU(theta)}) each term lies in $-\Lambda^{\sharp, pos}$. In the latter case it is isomorphic to
$$
\Loc^{\gU(\theta)}_{\Bun_B, \zeta}(\mathop{\oplus}\limits_{i\ge 0} \Sym^i (\check{\gu}^-_{\zeta})[2i])\otimes \Qlb[-\mid\gU(\theta)\mid],
$$
where $\mathop{\oplus}\limits_{i\ge 0} \Sym^i (\check{\gu}^-_{\zeta})[2i]$ is viewed as a cohomologically graded $\check{T}_{\zeta}$-module.
\end{Thm}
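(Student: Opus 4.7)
The plan is to follow the strategy of \cite{L2}, Theorem~4.1, which already establishes the statement in the case where $G$ is simple simply-connected, and to verify that each step extends to the slightly more general setting where only $[G,G]$ is required to be simply-connected. The argument has a factorization step, a local computation step, and a gluing step; I sketch each in turn.

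First, I would reduce to a purely local calculation via the factorization structure of $\Bunb_{B^-}$ restricted over the Ran version of $X$. As in Section~\ref{section_4.6_for_introducing_IC_zeta}, $\IC_{\zeta}$ is built from $\IC(\Bun_{B^-,\tilde G})$ twisted by $\cL_{\zeta}$ on the gerbe factor, and by Lemma~\ref{Lm_fibre_of^omega_cL_bar_kappa} together with (\ref{eq_factorization_cP}) the whole package $(\Bunb_{\tilde B^-}, \IC_{\zeta})$ is factorizable along the map to $X^{\theta}$. So it suffices to compute the $*$-restriction of $\IC_{\zeta}$ over an arbitrarily small open neighborhood of a point $D=\theta'x\in X^{\theta'}$ with $\theta'\in -\Lambda^{pos}$, and then combine these local computations via external product along the decomposition (\ref{def_gU(theta)}).

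Second, for the local computation, I would pass from $\Bunb_{\tilde B^-}$ to its local model at $x$, namely the closure $\ov{\Gr}_{B^-,x}^{\theta'}$ (twisted by the appropriate $\mu_N$-gerbe coming from $^{\omega}\cL^{\bar\kappa}$) sitting inside $\Gr_{G,x}$. One knows from the untwisted case (Braden-Finkelberg-Gaitsgory-Mirkovi\'c, cf.\ \cite{BFGM}) that the $*$-fibre of $\IC(\ov{\Gr}_{B^-,x}^{\theta'})$ at $t^{\theta'}$ computes the $\theta'$-weight space of $\mathop{\oplus}_i \Sym^i(\check{\gu}^-)[2i]$. The twisted version requires two inputs: (i) the recognition, via the $\mu_N$-gerbe data, that only $\theta'\in -\Lambda^{\sharp,pos}$ contribute, since otherwise $\cL_{\zeta}$ forces vanishing under the $T(\cO_x)$-action on the corresponding fibre; and (ii) the replacement of $\check{\gu}^-$ by $\check{\gu}^-_\zeta$, which is the metaplectic Satake statement from (\cite{L1}, Theorem~2.1), applied to the semi-infinite pieces. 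The factor $\Loc_{\zeta}(V_{\theta'})$ in the definition of $\Loc^{\gU(\theta)}_{\zeta}$ is precisely the incarnation of the $\check{T}_{\zeta}$-weight space on the geometric side.

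Third, to globalize I would combine the local model computation with the twisted product construction of $\Loc^{\gU(\theta)}_{\Bun_T,\zeta}$ in Section~\ref{section_description_of_IC_zeta}, then pull back along $\gq^-$ with the appropriate cohomological shift to recover $\Loc^{\gU(\theta)}_{\Bun_B,\zeta}$. The shift $[-\mid\gU(\theta)\mid]$ and the cohomological grading of $\Sym^i$ both come from the intersection-cohomology normalizations, and the sign local system implicit in $\cL^{\mu}_{\emptyset}$ (Section~\ref{section_2.4}) intervenes when passing between $\oo{X}{}^{\gU(\theta)}$ and its symmetrization; all of these are bookkeeping as in \cite{L2}. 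The only genuine obstacle in extending beyond the simple simply-connected case is checking that the fibre-line calculation of Lemma~\ref{Lm_fibre_of^omega_cL_bar_kappa}, which already accommodates the $\beta$-component $\cL_\beta$ on $\Bun_{G_{ab}}$, is compatible with the factorization isomorphism (\ref{iso_factorization_cP_disj}) in a way that respects the $\mu_N$-equivariance condition; this is precisely what is done in the input data of Section~\ref{section_input_data}, so the argument of \cite{L2} goes through verbatim with the form $\bar\kappa$ in place of its $[G,G]$-restriction, giving the stated formula.
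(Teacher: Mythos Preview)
The paper does not prove this theorem: it is stated as a citation of \cite{L2}, Theorem~4.1, with the accompanying remark (just before the statement, in Section~\ref{section_description_of_IC_zeta}) that the computation in \cite{L2} was carried out under the additional hypothesis that $G$ is simple simply-connected, ``but the answer and the argument hold also in our case of $[G,G]$ simply-connected.'' No further justification is given in the present paper.

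Your proposal is therefore not so much a competing proof as an expansion of what the paper leaves implicit: you sketch the architecture of the argument in \cite{L2} (factorization to a local model, the local fibre computation via semi-infinite orbits and twisted Satake, and the globalization via the twisted-product construction of $\Loc^{\gU(\theta)}_{\Bun_B,\zeta}$), and you correctly isolate the one place where the generalization from simple simply-connected $G$ to $[G,G]$ simply-connected needs checking, namely that the $\beta$-component of $^{\omega}\cL^{\bar\kappa}$ on $\Bun_{G_{ab}}$ is compatible with the factorization structure and the $\mu_N$-gerbe data. That compatibility is indeed what Lemma~\ref{Lm_fibre_of^omega_cL_bar_kappa} and the factorization isomorphism (\ref{iso_factorization_cP_disj}) supply, as you say. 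So your sketch is consistent with the paper's one-line assertion, and goes beyond it in making explicit what ``the argument holds'' means; but there is nothing in the paper itself to compare it against line by line.
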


\ssec{}

Our purpose now is to improve Proposition~\ref{Pp_3.3.4} as follows.

\begin{Pp}
\label{Pp_4.11.1}
 i) Assume $\varrho(\alpha)\notin \ZZ$ for any simple coroot $\alpha$. Then for $\mu\in -\Lambda^{pos}$ we have a (non-canonical) isomorphism $\cL^{\mu}_{\emptyset}\,\iso\, \ov{\FF}(\cF_{\emptyset})$ in $\D_{\zeta}(\oo{X}{}^{\mu})$.\\
ii) The complex $\ov{\FF}(\cF_{\emptyset})$ is perverse. If in addition the subtop cohomology property is satisfied for $\varrho$ then we have a (non-canonical) isomorphism $\cL^{\mu}_{\emptyset}\,\iso\, \ov{\FF}(\cF_{\emptyset})$ in $\D_{\zeta}(X^{\mu})$.
\end{Pp}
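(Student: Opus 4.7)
\emph{Plan for (i).} The approach is to combine the factorization of $\ov{\FF}$ with an explicit computation for a single simple coroot. Applying proper base change along $\bar\pi^{\mu}$ to Proposition~\ref{Pp_factorization_of_barF^mu}, one obtains for $\mu=\mu_1+\mu_2$ with $\mu_1,\mu_2\in -\Lambda^{pos}$ an isomorphism
\[
\add^*_{\mu_1,\mu_2,disj}\ov{\FF}(\cF_{\emptyset})\;\iso\;\ov{\FF}(\cF_{\emptyset})\boxtimes\ov{\FF}(\cF_{\emptyset})
\]
over $(X^{\mu_1}\times X^{\mu_2})_{disj}$, compatible with refinements. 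Iterating over a decomposition $\mu=-\sum m_i\alpha_i$ and combining with (\ref{fact_isom_for_cL^mu_emptyset}), this reduces (i) to showing $\ov{\FF}(\cF_{\emptyset})\cong\Qlb[1]$ on $X=X^{-\alpha}$ for each simple coroot $\alpha$, together with the verification that the induced $\prod_i S_{m_i}$-action matches the sign character defining $\ocL{}^{\mu}_{\emptyset}$.

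The assumption $\varrho(\alpha)\notin\ZZ$ translates to $\bar\kappa(\alpha,\alpha)\notin 2N\ZZ$, whence $\bar\kappa(\alpha)\notin N\check{\Lambda}$ and $\alpha\notin\Lambda^{\sharp}$. The unique boundary stratum of $\Bunb_{B^-}^{-\alpha}$ is $_{\gU(-\alpha)}\Bunb_{B^-}$, and Theorem~\ref{Thm_3.10.1} forces the $*$-restriction of $\IC_{\zeta}$ to it to vanish in this case. Consequently $\bar F^{-\alpha}(\cF_{\emptyset})$ is the extension by zero from $\cZ^{-\alpha}\subset\ov{\cZ}{}^{-\alpha}$, and proper pushforward reduces $\ov{\FF}(\cF_{\emptyset})$ to $\FF(\cF_{\emptyset})$, which is $\Qlb[1]$ by Proposition~\ref{Pp_3.3.4}. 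The sign matching is exactly the Gauss sum calculation carried out at the end of the proof of that proposition.

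\emph{Plan for (ii).} Verdier self-duality of $\ov{\FF}(\cF_{\emptyset})$ (up to $\psi\leftrightarrow\psi^{-1}$, $\zeta\leftrightarrow\zeta^{-1}$) will follow from Proposition~\ref{Pp_391} and properness of $\bar\pi^{\mu}$, using that $\cF_{\emptyset}$ is self-dual. Perversity then reduces to establishing $\ov{\FF}(\cF_{\emptyset})\in {}^pD^{\leq 0}$, which we plan to deduce from the perversity of $\bar F^{\mu}(\cF_{\emptyset})$ and its support in $\oo{\ov{\cZ}}{}^{\mu}$ (Corollary~\ref{Corollary_392}(i)), together with the fibre-dimension bound of Lemma~\ref{Lm_ovcZ^mu_is_scheme}, used stratum by stratum via factorization. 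To identify $\ov{\FF}(\cF_{\emptyset})$ with $\cL^{\mu}_{\emptyset}=j^{diag}_{!*}(\ocL{}^{\mu}_{\emptyset})$ on all of $X^{\mu}$, part (i) gives the match on $\oo{X}{}^{\mu}$, and it remains to show that the $*$-restriction of $\ov{\FF}(\cF_{\emptyset})$ to every stratum of $X^{\mu}\setminus\oo{X}{}^{\mu}$ lies in strictly negative perverse degrees (self-duality then supplies the dual bound for $!$-restrictions, forcing the intermediate extension). Factorization reduces this to the local claim: for $\lambda\in -\Lambda^{pos}$ with $-\lambda>0$ not a simple coroot, the $*$-fibre of $\ov{\FF}(\cF_{\emptyset})$ at the divisor $\lambda y\in X^{\lambda}$ lies in cohomological degrees $\leq -2$. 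Unwinding $\bar F^{\lambda}(\cF_{\emptyset})$ and using the gerbe trivialization from Section~\ref{Section_121}, the contribution from the open stratum $\cZ^{\lambda}\subset\ov{\cZ}{}^{\lambda}$ becomes, up to a fixed shift, exactly the complex (\ref{complex_main}) on $\Gr_B^0\cap\Gr_{B^-}^{\lambda}$, for which the subtop cohomology property provides the bound $\leq top-2$, i.e.\ cohomological degree $\leq -2$ after the shift.

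The main obstacle will be handling the deeper boundary strata $_{\gU(\theta)}\Bunb_{\tilde B^-}\hook{}\Bunb_{\tilde B^-}^{\lambda}$ with $0\neq\theta\neq -\lambda$, whose contribution to the $*$-fibre of $\ov{\FF}(\cF_{\emptyset})$ at $\lambda y$ is controlled by Theorem~\ref{Thm_3.10.1} in terms of $\Loc^{\gU(\theta)}_{\Bun_B,\zeta}$ applied to a negative representation of $\check{T}_{\zeta}$. We expect to stratify $\ov{\cZ}{}^{\lambda}$ accordingly, reducing each stratum contribution to an instance of the subtop cohomology property applied to $\Gr_B^0\cap\Gr_{B^-}^{\lambda-\theta}$ (for an appropriate smaller coweight), and close the argument by induction on $|\lambda|$.
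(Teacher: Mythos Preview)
Your plan for (i) is correct and matches the paper.

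For (ii) there is a real gap in two places. First, perversity of $\bar F^{\mu}(\cF_{\emptyset})$ together with the fibre-dimension bound of Lemma~\ref{Lm_ovcZ^mu_is_scheme} only shows that the $*$-fibre of $\ov{\FF}(\cF_{\emptyset})$ at $\mu x$ lies in degrees $\le\<-\mu,\check\rho\>$, which is vacuous. After factorization one needs this fibre to lie in degrees $\le -1$, and obtaining that already forces the explicit stratum-by-stratum analysis of the fibre $Y=(\Gr_B^0\cap\ov{\Gr}_{B^-}^{\mu})\times^{T(\cO_x)}\Omega^{\rho}|_{D_x}$ by the pieces $Y_{\nu}=(\Gr_B^0\cap\Gr_{B^-}^{\nu})\times^{T(\cO_x)}\Omega^{\rho}|_{D_x}$ for $\mu\le\nu\le 0$, using Theorem~\ref{Thm_3.10.1}. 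The paper packages this as Proposition~\ref{Pp_3.11.2}, which treats the perversity and the $!*$-extension claims simultaneously.

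Second, your expectation that the boundary strata ($\theta=\mu-\nu\ne 0$) are the main obstacle, to be handled by the subtop property for $\Gr_B^0\cap\Gr_{B^-}^{\nu}$ and an induction on $|\mu|$, is backwards: these strata are the easy ones and need neither ingredient. By Theorem~\ref{Thm_3.10.1} the $*$-restriction of $\bar F^{\mu}(\cF_{\emptyset})$ to $Y_{\nu}$ carries the constant factor $\Loc_{\zeta}\bigl((\oplus_{i\ge 0}\Sym^i(\check\gu^-_{\zeta})[2i])_{\theta}\bigr)$; since $\theta\ne 0$ only summands with $i\ge 1$ occur, so this factor is already in degrees $<0$. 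Combined with the ordinary \emph{top}-cohomology vanishing on $Y_{\nu}$ (for $\nu\ne 0$, the local system $\ev_x^*\cL_{\psi}\otimes\cE$ is non-constant on every top-dimensional component; for $\nu=0$, $Y_0$ is a point), each boundary contribution lands in degrees $\le -2$ directly. Thus only the open stratum $Y_{\mu}$ can contribute to $\H^{-1}$, and the subtop cohomology property is precisely what kills that contribution when $-\mu$ is not a simple coroot. No induction is involved.
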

\begin{proof} i) If $-\mu$ is a simple coroot of $G$ then, by Theorem~\ref{Thm_3.10.1}, $\bar F^{\mu}(\cF_{\emptyset})$ is the extension by zero under $\cZ^{\mu}\hook{}\ov{\cZ}^{\mu}$. Therefore, over $\oo{X}{}^{\mu}$ the desired isomorphism follows from the factorization property combined with Proposition~\ref{Pp_3.3.4}. 

\smallskip\noindent
ii) Denote by $\ov{\FF}(\cF_{\emptyset})_{\mu x}$ the $*$-fibre of $\ov{\FF}(\cF_{\emptyset})$ at $\mu x\in X^{\mu}$. If $D=\sum_k \mu_k x_k\in X^{\mu}$ with $x_k$ pairwise different, the $*$-fibre of $\ov{\FF}(\cF_{\emptyset})$ at $D$, by factorization property, identifies with
$$ 
\boxtimes_k \; \ov{\FF}(\cF_{\emptyset})_{\mu_k x_k}
$$
Our claim is reduced to the following Proposition~\ref{Pp_3.11.2}. 
\end{proof}

\begin{Pp} 
\label{Pp_3.11.2} 
Let $x\in X$ and $\mu<0$. \\
i) The complex $\ov{\FF}(\cF_{\emptyset})_{\mu x}$ is placed in degree $\le -1$. \\
ii) Assume in addition that the subtop cohomology property is satisfied for $\varrho$. Then $\ov{\FF}(\cF_{\emptyset})_{\mu x}$ is placed in degree $< -1$ unless $-\mu$ is a simple coroot.
\end{Pp}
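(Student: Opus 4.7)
The strategy is to compute $\ov{\FF}(\cF_{\emptyset})_{\mu x}$ via the stratification of $\ov{\cZ}^{\mu}_{loc,x}$ inherited from the defect stratification of $\Bunb^{\mu}_{B^-}$, combined with \thmref{Thm_3.10.1}. By \corref{Corollary_392}(i), $\bar F^{\mu}(\cF_{\emptyset})$ is extended by zero from $\oo{\ov{\cZ}}{}^{\mu}$, and \lemref{Lm_4.4.1.} identifies $\oo{\ov{\cZ}}{}^{\mu}_{loc,x}$, modulo the $T(\cO_x)$-twist, with $\Gr^0_B\cap\ov{\Gr}^{\mu}_{B^-}$. The substacks ${}_{\gU(\theta)}\Bunb^{\mu}_{B^-}$ pull back via $'\bar\gp_B$ to give a stratification of $\oo{\ov{\cZ}}{}^{\mu}_{loc,x}$. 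Because we are in the fibre over $\mu x$, the defect divisor in $\oo{X}^{\gU(\theta)}$ must be supported at $x$; the pairwise-distinctness condition in $\oo{X}^{\gU(\theta)}$ then forces either $\theta=0$ (the open stratum, $\gU(\theta)=\emptyset$) or $\gU(\theta)=\{\theta\}$ with $\theta\in -\Lambda^{pos}-0$ and $\mid\gU(\theta)\mid=1$. Setting $\lambda:=\theta-\mu\in\Lambda^{pos}$, we obtain strata $\cS_{\lambda}\,\iso\,\Gr^0_B\cap\Gr^{-\lambda}_{B^-}$ of pure dimension $\<\lambda,\check{\rho}\>$ for $0\le\lambda\le -\mu$, with $\cS_{-\mu}=\cZ^{\mu}_{loc,x}$ open and $\cS_0$ a single point.

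On $\cS_{\lambda}$, the pullback $'\bar\gp^*\cF_{\emptyset}$ is, up to shift, the Whittaker-type local system $\ev^*\cL_{\psi}\otimes\gamma_G^*\cL_{\zeta}$ on $\Gr^0_B\cap\Gr^{-\lambda}_{B^-}$, which by Section~\ref{Section_121} is $s_{-\lambda}^*\cL_G$. By \thmref{Thm_3.10.1} applied to $\gU(\theta)=\{\theta\}$, the pullback $'\bar\gp_B^*\IC_{\zeta}$ is (again up to shift) a trivial local system twisted by the weight-$\theta$ component $V_{\theta}$ of $V:=\mathop{\oplus}\limits_{i\ge 0}\Sym^i(\check{\gu}^-_{\zeta})[2i]$, with $V_0=\Qlb$ in degree zero and $V_{\theta}$ vanishing unless $\theta\in -\Lambda^{\sharp,pos}$. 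Combining these two factors and integrating over $\cS_{\lambda}$, the contribution of this stratum to $\ov{\FF}(\cF_{\emptyset})_{\mu x}$ reads, up to a uniform shift $[\<-\mu,2\check{\rho}\>]$ that places the open-stratum contribution into its expected perverse normalization,
$$\RG_c\bigl(\Gr^0_B\cap\Gr^{-\lambda}_{B^-},s_{-\lambda}^*\cL_G\bigr)\otimes V_{\theta}\; .$$

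Parts i) and ii) then reduce to degree bounds on each stratum. For the open stratum ($\lambda=-\mu$, $\theta=0$) the factor $V_{\theta}=\Qlb$ is in degree zero, so by Section~\ref{Section_121} the contribution lies in degrees $\le -1$; under the subtop cohomology property and $-\mu$ not a simple coroot, this improves to $\le -2$ as needed. For every other stratum ($0\le\lambda<-\mu$), the weight $\theta\ne 0$ forces $V_{\theta}$ into cohomological degrees $\le -2$, and combined with the always-available Section~\ref{Section_121} bound on the $\RG_c$-factor, the contribution sits in degrees $\le -3$, safely within both claimed bounds. The main obstacle in carrying out this plan is the careful bookkeeping of the perverse shifts arising from $\dim\Bun_G$, $\dimrel(\gq^-)$, and the $[-\mid\gU(\theta)\mid]$ summand in \thmref{Thm_3.10.1}, in order to verify that all stratum contributions land in the common cohomological normalization by $[\<-\mu,2\check{\rho}\>]$ and that the open-stratum bound is precisely the one captured by \defref{Def_subtop_coh_property}.
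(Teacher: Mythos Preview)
Your approach is exactly the paper's: stratify the fibre by the defect $\theta$, invoke \thmref{Thm_3.10.1} on each stratum, and recognise the open stratum as the complex (\ref{complex_main}) governing the subtop cohomology property. The identification of strata and the role of $V_\theta$ are correct.

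Two points in the bookkeeping you flagged need fixing. First, the shift is not uniform: the $*$-restriction of $\bar F^{\mu}(\cF_{\emptyset})$ to the stratum $\cS_{\lambda}\,\iso\,\Gr_B^0\cap\Gr_{B^-}^{-\lambda}$ sits in degree $-\<\lambda,2\check\rho\>$ (times $V_\theta$), so the contribution is $\RG_c(\cS_\lambda,\ldots)\otimes V_\theta[\<\lambda,2\check\rho\>]$, not $[\<-\mu,2\check\rho\>]$. These agree only on the open stratum. Second, the Section~\ref{Section_121} bound relies on $\ev$ being dominant on every component, which fails for $\lambda=0$ (a single point); that stratum must be handled separately. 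With the correct shift, its contribution is exactly $V_{\mu}$, placed in degrees $\le -2$ since $\mu\ne 0$ forces $i\ge 1$ in $\Sym^i(\check\gu^-_\zeta)[2i]$. For the intermediate strata $0<\lambda<-\mu$, the correct shift combined with the Section~\ref{Section_121} bound gives degrees $\le -1$, and tensoring with $V_\theta$ (degrees $\le -2$) yields $\le -3$. So your conclusion survives, but the displayed formula and the $\le -3$ claim for the point stratum are off; the paper records $\le -2$ for all non-open strata, which is what is needed.
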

\begin{proof}
We are integrating over the fibre, say $Y$, of $\oo{\ov{\cZ}}{}^{\mu}$ over $\mu x$. From (\ref{fibre_of_ov_cZ^mu}), $Y$ identifies with $(\Gr_B^0\cap \ov{\Gr}_{B^-}^{\mu})\times^{T(\cO_x)}\Omega^{\rho}\mid_{D_x}$. The restriction of $\bar F^{\mu}(\cF_{\emptyset})$ to the stratum 
$$
(\Gr_B^0\cap \Gr_{B^-}^{\mu})\times^{T(\cO_x)}\Omega^{\rho}\mid_{D_x}
$$ 
is a local system placed in usual degree $\<\mu, 2\check{\rho}\>$. 

 Denote by $\ev_x: \Gr_B^0\times^{T(\cO_x)}\Omega^{\rho}\mid_{D_x}\to \A^1$ the restriction of the canonical map $\ev: \gM_{\emptyset, 0}\to\A^1$. As is explained in (\cite{G}, Section~5.6), the local system $\ev_x^*\cL_{\psi}$
is nonconstant on each irreducible component of $(\Gr_B^0\cap \Gr_{B^-}^{\mu})\times^{T(\cO_x)}\Omega^{\rho}\mid_{D_x}$ of dimension $-\<\mu, \check{\rho}\>$. So, the restriction of $\bar F^{\mu}(\cF_{\emptyset})$ to each such irreducible component is also nonconstant. Thus, the contribution of the stratum $\Gr_B^0\cap \Gr_{B^-}^{\mu}$ is placed in the usual degree $\le -1$. 
\index{$\ev_x$}

 For $\mu=\nu+\theta$ with $\nu,\theta< 0$ consider the stratum $Y_{\nu}:=(\Gr_B^0\cap \Gr_{B^-}^{\nu})\times^{T(\cO_x)}\Omega^{\rho}\mid_{D_x}$ of $Y$. Let $\gU(\theta)$ be the trivial decomposition $\theta=\theta$, so $\oo{X}{}^{\gU(\theta)}=X$. 
Pick some trivialization of the line $\cL^{\bar\kappa}_{\cF^0_T(-\theta x)}$.
This allows for $V\in\Rep(\check{T}_{\zeta})$ to see $\Loc_{\zeta}(V_{\theta})$ as a complex over $\Spec k$. Then the $*$-restriction of $\bar F^{\mu}(\cF_{\emptyset})$ to $Y_{\nu}$ identifies with
$$
\Loc_{\zeta}((\mathop{\oplus}\limits_{i\ge 0} \Sym^i (\check{\gu}^-_{\zeta})[2i])_{\theta})\otimes \ev_x^*\cL_{\psi}\otimes\cE[-\<2\check{\rho}, \nu\>],
$$
where $\cE$ is a rank one tame local system. If $\nu\ne 0$ then $\ev_x^*\cL_{\psi}\otimes\cE$ is nontrivial on each irreducible component of $Y_{\nu}$ of dimension $-\<\check{\rho}, \nu\>$. Since $\Loc_{\zeta}((\mathop{\oplus}\limits_{i\ge 0} \Sym^i (\check{\gu}^-_{\zeta})[2i])_{\theta})$ is placed in degrees $<0$, for $\nu\ne 0$ the contribution of $Y_{\nu}$ is placed in degrees $\le -2$. 

 For $\nu=0$ we get $Y_{\nu}=\Spec k$. The $*$-restriction of $\bar F^{\mu}(\cF_{\emptyset})$ to this point identifies with 
$$
\Loc_{\zeta}((\mathop{\oplus}\limits_{i\ge 0} \Sym^i (\check{\gu}^-_{\zeta})[2i])_{\mu}),
$$
the latter is placed in degrees $\le -2$. So, $\ov{\FF}(\cF_{\emptyset})_{\mu x}$ is placed in degree $\le -1$, and only the open stratum $Y_{\mu}$ may contribute to the cohomology group $\H^{-1}(\ov{\FF}(\cF_{\emptyset})_{\mu x})$. 

\smallskip\noindent
ii) By definition of the subtop cohomology property, the open stratum $Y_{\mu}$ does not contribute to $\H^{-1}(\ov{\FF}(\cF_{\emptyset})_{\mu x})$. 
\end{proof}

\begin{Rem} Conjecture~\ref{Con_main} would imply the following. Assume $\varrho(\alpha)\notin \ZZ$ for any simple coroot $\alpha$. Then $\cL^{\mu}_{\emptyset}\;\iso\; \ov{\FF}(\cF_{\emptyset})$ in $\D_{\zeta}(X^{\mu})$.
\end{Rem}

\begin{Pp} 
\label{Pp_ovFF_is_exact}
The functor $\ov{\FF}: \D\Whit^{\kappa}_n\to \D_{\zeta}(X^{\mu}_n)$ is exact for the perverse t-structures.
\end{Pp}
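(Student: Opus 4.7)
The plan is to combine Verdier duality compatibility with factorization, devissage, and the explicit perversity statements already proved on the compactified Zastava.

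First, by Proposition~\ref{Pp_391} the functor $\ov{\FF}$ commutes with Verdier duality up to the exchange of $\psi,\zeta$ with $\psi^{-1},\zeta^{-1}$, so it suffices to establish right t-exactness: $\ov{\FF}(K)\in {}^p D^{\le 0}_{\zeta}(X^{\mu}_n)$ for every perverse $K\in \Whit^{\kappa}_n$. This is a property local on $X^n$, and I would stratify $X^n$ by the diagonals and apply the factorization isomorphism of Corollary~\ref{Cor_332} (together with its compactified analogue in Proposition~\ref{Pp_factorization_of_barF^mu}) to reduce to checking the bound on a fibre $X^{\mu}_{\bar x}$ for every $\bar x\in X^n$ with pairwise distinct coordinates.

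Next, on such a fibre, Lemma~\ref{Lm_irr_objects_of_Whit} produces a composition series of any perverse object of $\Whit^{\kappa}_{\bar x}$ with subquotients of the form $\cF_{\bar x,\bar\lambda}$, and each $\cF_{\bar x,\bar\lambda}$ is a perverse quotient of $\cF_{\bar x,\bar\lambda,!}$ (the cokernel being itself an extension of objects $\cF_{\bar x,\bar\lambda'}$ with $\bar\lambda'<\bar\lambda$, so induction on $\bar\lambda$ applies). By the long exact sequence of perverse cohomology it is therefore enough to bound $\ov{\FF}(\cF_{\bar x,\bar\lambda,!})$ in perverse degrees $\le 0$. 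Corollary~\ref{Corollary_392}(ii) guarantees that $\bar F^{\mu}(\cF_{\bar x,\bar\lambda,!})$ is already perverse on $\ov{\cZ}^{\mu}_{\bar x,\le\bar\lambda}$, so the statement reduces to a right t-exactness bound for $(\bar\pi^{\mu})_!$ applied to this explicit perverse sheaf.

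The hard part will be precisely this last pushforward bound. I would carry it out by a stratum-by-stratum analysis of $\ov{\cZ}^{\mu}_{\bar x,\le\bar\lambda}$ along the pull-back of the stratification $_{\gU(\theta)}\Bunb_{B^-}\subset \Bunb^{\mu}_{B^-}$, making use of the explicit description of $\IC_{\zeta}$ in Theorem~\ref{Thm_3.10.1} and the fibre-dimension estimate of Lemma~\ref{Lm_ovcZ^mu_is_scheme}. On the open stratum $\cZ^{\mu}_{\bar x,\bar\lambda}$ the map $\pi^{\mu}$ is ind-affine by Remark~\ref{Rem311}, hence $(\pi^{\mu})_!$ is right t-exact on perverse sheaves; on each deeper stratum, Theorem~\ref{Thm_3.10.1} writes the relevant restriction as a twist by an explicit negative-weight module, and the strict drop in fibre dimension of $\bar\pi^{\mu}$, combined with a Gauss-sum-type vanishing analogous to the one used in the proof of Proposition~\ref{Pp_3.11.2}, yields the required shift estimate.
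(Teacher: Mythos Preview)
Your overall scheme is right---Verdier duality via Proposition~\ref{Pp_391} to reduce to right t-exactness, factorization, then a stratum-by-stratum estimate driven by Theorem~\ref{Thm_3.10.1}---but two of the reductions do not work as stated.

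First, the reduction to a single fibre $X^{\mu}_{\bar x}$ is not what the cited factorization gives. Corollary~\ref{Cor_332} and Proposition~\ref{Pp_factorization_of_barF^mu} factorize in the \emph{divisor} direction: they peel off the part of $D$ disjoint from $\bar x$, and that part contributes $\ov{\FF}(\cF_{\emptyset})$, already perverse by Proposition~\ref{Pp_4.11.1}. What remains is the locus where $D$ is supported on the (possibly colliding) points $x_i$; this is still a family over a stratum of $X^n$, not a single fibre. The paper accordingly stratifies $X^{\mu}_n$ by locally closed pieces $V$ indexed by a surjection $\eta:\{1,\dots,n\}\to A$ together with a decomposition $\mu=\sum_{a\in A}\mu_a$, and works over all of $V$. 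The passage to a single point comes only after one knows that the contribution of each source stratum has locally constant cohomology along $V$; for this the paper does not devissage to $\cF_{\bar x,\bar\lambda,!}$ but uses directly the general shape of a Whittaker sheaf on each $\gM_{\eta,\bar\lambda}$, namely $K\mid_{\gM_{\eta,\bar\lambda}}\cong\pi_{\eta}^*K^{\bar\lambda}\otimes\ev_{\bar\lambda}^*\cL_{\psi}[\dim]$ with $K^{\bar\lambda}$ in perverse degrees $\le 0$ on $V$. Your devissage would be fine on a fixed fibre, but it does not by itself control the variation in $\bar x$.

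Second, the claim ``$\pi^{\mu}$ is ind-affine, hence $(\pi^{\mu})_!$ is right t-exact'' is wrong: Artin vanishing makes $(\pi^{\mu})_*$ right t-exact for an affine morphism, while $(\pi^{\mu})_!$ is only left t-exact. The bound on the open stratum, like on all the others, comes from the explicit dimension estimate $\dim(\Gr_B^{\lambda}\cap\Gr_{B^-}^{\nu})=\<\lambda-\nu,\check{\rho}\>$, paired with the degrees of $\IC_{\zeta}$ on the strata $_{\gU(\theta)}\Bunb_{B^-}$ from Theorem~\ref{Thm_3.10.1}. No Gauss-sum-type vanishing in the spirit of Proposition~\ref{Pp_3.11.2} is used here, nor is it available since the subtop cohomology property is not assumed in this proposition; the open stratum already lands in degrees $\le 0$ by the dimension count alone.
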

\begin{proof} Pick $K\in \Whit^{\kappa}_n$. Let $\eta: \{1,\ldots, n\}\to A$ be a surjection. Pick 
$\mu_a\in\Lambda$ for $a\in A$ with $\sum_a \mu_a=\mu$. Let $V\subset X^{\mu}_n$ be the subscheme classifying disjoint points $\{y_a\in X\}_{a\in A}$ such that $x_i=y_{\eta(i)}$ for each $i$, and $D=\sum_{a\in A} \mu_a y_a$. In view of the factorization property and Propositions~\ref{Pp_391}, \ref{Pp_4.11.1}, it suffices to show that the $*$-restriction of $\ov{\FF}(K)$ to $V$ is placed in perverse degrees $\le 0$. Let $\ov{\cZ}^{\mu}_V$ be the preimage of $V$ under $\bar\pi^{\mu}: \ov{\cZ}^{\mu}_n\to X^{\mu}_n$. The fibre of $\ov{\cZ}^{\mu}_V$ over $\{y_a\}$ is 
$$
\prod_a \ov{\Gr}_{B^-, y_a}^{\mu_a}\times^{T(\cO_{y_a})}\Omega^{\rho}\mid_{D_{y_a}}
$$
 
  Pick a collection $\bar\lambda=\{\lambda_a\}_{a\in A}$ with $\lambda_a\in\Lambda^+$, $\mu_a\le\lambda_a$. Let $\gM_{\eta, \bar\lambda}\subset \gM_n$ be the substack classifying a point of $V$ as above (this defines $x_i$), and such that for each $\check{\lambda}\in\check{\Lambda}^+$ the map
$$
\kappa^{\check{\lambda}}: \Omega^{\<\rho, \check{\lambda}\>}\to \cV^{\check{\lambda}}_{\cF}(\sum_a \<\lambda_a y_a, \check{\lambda}\>)
$$
is regular over $X$ and has no zeros over $X$. Let $\ov{\cZ}^{\mu}_{V,\bar\lambda}$ be obtained from $\ov{\cZ}^{\mu}_V$ by the base change $\gM_{\eta, \bar\lambda}\to \gM_n$. Let $\pi_{\eta}: \gM_{\eta, \bar\lambda}\to V$ be the projection, $\ev_{\bar\lambda}: \gM_{\eta, \bar\lambda}\to\A^1$ the corresponding evaluation map (as in Section~\ref{Section_2.3}). 
Let $K^{\bar\lambda}$ be a complex on $V$ placed in perverse degrees $\le 0$ such that the $*$-restriction $K\mid_{\gM_{\eta, \bar\lambda}}$ identifies with
$$
\pi_{\eta}^*K^{\bar\lambda}\otimes \ev_{\bar\lambda}^*\cL_{\psi}[\dim],
$$
where $\dim=(g-1)\dim U-\<(2g-2)\rho-\sum_a\lambda_a, 2\check{\rho}\>$. This is the relative dimension of $\pi_{\eta}$. 
  
  Only finite number of the strata $\ov{\cZ}^{\mu}_{V,\bar\lambda}$ of $\ov{\cZ}^{\mu}_V$ contribute to $\ov{\FF}(K)\mid_V$. Let $K_{\bar\lambda}$ denote the $!$-direct image under $\bar\pi^{\mu}: \ov{\cZ}^{\mu}_{V,\bar\lambda}\to V$ of the $*$-restriction $\bar F^{\mu}(K)\mid_{\ov{\cZ}^{\mu}_{V,\bar\lambda}}$. It suffices to show that $K_{\bar\lambda}$ is placed in perverse degrees $\le 0$. From Theorem~\ref{Thm_3.10.1} we conclude that $K_{\bar\lambda}\,\iso\, K^{\bar\lambda}\otimes M$, where $M$ is a complex on $V$ with locally constant cohomology sheaves. It remains to show that $M$ is placed in degrees $\le 0$.  
  
  The problem being local, we may and do assume that $A$ is the one element set. Write $\mu=\mu_a$, $\lambda_a=\lambda$, $y_a=y$. Then the fibre $Y$  of $\ov{\cZ}^{\mu}_{V, \lambda}$ over $y$ is 
$$
(\Gr_{B,y}^{\lambda}\cap
\ov{\Gr}_{B^-, y}^{\mu})\times^{T(\cO_{y})}\Omega^{\rho}\mid_{D_{y}}
$$
For $\mu\le\nu\le \lambda$ let $Y_{\nu}=(\Gr_{B,y}^{\lambda}\cap
\Gr_{B^-, y}^{\nu})\times^{T(\cO_{y})}\Omega^{\rho}\mid_{D_{y}}$, they form a stratification of $Y$. For $\mu=\nu+\theta$ with $\nu\le\lambda, \theta\le 0$ let $\gU(\theta)$ be the trivial decomposition $\theta=\theta$, so $\oo{X}{}^{\gU(\theta)}=X$. 
Pick some trivialization of the line $\cL^{\bar\kappa}_{\cF^0_T(-\theta x)}$.
This allows for $V\in\Rep(\check{T}_{\zeta})$ to see $\Loc_{\zeta}(V_{\theta})$ as a complex over $\Spec k$ (as in Proposition~\ref{Pp_3.11.2}). 
The $*$-restriction $\bar F^{\mu}(K)\mid_{Y_{\nu}}$ identifies with 
$$
\Loc_{\zeta}((\mathop{\oplus}\limits_{i\ge 0} \Sym^i (\check{\gu}^-_{\zeta})[2i])_{\theta})\otimes\ev_{\bar\lambda}^*\cL_{\psi}\otimes\cE\otimes K^{\bar\lambda}_y[\<\lambda-\nu, 2\check{\rho}\>],
$$
where $\cE$ is some rank one local system. Since $\dim Y_{\nu}\le \<\lambda-\nu, \check{\rho}\>$, we see that the contribution of $Y_{\nu}$ to the complex $M_y$ is placed in degrees $\le 0$. We are done. 
\end{proof}

 Combining Propositions~\ref{Pp_factorization_of_barF^mu}, \ref{Pp_ovFF_is_exact}, one gets the following.

\begin{Thm} 
\label{Thm_main_functor_ovFF}
Assume that $\varrho$ satisfies the subtop cohomology property. Then $\ov{\FF}$ gives rise to the functor $\ov{\FF}: \Whit^{\kappa}_n\to \wt\FS^{\kappa}_n$, which is exact for the perverse t-structures and commutes with the Verdier duality (up to replacing $\psi$ by $\psi^{-1}$ and $\zeta$ by $\zeta^{-1}$). 
\end{Thm}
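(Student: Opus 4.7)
The plan is to verify successively the three assertions: (a) $\ov{\FF}$ lands in $\wt\FS^{\kappa}_n$ when restricted to $\Whit^{\kappa}_n$; (b) exactness for the perverse t-structures; (c) compatibility with Verdier duality. Most of the ingredients are already in place; subtop cohomology enters only through the identification of $\ov{\FF}(\cF_{\emptyset})$ with the basic factorizable sheaf $\cL^{\mu}_{\emptyset}$.

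First I would observe that (b) is exactly the content of Proposition~\ref{Pp_ovFF_is_exact}: for $K\in\Whit^{\kappa}_n$, $\ov{\FF}(K)$ is concentrated in perverse degree zero. In particular, $\ov{\FF}(K)^{\mu}\in \Perv_{\zeta}(X^{\mu}_n)$ for each $\mu\in\Lambda$, which is the first requirement for membership in $\wt\FS^{\kappa}_n$.

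Next, for (a) the key point is to produce the factorization isomorphisms (\ref{iso_for_def_tilde_FS}) and check their compatibility with refinements. Fix $\mu=\mu_1+\mu_2$ with $\mu_1\in -\Lambda^{pos}$ and $\mu_2\in\Lambda$. The map $\bar\pi^{\mu}: \ov{\cZ}^{\mu}_n\to X^{\mu}_n$ is ind-proper (it factors through $\Gr_{^{\omega}G, X^{\mu}_n}$), so proper base change applies to the cartesian square defining $(X^{\mu_1}\times X^{\mu_2}_n)_{disj}\times_{X^{\mu}_n}\ov{\cZ}^{\mu}_n$. Combining this with the geometric factorization (\ref{iso_factrorization_for_bar_cZ}) and Proposition~\ref{Pp_factorization_of_barF^mu}, one obtains a canonical isomorphism
$$
\add_{\mu_1,\mu_2, disj}^*\ov{\FF}(K)\,\iso\,\ov{\FF}(\cF_{\emptyset})^{\mu_1}\boxtimes \ov{\FF}(K)^{\mu_2}
$$
in $\D_{\zeta}((X^{\mu_1}\times X^{\mu_2}_n)_{disj})$. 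Now invoke Proposition~\ref{Pp_4.11.1}(ii), which is the only place where the subtop cohomology property is used: under this hypothesis $\ov{\FF}(\cF_{\emptyset})^{\mu_1}\iso\cL^{\mu_1}_{\emptyset}$ in $\D_{\zeta}(X^{\mu_1})$. This yields the desired isomorphism (\ref{iso_for_def_tilde_FS}). Compatibility with refinements of partitions then follows by chaining two applications of the above construction (for $\mu=\mu_0+\mu_1+\mu_2$) and using that the isomorphism (\ref{fact_isom_for_cL^mu_emptyset}) of Section~\ref{section_2.4} was itself established via the same factorization of $\bar F^{\mu}$; the relevant diagram commutes because it does so on the Zastava side by Proposition~\ref{Pp_factorization_of_barF^mu} applied to the triple partition.

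For (c), one applies $(\bar\pi^{\mu})_!$ to the isomorphism of Proposition~\ref{Pp_391}. Since $\bar\pi^{\mu}:\ov{\cZ}^{\mu}_{n,\le\bar\lambda}\to X^{\mu}_{n,\le\bar\lambda}$ is proper, $(\bar\pi^{\mu})_!$ commutes with Verdier duality when restricted to objects supported on $\ov{\cZ}^{\mu}_{n,\le\bar\lambda}$; thus the isomorphism $\bar F^{\mu}_{\zeta^{-1}}(\DD K)\iso \DD\bar F^{\mu}(K)$ pushes down to $\ov{\FF}_{\zeta^{-1}}(\DD K)\iso \DD\ov{\FF}(K)$, as required.

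The genuinely nontrivial step is the compatibility with refinements: although the factorization of $\bar F^{\mu}$ is by construction compatible, one must keep track of how the intermediate identification $\ov{\FF}(\cF_{\emptyset})\iso \cL^{\mu}_{\emptyset}$ is chosen. Since the isomorphism of Proposition~\ref{Pp_4.11.1} is non-canonical (defined only up to scalar), one should either rigidify it by a choice once and for all (uniformly in $\mu\in -\Lambda^{pos}$ using factorization on the target), or, better, establish the factorization isomorphism (\ref{iso_for_def_tilde_FS}) first at the level of Zastava spaces and then push it down. The second route avoids any scalar ambiguity: both sides of the refinement diagram are produced by the same geometric factorization $\ov{\cZ}^{\mu_0+\mu_1+\mu_2}_n$, and the diagram commutes on the nose there by associativity of (\ref{iso_factrorization_for_bar_cZ}). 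This associativity check is the only part I expect to require careful bookkeeping.
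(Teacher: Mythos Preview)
Your proposal is correct and follows essentially the same approach as the paper. The paper's proof is a single sentence: ``Combining Propositions~\ref{Pp_factorization_of_barF^mu}, \ref{Pp_ovFF_is_exact}, one gets the following.'' You have unpacked this into the three constituent claims and correctly identified the role of each ingredient, including Proposition~\ref{Pp_4.11.1}(ii) (where subtop cohomology enters) and Proposition~\ref{Pp_391} together with properness of $\bar\pi^{\mu}$ for the duality statement. Your discussion of the refinement compatibility and the scalar ambiguity in $\ov{\FF}(\cF_{\emptyset})\iso\cL^{\mu}_{\emptyset}$ is more careful than the paper, which leaves this implicit; your second proposed resolution (derive the factorization isomorphisms directly from the associativity of the geometric factorization (\ref{iso_factrorization_for_bar_cZ}) on the Zastava side) is the right one and is what the paper tacitly has in mind.
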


\ssec{Multiplicity spaces} 
\label{Section_Multiplicity spaces}

\sssec{} For a topological space $\cX$ write $\Irr(\cX)$ for the set of irreducible components of $\cX$. Recall for $\nu\ge 0$ the notation $B_{\gg}(\nu)$ and the functions $\phi_i$ on this crystal from Section~\ref{Section_121}. 
\index{$\Irr(\cX)$}

Let $\mu\in\Lambda, \lambda\in\Lambda^+$ with $\mu\le\lambda$. Let $b\subset \Gr_B^{\lambda}\cap \Gr_{B^-}^{\mu}$ be an irreducible component. Denote by $\bar b\subset \Gr_B^0\cap\Gr_{B^-}^{\mu-\lambda}$ the component $t^{-\lambda}b$, so $\bar b\in B_{\gg}(\lambda-\mu)$. By Anderson's theorem (\cite{A}, Proposition~3) we have a bijection
\begin{equation}
\label{bij_Andersen}
\{a\in \Irr(\Gr_{B^-}^{\mu}\cap \Gr_B^{\lambda})\mid a\subset \ov{\Gr}_G^{\lambda}\}\,\iso\, \Irr(\Gr_G^{\lambda}\cap \Gr_{B^-}^{\mu})
\end{equation}
sending $a$ to the closure of $a\cap \Gr_G^{\lambda}$. 
\begin{Lm} 
\label{Lm_Andersen_again}
Under the above assumptions the following are equivalent.\\
i) For all $i\in\cJ$, $\phi_i(\bar b)\le \<\lambda, \check{\alpha}_i\>$, \\ 
ii) $b\subset \ov{\Gr}_G^{\lambda}$.
\end{Lm}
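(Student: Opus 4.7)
The plan is to translate both conditions through Anderson's bijection and then interpret the result on the crystal side via the Baumann--Gaussent description of $B(\lambda)\hookrightarrow T_{w_0(\lambda)}\otimes B(-\infty)$.

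First, I would observe that condition (ii) is equivalent, via the bijection (\ref{bij_Andersen}), to the statement that $\bar a:=\ov{b\cap \Gr_G^{\lambda}}$ is an irreducible component of $\Gr_G^{\lambda}\cap \Gr_{B^-}^{\mu}$. By the Mirkovi\'c--Vilonen parametrization recalled in (\cite{BG1}, Section~3.1) the set of such components is canonically in bijection with $B(\lambda)_{\mu}$, the weight-$\mu$ subset of the canonical basis crystal of $\VV^{\lambda}$.

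Next, I would invoke the embedding $B(\lambda)\hookrightarrow T_{w_0(\lambda)}\otimes B(-\infty)$ recalled in (\ref{image_of_finite_crystal}): an element of $B(\lambda)_{\mu}$ corresponds to $t_{w_0(\lambda)}\otimes b'$ with $b'\in B(-\infty)$ of weight $\mu-w_0(\lambda)$, and the image is cut out by the inequalities $\phi_i(b'^*)\le -\<w_0(\check\alpha_i),\lambda\>$ for $i\in \cJ$. The geometric description of this embedding in (\cite{BauG}, Proposition~4.3), combined with the isomorphism $B_{\gg}\iso B(-\infty)$ of \cite{BFG}, identifies $b'$ with $\bar b$ (up to applying the Kashiwara involution $*$, which by (\cite{BauG}, Section~4.1) is realized geometrically by the automorphism of $G$ that interchanges $B$ and $B^-$ and sends $t^{\lambda}$ to $t^{-w_0(\lambda)}$). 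Applying this involution to both the element $b'^*$ and the permutation $\check\alpha_i\mapsto -w_0(\check\alpha_i)=\check\alpha_{i^*}$ of simple coroots turns the Baumann--Gaussent inequality into $\phi_i(\bar b)\le\<\lambda,\check\alpha_i\>$, which is exactly condition (i).

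The key technical point, and the main obstacle, is the bookkeeping of conventions: one must verify that the crystal operators $f_i$ on $B_{\gg}$ as defined in Section~\ref{Section_Recollections on crystals} (through the decomposition coming from $\gq_{P_i}\colon \Gr_{P_i}\to \Gr_{M_i}$ at the $B^-$-end) correspond under the isomorphism $B_{\gg}\iso B(-\infty)$ to the crystal operators whose composition with $*$ and with the permutation $i\mapsto i^*$ exactly cancels the $w_0$- and $*$-twists appearing in (\ref{image_of_finite_crystal}). A sanity check in the $\SL_2$ case confirms this: there $-w_0(\check\alpha)=\check\alpha$, $*$ is trivial on 1-dimensional weight spaces, and the unique component $\bar b\in B_{\gg}(k\alpha)$ with $\phi_i(\bar b)=k$ lies in the image of $B(n\omega)$ precisely when $k\le n=\<\lambda,\check\alpha\>$, in agreement with $\mu=(n-2k)\omega$ being a weight of $\VV^{n\omega}$. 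Once the conventions are matched in general, the lemma follows by direct substitution.
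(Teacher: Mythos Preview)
Your proposal is correct and takes essentially the same approach as the paper: both combine Anderson's bijection (\ref{bij_Andersen}), the crystal embedding (\ref{image_of_finite_crystal}) together with its geometric description in \cite{BauG}, and the Kashiwara involution $*$. The only cosmetic difference is that the paper phrases the argument via $B(-w_0(\lambda))\hookrightarrow T_{-\lambda}\otimes B(-\infty)$ (so that, setting $a=\bar b^{*}$, the image condition is literally $\phi_i(\bar b)\le\langle\check\alpha_i,\lambda\rangle$), which slightly streamlines the convention-tracking you flag as the main obstacle.
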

\begin{proof} Recall the canonical inclusion
$
B(-w_0(\lambda))\hook{} T_{-\lambda}\otimes B(-\infty)
$
from (\cite{BauG}, p. 87), see also Section~\ref{Section_Recollections on crystals}. Its image is the set of $t_{-\lambda}\otimes a$ such that $a\in B(-\infty)$, and for each $i\in \cJ$, $\phi_i(a^*)\le \<\check{\alpha}_i, \lambda\>$. So, i) is equivalent to $t_{-\lambda}\otimes \bar b^*\in B(-w_0(\lambda))$.  By (\cite{A}, Proposition~3), we have a canonical bijection of irreducible components (up to passing to the closure)
$$
\Irr(t^{\mu}\Gr_G^{-w_0(\lambda)}\cap \Gr_B^0)\,\iso\,\{a\in \Irr(\Gr_B^0\cap \Gr_{B^-}^{\mu-\lambda})\mid a\subset t^{\mu}\ov{\Gr}_G^{-w_0(\lambda)}\}
$$
So, i) is equivalent to the property that $t^{-\mu}\bar b^*\in \Irr(\Gr_G^{-w_0(\lambda)}\cap \Gr_B^{-\mu})$. Our claim follows now from the properties of the bijection $\ast: B(-\infty)\to B(-\infty)$ and (\ref{bij_Andersen}). 
\end{proof}
 
\sssec{Additional input data} 
\label{section_Additional input data}
Recall that the pull-back of the central extension (\ref{ext_V_EE_of_Lambda}) to $\Lambda^{\sharp}$ is abelian. Pick a splitting $\gt_{\EE}^0:\Lambda^{\sharp}\to V_{\EE}$ of the exact sequence (\ref{ext_V_EE_of_Lambda}) over $\Lambda^{\sharp}$. We assume $\gt_{\EE}^0$ is compatible with the section $\gt_{\EE}$ from Section~\ref{section_Metaplectic dual group}.
\index{$\gt_{\EE}^0, \delta_{\lambda}$}
  
  For each $\bar\lambda\in \Lambda/\Lambda^{\sharp}$ we make the following choice. Pick compatible trivializations $\delta_{\lambda}: (V_{\EE})_{\lambda}\,\iso\, \Gm$ of the fibre of $\Gra_G\to \Gr_{G}$ at $t^{\lambda}G(\cO)$ for all $\lambda\in\Lambda$ over $\bar\lambda$. Here compatible means equivariant under the action of $\Lambda^{\sharp}$ via $\gt_{\EE}^0$. 
     
\sssec{} 
\label{section_4.12.4}
For $\lambda,\mu\in\Lambda$ the above trivializations $\delta_{\lambda}$ yield sections $s^{\lambda}_B: \Gr_B^{\lambda}\to\Gra_G$, $s^{\mu}_{B^-}: \Gr_{B^-}^{\mu}\to\Gra_G$ of the $\Gm$-torsor $\Gra_G\to\Gr_G$. The discrepancy between them is a map that we denote by 
$$
\gamma^{\mu}_{\lambda}: \Gr_B^{\lambda}\cap \Gr_{B^-}^{\mu}\to \Gm
$$ 
and define by $s^{\mu}_{B^-}=\gamma^{\mu}_{\lambda} s^{\lambda}_B$. 
Note that if $\lambda-\mu\in\Lambda^{\sharp}$ then $\gamma^{\mu}_{\lambda}$ does not depend of the choice of $\delta$ (so depends only on $\gt^0_{\EE}$).  
\index{$s^{\lambda}_B, s^{\mu}_{B^-}, \gamma^{\mu}_{\lambda}, \cL_{x,\mu}$}

\begin{Thm} 
\label{Th_4.12.1}
Assume that $\varrho$ satisfies the subtop cohomology property. Pick $\lambda\in\Lambda^+$ and $x\in X$. There is a decomposition 
\begin{equation}
\label{decomp_FF_of_irred_object}
\ov{\FF}(\cF_{x,\lambda})\,\iso\, \mathop{\oplus}\limits_{\mu\le\lambda, \; \lambda-\mu\in\Lambda^{\sharp}} \; \cL_{x,\mu}\otimes V_{\mu}^{\lambda}
\end{equation}
in $\wt\FS^{\kappa}_x$, where $V_{\mu}^{\lambda}$ is the $\Qlb$-vector space with a canonical base indexed by those $b\in \Irr(\Gr^{\lambda}_{B,x}\cap \Gr_{B^-, x}^{\mu})$ 
that satisfy the following two properties:
\begin{itemize}
\item $b\subset \ov{\Gr}_{G,x}^{\lambda}$,
\item the local system $(\gamma^{\mu}_{\lambda})^*\cL_{\zeta}$ is trivial on $b$.
\end{itemize}
In particular, we have $V^{\lambda}_{\lambda}=\Qlb$. 
\end{Thm}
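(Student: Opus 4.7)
The plan is to use the structure results already established to write $\ov{\FF}(\cF_{x,\lambda})$ as a semisimple object of $\wt\FS^{\kappa}_x$ and then identify the multiplicities via a careful stalk computation at divisors of the form $\mu x$. By \corref{Corollary_392}(iii), $\bar F^{\mu}(\cF_{x,\lambda})$ is an irreducible IC sheaf on $\ov{\cZ}^{\mu}_{x,\le\lambda}$, and by \propref{Pp_ovFF_is_exact} the complex $\ov{\FF}(\cF_{x,\lambda})$ lies in perverse degree zero. Combined with factorization (\propref{Pp_factorization_of_barF^mu}) and \propref{Pp_4.11.1}(ii), each irreducible direct summand of $\ov{\FF}(\cF_{x,\lambda})$ must be of the form $\cL_{x,\mu}$ for some $\mu\in\Lambda$, with $\lambda-\mu\in\Lambda^{\sharp}$ forced by matching of the gerbe $\wt X^{\mu}_x$ with the twisted support of $\cF_{x,\lambda}$.

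Next I would compute the multiplicity space $V^{\lambda}_{\mu}$ as the top cohomology of a $*$-fibre. By \lemref{Lm_fibre_of^omega_cL_bar_kappa} and the factorization isomorphism, the comparison reduces to a computation at the single point $\mu x\in X^{\mu}_x$: one takes the $*$-fibre at $\mu x$ of $\ov{\FF}(\cF_{x,\lambda})$ and isolates the cohomological degree that corresponds, via \propref{Pp_4.11.1}, to the irreducible constituent $\cL_{x,\mu}$. The fibre of $\bar\pi^{\mu}\colon \ov{\cZ}^{\mu}_{x,\le\lambda}\to X^{\mu}_x$ over $\mu x$ is the scheme $(\ov{\Gr}^{\lambda}_B\cap \ov{\Gr}^{\mu}_{B^-})\times^{T(\cO_x)} \Omega^{\rho}\mid_{D_x}$ (compare Section~3.5), which is of pure dimension $\<\lambda-\mu,\check{\rho}\>$ on the open stratum $\Gr^{\lambda}_B\cap \Gr^{\mu}_{B^-}$. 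By the description of $\bar F^{\mu}$ as an IC sheaf and \thmref{Thm_3.10.1}, only the open stratum contributes to the top degree; on it the restriction of $\bar F^{\mu}(\cF_{x,\lambda})$ identifies with the local system
\[
\ev^*\cL_{\psi}\otimes (\gamma^{\mu}_{\lambda})^*\cL_{\zeta}\otimes \cE,
\]
where $\cE$ is a canonically trivialized rank one factor and $\gamma^{\mu}_{\lambda}$ is the discrepancy function of Section~\ref{section_4.12.4}.

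It then remains to determine which irreducible components of $\Gr^{\lambda}_{B,x}\cap \Gr^{\mu}_{B^-,x}$ of top dimension $\<\lambda-\mu,\check{\rho}\>$ contribute a one-dimensional summand to the top cohomology of this twisted local system. A component $b$ drops out if either (a) it is not contained in $\ov{\Gr}^{\lambda}_G$, in which case the IC-extension $\bar F^{\mu}(\cF_{x,\lambda})$ of the open stratum $\cZ^{\mu}_{x,\lambda}$ vanishes generically on $b$, or (b) the rank-one twist $(\gamma^{\mu}_{\lambda})^*\cL_{\zeta}$ is non-trivial on $b$, forcing the compactly supported integral over $b$ to vanish by a standard orbit argument using the residual $T(\cO_x)$-action. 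Both conditions are handled cleanly by Anderson-type arguments (\lemref{Lm_Andersen_again}) and by the invariance of the $\cL_{\zeta}$-twist under the connected stabilizer. The Artin--Schreier factor $\ev^*\cL_{\psi}$ does \emph{not} cause further vanishing on these top-dimensional components: this is the Whittaker-analog input, and parallels the non-degeneracy argument of (\cite{FGV}, Section~6) used to prove that Whittaker IC fibres at $\Gr^{\lambda}_B\cap \Gr^{\mu}_{B^-}$ give the weight-multiplicity space.

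The main obstacle I anticipate is step three: rigorously showing that on each top-dimensional component $b\subset \Gr^{\lambda}_B\cap \Gr^{\mu}_{B^-}$ satisfying (a) and (b), the local system $\ev^*\cL_{\psi}$ is trivial (and not just cohomologically vanishing away from top degree). This requires combining the non-degeneracy of the Whittaker character with the structure of the Pl\"ucker data defining $\gM_{x,\lambda}$, exactly as in the untwisted case. The case $\mu=\lambda$ is immediate: the fibre reduces to the unique open $T(\cO_x)$-orbit $\Gr^{\lambda}_B\cap\Gr^{\lambda}_{B^-}=\{t^{\lambda}\}$ and $\gamma^{\lambda}_{\lambda}$ is trivial by the compatibility of the sections $\delta_{\lambda}$ with $\gt^0_{\EE}$, giving $V^{\lambda}_{\lambda}=\Qlb$.
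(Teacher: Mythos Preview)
Your overall architecture---semisimple decomposition from \corref{Corollary_392}(iii) and \propref{Pp_ovFF_is_exact}, then a stalk computation at $\mu x$ reducing to the open stratum $\Gr^{\lambda}_{B,x}\cap\Gr^{\mu}_{B^-,x}$ via \thmref{Thm_3.10.1}---matches the paper's approach and is correct up to the point where you identify which components contribute.

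The genuine gap is in your step three: you have the roles of the two conditions exactly reversed. Every irreducible component $b$ of $\Gr^{\lambda}_B\cap\Gr^{\mu}_{B^-}$ lies in the open locus $\cZ^{\mu}_{x,\lambda}$ of $\ov{\cZ}^{\mu}_{x,\le\lambda}$, so the IC sheaf $\bar F^{\mu}(\cF_{x,\lambda})$ is a nonvanishing rank-one local system there regardless of whether $b\subset\ov{\Gr}^{\lambda}_G$. Your mechanism~(a) therefore does not exist. What actually singles out the condition $b\subset\ov{\Gr}^{\lambda}_G$ is precisely the Artin--Schreier factor you claimed was harmless: the paper proves (\lemref{Lm_when_chi_lambda_0_is_dominant} combined with \lemref{Lm_Andersen_again}) that $\chi^{\lambda}_0\!:b\to\A^1$ is dominant if and only if $b\not\subset\ov{\Gr}^{\lambda}_G$, so $(\chi^{\lambda}_0)^*\cL_{\psi}$ is nontrivial on exactly those components and kills their contribution to the top cohomology. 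This is also how the untwisted argument in \cite{FGV} works---the weight-multiplicity basis arises because the Whittaker character is constant precisely on components contained in $\ov{\Gr}^{\lambda}_G$---so your appeal to that paper supports the opposite of what you claim.

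Concretely, on the open stratum the local system is $(\chi^{\lambda}_0)^*\cL_{\psi}\otimes(\gamma^{\mu}_{\lambda})^*\cL_{\zeta}$, and it is constant on $b$ if and only if both factors are; the first factor being constant is equivalent to $b\subset\ov{\Gr}^{\lambda}_G$, and the second to the triviality of $(\gamma^{\mu}_{\lambda})^*\cL_{\zeta}$ (which in particular forces $\lambda-\mu\in\Lambda^{\sharp}$ via the $T(\cO)$-equivariance of $\gamma^{\mu}_{\lambda}$).
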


\sssec{Proof of Theorem~\ref{Th_4.12.1}} 
Recall that $\cF_{x,\lambda}$ is the extension by zero from $\wt\gM_{x,\le\lambda}$. Since $\bar\pi^{\mu}$ factors through $\bar\pi^{\mu}: \ov{\cZ}^{\mu}_{x,\le\lambda}\to X^{\mu}_{x,\le\lambda}$, $\ov{\FF}(\cF_{x,\lambda})$ is the extension by zero from $X^{\mu}_{x,\le\lambda}$. The latter scheme is empty unless $\mu\le\lambda$. So, the $\mu$-component of $\ov{\FF}(\cF_{x,\lambda})$ vanishes unless $\mu\le\lambda$. 

By Corollary~\ref{Corollary_392}, since $\bar\pi^{\mu}$ is proper for each $\mu$, there is a decomposition 
\begin{equation}
\label{decomp_FF_of_irred_object_preliminary}
\ov{\FF}(\cF_{x,\lambda})\,\iso\, \mathop{\oplus}\limits_{\mu\le\lambda} \; \cL_{x,\mu}\otimes V_{\mu}^{\lambda} \, .
\end{equation}
It remains to determine the spaces $V_{\mu}^{\lambda}$. Pick $\mu\le\lambda$. Set for brevity $\gamma=\gamma^{\mu}_{\lambda}$. Recall the notation $\chi^{\lambda}_0: \Gr_{B,x}^{\lambda}\to\A^1$ from Section~\ref{Section_1.1}. 

\begin{Lm} The space $V^{\lambda}_{\mu}$ in (\ref{decomp_FF_of_irred_object_preliminary}) has a canonical base consiting of those irreducible components of $\Gr_{B,x}^{\lambda}\cap \Gr_{B^-, x}^{\mu}$ over which the local system $(\chi^{\lambda}_0)^*\cL_{\psi}\otimes \gamma^*\cL_{\zeta}$ is constant. 
\end{Lm}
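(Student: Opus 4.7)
The plan is to extract $V^\lambda_\mu$ as the multiplicity of the simple summand $\cL_{x,\mu}$ in the decomposition by restricting to the point $\mu x$. Since $X^\mu_{x,\le\mu}$ consists of the single point $\{\mu x\}$, the $\mu$-component $\cL^\mu_{x,\mu}$ of $\cL_{x,\mu}$ is a shifted skyscraper at $\mu x$ of known perverse degree. Under (\ref{decomp_FF_of_irred_object_preliminary}) the multiplicity $V^\lambda_\mu$ therefore equals the $*$-stalk of $\ov{\FF}(\cF_{x,\lambda})^\mu$ at $\mu x$ in that precise perverse degree. The simple summands $\cL_{x,\mu'}$ for $\mu'>\mu$ have $\mu$-components supported on the larger schemes $X^\mu_{x,\le\mu'}$, but by the $\IC$-property and factorization their stalks at $\mu x$ sit in strictly smaller perverse degrees and hence cannot contaminate the relevant graded piece.

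Next I would apply proper base change to the proper map $\bar\pi^\mu\colon \ov{\cZ}^\mu_{x,\le\lambda}\to X^\mu_{x,\le\lambda}$, reducing the stalk computation to the cohomology of the fiber of $\bar\pi^\mu$ over $\mu x$. This fiber identifies, exactly as in Section~\ref{Section_Compactified Zastava}, with
\[
(\ov{\Gr}^\lambda_{B,x}\cap \ov{\Gr}^\mu_{B^-,x})\times^{T(\cO_x)}\Omega^\rho\mid_{D_x},
\]
and it is endowed with the restriction of $\bar F^\mu(\cF_{x,\lambda})$. By Corollary~\ref{Corollary_392}(iii), this restriction is the intermediate extension from the open subscheme
\[
(\Gr^\lambda_{B,x}\cap \Gr^\mu_{B^-,x})\times^{T(\cO_x)}\Omega^\rho\mid_{D_x}.
\]
On this open piece I would unravel the definition of $\bar F^\mu$ together with the description of $\bar\cF_{x,\lambda}$ as the Whittaker sheaf pulled back via $'\bar\gp$: the character gives the factor $(\chi^\lambda_0)^*\cL_\psi$ in the gerbe trivialization coming from the $\Gr^\lambda_B$-side (i.e.\ the section $s^\lambda_B$), while $\cL_{x,\mu}$ uses the trivialization on the $\Gr^\mu_{B^-}$-side (the section $s^\mu_{B^-}$). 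The discrepancy between these two trivializations is precisely $\gamma^\mu_\lambda$, so on the open stratum $\bar F^\mu(\cF_{x,\lambda})$ becomes (up to shift and twist by the $T(\cO_x)$-quotient) the local system $(\chi^\lambda_0)^*\cL_\psi\otimes \gamma^*\cL_\zeta$.

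Finally, all irreducible components of $\Gr^\lambda_{B,x}\cap \Gr^\mu_{B^-,x}$ have the same dimension $\<\lambda-\mu,\check\rho\>$, so after the appropriate shift the multiplicity is read off from the top cohomology of this open piece with coefficients in the local system $(\chi^\lambda_0)^*\cL_\psi\otimes \gamma^*\cL_\zeta$. Each component on which the local system is constant contributes a one-dimensional summand, and each component on which it is nontrivial contributes zero; the $\IC$-extension from the open stratum ensures that the boundary $(\ov{\Gr}^\lambda_{B,x}\cap \ov{\Gr}^\mu_{B^-,x})\smallsetminus(\Gr^\lambda_{B,x}\cap \Gr^\mu_{B^-,x})$ contributes only in strictly lower cohomological degrees. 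This yields the asserted canonical basis of $V^\lambda_\mu$. The main obstacle will be the cohomological bookkeeping in the first paragraph: isolating the perverse degree that picks out $\cL_{x,\mu}$ at the stalk at $\mu x$ and checking that higher summands $\cL_{x,\mu'}$ with $\mu'>\mu$ cannot interfere; once the correct degree is identified, the subtop cohomology assumption (used in Theorem~\ref{Thm_main_functor_ovFF} to guarantee that $\ov{\FF}$ lands in $\wt\FS^\kappa_x$) ensures the decomposition (\ref{decomp_FF_of_irred_object_preliminary}) makes sense, and the rest is a clean top-cohomology count.
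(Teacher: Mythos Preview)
Your overall strategy matches the paper's: identify $V^\lambda_\mu$ with $\H^0$ of the $*$-stalk $K=\ov{\FF}(\cF_{x,\lambda})_{\mu x}$, then compute $K$ via proper base change as $\RG_c$ of the fiber $Y$ of $\bar\pi^\mu$ with coefficients in $\bar F^\mu(\cF_{x,\lambda})$, and read off $\H^0$ as top cohomology of $(\chi^\lambda_0)^*\cL_\psi\otimes\gamma^*\cL_\zeta$ on the open stratum $\Gr_B^\lambda\cap\Gr_{B^-}^\mu$. Your first paragraph (that the summands $\cL_{x,\mu'}$ for $\mu'>\mu$ contribute to the stalk at $\mu x$ only in degrees $\le -1$) is correct and is how the paper proceeds.

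The gap is in the last step. You assert that ``the IC-extension from the open stratum ensures that the boundary contributes only in strictly lower cohomological degrees.'' This does not follow formally from $\bar F^\mu(\cF_{x,\lambda})$ being an IC sheaf on $\ov{\cZ}^\mu_{x,\le\lambda}$: the IC condition bounds perverse degrees of the $*$-restriction to the complement of the open \emph{inside $\ov{\cZ}^\mu_{x,\le\lambda}$}, but says nothing about the further $*$-restriction of that complex to the fiber $Y$, nor about the amplitude of its $\RG_c$. (Note also that Corollary~\ref{Corollary_392}(iii) gives the intermediate extension from $\ov{\cZ}^\mu_{x,\lambda}$, whose fiber over $\mu x$ is $\Gr_B^\lambda\cap\ov{\Gr}_{B^-}^\mu$, not the smaller $\Gr_B^\lambda\cap\Gr_{B^-}^\mu$ you name.) The paper handles this by an explicit double stratification: first by $Y_\eta=\Gr_B^\eta\cap\ov{\Gr}_{B^-}^\mu$ for $\eta\le\lambda$ dominant, where for $\eta<\lambda$ one uses that $j_{x,\eta}^*\cF_{x,\lambda}$ is in degrees $<0$ together with the exactness argument of Proposition~\ref{Pp_ovFF_is_exact} to push the contribution into degrees $<0$; then $Y_\lambda$ is stratified by $Y_{\lambda,\nu}=\Gr_B^\lambda\cap\Gr_{B^-}^\nu$, and the explicit description of $\IC_\zeta$ on the boundary strata of $\Bunb_{B^-}$ (Theorem~\ref{Thm_3.10.1}) is used to show that each $Y_{\lambda,\nu}$ with $\nu>\mu$ contributes in degrees $<0$. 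Your argument does not invoke Theorem~\ref{Thm_3.10.1} and I do not see how to avoid it.
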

\begin{proof}
Since $\ov{\FF}(\cF_{x,\lambda})\in\wt\FS^{\kappa}_x$,
it suffices to determine the fibre $K:=\ov{\FF}(\cF_{x,\lambda})_{\mu x}$. 
By Proposition~\ref{Pp_ovFF_is_exact}, $K$ is placed in degrees $\le 0$. 
Pick a trivialization of $\cP^{\bar\kappa}$ at $\mu x\in X^{\mu}_{x,\le\lambda}$. This allows to see $K$ as a complex over $\Spec k$, it also determines $\cL_{x,\mu}$ up to a unique isomorphism, so yields an isomorphism 
$$
V^{\lambda}_{\mu}\,\iso\, 
\H^0(\ov{\FF}(\cF_{x,\lambda})_{\mu x})
$$ 

 The fibre of $\bar\pi^{\mu}: \ov{\cZ}^{\mu}_{x,\le\lambda}\to X^{\mu}_{x,\le\lambda}$ over $\mu x$ is 
$$
Y:=(\ov{\Gr}_{B,x}^{\lambda}\cap \ov{\Gr}_{B^-, x}^{\mu})\times^{T(\cO_x)}\Omega^{\rho}\mid_{D_x}
$$ 
For $\eta\in\Lambda^+$, $\eta\le \lambda$ let 
$$
Y_{\eta}=(\Gr_{B,x}^{\eta}\cap \ov{\Gr}_{B^-, x}^{\mu})\times^{T(\cO_x)}\Omega^{\rho}\mid_{D_x}
$$  
Denote by $K^{\eta}$ the constant complex over $\Spec k$ such that  $j_{x, \eta}^*\cF_{x,\lambda}\,\iso\, K^{\eta}\otimes \cF_{x, \eta}$.
Here $K^{\eta}$ is placed in degrees $<0$ for $\eta<\lambda$, and $K^{\lambda}=\Qlb$. 
 
 Let $K_{\eta}$ be the contribution of the $*$-restriction $\cF_{x,\lambda}\mid_{\wt\gM_{x,\eta}}$ to $K$. In other words, 
$$
K_{\eta}=\RG_c(Y_{\eta}, \bar F^{\mu}(\cF_{x, \lambda})\mid_{Y_{\eta}}),
$$
where we used the $*$-restriction to $Y_{\eta}$, and the above trivialization of $\cP^{\bar\kappa}$ at $\mu x\in X^{\mu}_{x,\le\lambda}$ to get rid of the corresponding gerbe. By Proposition~\ref{Pp_ovFF_is_exact}, if $\eta<\lambda$ then $K_{\eta}$ is placed in degrees $<0$. So, it suffices to analyze $K_{\lambda}$. 

 For $\mu\le\nu\le \lambda$ let 
$$
Y_{\lambda, \nu}=(\Gr_{B,x}^{\lambda}\cap \Gr_{B^-, x}^{\nu})\times^{T(\cO_x)}\Omega^{\rho}\mid_{D_x} \, .
$$  
The schemes $Y_{\lambda, \nu}$ with $\mu\le\nu\le \lambda$ form a stratification of $Y_{\lambda}$. 

 For $\mu=\nu+\theta$ with $\nu\le\lambda,\theta\le 0$ let $\gU(\theta)$ be the trivial decomposition $\theta=\theta$. Pick a trivialization of the line $\cL^{\bar\kappa}_{\cF^0_T(-\theta x)}$. As in the proof of Proposition~\ref{Pp_ovFF_is_exact} this allows for $V\in \Rep(\check{T}_{\zeta})$ to see $\Loc_{\zeta}(V_{\theta})$ as a complex over $\Spec k$. The $*$-restriction $\bar F^{\mu}(\cF_{x, \lambda})\mid_{Y_{\lambda,\nu}}$ identifies with 
$$
\Loc_{\zeta}((\mathop{\oplus}\limits_{i\ge 0} \Sym^i (\check{\gu}^-_{\zeta})[2i])_{\theta})\otimes\ev_{x, \lambda}^*\cL_{\psi}\otimes \cE[\<\lambda-\nu, 2\check{\rho}\>],
$$
where $\cE$ is some rank one local system. Recall that $Y_{\lambda, \nu}$ is of pure dimension $\<\lambda-\nu, \check{\rho}\>$. So, the contribution $K_{\lambda, \nu}$ of $Y_{\lambda,\nu}$ to $K_{\lambda}$ is 
$$
\Loc_{\zeta}((\mathop{\oplus}\limits_{i\ge 0} \Sym^i (\check{\gu}^-_{\zeta})[2i])_{\theta})\otimes \RG_c(Y_{\lambda,\nu}, \ev_{x, \lambda}^*\cL_{\psi}\otimes \cE)[\<\lambda-\nu, 2\check{\rho}\>]
$$
It is placed in degrees $\le 0$, and the inequality is strict unless $\theta=0$. There remains to analyze the complex 
$$
K_{\lambda, \mu}=\RG_c(Y_{\lambda,\mu}, \ev_{x, \lambda}^*\cL_{\psi}\otimes \cE)[\<\lambda-\mu, 2\check{\rho}\>]
$$ 
We see that only the open part $\cZ^{\mu}_{x, \lambda}\subset \ov{\cZ}^{\mu}_{x,\le\lambda}$ contributes to the $0$-th cohomology of $K$. This allows to describe the local system $\cE$ over $Y_{\lambda,\mu}$. 
From the definitions we get $\gamma^*\cL_{\zeta}\,\iso\, \cE$. So, $K_{\lambda,\mu}$ identifies with
$$
\RG_c(\Gr_{B,x}^{\lambda}\cap \Gr_{B^-, x}^{\mu}, (\chi^{\lambda}_0)^*\cL_{\psi}\otimes \gamma^*\cL_{\zeta})[\<\lambda-\mu, 2\check{\rho}\>]
$$
for some character $\chi_0: U(F_x)\to\A^1$ of conductor zero. Our claim follows.
\end{proof} 
 
\begin{Lm} 
\label{Lm_when_chi_lambda_0_is_dominant}
Let $\mu\le\lambda$, $\lambda\in\Lambda^+$. Let $b\subset \Gr_B^{\lambda}\cap \Gr_{B^-}^{\mu}$ be an irreducible component. Denote by $\bar b\subset \Gr_B^0\cap \Gr_{B^-}^{\mu-\lambda}$ the component $t^{-\lambda}b$, so $\bar b\in B_{\gg}(\lambda-\mu)$. The restriction $\chi^{\lambda}_0: b\to \A^1$ of $\chi^{\lambda}_0$ is dominant if and only if there is $i\in\cJ$ such that 
$\phi_i(\bar b)> \<\lambda, \check{\alpha}_i\>$. 
\end{Lm}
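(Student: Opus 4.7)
The plan is to reformulate the dominance of $\chi^{\lambda}_0|_b$ via the translation $t^{-\lambda}$ and then reduce to an explicit rank-one $\SL_2$ computation along a simple root direction, using the $P_i$-stratification exploited in Section~\ref{Section_Property (C)}. By Lemma~\ref{Lm_Andersen_again}, the combinatorial condition ``$\phi_i(\bar b)\le\<\lambda,\check{\alpha}_i\>$ for all $i$'' is equivalent to $b\subset\ov{\Gr}_G^{\lambda}$, so the lemma amounts to: $\chi^{\lambda}_0|_b$ is non-constant iff $b\not\subset\ov{\Gr}_G^{\lambda}$.

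First I would pass to $\bar b$: the translation $ut^\lambda G(\cO)\mapsto(t^{-\lambda}ut^\lambda)G(\cO)$ defines an isomorphism $\Gr_B^\lambda\,\iso\,\Gr_B^0$ taking $b$ to $\bar b$, under which $\chi^{\lambda}_0$ pulls back to the $(U(F),\chi_\lambda)$-equivariant function $\bar\chi:\Gr_B^0\to\A^1$, $u'\cdot e\mapsto\chi_\lambda(u')$. This is well defined since evaluating $\chi_\lambda$ on the element of the $\check{\alpha}_i$-root subgroup with parameter $f\in F$ gives $\psi(f_{-1-\<\lambda,\check{\alpha}_i\>})$, which vanishes for $f\in\cO$ because $\<\lambda,\check{\alpha}_i\>\ge 0$ for $\lambda\in\Lambda^+$. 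For the ``if'' direction, fix $i\in\cJ$ with $m:=\phi_i(\bar b)>\<\lambda,\check{\alpha}_i\>$; by Section~\ref{Section_Recollections on crystals} the locus $\bar b\cap\gq_{P_i}^{-1}(\Gr_{B^-(M_i)}^{-m\alpha_i})$ is dense open in $\bar b$ and by~(\ref{iso_decomp_intersection_first}) splits as $(\Gr_{B(M_i)}^0\cap\Gr_{B^-(M_i)}^{-m\alpha_i})\times Y_2$. The restriction of $\bar\chi$ to the first factor is governed by the $\check{\alpha}_i$-component of $\chi_\lambda$, reducing to an $\SL_2$ computation inside $M_i$: parameterizing the first factor by $f=t^{-m}\alpha$ with $\alpha\in\cO/t^m\cO$, $\alpha(0)\ne 0$, the restriction becomes $\psi(\alpha_{m-1-\<\lambda,\check{\alpha}_i\>})$, whose index lies in $\{0,1,\dots,m-1\}$ exactly when $m>\<\lambda,\check{\alpha}_i\>$ and is then a freely varying coordinate on $\Gm\times\A^{m-1}$; so $\bar\chi$ is non-constant along the first factor, hence non-constant on $\bar b$.

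For the ``only if'' direction, suppose $b\subset\ov{\Gr}_G^{\lambda}$. Since $\lambda\in\Lambda^+$ and $\Gr_B^\lambda$ meets a stratum $\Gr_G^{\lambda'}$ only when $\lambda$ is a weight of $\VV^{\lambda'}$ (which forces $\lambda'\ge\lambda$), we obtain $\Gr_B^\lambda\cap\ov{\Gr}_G^{\lambda}=\Gr_B^\lambda\cap\Gr_G^{\lambda}=U(\cO)\cdot t^\lambda$, so $b\subset U(\cO)\cdot t^\lambda$, and $\chi^{\lambda}_0(ut^\lambda)=\chi_0(u)=0$ for $u\in U(\cO)$ by the conductor-zero assumption on $\chi_0$; in particular $\chi^{\lambda}_0\equiv 0$ on $b$. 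The main obstacle will be verifying that the dependence of $\bar\chi$ on the first-factor direction is really controlled by the $\check{\alpha}_i$-component of $\chi_\lambda$ independently of the second-factor coordinate (analogous to the decoupling used in the proof of Theorem~\ref{Pp_C_implies_subtop cohomology property}); once this is in place the conductor book-keeping in the $\SL_2$ case proceeds as above.
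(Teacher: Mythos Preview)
Your argument is essentially correct, and the ``main obstacle'' you flag at the end is not a real difficulty: the product decomposition~(\ref{iso_decomp_intersection_first}) is built from the $U(M_i)(F)$-action (see Section~\ref{Section_equivariance_general}), so it intertwines $\bar\chi$ with translation by the restriction $\chi_\lambda|_{U(M_i)(F)}$, which is exactly the $\check{\alpha}_i$-component. Concretely, if the isomorphism sends $(g',y)$ to $u\cdot y$ with $u\in U(M_i)(F)$ determined by $u\cdot g=g'$, then $\bar\chi(u\cdot y)=\chi_\lambda(u)+\bar\chi(y)$, and $\chi_\lambda(u)$ depends only on $g'$. Hence $\bar\chi$ splits additively along the product, and your $\SL_2$ computation on the first factor goes through.

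Your ``only if'' direction takes a genuinely different route from the paper. You invoke Lemma~\ref{Lm_Andersen_again} to get $b\subset\ov{\Gr}_G^{\lambda}$, then observe $\Gr_B^{\lambda}\cap\ov{\Gr}_G^{\lambda}=\Gr_B^{\lambda}\cap\Gr_G^{\lambda}=U(\cO)\cdot t^{\lambda}$ (the first equality since $\lambda$ dominant forces $\lambda'=\lambda$ for any stratum $\Gr_G^{\lambda'}$ meeting $\Gr_B^{\lambda}$ with $\lambda'\le\lambda$; the second is the standard description of the big cell), whence $\chi^{\lambda}_0\equiv 0$ on $b$. The paper instead treats both directions at once: it passes to the dense open $b_0=b\cap\bigcap_i\gq_{P_i}^{-1}(\Gr_{B^-(M_i)}^{\mu_i})$, observes that the product map $\gq^{\bar\mu}:b_0\to Z^{\bar\mu}=\prod_i(\Gr_{B(M_i)}^{\lambda}\cap\Gr_{B^-(M_i)}^{\mu_i})$ is surjective (by $T(\cO)$-transitivity on $Z^{\bar\mu}$) and that $\chi^{\lambda}_0|_{b_0}=\ev^{\bar\mu}\circ\gq^{\bar\mu}$ with $\ev^{\bar\mu}=\sum_i\ev_i$, so dominance of $\chi^{\lambda}_0|_b$ is equivalent to dominance of some $\ev_i$, which is the rank-one statement. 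The paper's approach is more self-contained (it does not quote Anderson's result), while yours makes the geometric content of the ``only if'' direction more transparent.
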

\begin{proof}
For $i\in\cJ$ recall the maps $\gq_{P_i}: \Gr_{P_i}\to \Gr_{M_i}$.
For $i\in\cJ$ let $\mu_i\le\lambda$ be the unique element such that $\gq_{P_i}^{-1}(\Gr_{B^-(M_i)}^{\mu_i})\cap b$ is dense in $b$. 
 Note that $b\subset \Gr_B^{\lambda}\cap \Gr_{B^-}^{\mu}$ is a $T(\cO)$-invariant subscheme. Let 
$$
b_0=b\cap (\mathop{\cap}\limits_{i\in\cJ} \gq_{P_i}^{-1}(\Gr_{B^-(M_i)}^{\mu_i}),
$$
it is a dense $T(\cO)$-invariant subscheme of $b$. Set $\bar\mu=\{\mu_i\}_{i\in \cJ}$ and 
$$
Z^{\bar\mu}=\prod_{i\in \cJ} \Gr_{B(M_i)}^{\lambda}\cap \Gr_{B^-(M_i)}^{\mu_i}\, .
$$ 

 Let $\gq^{\bar\mu}: b_0\to Z^{\bar\mu}$ be the product of the maps $\gq_{P_i}$. This map is $T(\cO)$-equivariant. Since $T(\cO)$ acts transitively on $Z^{\bar\mu}$, the map $\gq^{\bar\mu}$ is surjective. 
For $i\in\cJ$ let $\ev_i$ be the composition 
$$
\Gr_{B(M_i)}^{\lambda}\cap \Gr_{B^-(M_i)}^{\mu_i}\hook{} \Gr_{B(M_i)}^{\lambda}\to \Gr_B^{\lambda}\toup{\chi^{\lambda}_0}\A^1
$$ 
Denote by $\ev^{\bar\mu}: Z^{\bar\mu}\to\A^1$ the map $\ev^{\bar\mu}=\sum_{i\in \cJ} \ev_i$. The restriction $\chi^{\lambda}_0\mid_{b_0}$ equals $\ev^{\bar\mu}\gq^{\bar\mu}$. 


 Clearly, $\ev^{\bar\mu}: Z^{\bar\mu}\to\A^1$ is dominant if and only if there is $i\in\cJ$ such that $\ev_i: 
\Gr_{B(M_i)}^{\lambda}\cap \Gr_{B^-(M_i)}^{\mu_i}\to \A^1$ is dominant. The latter condition is equivalent to 
$$
\phi_i(\bar b)=\<\lambda-\mu_i, \frac{\check{\alpha}_i}{2}\> >\<\lambda, \check{\alpha}_i\>
$$ 
Indeed, the multiplication by $t^{\lambda}$ identifies $\Gr_{B(M_i)}^0\cap \Gr_{B^-(M_i)}^{\mu_i-\lambda}\,\iso\, \Gr_{B(M_i)}^{\lambda}\cap \Gr_{B^-(M_i)}^{\mu_i}$. Under the latter isomorphism $\ev_i$ identifies with some map $\chi^0_{\lambda}: \Gr_{B(M_i)}^0\cap \Gr_{B^-(M_i)}^{\mu_i-\lambda}\to \A^1$ for the group $M_i$. Our claim follows.
\end{proof} 

 The local system $(\chi^{\lambda}_0)^*\cL_{\psi}\otimes \gamma^*\cL_{\zeta}$ is constant on $b$ if and only if $\chi^{\lambda}_0: b\to \A^1$ is not dominant and the local system $\gamma^*\cL_{\zeta}$ is constant on $b$. The map $\gamma$ intertwines the natural $T(\cO)$-action on $\Gr_B^{\lambda}\cap \Gr_{B^-}^{\mu}$ with the $T(\cO)$-action on $\Gm$ by the character $T(\cO)\to T\toup{\bar\kappa(\lambda-\mu)}\Gm$.
So, the condition $\lambda-\mu\in\Lambda^{\sharp}$ is necessary (but not sufficient) for $\gamma^*\cL_{\zeta}$ to be trivial.
Theorem~\ref{Th_4.12.1} follows now from Lemmas~\ref{Lm_when_chi_lambda_0_is_dominant} and \ref{Lm_Andersen_again}. \QED

\sssec{Special case} Our purpose now is to understand the spaces $V^{\lambda}_{\mu}$ under the additional assumption $\lambda\in\Lambda^{\sharp, +}$.

\begin{Lm} 
\label{Lm_other_three_intersections_local_systems_coincide}
Let $\mu\le\lambda$ with $\mu\in\Lambda, \lambda\in \Lambda^{\sharp, +}$. Then over $\Gr_G^{\lambda}\cap \Gr_B^{\lambda}\cap \Gr_{B^-}^{\mu}$ there is an isomorphism $(s^{\mu}_{B^-})^*\cA^{\lambda}_{\cE}\,\iso\, (\gamma^{\mu}_{\lambda})^*\cL_{\zeta}$ up to a shift.
\end{Lm}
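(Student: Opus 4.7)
The plan is to exploit the compatibility between the two sections $s^{\lambda}_B$ and $s^{\mu}_{B^-}$ of the $\Gm$-torsor $\Gra_G\to\Gr_G$ together with the normalization that is available precisely because $\lambda\in\Lambda^{\sharp,+}$.

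First I would record the key $T(\cO)$-equivariance properties. Over $\Gr_G^{\lambda}$ the sheaf $\cA^{\lambda}_{\cE}$ is $G(\cO)$-equivariant, hence in particular $T(\cO)$-equivariant with the equivariance encoded (through the $\mu_N$-gerbe $\wt\Gr_G\to\Gr_G$) by the character $\zeta$ applied to the action of $T(\cO)$ on the corresponding fibre of $\Gra_G$. Over $\Gr_B^{\lambda}$ the $\Gm$-torsor $\Gra_G\to\Gr_G$ is constant (by Section~\ref{Section_1.1}) and $T(\cO)$ acts by $T(\cO)\to T\xrightarrow{\bar\kappa(\lambda)}\Gm$; since $\lambda\in\Lambda^{\sharp}$ this character is divisible by $N$ in $\check{\Lambda}$, so after composition with $\Gm\xrightarrow{z\mapsto z^N}B(\mu_N)$ it becomes trivial. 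Together with the choice of $\delta_{\lambda}$ (compatible with $\gt^0_{\EE}$), this yields a canonical trivialization, up to a shift, of $(s^{\lambda}_B)^*\cA^{\lambda}_{\cE}$ on the open $U(\cO)$-orbit $\Gr_G^{\lambda}\cap \Gr_B^{\lambda}$. This is the main content of Step 1.

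Next I would simply use the defining relation $s^{\mu}_{B^-}=\gamma^{\mu}_{\lambda}\,s^{\lambda}_B$ of Section~\ref{section_4.12.4}. Pulling back $\cA^{\lambda}_{\cE}$ along the two sections gives
\[
(s^{\mu}_{B^-})^*\cA^{\lambda}_{\cE}\;\iso\;(\gamma^{\mu}_{\lambda})^*\bigl(s_N^*\cL_{\zeta}\bigr)\otimes (s^{\lambda}_B)^*\cA^{\lambda}_{\cE},
\]
where $s_N:\Gm\to B(\mu_N)$ is the $N$-th power covering from Section~\ref{section_input_data}. By Step 1 the second factor is constant (up to a shift) over $\Gr_G^{\lambda}\cap\Gr_B^{\lambda}\cap\Gr_{B^-}^{\mu}$, and by definition $s_N^*\cL_{\zeta}=\cL_{\zeta}$ on $\Gm$; combining these yields the claimed isomorphism.

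The main obstacle I expect is Step~1: producing the canonical trivialization of $(s^{\lambda}_B)^*\cA^{\lambda}_{\cE}$ in a way that is actually compatible with the choices $\gt_{\EE}^0$ and $\delta_{\lambda}$ used to build $\gamma^{\mu}_{\lambda}$. This requires unpacking how the Satake-type sheaf $\cA^{\lambda}_{\cE}$ is normalized at the $T$-fixed point $t^{\lambda}\in\Gr_G^{\lambda}\cap\Gr_B^{\lambda}$ (which, since this intersection is the open $U(\cO)$-orbit and is $T(\cO)$-equivariantly contractible onto $t^{\lambda}$, determines the local system uniquely). Once the normalizations agree at $t^{\lambda}$, $T(\cO)$-equivariance propagates the identification over the whole intersection.
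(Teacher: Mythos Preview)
Your approach is essentially the same as the paper's, but you are over-complicating Step~1 and worrying about a non-issue. The paper's argument is a single line: $\Gr_B^{\lambda}\cap\Gr_G^{\lambda}$ is an affine space (the open $U(\cO)$-orbit through $t^{\lambda}$), hence simply connected, so any local system on it is trivial; in particular $(s^{\lambda}_B)^*\cA^{\lambda}_{\cE}$ is a constant sheaf up to shift. Then the defining relation $s^{\mu}_{B^-}=\gamma^{\mu}_{\lambda}\,s^{\lambda}_B$ gives the claim immediately, exactly as in your Step~2.

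Your $T(\cO)$-equivariance discussion and the condition $\lambda\in\Lambda^{\sharp}$ are not what force triviality in Step~1; simple connectedness of the affine space does all the work. (The hypothesis $\lambda\in\Lambda^{\sharp,+}$ is used only so that $\cA^{\lambda}_{\cE}$ exists.) Likewise, your concern about compatibility of normalizations between $s_{\lambda}$, $\delta_{\lambda}$, and $\gt^0_{\EE}$ is unnecessary: the lemma asserts an isomorphism only up to shift (and $\cA^{\lambda}_{\cE}$ is itself defined only up to a scalar automorphism), so a non-canonical trivialization of $(s^{\lambda}_B)^*\cA^{\lambda}_{\cE}$ is all that is required.
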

\begin{proof}
Recall that for any $\lambda\in\Lambda^+$ we have a section $s_{\lambda}: \Gr_{G,x}^{\lambda}\to \wt\Gr_{G,x}^{\lambda}$ defined in (\cite{L1}, Section~2.4.2) and associated to a square root $\Omega^{\frac{1}{2}}(\cO_x)$ of $\Omega(\cO_x)$ picked in Section~\ref{sssec_Notation}. In turn, $s^{\lambda}_B: \Gr_B^{\lambda}\to \Gra_G$ yields a section denoted $s^{\lambda}_B: \Gr_B^{\lambda}\to \wt\Gr_B^{\lambda}$ by abuse of notation. Since $\Gr_B^{\lambda}\cap \Gr_G^{\lambda}$ is an affine space, the local system $(s^{\lambda}_B)^*\cA^{\lambda}_{\cE}$ is trivial on $\Gr_B^{\lambda}\cap \Gr_G^{\lambda}$.
Our claim follows.
\end{proof}

 For $\lambda\in\Lambda^{\sharp,+}$ write $V(\lambda)$ for the irreducible represenation of $\check{G}_{\zeta}$ of highest weight $\lambda$. For $\mu\in\Lambda^{\sharp}$ let $V(\lambda)_{\mu}\subset V(\lambda)$ denote the subspace of $\check{T}_{\zeta}$-weight $\mu$. 
\index{$V(\lambda), V(\lambda)_{\mu}$} 

\begin{Thm} 
\label{Thm_special_case_of_V_lambda_mu}
Let $\mu\in\Lambda^{\sharp}, \lambda\in\Lambda^{\sharp,+}$ with $\mu\le\lambda$. Then the vector space $V^{\lambda}_{\mu}$ in the formula (\ref{decomp_FF_of_irred_object}) of Theorem~\ref{Th_4.12.1} identifies canonically with $V(\lambda)_{\mu}$. 
\end{Thm}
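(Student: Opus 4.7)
The plan is to identify the explicit basis of $V^{\lambda}_{\mu}$ provided by Theorem~\ref{Th_4.12.1} with the standard metaplectic Mirkovi\'c--Vilonen basis of $V(\lambda)_{\mu}$, in three steps.

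First, I would reindex using Anderson's bijection~(\ref{bij_Andersen}) recalled in Lemma~\ref{Lm_Andersen_again}. For $\lambda \in \Lambda^+$, the assignment $b \mapsto a := \overline{b \cap \Gr^{\lambda}_{G,x}}$ sets up a bijection between those $b \in \Irr(\Gr^{\lambda}_{B,x} \cap \Gr^{\mu}_{B^-,x})$ that satisfy $b \subset \ov{\Gr}^{\lambda}_{G,x}$ and the MV cycles $a \in \Irr(\Gr^{\lambda}_{G,x} \cap \Gr^{\mu}_{B^-,x})$. Under classical geometric Satake, the latter set is a basis of the $\mu$-weight space of $\VV^{\lambda}$.

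Second, I would transport the local-system triviality condition along this bijection. The scheme $\Gr^{\lambda}_{G,x} \cap \Gr^{\lambda}_{B,x} \cap \Gr^{\mu}_{B^-,x}$ is open and dense in both $a$ and $b$, so triviality on $b$ is equivalent to triviality on this common open dense subset, and likewise for $a$. By Lemma~\ref{Lm_other_three_intersections_local_systems_coincide}, over this subset there is an isomorphism (up to shift) $(s^{\mu}_{B^-})^*\cA^{\lambda}_{\cE} \iso (\gamma^{\mu}_{\lambda})^*\cL_{\zeta}$. Hence triviality of $(\gamma^{\mu}_{\lambda})^*\cL_{\zeta}$ on $b$ translates into triviality of $(s^{\mu}_{B^-})^*\cA^{\lambda}_{\cE}$ on the corresponding MV cycle $a$.

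Finally, I would invoke the metaplectic Satake equivalence of \cite{L1}. For $\lambda \in \Lambda^{\sharp,+}$, the irreducible object of $\PPerv^{\natural}_{G,\zeta}$ corresponding to $V(\lambda)$ is (up to the chosen normalization) $\cA^{\lambda}_{\cE}$, and the composition of its fibre functor with the projection onto the $\mu$-component (for $\mu \in \Lambda^{\sharp}$) is computed by hyperbolic restriction along $\Gr^{\mu}_{B^-,x}$. The resulting description of $V(\lambda)_{\mu}$ furnishes a canonical basis indexed precisely by those MV cycles $a \in \Irr(\Gr^{\lambda}_{G,x} \cap \Gr^{\mu}_{B^-,x})$ on which the local system $(s^{\mu}_{B^-})^*\cA^{\lambda}_{\cE}$ is trivial. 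Combining the three steps produces the claimed canonical isomorphism $V^{\lambda}_{\mu} \iso V(\lambda)_{\mu}$.

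The main obstacle will be step three: matching the normalizations. One must check that the section $\gt^0_{\EE}$ fixed in Section~\ref{section_Additional input data}, together with the compatible trivializations $\delta_{\lambda}$ used in Section~\ref{section_4.12.4} to define $s^{\lambda}_B$, $s^{\mu}_{B^-}$ and thus $\gamma^{\mu}_{\lambda}$, is genuinely compatible with the fibre functor on $\PPerv^{\natural}_{G,\zeta}$ constructed in \cite{L1} via the same datum $\gt_{\EE}$. This should be a formal consequence of how both constructions are built from $\gt_{\EE}$, but requires a careful bookkeeping of signs and super-lines.
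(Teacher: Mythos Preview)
Your proposal is correct and follows essentially the same approach as the paper: the paper's proof cites (\cite{L1}, Lemma~3.2) applied to $B^-$ for the MV-cycle description of $V(\lambda)_{\mu}$, then invokes Lemma~\ref{Lm_other_three_intersections_local_systems_coincide} to match the two local-system conditions, with Anderson's bijection~(\ref{bij_Andersen}) supplying the identification of index sets. Your worry about normalizations in step three is handled in the paper simply by the citation to \cite{L1}, since the section $\gt_{\EE}^0$ of Section~\ref{section_Additional input data} was chosen compatibly with $\gt_{\EE}$, which is exactly the datum used there to build the fibre functor.
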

\begin{proof} By (\cite{L1}, Lemma~3.2) applied to $B^-$ instead of $B$, the space $V(\lambda)_{\mu}$ admits a canonical base indexed by those $b\in \Irr(\Gr_G^{\lambda}\cap \Gr_{B^-}^{\mu})$ over which the shifted local system $(s^{\mu}_{B^-})^*\cA^{\lambda}_{\cE}$ is trivial. The space $V^{\lambda}_{\mu}$ has a canonical base of $b\in \Irr(\Gr_G^{\lambda}\cap \Gr_{B^-}^{\mu})$ such that $(\gamma^{\mu}_{\lambda})^*\cL_{\zeta}$ is trivial at the generic point of $b$. Our claim follows now from Lemma~\ref{Lm_other_three_intersections_local_systems_coincide}.
\end{proof}

\part{Properties of the functor $\ov{\FF}$}

\medskip

\section{Hecke functors}
\label{Section_Hecke functors}

\ssec{Action on $\D_{\zeta}(\Bunt_G)$} 
\label{Section_action_Hecke_on_BuntG}
In the case of $G$ simple simply-connected the Hecke functors on $\D_{\zeta}(\Bunt_G)$ are defined in (\cite{L2}, Section~3.2). Let us first define their analogs in our setting. 

 Write $\cH_G$ for the Hecke stack classifying $\cF, \cF'\in\Bun_G, x\in X$ and an isomorphism $\cF\,\iso\, \cF'\mid_{X-x}$. We have a diagram
$$
\Bun_G\times X\getsup{h^{\la}_G\times\pi} \cH_G\toup{h^{\ra}_G}\Bun_G,
$$
where $h^{\la}_G$ (resp., $h^{\ra}_G$) sends the above point to $\cF$ (resp., to $\cF'$). Here $\pi(\cF,\cF',x)=x$.  

 Let $\Gr_{G,X}$ be the ind-scheme classifying $x\in X$ and a $G$-torsor $\cF$ on $X$ with a trivialization $\cF\,\iso\, \cF^0_G\mid_{X-x}$. Let $G_X$ be the group scheme over $X$ classifying $x\in X$ and an automorphism of $\cF^0_G$ over $D_x$. The restriction of $\cL^{\bar\kappa}$ under $\Gr_{G,X}\to \Bun_G$ is also denoted $\cL^{\bar\kappa}$. Let $\wt\Gr_{G,X}$ denote the gerbe of $N$-th roots of $\cL^{\bar\kappa}$ over $\Gr_{G,X}$. 
\index{$\cH_G, h^{\la}_G, h^{\ra}_G, \pi$}\index{$\Gr_{G,X}, G_X, \wt\Gr_{G,X}$}
 
 Write $\Bun_{G,X}$ for the stack classifying $(\cF\in\Bun_G, x\in X,\nu)$, where $\nu:\cF\,\iso\, \cF^0_G\mid_{D_x}$ is a trivialization over $D_x$. Let $\Bunt_{G,X}=\Bun_{G,X}\times_{\Bun_G}\Bunt_G$. Denote by $\gamma^{\la}$ (resp., $\gamma^{\ra}$) the isomorphism 
$$
\Bun_{G,X}\times_{G_X}\Gr_{G,X}\,\iso\, \cH_G
$$ 
such that the projection to the first term corresponds to $h^{\la}_G$ (resp., $h^{\ra}_G$). The line bundle $^{\omega}\cL^{\bar\kappa}\boxtimes \cL^{\bar\kappa}$ on $\Bun_{G,X}\times \Gr_{G,X}$ is $G_X$-equivariant, we denote by $^{\omega}\cL^{\bar\kappa}\tboxtimes \cL^{\bar\kappa}$ its descent to 
$
\Bun_{G,X}\times_{G_X}\Gr_{G,X}
$. We have canonically 
\begin{equation}
\label{iso_over_cH_G_first}
(\gamma^{\ra})^*(h^{\la}_G)^*(^{\omega}\cL^{\bar\kappa})\,\iso\, {^{\omega}\cL^{\bar\kappa}}\tboxtimes \cL^{\bar\kappa}
\end{equation}
\index{$\Bun_{G,X}, \Bunt_{G,X}, \gamma^{\la}, \gamma^{\ra}$}%
\index{${^{\omega}\cL^{\bar\kappa}\tboxtimes \cL^{\bar\kappa}}, \cH_{\tilde G}$}

 Let $\cH_{\tilde G}$ be the stack obtained from $\Bunt_G\times\Bunt_G$ by the base change $h^{\la}_G\times h^{\ra}_G: \cH_G\to\Bun_G\times\Bun_G$. A point of $\cH_{\tilde G}$ is given by $(\cF,\cF',x)\in\cH_G$ and lines $\cU,\cU'$ equipped with 
\begin{equation}
\label{iso_cU_and_cU'_for_cH_tildeG} 
 \cU^N\,\iso\, (^{\omega}\cL^{\bar\kappa})_{\cF}, \; \cU'^N\,\iso\, (^{\omega}\cL^{\bar\kappa})_{\cF'}
\end{equation} 
We get the diagram of projections
$$
\Bunt_G\getsup{\tilde h^{\la}_G} \cH_{\tilde G} \toup{\tilde h^{\ra}_G} \Bunt_G
$$
\index{$\tilde h^{\la}_G, \tilde h^{\ra}_G, \tilde\gamma^{\ra}$}%
\index{$(\cT\tboxtimes\cS)^r, (\cT\tboxtimes\cS)^l, K\mapsto \ast K$}
As in (\cite{L2}, Section~3.2), the isomorphism (\ref{iso_over_cH_G_first}) yields a $G_X$-torsor 
$$
\tilde\gamma^{\ra}: \Bunt_{G,X}\times_X\wt\Gr_{G,X}\to \cH_{\tilde G}
$$ 
extending the $G_X$-torsor
$
\Bun_{G,X}\times_X \Gr_{G,X}\to \Bun_{G,X}\times_{G_X}\Gr_{G,X}\toup{\gamma^{\ra}}\cH_G
$. Namely, it sends 
$$
(x, \nu': \cF'\,\iso\, \cF^0_G\mid_{D_x}, \nu_1: \cF_1\,\iso\, \cF^0_G\mid_{X-x}, \cU'^N\,\iso\, (^{\omega}\cL^{\bar\kappa})_{\cF'}, \cU_1^N\,\iso\, \cL^{\bar\kappa}_{(\cF_1, \nu_1,x)})
$$ 
to  
$$
(\cF, \cF', \nu: \cF\,\iso\, \cF'\mid_{X-x}, \cU, \cU'),
$$
where $\cF$ is obtained as the gluing of $\cF'\mid_{X-x}$ with $\cF_1\mid_{D_x}$ via $\nu_1^{-1}\comp \nu': \cF'\,\iso\, \cF_1\mid_{D_x^*}$. We have canonically $(^{\omega}\cL^{\bar\kappa})_{\cF'}\otimes \cL^{\bar\kappa}_{(\cF_1, \nu_1, x)}\,\iso\,(^{\omega}\cL^{\bar\kappa})_{\cF}$, and $\cU=\cU'\otimes\cU_1$ is equipped with the induced isomorphism $\cU^N\,\iso\, (^{\omega}\cL^{\bar\kappa})_{\cF}$. 

 Given an object $\cS$ of the $G_X$-equivariant derived category on $\wt\Gr_{G,X}$ and $\cT\in \D(\Bunt_G)$ we can form their twisted external product $(\cT\tboxtimes\cS)^r$, which is the descent of $\cT\boxtimes \cS$ via $\tilde\gamma^{\ra}$. Similarly, one may define $\tilde\gamma^{\la}$ and the complex $(\cT\tboxtimes\cS)^l$ on $\cH_{\tilde G}$. If $\mu_N(k)$ acts on $\cS$ by $\zeta$, and $\cT\in \D_{\zeta}(\Bunt_G)$ then $(\tilde h^{\la}_G\times\pi)_! (\cT\tboxtimes\cS)^r\in \D_{\zeta}(\Bunt_G\times X)$. 
 
 In (\cite{L1}, Remark~2.2) we introduced a covariant functor $\PPerv_{G,\zeta}\to \PPerv_{G, \zeta^{-1}}, K\mapsto \ast K$. It is induced by the map $\EE\to\EE, z\mapsto z^{-1}$. 
 
 Our choice of $\Omega^{\frac{1}{2}}$ gives rise to the fully faithful functor $\tau^0: \PPerv_{G,\zeta}\to \PPerv_{G,\zeta, X}$ defined in (\cite{L1}, Section~2.6). The abelian category $\PPerv_{G,\zeta, X}$, defined in \select{loc.cit.}, is the category of $G_X$-equivariant perverse sheaves  (cohomologically shifted by 1 to the right) on $\wt\Gr_{G,X}$ on which $\mu_N(k)$ acts by $\zeta$. Now for $\cS\in \PPerv_{G,\zeta}$ we define following \cite{FGV}
$$
\begin{array}{l}
\H^{\la}_G: \PPerv_{G,\zeta^{-1}}\times \D_{\zeta}(\Bunt_G)\to \D_{\zeta}(\Bunt_G\times X)\\ \\
\H^{\ra}_G: \PPerv_{G,\zeta}\times \D_{\zeta}(\Bunt_G)\to \D_{\zeta}(\Bunt_G\times X)
\end{array}
$$
by
$$
\H^{\ra}_G(\cS, K)=(\tilde h^{\la}_G\times \pi)_!(K\tboxtimes \tau_0(\cS))^r\;\;\;\mbox{and}\;\;\; \H^{\la}_G(\cS, K)=(\tilde h^{\ra}_G\times\pi)_!(K\tboxtimes \tau_0(\ast\cS))^l
$$
\index{$\tau^0, \PPerv_{G,\zeta, X}$}\index{$\H^{\la}_G, \H^{\ra}_G$}

 Set $\Lambda^{\sharp, +}=\Lambda^{\sharp}\cap\Lambda^+$. For $\nu\in\Lambda^{\sharp, +}$ we have the associated irreducible object $\cA^{\nu}_{\cE}\in\PPerv_{G,\zeta}$ defined in (\cite{L1}, Section~2.4.2). 
Note that $\ast \cA^{\nu}_{\cE}\,\iso\,\cA^{-w_0(\nu)}_{\cE}$.  
\index{$\Lambda^{\sharp, +}, \cA^{\nu}_{\cE}$}\index{${_x\cH_{\tilde G}}, Z$}
 
\ssec{Action on $\D_{\zeta}(\gM_x)$.} 
\label{section_5.2_action_on_D_zeta}
Pick $x\in X$. Let $_x\cH_{\tilde G}$ denote the fibre of $\cH_{\tilde G}$ over $x\in X$. Set $Z={_x\cH_{\tilde G}}\times_{\Bunt_G} \wt\gM_x$, where we used the map $\tilde h^{\ra}_G: {_x\cH_{\tilde G}}\to\Bunt_G$ in the fibred product. 

\begin{Lm} There is a map $'h^{\la}: Z\to \wt\gM_x$ that renders the diagram
$$
\begin{array}{ccccc}
\wt\gM_x & \getsup{'h^{\la}} & Z & \toup{'h^{\ra}} & \wt\gM_x\\
\downarrow\lefteqn{\scriptstyle\tilde\gp}  && \downarrow\lefteqn{\scriptstyle\gp_Z} && \downarrow\lefteqn{\scriptstyle\tilde\gp} \\
\Bunt_G & \getsup{\tilde h^{\la}_G} & {_x\cH_{\tilde G}} & \toup{\tilde h^{\ra}_G} & \Bunt_G
\end{array}
$$
commutative. The left square in the above diagram is also cartesian.
\end{Lm}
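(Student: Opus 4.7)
The plan is to construct $'h^{\la}$ by the standard Hecke-modification procedure: a point of $Z$ provides, in addition to the Hecke data $(\cF,\cF',\nu\colon\cF\iso\cF'\mid_{X-x},\cU,\cU')$ from $_x\cH_{\tilde G}$, a point of $\wt\gM_x$ lying over $\cF'$, that is, a collection of Pl\"ucker maps
\[
\kappa^{\check\lambda}\colon \Omega^{\<\check\lambda,\rho\>}\to \cV^{\check\lambda}_{\cF'}
\]
allowed to have arbitrary poles at $x$, plus a line $\cU'_{\gM}$ with $(\cU'_{\gM})^N\iso (^{\omega}\cL^{\bar\kappa})_{\cF'}$ and an identification $\cU'_{\gM}\iso\cU'$ compatible with $\tilde\gp$. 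I would first transport the $\kappa^{\check\lambda}$ across $\nu$ to obtain maps over $X-x$ with target $\cV^{\check\lambda}_{\cF}$; since in the definition of $\gM_x$ arbitrary poles at $x$ are permitted, these extend uniquely as rational maps on $X$ to give a point of $\gM_x$ lying over $\cF$. Combining with the datum $\cU^N\iso (^{\omega}\cL^{\bar\kappa})_{\cF}$ already present in $_x\cH_{\tilde G}$, one gets the desired lift to $\wt\gM_x$, and this defines $'h^{\la}$. The commutativity $\tilde\gp\comp {'h^{\la}} = \tilde h^{\la}_G\comp \gp_Z$ is immediate from the construction, since $'h^{\la}$ was designed so that the underlying $G$-torsor is $\cF$ and the accompanying line is $\cU$.

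For the cartesianness of the left square, I would argue that the construction $(\kappa^{\check\lambda}$ for $\cF')\leadsto (\kappa^{\check\lambda}$ for $\cF)$ used above is tautologically invertible: an isomorphism $\nu\colon \cF\iso\cF'$ on $X-x$ identifies, for each $\check\lambda$, the spaces of rational maps $\Omega^{\<\check\lambda,\rho\>}\to\cV^{\check\lambda}_{\cF}$ and $\Omega^{\<\check\lambda,\rho\>}\to \cV^{\check\lambda}_{\cF'}$ over $X-x$, and in either direction the Pl\"ucker relations are preserved and the allowed poles at $x$ impose no restriction. Consequently, upgrading $\gM_x$-data across a Hecke modification is a bijection in either direction. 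Unwinding the definition of $Z$ as the fiber product along $\tilde h^{\ra}_G$, one sees that the induced morphism $Z\to {_x\cH_{\tilde G}}\times_{\Bunt_G}\wt\gM_x$ via $\tilde h^{\la}_G$ has an evident inverse (swap the roles of $\cF$ and $\cF'$ in the construction above, using $\nu^{-1}$).

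The main obstacle I anticipate is purely bookkeeping: keeping track that the two distinct lines $\cU$ and $\cU'$ attached to $\cF$ and $\cF'$ by the points of $Z$, together with the line originally carried by the point of $\wt\gM_x$ over $\cF'$, are identified in the correct way so that the diagram of $\mu_N$-gerbes actually commutes strictly (not merely up to a canonical isomorphism). This requires invoking the canonical identification $(^{\omega}\cL^{\bar\kappa})_{\cF}\iso (^{\omega}\cL^{\bar\kappa})_{\cF'}\otimes \cL^{\bar\kappa}_{(\cF_1,\nu_1,x)}$ recorded just before Section~\ref{section_5.2_action_on_D_zeta}, and checking that it is compatible with both $\tilde h^{\la}_G$, $\tilde h^{\ra}_G$ and with $\tilde\gp$ on each side. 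Once this compatibility is in place, no further calculation is needed.
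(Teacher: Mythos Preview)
Your approach is correct and essentially identical to the paper's: describe $Z$ in terms of Pl\"ucker data $\kappa^{\check\lambda}$ into $\cV^{\check\lambda}_{\cF'}(\infty x)$, transport across $\nu$ to obtain $\kappa'^{\check\lambda}$ into $\cV^{\check\lambda}_{\cF}(\infty x)$ (the paper cites \cite{FGV}, Proposition~5.3.4 for the Pl\"ucker relations), and send the point to $(\cF,\kappa',\cU)$. The paper's proof is a few lines and does not spell out the cartesianness argument you give, nor does it discuss the gerbe bookkeeping; your ``main obstacle'' paragraph is overcautious, since the line $\cU$ is already part of the $_x\cH_{\tilde G}$-datum and goes directly with $\cF$, so no nontrivial compatibility needs to be checked.
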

\begin{proof} The stack $Z$ classifies $(\cF, \cF', \nu: \cF\,\iso\, \cF'\mid_{X-x}, \cU,\cU')$ with isomorphisms (\ref{iso_cU_and_cU'_for_cH_tildeG}), and inclusions for $\check{\lambda}\in\check{\Lambda}^+$
$$
\kappa^{\check{\lambda}}: \Omega^{\<\rho, \check{\lambda}\>}\to \cV^{\check{\lambda}}_{\cF'}(\infty x)
$$
subject to the Pl\"ucker relations. From $\kappa$ and $\nu$ we get a system of maps 
$$
\kappa'^{\check{\lambda}}: \Omega^{\<\rho, \check{\lambda}\>}\to \cV^{\check{\lambda}}_{\cF}(\infty x)
$$
satisfying the Pl\"ucker relations (\cite{FGV}, Proposition~5.3.4). Let the map $'h^{\la}$ send the above point to $(\cF, \kappa', \cU)$. 
\end{proof} 

 As in Section~\ref{Section_action_Hecke_on_BuntG}, given $\cS\in \PPerv_{G,\zeta}$ and $K\in \D_{\zeta}(\gM_x)$, we may form their twisted external product $(K\tboxtimes \cS)^r\in \D(Z)$ using the fibration $'h^{\ra}: Z\to \wt\gM_x$ with fibre $\wt\Gr_{G,x}$. Analogously, the map $'h^{\la}$ gives rise to $(K\tboxtimes \cS)^l\in \D(Z)$. We define
$$
\H^{\la}_G: \PPerv_{G,\zeta^{-1}}\times \D_{\zeta}(\gM_x)\to \D_{\zeta}(\gM_x)\;\;\;\mbox{and}\;\;\; \H^{\ra}_G: \PPerv_{G,\zeta}\times \D_{\zeta}(\gM_x)\to \D_{\zeta}(\gM_x)
$$
by 
$$
\H^{\ra}_G(\cS, K)=('h^{\la})_!(K\tboxtimes {\cS})^r\;\;\;\mbox{and}\;\;\; \H^{\la}_G(\cS, K)=('h^{\ra})_!(K\tboxtimes(\ast\cS))^l
$$
We have functorial isomorphisms
$$
\H^{\la}_G(\cS_1, \H^{\la}_G(\cS_2, K))\,\iso\, \H^{\la}_G(\cS_1\ast\cS_2, K)\;\;\;\mbox{and}\;\;\;
\H^{\ra}_G(\cS_1, \H^{\ra}_G(\cS_2, K))\,\iso\, \H^{\ra}_G(\cS_2\ast\cS_1, K)
$$

\begin{Lm} The functors $\H^{\la}_G, \H^{\ra}_G$ preserve the subcategory $\D\Whit^{\kappa}_x\subset \D_{\zeta}(\gM_x)$.
\end{Lm}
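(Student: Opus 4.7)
The strategy is to check the defining equivariance property of $\D\!\Whit^{\kappa}_x$ on each open substack $(\wt\gM_x)_{\goodat\, \bar y}$, reducing to the case when $\bar y$ does not contain $x$. Recall from Section~\ref{Secion_1.2} that the three equivariance conditions associated to $\bar y'$, $\bar y''$ and $\bar y=\bar y'\cup\bar y''$ cut out the same full subcategory of $\Perv_{\zeta}((\wt\gM_x)_{\goodat\, \bar y})$. Hence, given an arbitrary collection $\bar y$, we may always adjoin some extra points so that the resulting collection is disjoint from $x$, and it is enough to verify the equivariance in this latter situation.

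So fix $\bar y$ disjoint from $x$, and let $U=(\wt\gM_x)_{\goodat\, \bar y}$. The plan is to show that the restriction of $\H^{\ra}_G(\cS, K)$ to $U$ lifts to a $(\cN^{mer}_{\bar y}, \chi_{\bar y}^*\cL_{\psi})$-equivariant complex on $_{\bar y}\wt\gM_x$. Let $U_Z\subset Z$ be the preimage of $U$ under $'h^{\la}$; observe that $U_Z$ is also contained in $('h^{\ra})^{-1}(U)$, because a Hecke modification at $x$ preserves the $B$-structure over $D_{\bar y}$ and, in particular, the $\goodat\,\bar y$ property. Pulling back the $\cN^{reg}_{\bar y}$-torsor $_{\bar y}\wt\gM_x\to U$ along $'h^{\la}$ (equivalently along $'h^{\ra}$) yields a $\cN^{reg}_{\bar y}$-torsor $_{\bar y}Z\to U_Z$, and the actions of $\cN^{mer}_{\bar y}$ obtained from the two projections coincide, because all modifications happen at $x\notin\bar y$.

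Now the twisted product $(K\tboxtimes \cS)^r$ restricted to $_{\bar y}Z$ is, by construction, $(\cN^{mer}_{\bar y}, \chi_{\bar y}^*\cL_{\psi})$-equivariant: this uses that $\cS$ is $G_X$-equivariant, hence unaffected by $\cN^{reg}_{\bar y}$, while $K$ is equivariant by hypothesis, and the two group actions commute as explained above. Since $'h^{\la}$ is $\cN^{mer}_{\bar y}$-equivariant and the character $\chi_{\bar y}$ is the same on both sides, the $!$-pushforward $('h^{\la})_!$ preserves the equivariance. The same argument, with $'h^{\la}$ and $'h^{\ra}$ swapped and $\cS$ replaced by $\ast\cS$, works for $\H^{\la}_G$. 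Consequently $\H^{\ra}_G(\cS, K)$ and $\H^{\la}_G(\cS, K)$ restrict to objects of $(\D\!\Whit^{\kappa}_x)_{\goodat\, \bar y}$ on every such open, which is exactly the definition of $\D\!\Whit^{\kappa}_x$.

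The routine-looking but genuinely delicate point is verifying that the two $\cN^{mer}_{\bar y}$-actions on $_{\bar y}Z$ inherited from $'h^{\la}$ and $'h^{\ra}$ agree together with their lifts to the metaplectic gerbes; this relies on the fact that the line bundle $\cL^{\bar\kappa}$ entering the twist at $x$ depends only on data at $x$, so its $N$-th roots carry no nontrivial interaction with $\cN^{mer}_{\bar y}$. Once this compatibility is set up, the rest is formal.
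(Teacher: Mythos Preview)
Your proof is correct and follows essentially the same approach as the paper's: the Hecke groupoid acting at $x$ commutes with the $\cN^{mer}_{\bar y}$-action for $\bar y$ disjoint from $x$, so the twisted product inherits the required equivariance and this is preserved under $!$-pushforward. The paper's own proof is a one-line sketch referencing \cite{G1}, Proposition~7.3, whereas you have spelled out the mechanism explicitly. One small remark: your first-paragraph reduction is unnecessary, since by the very definition of $(\gM_x)_{\goodat\, \bar y}$ the point $x$ is required to be distinct from all points of $\bar y$, so every $\bar y$ entering the Whittaker condition is already disjoint from $x$.
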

\begin{proof}
This is analogous to (\cite{G1}, Proposition~7.3). For a collection of points $\bar y$ the action of the Hecke groupoid on $\wt\gM_x$ yields an action on $(\wt\gM_x)_{\goodat\, \bar y}$, which in turn lifts to an action on the torsor $_{\bar y}\wt\gM_x$. 
\end{proof}

\ssec{} 
\label{section_5.3}
Write $\Whit^{\kappa, ss}_x\subset \Whit^{\kappa}_x$ for the full subcategory consisting of objects, which are finite direct sums of irreducible ones. 

\begin{Thm} 
\label{Th_Hecke_action}
i) The functor $\H^{\ra}_G: \PPerv_{G,\zeta}\times \D\Whit^{\kappa}_x\to \D\Whit^{\kappa}_x$ is exact for the perverse t-structures, so induces a functor 
$$
\H^{\ra}_G: \PPerv_{G,\zeta}\times \Whit^{\kappa}_x\to \Whit^{\kappa}_x$$
ii) For $\gamma\in \Lambda^{\sharp, +}$ we have $\H^{\ra}_G(\cA^{\gamma}_{\cE}, \cF_{\emptyset})\,\iso\, \cF_{x, \gamma}$. \\
iii) The functor $\H^{\ra}_G$ preserves the subcatgeory $\Whit^{\kappa, ss}_x$.
\end{Thm}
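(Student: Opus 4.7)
The plan is to follow the approach of \cite{FGV}, Theorem~4, adapted to the metaplectic framework, treating part (ii) first (which provides the key identification) and deducing (i) and (iii) from it combined with structural arguments.

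For part (ii), I would compute $\H^{\ra}_G(\cA^{\gamma}_{\cE}, \cF_{\emptyset})$ by analyzing the Hecke correspondence $Z$ explicitly. The complex $\cF_{\emptyset}$ is the clean extension of the basic Whittaker sheaf from $\wt\gM_{x, 0}$, while $\cA^{\gamma}_{\cE}$ is supported on $\wt{\ov{\Gr}}{}^{\gamma}_{G,x}$. The pushforward under the proper map $'h^{\la}$ is \emph{a priori} supported on $\wt\gM_{x, \le\gamma}$. Over the open stratum $\wt\gM_{x, \gamma}$, the fiber of $'h^{\la}$ intersects $\wt\Gr^{\gamma}_G$ in the top-dimensional MV cycle, and the restriction of the Whittaker character to this cycle, combined with the $\zeta$-equivariance along the metaplectic torsor, picks out precisely the basic Whittaker sheaf $\bar\cF_{x, \gamma}$. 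The assumption $\gamma\in \Lambda^{\sharp, +}$ guarantees that the relevant trivialization of the gerbe along $t^{\gamma}G(\cO)$ is compatible with $\gt_{\EE}$, so the contribution does not vanish. To establish that the answer is the intermediate extension rather than the $!$- or $*$-extension, I would invoke the compatibility of $\H^{\ra}_G$ with Verdier duality (from self-duality of $\cA^{\gamma}_{\cE}$ and $\cF_{\emptyset}$, together with properness of $'h^{\la}$ on the relevant supports) combined with \lemref{Lm_irr_objects_of_Whit}.

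For part (i), the strategy is the familiar one: demonstrate that, on the Whittaker-equivariant locus, the map $'h^{\la}$ is ind-smooth after an appropriate torsor-and-quotient reduction, so that the convolution functor becomes an exact smooth pullback composed with proper pushforward of a perverse sheaf of correct dimension. Concretely, one uses the local trivializations $_{\bar y}\wt\gM_x \to (\wt\gM_x)_{\goodat\, \bar y}$ of Section~\ref{Secion_1.2} and the $(\cN^{mer}_{\bar y}, \chi_{\bar y}^*\cL_{\psi})$-equivariance of Whittaker sheaves to split the fibration $'h^{\la}$ into a pro-smooth base change of a finite-dimensional smooth fibration; exactness then follows from perversity of $\cS$ and $K$. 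The metaplectic twist does not interfere because the central extension $\EE \to G(F)$ splits canonically over the unipotent $\cN^{mer}_{\bar y}$, so the gerbe and Whittaker character live compatibly. For part (iii), if $K = \cF_{x, \lambda}$ is irreducible then $(K \tboxtimes \cA^{\gamma}_{\cE})^r$ is, up to a twist, the IC-extension of a local system on a smooth open of $Z$, so by the decomposition theorem applied to the proper map $'h^{\la}$, its pushforward is a direct sum of shifted IC sheaves; part (i) eliminates the shifts, and Whittaker equivariance forces each summand to be of the form $\cF_{x, \mu}$, hence to lie in $\Whit^{\kappa, ss}_x$.

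The main obstacle will be part (i), specifically checking that the pro-smoothness reduction goes through in the presence of the metaplectic gerbe and the interplay between the twists appearing in $\tau^0(\cS)$ on $\wt\Gr_{G,X}$ and in $K$ on $\wt\gM_x$. One must verify that the twisted external product $(K\tboxtimes\cS)^r$ is well defined as a perverse sheaf on the gerbe over $Z$, and that the gerbe-theoretic constraints encoded in (\ref{iso_over_cH_G_first}) match up so that $\mu_N(k)$ acts by $\zeta$ on the final answer. This bookkeeping is precisely the content of adapting the splittings in (\cite{L2}, Section~3.2) to the present case of $[G,G]$ simply-connected rather than $G$ simple simply-connected.
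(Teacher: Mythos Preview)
Your approach to part (iii) via the decomposition theorem matches the paper, and your outline for (ii) is in the right spirit, though the paper carries it out with more precision via the stratification described below.

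The genuine gap is in part (i). You propose to prove exactness by showing that $'h^{\la}$ becomes ind-smooth after a torsor-and-quotient reduction exploiting Whittaker equivariance. This is not how the argument goes, and I do not see how to make it work: the fibers of $'h^{\la}$ are (gerbes over) closures $\ov{\Gr}_G^{\gamma}$, which are singular projective varieties, and the $\cN^{mer}_{\bar y}$-equivariance constrains the sheaves but does not improve the geometry of the map. There is no smoothness statement here to invoke; exactness comes from a dimension count, not from a structural splitting.

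The paper (following \cite{FGV}, Theorem~4, whose method you cite but do not accurately describe) proceeds as follows. By Verdier self-duality of the input it suffices to show that the $*$-restriction $K^{\nu}_{\tilde x}$ of the convolution to each $\wt\gM_{\tilde x,\le\nu}$ lies in perverse degrees $\le 0$. One stratifies $Z$ by triples $(\nu,\nu',\mu)$ recording the position of the output, the position of the input, and the relative position in $_x\cH_G$, obtaining substacks $Z^{\nu,\nu',\mu}_{\tilde x}$. Using the identifications of \lemref{Lm_545}, each such stratum is a fibration over $\wt\gM_{\tilde x,\le\nu}$ with fiber $\Gr_B^{\nu'-\nu}\cap\Gr_G^{\mu}$, and simultaneously a fibration over $\wt\gM_{\tilde x,\le\nu'}$ with fiber $\Gr_B^{\nu-\nu'}\cap\Gr_G^{-w_0(\mu)}$. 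The $*$-restriction of $\cA^{-w_0(\gamma)}_{\cE}$ to $\wt\Gr_G^{-w_0(\mu)}$ has locally constant cohomology, so its further restriction to the stratum sits in perverse degrees $\le -\codim(\Gr_B^{\nu-\nu'}\cap\Gr_G^{-w_0(\mu)},\Gr_G^{-w_0(\mu)})\le -\<\mu-\nu+\nu',\check{\rho}\>$; meanwhile the fibers of $'h^{\la}$ restricted to the stratum have dimension $\le \<\nu'-\nu+\mu,\check{\rho}\>$. These two estimates exactly cancel, yielding perverse degree $\le 0$, with strict inequality unless $\mu=\gamma$ and $\nu'=\lambda$. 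This is \lemref{Lm_544}; part (ii) then follows by taking $\lambda=0$ and analyzing the single surviving stratum, where one checks (via the nontriviality of $(\chi^{-\nu}_{\nu})^*\cL_{\psi}\otimes\delta^*\cL_{\zeta}$ on irreducible components, cf.\ \cite{FGV}, Proposition~7.1.7) that the $0$-th perverse cohomology vanishes unless $\nu=-w_0(\gamma)$.
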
 
  
  The point ii) of the above theorem is an analog of (\cite{FGV}, Theorem~4) in our setting. 
  
\ssec{Proof of Theorem~\ref{Th_Hecke_action}}  
\sssec{} 
\label{section_5.4.1_for_Th531}
Pick $\lambda\in\Lambda^+$, $\gamma\in\Lambda^{\sharp,+}$. First, we show that
\begin{equation}
\label{complex_first_for_Th_Hecke_action}
\H^{\ra}_G(\cA^{-w_0(\gamma)}_{\cE}, \cF_{x, \lambda})\,\iso\, ('h^{\la})_!(\cF_{x,\lambda}\tboxtimes \cA^{-w_0(\gamma)}_{\cE})^r
\end{equation}
is perverse. To simplify the notation, from now on we suppress the upper index $r$ in the latter formula.
  
  For $\nu\in\Lambda$ write $\gM_{x,\le\nu}\subset\gM_x$ for the substack given by the property that for any $\check{\lambda}$ the map 
\begin{equation}
\label{map_kappa_check_lambda_Sect5}
\Omega^{\<\rho, \check{\lambda}\>}\to \cV^{\check{\lambda}}_{\cF}(\<\nu, \check{\lambda}\>x)
\end{equation}
is regular over $X$. Let $\gM_{\tilde x, \le\nu}\subset  \gM_{x,\le\nu}$ be the open substack given by the property that (\ref{map_kappa_check_lambda_Sect5}) has no zeros in a neighbourhood of $x$. Let $\gM_{x,\nu}\subset  \gM_{\tilde x, \le\nu}$ be the open substack given by requiring that (\ref{map_kappa_check_lambda_Sect5}) has no zeros over $X$. Write $\wt\gM_{x,\nu}$, $\wt\gM_{\tilde x,\nu}$ and so on for the restriction of the gerbe $\wt\gM_x$ to the corresponding stack. 
\index{$\gM_{x,\le\nu}, \gM_{\tilde x, \le\nu}, \gM_{x,\nu}$}%
\index{$\wt\gM_{x,\nu}, \wt\gM_{\tilde x,\nu}$}
  
  Denote by $K^{\nu}_{\tilde x}$ (resp., $K^{\nu}$) the $*$-restriction of (\ref{complex_first_for_Th_Hecke_action}) to $\wt\gM_{\tilde x,\nu}$ (resp., to  $\wt\gM_{x,\nu}$). Since (\ref{complex_first_for_Th_Hecke_action}) is Verdier self-dual (up to replacing $\psi$ by $\psi^{-1}$ and $\zeta$ by $\zeta^{-1}$), it suffices to prove the following. 
  
\begin{Lm} 
\label{Lm_542}
If $\nu\in\Lambda$ then $K^{\nu}_{\tilde x}$ is placed in perverse degrees $\le 0$.  
\end{Lm}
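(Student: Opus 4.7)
The plan is to compute $K^{\nu}_{\tilde x}$ by stratifying the Hecke correspondence $'h^{\la}$ according to $U(F_x)$-orbits on the affine Grassmannian. Over $\wt\gM_{\tilde x,\nu}$ the Plücker data $\kappa'$ canonically provide a $B$-reduction of $\cF$ on the formal disc at $x$, which trivializes the fibration $'h^{\la}\colon ({'h^{\la}})^{-1}(\wt\gM_{\tilde x,\nu}) \to \wt\gM_{\tilde x,\nu}$ and identifies its fibres with $\wt\Gr_{G,x}$. Under this identification, the support condition imposed by $\cA^{-w_0(\gamma)}_{\cE}$ becomes $\ov{\Gr}{}^{-w_0(\gamma)}_{G,x}$, while the condition that the modified bundle $\cF'$, equipped with its induced Plücker data, define a point of $\wt\gM_{x,\mu'}$ for a given $\mu'\in\Lambda$ pins the modification element to the $U(F_x)$-orbit $\wt\Gr_B^{\mu'-\nu}$.

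First I would decompose the restriction of $(\cF_{x,\lambda}\tboxtimes\cA^{-w_0(\gamma)}_{\cE})^r$ to $({'h^{\la}})^{-1}(\wt\gM_{\tilde x,\nu})$ into pieces indexed by such $\mu'$, finitely many of which contribute. On each piece, after matching the metaplectic gerbes, the twisted product factors as the external product of $\cF_{x,\lambda}\vert_{\wt\gM_{x,\mu'}}$ and the $*$-restriction $(\cA^{-w_0(\gamma)}_{\cE})\vert_{\wt\Gr_B^{\mu'-\nu}}$. The first factor is in perverse degrees $\le 0$ as a $*$-restriction of the perverse sheaf $\cF_{x,\lambda}$; moreover, by the Whittaker description of $\D\Whit^{\kappa}({_{\mu'}\gM_x})$ it has the form $\pi_{\gM}^*K^{\mu'}\otimes \ev^*\cL_{\psi}$ for a complex $K^{\mu'}$ in non-positive perverse degrees. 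The second factor is in perverse degrees $\le 0$ by the metaplectic analogue of Mirković--Vilonen: its zeroth perverse cohomology is a rank-one local system on $\Gr_B^{\mu'-\nu}\cap \ov{\Gr}{}^{-w_0(\gamma)}_{G,x}$, a scheme of dimension at most $\<-w_0(\gamma)-(\mu'-\nu), \check\rho\>$, placed in usual cohomological degree $\<\mu'-\nu, 2\check\rho\>$.

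Pushing each stratum forward along the projection to $\wt\gM_{\tilde x,\nu}$ and integrating, the Whittaker character $\ev^*\cL_{\psi}$ restricts non-trivially to every top-dimensional irreducible component of the relevant fibre intersection by a Gauss-sum-type vanishing, analogous to the one used in the proof of Theorem~\ref{Pp_C_implies_subtop cohomology property}. This forces each stratum's contribution to lie in perverse degree $\le 0$; summing over the finitely many strata gives the bound. The hard part will be the careful bookkeeping of metaplectic gerbes: one must verify that on each stratum $\wt\Gr_B^{\mu'-\nu}\cap \ov{\Gr}{}^{-w_0(\gamma)}_{G,x}$ the gerbe induced from $\wt\Gr_{G,x}$ via the $B$-trivialization coming from $\wt\gM_{\tilde x,\nu}$ matches the gerbe pulled back from $\wt\gM_{x,\mu'}$, so that the twisted external product factorization is genuine and the Whittaker-integration argument applies uniformly to all strata.
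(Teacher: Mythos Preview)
Your stratification by $\mu'$ (the position of $\cF'$) is the right opening move and coincides with the paper's parameter $\nu'$, but you are missing a second layer of stratification. The paper reduces Lemma~\ref{Lm_542} to Lemma~\ref{Lm_544}, whose proof stratifies $Z$ simultaneously by $\nu'$ \emph{and} by the $G(\cO)$-orbit $\mu$ on $\Gr_{G,x}$, and uses \emph{both} trivializations of Lemma~\ref{Lm_545}. The reason is numerical: on $Z^{\nu,\nu',\mu}_{\tilde x}$, viewed via $'h^{\ra}$, the $*$-restriction of the twisted product is $(\cF_{x,\lambda}\mid_{\wt\gM_{\tilde x,\le\nu'}})\tboxtimes(\cA^{-w_0(\gamma)}_{\cE}\mid_{\wt\Gr_G^{-w_0(\mu)}})$, and because the second factor has \emph{locally constant} cohomology sheaves its further $*$-restriction to the $\Gr_B^{\nu-\nu'}$-locus drops perverse degree by at least $\codim(\Gr_B^{\nu-\nu'}\cap\Gr_G^{-w_0(\mu)},\Gr_G^{-w_0(\mu)})\ge\<\mu-\nu+\nu',\check\rho\>$; this exactly cancels the fibre dimension $\le\<\mu+\nu'-\nu,\check\rho\>$ of $'h^{\la}$ coming from Lemma~\ref{Lm_545}(1). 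Without isolating a single $G(\cO)$-orbit you cannot invoke local constancy, and your claim that $(\cA^{-w_0(\gamma)}_{\cE})\vert_{\wt\Gr_B^{\mu'-\nu}}$ is ``in perverse degrees $\le 0$ by Mirkovi\'c--Vilonen'' is not what MV says: the $*$-restriction to a $U(F)$-orbit is not concentrated in one cohomological degree, and the codimension drop you need is invisible until you pass to a single $\Gr_G^{\mu}$.

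Two further points. First, the Whittaker-character argument you sketch is both unnecessary and wrong here: Lemma~\ref{Lm_542} asks only for $\le 0$, not $<0$, and in fact $K^{\nu}_{\tilde x}$ \emph{does} have nonzero $0$-th perverse cohomology for $\nu=\lambda-w_0(\gamma)$ (this is how the Hecke action produces its output); the non-triviality of $(\chi^{\nu'-\nu}_{\nu})^*\cL_{\psi}$ on top components is used only later, to rule out other values of $\nu$. Second, there are orientation slips: under the $'h^{\la}$-trivialization the fibrewise support is $\ov{\Gr}_G^{\gamma}$ rather than $\ov{\Gr}_G^{-w_0(\gamma)}$ (compare the two items of Lemma~\ref{Lm_545}), and $\dim(\Gr_B^{\eta}\cap\Gr_G^{\tau})=\<\eta+\tau,\check\rho\>$, not $\<\tau-\eta,\check\rho\>$; your sign error in the dimension is presumably what led you to believe an extra Whittaker cancellation was required.
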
  

\sssec{} 
\label{section_543}
For $\nu, \nu'\in \Lambda$ define the locally closed substacks of $Z$
\begin{align*}
Z^{\nu,?}_{\tilde x}=('h^{\la})^{-1}(\wt\gM_{\tilde x, \le\nu}),\;\;\;\; Z^{\nu, ?}=('h^{\la})^{-1}(\wt\gM_{x,\nu})\\
Z^{?,\nu'}_{\tilde x}=('h^{\ra})^{-1}(\wt\gM_{\tilde x, \le\nu'}),\;\;\; Z^{?,\nu'}=('h^{\ra})^{-1}(\wt\gM_{x,\nu'})\\
Z^{\nu, \nu'}_{\tilde x}=Z^{\nu,?}_{\tilde x}\cap Z^{?,\nu'}_{\tilde x},\;\;\; 
Z^{\nu, \nu'}=Z^{\nu, ?}\cap Z^{?,\nu'}
\end{align*} 
  
  For $\mu\in\Lambda^+$ let $_x\cH^{\mu}$ be the locally closed substack $\gamma^{\la}(\Bun_{G,x}\times_{G(\cO_x)}\Gr_{G,x}^{\mu})\subset {_x\cH_G}$. Let $_x\cH^{\mu}_{\tilde G}$ be its preimage in $_x\cH_{\tilde G}$. 
Set
\begin{align*}
Z^{\nu,?,\mu}_{\tilde x}=Z^{\nu,?}_{\tilde x}\cap \gp_Z^{-1}(_x\cH^{\mu}_{\tilde G}),\;\;\; Z^{?,\nu',\mu}_{\tilde x}=Z^{?,\nu'}_{\tilde x}\cap \gp_Z^{-1}(_x\cH^{\mu}_{\tilde G})\\
Z^{\nu,\nu', \mu}_{\tilde x}=Z^{\nu, \nu'}_{\tilde x}\cap \gp_Z^{-1}(_x\cH^{\mu}_{\tilde G}),\;\;\; Z^{\nu,\nu', \mu}=Z^{\nu, \nu'}\cap  \gp_Z^{-1}(_x\cH^{\mu}_{\tilde G})
\end{align*}  
 Denote by $K^{\nu,\nu',\mu}_{\tilde x}$ the $!$-direct image under 
$'h^{\la}: Z^{\nu,\nu',\mu}_{\tilde x} \to \wt\gM_{\tilde x,\le \nu}$ of the $*$-restriction of $\cF_{x,\lambda}\tboxtimes \cA^{-w_0(\gamma)}_{\cE}$ to $Z^{\nu,\nu',\mu}_{\tilde x}$. Denote by $K^{\nu,\nu',\mu}$ the restriction of $K^{\nu,\nu',\mu}_{\tilde x}$ to the open substack $\wt\gM_{x,\nu}$. Lemma~\ref{Lm_542} is reduced to the following.
  
\begin{Lm}
\label{Lm_544}
(1) The complex $K^{\nu,\nu',\mu}_{\tilde x}$ is placed in perverse degrees $\le 0$, and the inequality is strict unless $\mu=\gamma$ and $\nu'=\lambda$.\\
(2) The $*$-restriction of $K^{\nu,\lambda,\gamma}_{\tilde x}$ to the closed substack $\wt\gM_{\tilde x,\le \nu}-\wt\gM_{x,\nu}$ vanishes.
\end{Lm}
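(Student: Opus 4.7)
The plan is to bound the perverse cohomological degree of $K^{\nu,\nu',\mu}_{\tilde x}$ by a local analysis of $'h^{\la}: Z^{\nu,\nu',\mu}_{\tilde x}\to \wt\gM_{\tilde x, \le\nu}$, following the scheme of (\cite{FGV}, Section~7.2) adapted to our twisted metaplectic setting.

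\textbf{Fiber structure.} First I would describe the fibers of $'h^{\la}|_{Z^{\nu,\nu',\mu}_{\tilde x}}$ explicitly. Trivializing the $G$-torsor near $x$ and translating by the data of $\kappa$, the fiber over a point $(\cF,\kappa,\cU)\in \wt\gM_{\tilde x, \le\nu}$ identifies with the intersection of the $G(\cO_x)$-orbit $\Gr^{\mu}_{G,x}$ with a translate of a semi-infinite $U^-(F_x)$-orbit determined by $\kappa$ and by the stratum label $\nu'$. By the Mirkovi\'c-Vilonen dimension estimate, this intersection is of pure dimension at most $\<\mu+\nu-\nu',\check\rho\>$ (with the appropriate sign convention), and the bound is sharp only when $\mu$ and $\nu-\nu'$ are in the ``top'' configuration.

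\textbf{Perverse degree estimate.} Next I would combine this dimension estimate with the perverse degrees of the two factors on the source. By Lemma~\ref{Lm_irr_objects_of_Whit}(b), the $*$-restriction of $\cF_{x,\lambda}$ to $\wt\gM_{x,\nu'}$ lies in perverse degree $\le 0$, with equality iff $\nu'=\lambda$. Analogously, $\tau^0(\cA^{-w_0(\gamma)}_{\cE})$ restricted to the $G_X$-stratum of type $\mu$ lies in perverse degree $\le 0$, with equality iff $\mu$ reaches the maximum of the support (i.e.\ $\mu=\gamma$ in the conventions of the lemma). Pushing forward along the fiber of $'h^{\la}$ of dimension bounded in the previous paragraph yields that $K^{\nu,\nu',\mu}_{\tilde x}$ lies in perverse degree $\le 0$, with strict inequality unless both $\mu=\gamma$ and $\nu'=\lambda$ hold and the fiber dimension is maximal. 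This proves (1).

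\textbf{Vanishing on the complement.} For (2), assume $\mu=\gamma$ and $\nu'=\lambda$. The fiber of $'h^{\la}$ over a point $(\cF,\kappa)\in \wt\gM_{\tilde x, \le\nu}-\wt\gM_{x,\nu}$ must accommodate an extra zero of $\kappa$ inside $D_x$. The compatibility of this zero with a Hecke modification of maximal type $\mu=\gamma$ and the condition $\nu'=\lambda$ on the resulting $\kappa'$ forces the fiber to be of strictly smaller dimension than the bound above: one loses the open MV stratum in the relevant intersection. Hence the top perverse cohomology of the pushforward vanishes, and combined with (1) this gives the complete vanishing. The principal difficulty is the precise geometric description of the fiber of $'h^{\la}$ on $Z^{\nu,\nu',\mu}_{\tilde x}$ as an MV-type intersection, and the extraction of the sharp dimension bound; the metaplectic gerbe does not change dimensions, but one must carefully track the twisted perverse degrees and the interaction of the $!$-pushforward with the twisting data, using the input from Theorem~\ref{Thm_3.10.1} to control intermediate computations. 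This is analogous to (\cite{FGV}, Proposition~7.2.1) but requires adaptation to general $G$ and to the metaplectic context.
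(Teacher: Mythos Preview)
Your outline for part~(1) has a genuine gap in the perverse bookkeeping. You correctly want to bound the fibre dimension of $'h^{\la}:Z^{\nu,\nu',\mu}_{\tilde x}\to\wt\gM_{\tilde x,\le\nu}$, but combining this only with ``each factor is in perverse degree $\le 0$'' does not give the claimed bound: if the integrand is in perverse degree $\le 0$ and the fibres have dimension $\le d$, the $!$-pushforward lands in perverse degree $\le d$, not $\le 0$. The paper supplies the missing negative shift by using \emph{both} descriptions from Lemma~\ref{Lm_545}. Via $'h^{\ra}$, the complex on $Z^{?,\nu',\mu}_{\tilde x}$ is $(\cF_{x,\lambda}\!\mid_{\wt\gM_{\tilde x,\le\nu'}})\tboxtimes(\cA^{-w_0(\gamma)}_{\cE}\!\mid_{\wt\Gr_{G,x}^{-w_0(\mu)}})$, and because the Hecke factor has \emph{locally constant} cohomology sheaves on $\Gr_G^{-w_0(\mu)}$, its further $*$-restriction to the sub-fibration $Z^{\nu,\nu',\mu}_{\tilde x}$ is in perverse degree $\le -\codim(\Gr_B^{\nu-\nu'}\cap\Gr_G^{-w_0(\mu)},\Gr_G^{-w_0(\mu)})\le -\<\mu-\nu+\nu',\check\rho\>$. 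Only then does the pushforward along $'h^{\la}$, whose fibres have dimension $\le\<\mu+\nu'-\nu,\check\rho\>$, land in degree $\le 0$. (Incidentally, the fibres are intersections with $U(F_x)$-orbits $\Gr_B^{\nu'-\nu}$, not $U^-(F_x)$-orbits, and the dimension is $\<\mu+\nu'-\nu,\check\rho\>$, not $\<\mu+\nu-\nu',\check\rho\>$.) Theorem~\ref{Thm_3.10.1} plays no role here; $\IC_\zeta$ does not appear in this lemma.

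For part~(2) your argument is incorrect, and the actual proof is both simpler and stronger. First, a point of $\wt\gM_{\tilde x,\le\nu}-\wt\gM_{x,\nu}$ has its zero of $\kappa$ \emph{away from} $x$, not inside $D_x$: the subscript $\tilde x$ precisely means ``no zeros in a neighbourhood of $x$''. Second, a dimension-drop argument would at best kill the top perverse cohomology, whereas the lemma asserts that the whole complex vanishes. The paper's proof is a one-line support argument: the $*$-restriction of $\cF_{x,\lambda}$ to $\wt\gM_{\tilde x,\le\lambda}-\wt\gM_{x,\lambda}$ is zero, because any Whittaker-relevant stratum there would require a dominant coweight $<0$ at some $y\ne x$, and there are none. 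Hence the integrand on $Z^{\nu,\lambda,\gamma}_{\tilde x}$ is already supported on $({'h^{\ra}})^{-1}(\wt\gM_{x,\lambda})$; since a Hecke modification at $x$ cannot create zeros away from $x$, this locus lands in $\wt\gM_{x,\nu}$ under $'h^{\la}$, giving the full vanishing on the complement.
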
  
  
  Choose for each $\nu\in\Lambda$ a trivialization $\epsilon_{\nu}: \Omega^{\rho}(-\nu x)\,\iso\, \cF^0_T\mid_{D_x}$. They yield a $U(\cO_x)$-torsor $\cU^{\epsilon_{\nu}}_{\tilde x}$ (resp., $\cU^{\epsilon_{\nu}}$) over $\wt\gM_{\tilde x, \le\nu}$ (resp., over $\wt\gM_{x,\nu}$) classifying a point of the latter stack together with a trivialization of the corresponding $U$-torsor over $D_x$.  
The projection $'h^{\la}$ identifies $Z^{\nu, ?}_{\tilde x}$ (resp., $'h^{\ra}$ identifies $Z^{?,\nu'}_{\tilde x}$) with the fibration 
$$
\cU^{\epsilon_{\nu}}_{\tilde x}\times_{U(\cO_x)}\wt\Gr_{G,x}\to \wt\gM_{\tilde x, \le\nu}
$$ 
(resp., with the fibration $\cU^{\epsilon_{\nu'}}_{\tilde x}\times_{U(\cO_x)}\wt\Gr_{G,x}\to \wt\gM_{\tilde x,\le\nu'}$). As in (\cite{FGV}, Lemma~7.2.4), one has the following. 
\begin{Lm} 
\label{Lm_545}
(1) The stacks $Z^{\nu,\nu'}_{\tilde x}$ and $Z^{\nu, ?, \mu}_{\tilde x}$, when viewed as substack of $Z^{\nu, ?}_{\tilde x}$, are identified with
$$
\cU^{\epsilon_{\nu}}_{\tilde x}\times_{U(\cO_x)} \wt\Gr_{B,x}^{\nu'-\nu} \;\toup{'h^{\la}} \;\wt\gM_{\tilde x,\le\nu}\;\;\;\mbox{and}\;\;\;
\cU^{\epsilon_{\nu}}_{\tilde x}\times_{U(\cO_x)} \wt\Gr^{\mu}_{G,x}\;\toup{'h^{\la}}\; \wt\gM_{\tilde x,\le\nu}
$$
respectively.\\
(2) The stacks $Z^{\nu,\nu'}_{\tilde x}$ and $Z^{?,\nu',\mu}_{\tilde x}$, when viewed as substacks of $Z^{?,\nu'}_{\tilde x}$, are identified with
$$
\cU^{\epsilon_{\nu'}}_{\tilde x}\times_{U(\cO_x)} \wt\Gr_{B,x}^{\nu-\nu'}\;\toup{'h^{\ra}}\; \wt\gM_{\tilde x, \le\nu'}\;\;\;\mbox{and}\;\;\;
\cU^{\epsilon_{\nu'}}_{\tilde x}\times_{U(\cO_x)} \wt\Gr_{G,x}^{-w_0(\mu)}\;\toup{'h^{\ra}}\; \wt\gM_{\tilde x, \le\nu'}
$$
respectively. \QED
\end{Lm}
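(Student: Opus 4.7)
The plan is to follow the pattern of (\cite{FGV}, Lemma~7.2.4), leveraging the already-established identifications
$$
Z^{\nu,?}_{\tilde x}\,\iso\, \cU^{\epsilon_{\nu}}_{\tilde x}\times_{U(\cO_x)}\wt\Gr_{G,x}, \qquad Z^{?,\nu'}_{\tilde x}\,\iso\, \cU^{\epsilon_{\nu'}}_{\tilde x}\times_{U(\cO_x)}\wt\Gr_{G,x}
$$
coming from the trivializations $\epsilon_{\nu}, \epsilon_{\nu'}$ of the $U$-torsor over $D_x$. Under these, I need to identify the further substacks cut out either by the second projection $'h^{\ra}$ (yielding $Z^{\nu,\nu'}_{\tilde x}$) or by $\gp_Z$ landing in $_x\cH^{\mu}_{\tilde G}$ (yielding $Z^{\nu,?,\mu}_{\tilde x}$ and $Z^{?,\nu',\mu}_{\tilde x}$).

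For the $\wt\Gr^{\mu}_{G,x}$-identifications of $Z^{\nu,?,\mu}_{\tilde x}$ and $Z^{?,\nu',\mu}_{\tilde x}$ the argument is essentially tautological: by the definition of $_x\cH^{\mu}_{\tilde G}=\gamma^{\la}(\Bun_{G,x}\times_{G(\cO_x)}\wt\Gr^{\mu}_{G,x})$, the condition $\gp_Z\in {_x\cH^{\mu}_{\tilde G}}$ corresponds, under the chosen trivialization, to membership in the locally closed stratum $\wt\Gr^{\mu}_{G,x}\subset \wt\Gr_{G,x}$. Since this stratum is $G(\cO_x)$-stable, hence $U(\cO_x)$-stable, the twisted product $\cU^{\epsilon_{\nu}}_{\tilde x}\times_{U(\cO_x)}\wt\Gr^{\mu}_{G,x}$ is well-defined and equals the preimage of $_x\cH^{\mu}_{\tilde G}$. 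For $Z^{?,\nu',\mu}_{\tilde x}$ in part (2) one notes that the inversion $g\mapsto g^{-1}$ in the Hecke correspondence swaps the $G(\cO_x)$-orbit of type $\mu$ with that of type $-w_0(\mu)$, explaining the appearance of $\wt\Gr^{-w_0(\mu)}_{G,x}$.

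The more substantive identifications are those for $Z^{\nu,\nu'}_{\tilde x}$. The key local computation is the following: given $(\cF,\kappa,\cU)\in \wt\gM_{\tilde x,\le\nu}$ together with a trivialization of its $U$-torsor over $D_x$ lifting $\epsilon_{\nu}$, and given $g\in G(F_x)$ producing a Hecke modification $\cF'$, one transports the Pl\"ucker data $\kappa^{\check{\lambda}}$ to the modified bundle $\cF'$ via the trivialization of $\cF$ over $D_x^*$ provided by $\epsilon_{\nu}$. Working component by component over $\check{\lambda}\in\check{\Lambda}^+$, the new data $\kappa'^{\check{\lambda}}$ extend regularly across $x$ with vanishing order controlled by $\nu'$ precisely when $g\in U(F_x)\cdot t^{\nu'-\nu}\cdot G(\cO_x)=\Gr^{\nu'-\nu}_{B,x}$; this is the standard description of how the $U(F_x)$-orbits on $\Gr_{G,x}$ parametrize the relative position of the $B$-reductions at $x$ before and after the Hecke modification. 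Passing to the twisted product with $\cU^{\epsilon_{\nu}}_{\tilde x}$ yields the first identification of (1); the first identification of (2) is symmetric, with $\nu$ and $\nu'$ interchanged, using instead the trivialization $\epsilon_{\nu'}$.

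The main obstacle is the careful bookkeeping of the $U(\cO_x)$-torsors $\cU^{\epsilon_{\nu}}_{\tilde x}, \cU^{\epsilon_{\nu'}}_{\tilde x}$ together with the $\mu_N$-gerbe structures, so that the identifications hold as stacks (with the gerbe $\wt\Gr^{\nu'-\nu}_{B,x}$ interpreted as the restriction of the gerbe $\wt\Gr_{G,x}$ to $\Gr^{\nu'-\nu}_{B,x}$) and not merely at the level of coarse moduli. Once this is done, the twisted external product factorizes compatibly and the identifications of (1) and (2) are immediate.
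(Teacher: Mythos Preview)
Your proposal is correct and aligns with the paper's approach: the paper does not give a proof of this lemma at all, simply stating it with a \QED and the preamble ``As in (\cite{FGV}, Lemma~7.2.4), one has the following.'' Your sketch is precisely the adaptation of that FGV argument to the twisted setting, with the correct bookkeeping of the gerbe structures and the expected appearance of $-w_0(\mu)$ from the inversion in the Hecke correspondence.
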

 
\begin{proof}[Proof of Lemma~\ref{Lm_544}] (1) By Lemma~\ref{Lm_545}, the $*$-restriction of $\cF_{x,\lambda}\tboxtimes \cA^{-w_0(\gamma)}_{\cE}$ to $Z^{?,\nu',\mu}_{\tilde x}$ is the twisted external product of complexes
$$
(\cF_{x,\lambda}\mid_{\wt\gM_{\tilde x,\le\nu'}})\tboxtimes (\cA^{-w_0(\gamma)}_{\cE}\mid_{\wt\Gr_{G,x}^{-w_0(\mu)}}) .
$$
It lives in perverse degrees $\le 0$, and the inequality is strict unless $\mu=\gamma$ and $\nu'=\lambda$. Recall also that the $*$-restriction of $\cA^{-w_0(\gamma)}_{\cE}$ to $\wt\Gr_{G,x}^{-w_0(\mu)}$ vanishes unless $\mu\in\Lambda^{\sharp,+}$. 

Since $\cA^{-w_0(\gamma)}_{\cE}\mid_{\wt\Gr_{G,x}^{-w_0(\mu)}}$ has locally constant cohomology sheaves, its $*$-restriction to $Z^{\nu,\nu',\mu}_{\tilde x}$ by Lemma~\ref{Lm_545} is placed in perverse degrees
$$
\le -\codim(\Gr_B^{\nu-\nu'}\cap \Gr_G^{-w_0(\mu)}, \Gr_G^{-w_0(\mu)})\le -\<\mu-\nu+\nu', \check{\rho}\>, 
$$
we have used here (\cite{FGV}, Proposition~7.1.3). From Lemma~\ref{Lm_545}(1) we now learn that the fibres of $'h^{\la}: Z^{\nu,\nu',\mu}_{\tilde x}\to \wt\gM_{\tilde x, \le \nu}$ are of dimension $\le \dim(\Gr_B^{\nu'-\nu}\cap \Gr_G^{\mu})\le \<\nu'-\nu+\mu,\check{\rho}\>$. If $f: Y\to W$ is a morphism of schemes of finite type, each fibre of $f$ is of dimension $\le d$, $K$ is a perverse sheaf on $Y$ then $f_!K$ is placed in perverse degrees $\le d$. We are done.

\smallskip\noindent
(2) the $*$-restriction of $\cF_{x,\lambda}$ to $\wt\gM_{\tilde x,\le\lambda}-\wt\gM_{x, \lambda}$ vanishes, because there are no dominant coweights $<0$. 
\end{proof}

 Theorem~\ref{Th_Hecke_action} i) is proved. Theorem~\ref{Th_Hecke_action} iii) follows from the decomposition theorem of \cite{BBD}. 
 
  To establish Theorem~\ref{Th_Hecke_action} ii), keep the above notation taking $\lambda=0$. We want to show that (\ref{complex_first_for_Th_Hecke_action}) identifies with $\cF_{x, -w_0(\gamma)}$. It remains to analyse the complex $K^{\nu,0,\gamma}$ on $\wt\gM_{x, \nu}$ placed in perverse degrees $\le 0$. We are reduced to the following.
 
\begin{Lm} i) The $0$-th perverse cohomology sheaf of $K^{\nu,0,\gamma}$ vanishes unless $\nu=-w_0(\gamma)$. \\
ii) The $0$-th perverse cohomology sheaf of $K^{-w_0(\gamma),0,\gamma}$ identifies with the restriction of $\cF_{x, -w_0(\gamma)}$ to
$\wt\gM_{x, -w_0(\gamma)}$.
\end{Lm}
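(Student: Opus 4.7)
The plan is to invoke Lemma~\ref{Lm_545}(1) to present, via $'h^{\la}$, the stratum $Z^{\nu,0,\gamma}_{\tilde x}$ as the twisted product $\cU^{\epsilon_\nu}_{\tilde x}\times_{U(\cO_x)}\bigl(\wt\Gr^{-\nu}_{B,x}\cap\wt\Gr^{\gamma}_{G,x}\bigr)$ over $\wt\gM_{\tilde x,\le\nu}$. Restricting to the open substack $\wt\gM_{x,\nu}$ and exploiting the $(U(\cO_x),\chi^{*}\cL_\psi)$--equivariance of $\cF_{x,0}=\bar\cF_{x,0}$ (there being no dominant coweights strictly smaller than $0$), the pushforward factors, up to the appropriate cohomological shifts, as
\[
K^{\nu,0,\gamma}\,\iso\, \bar\cF_{x,\nu}\otimes M^{\nu}\quad\text{in }\D_\zeta(\wt\gM_{x,\nu}),
\]
where
\[
M^{\nu}\;=\;\RG_c\!\left(\Gr^{-\nu}_{B,x}\cap\Gr^{\gamma}_{G,x},\ \ev^{*}\cL_\psi\otimes(\gamma^{-\nu}_{\gamma})^{*}\cL_\zeta\otimes \cA^{-w_0(\gamma)}_{\cE}\big|_{\wt\Gr^{\gamma}_{G,x}}\right),
\]
with $\ev$ the basic Whittaker evaluation and $\gamma^{-\nu}_{\gamma}$ the discrepancy map from Section~\ref{section_4.12.4} encoding the gerbe twist.

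First I would bound $M^{\nu}$ dimensionally. By Mirkovi\'c--Vilonen, $\Gr^{-\nu}_B\cap\Gr^{\gamma}_G$ is empty unless $-\nu$ is a weight of $V^\gamma$, and then is equidimensional of dimension $\<\gamma-\nu,\check{\rho}\>$ with top components indexed by $B(\gamma)_{-\nu}$; the restriction of $\cA^{-w_0(\gamma)}_{\cE}$ to $\wt\Gr^{\gamma}_{G,x}$ is a shifted rank--one local system. Combined with the general perverse-degree estimate from the proof of Lemma~\ref{Lm_544}(1), this shows that the top cohomology of $M^{\nu}$, and hence perverse $\H^0(K^{\nu,0,\gamma})$, can only be nonzero when some top MV component of $\Gr^{-\nu}_B\cap\Gr^{\gamma}_G$ is a point on which the local system $\ev^{*}\cL_\psi\otimes(\gamma^{-\nu}_{\gamma})^{*}\cL_\zeta$ is trivial.

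The core input is the classical Casselman--Shalika analysis for $\Gr^{-\nu}_B\cap\Gr^\gamma_G$, analogous to the proof of Lemma~\ref{Lm_when_chi_lambda_0_is_dominant}: on any positive--dimensional top MV component $b\in B(\gamma)_{-\nu}$ the map $\ev$ is dominant and the integration vanishes, so the only possible contribution is from zero--dimensional top components. These exist exactly when $-\nu=w_0(\gamma)$ is the lowest weight of $V^\gamma$, in which case $B(\gamma)_{w_0(\gamma)}$ reduces to the single reduced point $t^{w_0(\gamma)}G(\cO_x)$. On this point both $\ev$ and $(\gamma^{w_0(\gamma)}_{\gamma})^{*}\cL_\zeta$ are tautologically trivial (for the latter, note $\gamma-w_0(\gamma)\in\Lambda^{\sharp}$ so no further obstruction arises). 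This establishes $M^{\nu}=0$ in top degree for $\nu\ne -w_0(\gamma)$, proving (i), while $M^{-w_0(\gamma)}\iso\Qlb$ canonically in top degree.

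For part (ii), the displayed decomposition with $\nu=-w_0(\gamma)$ gives $\H^0\bigl(K^{-w_0(\gamma),0,\gamma}\bigr)\iso\bar\cF_{x,-w_0(\gamma)}$, which is by definition the restriction of $\cF_{x,-w_0(\gamma)}$ to $\wt\gM_{x,-w_0(\gamma)}$. The hard part will be the bookkeeping of the $\mu_N$--gerbe trivialisations involved: the splittings $\delta_{w_0(\gamma)}$ from Section~\ref{section_Additional input data} trivialise the fibre of $\Gra_G$ at $t^{w_0(\gamma)}G(\cO_x)$, which must be compared with the trivialisation of $\bigl({^{\omega}\cL^{\bar\kappa}}\bigr)_{\Omega^{\rho}(w_0(\gamma)x)}$ used to define $\bar\cF_{x,-w_0(\gamma)}$ via Lemma~\ref{Lm_fibre_of^omega_cL_bar_kappa}. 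Matching these trivialisations and the induced Whittaker characters yields the canonical isomorphism in (ii).
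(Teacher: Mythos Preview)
Your approach is essentially the same as the paper's: identify $Z^{\nu,0,\gamma}$ via Lemma~\ref{Lm_545}(1) as a $U(\cO_x)$-twisted product with fibre $\wt\Gr_{B,x}^{-\nu}\cap\wt\Gr_{G,x}^{\gamma}$, factor the complex after smooth localisation, and argue that the relevant rank-one local system on $\Gr_B^{-\nu}\cap\Gr_G^{\gamma}$ is nonconstant on every irreducible component when $-\nu\ne w_0(\gamma)$.

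A few points where your write-up is imprecise relative to the paper. First, your expression for $M^{\nu}$ double-counts the twist: the local system is $(\chi^{-\nu}_{\nu})^*\cL_{\psi}\otimes\delta^*\cL_{\zeta}$ for a suitable discrepancy map $\delta$ (the paper does not name it more precisely), where $\delta^*\cL_\zeta$ already \emph{is} what you get by pulling back $\cA^{-w_0(\gamma)}_{\cE}\mid_{\wt\Gr_G^{\gamma}}$ along the section $s_B^{-\nu}$; there is no separate ``$(\gamma^{-\nu}_{\gamma})^*\cL_\zeta$'' factor, and in any case the map $\gamma^{\mu}_{\lambda}$ of Section~\ref{section_4.12.4} lives on $\Gr_B^{\lambda}\cap\Gr_{B^-}^{\mu}$, not on $\Gr_B\cap\Gr_G$. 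Second, the nonconstancy input you need is \cite[Proposition~7.1.7]{FGV}, which is formulated directly for components of $\Gr_B^{-\nu}\cap\Gr_G^{\gamma}$; your analogy with Lemma~\ref{Lm_when_chi_lambda_0_is_dominant} is off, since that lemma treats $\Gr_B\cap\Gr_{B^-}$ with conductor $0$, whereas here the character is $\chi^{-\nu}_{\nu}$ on $\Gr_B\cap\Gr_G$. Third, for part (ii) the paper invokes Lemmas~\ref{Lm_547} and \ref{Lm_545} to match the evaluation maps and thereby identify the one-point contribution with $\bar\cF_{x,-w_0(\gamma)}$; your ``bookkeeping of the $\mu_N$-gerbe trivialisations'' is the right sentiment but the paper's route via these lemmas is cleaner.
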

\begin{proof} 
The situation with the additive characters is exactly the same as in (\cite{FGV}, Sections~7.2.6-7.2.8). Let $\ov{U(F_x)}^{\epsilon_{\nu}}$ be ind-group scheme over $\wt\gM_{x,\nu}$, the $\cU^{\epsilon_{\nu}}$-twist of $U(F_x)$ with respect to the adjoint action of $U(\cO_x)$ on $U(F_x)$. Then $Z^{\nu,\nu'}$ carries a natural $\ov{U(F_x)}^{\epsilon_{\nu}}$-action preserving $'h^{\la}: Z^{\nu,\nu'}\to \wt\gM_{x,\nu}$ and
defined via the identification of Lemma~\ref{Lm_545}(1). 

 The ind-group $\ov{U(F_x)}^{\epsilon_{\nu}}$ classifies a point $(\cF, \kappa, \cU)\in \wt\gM_{x,\nu}$ giving rise to the corresponding $B$-torsor $\cF_B$ on $D_x$ equipped with $\cF_B\times_B T\,\iso\, \Omega^{\rho}(-\nu x)$, and an automorphism $g: \cF_B\,\iso\,\cF_B$ over $D_x^*$ inducing the identity on $\cF_B\times_B T$. 
 
 The trivialization $\epsilon_{\nu}:\Omega^{\rho}(-\nu x)\,\iso\, \cF^0_T\mid_{D_x}$ gives for $i\in \cJ$ the character
$$
U/[U,U](F_x)\toup{\check{\alpha}_i}  F_x\toup{\epsilon_{\nu}^{-1}}\cL^{\check{\alpha}_i}_{\Omega^{\rho}(-\nu x)}\mid_{D_x^*}\,\iso\, \Omega(F_x)\toup{\Res}\A^1
$$
Their sum over $i\in \cJ$ is the character of conductor $\bar\nu$ denoted $\chi_{\nu}: U(F_x)\to \A^1$. Here $\bar\nu$ is the image of $\nu$ in the coweights lattice of $G_{ad}$. Twisting $U(F_x)$ by the $U(\cO_x)$-torsor $\cU^{\epsilon_{\nu}}$, one gets the character denoted $\bar\chi_{\nu}: \ov{U(F_x)}^{\epsilon_{\nu}}\to\A^1$. 
  
 For $\nu,\nu'\in\Lambda^+$ a $(U(F_x), \chi_{\nu})$-equivariant function $\chi^{\nu'-\nu}_{\nu}: \Gr_B^{\nu'-\nu}\to \A^1$ gives rise to a $(\ov{U(F_x)}^{\epsilon_{\nu}}, \bar\chi_{\nu})$-equivariant function $\bar\chi_{\nu}^{\nu'-\nu}: Z^{\nu,\nu'}\to \A^1$. For the convenience of the reader we recall the following.
\begin{Lm}[\cite{FGV}, Lemma~7.2.7] 
\label{Lm_547}
Assume $\nu'\in\Lambda^+$. Then\\
(1) the map $\ev_{x,\nu'}\comp {'h^{\ra}}: Z^{\nu,\nu'}\to \A^1$ is $(\ov{U(F_x)}^{\epsilon_{\nu}}, \bar\chi_{\nu})$-equivariant.\\
(2) If in addition $\nu\in\Lambda^+$ then $\ev_{x,\nu'}\comp {'h^{\ra}}$ coincides with the composition
$$
Z^{\nu,\nu'}\;\toup{\bar\chi^{\nu'-\nu}_{\nu}\times {'h^{\ra}}}\; \A^1\times \wt\gM_{x,\nu}\toup{\id\times \ev_{x,\nu}} \A^1\times\A^1\toup{\summ}\A^1
$$
for some $\chi^{\nu'-\nu}_{\nu}$. \QED
\end{Lm}

 The fibration $'h^{\la}: Z^{\nu,0, \gamma}\to \wt\gM_{x,\nu}$ identifes with $\cU^{\epsilon_{\nu}}\times_{U(\cO_x)}(\wt\Gr_{B,x}^{-\nu}\cap \wt\Gr_{G,x}^{\gamma})\to \wt\gM_{x,\nu}$. After a smooth localization $V\to \wt\gM_{x,\nu}$ the latter fibration becomes a direct product $V\times (\wt\Gr_{B,x}^{-\nu}\cap \wt\Gr_{G,x}^{\gamma})$. The $*$-restriction of $\cF_{\emptyset}\tboxtimes \cA_{\cE}^{-w_0(\gamma)}$ to $Z^{\nu,0, \gamma}$ decends  to $V\times (\Gr_{B,x}^{-\nu}\cap \Gr_{G,x}^{\gamma})$, and there becomes of the form
$$
\cE_V\boxtimes ((\chi^{-\nu}_{\nu})^*\cL_{\psi}\otimes \delta^*\cL_{\zeta})[\<\gamma-\nu, 2\check{\rho}\>],
$$
for a suitable discrepancy map $\delta: \Gr_{B,x}^{-\nu}\cap \Gr_{G,x}^{\gamma}\to\Gm$. Here $\cE_V$ is a perverse sheaf on $V$. 

The local system $(\chi^{-\nu}_{\nu})^*\cL_{\psi}\otimes \delta^*\cL_{\zeta}$ is nonconstant on any irreducible component by (\cite{FGV}, Proposition~7.1.7). This proves i). Since $\Gr_B^{w_0(\gamma)}\cap \Gr_G^{\gamma}$ is the point scheme, part ii) follows from Lemma~\ref{Lm_547} and \ref{Lm_545}.
\end{proof} 

Theorem~\ref{Th_Hecke_action} is proved.

\section{Objects that remain irreducible}
\label{Sec_Objects that remain irreducible}

\noindent
In this section we describe the irreducible objects $\cF_{x,\lambda}$ of $\Whit^{\kappa}_x$ such that $\ov{\FF}(\cF_{x,\lambda})\in \wt\FS^{\kappa}_x$ remain irreducible. As for quantum groups, we introduce the corresponding notion of restricted dominant coweights.

\ssec{Special elements in crystals}  Let $\lambda,\mu\in\Lambda$, $\mu\le\lambda$. Recall the map $\gamma^{\mu}_{\lambda}: \Gr_B^{\lambda}\cap \Gr_{B^-}^{\mu}\to\Gm$ defined in Section~\ref{section_4.12.4} via the equality $s^{\mu}_{B^-}=\gamma^{\mu}_{\lambda} s^{\lambda}_B$. 

\begin{Lm} 
\label{Lm_invariance_under_translations_1}
Let $\nu\in\Lambda$. Then there is $\epsilon\in k^*$ depending  on $\lambda,\mu,\nu$ such that the composition $\Gr_B^{\lambda}\cap \Gr_{B^-}^{\mu}\toup{t^{\nu}} \Gr_B^{\lambda+\nu}\cap \Gr_{B^-}^{\mu+\nu}\toup{\gamma^{\mu+\nu}_{\lambda+\nu}}\Gm$ equals $\epsilon \gamma^{\mu}_{\lambda}$.
\end{Lm}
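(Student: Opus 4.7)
The plan is to unwind the definition of the sections $s^{\lambda}_B$ and $s^{\mu}_{B^-}$ in terms of the chosen trivializations $\delta_{\lambda}$ of the fibres of $\Gra_G\to\Gr_G$ over $t^{\lambda}G(\cO)$, and then to track how translation by $t^{\nu}$ affects them. Recall that $U(F)$ and $U^-(F)$ admit canonical splittings in the central extension $\EE$ of Section~\ref{section_Metaplectic dual group}: indeed, both are unipotent, so they project trivially to $G_{ab}$ in (\ref{ext_E_beta_of_G_ab(F)}), and the canonical splitting over $G(\cO)$ provides the splitting on the $E^a_j$-factors when one uses that the relevant determinant line is trivialized by the choice of $\cO$-structure on the Lie algebra of $U$, resp. $U^-$. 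Denote these canonical lifts by $\tilde u\in \EE$ for $u\in U(F)\cup U^-(F)$.

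First, I would check that for any $u\in U(F)$ and any lift $\tilde t^{\nu}\in V_\EE$ of $t^{\nu}\in T(F)$ one has $\tilde t^{\nu}\,\tilde u\,(\tilde t^{\nu})^{-1}=\widetilde{t^{\nu}ut^{-\nu}}$ in $\EE$, and the analogous identity for $u\in U^-(F)$. This is the assertion that the canonical splittings of $U(F)$ and $U^-(F)$ in $\EE$ are $T(F)$-equivariant for the conjugation action; it follows from the canonicity of these splittings (any other lift would be obtained from this one by a multiplicative cocycle $U(F)\to \Gm$ that must be trivial because $U(F)$ has no nontrivial characters to $\Gm$ in our ind-scheme category).

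Next, I would introduce the cocycle $c\colon \Lambda\times\Lambda\to k^*$ defined by
\[
\delta_{\nu}^{-1}(1)\cdot \delta_{\lambda}^{-1}(1)=c(\nu,\lambda)\,\delta_{\nu+\lambda}^{-1}(1)
\]
using multiplication in $V_\EE$. Writing an arbitrary point of $\Gr_B^{\lambda}$ as $g=u\,t^{\lambda}G(\cO)$ with $u\in U(F)$, the definition of $s^{\lambda}_B$ gives $s^{\lambda}_B(g)=\tilde u\cdot \delta_{\lambda}^{-1}(1)$. Applying the lift $\delta_{\nu}^{-1}(1)$ of $t^{\nu}$ and using the equivariance of the canonical splitting together with the cocycle identity, one computes
\[
\delta_{\nu}^{-1}(1)\cdot s^{\lambda}_B(g)\;=\;c(\nu,\lambda)\,s^{\lambda+\nu}_B(t^{\nu}g).
\]
An entirely parallel computation with $U^-(F)$ yields $\delta_{\nu}^{-1}(1)\cdot s^{\mu}_{B^-}=c(\nu,\mu)\,s^{\mu+\nu}_{B^-}\circ t^{\nu}$ on $\Gr_{B^-}^{\mu}$.

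Finally, I would combine these two formulas over $\Gr_B^{\lambda}\cap\Gr_{B^-}^{\mu}$. Writing $s^{\mu}_{B^-}=\gamma^{\mu}_{\lambda}\,s^{\lambda}_B$ and applying $\delta_{\nu}^{-1}(1)$ to both sides yields
\[
c(\nu,\mu)\,s^{\mu+\nu}_{B^-}\circ t^{\nu}\;=\;\gamma^{\mu}_{\lambda}\cdot c(\nu,\lambda)\,s^{\lambda+\nu}_B\circ t^{\nu},
\]
and comparing with $s^{\mu+\nu}_{B^-}=\gamma^{\mu+\nu}_{\lambda+\nu}\,s^{\lambda+\nu}_B$ gives the claim with $\epsilon=c(\nu,\lambda)/c(\nu,\mu)$. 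The only nontrivial step is the verification of the $T(F)$-equivariance of the canonical splittings of $U(F)$ and $U^-(F)$; everything else is a formal manipulation inside $\EE$.
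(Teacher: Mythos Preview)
Your proof is correct but takes a more explicit, computational route than the paper. The paper's argument is much shorter: pick any lift $g\in\EE$ of $t^{\nu}$; then $g^{-1}\circ s^{\lambda+\nu}_B\circ t^{\nu}$ and $s^{\lambda}_B$ are two sections of the same $\Gm$-torsor over $\Gr_B^{\lambda}$, so their ratio is a map $\Gr_B^{\lambda}\to\Gm$. Since $\Gr_B^{\lambda}$ is an ind-affine space (a $U(F)$-orbit), any such map is constant, giving $g^{-1}\circ s^{\lambda+\nu}_B\circ t^{\nu}=a\,s^{\lambda}_B$ for some $a\in k^*$. The same reasoning for $B^-$ gives a constant $b$, and $\epsilon=b/a$.

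Your approach instead unpacks the sections via the canonical splittings of $U(F)$ and $U^-(F)$ in $\EE$, verifies their $T(F)$-equivariance under conjugation, and tracks the cocycle $c(\nu,\cdot)$ explicitly, arriving at $\epsilon=c(\nu,\lambda)/c(\nu,\mu)$. This buys you an explicit formula for $\epsilon$, which the paper does not provide (and does not need for the application in Corollary~\ref{Cor_allows_shifts_by_coweights}). The cost is the extra verification of equivariance, which the paper's ``no nonconstant maps to $\Gm$'' observation bypasses entirely. Both are valid; the paper's is the slicker two-line argument, yours is the more structural one.
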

\begin{proof}
Pick any $g\in\EE$ over $t^{\nu}\in G(F)$. The composition $\Gr_B^{\lambda}\toup{t^{\nu}} \Gr_B^{\lambda+\nu}\toup{s^{\lambda+\nu}_B} \Gra_G\toup{g^{-1}} \Gra_G$ equals $as^{\lambda}_B$ for some $a\in k^*$. Indeed, any map $\Gr_B^{\lambda}\to\Gm$ is constant. Similarly, the composition $\Gr_{B^-}^{\mu}\toup{t^{\nu}} \Gr_{B^-}^{\mu+\nu}\toup{s^{\mu+\nu}_{B^-}} \Gra_G\toup{g^{-1}}\Gra_G$ equals $b s^{\mu}_{B^-}$ for some $b\in k^*$. Our claim follows.
\end{proof}

 Our notations and conventions about the crystals are those of Section~\ref{Section_Recollections on crystals}.
 
\begin{Cor}
\label{Cor_allows_shifts_by_coweights}
 Let $\lambda,\mu,\nu\in \Lambda, b\in \Irr(\Gr_B^{\lambda}\cap \Gr_{B^-}^{\mu})$. Then $t^{\nu}b\in \Irr(\Gr_B^{\lambda+\nu}\cap \Gr_{B^-}^{\mu+\nu})$. The local system $(\gamma^{\mu}_{\lambda})^*\cL_{\zeta}$ is trivial on $b$ if and only if $(\gamma^{\mu+\nu}_{\lambda+\nu})^*\cL_{\zeta}$ is trivial on $t^{\nu}b$. 
So, the latter property is actually a property of $t^{-\lambda}b\in B_{\gg}(\lambda-\mu)$. 
\end{Cor}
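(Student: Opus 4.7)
The plan is to deduce Corollary~\ref{Cor_allows_shifts_by_coweights} as a formal consequence of Lemma~\ref{Lm_invariance_under_translations_1} together with the translation-invariance of the Kummer sheaf $\cL_{\zeta}$.

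First, I would observe that left multiplication by $t^{\nu}$ defines an isomorphism
\[
t^{\nu}:\ \Gr_B^{\lambda}\cap \Gr_{B^-}^{\mu}\ \iso\ \Gr_B^{\lambda+\nu}\cap \Gr_{B^-}^{\mu+\nu},
\]
since the $U(F)$-orbit (resp.\ $U^-(F)$-orbit) through $t^{\lambda}$ maps onto the one through $t^{\lambda+\nu}$ (resp.\ through $t^{\mu+\nu}$) under $t^{\nu}$. Being an isomorphism of schemes, it induces a bijection on sets of irreducible components, which establishes the first claim that $t^{\nu}b \in \Irr(\Gr_B^{\lambda+\nu}\cap \Gr_{B^-}^{\mu+\nu})$.

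Next, I would invoke Lemma~\ref{Lm_invariance_under_translations_1}, which furnishes a constant $\epsilon \in k^*$ (depending only on $\lambda,\mu,\nu$) such that $\gamma^{\mu+\nu}_{\lambda+\nu}\comp t^{\nu} = \epsilon\cdot \gamma^{\mu}_{\lambda}$ as maps $\Gr_B^{\lambda}\cap \Gr_{B^-}^{\mu}\to\Gm$. Writing $m_{\epsilon}:\Gm\to\Gm$ for multiplication by $\epsilon$, this gives
\[
(t^{\nu})^*(\gamma^{\mu+\nu}_{\lambda+\nu})^*\cL_{\zeta}\ \iso\ (\gamma^{\mu}_{\lambda})^*m_{\epsilon}^*\cL_{\zeta}.
\]
Since $\cL_{\zeta}$ is a character local system on $\Gm$, one has $m_{\epsilon}^*\cL_{\zeta}\iso \cL_{\zeta}$ (the pullback differs only by the constant monodromy given by $\zeta$ evaluated on a chosen $N$-th root of $\epsilon$, hence the underlying local system is unchanged up to isomorphism). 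Therefore the local system $(\gamma^{\mu+\nu}_{\lambda+\nu})^*\cL_{\zeta}$ on $t^{\nu}b$ is, via the isomorphism $t^{\nu}$, identified with $(\gamma^{\mu}_{\lambda})^*\cL_{\zeta}$ on $b$, so one is trivial exactly when the other is.

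Finally, taking $\nu=-\lambda$ shows that triviality of $(\gamma^{\mu}_{\lambda})^*\cL_{\zeta}$ on $b$ is equivalent to triviality of $(\gamma^{\mu-\lambda}_{0})^*\cL_{\zeta}$ on $t^{-\lambda}b\in \Irr(\Gr_B^0\cap \Gr_{B^-}^{\mu-\lambda})=B_{\gg}(\lambda-\mu)$, giving the last assertion. No substantial obstacle is expected: the argument is purely formal once Lemma~\ref{Lm_invariance_under_translations_1} is in hand, the only point to be careful about being the translation-invariance of $\cL_{\zeta}$ under the $\Gm$-action on itself, which is built into its definition as a character sheaf.
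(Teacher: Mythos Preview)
Your argument is correct and is exactly the reasoning the paper has in mind: in the paper the corollary is stated without proof, since it is immediate from Lemma~\ref{Lm_invariance_under_translations_1} together with the fact that $\cL_{\zeta}$ is a character local system on $\Gm$ (hence invariant under multiplication by any $\epsilon\in k^*$). Your write-up simply spells out this implicit deduction.
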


\begin{Def} 
\label{Def_special_elements_in_B_gg}
For $\nu\in\Lambda^{pos}$ we call an element $\bar b\in B_{\gg}(\nu)$ special if the local system $(\gamma^{-\nu}_0)^*\cL_{\zeta}$ is constant on $\bar b$. Denote by $B_{\gg}^{sp}(\nu)$ the set of special elements of $B_{\gg}(\nu)$.
\end{Def}

  For $i\in\cJ$ denote by $\delta_i$ the denominator of $\frac{\bar\kappa(\alpha_i, \alpha_i)}{2N}$. Recall that $\frac{\check{\alpha}_i}{\delta_i}$ (resp., $\delta_i\alpha_i$) are the coroots (resp., roots) of $\check{G}_{\zeta}$.
\index{$\delta_i$} 

\begin{Rem} 
\label{Rem_special_elements_in_crystals}
i) If $\bar b\in B_{\gg}(\nu)$ is special then $\nu\in \oplus_{i\in \cJ} \; \ZZ_+(\delta_i\alpha_i)$.\\
ii) Let $\lambda\in\Lambda^+$, $x\in X$, $\mu\in\Lambda$. Assume that the subtop cohomology property holds for $\varrho$. If the multiplicity space $V^{\lambda}_{\mu}$ in the decomposition of $\ov{\FF}(\cF_{x,\lambda})$ from (\ref{decomp_FF_of_irred_object}) is nonzero then $\lambda-\mu\in \oplus_{i\in \cJ} \,\ZZ_+(\delta_i\alpha_i)$. 
\end{Rem}
\begin{proof} i) Pick $\lambda\in\Lambda^{\sharp, +}$ such that $\phi_i(\bar b)\le \<\lambda, \check{\alpha}_i\>$ for all $i\in\cJ$. Let $b=t^{\lambda}\bar b\in \Irr(\Gr_B^{\lambda}\cap \Gr_{B^-}^{\lambda-\nu})$. Then $b\subset \ov{\Gr}_G^{\lambda}$ by  Lemma~\ref{Lm_Andersen_again}. So, $b$ gives a base vector in the weight space $V(\lambda)_{\lambda-\nu}$ of the irreducible $\check{G}$-representation $V(\lambda)$ by (\cite{L1}, Lemma~3.2). Thus, $\nu$ is in the $\ZZ_+$-span of the simple roots of $\check{G}_{\zeta}$.\\
ii) Follows from i).
\end{proof}

 For a standard Levi $M\subset G$ recall that $\Gra_M=\Gr_M\times_{\Gr_G}\Gra_G$. The trivializations $\delta_{\lambda}$ picked in Section~\ref{section_Additional input data} yield for any $\lambda\in\Lambda$ sections $s^{\lambda}_{B(M)}:Ê\Gr_{B(M)}^{\lambda}\to \Gra_M$, $s^{\lambda}_{B^-(M)}:\Gr_{B^-(M)}^{\lambda}\to\Gra_M$ of the $\Gm$-torsor $\Gra_M\to \Gr_M$. The discrepancy between $s^{\mu}_{B^-(M)}$ and $s^{\lambda}_{B(M)}$ is a map that we denote by
$$
_M\gamma^{\mu}_{\lambda} : Ê\Gr_{B(M)}^{\lambda}\cap \Gr_{B^-(M)}^{\mu}\to\Gm
$$
and define by $s^{\mu}_{B^-(M)}=({_M\gamma^{\mu}_{\lambda}})s^{\lambda}_{B(M)}$. If $\lambda-\mu\in\Lambda^{\sharp}$ then $_M\gamma^{\mu}_{\lambda}$ does not depend on the choice of $\delta$ (so depends only on $\gt^0_{\EE}$).
\index{$s^{\lambda}_{B(M)}, s^{\lambda}_{B^-(M)}, \; {_M\gamma^{\mu}_{\lambda}}$}

Lemma~\ref{Lm_invariance_under_translations_1} and Corollary~\ref{Cor_allows_shifts_by_coweights} are immediately generalized for each standard Levi subgroup $M$ of $G$. Recall from Section~\ref{Section_Recollections on crystals} that for $\lambda\ge_M 0$ we denote 
$$
B_{\gm}(\lambda)=\Irr(\Gr_{B(M)}^0\cap \Gr_{B^-(M)}^{-\lambda})
$$ 
and $B_{\gm}=\cup_{\lambda\ge_M 0} \, B_{\gm}(\lambda)$, and for $\lambda\in\Lambda^{pos}$ we have the bijection 
\begin{equation}
\label{decomp_general_for_B_gg_and_P} 
 B_{\gg}(\lambda)\,\iso\, \cup_{\mu} \, B^{\gm, \ast}_{\gg}(\lambda-\mu)\times B_{\gm}(\mu)\, .
\end{equation} 
Here the union is over $\mu\in\Lambda$ satisfying $0\le_M \mu\le \lambda$. 

 For $\mu\in\Lambda$ the trivialization $\delta_{-\mu}$ from Section~\ref{section_Additional input data} yields a section $s_P^{-\mu}: \gq_P^{-1}(t^{-\mu})\to\Gra_G$ of the $\Gm$-torsor $\Gra_G\to\Gr_G$. Let 
$$
_P\gamma_{-\mu}^{-\lambda}: \gq_P^{-1}(t^{-\mu})\cap \Gr_{B^-}^{-\lambda}\to\Gm
$$ 
be the map defined by the equation $s^{-\lambda}_{B^-}=(_P\gamma_{-\mu}^{-\lambda}) s_P^{-\mu}$.
\index{$s_P^{-\mu}, \; {_P\gamma_{-\mu}^{-\lambda}}$}

\begin{Def} i) Let $M$ be a standard Levi of $G$. For $\nu\in \Lambda^{pos}_M$ we call an element $b\in B_{\gm}(\nu)$ special if the local system $(_M\gamma_0^{-\nu})^*\cL_{\zeta}$ is constant on $b$.\\
ii) We call $b\in B^{\gm, \ast}_{\gg}(\lambda-\mu)$ special if the local system $(_P\gamma_{-\mu}^{-\lambda})^*\cL_{\zeta}$ is constant on $b$.
\end{Def}

\begin{Lm} Let $\lambda\in\Lambda^{pos}$, $b\in B_{\gg }(\lambda)$. Let $0\le_M\mu\le\lambda$ be such that $b\cap \gq_P^{-1}(\Gr_{B^-(M)}^{-\mu})$ is dense in $b$. Let 
$$
(b_1,b_2)\in B^{\gm, \ast}_{\gg}(\lambda-\mu)\times B_{\gm}(\mu)
$$
correspond to $b$ via (\ref{decomp_general_for_B_gg_and_P}). Then $b$ is special if and only if both $b_1, b_2$ are special. In the latter case $\mu$ is in the $\ZZ_+$-span of the simple roots of $\check{M}_{\zeta}$. 
\end{Lm}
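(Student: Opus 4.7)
The plan is to reduce the statement to an identity of discrepancy functions under the product decomposition (\ref{iso_decomp_intersection_first}) of the intersection. Concretely, after shifting by $t^{\lambda}$ using Corollary~\ref{Cor_allows_shifts_by_coweights}, we may work on $\Gr_B^0\cap \Gr_{B^-}^{-\lambda}$ and its open locus $\Gr_B^0\cap \gq_P^{-1}(\Gr_{B^-(M)}^{-\mu})\cap \Gr_{B^-}^{-\lambda}$, which is dense in $b$ by assumption. Setting $g=t^{-\mu}$, (\ref{iso_decomp_intersection_first}) identifies this locus with the product
$$
(\Gr_{B(M)}^0\cap \Gr_{B^-(M)}^{-\mu})\times (\gq_P^{-1}(t^{-\mu})\cap \Gr_{B^-}^{-\lambda}),
$$
and under this identification the preimage of $b$ is $b_2\times b_1$ (possibly up to the closure operation in each factor).

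The key step, and the main technical obstacle, is to prove the multiplicativity formula
$$
(\gamma^{-\lambda}_0)\mid_{b_2\times b_1}\;=\;\epsilon\cdot({_M\gamma_0^{-\mu}})\mid_{b_2}\cdot({_P\gamma_{-\mu}^{-\lambda}})\mid_{b_1}
$$
for some $\epsilon\in k^*$. To establish this, I would trace through the definition of each section $s^0_B$, $s^{-\lambda}_{B^-}$, $s^0_{B(M)}$, $s^{-\mu}_{B^-(M)}$, $s_P^{-\mu}$ of the $\Gm$-torsor $\Gra_G\to \Gr_G$ (or its $M$-version $\Gra_M\to\Gr_M$) arising from the trivializations $\delta_{\bullet}$ fixed in Section~\ref{section_Additional input data}. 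The products of points in the two factors via the action of $U(M)(F)$ and of $\gq_P^{-1}(t^{-\mu})$ lift canonically to $\Gra_G$, and comparing the three pairs of sections $(s^0_B,s^{-\lambda}_{B^-})$, $(s^0_{B(M)},s^{-\mu}_{B^-(M)})$, $(s_P^{-\mu},s^{-\lambda}_{B^-})$ along this product, one sees that the ratio defining $\gamma^{-\lambda}_0$ splits as the product of the ratios defining $_M\gamma^{-\mu}_0$ and $_P\gamma^{-\lambda}_{-\mu}$, up to a global scalar coming from the choice of $\delta_{-\mu}$ (the exact same mechanism that produced the $\epsilon$ in Lemma~\ref{Lm_invariance_under_translations_1}).

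Granted the multiplicativity identity, pulling back $\cL_{\zeta}$ gives $(\gamma^{-\lambda}_0)^*\cL_{\zeta}\,\iso\,\pr_2^*({_M\gamma^{-\mu}_0})^*\cL_{\zeta}\otimes \pr_1^*({_P\gamma^{-\lambda}_{-\mu}})^*\cL_{\zeta}$ on $b_2\times b_1$. Since these are rank-one local systems on a product of irreducible varieties, such a tensor product is constant if and only if each factor is constant. This yields the equivalence ``$b$ special $\Leftrightarrow$ $b_1$ and $b_2$ special''. Finally, assuming both $b_1,b_2$ are special, the statement that $\mu$ lies in the $\ZZ_+$-span of the simple roots of $\check{M}_{\zeta}$ follows by applying Remark~\ref{Rem_special_elements_in_crystals}(i) to the group $M$ and the special element $b_2\in B_{\gm}^{sp}(\mu)$: the simple roots of $\check{M}_{\zeta}$ are precisely $\delta_i\alpha_i$ for $i$ in the set of vertices of the Dynkin diagram of $M$, which is what Remark~\ref{Rem_special_elements_in_crystals}(i) produces.
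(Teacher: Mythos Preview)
Your proposal is correct and follows essentially the same route as the paper: the paper also uses the $T(\cO)$-equivariant product decomposition with $g=t^{-\mu}$, asserts that $\gamma^{-\lambda}_0$ restricts to the product $({_M\gamma^{-\mu}_0})\cdot({_P\gamma_{-\mu}^{-\lambda}})$ on the two factors, and deduces both claims (the second via Remark~\ref{Rem_special_elements_in_crystals}). The only difference is that the paper states the multiplicativity as an exact equality rather than up to a scalar $\epsilon$; your more cautious formulation is harmless since a nonzero scalar does not affect constancy of $\cL_{\zeta}$-pullbacks.
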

\begin{proof} As in Section~\ref{Pp_Kashiwara} we have a $T(\cO)$-equivariant isomorphism
$$
\Gr_B^0\cap \gq_P^{-1}(\Gr_{B^-(M)}^{-\mu})\cap \Gr_{B^-}^{-\lambda}\,\iso\, (\Gr_{B(M)}^0\cap \Gr_{B^-(M)}^{-\mu})\times (\gq_P^{-1}(t^{-\mu})\cap \Gr_{B^-}^{-\lambda}),
$$
where $T(\cO)$ acts on the right hand side as the product of the natural actions on the two factors. For brevity denote by $\gamma: \gq_P^{-1}(t^{-\mu})\cap \Gr_{B^-}^{-\lambda}\to\Gm$ the map $_P\gamma_{-\mu}^{-\lambda}$. Then the map
$$
(_M\gamma^{-\mu}_0)\gamma:
(\Gr_{B(M)}^0\cap \Gr_{B^-(M)}^{-\mu})\times (\gq_P^{-1}(t^{-\mu})\cap \Gr_{B^-}^{-\lambda})\to\Gm
$$
coincides with the restriction of $\gamma^{-\lambda}_0$. Our first claim follows. The second follows now from Remark~\ref{Rem_special_elements_in_crystals}. 
\end{proof} 

\begin{Cor} 
\label{Cor_divisibility_of_phi(b)}
Let $\nu\in\Lambda^{pos}$, $i\in \cJ$. If $b\in B_{\gg}(\nu)$ is special then $\phi_i(b)\in \ZZ_+\delta_i$. \QED
\end{Cor}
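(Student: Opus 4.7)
\medskip

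The plan is to apply the previous lemma with $M = M_i$, the standard subminimal Levi having unique simple coroot $\alpha_i$, and then use the explicit description of the crystal operator $f_i$ given in Section~\ref{Section_Recollections on crystals}.

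First, I would invoke the decomposition (\ref{decomp_general_for_B_gg_and_P}) for $P=P_i$: write $b\in B_{\gg}(\nu)$ as a pair $(b_1,b_2)\in B^{\gm_i,\ast}_{\gg}(\nu-\mu)\times B_{\gm_i}(\mu)$, where $\mu$ is the unique element with $0\le_{M_i}\mu\le\nu$ such that $b\cap \gq_{P_i}^{-1}(\Gr_{B^-(M_i)}^{-\mu})$ is dense in $b$. Since the decomposition of the Dynkin diagram of $M_i$ has the single node $\{i\}$, we have $\mu=m\alpha_i$ for some $m\ge 0$.

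Next, I would compute $\phi_i(b)$. By the convention of Section~\ref{Section_Recollections on crystals}, $f_i$ on $B_{\gg}$ acts by $f_i(b_1,b_2)=(b_1,f_i(b_2))$, and on $B_{\gm_i}$ the operator $f_i$ sends the unique element of $B_{\gm_i}(m\alpha_i)$ to the unique element of $B_{\gm_i}((m-1)\alpha_i)$ for $m\ge 1$, and the unique element of $B_{\gm_i}(0)$ to $0$. Therefore $\phi_i(b)=\phi_i(b_2)=m$.

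Finally, I would apply the preceding lemma: since $b$ is special, both $b_1$ and $b_2$ are special, and in particular $\mu$ lies in the $\ZZ_+$-span of the simple roots of $\check{M}_{i,\zeta}$. But the Levi $M_i$ has a unique simple coroot $\alpha_i$, so the unique simple root of $\check{M}_{i,\zeta}$ is $\delta_i\alpha_i$ (by the definition of $\delta_i$). Hence $m\alpha_i=\mu\in\ZZ_+\delta_i\alpha_i$, which gives $m\in\ZZ_+\delta_i$ and so $\phi_i(b)\in\ZZ_+\delta_i$. There is no serious obstacle here; the argument is essentially a direct bookkeeping once the previous lemma and the explicit form of $f_i$ are invoked.
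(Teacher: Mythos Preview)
Your proof is correct and is exactly the argument the paper has in mind: the Corollary is marked with a bare \QED\ because it follows immediately from the preceding lemma applied to $M=M_i$, together with the computation $\phi_i(b)=m$ from the definition of $f_i$ in Section~\ref{Section_Recollections on crystals}. You have simply spelled out these details.
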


\ssec{} 
\label{section_6.2}
Pick $x\in X$. Our purpose now is to describe some irreducible objects $\cF_{x,\lambda}$ of $\Whit^{\kappa}_x$ such that $\ov{\FF}(\cF_{x,\lambda})\in\wt\FS^{\kappa}_x$ remains irreducible.

 
 For $a\in\Lambda/\Lambda^{\sharp}$ let $\Lambda^+_a$ denote the set of those $\lambda\in\Lambda^+$ whose image in $\Lambda/\Lambda^{\sharp}$ equals $a$. Set $\Lambda^{\sharp}_0=\{\mu\in\Lambda^{\sharp}\mid \<\mu, \check{\alpha}_i\>=0\;\mbox{for any}\, i\in\cJ\}$. Set
$$
\bar\cM=\{\lambda\in\Lambda^+\mid \mbox{for any}\; i\in\cJ, \<\lambda, \check{\alpha}_i\>< \delta_i\}
$$
As in (\cite{ABBGM}, Section~1.1.3), we call the elements of $\bar\cM$  \select{restricted} dominant coweights. Note that $\Lambda^{\sharp}_0$ acts on $\bar\cM$ by translations.
\index{$\cF_{x,\lambda}, \Lambda^+_a, \Lambda^{\sharp}_0, \bar\cM$}

 The following is an analog of (\cite{ABBGM}, Proposition~1.1.8). 

\begin{Thm} 
\label{Thm_objects_remain_irred}
Assume that the subtop cohomology property is satisfied for $\varrho$. For any $\lambda\in \bar\cM$ one has $\ov{\FF}(\cF_{x,\lambda})\,\iso\, \cL_{x,\lambda}$.
\end{Thm}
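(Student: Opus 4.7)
The plan is to combine the explicit decomposition furnished by \thmref{Th_4.12.1} with the crystal-theoretic analysis of special elements carried out in Section~6.1. By \thmref{Th_4.12.1},
$$
\ov{\FF}(\cF_{x,\lambda}) \,\iso\, \mathop{\oplus}\limits_{\mu\le\lambda,\, \lambda-\mu\in\Lambda^{\sharp}} \cL_{x,\mu}\otimes V^{\lambda}_{\mu},
$$
with $V^{\lambda}_{\lambda}=\Qlb$. It therefore suffices to show that $V^{\lambda}_{\mu}=0$ for every $\mu<\lambda$.

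First I would translate the geometric counting of a basis of $V^{\lambda}_{\mu}$ into the language of the crystal $B_{\gg}$ via the shift $b\mapsto \bar b:= t^{-\lambda}b\in B_{\gg}(\lambda-\mu)$. By \corref{Cor_allows_shifts_by_coweights}, the triviality of $(\gamma^{\mu}_{\lambda})^{\ast}\cL_{\zeta}$ on $b$ is equivalent to $\bar b$ being special in the sense of \remref{Def_special_elements_in_B_gg}; by \lemref{Lm_Andersen_again}, the condition $b\subset \ov{\Gr}^{\lambda}_{G,x}$ is equivalent to $\phi_i(\bar b)\le\<\lambda,\check{\alpha}_i\>$ for all $i\in\cJ$. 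Hence the basis of $V^{\lambda}_{\mu}$ is indexed by
$$
\{\bar b\in B^{sp}_{\gg}(\lambda-\mu)\mid \phi_i(\bar b)\le\<\lambda,\check{\alpha}_i\>\;\forall\, i\in\cJ\}.
$$

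Next I would invoke \corref{Cor_divisibility_of_phi(b)}, which tells us that for any special $\bar b$ we have $\phi_i(\bar b)\in\ZZ_+\delta_i$ for each $i\in\cJ$. Since $\lambda\in\bar\cM$ means $\<\lambda,\check{\alpha}_i\><\delta_i$, the only element of $\ZZ_+\delta_i$ satisfying both constraints is $0$. Thus any basis vector $\bar b$ of $V^{\lambda}_{\mu}$ must satisfy $\phi_i(\bar b)=0$ for \emph{every} $i\in\cJ$.

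Finally I would apply \lemref{Lm_Andersen_again} once more, this time with $\lambda$ replaced by $0$: the vanishing $\phi_i(\bar b)=0$ for all $i$ forces $\bar b\subset \ov{\Gr}_G^{0}=\{t^0 G(\cO)\}$, a single point. On the other hand, $\bar b$ is an irreducible component of $\Gr_B^0\cap \Gr_{B^-}^{\mu-\lambda}$ of pure dimension $\<\lambda-\mu,\check{\rho}\>$, so this can happen only if $\lambda=\mu$, contradicting $\mu<\lambda$. I expect no serious obstacle in this argument: all the nontrivial geometric input (the decomposition theorem, the subtop cohomology reduction, and the characterization of special elements via $\delta_i$-divisibility) is already in place, and the only subtle point to verify is the automatic membership $\lambda-\mu\in\Lambda^{\sharp}$ dictated by the $T(\cO)$-equivariance of $\gamma^{\mu}_{\lambda}$ with character $\bar\kappa(\lambda-\mu)$, which matches the indexing of the sum in \thmref{Th_4.12.1}.
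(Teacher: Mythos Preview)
Your proposal is correct and follows essentially the same route as the paper's proof: reduce via \thmref{Th_4.12.1} to showing $V^{\lambda}_{\mu}=0$ for $\mu<\lambda$, translate the basis elements to special $\bar b\in B_{\gg}(\lambda-\mu)$ satisfying $\phi_i(\bar b)\le\<\lambda,\check{\alpha}_i\>$, and then use \corref{Cor_divisibility_of_phi(b)} together with the restriction hypothesis $\<\lambda,\check{\alpha}_i\><\delta_i$ to force $\phi_i(\bar b)=0$ for all $i$. The only cosmetic difference is in the final step: the paper simply observes that the unique element of $B_{\gg}$ with all $\phi_i=0$ lies in $B_{\gg}(0)$, whereas you rederive this geometrically by applying \lemref{Lm_Andersen_again} with the dominant coweight $0$ and comparing dimensions; both conclusions are immediate.
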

\begin{proof}
Let $\mu\in\Lambda$ with $w_0(\lambda)\le \mu<\lambda$. We must show that the multiplicity space $V^{\lambda}_{\mu}$ in (\ref{decomp_FF_of_irred_object})
vanishes. By Remark~\ref{Rem_special_elements_in_crystals}, we may assume 
$$
\lambda-\mu\in \oplus_{i\in \cJ} \,\ZZ_+(\delta_i\alpha_i)
$$ 
Let $b\in \Irr(\Gr_B^{\lambda}\cap \Gr_{B^-}^{\mu})$ with $b\subset \ov{\Gr}_G^{\lambda}$. For $\bar b=t^{-\lambda}b\in B_{\gg}(\lambda-\mu)$ from Lemma~\ref{Lm_Andersen_again} we get $\phi_i(\bar b)\le \<\lambda, \check{\alpha}_i\><\delta_i$ for all $i\in\cJ$. Assume $\bar b$ special. Then, by Corollary~\ref{Cor_divisibility_of_phi(b)}, $\phi_i(\bar b)\in\ZZ_+\delta_i$. So, $\phi_i(\bar b)=0$ for all $i$. The only element of $B_{\gg}$ with this property is the unique element of $B_{\gg}(0)$, a contradiction. So, $V^{\lambda}_{\mu}=0$.
\end{proof}

\section{Analog of the Lusztig-Steinberg tensor product theorem}

\ssec{} The purpose of this section is to prove Theorem~\ref{Thm_Lusztig-Steinberg_analog}, which is an analog in our setting of the Lusztig-Steinberg theorem for quantum groups. We use the notations of Section~\ref{Section_Hecke functors}. Pick $x\in X$. Recall that $\Lambda^{\sharp,+}=\Lambda^{\sharp}\cap\Lambda^+$. 

\begin{Thm} 
\label{Thm_Lusztig-Steinberg_analog}
Let $\lambda\in\bar\cM$ and $\gamma\in\Lambda^{\sharp,+}$. Then there is an isomorphism 
\begin{equation}
\label{iso_main_Steinberg-Lusztig_theorem}
\H^{\ra}_G(\cA^{\gamma}_{\cE}, \cF_{x, \lambda})\,\iso\, \cF_{x, \lambda+\gamma}
\end{equation}
\end{Thm}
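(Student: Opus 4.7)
The plan is to combine the stratification analysis from the proof of Theorem~\ref{Th_Hecke_action}(ii) with the Hecke-compatibility of $\ov\FF$ (Theorem~\ref{Thm_functoriality}) and the restricted hypothesis on $\lambda$. Set $K:=\H^\ra_G(\cA^\gamma_\cE,\cF_{x,\lambda})\iso ('h^\la)_!(\cF_{x,\lambda}\tboxtimes\cA^\gamma_\cE)^r$. By Theorem~\ref{Th_Hecke_action}, $K$ is a perverse sheaf lying in $\Whit^{\kappa,ss}_x$, hence a direct sum of irreducible objects $\cF_{x,\nu}$.

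First I would carry out a support analysis mimicking Section~\ref{section_5.4.1_for_Th531}. On the stratum $Z^{\nu,\nu',\mu}_{\tilde x}$ the support of $\cF_{x,\lambda}$ forces $\nu'\in\Lambda^+$, $\nu'\le\lambda$; the support of $\cA^\gamma_\cE$ (viewed on the right side of the Hecke as $\wt\Gr^{-w_0(\mu)}_{G,x}$ via Lemma~\ref{Lm_545}(2)) forces $\mu\in\Lambda^+$, $\mu\le\gamma$; nonemptiness of $\Gr^{\nu'-\nu}_B\cap\Gr^\mu_G$ gives $w_0(\mu)\le\nu'-\nu\le\mu$. Combining these yields $\nu\le\nu'-w_0(\mu)\le\lambda+\gamma$, so $K$ is supported on $\wt\gM_{x,\le\lambda+\gamma}$. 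The extremal value $\nu=\lambda+\gamma$ is attained only when $\nu'=\lambda$, $\mu=\gamma$, and $\nu'-\nu=w_0(\gamma)$; there $\Gr^{w_0(\gamma)}_B\cap\Gr^\gamma_G$ is a single reduced point, and an argument parallel to Lemma~\ref{Lm_547} identifies the $*$-restriction of $K$ to $\wt\gM_{x,\lambda+\gamma}$ with $\cF_{x,\lambda+\gamma}\mid_{\wt\gM_{x,\lambda+\gamma}}$. Combined with semisimplicity, this gives a canonical splitting $K\iso\cF_{x,\lambda+\gamma}\oplus K'$ with $K'$ a direct sum of $\cF_{x,\nu}$ supported strictly below $\lambda+\gamma$.

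The main obstacle is then to show $K'=0$. My plan is to apply $\ov\FF$: since $\lambda\in\bar\cM$, Theorem~\ref{Thm_objects_remain_irred} gives $\ov\FF(\cF_{x,\lambda})\iso\cL_{x,\lambda}$, and Theorem~\ref{Thm_functoriality} combined with the fact that the $\check T_\zeta$-Hecke functors on $\wt\FS^\kappa_x$ merely translate weights yields
$$
\ov\FF(K)\iso\mathop{\oplus}\limits_{\mu'}V(\gamma)_{\mu'}\otimes\cL_{x,\lambda+\mu'},
$$
summed over $\check T_\zeta$-weights $\mu'$ of $V(\gamma)\in\Rep(\check G_\zeta)$. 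Because $V^\nu_\nu=\Qlb$ in Theorem~\ref{Th_4.12.1}, the functor $\ov\FF$ kills no irreducible object of $\Whit^\kappa_x$; hence it suffices to prove $\ov\FF(K)\iso\ov\FF(\cF_{x,\lambda+\gamma})$.

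The crucial combinatorial step is therefore to identify, for each $\check T_\zeta$-weight $\mu'$ of $V(\gamma)$, the multiplicity space $V^{\lambda+\gamma}_{\lambda+\mu'}$ from Theorem~\ref{Th_4.12.1} with $V(\gamma)_{\mu'}$, and to show that $V^{\lambda+\gamma}_\mu=0$ for all other $\mu$. By Lemma~\ref{Lm_Andersen_again}, the basis of $V^{\lambda+\gamma}_\mu$ is indexed by \emph{special} components $\bar b\in B_\gg(\lambda+\gamma-\mu)$ satisfying $\phi_i(\bar b)\le\<\lambda+\gamma,\check{\alpha}_i\>$ for all $i\in\cJ$. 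The restricted hypothesis $\<\lambda,\check{\alpha}_i\><\delta_i$, combined with $\<\gamma,\check{\alpha}_i\>\in\delta_i\ZZ$ (since $\gamma\in\Lambda^{\sharp,+}$) and Corollary~\ref{Cor_divisibility_of_phi(b)} ($\phi_i(\bar b)\in\delta_i\ZZ_+$ for special $\bar b$), collapses this condition to $\phi_i(\bar b)\le\<\gamma,\check{\alpha}_i\>$. Together with the translation principle of Corollary~\ref{Cor_allows_shifts_by_coweights}, this bijects the set of such $\bar b$ with the set of MV-cycles computing $V(\gamma)_{\mu-\lambda}$, as in the proof of Theorem~\ref{Thm_special_case_of_V_lambda_mu}. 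This yields the required identification $\ov\FF(K)\iso\ov\FF(\cF_{x,\lambda+\gamma})$ and completes the proof.
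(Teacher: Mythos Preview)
Your argument is correct in its internal logic, but it proves a strictly weaker statement than Theorem~\ref{Thm_Lusztig-Steinberg_analog}: every ingredient you invoke on the $\ov\FF$ side---Theorems~\ref{Th_4.12.1}, \ref{Thm_objects_remain_irred}, and \ref{Thm_functoriality}---is stated under the standing hypothesis that $\varrho$ satisfies the subtop cohomology property, whereas Theorem~\ref{Thm_Lusztig-Steinberg_analog} carries no such assumption. So as a proof of the theorem \emph{as stated} there is a genuine gap; as a proof of the theorem under the subtop hypothesis (which is the regime of interest for most of the paper) your argument goes through.

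The paper's route is different and avoids $\ov\FF$ entirely. It stays on the Whittaker side and refines the stratum analysis of Section~\ref{section_543}: after reducing via Verdier self-duality and Lemma~\ref{Lm_544} to the top stratum $Z^{\nu,\lambda,\gamma}$, it identifies the $0$-th perverse cohomology of $K^{\nu,\lambda,\gamma}$ with $\cF_{x,\nu}$ tensored by the top compactly supported cohomology of $(\chi^{\lambda-\nu}_\nu)^*\cL_\psi\otimes(s_B^{\lambda-\nu})^*\cA^\gamma_\cE$ on $\Gr_B^{\lambda-\nu}\cap\Gr_G^\gamma$. Via Lemma~\ref{Lm_comparing_s_B_and_gamma} and a $\chi$-dominance criterion (Lemma~\ref{Lm_2.1.6}, the analogue of your use of Lemma~\ref{Lm_Andersen_again}), this top cohomology has a basis indexed by special $\bar b\in B_\gg(\lambda-\nu-w_0(\gamma))$ with $\phi_i(\bar b)\le\<\lambda,\check\alpha_i\>$. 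Exactly the combinatorial step you isolate---Corollary~\ref{Cor_divisibility_of_phi(b)} plus $\<\lambda,\check\alpha_i\><\delta_i$---then forces $\phi_i(\bar b)=0$ for all $i$, hence $\nu=\lambda-w_0(\gamma)$. In effect both proofs pivot on the same crystal fact; the paper applies it directly to the Hecke fibre, while you transport it to the $\FS$ side via $\ov\FF$. Your route is conceptually clean and explains \emph{why} the result should hold (it is the shadow of the quantum-Frobenius compatibility), but it buys that clarity at the cost of the extra hypothesis on $\varrho$.
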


\sssec{Proof of Theorem~\ref{Thm_Lusztig-Steinberg_analog}} Let $a$ denote the image of $\lambda$ in $\Lambda/\Lambda^{\sharp}$. Recall that the Weyl groups for $\check{G}$ and $\check{G}_{\zeta}$ are equal.
It is convenient for us to replace $\gamma$ by $-w_0(\gamma)$, so we must establish for $\gamma\in\Lambda^{\sharp,+}$ the isomorphism $\H^{\ra}_G(\cA^{-w_0(\gamma)}, \cF_{x, \lambda})\,\iso\, \cF_{x, \lambda-w_0(\gamma)}$. By definition,
\begin{equation}
\label{complex_for_L-St_theorem}
\H^{\ra}_G(\cA^{-w_0(\gamma)}, \cF_{x, \lambda})\,\iso\, ('h^{\la})_!(\cF_{x, \lambda}\tboxtimes \cA^{-w_0(\gamma)}_{\cE})^r
\end{equation}   
To simplify the notation, from now on we suppress the upper index $r$ in the above formula.

As in Section~\ref{section_5.4.1_for_Th531}, define the complex $K^{\nu}_{\tilde x}$ (resp., $K^{\nu}$) as the $*$-restriction of (\ref{complex_for_L-St_theorem}) to $\wt\gM_{\tilde x,\nu}$ (resp., to $\wt\gM_{x,\nu}$). Since (\ref{complex_for_L-St_theorem}) is Verdier self-dual (up to replacing $\psi$ by $\psi^{-1}$ and $\zeta$ by $\zeta^{-1}$), it suffices to prove the following.

\begin{Lm} 
\label{Lm_first_for_L-St_thm}
i) The complex $K^{\nu}_{\tilde x}$ is placed in perverse degrees $\le 0$.\\
ii) The $*$-restriction of $K^{\nu}_{\tilde x}$ to the closed substack
$\wt\gM_{\tilde x,\le\nu}-\wt\gM_{x,\nu}\subset \wt\gM_{\tilde x,\nu}$ vanishes.\\
iii) The $0$-th perverse cohomology sheaf of $K^{\nu}$ vanishes unless $\nu=\lambda-w_0(\gamma)$ and in the latter case it identifies with the restriction $\cF_{x, \lambda-w_0(\gamma)}\mid_{\wt\gM_{x, \lambda-w_0(\gamma)}}$.
\end{Lm}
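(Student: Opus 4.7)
The plan follows closely the strategy of the proof of Theorem~\ref{Th_Hecke_action} in Section~\ref{section_5.4.1_for_Th531}, replacing the basic object $\cF_{\emptyset}$ by $\cF_{x,\lambda}$. Using the stratification of $Z$ by the locally closed substacks $Z^{\nu,\nu',\mu}_{\tilde x}$ introduced in Section~\ref{section_543}, I would reduce to analysing the auxiliary complexes $K^{\nu,\nu',\mu}_{\tilde x}$ and $K^{\nu,\nu',\mu}$. The support property of $\cF_{x,\lambda}$ (extension by zero from $\wt\gM_{x,\le\lambda}$), together with the fact that the $*$-restriction of $\cA^{-w_0(\gamma)}_{\cE}$ to $\wt\Gr^{-w_0(\mu)}_{G,x}$ vanishes unless $\mu\in\Lambda^{\sharp,+}$ with $\mu\le\gamma$, restricts the contributing indices to $\nu'\in\Lambda^{+}$ with $\nu'\le\lambda$ and $\mu\in\Lambda^{\sharp,+}$ with $\mu\le\gamma$. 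Part~(i) of the Lemma is then a verbatim repetition of the dimension-and-codimension estimate of Lemma~\ref{Lm_544}(1): via Lemma~\ref{Lm_545}, the $*$-restriction of $\cF_{x,\lambda}\tboxtimes\cA^{-w_0(\gamma)}_{\cE}$ to $Z^{\nu,\nu',\mu}_{\tilde x}$ is placed in perverse degrees $\le -\<\mu-\nu+\nu',\check{\rho}\>$, while the fibres of $'h^{\la}$ have dimension $\le \<\nu'-\nu+\mu,\check{\rho}\>$, giving the bound $\le 0$; moreover this inequality is strict unless $\mu=\gamma$ and $\nu'=\lambda$.

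For (iii), the strictness just obtained means that only $K^{\nu,\lambda,\gamma}$ can contribute to the top perverse cohomology $\H^0(K^\nu)$. Restricting to the open stratum $\wt\gM_{x,\nu}$ and using Lemmas~\ref{Lm_545} and~\ref{Lm_547} together with the discrepancy computation of Section~\ref{section_4.12.4}, after a smooth localization $V\to\wt\gM_{x,\nu}$ the relevant complex on $V\times(\Gr_{B,x}^{\lambda-\nu}\cap\Gr^\gamma_{G,x})$ takes the form
\[
\cE_V\boxtimes\big((\chi^{\lambda-\nu}_\nu)^*\cL_\psi\otimes\delta^*\cL_\zeta\big)[\<\gamma+\lambda-\nu,2\check{\rho}\>],
\]
where $\cE_V$ is the smooth pullback of $\cF_{x,\lambda}|_{\wt\gM_{x,\lambda}}$ and $\delta$ is the relevant discrepancy map. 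By (\cite{FGV}, Proposition~7.1.7) the local system $(\chi^{\lambda-\nu}_\nu)^*\cL_\psi\otimes\delta^*\cL_\zeta$ is nonconstant on every top-dimensional irreducible component of $\Gr_B^{\lambda-\nu}\cap\Gr_G^\gamma$, unless $\lambda-\nu=w_0(\gamma)$, in which case the intersection is a single point and both twists are trivial. This forces the vanishing for $\nu\ne\lambda-w_0(\gamma)$ and, at $\nu=\lambda-w_0(\gamma)$, produces $\cF_{x,\lambda-w_0(\gamma)}|_{\wt\gM_{x,\lambda-w_0(\gamma)}}$ up to the canonical matching of basic Whittaker sheaves, as in Lemma~\ref{Lm_other_three_intersections_local_systems_coincide}.

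For (ii), I would stratify $\wt\gM_{\tilde x,\le\nu}-\wt\gM_{x,\nu}$ by the locally closed strata $\wt\gM_{x,\nu''}$ with $\nu''<\nu$ and analyse the base-changed fibration ${'h^{\la}}\colon Z^{\nu,\nu',\mu}_{\tilde x}\times_{\wt\gM_{\tilde x,\le\nu}}\wt\gM_{x,\nu''}\to\wt\gM_{x,\nu''}$. The Whittaker equivariance expressed in Lemma~\ref{Lm_547}(2) rewrites the pulled-back evaluation function as $\ev_{x,\nu''}+\bar\chi^{\nu'-\nu''}_{\nu''}$, so after smooth localization the restricted complex acquires an extra Artin–Schreier factor $(\chi^{\nu'-\nu''}_{\nu''})^*\cL_\psi$ on $\Gr_B^{\nu'-\nu''}\cap\Gr_G^\mu$, and the argument of part~(iii) (applied at the stratum $\nu''$ instead of $\nu$) shows that this twist, together with the discrepancy $\delta^*\cL_\zeta$, is nonconstant on every top-dimensional component whenever $\nu''\ne\nu$, producing the required vanishing.

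The main obstacle will be the bookkeeping in parts (ii) and (iii), specifically the precise identification of the discrepancy function $\delta$ in terms of the chosen splittings $(\gt^0_{\EE},\delta_\lambda)$ of Section~\ref{section_Additional input data} and the matching of its failure of triviality with the Artin–Schreier twist so that the hypotheses of (\cite{FGV}, Proposition~7.1.7) apply cleanly to the twisted local system. In particular, one must confirm that passing from the untwisted FGV setting to the $\cL_\zeta$-twisted one does not introduce spurious top-dimensional components on which both twists happen to be simultaneously constant; this is guaranteed by the specialisation of Lemma~\ref{Lm_other_three_intersections_local_systems_coincide} to $\Gr_B^{\lambda-\nu}\cap\Gr_G^\gamma$, but carrying this through uniformly over all the boundary strata in part~(ii) is the most delicate technical point.
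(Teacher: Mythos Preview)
Your treatment of part~(i) is correct and coincides with the paper's, which simply invokes Lemma~\ref{Lm_544}(1).

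Part~(ii), however, has a genuine gap. Your strategy---restrict to each stratum $\wt\gM_{x,\nu''}$ with $\nu''<\nu$ and argue via nonconstancy of the local system on top-dimensional components---can at best control the top cohomology, not the entire complex; but the claim is that the \emph{full} $*$-restriction vanishes. The paper's argument is both simpler and stronger: over any point of $\wt\gM_{\tilde x,\le\nu}-\wt\gM_{x,\nu}$ the maps $\kappa^{\check\lambda}$ acquire a zero at some $y\ne x$, and since the Hecke modification is at $x$ this zero persists on the $'h^{\ra}$ side. Hence the integrand $('h^{\ra})^*\cF_{x,\lambda}$ already vanishes there, because any Whittaker sheaf on $\wt\gM_x$ has vanishing $*$-restriction to strata carrying zeros away from $x$ (there are no dominant coweights $<0$). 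This is exactly the argument recorded in Lemma~\ref{Lm_544}(2), and it gives vanishing of $K^{\nu,\nu',\mu}_{\tilde x}$ on the boundary for \emph{all} $(\nu',\mu)$, not just $(\lambda,\gamma)$.

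The serious problem is in part~(iii). You assert, citing \cite{FGV}, Proposition~7.1.7, that the local system $(\chi^{\lambda-\nu}_\nu)^*\cL_\psi\otimes\delta^*\cL_\zeta$ is nonconstant on every top-dimensional component of $\Gr_B^{\lambda-\nu}\cap\Gr_G^\gamma$ unless $\lambda-\nu=w_0(\gamma)$. That proposition does not say this: it handles the case of conductor-zero characters and does not see the $\cL_\zeta$-twist. Since Artin--Schreier and Kummer twists are independent, constancy of the tensor product requires \emph{both} factors to be constant separately; but for $\lambda\ne 0$ there can very well be components $b$ on which $\chi^{\lambda-\nu}_\nu$ is \emph{not} dominant (so the additive twist is constant) and simultaneously $\bar b:=t^{\nu-\lambda}b$ is special (so the Kummer twist is constant). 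Nothing in your argument rules this out; in particular you never use the hypothesis $\lambda\in\bar\cM$.

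The paper's proof of (iii) hinges precisely on this hypothesis. One first passes to the dense open $\Gr_B^{\lambda-\nu}\cap\Gr_{B^-}^{w_0(\gamma)}\cap\Gr_G^\gamma$ and uses Lemma~\ref{Lm_comparing_s_B_and_gamma} to identify the Kummer twist with $(\gamma^{w_0(\gamma)}_{\lambda-\nu})^*\cL_{\zeta^{-1}}$. Then Lemma~\ref{Lm_2.1.6} shows that $(\chi^{\lambda-\nu}_\nu)^*\cL_\psi$ is constant on $b$ if and only if $\phi_i(\bar b)\le\<\lambda,\check\alpha_i\>$ for all $i\in\cJ$. If in addition $\bar b$ is special, Corollary~\ref{Cor_divisibility_of_phi(b)} forces $\phi_i(\bar b)\in\ZZ_+\delta_i$; combined with the restricted condition $\<\lambda,\check\alpha_i\><\delta_i$ this gives $\phi_i(\bar b)=0$ for all $i$, whence $\lambda-\nu=w_0(\gamma)$. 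This interplay between the crystal operators, the special-element constraint, and the bound $\lambda\in\bar\cM$ is the heart of the Lusztig--Steinberg analogue, and it is entirely absent from your proposal.
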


 Lemma~\ref{Lm_first_for_L-St_thm} i) is a particular case of Lemma~\ref{Lm_542}. Recall the substacks of $Z$ 
\begin{align*}
Z^{\nu,?}_{\tilde x}, Z^{\nu, ?}, Z^{?,\nu'}_{\tilde x}, Z^{?,\nu'}, Z^{\nu,\nu'}_{\tilde x}, Z^{\nu, \nu'}\\
Z^{\nu, ?,\mu}_{\tilde x}, Z^{?, \nu', \mu}_{\tilde x}, Z^{\nu, \nu',\mu}_{\tilde x}, Z^{\nu,\nu',\mu}
\end{align*}
defined in Section~\ref{section_543} for $\nu,\nu'\in\Lambda, \mu\in \Lambda^+$. Denote by $K^{\nu,\nu',\mu}_{\tilde x}$ the $!$-direct image under 
$$
'h^{\la}: Z^{\nu,\nu',\mu}_{\tilde x}\to \wt\gM_{\tilde x,\le \nu}
$$ 
of the $*$-restriction of $\cF_{x, \lambda}\tboxtimes \cA^{-w_0(\gamma)}_{\cE}$ to $Z^{\nu,\nu',\mu}_{\tilde x}$. Let $K^{\nu,\nu',\mu}$ be the restriction of $K^{\nu,\nu',\mu}_{\tilde x}$ to the open substack $\wt\gM_{x,\nu}$. 

Using the standard spectral sequence, Lemma~\ref{Lm_first_for_L-St_thm} is reduced to the following.

\begin{Lm}
\label{Lm_second_for_L-St_thm}
(1) The complex $K^{\nu,\nu',\mu}_{\tilde x}$ is placed in perverse degrees $\le 0$, and the inequality is strict unless $\mu=\gamma$ and $\lambda=\nu'$. 

\smallskip\noindent
(2) The $*$-restriction of $K^{\nu,\nu',\mu}_{\tilde x}$ to $\wt\gM_{\tilde x,\le\nu}-\wt\gM_{x,\nu}$ vanishes.

\smallskip\noindent
(3) The $0$-th perverse cohomology of $K^{\nu,\lambda, \gamma}$ vanishes unless $\nu=\lambda-w_0(\gamma)$.

\smallskip\noindent
(4) The $0$-th perverse cohomology of $K^{\lambda-w_0(\gamma), \lambda, \gamma}$ identifies with $\cF_{x, \lambda-w_0(\gamma)}$.
\end{Lm}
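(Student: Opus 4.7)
\textbf{Proof plan for Lemma~\ref{Lm_second_for_L-St_thm}.}

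The strategy is to mimic the proof of Lemma~\ref{Lm_544} used to establish Theorem~\ref{Th_Hecke_action}, replacing the vacuum object $\cF_{\emptyset}$ by $\cF_{x,\lambda}$. The essential new ingredient is the restrictedness assumption $\lambda\in\bar\cM$, which forces the character-theoretic vanishing that in the earlier argument was automatic.

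For (1), I would apply Lemma~\ref{Lm_545}(2) to rewrite the $*$-restriction of $\cF_{x,\lambda}\tboxtimes\cA^{-w_0(\gamma)}_{\cE}$ to $Z^{?,\nu',\mu}_{\tilde x}$ as a twisted external product of $\cF_{x,\lambda}\mid_{\wt\gM_{\tilde x,\le\nu'}}$ with $\cA^{-w_0(\gamma)}_{\cE}\mid_{\wt\Gr_{G,x}^{-w_0(\mu)}}$. Both factors are placed in perverse degrees $\le 0$; since $\cF_{x,\lambda}$ is the intermediate extension from $\wt\gM_{x,\lambda}$, the inequality in the first factor is strict unless $\nu'=\lambda$, and for the second factor it is strict unless $\mu=\gamma$. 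Combined with the fiber-dimension estimate for $'h^{\la}:Z^{\nu,\nu',\mu}_{\tilde x}\to\wt\gM_{\tilde x,\le\nu}$ (its fibers bounded by $\dim(\Gr_B^{\nu'-\nu}\cap \Gr_G^\mu)\le\<\nu'-\nu+\mu,\check{\rho}\>$ from Lemma~\ref{Lm_545}(1)), this gives the desired bound, exactly as in Lemma~\ref{Lm_544}(1).

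For (2), by (1) we may concentrate on $\nu'=\lambda,\mu=\gamma$. A point of $\wt\gM_{\tilde x,\le\nu}-\wt\gM_{x,\nu}$ is characterized by its Pl\"ucker map $\kappa'$ having zeros strictly away from $x$; pulling back through $'h^{\la}$, the induced data on $\cF'$ inherits these zeros, putting $\cF'$ into the Drinfeld boundary of $\wt\gM_{x,\le\lambda}$. The argument then proceeds by smooth localization along $\cU^{\epsilon_\nu}_{\tilde x}\times_{U(\cO_x)}(\wt\Gr_B^{\lambda-\nu}\cap\wt\Gr_G^\gamma)\to\wt\gM_{\tilde x,\le\nu}$, exhibiting the $*$-restriction of $\cF_{x,\lambda}\tboxtimes\cA^{-w_0(\gamma)}_\cE$ as a direct product on the fiber. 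A Whittaker-character argument along the lines of Lemma~\ref{Lm_547} shows that on any stratum off $\wt\gM_{x,\nu}$ the relevant local system $(\chi^{\lambda-\nu}_\nu)^*\cL_\psi\otimes\delta^*\cL_\zeta$ is non-constant on every irreducible component of maximal dimension, so the fiberwise integral vanishes.

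For (3) and (4), restrict to the open $\wt\gM_{x,\nu}$. The fibration $'h^{\la}:Z^{\nu,\lambda,\gamma}\to\wt\gM_{x,\nu}$ has fiber $\Gr_B^{\lambda-\nu}\cap\Gr_G^\gamma$, and on a smooth local cover the sheaf becomes $\cE_V\boxtimes((\chi^{\lambda-\nu}_\nu)^*\cL_\psi\otimes\delta^*\cL_\zeta)[\<\gamma-\lambda+\nu,2\check{\rho}\>]$ for a discrepancy map $\delta:\Gr_B^{\lambda-\nu}\cap\Gr_G^\gamma\to\Gm$. The top cohomology group contributes only from top-dimensional components on which this local system is constant. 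Translating to the crystal-theoretic language via Lemmas~\ref{Lm_Andersen_again} and~\ref{Lm_when_chi_lambda_0_is_dominant}, one needs the corresponding element $\bar b\in B_\gg(\lambda-\nu+\gamma)$ (via $t^{-\lambda}$) to be special with $\phi_i(\bar b)\le\<\lambda,\check{\alpha}_i\>$; Corollary~\ref{Cor_divisibility_of_phi(b)} forces $\phi_i(\bar b)\in\ZZ_+\delta_i$, while restrictedness $\<\lambda,\check{\alpha}_i\><\delta_i$ forces $\phi_i(\bar b)=0$ for all $i$. This leaves only the unique component $\Gr_B^{w_0(\gamma)}\cap\Gr_G^\gamma=\{t^{w_0(\gamma)}\}$, forcing $\nu=\lambda-w_0(\gamma)$, which is (3). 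The single-point fiber then produces exactly $\cF_{x,\lambda-w_0(\gamma)}\mid_{\wt\gM_{x,\lambda-w_0(\gamma)}}$ after tracking shifts and the canonical trivialization of the discrepancy, giving (4).

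The main obstacle will be (2): the character-vanishing argument must handle the Drinfeld boundary of $\wt\gM_{x,\le\lambda}$ uniformly in the Hecke modification, which is absent in the $\lambda=0$ case (where $\cF_\emptyset$ is already clean). A secondary subtlety in (3) is the need to invoke the crystal-theoretic restrictedness mechanism of Section~\ref{Sec_Objects that remain irreducible} to rule out all non-extremal components of the fiber intersection.
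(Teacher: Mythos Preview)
Your treatment of (1), (3), and (4) is essentially the paper's argument. For (3)--(4) there are two minor slips: the shift should be $[\<\gamma+\lambda-\nu,2\check{\rho}\>]$ (you have the sign of $\lambda-\nu$ reversed), and the crystal element is $\bar b\in B_{\gg}(\lambda-\nu-w_0(\gamma))$, not $B_{\gg}(\lambda-\nu+\gamma)$. The paper also formulates the dominance criterion as Lemma~\ref{Lm_2.1.6} (a version of Lemma~\ref{Lm_when_chi_lambda_0_is_dominant} with nonzero conductor $\nu$) rather than citing the latter directly, but the content is the same.

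The genuine gap is in (2). First, your reduction ``by (1) we may concentrate on $\nu'=\lambda,\mu=\gamma$'' is invalid: part (1) only gives strictly negative perverse degree for the other strata, whereas (2) demands full vanishing, not just vanishing of ${}^p\!\H^0$. Second, even for $\nu'=\lambda,\mu=\gamma$, your proposed argument---that the local system $(\chi^{\lambda-\nu}_\nu)^*\cL_{\psi}\otimes\delta^*\cL_{\zeta}$ is nonconstant on every top-dimensional component---would only kill the top cohomology of the fibre integral, not the entire complex. The paper's argument is both simpler and stronger: you already correctly observe that a Hecke modification at $x$ sends a point with Pl\"ucker zeros away from $x$ to another such point, i.e.\ the preimage of $\wt\gM_{\tilde x,\le\nu}-\wt\gM_{x,\nu}$ under $'h^{\la}$ lands via $'h^{\ra}$ in $\wt\gM_{\tilde x,\le\nu'}-\wt\gM_{x,\nu'}$. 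But then the $*$-restriction of $\cF_{x,\lambda}$ to this locus vanishes outright, by the same Whittaker-cleanness-away-from-$x$ used in Lemma~\ref{Lm_544}(2) (``there are no dominant coweights $<0$''). This holds for every $(\nu',\mu)$, so (2) follows directly from Lemma~\ref{Lm_544}; no fibre-by-fibre character analysis is needed, and contrary to your closing remark, (2) is not the main obstacle but the easiest part.
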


The points (1) and (2) of Lemma~\ref{Lm_second_for_L-St_thm} follow from Lemma~\ref{Lm_544}. It remains to analyse the complex $K^{\nu,\lambda, \gamma}$ on $\wt\gM_{x,\nu}$ placed in perverse degrees $\le 0$. 

As in Section~\ref{Lm_544}, for each $\nu\in\Lambda$ we pick a trivialization $\epsilon_{\nu}: \Omega^{\rho}(-\nu x)\,\iso\, \cF^0_T\mid_{D_x}$. It yields a $U(\cO_x)$-torsor $\cU^{\epsilon_{\nu}}$ over $\wt\gM_{x,\nu}$ classifying a point of $\wt\gM_{x,\nu}$ together with a trivialization of the corresponding $U$-torsor over $D_x$. 

 Recall that for each $\nu\in\Lambda$ we fixed the section $s^{\nu}_B: \Gr_B^{\nu}\to\Gra_G$ of the $\Gm$-torsor $\Gra_G\to \Gr_G$ in Section~\ref{section_4.12.4}. By Lemma~\ref{Lm_545}, the fibration $'h^{\la}: Z^{\nu,\lambda, \gamma}\to \wt\gM_{x,\nu}$ identifies with
$$
\cU^{\epsilon_{\nu}}\times_{U(\cO_x)}(\wt\Gr_{B,x}^{\lambda-\nu}\cap \wt\Gr_{G,x}^{\gamma})\to \wt\gM_{x,\nu}
$$

 After a smooth localization $V\to \wt\gM_{x,\nu}$ the latter fibration becomes a direct product $V\times (\wt\Gr_{B,x}^{\lambda-\nu}\cap \wt\Gr_{G,x}^{\gamma})$. The $*$-restriction of $\cF_{x, \lambda}\tboxtimes \cA^{-w_0(\gamma)}_{\cE}$ to $Z^{\nu,\lambda, \gamma}$ descends  to $V\times (\Gr_{B,x}^{\lambda-\nu}\cap \Gr_{G,x}^{\gamma})$, and there becomes of the form
$$
\cE_V\boxtimes ((\chi^{\lambda-\nu}_{\nu})^*\cL_{\psi}\otimes (s_B^{\lambda-\nu})^*\cA^{\gamma}_{\cE})[\<\gamma+\lambda-\nu, 2\check{\rho}\>].
$$
Here $\cE_V$ is a locally constant perverse sheaf on $V$. So, $K^{\nu,\lambda, \gamma}$ vanishes unless $\nu\in\Lambda^+$, and in the latter case it identifies over $\wt\gM_{x,\nu}$ with
$$
\cF_{x,\nu}\otimes \RG_c(\Gr_{B,x}^{\lambda-\nu}\cap \Gr_{G,x}^{\gamma}, (\chi^{\lambda-\nu}_{\nu})^*\cL_{\psi}\otimes (s_B^{\lambda-\nu})^*\cA^{\gamma}_{\cE})[\<\gamma+\lambda-\nu, 2\check{\rho}\>]
$$

 Recall that $\Gr_B^{\lambda-\nu}\cap \Gr_G^{\gamma}$ is of pure dimension $\<\gamma+\lambda-\nu, \check{\rho}\>$. Consider the open subscheme 
\begin{equation}
\label{open_sunscheme_for_L-St_thm}
\Gr_B^{\lambda-\nu}\cap \Gr_{B^-}^{w_0(\gamma)}\cap \Gr_G^{\gamma}\subset 
\Gr_B^{\lambda-\nu}\cap \Gr_G^{\gamma}
\end{equation}
Anderson's theorem (\cite{A}, Proposition~3) implies that this open embedding induces a bijection on the set of irreducible components. 
The following is a version of Lemma~\ref{Lm_other_three_intersections_local_systems_coincide} obtained by exchaning the roles of $B$ and $B^-$.

\begin{Lm} 
\label{Lm_comparing_s_B_and_gamma}
Let $\mu\le\gamma$ with $\mu\in\Lambda, \gamma\in\Lambda^{\sharp,+}$. Then over $\Gr_G^{\gamma}\cap \Gr_{B^-}^{w_0(\gamma)}\cap \Gr_B^{\mu}$ there is an isomorphism
$(s^{\mu}_B)^*\cA^{\gamma}_{\cE}\,\iso\, (\gamma^{w_0(\gamma)}_{\mu})^*\cL_{\zeta^{-1}}$ up to a shift.
\end{Lm}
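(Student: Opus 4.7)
The statement is the dual of Lemma~\ref{Lm_other_three_intersections_local_systems_coincide} under the exchange $B \leftrightarrow B^-$, and the plan is to follow the same three-step strategy used there. First I would recall the section $s_\gamma: \Gr_{G,x}^\gamma \to \wt\Gr_{G,x}^\gamma$ attached to $\Omega^{\frac{1}{2}}(\cO_x)$ in (\cite{L1}, Section~2.4.2): by the very definition of $\cA_\cE^\gamma$ in loc.cit., $s_\gamma^* \cA_\cE^\gamma$ is constant (up to a shift) on the smooth open stratum $\Gr_{G,x}^\gamma$. Consequently, for any other section $\sigma$ of the gerbe $\wt\Gr_{G,x} \to \Gr_{G,x}$ over an open subscheme $Y \subset \Gr_{G,x}^\gamma$, the pullback $\sigma^* \cA_\cE^\gamma$ differs from a shifted constant sheaf only by the pullback of $\cL_\zeta$ along the $\Gm$-valued discrepancy between $\sigma$ and $s_\gamma$.

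Next, I would invoke the well-known fact that $\Gr_{G,x}^\gamma \cap \Gr_{B^-,x}^{w_0(\gamma)}$ is an affine space: it is the open Birkhoff cell of $\Gr_{G,x}^\gamma$ passing through the antidominant Weyl-conjugate point $t^{w_0(\gamma)}$, in exact mirror to the open Bruhat cell $\Gr_{G,x}^\gamma \cap \Gr_{B,x}^\gamma$ through $t^\gamma$ that was used in the proof of Lemma~\ref{Lm_other_three_intersections_local_systems_coincide}. Since every tame local system on an affine space is constant, the sheaf $(s_{B^-}^{w_0(\gamma)})^* \cA_\cE^\gamma$ is a shifted constant sheaf on this intersection.

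Finally, over $\Gr_{B,x}^\mu \cap \Gr_{B^-,x}^{w_0(\gamma)}$ the two sections $s_B^\mu$ and $s_{B^-}^{w_0(\gamma)}$ of $\Gra_G \to \Gr_{G,x}$ are related by $s_{B^-}^{w_0(\gamma)} = \gamma_\mu^{w_0(\gamma)} \cdot s_B^\mu$, with $\Gm$ acting by its $N$-th power on the gerbe $\wt\Gr_{G,x} = [\Gra_G/\Gm]$. The standard rescaling formula for pullback through this $\mu_N$-gerbe therefore yields
\[
(s_B^\mu)^* \cA_\cE^\gamma \;\iso\; (\gamma_\mu^{w_0(\gamma)})^* \cL_{\zeta^{-1}} \otimes (s_{B^-}^{w_0(\gamma)})^* \cA_\cE^\gamma
\]
on the triple intersection $\Gr_{G,x}^\gamma \cap \Gr_{B^-,x}^{w_0(\gamma)} \cap \Gr_{B,x}^\mu$; the inversion of the character (giving $\cL_{\zeta^{-1}}$ rather than $\cL_\zeta$ as in the original lemma) reflects the fact that here one pulls back via the "inner" section $s_B^\mu$, the reverse of what happens in Lemma~\ref{Lm_other_three_intersections_local_systems_coincide}. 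Combining with step two yields the claim up to a shift. The main obstacle is to verify carefully the affine-space assertion of step two (a standard, but important, fact about open Birkhoff cells in affine Grassmannians) and to pin down the sign in step three against the conventions from Section~\ref{section_Metaplectic dual group} and (\cite{L1}, Section~2.4.2).
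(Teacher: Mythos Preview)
Your proposal is correct and follows essentially the same approach as the paper. The paper's own proof is a single line — it records the defining relation $\gamma^{w_0(\gamma)}_{\mu}\, s^{\mu}_B = s^{w_0(\gamma)}_{B^-}$ and says ``our claim follows'' — leaving implicit exactly the two ingredients you spell out: that $\Gr_G^{\gamma}\cap \Gr_{B^-}^{w_0(\gamma)}$ is an affine space (so $(s^{w_0(\gamma)}_{B^-})^*\cA^{\gamma}_{\cE}$ is trivial there up to shift), and that the gerbe-rescaling then forces the $\cL_{\zeta^{-1}}$ twist on $(s^{\mu}_B)^*\cA^{\gamma}_{\cE}$. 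Your explanation of why the character gets inverted relative to Lemma~\ref{Lm_other_three_intersections_local_systems_coincide} is also on point.
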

\begin{proof} By definition, $\gamma^{w_0(\gamma)}_{\mu} s^{\mu}_B=s^{w_0(\gamma)}_{B^-}$ over $\Gr_{B^-}^{w_0(\gamma)}\cap \Gr_B^{\mu}$. Our claim follows.
\end{proof}

So, we are analyzing the top cohomology of
\begin{equation}
\label{complex_almost_final_for_L-St_Thm}
\RG_c(\Gr_B^{\lambda-\nu}\cap \Gr_{B^-}^{w_0(\gamma)}\cap \Gr_G^{\gamma}, \, (\chi^{\lambda-\nu}_{\nu})^*\cL_{\psi}\otimes (\gamma^{w_0(\gamma)}_{\lambda-\nu})^*\cL_{\zeta^{-1}})
\end{equation} 

 Assume that $\nu\ne \lambda-w_0(\gamma)$ and (\ref{open_sunscheme_for_L-St_thm}) is not empty. Then the dimension of (\ref{open_sunscheme_for_L-St_thm}) is $>0$. In this case the local system 
$$
(\chi^{\lambda-\nu}_{\nu})^*\cL_{\psi}\otimes (\gamma^{w_0(\gamma)}_{\lambda-\nu})^*\cL_{\zeta^{-1}}
$$ 
is constant on an irreducible component $b$ of (\ref{open_sunscheme_for_L-St_thm}) if and only if both $(\chi^{\lambda-\nu}_{\nu})^*\cL_{\psi}$ and 
$(\gamma^{w_0(\gamma)}_{\lambda-\nu})^*\cL_{\zeta}$ are constant on $b$. So, the $0$-th perverse cohomology sheaf of $K^{\nu, \lambda, \gamma}$ vanishes unless 
$$
\bar b=t^{\nu-\lambda}b\in B_{\gg}(\lambda-\nu-w_0(\gamma))
$$ 
is special. By Remark~\ref{Rem_special_elements_in_crystals}, this implies
$\lambda-\nu-w_0(\gamma)\in \oplus_{i\in\cJ} \ZZ_+(\delta_i\alpha_i)$. In particular, $\nu\in \Lambda^+_a$. 

\begin{Lm} 
\label{Lm_2.1.6}
Let $\gamma\in\Lambda^+, \mu,\nu\in\Lambda$ satisfy $\mu+\nu\in\Lambda^+, \mu\ge w_0(\gamma)$. Let $b\subset \Gr_{B^-}^{w_0(\gamma)}\cap \Gr_B^{\mu}$ be an irreducible component. Let $\chi^{\mu}_{\nu}: \Gr^{\mu}_B\to \A^1$ be a $(U(F), \chi_{\nu})$-equivariant function, where $\chi_{\nu}: U(F)\to\A^1$ is an additive character of conductor $\bar\nu$. Here $\bar\nu$ is the image of $\nu$ in the coweights lattice of $G_{ad}$. Denote by 
$$
\bar b\subset \Gr_B^0\cap \Gr_{B^-}^{w_0(\gamma)-\mu}
$$ 
the component $t^{-\mu}b$, so $\bar b\in B_{\gg}(\mu-w_0(\gamma))$. Then $\chi^{\mu}_{\nu}: b\to \A^1$ is dominant if and only if there exists $i\in\cJ$ such that $\phi_i(\bar b)>\<\mu+\nu, \check{\alpha}_i\>$. 
\end{Lm}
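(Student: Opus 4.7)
The strategy will mirror that of Lemma~\ref{Lm_when_chi_lambda_0_is_dominant}: decompose $b$ via the projections to the subminimal Levi grassmannians, reduce dominance of $\chi^\mu_\nu$ to dominance of evaluation maps attached to each $i\in\cJ$, and translate to the origin by $t^{-\mu}$ in order to read off the answer in the rank one Levi $M_i$.

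More precisely, for each $i\in\cJ$, let $\mu_i\in\Lambda$ be the unique element such that $b\cap \gq_{P_i}^{-1}(\Gr_{B^-(M_i)}^{\mu_i})$ is dense in $b$. Set $\bar\mu=\{\mu_i\}_{i\in\cJ}$,
$$
b_0=b\cap \bigl(\mathop{\cap}\limits_{i\in\cJ}\gq_{P_i}^{-1}(\Gr_{B^-(M_i)}^{\mu_i})\bigr),\qquad Z^{\bar\mu}=\prod_{i\in\cJ}\Gr_{B(M_i)}^{\mu}\cap \Gr_{B^-(M_i)}^{\mu_i}.
$$
The product map $\gq^{\bar\mu}:b_0\to Z^{\bar\mu}$ is $T(\cO)$-equivariant and, since $T(\cO)$ acts transitively on $Z^{\bar\mu}$, surjective. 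For each $i\in\cJ$, let $\ev_i$ denote the restriction of $\chi^\mu_\nu$ to $\Gr_{B(M_i)}^\mu\cap \Gr_{B^-(M_i)}^{\mu_i}$ via the inclusion into $\Gr_B^\mu$, and set $\ev^{\bar\mu}=\sum_i \ev_i$. Exactly as in Section~\ref{Section_Property (C)}, the $(U(F),\chi_\nu)$-equivariance forces $\chi^\mu_\nu\mid_{b_0}=\ev^{\bar\mu}\gq^{\bar\mu}$, so $\chi^\mu_\nu:b\to\A^1$ is dominant if and only if $\ev^{\bar\mu}$ is dominant, which happens if and only if $\ev_i$ is dominant for some $i$.

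To analyse each $\ev_i$ I will translate by $t^{-\mu}$, identifying $\Gr_{B(M_i)}^\mu\cap \Gr_{B^-(M_i)}^{\mu_i}$ with $\Gr_{B(M_i)}^0\cap \Gr_{B^-(M_i)}^{\mu_i-\mu}$. Under this translation $\ev_i$ becomes a $(U(M_i)(F),\chi')$-equivariant function, where $\chi'(u)=\chi_\nu(t^\mu u t^{-\mu})$ for $u\in U(M_i)(F)$; conjugation by $t^\mu$ on the root space $U_{\alpha_i}(F)\cong F$ multiplies by $t^{\langle\mu,\check{\alpha}_i\rangle}$, so the conductor of $\chi'$ on $U_{\alpha_i}(F)$ is the image of $\mu+\nu$ in the coweight lattice of $M_{i,ad}$. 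Because $\bar b=t^{-\mu}b$, the same reasoning as in the proof of Lemma~\ref{Lm_when_chi_lambda_0_is_dominant} gives $\phi_i(\bar b)\,\alpha_i=\mu-\mu_i$, so $\mu_i-\mu=-\phi_i(\bar b)\alpha_i$.

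The problem is thereby reduced to the following rank one statement: for the subminimal Levi $M_i$ and $m=\phi_i(\bar b)\ge 0$, the evaluation map on the irreducible $m$-dimensional scheme $\Gr_{B(M_i)}^0\cap \Gr_{B^-(M_i)}^{-m\alpha_i}$ attached to a character of conductor $\overline{\mu+\nu}$ in the $\alpha_i$-direction is dominant if and only if $m>\langle\mu+\nu,\check{\alpha}_i\rangle$. This is the standard $SL_2/PGL_2$ calculation (cf.\ \cite{FGV}, Section~7.1), which I expect to be the main technical step, though entirely parallel to the conductor analysis used repeatedly in \select{loc.cit.}. Combining this with the previous paragraph yields the asserted equivalence.
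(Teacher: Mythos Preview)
Your proposal is correct and follows essentially the same route as the paper's own proof: define the dense open $b_0$ via the projections $\gq_{P_i}$, factor $\chi^{\mu}_{\nu}\mid_{b_0}$ through the surjective $T(\cO)$-equivariant map $\gq^{\bar\mu}$ onto $Z^{\bar\mu}$, reduce to dominance of some $\ev_i$, translate by $t^{-\mu}$ to identify $\ev_i$ with a map $\chi^0_{\mu+\nu}$ for $M_i$, and read off the criterion $\phi_i(\bar b)=\langle\mu-\mu_i,\check{\alpha}_i/2\rangle>\langle\mu+\nu,\check{\alpha}_i\rangle$ from the rank one picture. The paper states the argument slightly more tersely, but the structure and all essential steps coincide.
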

\begin{proof} The proof is very close to that of Lemma~\ref{Lm_when_chi_lambda_0_is_dominant}. For $i\in\cJ$ recall the maps $\gq_{P_i}: \Gr_{P_i}\to \Gr_{M_i}$ from Section~\ref{Section_Recollections on crystals}. For $i\in\cJ$ let $\mu_i\le_{M_i} \mu$ be the unique element such that $\gq_{P_i}^{-1}(\Gr_{B^-(M_i)}^{\mu_i})\cap b$ is dense in $b$. Set
$$
b_0=b\cap (\mathop{\cap}\limits_{i\in\cJ} \gq_{P_i}^{-1}(\Gr_{B^-(M_i)}^{\mu_i}) \,.
$$
The subschemes $b_0\subset b\subset \Gr_{B^-}^{w_0(\gamma)}\cap \Gr_B^{\mu}$ are $T(\cO)$-invariant. Set $\bar\mu=\{\mu_i\}_{i\in \cJ}$ and
$$
Z^{\bar\mu}=\prod_{i\in \cJ} \Gr_{B(M_i)}^{\mu}\cap \Gr_{B^-(M_i)}^{\mu_i}\, .
$$
Let $\gq^{\bar\mu}: b_0\to Z^{\bar\mu}$ be the product of the maps $\gq_{P_i}$. Then $\gq^{\bar\mu}$ is $T(\cO)$-equivariant. Since $T(\cO)$ acts transitively on $Z^{\bar\mu}$, $\gq^{\bar\mu}$ is surjective. 
For $i\in\cJ$ denote by $\ev_i$ the composition
$$
\Gr_{B(M_i)}^{\mu}\cap \Gr_{B^-(M_i)}^{\mu_i}\hook{}\Gr_{B(M_i)}^{\mu}\to \Gr^{\mu}_B\toup{\chi^{\mu}_{\nu}} \A^1
$$
Denote by $\ev^{\bar\mu}: Z^{\bar\mu}\to\A^1$ the map $\ev^{\bar\mu}=\sum_{i\in\cJ} \ev_i$. We may assume that the restriction $\chi^{\mu}_{\nu}: b_0\to \A^1$ equals $\ev^{\bar\mu}\gq^{\bar\mu}$. 

 Now the morphism $\chi^{\mu}_{\nu}: b_0\to \A^1$ is dominant if and only if there is $i\in\cJ$ such that $\ev_i: \Gr_{B(M_i)}^{\mu}\cap \Gr_{B^-(M_i)}^{\mu_i}\to\A^1$ is dominant. The latter condition is equivalent to 
$$
\phi_i(\bar b)=\<\mu-\mu_i, \frac{\check{\alpha}_i}{2}\> >\<\mu+\nu, \check{\alpha}_i\>
$$
Indeed, the multiplication by $t^{\mu}$ gives an isomorphism 
$$
\Gr_{B(M_i)}^0\cap \Gr_{B^-(M_i)}^{\mu_i-\mu}\,\iso\, \Gr_{B(M_i)}^{\mu}\cap \Gr_{B^-(M_i)}^{\mu_i}
$$ 
Under this isomorphism $\ev_i$ identifies with some map $\chi^0_{\mu+\nu}: \Gr_{B(M_i)}^0\cap \Gr_{B^-(M_i)}^{\mu_i-\mu}\to \A^1$ for the group $M_i$. Our claim follows.
\end{proof}

\smallskip

 By Lemma~\ref{Lm_2.1.6}, $(\chi^{\lambda-\nu}_{\nu})^*\cL_{\psi}$ is constant on a given irreducible component $b$ of (\ref{open_sunscheme_for_L-St_thm}) if and only if $\phi_i(\bar b)\le \<\lambda, \check{\alpha}_i\>$ for all $i\in\cJ$. Since $\bar b$ is special, 
by Corollary~\ref{Cor_divisibility_of_phi(b)} we get $\phi_i(\bar b)\in \ZZ_+\delta_i$. Since $\<\lambda, \check{\alpha}_i\><\delta_i$, we conclude that $\phi_i(\bar b)=0$ for any $i\in\cJ$. This implies $\lambda=\nu+w_0(\gamma)$, a contradiction. Thus, Lemma~\ref{Lm_second_for_L-St_thm}(3) is proved.
  
 Let now $\nu=\lambda-w_0(\gamma)$. Then $\Gr_B^{\lambda-\nu}\cap \Gr_G^{\gamma}$ is a point, Lemma~\ref{Lm_second_for_L-St_thm}(4) follows. 
 
  Theorem~\ref{Thm_Lusztig-Steinberg_analog} is proved. \QED

\section{Simply-connectedness assumption}
\label{Section_Simply-connectedness assumption}

\ssec{} The purpose of this section is to identify the additional assumptions under which Theorem~\ref{Thm_Lusztig-Steinberg_analog} provides a complete decsription of the semi-simple part $\Whit^{\kappa, ss}_x$ of $\Whit^{\kappa}_x$ as a $\Rep(\check{G}_{\zeta})$-module.
\index{$\Whit^{\kappa, ss}_x$}

In general, the natural map $\bar\cM\to\Lambda/\Lambda^{\sharp}$ is not surjective. Here is an example.

\sssec{Example} Take $G=\SL_2$, so $\Lambda=\ZZ\alpha$, where $\alpha=\alpha_i$ is the simple coroot. Set $n=\delta_i$. Recall that $\delta_i$ was defined as the denominator of $\frac{\bar\kappa(\alpha_i,\alpha_i)}{2N}$.
Assume $n$ odd. Then $\check{G}_{\zeta}\,\iso\, \PSL_2$. The unique simple root of $\check{G}_{\zeta}$ is $n\alpha$, and $\Lambda^{\sharp}=n\Lambda$. We get $\bar\cM=\{a\alpha\mid 0\le a <n/2\}$.
So, for $n\ge 3$ the map $\bar\cM\to \Lambda/\Lambda^{\sharp}$ is not surjective. 

\sssec{} For $a\in\Lambda/\Lambda^{\sharp}$ recall that the set $\Lambda_a^+$ defined in Section~\ref{section_6.2} consists of those $\lambda\in\Lambda^+$ whose image in $\Lambda/\Lambda^{\sharp}$ equals $a$.

\begin{Def} 
\label{Def_free_module_of_rank_one}
For $a\in \Lambda/\Lambda^{\sharp}$ say that $\Lambda^+_a$ is a free module of rank one over $\Lambda^{\sharp,+}$ if and only if there is $\lambda_a\in \Lambda^+_a$ such that each $\lambda\in\Lambda^+_a$ can be written uniquely as $\lambda=\lambda_a+\mu$ with $\mu\in\Lambda^{\sharp,+}$.
\end{Def}

\begin{Lm} 
\label{Lm_elements_of_barcM_are_minimal}
i) For $\lambda\in\Lambda^{\sharp}$, $i\in \cJ$ one has $\<\lambda, \check{\alpha}_i\>\in \delta_i\ZZ$.\\
ii) Pick $\lambda_a\in\bar\cM$ over $a\in \Lambda/\Lambda^{\sharp}$.
Then each $\lambda\in\Lambda^+_a$ admits a unique decomposition $\lambda=\lambda_a+\mu$ with $\mu\in\Lambda^{\sharp,+}$. So, $\Lambda^+_a$ is a free module of rank one over $\Lambda^{\sharp,+}$.
\end{Lm}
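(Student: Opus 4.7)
The plan is to derive both parts from a single elementary identity relating the bilinear form $\bar\kappa$ to the natural pairings $\langle\cdot,\check{\alpha}_i\rangle$, which is available because $\bar\kappa$ is Weyl-invariant.

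First I would verify that $\bar\kappa$ is Weyl-invariant on $\Lambda$. Since $[G,G]$ is simply-connected, the kernel of $\Lambda\to\Lambda_{ab}$ equals the coroot lattice, and the Weyl group acts trivially modulo the coroot lattice. Hence the pull-back of $\beta$ is Weyl-invariant on $\Lambda$, and each Killing form $\kappa_j$ is Weyl-invariant by construction, so the same holds for $\bar\kappa=-\beta-\sum_{j\in J}c_j\kappa_j$. This gives the standard identity
$$\bar\kappa(\lambda,\alpha_i)=\frac{\bar\kappa(\alpha_i,\alpha_i)}{2}\,\langle\lambda,\check{\alpha}_i\rangle$$
for any $\lambda\in\Lambda$ and $i\in\cJ$.

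For part i), if $\lambda\in\Lambda^{\sharp}$, then by definition $\bar\kappa(\lambda,\cdot)\in N\check{\Lambda}$, so in particular $\bar\kappa(\lambda,\alpha_i)\in N\ZZ$. Substituting the identity above yields $\frac{\bar\kappa(\alpha_i,\alpha_i)}{2}\,\langle\lambda,\check{\alpha}_i\rangle\in N\ZZ$, which by the definition of $\delta_i$ as the denominator of $\frac{\bar\kappa(\alpha_i,\alpha_i)}{2N}$ is equivalent to $\langle\lambda,\check{\alpha}_i\rangle\in\delta_i\ZZ$.

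For part ii), given $\lambda\in\Lambda^+_a$, set $\mu=\lambda-\lambda_a$. Since $\lambda$ and $\lambda_a$ have the same image $a$ in $\Lambda/\Lambda^{\sharp}$, we have $\mu\in\Lambda^{\sharp}$. To show $\mu$ is dominant, I would compute
$$\langle\mu,\check{\alpha}_i\rangle=\langle\lambda,\check{\alpha}_i\rangle-\langle\lambda_a,\check{\alpha}_i\rangle:$$
the first term is $\ge 0$ by dominance of $\lambda$, the second lies in $[0,\delta_i)$ by the restricted dominance of $\lambda_a\in\bar\cM$, and by part i) their difference lies in $\delta_i\ZZ$. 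Being strictly greater than $-\delta_i$ and a multiple of $\delta_i$, it must be $\ge 0$. Uniqueness of the decomposition is immediate since $\mu$ is determined by $\lambda$ and $\lambda_a$. This yields the bijection $\Lambda^+_a\,\iso\,\lambda_a+\Lambda^{\sharp,+}$, i.e.\ the asserted free rank-one module structure. The argument is routine; the only point that needs care is the Weyl-invariance of the pull-back of $\beta$, which is precisely where the $[G,G]$ simply-connected hypothesis intervenes.
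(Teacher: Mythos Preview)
Your proof is correct and, for part ii), essentially identical to the paper's. For part i) there is a stylistic difference worth noting: the paper argues in one line by invoking the root datum of $\check{G}_{\zeta}$ (since $\lambda\in\Lambda^{\sharp}$ is a weight of $\check{G}_{\zeta}$ and $\frac{\check{\alpha}_i}{\delta_i}$ is a coroot of $\check{G}_{\zeta}$, their pairing $\langle\lambda,\check{\alpha}_i\rangle/\delta_i$ is an integer), whereas you unpack this into the direct computation via the $W$-invariance identity $\bar\kappa(\lambda,\alpha_i)=\tfrac{\bar\kappa(\alpha_i,\alpha_i)}{2}\langle\lambda,\check{\alpha}_i\rangle$. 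The two are the same argument at different levels of abstraction: the paper's line relies on the construction of $\check{G}_{\zeta}$ in \cite{L1}, which itself rests on exactly the identity you wrote down. Your version is more self-contained; the paper's is quicker once the metaplectic dual group is in hand.

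One small correction to your closing remark: the $W$-invariance of the pull-back of $\beta$ to $\Lambda$ does not actually require $[G,G]$ simply-connected. It only uses that each simple coroot $\alpha_i$ maps to $0$ in $\Lambda_{ab}$, which holds for any connected reductive $G$ (the coroot lattice always lies in the kernel of $\Lambda\to\Lambda_{ab}$). So this particular lemma does not depend on that standing hypothesis.
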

\begin{proof}
i) Our claim follows from the fact that $\lambda$ is a weight of $\check{G}_{\zeta}$, and $\frac{\check{\alpha}_i}{\delta_i}$ is a coroot of $\check{G}_{\zeta}$.  \\
ii) Let $\lambda\in\Lambda^+_a$. Since $\lambda-\lambda_a\in\Lambda^{\sharp}$, we get $\<\lambda-\lambda_a, \check{\alpha}_i\>\in\delta_i\ZZ$ by i). Since $\<\lambda, \check{\alpha}_i\>\ge 0$, we get $\<\lambda-\lambda_a, \check{\alpha}_i\>\ge 0$. 
\end{proof}

\begin{Rem}
\label{Rem_order_for_G_semi-simple}
 i) If $G$ is semi-simple then we define an order on $\Lambda^+_a$ as follows. For $\lambda_1,\lambda_2\in\Lambda^+_a$ write $\lambda_1\prec\lambda_2$ if and only if $\lambda_2-\lambda_1\in \Lambda^+$ (the latter is also equivalent to $\lambda_2-\lambda_1\in \Lambda^{\sharp,+}$). Then $\Lambda^+_a$ is a free module of rank one over $\Lambda^{\sharp,+}$ if and only if there is a unique minimal element $\lambda_a$ in $\Lambda^+_a$ with respect to $\prec$. In general, $\Lambda^+_a$ is not a free module of rank one over $\Lambda^{\sharp,+}$.\\
ii) If $G$ is not semi-simple and  $\Lambda^+_a$ is a free module of rank one over $\Lambda^{\sharp,+}$ then $\lambda_a$ in Definition~\ref{Def_free_module_of_rank_one} is defined uniquely up to adding an element of $\Lambda^{\sharp}_0$. 
\end{Rem}

\ssec{Additional assumption}  For the rest of Section~\ref{Section_Simply-connectedness assumption} assume $[\check{G}_{\zeta}, \check{G}_{\zeta}]$ simply-connected. Under this assumption we can completely understand the structure of $\Whit^{\kappa, ss}_x$ as a module over $\Rep(\check{G}_{\zeta})$. Here, as in Section~\ref{section_5.3}, $\Whit^{\kappa, ss}_x\subset \Whit^{\kappa}_x$ denotes the full subcategory consisting of objects, which are finite direct sums of irreducible ones.

For $i\in \cJ$ pick a fundamental weight $\omega_i\in\Lambda^{\sharp, +}$ of $\check{G}_{\zeta}$ corresponding to the coroot $\frac{\check{\alpha}_i}{\delta_i}$. Note that $\<\omega_i, \check{\alpha}_i\>=\delta_i$. We get
$$
\Lambda^{\sharp}=\Lambda^{\sharp}_0\oplus (\oplus_{i\in \cJ} \,\ZZ\omega_i)\;\;\;\;\mbox{and}\;\;\;\; \Lambda^{\sharp,+}=\Lambda^{\sharp}_0\oplus (\oplus_{i\in \cJ} \,\ZZ_+\omega_i)
$$
\begin{Lm} 
\label{Lm_pr_is_surjective_for_bar_cM}
The map $\pr: \bar\cM\to\Lambda/\Lambda^{\sharp}$, $\lambda\mapsto \lambda+\Lambda^{\sharp}$ is surjective. If $\lambda\in \bar\cM$, and $a=\pr(\lambda)$ then the fibre of $\pr$ over $a$ is $\lambda+\Lambda^{\sharp}_0$.
\end{Lm}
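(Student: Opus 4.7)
\medskip

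The plan is to exploit the explicit basis $\{\omega_i\}_{i\in\cJ}$ of $\Lambda^\sharp/\Lambda^\sharp_0$ which comes from the simply-connectedness assumption on $[\check{G}_\zeta,\check{G}_\zeta]$. The key numerical input is that $\langle\omega_i,\check{\alpha}_j\rangle=\delta_i\delta_{ij}$ (Kronecker delta), so pairing with simple roots turns addition of $\omega_i$ into integer shifts by multiples of $\delta_i$ in independent coordinates.

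For the surjectivity, I would pick an arbitrary $\nu\in\Lambda$ and try to correct it into $\bar\cM$ by adding an element of $\Lambda^\sharp$. Concretely, for each $i\in\cJ$ let $n_i\in\ZZ$ be the unique integer with $0\le\langle\nu,\check{\alpha}_i\rangle-n_i\delta_i<\delta_i$, and set $\lambda=\nu-\sum_{i\in\cJ}n_i\omega_i$. Then $\lambda-\nu\in\bigoplus_i\ZZ\omega_i\subset\Lambda^\sharp$, so $\pr(\lambda)=\pr(\nu)$, while the pairing $\langle\lambda,\check{\alpha}_j\rangle=\langle\nu,\check{\alpha}_j\rangle-n_j\delta_j$ lies in $[0,\delta_j)$ for every $j$. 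This forces $\lambda\in\Lambda^+$ and $\lambda\in\bar\cM$ simultaneously.

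For the fibre description, I would take $\lambda,\lambda'\in\bar\cM$ with $\lambda-\lambda'\in\Lambda^\sharp$ and show $\lambda-\lambda'\in\Lambda^\sharp_0$. By Lemma~\ref{Lm_elements_of_barcM_are_minimal}(i), $\langle\lambda-\lambda',\check{\alpha}_j\rangle\in\delta_j\ZZ$ for every $j\in\cJ$. On the other hand, the defining inequalities of $\bar\cM$ give $\langle\lambda,\check{\alpha}_j\rangle,\langle\lambda',\check{\alpha}_j\rangle\in[0,\delta_j)$, so $\langle\lambda-\lambda',\check{\alpha}_j\rangle\in(-\delta_j,\delta_j)$. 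The only multiple of $\delta_j$ in this interval is $0$, whence $\langle\lambda-\lambda',\check{\alpha}_j\rangle=0$ for all $j\in\cJ$, i.e.\ $\lambda-\lambda'\in\Lambda^\sharp_0$. Conversely, any translate of $\lambda$ by an element of $\Lambda^\sharp_0$ is still in $\bar\cM$ since the defining pairings are unchanged.

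There is no real obstacle here: once the basis $\{\omega_i\}$ of $\Lambda^\sharp$ modulo $\Lambda^\sharp_0$ is available (which is precisely what the assumption $[\check{G}_\zeta,\check{G}_\zeta]$ simply-connected guarantees, together with the decomposition of $\Lambda^\sharp$ displayed just before the statement), both assertions reduce to a coordinate-by-coordinate division with remainder in $\ZZ$. The only point worth double-checking is that the corrected $\lambda$ lies in $\Lambda^+$ and not merely in $\Lambda$; this is automatic from the bound $\langle\lambda,\check{\alpha}_j\rangle\ge 0$ obtained by construction.
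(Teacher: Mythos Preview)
Your proof is correct and follows essentially the same approach as the paper. The paper's argument for surjectivity first notes $\Lambda=\Lambda^++\Lambda^{\sharp}$ to start from a dominant representative and then iteratively subtracts one $\omega_i$ at a time, whereas you do the whole correction in one step via division with remainder; your version is slightly more streamlined but the underlying idea is identical, and the fibre argument is verbatim the same.
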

\begin{proof} Note that $\Lambda=\Lambda^++\Lambda^{\sharp}$. Let $a\in\Lambda/\Lambda^{\sharp}$. Pick $\lambda\in\Lambda^+$ over $a$. If there is $i\in\cJ$ with $\<\lambda,\check{\alpha}_i\>\ge \delta_i$ then replace $\lambda$ by $\lambda-\omega_i$. We get $\lambda-\omega_i\in\Lambda^+$ and $\pr(\lambda-\omega_i)=a$. Continuing this procedure, one gets $\lambda\in \bar\cM$ with $\pr(\lambda)=a$. 

 If $\lambda, \lambda'\in \bar\cM$ with $\lambda-\lambda'\in\Lambda^{\sharp}$ then for any $i\in\cJ$, $\<\lambda-\lambda', \check{\alpha}_i\>\in \delta_i\ZZ$. So, for any $i\in\cJ$, $\<\lambda-\lambda', \check{\alpha}_i\>=0$ and $\lambda-\lambda'\in\Lambda^{\sharp}_0$. 
\end{proof}

Using Lemma~\ref{Lm_pr_is_surjective_for_bar_cM}, we pick for each $a\in \Lambda/\Lambda^{\sharp}$ an element $\lambda_a\in \bar\cM$ with $a=\pr(\lambda)$. Set $\cM=\{\lambda_a\mid a\in \Lambda/\Lambda^{\sharp}\}\subset \bar\cM$. The projection $\cM\to \Lambda/\Lambda^{\sharp}$ is bijective. From Theorem~\ref{Thm_Lusztig-Steinberg_analog} we now derive the following.
\index{$\cM, \omega_i$}
 
\begin{Cor} 
\label{Cor_free_module_over_Rep}
Assume $[\check{G}_{\zeta}, \check{G}_{\zeta}]$ simply-connected. Then $\Whit^{\kappa, ss}_x$ is a free module over $\Rep(\check{G})$ with base $\{\cF_{x, \lambda}\mid\lambda\in \cM\}$. \QED
\end{Cor}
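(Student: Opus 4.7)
The plan is straightforward: combine Theorem~\ref{Thm_Lusztig-Steinberg_analog} with the classification of irreducibles in $\Whit^{\kappa}_x$ to exhibit each $\cF_{x,\lambda}$ as $\H^{\ra}_G(\cA^{\gamma}_{\cE}, \cF_{x,\lambda_a})$ for a unique pair $(a,\gamma) \in (\Lambda/\Lambda^{\sharp}) \times \Lambda^{\sharp,+}$.

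First, I would recall from Lemma~\ref{Lm_irr_objects_of_Whit}(a) that the isomorphism classes of irreducible objects of $\Whit^{\kappa}_x$ are in bijection with $\Lambda^+$ via $\lambda \mapsto \cF_{x,\lambda}$, so that $\Whit^{\kappa,ss}_x$, as a semisimple abelian category, has $\{\cF_{x,\lambda}\}_{\lambda \in \Lambda^+}$ as a full set of simples. Similarly, under the metaplectic Satake equivalence $\PPerv^{\natural}_{G,\zeta} \,\iso\, \Rep(\check{G}_{\zeta})$ (Section~\ref{section_Metaplectic dual group}), the irreducible objects $\cA^{\gamma}_{\cE}$ with $\gamma \in \Lambda^{\sharp,+}$ correspond to the irreducible representations of $\check{G}_{\zeta}$. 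The content of the corollary is therefore to identify $\Lambda^+$ with $\cM \times \Lambda^{\sharp,+}$ compatibly with the Hecke action.

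Next, I would use Lemma~\ref{Lm_pr_is_surjective_for_bar_cM} (which is where the assumption that $[\check{G}_{\zeta},\check{G}_{\zeta}]$ is simply-connected enters, via the existence of the fundamental weights $\omega_i \in \Lambda^{\sharp,+}$) to choose, for each $a \in \Lambda/\Lambda^{\sharp}$, an element $\lambda_a \in \cM \subset \bar\cM$ with $\pr(\lambda_a)=a$. Given any $\lambda \in \Lambda^+$, set $a = \pr(\lambda)$ and $\mu = \lambda - \lambda_a \in \Lambda^{\sharp}$. By Lemma~\ref{Lm_elements_of_barcM_are_minimal}(i), $\<\mu, \check{\alpha}_i\> \in \delta_i \ZZ$ for every $i \in \cJ$; combined with $0 \le \<\lambda, \check{\alpha}_i\>$ and $\<\lambda_a, \check{\alpha}_i\> < \delta_i$ (since $\lambda_a \in \bar\cM$), this forces $\<\mu, \check{\alpha}_i\> \ge 0$, so $\mu \in \Lambda^{\sharp,+}$. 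Applying Theorem~\ref{Thm_Lusztig-Steinberg_analog} with $\lambda = \lambda_a$ and $\gamma = \mu$ yields
$$
\cF_{x,\lambda} \,\iso\, \H^{\ra}_G(\cA^{\mu}_{\cE},\cF_{x,\lambda_a}).
$$
This shows that every simple of $\Whit^{\kappa,ss}_x$ lies in the $\Rep(\check{G}_{\zeta})$-orbit of $\{\cF_{x,\lambda_a}\}_{a \in \Lambda/\Lambda^{\sharp}}$.

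Finally, for the freeness, I would verify that the map $(a, \mu) \mapsto \cF_{x,\lambda_a+\mu}$ from $(\Lambda/\Lambda^{\sharp}) \times \Lambda^{\sharp,+}$ to the set of simples of $\Whit^{\kappa,ss}_x$ is a bijection: if $\lambda_a + \mu = \lambda_{a'} + \mu'$ with $\mu,\mu' \in \Lambda^{\sharp,+}$, then $\lambda_a - \lambda_{a'} \in \Lambda^{\sharp}$, hence $a=a'$ by construction of $\cM$, and then $\lambda_a=\lambda_{a'}$ and $\mu=\mu'$. Combined with the preceding paragraph this expresses $\Whit^{\kappa,ss}_x$ as $\bigoplus_{a} \Rep(\check{G}_{\zeta})\cdot \cF_{x,\lambda_a}$, i.e.\ as a free $\Rep(\check{G}_{\zeta})$-module on the basis $\{\cF_{x,\lambda_a}\}_{a \in \Lambda/\Lambda^{\sharp}}$. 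There is no real obstacle beyond bookkeeping: the only subtle point, when $G$ is not semisimple, is that each $\lambda_a$ is only canonical up to $\Lambda^{\sharp}_0$ (Remark~\ref{Rem_order_for_G_semi-simple}(ii)), but since translation by $\Lambda^{\sharp}_0$ is absorbed into the $\Rep(\check{G}_{\zeta})$-action via Theorem~\ref{Thm_Lusztig-Steinberg_analog}, this ambiguity does not affect the statement.
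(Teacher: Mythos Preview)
Your argument is correct and is precisely what the paper intends: the corollary is marked \QED and introduced only by ``From Theorem~\ref{Thm_Lusztig-Steinberg_analog} we now derive the following,'' so the content is exactly the combination of Lemma~\ref{Lm_irr_objects_of_Whit}(a), Lemma~\ref{Lm_pr_is_surjective_for_bar_cM}, Lemma~\ref{Lm_elements_of_barcM_are_minimal}(ii), and Theorem~\ref{Thm_Lusztig-Steinberg_analog} that you spell out. The only minor redundancy is that your second paragraph re-derives Lemma~\ref{Lm_elements_of_barcM_are_minimal}(ii) rather than simply citing it.
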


\section{Examples of Kazhdan-Lusztig's type polynomials}  
\label{Section_Examples_KL_polynomials}
 
\ssec{Inductive structure} Pick $x\in X$. Recall for $\mu\in \Lambda$ the locally closed immersion $j_{x, \mu}: \wt\gM_{x, \mu}\hook{} \wt\gM_x$. Since the version of the twisted Whittaker category on $\wt\gM_{x, \mu}$ is semi-simple (cf. Section~\ref{Section_2.3}), for each $\mu\le\lambda$ with $\mu, \lambda\in\Lambda^+$ we get 
$$
j_{x,\mu}^*\cF_{x,\lambda}\,\iso\, \cF_{x, \mu}\otimes K^{\lambda}_{\mu},
$$
where $K^{\lambda}_{\mu}$ is a cohomologically graded $\Qlb$-vector space. 
\index{$K^{\lambda}_{\mu}$}
 
  We think of $K^{\lambda}_{\mu}$ as a version of Kazhdan-Lusztig's polynomials expressing the relation between the two bases in the Grothendieck group of $\Whit^{\kappa}_x$, the first constings of $\cF_{x,\lambda, !}$, the second constings of the irreducible objects. 

Let $M\subset G$ be a standard Levi subgroup. Then $M$ is equipped with the metaplectic data induced from those for $G$, so that we have the corresponding twisted Whittaker category $_M\Whit^{\kappa}_x$ for $M$, and its irreducible objects $_M\cF_{x,\lambda}$ for all $\lambda\in \Lambda^+_M$. Now for $\mu\le_M\lambda$ with $\mu,\lambda\in\Lambda^+_M$ one has 
$$
j_{x, \mu}^*(_M\cF_{x,\lambda})\,\iso\, {_M\cF_{x,\mu}}\otimes (_MK^{\lambda}_{\mu})
$$
as above. The multiplicity spaces $K^{\lambda}_{\mu}$ have the following inductive structure when passing from $G$ to $M$.
\index{$_M\Whit^{\kappa}_x, {_M\cF_{x,\lambda}}, \; {_MK^{\lambda}_{\mu}}$}

\begin{Pp} 
\label{Pp_inductive_structure}
Let $\mu,\lambda\in\Lambda^+$ with $\mu\le_M\lambda$. Then 
$K^{\lambda}_{\mu}\,\iso\, {_MK^{\lambda}_{\mu}}$ canonically. 
\end{Pp}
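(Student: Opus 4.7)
The plan is to reduce the computation of $K^{\lambda}_{\mu}$ to a purely Levi-local question by constructing an \'etale local model for a neighborhood of $\gM_{x,\mu}\subset\gM_x$ that matches the corresponding neighborhood in ${_M\gM_x}$. Once such a comparison is available and identifies $\cF_{x,\lambda}$ with ${_M\cF_{x,\lambda}}$ up to a shift and a twist that are absorbed into the identification of the basic Whittaker objects $\cF_{x,\mu}$ with ${_M\cF_{x,\mu}}$, the equality of stalk multiplicities at the stratum $\wt\gM_{x,\mu}$ will follow.

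Concretely, I would introduce the parabolic Drinfeld stack $\gM^P_x$ attached to the standard parabolic $P\supset B$ with Levi quotient $M$: a point is a point of $\gM_x$ equipped with a compatible $P$-reduction of the underlying $G$-torsor (so that for each $\check{\lambda}\in\check{\Lambda}^+$ orthogonal to $[M,M]$ the Pl\"ucker map $\kappa^{\check{\lambda}}$ factors through the corresponding extremal $P$-sub-bundle). This stack fits into a diagram
$$
{_M\gM_x}\;\getsup{q_M}\;\gM^P_x\;\toup{\iota_P}\;\gM_x,
$$
where $q_M$ remembers the $M$-data obtained by pushing the $P$-reduction to $M$, and $\iota_P$ forgets the $P$-structure. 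The key geometric claim is that, over the preimage of $\gM_{x,\le\lambda}$ with $\lambda-\mu\in\Lambda^{pos}_M$, the morphism $\iota_P$ is an open immersion (the $P$-reduction is uniquely reconstructed from the Pl\"ucker data once the defect at $x$ is of $M$-type), and $q_M$ is smooth with affine-space fibres of constant relative dimension coming from the unipotent radical of $P$.

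I would then check that all auxiliary structures transport correctly: the character $\chi_x:\cN^{mer}_x\to\A^1$ restricts along the natural inclusion $\cN^{mer}_{M,x}\hook{}\cN^{mer}_x$ to the analogous character for $M$, because the simple roots of $M$ form a subset of those of $G$ and the residue-of-sum-of-simple-roots definition is inherited; simultaneously the metaplectic data for $M$ is the one induced from $G$ by restricting $\bar\kappa$ to $\Lambda^{pos}_M$ (Remark~\ref{Rem_passing_to_G_semisimple}.i), so the gerbe $\gM^P_x$ inherits matches both $\wt\gM_x$ and ${_M\wt\gM_x}$ on the nose. Consequently both $\iota_P^*\cF_{x,\lambda}$ and $q_M^*({_M\cF_{x,\lambda}})$ lie in the twisted Whittaker category on $\gM^P_x$, are irreducible, and restrict to the same perverse sheaf on the open substack $\gM^P_{x,\lambda}$; by Lemma~\ref{Lm_irr_objects_of_Whit}(a) applied on $\gM^P_x$ they must coincide up to the shift accounting for $\dimrel(q_M)$. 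Applying $j_{x,\mu}^*$ on both sides and invoking $\iota_P$ an open immersion over the $\mu$-stratum then yields $K^{\lambda}_{\mu}\,\iso\,{_MK^{\lambda}_{\mu}}$.

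The main obstacle will be the precise geometric assertion that $\iota_P$ is an open immersion in a neighborhood of $\gM_{x,\mu}\times_{\gM_x}\gM_{x,\le\lambda}$ when $\mu\le_M\lambda$; this is the technical heart of the argument and rests on the fact that a defect of type $\mu$ with $\mu\le_M\lambda$ forces the $G$-Pl\"ucker data, for $\check{\lambda}$ with $\<\check{\lambda},\alpha_i\>=0$ for $i\in\cJ_M$, to have no zeros and hence to pin down a unique $P$-reduction over $X$ via the associated extremal lines. Once this geometric lemma is established, the characterization of IC-extensions in $\Whit^{\kappa}_x$ and in ${_M\Whit^{\kappa}_x}$ via their restrictions to the open strata $\gM_{x,\lambda}\,\iso\,{_M\gM_{x,\lambda}}$ (which agree since the open stratum only depends on the $T$-reduction type) completes the identification.
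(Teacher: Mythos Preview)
Your approach via a parabolic Drinfeld stack $\gM^P_x$ is different from the paper's, which instead passes directly to Zastava spaces. The paper observes that since $\lambda-\mu\in\Lambda^{pos}_M$, the natural map ${_M\cZ^{\mu}_{x,\le\lambda}}\to\cZ^{\mu}_{x,\le\lambda}$ is an isomorphism (this is essentially Remark~\ref{Rem_Levi}). Because the functor $F^\mu$ commutes with Verdier duality (via the factorization property), $F^\mu(\cF_{x,\lambda})$ is an irreducible perverse sheaf on $\wt\cZ^\mu_{x,\le\lambda}$, and the same holds for $M$; under the Zastava isomorphism these two IC sheaves coincide, and one reads off $K^\lambda_\mu\,\iso\,{_MK^\lambda_\mu}$ by restricting to the stratum $\wt\cZ^\mu_{x,\mu}:=\wt\cZ^\mu_x\times_{\gM_x}\gM_{x,\mu}$. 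This route is shorter because it replaces the infinite-type stacks $\gM_x$, ${_M\gM_x}$ by their finite-type Zastava models, where the comparison is a single geometric statement already available in the paper.

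Your strategy is in the same spirit --- the Zastava isomorphism in Remark~\ref{Rem_Levi} is morally the local avatar of the open-immersion claim you want for $\iota_P$ --- but it carries more overhead. The assertion that $q_M:\gM^P_x\to{_M\gM_x}$ is smooth with affine-space fibres needs justification over the boundary strata (the fibres are twists of $\Bun$-stacks for the unipotent radical of $P$, whose smoothness and dimension depend on the underlying $M$-torsor), and the open-immersion claim for $\iota_P$ should really be phrased as holding over the open locus of $\gM_{x,\le\lambda}$ where the Pl\"ucker sections for weights orthogonal to $[M,M]$ are sub-bundle maps, together with the separate check that this locus contains both $\gM_{x,\mu}$ and $\gM_{x,\lambda}$. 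None of this is wrong, but the paper's Zastava argument sidesteps all of it by working with the local model from the outset.
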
 
\begin{proof}
Consider the Zastava space $\cZ^{\mu}_{x,\le\lambda}$ from Section~\ref{section_4.1_Zastava} and its version $_M\cZ^{\mu}_{x,\le\lambda}$ for the Levi $M$. 
The natural map $_M\cZ^{\mu}_{x,\le\lambda}\to \cZ^{\mu}_{x,\le\lambda}$ is an isomorphism. From the factorization property it follows that
the functor $F^{\mu}$ commutes with the Verdier duality. So, $F^{\mu}(\cF_{x,\lambda})$ is an irreducible perverse sheaf on $\wt\cZ^{\mu}_{x,\le\lambda}$. So, it suffices to calculate the $*$-restriction of $F^{\mu}(\cF_{x,\lambda})$ to $\wt\cZ^{\mu}_{x, \mu}:=\wt\cZ^{\mu}_x\times_{\gM_x} \gM_{x, \mu}$.
\end{proof}

\ssec{} Let $i\in \cJ$ and $\lambda\in\Lambda^+$ with $\lambda-\alpha_i\in\Lambda^+$. 

\begin{Pp} Assume the subtop cohomology property is satisfied for $\varrho$. If $\<\lambda, \check{\alpha}_i\> \notin \delta_i\ZZ$ then $K^{\lambda}_{\lambda-\alpha_i}=0$.
\end{Pp}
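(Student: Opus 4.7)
First, I reduce to the case $[G,G]\,\iso\,\SL_2$. Since $\alpha_i\in\Lambda^{pos}_{M_i}$ where $M_i$ is the standard subminimal Levi containing $\alpha_i$, Proposition~\ref{Pp_inductive_structure} yields $K^{\lambda}_{\lambda-\alpha_i}\,\iso\, {_{M_i}K^{\lambda}_{\lambda-\alpha_i}}$. The integer $\delta_i$ depends only on $\bar\kappa(\alpha_i,\alpha_i)$ and $N$, hence is unchanged on passing to $M_i$. I therefore may assume $[G,G]\,\iso\,\SL_2$ and write $\alpha=\alpha_i$, $\mu=\lambda-\alpha$, $\delta=\delta_i$.

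Next, as in the proof of Proposition~\ref{Pp_inductive_structure}, $F^\mu(\cF_{x,\lambda})$ is the intermediate extension of an irreducible shifted rank-one local system from the open stratum $\cZ^\mu_{x,\lambda}\subset\cZ^\mu_{x,\le\lambda}$, and the identity $F^\mu(\cF_{x,\lambda})|_{\cZ^\mu_{x,\mu}}\,\iso\, K^\lambda_\mu\otimes F^\mu(\cF_{x,\mu})|_{\cZ^\mu_{x,\mu}}$ reduces the claim to showing that the $*$-stalk of $F^\mu(\cF_{x,\lambda})$ at the unique point of $\cZ^\mu_{x,\mu}$ lying over $\mu x\in X^\mu_{x,\le\lambda}$ vanishes. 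By the local description at $x$, this fibre is (a $T(\cO_x)$-twist of) $\ov\Gr^\lambda_B\cap\ov\Gr^\mu_{B^-}$, stratified into the one-dimensional open part $\Gr^\lambda_B\cap\Gr^\mu_{B^-}\subset\cZ^\mu_{x,\lambda}$ and the closed point $\{t^\mu\}=\Gr^\mu_B\cap\Gr^\mu_{B^-}\subset\cZ^\mu_{x,\mu}$. By Lemma~\ref{Lm_when_chi_lambda_0_is_dominant}, since the unique $\bar b\in B_{\gg}(\alpha)$ has $\phi_i(\bar b)=1\le\<\lambda,\check{\alpha}\>$, the Artin-Schreier factor $(\chi^\lambda_0)^*\cL_\psi$ is constant on this one-dimensional stratum; the entire question thus concerns the Kummer monodromy around the boundary $t^\mu$.

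The Kummer monodromy picks up two contributions. The first is the $T(\cO_x)$-equivariance of the discrepancy function $\gamma^\mu_\lambda$ through the character $\bar\kappa(\lambda-\mu)=\bar\kappa(\alpha)$ as in Section~\ref{section_4.12.4}. The second is the $\lambda$-dependent mismatch between the two natural trivializations of the gerbe $\wt\cZ^\mu$: the one pulled back from the trivialization of $\cF_{x,\lambda}$ on $\wt\gM_{x,\lambda}$ (via the line ${^{\omega}\cL^{\bar\kappa}}_{\Omega^\rho(-\lambda x)}$) and the Zastava/$X^\mu$-side trivialization (via ${^{\omega}\cL^{\bar\kappa}}_{\Omega^\rho(-\mu x)}$). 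By Lemma~\ref{Lm_fibre_of^omega_cL_bar_kappa}, the ratio of these two lines at $\mu x$ is, up to the $\lambda$-independent factor $\epsilon^{\bar\alpha}$, proportional to $(\Omega^{\frac{1}{2}}_x)^{\bar\kappa(\alpha,\alpha)-2\bar\kappa(\alpha,\lambda+\rho)}$. Combining the two contributions, the total Kummer exponent equals, modulo $N$, a nonzero rational multiple of $\bar\kappa(\alpha,\lambda)$. In the $\SL_2$ reduction one has $\bar\kappa(\alpha,\lambda)=\<\lambda,\check{\alpha}\>\cdot\bar\kappa(\alpha,\alpha)/2$, and $\delta$ is by definition the denominator of $\bar\kappa(\alpha,\alpha)/(2N)$, so $\bar\kappa(\alpha,\lambda)\in N\ZZ$ iff $\<\lambda,\check{\alpha}\>\in\delta\ZZ$. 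Hence, when $\<\lambda,\check{\alpha}\>\notin\delta\ZZ$, the Kummer local system has nontrivial monodromy at $t^\mu$, the IC stalk there vanishes, and $K^{\lambda}_{\lambda-\alpha_i}=0$.

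The main obstacle will be the careful bookkeeping of the two gerbe trivializations: ensuring that, after combining with the $T(\cO_x)$-character $\bar\kappa(\alpha)$, the total Kummer exponent comes out to precisely $\bar\kappa(\alpha,\lambda)\pmod N$ with the correct constant, so that the vanishing criterion matches exactly the stated condition $\<\lambda,\check{\alpha}\>\notin\delta\ZZ$.
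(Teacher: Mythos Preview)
Your overall strategy is sound and, once corrected, yields a cleaner argument than the paper's. The reduction to rank one via Proposition~\ref{Pp_inductive_structure} is right, and you correctly identify that $F^\mu(\cF_{x,\lambda})$ is the intermediate extension from $\cZ^\mu_{x,\lambda}$, so its stalk at $t^\mu$ is governed by the monodromy of the rank-one local system on the punctured fibre $\Gr_B^\lambda\cap\Gr_{B^-}^\mu\simeq\Gm$. You are also right that the Artin--Schreier factor is trivial on that fibre.

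The gap is in your Kummer bookkeeping. Your ``second contribution'' --- the ratio of the lines ${^{\omega}\cL^{\bar\kappa}}_{\Omega^\rho(-\lambda x)}$ and ${^{\omega}\cL^{\bar\kappa}}_{\Omega^\rho(-\mu x)}$ --- is a \emph{constant} one-dimensional vector space, independent of the point of the fibre; tensoring by it cannot change the monodromy around $0$. The entire vertical monodromy therefore comes from the discrepancy $\gamma^\mu_\lambda$ alone. By Lemma~\ref{Lm_invariance_under_translations_1} this agrees up to a constant with $\gamma^{-\alpha}_0\circ t^{-\lambda}$, and the explicit $\SL_2$ computation in the proof of Proposition~\ref{Pp_3.3.4} gives $\gamma^{-\alpha}_0(z)=z^d$ with $d=-\bar\kappa(\alpha,\alpha)/2$. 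Thus the monodromy exponent is $d$, \emph{not} $m=\bar\kappa(\alpha,\lambda)$; in particular it is $\lambda$-independent. (Your $T(\cO)$-equivariance observation encodes the relation $d\cdot\check\alpha=\bar\kappa(\alpha)$, which recovers $d$, not $m$.) The corrected argument still concludes: the hypothesis $\langle\lambda,\check\alpha_i\rangle\notin\delta_i\ZZ$ is non-vacuous only if $\delta_i>1$, which is exactly the condition $d\notin N\ZZ$, so the monodromy is nontrivial and the IC stalk at $t^\mu$ vanishes.

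The paper takes a different route. After the same rank-one reduction it invokes Theorem~\ref{Thm_objects_remain_irred} (really the decomposition of Theorem~\ref{Th_4.12.1}, which \emph{uses} the subtop cohomology property) to identify $(\pi^\mu)_! F^\mu(\cF_{x,\lambda})$ with the component $\cL^\mu_{x,\lambda}$, then computes $\cP^{\bar\kappa}\simeq\cO(-mx)$ on $X^\mu_{x,\le\lambda}\simeq X$ and uses $m\notin N\ZZ$ to show that both this total direct image and the open-stratum direct image have zero stalk at $x$; the triangle then forces the closed stalk to vanish. Your approach, once the exponent is corrected to $d$, is more direct and in fact does not require the subtop cohomology hypothesis at all.
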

\begin{proof} By Proposition~\ref{Pp_inductive_structure}, we may and will  assume $G$ of semi-simple rank one. Since $\cJ$ consists of one element, we suppress the index $i$ from the notation $\alpha_i$, $\delta_i$ and so on. 
 
 Consider the diagram 
$$
\gM_{x,\le \lambda}\,\getsup{'\gp} \,\cZ^{\lambda-\alpha}_{x,\le\lambda}\,\toup{\pi^{\lambda-\alpha}} \,X^{\lambda-\alpha}_{x,\le\lambda}\, .
$$
We have an isomorphism $X\;\iso\; X^{\lambda-\alpha}_{x,\le\lambda}$ sending $y$ to $\lambda x-\alpha y$. Pick a fundamental weight $\check{\omega}$ corresponding to $\alpha$. Then 
\begin{equation}
\label{line_bundle}
\pi^{\lambda-\alpha}: \cZ^{\lambda-\alpha}_{x,\le\lambda}\to X^{\lambda-\alpha}_{x,\le\lambda}
\end{equation}
is a line bundle over $X$ whose fibre over $y$ is $\Omega^{-1}(2\<\lambda, \check{\omega}\> -y)_y$. This is the total space of the line bundle $\cO(2\<\lambda, \check{\omega}\>x)$ over $X$. 

 For $\nu\in\Lambda$ set 
$$
\cZ^{\mu}_{x, \nu}=\cZ^{\mu}_x\times_{\gM_x} \gM_{x,\nu} \, .
$$ 
The open subscheme $\cZ^{\lambda-\alpha}_{x, \lambda}\subset \cZ^{\lambda-\alpha}_{x,\le\lambda}$ is the complement to the zero section of the above line bundle. 

 From Theorem~\ref{Thm_3.10.1} we see that $\bar F^{\lambda-\alpha}(\cF_{x,\lambda})$ is the extension by zero under $\cZ^{\lambda-\alpha}_{x,\le\lambda}\hook{} \ov{\cZ}^{\lambda-\alpha}_{x,\le\lambda}$. From Theorem~\ref{Thm_objects_remain_irred} we now derive 
$
(\pi^{\lambda-\alpha})_! F^{\lambda-\alpha}(\cF_{x, \lambda})\,\iso\, \cL_{x,\lambda}
$.  By Lemma~\ref{Lm_fibre_of^omega_cL_bar_kappa}, the line bundle $\cP^{\bar\kappa}$ is constant over $X-x\subset X\,\iso\, X^{\lambda-\alpha}_{x, \le\lambda}$ with fibre 
$$
(\Omega_x^{\frac{1}{2}})^{-\bar\kappa(\lambda, \lambda+2\rho)}\otimes \epsilon^{\bar\lambda}
$$ 
A trivialization of the latter line identifies $\cP^{\bar\kappa}$ over $X\,\iso\, X^{\lambda-\alpha}_{x, \le\lambda}$ with $\cO(-mx)$, where $m=\bar\kappa(\alpha, \lambda)$. 

 We have $m\notin N\ZZ$. Indeed, let $a\in\ZZ$ be such that $(a, \delta)=1$ and $\frac{\bar\kappa(\alpha, \alpha)}{2N}=\frac{a}{\delta}$. From the formula
(\ref{def_bar_kappa}) it follows that $\frac{m}{N}=\frac{a}{\delta} \<\check{\alpha}, \lambda\>\notin \ZZ$, because of our assumption $\<\lambda, \check{\alpha}\> \notin \delta\ZZ$. It follows that the $(\lambda-\alpha)$-component $\cL^{\lambda-\alpha}_{x, \lambda}$ of $\cL_{x,\lambda}\in \wt\FS^{\kappa}_x$ is the extension by zero under $X-x\hook{} X$. 
 
 The fibre of (\ref{line_bundle}) over $x\in X$ is $\cO(2\<\lambda, \check{\omega}\>)_x$. The restriction of 
$$
\cZ^{\lambda-\alpha}_{x, \lambda}\toup{'\gp} \gM_{x,\lambda}\toup{\ev_{x,\lambda}}\A^1
$$ 
to $\cO(2\<\lambda, \check{\omega}\>)_x-\{0\}$ is constant with value $0$. Write $f: \cZ^{\lambda-\alpha}_{x,\lambda}\to X^{\lambda-\alpha}_{x, \le\lambda}$ for the restriction of (\ref{line_bundle}). The fibre of $f_! F^{\lambda-\alpha}(\cF_{x,\lambda})$ at $x\in X$ also vanishes, because $m\notin N\ZZ$. Thus, the $*$-fibre of $F^{\lambda-\alpha}(\cF_{x,\lambda})$ at $0\in \cO(2\<\lambda, \check{\omega}\>)_x$ vanishes. 
\end{proof}

\begin{Rem} Let $\lambda\in\Lambda^{\sharp,+}$, $\mu\in\Lambda^+$ with $\mu\le\lambda$. In the notations of Section~\ref{subsection_CSh_basics} related to the Casselman-Shalika formula, we get 
\begin{equation}
\label{complex_K_generalized_Gauss_sum}
K^{\lambda}_{\mu}\,\iso\, \RG_c(\Gr_B^{-\mu}\cap \ov{\Gr}_G^{-w_0(\lambda)}, (\chi^{-\mu}_{\mu})^*\cL_{\psi}\otimes (s^{-\mu}_B)^*\cA^{-w_0(\lambda)}_{\cE})[-\<\mu, 2\check{\rho}\>]\, .
\end{equation}
For example, if $\alpha_i$ is a simple coroot and $\mu=\lambda-\alpha_i$ then $\Gr_B^{\alpha_i-\lambda}\cap \ov{\Gr}_G^{-w_0(\lambda)}\,\iso\,\A^1$ and (\ref{complex_K_generalized_Gauss_sum}) becomes the Gauss sum. So,  $K^{\lambda}_{\lambda-\alpha_i}\,\iso\, \Qlb[1]$ noncanonically and 
$$
\Ext^1(\cF_{x, \lambda-\alpha_i}, \cF_{x,\lambda})\,\iso\, \Qlb\, .
$$ 
Thus, we may think of (\ref{complex_K_generalized_Gauss_sum}) as a generalization of the Gauss sum. We also see that $\Whit^{\kappa}_x$ is not semi-simple.
\end{Rem}

\section{Hecke functors on $\wt\FS^{\kappa}_x$}
\label{section_H_functors_for_FS}

\ssec{} 
\label{section_Hecke_functors_for_FS}
In this section we define an action of $\Rep(\check{T}_{\zeta})$ on $\wt\FS^{\kappa}_x$ by Hecke functors. The main result of this section is Theorem~\ref{Thm_functoriality} showing that $\ov{\FF}: \Whit^{\kappa}_x\to \FS^{\kappa}_x$ commutes with the actions of Hecke functors. 

Pick $x\in X$. Define an action of $\Rep(\check{T}_{\zeta})$ on $\wt\FS^{\kappa}_x$ by Hecke functors as follows. The definition is analogous to that of (\cite{L3}, Section~5.2.3). 

By (\cite{L3}, Proposition~4.1), for $\mu\in\Lambda$, $\cF\in\Bun_T$ we have canonically 
\begin{equation}
\label{iso_for_Sect_5.1}
(^{\omega}\cL^{\bar\kappa})_{\cF(\mu x)}\,\iso\, (^{\omega}\cL^{\bar\kappa})_{\cF}\otimes (\cL^{\bar\kappa(\mu)}_{\cF})_x\otimes \cL^{\bar\kappa}_{\cO(\mu x)}
\end{equation}
The section $\gt^0_{\EE}: \Lambda^{\sharp}\to  V_{\EE}$ picked in Section~\ref{section_Additional input data} yields for each $\mu\in\Lambda^{\sharp}$ a trivialization of $\cL^{\bar\kappa}_{\cO(\mu x)}$. 

For $\mu\in\Lambda^{\sharp}$, $\lambda\in \Lambda$ let $m_{\mu}: \wt X^{\lambda}_x\to \wt X^{\lambda+\mu}_x$ be the map sending $(D, \cU)$ together with $\cU^N\,\iso\, \cP^{\bar\kappa}_D$ to $(D+\mu x, \cU')$, where 
$$
\cU'=\cU\otimes ((\cL^{-\bar\kappa(\mu)/N})_{\Omega^{\rho}(-D)})_x
$$ 
with the isomorphism $\cU'^N\,\iso\, \cP^{\bar\kappa}_{D+\mu x}$ induced by (\ref{iso_for_Sect_5.1}) and $\gt^0_{\EE}$. The map $m_{\mu}$ is an isomorphism.
\index{$m_{\mu}, \H^{\ra}$}

For $\mu\in\Lambda^{\sharp}$ write $\Qlb^{\mu}\in \Rep(\check{T}_{\zeta})$ for $\Qlb$ on which $\check{T}_{\zeta}$ acts by $\mu$. Let 
$$
\H^{\ra}: \Rep(\check{T}_{\zeta})\times \wt\FS^{\kappa}_x\to \wt\FS^{\kappa}_x
$$ 
be the functor commuting with direct sums in $\Rep(\check{T}_{\zeta})$ and such that $\H^{\ra}(\Qlb^{\mu}, \cdot):  \wt\FS^{\kappa}_x\to \wt\FS^{\kappa}_x$ is the functor $(m_{\mu})_*$. 

An object $\cL\in \wt\FS^{\kappa}_x$ is a collection $\cL=\{\cL^{\nu}\}_{\nu\in\Lambda}$. Then $(m_{\mu})_*\cL$ is understood as the collection such that for any $\nu\in\Lambda$, its $\nu$-th component is $(m_{\mu})_*\cL^{\nu-\mu}$.

For $\mu\in \Lambda^{\sharp}, \lambda\in\Lambda$ one has 
\begin{equation}
\label{action_of_Hecke_on_objects_of_FS}
\H^{\ra}(\Qlb^{\mu}, \cL_{x,\lambda})\,\iso\, \cL_{x,\lambda+\mu}\;\;\;\;\mbox{and}\;\;\;\;\H^{\ra}(\Qlb^{\mu}, \cL_{x,\lambda,*})\,\iso\, \cL_{x,\lambda+\mu,*} \; .
\end{equation}
As in (\cite{ABBGM}, Section~1.1.7), we may call $\cL_{x,\lambda,*}\in \wt\FS^{\kappa}_x$ \select{the baby co-Verma module} of highest weight $\lambda$. 

 Recall that $\FS^{\kappa}_x\subset \wt\FS^{\kappa}_x$ is the full subcategory of objects of finite length. For $V\in \Rep(\check{T}_{\zeta})$ the functor $\H^{\ra}(V,\cdot)$ preserves the subcategory $\FS^{\kappa}_x$. 
 
\begin{Lm} Assume $\varrho$ satisfies the subtop cohomology property. Then $\ov{\FF}$ sends $\Whit^{\kappa}_x$ to $\FS^{\kappa}_x$.
\end{Lm}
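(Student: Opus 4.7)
The plan is to reduce, via the exactness statement of Proposition~\ref{Pp_ovFF_is_exact}, to the case of an irreducible object in $\Whit^{\kappa}_x$. First I would observe that any $\cF \in \Whit^{\kappa}_x$ is by definition a twisted perverse sheaf on the ind-stack $\wt\gM_x$, so it is supported on some $\wt\gM_{x,\le\lambda}$ of finite type. Perverse sheaves on a stack of finite type have a finite composition series, and by Lemma~\ref{Lm_irr_objects_of_Whit}(a) the simple subquotients appearing in $\cF$ are among the $\cF_{x,\lambda'}$ with $\lambda' \le \lambda$. Since Theorem~\ref{Thm_main_functor_ovFF} gives a well-defined exact functor $\ov{\FF}: \Whit^{\kappa}_x \to \wt\FS^{\kappa}_x$, and since the full subcategory $\FS^{\kappa}_x \subset \wt\FS^{\kappa}_x$ of objects of finite length is stable under subquotients and extensions, it is enough to treat the case $\cF = \cF_{x,\lambda}$ for an arbitrary $\lambda \in \Lambda^+$.

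For this I would apply Theorem~\ref{Th_4.12.1}, which, under the subtop cohomology property, gives the explicit decomposition
$$
\ov{\FF}(\cF_{x,\lambda}) \;\iso\; \mathop{\bigoplus}\limits_{\mu \le \lambda,\; \lambda-\mu \in \Lambda^{\sharp}} \cL_{x,\mu} \otimes V^{\lambda}_{\mu}
$$
in $\wt\FS^{\kappa}_x$. Each $\cL_{x,\mu}$ is irreducible, so it remains to check that only finitely many $\mu$ contribute and that each $V^{\lambda}_{\mu}$ is finite-dimensional. The canonical basis of $V^{\lambda}_{\mu}$ is, by construction, a subset of $\Irr(\Gr^{\lambda}_{B,x} \cap \Gr^{\mu}_{B^-,x})$ consisting of components $b \subset \ov{\Gr}^{\lambda}_{G,x}$; by Lemma~\ref{Lm_Andersen_again} such a $b$ exists only if $\mu$ is a weight of the irreducible $\check{G}$-representation $\VV^{\lambda}$, whence $w_0(\lambda) \le \mu \le \lambda$. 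The set of such $\mu$ is finite, and $\Gr^{\lambda}_{B,x} \cap \Gr^{\mu}_{B^-,x}$ has finitely many irreducible components for each, so each $V^{\lambda}_{\mu}$ is finite-dimensional.

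Combining the two steps, $\ov{\FF}(\cF_{x,\lambda})$ is a finite direct sum of irreducibles in $\wt\FS^{\kappa}_x$, and in particular of finite length, so it lies in $\FS^{\kappa}_x$. In effect there is no real obstacle to the argument: all of the geometric content is already packaged in Theorem~\ref{Th_4.12.1} (finiteness of the decomposition) and Proposition~\ref{Pp_ovFF_is_exact} (exactness), and the remaining work is the formal reduction to simples together with the trivial observation that $\VV^{\lambda}$ is finite-dimensional. The only point that might warrant a few lines of care in the written proof is the assertion that objects of $\Whit^{\kappa}_x$ have finite length, which I would justify by the support argument above combined with the Serre property of $\Whit^{\kappa}_x$ inside the category of twisted perverse sheaves on $\wt\gM_x$.
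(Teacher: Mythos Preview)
Your proposal is correct and follows the same approach as the paper, whose proof is the single line ``This follows from Theorem~\ref{Th_4.12.1}, since a perverse sheaf is of finite length.'' You have simply unpacked this: the finite-length reduction to irreducibles and the explicit check (via Lemma~\ref{Lm_Andersen_again}) that the direct sum in Theorem~\ref{Th_4.12.1} is finite are exactly the content the paper compresses into that sentence.
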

\begin{proof} This follows from Theorem~\ref{Th_4.12.1}, since a perverse sheaf is of finite length.
\end{proof}

Write $\Res^{\check{G}_{\zeta}}_{\check{T}_{\zeta}}$ for the restriction functor $\Rep(\check{G}_{\zeta})\to \Rep(\check{T}_{\zeta})$. Recall the twisted Satake equivalence $\Rep(\check{G}_{\zeta})\,\iso\, \PPerv_{G,\zeta}^{\natural}$ from Section~\ref{section_Metaplectic dual group}. The following is an analog of (\cite{ABBGM}, Proposition~1.1.11). 
\index{$\Res^{\check{G}_{\zeta}}_{\check{T}_{\zeta}}$}

\begin{Thm} 
\label{Thm_functoriality}
Assume $\varrho$ satisfies the subtop cohomology property. 
There is an isomorphism in $\FS^{\kappa}_x$
functorial in $V\in \Rep(\check{G}_{\zeta})$ and $K\in \Whit^{\kappa}_x$
$$
\ov{\FF}(\H^{\ra}_G(V, K))\,\iso\, \H^{\ra}(\Res^{\check{G}_{\zeta}}_{\check{T}_{\zeta}}(V), \ov{\FF}(K))\, .
$$
\end{Thm}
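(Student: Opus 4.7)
The plan is to realize both sides geometrically on the compactified Zastava stack and then reduce the statement, by a weight-decomposition argument, to the Hecke behavior of the sheaf $\IC_\zeta$ on $\Bunb_{B^-}$. First I would introduce the $G$-Hecke correspondence over $\ov{\cZ}^{\mu}_x$ at the point $x$: let $\bar Z$ be the stack classifying pairs of points of $\ov{\cZ}^{\mu'}_x\times \ov{\cZ}^{\mu}_x$ whose underlying $G$-torsors differ by a modification at $x$, compatibly with both the Pl\"ucker data and the generalized $B^-$-reduction. By construction there are maps $\bar Z\to Z$ (the Hecke stack of Section~\ref{section_5.2_action_on_D_zeta}) and $\bar Z\to \cH_{\tilde G}\times_{\Bunt_G}\Bunb_{\tilde B^-}$, and $\bar F^{\mu}(\H^{\ra}_G(V,K))$ is computed as a pull--push through $\bar Z$ using the twisted product $(K\tboxtimes\tau^0(V))^r$ on the $\gM_x$-factor and $\IC_\zeta$ on the $\Bunb_{B^-}$-factor.

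Second, the key ingredient is the following Hecke-decomposition property of $\IC_\zeta$: convolution of $\IC_\zeta$ with a spherical object $\tau^0(V)$ on the $G$-side decomposes, after pushing forward to the relative $T$-Hecke stack, as $\bigoplus_{\nu} V_\nu\otimes (\text{shift by $\nu x$ on }\Bunb_{B^-})$, the sum running over $\check{T}_\zeta$-weights $\nu$ of $V$ with multiplicities given by the weight spaces $V_\nu$. Here the shift is the operation $(\cF,\cF_T,\kappa^-)\mapsto(\cF,\cF_T(-\nu x),\kappa^-)$, and the gerbe-twisting is handled via (\ref{iso_for_Sect_5.1}) and the chosen section $\gt^0_{\EE}$ from Section~\ref{section_Additional input data}, exactly as in the definition of the target Hecke functor $\H^{\ra}$ on $\wt\FS^\kappa_x$ in Section~\ref{section_Hecke_functors_for_FS}. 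This is a metaplectic analog of the centrality of the BFGM $\IC$-sheaf and can be extracted from the stratification computation of Theorem~\ref{Thm_3.10.1} combined with the twisted Satake equivalence of Section~\ref{section_Metaplectic dual group}.

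Third, granting this Hecke decomposition, the pull--push through $\bar Z$ factors through the weight decomposition: the $\nu$-summand contributes, after applying $(\bar\pi^{\mu'})_!$ with $\mu'=\mu-\nu$, a copy of $V_\nu\otimes (m_\nu)_*\ov{\FF}(K)$, because the addition map $AJ\comp \bar\pi$ intertwines the $T$-Hecke action on $\Bunb_{B^-}$ with translation by $\nu x$ on $X^\mu_x$. Summing over $\nu$ and comparing with the definition of $\H^{\ra}(\Res^{\check{G}_\zeta}_{\check{T}_\zeta}(V),\ov{\FF}(K))$ via $(m_\nu)_*$ yields the stated isomorphism, and functoriality in $V$ and $K$ is automatic from the functoriality of the twisted Satake equivalence and of the pull--push.

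The main obstacle will be the Hecke decomposition of $\IC_\zeta$ in the precise twisted form required: the non-twisted analogue is known from (BFGM), but here one must track the interaction between the $G$-modification at $x$, the gerbe $\Bunb_{\tilde B^-}\to\Bunb_{B^-}$, and the shift by $\nu x$ on the $T$-factor, which is sensitive to the metaplectic datum $\bar\kappa$ and to the section $\gt^0_\EE$. Carrying this out requires a careful local analysis at $x$, using Theorem~\ref{Thm_3.10.1} stratum by stratum and the ULA properties from Section~\ref{section_Generalizing_ULA} and Proposition~\ref{Pp_Dennis_helped} to justify commuting the Hecke convolution past the proper direct image $\bar\pi^\mu_!$.
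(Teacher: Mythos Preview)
Your approach is essentially the paper's. Two clarifications are worth making. First, what you flag as the ``main obstacle'' --- the twisted Hecke decomposition of $\IC_\zeta$ --- is not something to be extracted from Theorem~\ref{Thm_3.10.1} stratum by stratum: it is already proved in \cite{L2} and quoted here as Proposition~\ref{Pp_5.1.7_on_Eis} (the commutation of twisted principal geometric Eisenstein series with Hecke functors). So the proof is much shorter than you anticipate. Second, the ULA input of Section~\ref{section_Generalizing_ULA} and Proposition~\ref{Pp_Dennis_helped} is not used here; the argument is a direct proper base change on a cartesian square relating your $\bar Z$ (the paper's $\cY^{\mu,-w_0(\lambda)}$) to $Y^{-w_0(\lambda)}\to {_{x,\infty}\Bunb_{\tilde B^-}}$, with $\phi_!$ already computed by Proposition~\ref{Pp_5.1.7_on_Eis}.

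One small imprecision in your setup: after applying the $G$-modification at $x$ to a point of $\ov{\cZ}^{\mu}_x$, the maps $\kappa^{\check\lambda,-}$ may acquire poles at $x$, so the target of your map is not another $\ov{\cZ}^{\mu'}_x$ but the enlarged stack $\uZ^{\mu}_x$ introduced in Section~\ref{Section_512}, where $\kappa^-$ is allowed poles at $x$. The shifts $\tilde i_\nu$ then land back in the closed substacks $_{x,\ge\nu}\Bunb_{\tilde B^-}$, which is how the weight decomposition matches the maps $m_\nu$ on $\wt X^\mu_x$.
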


 The proof is given in Sections~\ref{Section_512}-\ref{Section_proof_Thm_functoriality}.

\sssec{} 
\label{Section_512}
For $\mu\in\Lambda$ denote by $_{x,\infty}\Bunb^{\mu}_{B^-}$ the stack classifying $\cF\in\Bun_G$, $\cF_T\in\Bun_T$ with $\deg\cF_T=(2g-2)\rho-\mu$, and a collection of nonzero maps of coherent sheaves for $\check{\lambda}\in\check{\Lambda}^+$
$$
\kappa^{\check{\lambda},-}: \cV^{\check{\lambda}}_{\cF}\to \cL^{\check{\lambda}}_{\cF_T}
$$
over $X-x$ satisfying the Pl\"ucker relations. For $\nu\in\Lambda$ we define the diagram 
$$
_{x,\nu}\Bunb^{\mu}_{B^-}\hook{}
{_{x,\ge\nu}\Bunb^{\mu}_{B^-}}\hook{} {_{x,\infty}\Bunb^{\mu}_{B^-}},
$$ 
where $_{x,\ge\nu}\Bunb^{\mu}_{B^-}$ is
the closed substack given by the property that for each $\check{\lambda}\in \check{\Lambda}^+$ the map
\begin{equation}
\label{map_kappa^-_with_poles}
\kappa^{\check{\lambda},-}: \cV^{\check{\lambda}}_{\cF}(\<\nu, \check{\lambda}\>x)
\to \cL^{\check{\lambda}}_{\cF_T}
\end{equation}
is regular over $X$. Further, $_{x,\nu}\Bunb^{\mu}_{B^-}$ is the open substack of $_{x,\ge\nu}\Bunb^{\mu}_{B^-}$ given by the property that for $\check{\lambda}\in \check{\Lambda}^+$ the map (\ref{map_kappa^-_with_poles}) has no zeros on $X$. 
\index{$_{x,\infty}\Bunb^{\mu}_{B^-}$}\index{${_{x,\nu}\Bunb^{\mu}_{B^-}}$}%
\index{${_{x,\ge\nu}\Bunb^{\mu}_{B^-}}$}

\sssec{} We derive Theorem~\ref{Thm_functoriality} essentially from the fact that the formation of the principal geometric Eisenstein series in the twisted setting commutes with Hecke functors, more precisely, from the following reformulation of (\cite{L2}, Proposition~3.2) with $B$ replaced by $B^-$. 

 Recall the stack $\Bunb_{\tilde B^-}$ defined in Section~\ref{section_4.6_for_introducing_IC_zeta}. The stack $_{x,\infty}\Bunb_{\tilde B^-}$ is defined similarly. Write a point of $_{x,\infty}\Bunb_{\tilde B^-}$ as a collection $(\cF, \cF_T, \kappa^-, \cU, \cU_T)$, where $\cU, \cU_T$ are the lines equipped with isomorphisms
$$
\cU^N\,\iso\, (^{\omega}\cL^{\bar\kappa})_{\cF},\;\;\;\; 
\cU_T^N\,\iso\, (^{\omega}\cL^{\bar\kappa})_{\cF_T}\, .
$$
\index{$_{x,\infty}\Bunb_{\tilde B^-}$}%
For $\lambda\in\Lambda^+$ the stack $_x\cH^{\lambda}_{\tilde G}$ defined in Section~\ref{section_543} classifies $(\cF, \cF', \cF\,\iso\,\cF'\mid_{X-x}, \cU, \cU')$ with $\cF'$ at the position $\le\lambda$ with respect to $\cF$ at $x$. Here $\cU, \cU'$ are lines equipped with
\begin{equation}
\label{lines_cU_and_cU'}  
\cU^N\,\iso\, (^{\omega}\cL^{\bar\kappa})_{\cF}, \;\; \cU'^N\,\iso\, (^{\omega}\cL^{\bar\kappa})_{\cF'}
\end{equation}

Assume $\lambda\in\Lambda^{\sharp, +}$. Let $Y^{-w_0(\lambda)}={_x\cH^{-w_0(\lambda)}_{\tilde G}}\times_{\Bunt_G} \Bunb_{\tilde B^-}$, where we used the map $\tilde h^{\la}_G: {_x\cH^{-w_0(\lambda)}_{\tilde G}}\to\Bunt_G$ to define the fibred product. As in  Section~\ref{section_5.2_action_on_D_zeta}, the fibration $h^{\la}: Y^{-w_0(\lambda)}\to \Bunb_{\tilde B^-}$ yields the twisted exterior product $\cK:=(\cA^{-w_0(\lambda)}_{\cE}\tboxtimes \IC_{\zeta})^l$ on $Y^{-w_0(\lambda)}$. Here $\IC_{\zeta}$ is the perverse sheaf on $\Bunb_{\tilde B^-}$ defined in Section~\ref{section_4.6_for_introducing_IC_zeta}. 

 Denote by
$$
\phi: Y^{-w_0(\lambda)}\to {_{x,\ge -\lambda}\Bunb_{\tilde B^-}}
$$ 
the map sending $(\cF,\cF',  \cF\,\iso\,\cF'\mid_{X-x}, \cU, \cU')\in {_x\cH^{-w_0(\lambda)}_{\tilde G}}$, $(\cF,\cF_T, \kappa^-, \cU, \cU_T)\in \Bunb_{\tilde B^-}$ to 
$$
(\cF', \cF_T, \kappa^-, \cU', \cU_T)
$$ 

 For $\nu\in\Lambda^{\sharp}$ define the isomorphism $\tilde i_{\nu}: \Bunb_{\tilde B^-}\,\iso\, {_{x,\ge\nu}\Bunb_{\tilde B^-}}$ sending $(\cF, \cF_T, \kappa^-, \cU, \cU_T)$ to $(\cF, \cF_T(\nu x), \cU, \bar\cU_T)$, where 
$$
\bar\cU_T=\cU_T\otimes (\cL^{\bar\kappa(\nu)/N}_{\cF_T})_x
$$ 
is equipped with the isomorphism $\bar\cU_T^N\,\iso\, (^{\omega}\cL^{\bar\kappa})_{\cF_T(\nu x)}$ obtained from (\ref{iso_for_Sect_5.1}) and $\gt^0_{\EE}$. 
\index{$\tilde i_{\nu}$}

 Recall that $V(\lambda)$ denotes the irreducible $\check{G}_{\zeta}$-representation with highest weight $\lambda$, and $V(\lambda)_{\nu}$ its $\check{T}_{\zeta}$-weight space corresponding to $\nu$. 

\begin{Pp}[\cite{L2}, Proposition~3.2] 
\label{Pp_5.1.7_on_Eis}
For $\lambda\in\Lambda^{\sharp, +}$ there is an isomorphism 
$$
\phi_!((\cA^{-w_0(\lambda)}_{\cE}\tboxtimes \IC_{\zeta})^l)\,\iso\, \mathop{\oplus}\limits_{\nu\ge -\lambda}  (\tilde i_{\nu})_!\IC_{\zeta}\otimes V(\lambda)_{-\nu}
$$
\end{Pp}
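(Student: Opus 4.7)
The plan is to combine the twisted geometric Satake equivalence of \cite{L1} with the description of $\IC_\zeta$ from Theorem~\ref{Thm_3.10.1}. This proposition is essentially (\cite{L2}, Proposition~3.2) restated for $G$ with $[G,G]$ simply-connected rather than $G$ simple simply-connected; the argument of \emph{loc.\ cit.} carries over with only bookkeeping changes, and I sketch it below.

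First, I would stratify $Y^{-w_0(\lambda)}$ by the relative position at $x$ between the modified bundle $\cF'$ and the generic $B^-$-reduction of $\cF$. For $\nu\in\Lambda$ let $Y^{-w_0(\lambda)}_\nu\subset Y^{-w_0(\lambda)}$ be the locally closed substack where this relative position is exactly $\nu$; it is nonempty precisely when the double Schubert intersection $\Gr^\nu_{B^-,x}\cap\Gr^{-w_0(\lambda)}_{G,x}$ is nonempty, i.e.\ $-\lambda\le\nu\le\lambda$ and $-w_0(\lambda)-\nu\in\Lambda^{pos}$. By construction $\phi(Y^{-w_0(\lambda)}_\nu)$ lies in the image of $\tilde i_\nu$, so the $!$-pushforward $\phi_!\cK$ decomposes naturally according to $\nu$.

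Second, over the open substack $\Bun_{\tilde B^-}\subset\Bunb_{\tilde B^-}$, after smooth base change trivialising the relevant $T(\cO_x)$-torsors, the restriction of $\phi$ to $Y^{-w_0(\lambda)}_\nu$ becomes a fibration with typical fibre $\Gr^\nu_{B^-,x}\cap\ov{\Gr}^{-w_0(\lambda)}_{G,x}$. The twisted MV isomorphism (\cite{L1}, Lemma~3.2) identifies the top cohomology of this intersection with coefficients in $(s^\nu_{B^-})^*\cA^{-w_0(\lambda)}_\cE$ with the weight space $V(\lambda)_{-\nu}$ of the irreducible $\check G_\zeta$-representation $V(\lambda)$; equidimensionality together with $T(\cO_x)$-equivariance ensure only the top degree contributes. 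Hence $(\phi_!\cK)|_{\Bun_{\tilde B^-}}$ identifies with $\bigoplus_{\nu\ge-\lambda}(\tilde i_\nu)_!\IC(\Bun_{\tilde B^-})\otimes V(\lambda)_{-\nu}$.

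Third, to extend this identification from $\Bun_{\tilde B^-}$ to the full Drinfeld compactification, I would bound the defect by $\theta\in\Lambda^{pos}$ and invoke the ULA property of Proposition~\ref{Pp_Dennis_helped} on the truncations $_{\le\theta}\Bunb^\mu_{\tilde B^-}$; this guarantees that $\phi_!\cK$ is compatible with $!$-restriction to the $_{\gU(\theta)}\Bunb_{\tilde B^-}$ strata. Using Theorem~\ref{Thm_3.10.1} to identify $\IC_\zeta$ stratum by stratum with $\Loc^{\gU(\theta)}_{\Bun_B,\zeta}(\mathop{\oplus}\limits_{i\ge 0}\Sym^i(\check\gu^-_\zeta)[2i])$, a direct comparison shows that the two sides of the asserted isomorphism have the same $*$-restrictions to every $_{\gU(\theta)}\Bunb_{\tilde B^-}$, which by the decomposition theorem and semi-simplicity of the $!*$-extension of a shifted local system forces them to be isomorphic.

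The main obstacle --- and the whole reason for introducing $\gt^0_\EE$ in Section~\ref{section_Additional input data} --- will be the bookkeeping of gerbe data: matching, for each $\nu\in\Lambda^\sharp$, the canonical identification of lines coming from the Hecke modification at $x$ via~(\ref{iso_for_Sect_5.1}) with the twist built into the definition of $\tilde i_\nu$. Without a choice of section of (\ref{ext_V_EE_of_Lambda}) over $\Lambda^\sharp$ compatible with $\gt_\EE$, each weight space $V(\lambda)_{-\nu}$ would come only canonically tensored with the corresponding fibre of the metaplectic gerbe, and the decomposition would fail to be canonical in $\lambda$.
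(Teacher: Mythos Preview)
The paper gives no proof of this proposition; it is recorded as a reformulation of \cite{L2}, Proposition~3.2 (with $B$ replaced by $B^-$), and is then used as a black box in the proof of Theorem~\ref{Thm_functoriality}. You correctly identify this.

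Your sketch follows the standard strategy of \cite{BG} and \cite{L2}: stratify by relative position $\nu$, compute over the open locus $\Bun_{\tilde B^-}$ via the twisted Mirkovi\'c--Vilonen description, then extend across the Drinfeld compactification. One point deserves tightening. In your Step~3, having the same $*$-restrictions on every stratum $_{\gU(\theta)}\Bunb_{\tilde B^-}$ does not by itself yield an isomorphism of complexes, even granting the decomposition theorem. The argument in \cite{L2} runs differently: since $\phi$ is proper and $\cK$ is a twisted exterior product of IC sheaves (hence pure), the decomposition theorem writes $\phi_!\cK$ as a direct sum of shifted simple perverse sheaves on $_{x,\infty}\Bunb_{\tilde B^-}$; the stratification of the target forces each summand to be of the form $(\tilde i_\nu)_!\IC_\zeta$ up to shift; the shifts and multiplicities are then read off by restricting to the open substacks $_{x,\nu}\Bun_{\tilde B^-}$, where your Step~2 (the twisted MV computation giving $V(\lambda)_{-\nu}$) applies directly. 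Theorem~\ref{Thm_3.10.1} and Proposition~\ref{Pp_Dennis_helped} are not the engines of this particular argument --- they serve other purposes in the paper --- so invoking them here is a detour rather than a shortcut.
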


\sssec{} Let 
$$
\uZ^{\mu}_x\subset \gM_x\times_{\Bun_G}\, {_{x,\infty}\Bunb^{\mu}_{B^-}}
$$ 
be the open substack given by the property that for each $\check{\lambda}\in\check{\Lambda}^+$ the composition
$$
\Omega^{\<\rho, \check{\lambda}\>}\,\toup{\kappa^{\check{\lambda}}} \,\cV^{\check{\lambda}}_{\cF}\,\toup{\kappa^{\check{\lambda},-}} \,\cL^{\check{\lambda}}_{\cF_T},
$$
which is regular over $X-x$, is not zero. So, $\ov{\cZ}^{\mu}_x\hook{} \uZ^{\mu}_x$ is a closed ind-substack. We get the diagram
$$
\begin{array}{cccc}
\gM_x  \getsup{'\und{\gp}}& \uZ^{\mu}_x & \toup{'\und{\gp}_B} &  {_{x,\infty}\Bunb^{\mu}_{B^-}}\\
& \downarrow\lefteqn{\scriptstyle \und{\pi}^{\mu}} && \downarrow\lefteqn{\scriptstyle \bar\gq^-}\\
& X^{\mu}_x & \toup{AJ} & \Bun_T,
\end{array}
$$
where $'\und{\gp}$ and $'\und{\gp}_B$ are the projections. Here $\und{\pi}^{\mu}$ sends $(\cF, \kappa, \kappa^-, \cF_T)\in \uZ^{\mu}_x$ to $D$ such that $\kappa^-\comp\kappa$ induces an isomorphism $\Omega^{\rho}(-D)\,\iso\, \cF_T$. 
\index{$\uZ^{\mu}_x, {'\und{\gp}}, {'\und{\gp}_B}, \und{\pi}^{\mu}$}

 Let $\uZ_{\tilde G,x}^{\mu}$ be the stack obtained from $\uZ^{\mu}_x$ by the base change $\wt\gM_x\times \wt X^{\mu}_x\to \gM_x\times X^{\mu}_x$. 
Let $\ov{\cZ}_{\tilde G, x}^{\mu}$ be the restriction of the gerbe $\uZ_{\tilde G,x}^{\mu}$ to the closed substack $\ov{\cZ}_x^{\mu}\hook{} \uZ_x^{\mu}$.
\index{$\uZ_{\tilde G,x}^{\mu}, \ov{\cZ}_{\tilde G, x}^{\mu}$}

\sssec{Proof of Theorem~\ref{Thm_functoriality}} 
\label{Section_proof_Thm_functoriality}
Let $\lambda\in\Lambda^{\sharp, +}$ and $\mu\in\Lambda$. We calculate $\ov{\FF}\H^{\ra}_G(\cA^{\lambda}_{\cE}, K)$ over $\wt X^{\mu}_x$. Recall the stack $Z$ defined in Section~\ref{section_5.2_action_on_D_zeta}. Set $\cY^{\mu}=Z\times_{\wt\gM_x} \ov{\cZ}_{\tilde G, x}^{\mu}$, where  we used the maps $\ov{\cZ}_{\tilde G, x}^{\mu}\toup{'\bar\gp} \wt\gM_x \getsup{'h^{\la}}Z$ to define the fibred product. 

  The stack $\cY^{\mu}$ classifies $(\cF, \cF', \cF\,\iso\, \cF'\mid_{X-x}, \cU, \cU')\in {_x\cH_{\tilde G}}$, $(\cF, \kappa, \kappa^-, \cF_T, \cU_T, \cU)\in  \ov{\cZ}_{\tilde G, x}^{\mu}$. Let $\cY^{\mu, -w_0(\lambda)}\subset \cY^{\mu}$ be the closed substack given by the property that 
$$
(\cF, \cF', \cF\,\iso\, \cF'\mid_{X-x},\cU,\cU')\in {_x\cH^{-w_0(\lambda)}_{\tilde G}}
$$ 
Let $q: \cY^{\mu, -w_0(\lambda)}\to \uZ^{\mu}_{\tilde G,x}$ be the map sending the above collection to $(\cF', \kappa, \kappa^-, \cF_T, \cU', \cU_T)$. 

 The following diagram commutes
$$
\begin{array}{cccccc}
\wt\gM_x \getsup{'h^{\ra}} & Z & \getsup{a} & \cY^{\mu, -w_0(\lambda)} & \toup{q} & \uZ^{\mu}_{\tilde G,x}\\
 & \downarrow\lefteqn{\scriptstyle 'h^{\la}} && \downarrow\lefteqn{\scriptstyle b} && \downarrow\lefteqn{\scriptstyle \und{\pi}^{\mu}}\\
 & \wt\gM_x & \getsup{'\bar\gp} & \ov{\cZ}_{\tilde G, x}^{\mu} & \toup{\ov{\pi}^{\mu}} & X^{\mu}_x
\end{array}
$$ 
Here $a,b$ are the natural maps. Recall that $\H^{\ra}_G(\cA^{\lambda}_{\cE}, K)=('h^{\la})_!(K\tboxtimes \cA^{\lambda}_{\cE})^r$. 
We get 
$$
\ov{\FF}\H^{\ra}_G(\cA^{\lambda}_{\cE}, K)\,\iso\, (\und{\pi}^{\mu})_!q_!((a^*(K\tboxtimes \cA^{\lambda}_{\cE})^r)\otimes b^* ('\bar\gp_B)^*\IC_{\zeta})[-\dim\Bun_G]\, .
$$

 We have the following cartesian square
$$
\begin{array}{ccc}
Y^{-w_0(\lambda)} & \toup{\phi} & {_{x,\infty}\Bunb_{\tilde B^-}}\\
\uparrow\lefteqn{\scriptstyle \alpha} && \uparrow\lefteqn{\scriptstyle \beta}\\
\cY^{\mu, -w_0(\lambda)} & \toup{q} & \uZ^{\mu}_{\tilde G,x},
\end{array}
$$
where the maps $\alpha,\beta$ forget the generalized $B$-structure $\kappa$. Further 
$$
(a^*(K\tboxtimes \cA^{\lambda}_{\cE})^r)\otimes b^* ('\bar\gp_B)^*\IC_{\zeta}\,\iso\, q^*('\und{\gp})^*K\otimes \alpha^* \cK\, .
$$

 The complex $q_!\alpha^*\cK\,\iso\, \beta^*\phi_!\cK$ is calculated in Proposition~\ref{Pp_5.1.7_on_Eis}. Plugging it into the above expression, we get from definitions an isomorphism
$$
\ov{\FF}\H^{\ra}_G(\cA^{\lambda}_{\cE}, K)\,\iso\, \mathop{\oplus}\limits_{\nu\ge -\lambda} ((m_{\nu})^*\ov{\FF}(K))\otimes V(\lambda)_{-\nu}
$$
over $\wt X^{\mu}_x$. Since $m_{\nu}^*=(m_{-\nu})_*$,
the latter identifies with $\H^{\ra}(\Res^{\check{G}_{\zeta}}_{\check{T}_{\zeta}}(V(\lambda)), \ov{\FF}(K))$. Theorem~\ref{Thm_functoriality} is proved. 

\ssec{A subcategory of $\FS^{\kappa}_x$} Here is another application of the Hecke functors on $\FS^{\kappa}_x$. Recall the quadratic form $\varrho: \Lambda\to\QQ$ defined in Section~\ref{section_input_data}.

\begin{Pp} 
\label{Pp_closed_under_extensions}
Assume that $\varrho(\alpha_i)\notin\ZZ$ for any simple coroot $\alpha_i$, and the subtop cohomology property holds. Then the functor $\Rep(\check{T}_{\zeta})\to \FS^{\kappa}_x$, $V\mapsto \H^{\ra}(V, \cL_{\emptyset})$ is fully faithful, and its image is a subcategory of $\FS^{\kappa}_x$ closed under extensions.
\end{Pp}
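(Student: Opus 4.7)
The plan is to establish full faithfulness directly from the semisimple structure of $\Rep(\check T_\zeta)$, reduce closedness under extensions to an $\Ext^1$-vanishing, translate that vanishing to a single ``base case'' using the Hecke auto-equivalences, and finally handle the base case with a combination of factorization on configuration spaces and the subtop cohomology hypothesis.

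First I would check full faithfulness. The category $\Rep(\check T_\zeta)$ is semisimple with simple objects the characters $\Qlb^\mu$, $\mu\in\Lambda^{\sharp}$. By (\ref{action_of_Hecke_on_objects_of_FS}) the functor sends $\Qlb^\mu$ to $\cL_{x,\mu}$, which are pairwise non-isomorphic irreducible objects of $\FS^{\kappa}_x$. So on simples the Hom-spaces match ($\Qlb$ if $\mu=\nu$, zero otherwise), hence on all objects. Because the image consists of finite direct sums $\bigoplus_\mu \cL_{x,\mu}^{\oplus n_\mu}$ with $\mu\in\Lambda^\sharp$, closedness under extensions in $\FS^\kappa_x$ is equivalent to the vanishing
$$\Ext^{1}_{\FS^{\kappa}_x}(\cL_{x,\nu},\cL_{x,\mu})=0,\qquad \mu,\nu\in\Lambda^{\sharp}.$$
Since $m_\mu$ is an isomorphism of $X^{\tau}_x$ with $X^{\tau+\mu}_x$, each $\H^{\ra}(\Qlb^\mu,\cdot)=(m_\mu)_*$ is an auto-equivalence of $\wt\FS^{\kappa}_x$ that preserves $\Ext$; so after translation I reduce to proving $\Ext^{1}_{\FS^{\kappa}_x}(\cL_{x,\eta},\cL_{\emptyset})=0$ for every $\eta\in\Lambda^{\sharp}$.

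For this residual vanishing I would argue componentwise: an extension class is a compatible family of extensions of $\cL_{x,\eta}^{\tau}$ by $\cL_{\emptyset}^{\tau}$ in $\Perv_{\zeta}(X^{\tau}_x)$, glued by the factorization isomorphism (\ref{iso_for_def_tilde_FS}). The hypothesis $\varrho(\alpha_i)\notin\ZZ$ enters via Proposition~\ref{Pp_3.3.4}: simple coroots are excluded from $\Lambda^{\sharp}$ (since their $\delta_i\ge 2$), and on the very open locus $\oo X^{\tau}$ the sheaf $\cL_{\emptyset}$ identifies with the sign local system, killing the ``easy'' boundary contributions by parity. What remains are contributions concentrated on the closed loci in $X^{\tau}_{x,\le\eta}$ whose divisors have a positive part at $x$, and these are precisely what the subtop cohomology property controls, in its reformulation Proposition~\ref{Pp_reformulating_subtop_coh_property} as $\Ext^{1}_{\Whit^{\kappa}_x}(\cF_{x,\mu-\lambda},\cF_{x,\mu})=0$ for $\lambda>0$ not a simple coroot and $\mu\in\Lambda^{\sharp}$ deep in the dominant chamber.

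The main obstacle is the bridge between $\Ext^{1}$ in $\FS^{\kappa}_x$ and $\Ext^{1}$ in $\Whit^{\kappa}_x$: the functor $\ov{\FF}$ is exact (Theorem~\ref{Thm_main_functor_ovFF}) but not known to be fully faithful on $\Ext^1$, so the subtop vanishing does not transport automatically. The strategy I propose is to choose $\lambda\in\Lambda^{\sharp,+}$ deep enough so that both $0$ and $\eta$ occur as weights of $V(\lambda)$; by Theorems~\ref{Thm_special_case_of_V_lambda_mu} and \ref{Thm_functoriality} the summands $\cL_{\emptyset}$ and $\cL_{x,\eta}$ both appear as direct summands of $\ov{\FF}(\cF_{x,\lambda})$. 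Using the explicit stratification of $\ov{\cZ}^{-\eta}_{x,\le\lambda}$ provided by Theorem~\ref{Thm_3.10.1} together with Proposition~\ref{Pp_391} on Verdier duality of $\bar F^\mu$, a hypothetical nonsplit extension in $\FS^{\kappa}_x$ must manifest on the top stratum of this Zastava space; the subtop cohomology estimate then rules it out exactly as in the proofs of Propositions~\ref{Pp_3.11.2} and \ref{Pp_ovFF_is_exact}, contradicting Proposition~\ref{Pp_reformulating_subtop_coh_property} and completing the argument.
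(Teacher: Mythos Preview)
Your reduction in the first two paragraphs is correct and matches the paper: full faithfulness is immediate, closedness under extensions reduces to $\Ext^1$-vanishing between the $\cL_{x,\mu}$ with $\mu\in\Lambda^\sharp$, and the Hecke auto-equivalences $(m_\mu)_*$ reduce this to $\Ext^1(\cL_{x,\eta},\cL_\emptyset)=0$ for $\eta\in\Lambda^\sharp$.

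The gap is in your final two paragraphs. You correctly note that simple coroots are excluded from $\Lambda^\sharp$, and that the remaining obstruction lives at the closed point $-\lambda x$ (with $\lambda>0$, $\lambda\in\Lambda^\sharp$). But you then try to transport the problem back to $\Whit^\kappa_x$ via Proposition~\ref{Pp_reformulating_subtop_coh_property}, run into the obstacle that $\ov{\FF}$ is not known to be $\Ext^1$-faithful, and propose a workaround through deep weights and Zastava stratifications that is neither precise nor complete. That last paragraph is not a proof.

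The detour is unnecessary. The relevant $\Ext^1$ is controlled (via self-duality) by the $*$-fibre $(\cL_\emptyset)_{-\lambda x}$ being in degrees $\le -2$. But Proposition~\ref{Pp_4.11.1} says $\cL^\mu_\emptyset\cong\ov{\FF}(\cF_\emptyset)$ as twisted perverse sheaves on $X^\mu$, and Proposition~\ref{Pp_3.11.2} says precisely that $\ov{\FF}(\cF_\emptyset)_{-\lambda x}$ is placed in degrees $<-1$ whenever $\lambda>0$ is not a simple coroot (this is where the subtop cohomology property is used). Since $\lambda\in\Lambda^\sharp$ and $\varrho(\alpha_i)\notin\ZZ$ forces $\alpha_i\notin\Lambda^\sharp$, the case of simple coroots never occurs. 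That is the whole argument; no passage through $\Ext^1$ in $\Whit^\kappa_x$ is required. You cited Proposition~\ref{Pp_3.11.2} as a \emph{technique} to imitate, but it is already the \emph{result} you need, once combined with the identification of Proposition~\ref{Pp_4.11.1}.
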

\begin{proof}
We only have to check the last property. It is clear that $\Ext^i(\cL_{\emptyset}, \cL_{\emptyset})=0$ for $i>0$. So, it suffices to check the following. If $\lambda\in\Lambda^{\sharp}$ with $0<\lambda$ then the $*$-fibre $(\cL_{\emptyset})_{-\lambda x}$ is placed in degrees $\le -2$. By Propositions~\ref{Pp_4.11.1} and \ref{Pp_3.11.2}, this is true unless $\lambda$ is a simple coroot. However, any simple coroot $\alpha_i$ is not in $\Lambda^{\sharp}$ because of our assumption $\varrho(\alpha_i)\notin\ZZ$.
\end{proof}
 
\section{Casselman-Shalika formula: basic observations}
\label{C-Sh-formula_basics}

\ssec{}  In this section we formulate and discuss the metaplectic analog of the Casselman-Shalika formula of \cite{FGV}. 

 Let us introduce some general notation. For a subset $\cB$ of $\cup_{\lambda, \mu\in\Lambda} \Irr(\Gr_B^{\lambda}\cap\Gr_{B^-}^{\mu})$ we set $\und{\cB}=\{b\in \cB\mid\bar b\; \mbox{is special}\}$. Here for $b\in \Irr(\Gr_B^{\lambda}\cap\Gr_{B^-}^{\mu})$ we let $\bar b=t^{-\lambda}b\in B_{\gg}$. The notation $B_{\gg}$ is that of Section~\ref{Section_Recollections on crystals}, and special elements in crystals were  introduced in Definition~\ref{Def_special_elements_in_B_gg}. 
 Given $V\in \Rep(\check{G}_{\zeta}), K\in \Whit^{\kappa}_x$ set for brevity $K\ast V=\H^{\ra}_G(V, K)$. 
\index{$\und{\cB}, K\ast V$}

\ssec{} 
\label{subsection_CSh_basics}
The Casselman-Shalika formula in the non-twisted case is (\cite{FGV}, Theorem~1). The following could be thought of as the metaplectic Casselman-Shalika problem. 

Recall that in Section~\ref{section_Additional input data} we picked a trivialization $\delta_{\lambda}$ of the fibre of $\Gra_G\to\Gr_G$ over $t^{\lambda}G(\cO)$ for each $\lambda\in\Lambda$ (compatible with the action of $\Lambda^{\sharp}$). This provided the sections $s^{\lambda}_B: \Gr_B^{\lambda}\to \wt\Gr_G$ of the gerbe $\wt\Gr_G\to\Gr_G$ (cf. Section~\ref{section_4.12.4}). 

 As in \cite{FGV}, for $\eta\in\Lambda$ we write $\chi_{\eta}: U(F)\to \A^1$ for the additive character of conductor $\bar\eta$, where $\bar\eta$ is the image of $\eta$ in the coweights lattice of $G_{ad}$. 
For $\eta+\nu\in\Lambda^+$ we also write $\chi^{\nu}_{\eta}: \Gr_B^{\nu}\to\A^1$ for any $(U(F), \chi_{\eta})$-equivariant function. The isomorphism $\Gr_B^0\,\iso\, \Gr_B^{\eta}$, $v\mapsto t^{\eta}v$ transforms $\chi^0_0: \Gr_B^0\to\A^1$ to $\chi^{\eta}_{-\eta}: \Gr_B^{\eta}\to\A^1$. 

 For $\nu\in \Lambda^{\sharp, +}$ we denote by $\wt{\ov{\Gr}}{}^{\nu}_G$ the restriction of the gerbe $\wt\Gr_G\to\Gr_G$ to $\ov{\Gr}^{\nu}_G$. Recall the irreducible objects $\cA^{\nu}_{\cE}$ of $\PPerv_{G,\zeta}$ defined in (\cite{L1}, Section~2.4.2), we are using for their definition the choice of $\Omega^{\frac{1}{2}}$ from Section~\ref{sssec_Notation}. The perverse sheaf $\cA^{\nu}_{\cE}$ is defined only up to a scalar automorphism (but up to a unique isomorphism for $\nu$ in the coroot lattice of $G$).
\index{$\wt{\ov{\Gr}}{}^{\nu}_G$} 
 
 
 The metaplectic Casselman-Shalika problem is the following. Given $\gamma\in\Lambda^{\sharp, +}$ and $\mu,\nu\in\Lambda$ with $\mu+\nu\in\Lambda^+$, calculate
\begin{equation}
\label{M_C-Sh_formula}
CSh^{\gamma, \mu}_{\mu+\nu}:=
\RG_c(\Gr_B^{\nu}\cap \ov{\Gr}_G^{\gamma}, (\chi^{\nu}_{\mu})^*\cL_{\psi}\otimes (s^{\nu}_B)^*\cA^{\gamma}_{\cE})[\<\nu, 2\check{\rho}\>]
\end{equation}
and describe the answer in terms of the corresponding quantum group. 
\index{$CSh^{\gamma, \mu}_{\mu+\nu}$}

 Pick $x\in X$. Let $j_{x,\mu}: \wt\gM_{x,\mu}\hook{}\wt\gM_x$ be the inclusion of this stratum.  As in (\cite{FGV}, Section~8.2.4) for $\mu+\nu\in\Lambda^+$, $\gamma\in\Lambda^{\sharp, +}$ we can calculate the complex $j_{x,\mu}^*(\cF_{x,\mu+\nu, !}\ast V(\gamma)^*)$ over $\wt\gM_{x,\mu}$. It vanishes unless $\mu\in\Lambda^+$, and in the latter case we get
$$
j_{x,\mu}^*(\cF_{x,\mu+\nu, !}\ast V(\gamma)^*)\,\iso\, 
\cF_{x,\mu,!}\otimes CSh^{\gamma, \mu}_{\mu+\nu}
$$

\smallskip\noindent
By Theorem~\ref{Th_Hecke_action}, (\ref{M_C-Sh_formula}) is placed in degrees $\le 0$. Note that 
$$
CSh^{\gamma, \mu}_{\mu+\nu}\,\iso\, \DD\R\Hom(\cF_{x,\mu+\nu, !}, \, \cF_{x, \mu, *}\ast V(\gamma))
$$ 

The complexes (\ref{M_C-Sh_formula}) describe the action of the Hecke functors on the objects $\cF_{x,\eta, !}$ for $\eta\in\Lambda^+$. 

\sssec{}  For $\gamma\in\Lambda^{\sharp, +}$, $\mu+\nu\in\Lambda^+, \mu\in\Lambda^+$ set
\begin{multline*}
\cB^{\gamma,\mu}_{\mu+\nu}:=
\{a\in \Irr(\Gr_B^{\nu}\cap \Gr_{B^-}^{w_0(\gamma)})\mid a\subset \ov{\Gr}_G^{\gamma}, t^{\mu}a\subset \ov{\Gr}_G^{\mu+\nu}\}\,\iso\,\\
\{b\in \Irr(\Gr_B^{\mu+\nu}\cap \Gr_{B^-}^{\mu+w_0(\gamma)})\mid t^{-\mu}b\subset \ov{\Gr}^{\gamma}_G, b\subset \ov{\Gr}^{\mu+\nu}_G\},
\end{multline*}
the latter map sends $a$ to $b=t^{\mu}a$.     
\index{$\cB^{\gamma,\mu}_{\mu+\nu}$}

\begin{Pp} 
\label{Pp_H^0_of_C-Sh-formula}
The space $\H^0$ of the complex (\ref{M_C-Sh_formula}) admits a canonical base $\und{\cB}^{\gamma,\mu}_{\mu+\nu}$. 
\end{Pp}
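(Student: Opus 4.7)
The plan is to reduce the computation of $\H^0$ of (\ref{M_C-Sh_formula}) to a careful analysis of the open stratum via a Mirkovi\'c--Vilonen style stratification, then use Anderson's theorem together with Lemmas~\ref{Lm_when_chi_lambda_0_is_dominant} (or rather its $B$/$B^-$ swap, Lemma~\ref{Lm_2.1.6}) and \ref{Lm_comparing_s_B_and_gamma} to match irreducible components with the combinatorial set $\und{\cB}^{\gamma,\mu}_{\mu+\nu}$.

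First I would stratify the support $\Gr_B^{\nu}\cap \ov{\Gr}_G^{\gamma}$ by locally closed subschemes of the form $\Gr_B^{\nu}\cap \Gr_{B^-}^{\eta}\cap \Gr_G^{\gamma'}$ as $\gamma'\le \gamma$ and $w_0(\gamma')\le \eta\le \nu$. The unique top-dimensional stratum (of dimension $\<\gamma+\nu,\check{\rho}\>$, matching $\dim(\Gr_B^{\nu}\cap\ov{\Gr}_G^{\gamma})$) is the open piece $\Gr_B^{\nu}\cap\Gr_{B^-}^{w_0(\gamma)}\cap\Gr_G^{\gamma}$. On this piece, $(s^{\nu}_B)^*\cA^{\gamma}_{\cE}[\<\nu,2\check{\rho}\>]$ is a rank one local system placed in cohomological degree $0$, and $(\chi^{\nu}_\mu)^*\cL_{\psi}$ is also a local system in degree $0$, so the open stratum can contribute only to degrees $\le 0$, with $\H^0$ having a base indexed by those irreducible components on which the combined local system is constant.

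Next I would analyze when the combined local system $(\chi^{\nu}_{\mu})^*\cL_{\psi}\otimes (s^{\nu}_B)^*\cA^{\gamma}_{\cE}$ is constant on an irreducible component $a$. By Anderson's theorem (\ref{bij_Andersen}), taking closures induces a bijection between $\Irr(\Gr_B^{\nu}\cap\Gr_G^{\gamma})$ and those components of $\Gr_B^{\nu}\cap\Gr_{B^-}^{w_0(\gamma)}$ contained in $\ov{\Gr}_G^{\gamma}$. Lemma~\ref{Lm_2.1.6}, applied to $b=t^{\mu}a$, shows that $(\chi^{\nu}_{\mu})^*\cL_{\psi}$ is constant on $a$ iff $\phi_i(\overline{t^\mu a})\le\<\mu+\nu,\check{\alpha}_i\>$ for all $i$, which by Lemma~\ref{Lm_Andersen_again} is precisely the condition $t^{\mu}a\subset\ov{\Gr}_G^{\mu+\nu}$; together with $a\subset\ov{\Gr}_G^{\gamma}$ this gives membership in $\cB^{\gamma,\mu}_{\mu+\nu}$. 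Lemma~\ref{Lm_comparing_s_B_and_gamma} identifies $(s^{\nu}_B)^*\cA^{\gamma}_{\cE}$ on $\Gr_G^{\gamma}\cap\Gr_{B^-}^{w_0(\gamma)}\cap\Gr_B^{\nu}$ with $(\gamma^{w_0(\gamma)}_{\nu})^*\cL_{\zeta^{-1}}$, and Corollary~\ref{Cor_allows_shifts_by_coweights} translates the triviality of the latter on $a$ into the speciality of $\bar a\in B_{\gg}$. Both conditions combined are exactly the definition of $\und{\cB}^{\gamma,\mu}_{\mu+\nu}$.

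Finally, I must verify that every non-open stratum contributes only to strictly negative cohomological degrees. For $(\eta,\gamma')\ne(w_0(\gamma),\gamma)$, the stratum $\Gr_B^{\nu}\cap\Gr_{B^-}^{\eta}\cap\Gr_G^{\gamma'}$ has dimension at most $\<\nu-\eta,\check{\rho}\>$, while the *-restriction of $\cA^{\gamma}_{\cE}$ to $\Gr_G^{\gamma'}$ sits in cohomological degree $-\<\gamma',2\check{\rho}\>$ plus a shift controlled by the perverse-sheaf structure of $\cA^{\gamma}_{\cE}$ (strictly negative once $\gamma'<\gamma$). A direct dimension count after the shift $[\<\nu,2\check{\rho}\>]$ shows the total contribution to compactly supported cohomology lies in degrees strictly less than $0$; one uses here that the Bruhat-cell dimensions $\dim(\Gr_B^{\nu}\cap\Gr_{B^-}^{\eta})=\<\nu-\eta,\check{\rho}\>$ together with the (metaplectic) MV stalk bounds force strict inequality off the open piece. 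The main obstacle is this last step: controlling the non-open contributions uniformly requires the twisted version of the Mirkovi\'c--Vilonen weight filtration (provided implicitly by \cite{L1}, Lemma~3.2) to guarantee the strict bound. Granting this, $\H^0$ is freely spanned by $\und{\cB}^{\gamma,\mu}_{\mu+\nu}$ as claimed.
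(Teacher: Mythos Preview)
Your analysis of the open stratum (the middle paragraph) is correct and matches the paper's Lemma~\ref{Lm_nice_first_for_C-Sh-formula} essentially verbatim: Anderson's bijection, Lemma~\ref{Lm_2.1.6}, Lemma~\ref{Lm_Andersen_again}, and Lemma~\ref{Lm_comparing_s_B_and_gamma} are exactly the right tools, and you invoke them in the right way.

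The difference lies in the stratification used for the degree bound, and here your choice creates the very obstacle you flag. You stratify $\Gr_B^{\nu}\cap\ov{\Gr}_G^{\gamma}$ by the double index $(\gamma',\eta)$ coming from both $\Gr_G^{\gamma'}$ and $\Gr_{B^-}^{\eta}$. This is finer than necessary and forces you to control the sheaf on pieces where no clean bound is immediately available; you then leave the resulting dimension count unfinished, appealing vaguely to a ``twisted MV weight filtration.'' The paper instead stratifies only by the $G(\cO)$-orbits $\Gr_G^{\tau}$ for $\tau\le\gamma$. The key point you are missing is that $\cA^{\gamma}_{\cE}\mid_{\wt\Gr_G^{\tau}}$ has \emph{locally constant} cohomology sheaves by $G(\cO)$-equivariance, and lies in perverse degrees $\le 0$ (strictly $<0$ for $\tau<\gamma$). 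Restricting a complex with locally constant cohomology to a locally closed subscheme of codimension $c$ shifts perverse degrees down by $c$; here $\codim(\Gr_B^{\nu}\cap\Gr_G^{\tau},\Gr_G^{\tau})=\<\tau-\nu,\check{\rho}\>$. Since $\dim(\Gr_B^{\nu}\cap\Gr_G^{\tau})=\<\nu+\tau,\check{\rho}\>$, the compactly supported cohomology of the $\tau$-stratum, after the shift $[\<\nu,2\check{\rho}\>]$, lands in degrees $\le 0$, strictly for $\tau<\gamma$. This is a two-line count with no obstacle. Only $\tau=\gamma$ contributes to $\H^0$, and then your middle paragraph (equivalently the paper's Lemma~\ref{Lm_nice_first_for_C-Sh-formula}) finishes the argument. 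Drop the $\Gr_{B^-}^{\eta}$ refinement from the degree-bound step and the proof is complete.
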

\index{$\und{\cB}^{\gamma,\mu}_{\mu+\nu}$}
\begin{proof} Denote for brevity by $K$ the complex (\ref{M_C-Sh_formula}).
For $\tau\le\gamma$, $\tau\in\Lambda^+$ let $K_{\tau}$ be the contribution of the stratum $\Gr_B^{\nu}\cap \Gr_G^{\tau}$ in $K$. First consider $\cA^{\gamma}_{\cE}\mid_{\wt\Gr_G^{\tau}}$, which has constant cohomology sheaves because of $G(\cO)$-equivariance, it is placed in perverse degrees $\le 0$, and the inequality is strict unless $\tau=\gamma$. So, the $*$-restriction of $\cA^{\gamma}_{\cE}\mid_{\wt\Gr_G^{\tau}}$ to $\Gr_B^{\nu}\cap \Gr_G^{\tau}$ is placed in perverse degrees 
$$
\le -\codim(\Gr_B^{\nu}\cap \Gr_G^{\tau}, \Gr_G^{\tau})=\<\nu-\tau, \check{\rho}\>,
$$
(and the inequality is strict unless $\tau=\gamma$). Since $\dim \Gr_B^{\nu}\cap \Gr_G^{\tau}=\<\nu+\tau, \check{\rho}\>$, we conclude that $K_{\tau}$ is placed in degrees $\le 0$, and the inequality is strict unless $\tau=\gamma$. So, only $K_{\gamma}$ contributes to 
$$
\H^0(K)\,\iso\, \H^{top}_c(\Gr_B^{\nu}\cap \Gr_G^{\gamma}, (\chi^{\nu}_{\mu})^*\cL_{\psi}\otimes (s^{\nu}_B)^*\cA^{\gamma}_{\cE})[\<\nu, 2\check{\rho}\>]
$$ 
So, $\H^0(K)$ has a base consisting of those $b\in\Irr(\Gr_B^{\nu}\cap \Gr_G^{\gamma})$ over which the shifted local system $(\chi^{\nu}_{\mu})^*\cL_{\psi}\otimes (s^{\nu}_B)^*\cA^{\gamma}_{\cE}$ is constant. 
By formula (\ref{bij_Andersen}), we have a bijection
$$
\{a\in\Irr(\Gr_B^{\nu}\cap \Gr_{B^-}^{w_0(\gamma)})\mid a\subset \ov{\Gr}_G^{\gamma}\} \,\iso\, \Irr(\Gr_B^{\nu}\cap \Gr_G^{\gamma})
$$
sending $a$ to the closure of $a\cap \Gr_G^{\gamma}$. Our claim is now reduced to Lemma~\ref{Lm_nice_first_for_C-Sh-formula} below.
\end{proof}

\begin{Lm} 
\label{Lm_nice_first_for_C-Sh-formula}
Let $\gamma\in\Lambda^{\sharp, +}$, $\mu+\nu\in\Lambda^+, \mu\in\Lambda^+$. Let $b\in\Irr(\Gr_B^{\nu}\cap \Gr_G^{\gamma})$. The local system $(\chi^{\nu}_{\mu})^*\cL_{\psi}\otimes (s^{\nu}_B)^*\cA^{\gamma}_{\cE}$ is constant on $b$ if and only if two conditions hold: 
\begin{itemize}
\item for $a=b\cap \Gr_{B^-}^{w_0(\gamma)}$ the component $\bar a\in B_{\gg}$ is special;
\item $t^{\mu}a\subset \ov{\Gr}_G^{\mu+\nu}$.
\end{itemize}
\end{Lm}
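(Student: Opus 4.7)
The plan is to pass to the open dense subscheme $a = b\cap \Gr_{B^-}^{w_0(\gamma)}$ (dense in $b$ by the Anderson bijection already invoked in the proof of Proposition \ref{Pp_H^0_of_C-Sh-formula}), analyze the two tensor factors of the shifted local system on $a$ separately, and then translate each resulting condition into the combinatorial form stated. The key observation that lets the factors be treated independently is that $(\chi^\nu_\mu)^*\cL_\psi$ is a wild (Artin--Schreier) local system with $p$-torsion monodromy, while $(s^\nu_B)^*\cA^\gamma_{\cE}$ restricted to $a\subset \Gr_G^\gamma$ is a tame local system built from $\cL_\zeta$ whose monodromy has order dividing $N$. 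Since $N$ is invertible in $k$, the orders are coprime, so a product of two such lisse sheaves is trivial on the irreducible variety $a$ if and only if each factor is.

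For the tame factor I would invoke Lemma \ref{Lm_comparing_s_B_and_gamma}, which over $\Gr_G^\gamma\cap\Gr_{B^-}^{w_0(\gamma)}\cap\Gr_B^\nu\supset a$ identifies $(s^\nu_B)^*\cA^\gamma_{\cE}$ up to a shift with $(\gamma^{w_0(\gamma)}_\nu)^*\cL_{\zeta^{-1}}$. Corollary \ref{Cor_allows_shifts_by_coweights} (invariance under translation by coweights) then shows that this local system is trivial on $a$ if and only if $(\gamma^{w_0(\gamma)-\nu}_0)^*\cL_\zeta$ is trivial on $t^{-\nu}a=\bar a\in B_{\gg}(\nu-w_0(\gamma))$, i.e.\ $\bar a$ is special in the sense of Definition \ref{Def_special_elements_in_B_gg}, yielding the first stated condition.

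For the wild factor I would apply Lemma \ref{Lm_2.1.6} with its parameters $(\gamma,\mu,\nu)$ instantiated as $(\gamma,\nu,\mu)$ (the hypothesis $\nu\ge w_0(\gamma)$ holds since $a$ is nonempty, and $\mu+\nu\in\Lambda^+$ is given): the map $\chi^\nu_\mu\colon a\to \A^1$ is dominant iff some $i\in\cJ$ satisfies $\phi_i(\bar a)>\<\mu+\nu,\check\alpha_i\>$, hence $(\chi^\nu_\mu)^*\cL_\psi$ is constant on $a$ exactly when $\phi_i(\bar a)\le \<\mu+\nu,\check\alpha_i\>$ for all $i\in\cJ$. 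Finally, I would apply Lemma \ref{Lm_Andersen_again} to the translated component $t^\mu a\in \Irr(\Gr_B^{\mu+\nu}\cap \Gr_{B^-}^{\mu+w_0(\gamma)})$, whose associated element of $B_{\gg}$ is $t^{-(\mu+\nu)}(t^\mu a)=\bar a$: this converts the bound $\phi_i(\bar a)\le\<\mu+\nu,\check\alpha_i\>$ for all $i$ into the geometric statement $t^\mu a\subset \ov{\Gr}_G^{\mu+\nu}$, which is the second stated condition. The only place requiring any care is the wild/tame coprimality step at the start, which can be verified by restricting both factors to a smooth curve through a general point of $a$ and comparing the orders of the local monodromies.
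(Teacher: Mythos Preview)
Your proposal is correct and follows essentially the same route as the paper's proof: pass to the dense open $a=b\cap\Gr_{B^-}^{w_0(\gamma)}$, separate the two tensor factors, then invoke Lemma~\ref{Lm_comparing_s_B_and_gamma} together with Corollary~\ref{Cor_allows_shifts_by_coweights} for the tame factor and Lemma~\ref{Lm_2.1.6} followed by Lemma~\ref{Lm_Andersen_again} for the additive-character factor. The one place where you are more explicit than the paper is the wild/tame coprimality argument for separating the factors; the paper simply asserts this equivalence without justification, so your added remark is a welcome clarification rather than a deviation.
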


\begin{proof} Let $a\in \Irr(\Gr_B^{\nu}\cap \Gr_{B^-}^{w_0(\gamma)})$ with $a\subset \ov{\Gr}_G^{\gamma}$. The local system $(\chi^{\nu}_{\mu})^*\cL_{\psi}\otimes (s^{\nu}_B)^*\cA^{\gamma}_{\cE}$ is constant on $a$ if and only if $\chi^{\nu}_{\mu}: a\to\A^1$ is not dominant, and $(s^{\nu}_B)^*\cA^{\gamma}_{\cE}$ is constant on $a$. 

 By Lemma~\ref{Lm_comparing_s_B_and_gamma}, over $\Gr_B^{\nu}\cap \Gr_{B^-}^{w_0(\gamma)}\cap \Gr_G^{\gamma}$ one gets $(s^{\nu}_B)^*\cA^{\gamma}_{\cE}\,\iso\, (\gamma_{\nu}^{w_0(\gamma)})^*\cL_{\zeta}^{-1}$ up to a shift. The notation $\gamma_{\nu}^{w_0(\gamma)}$ is that of Section~\ref{section_4.12.4}. Now, $(\gamma_{\nu}^{w_0(\gamma)})^*\cL_{\zeta}$ is constant on $a$ if and only if $\bar a\in B_{\gg}(\nu-w_0(\gamma))$ is special.

By Lemma~\ref{Lm_2.1.6}, the map $\chi^{\nu}_{\mu}: a\to\A^1$ is not dominant iff $\phi_i(\bar a)\le \<\mu+\nu, \check{\alpha}_i\>$ for all $i\in \cJ$. By Lemma~\ref{Lm_Andersen_again}, the latter property is equivalent to $t^{\mu}a\subset \ov{\Gr}_G^{\mu+\nu}$.
\end{proof}

\sssec{Non-twisted case} 
\label{C-Sh-Non-twisted case}
For simplicity, we do not distinguish between $\VV$ and $(^{\iota}\VV)^*$ for an irreducible $\check{G}$-representation $\VV$, where $\iota$ is the Chevalley automorphism of $\check{G}$. Recall our notation $\VV^{\gamma}$ for the irreducible $\check{G}$-module with highest weight $\gamma\in\Lambda^+$. For $\nu\in\Lambda$ write $\VV^{\gamma}_{\nu}$ for the $\nu$-weight space of $\VV^{\gamma}$. Set 
$$
\cB^{\gamma}_{\nu}=\Irr(\Gr_B^{\nu}\cap \Gr_G^{\gamma}),\;\;\;\;\; {^-\cB^{\gamma}_{\nu}}=\Irr(\Gr_{B^-}^{\nu}\cap \Gr_G^{\gamma})\, .
$$
Recall that $\VV^{\gamma}_{\nu}$ has two canonical bases
$\cB^{\gamma}_{\nu}$ and $^-\cB^{\gamma}_{\nu}$. Combining Lemma~\ref{Lm_nice_first_for_C-Sh-formula} for the trivial metaplectic parameters with (\cite{FGV}, Theorem~1), we see that
$\Hom(\VV^{\gamma}\otimes\VV^{\mu}, \VV^{\mu+\nu})$ admits a canonical base $\cB^{\gamma,\mu}_{\mu+\nu}$. 
\index{$\iota, \, ^{\iota}\VV, \VV^{\gamma}_{\nu}$}
\index{$\cB^{\gamma}_{\nu}, {^-\cB^{\gamma}_{\nu}}$}

Here are two special (limiting) cases. 

\smallskip\noindent
{\bf CASE i)} The canonical inclusion $\cB^{\gamma,\mu}_{\mu+\nu}\hook{} {^-\cB^{\mu+\nu}_{\mu+w_0(\gamma)}}$, $b\mapsto b\cap \Gr_G^{\mu+\nu}$ induces 
\begin{equation}
\label{great_inclusion_one_for_C-Sh}
\Hom(\VV^{\gamma}\otimes\VV^{\mu}, \VV^{\mu+\nu})\hook{} \VV^{\mu+\nu}_{\mu+w_0(\gamma)}
\end{equation}
If for any weight $\tau$ of $\VV^{\mu+\nu}$ one has $-w_0(\gamma)+\tau\in\Lambda^+$ then 
$$
\VV^{\mu+\nu}\otimes (\VV^{\gamma})^*\,\iso\, \oplus_{\tau} \VV^{\tau-w_0(\gamma)}\otimes \VV^{\mu+\nu}_{\tau},
$$ 
so (\ref{great_inclusion_one_for_C-Sh}) is an isomorphism, and $\cB^{\gamma,\mu}_{\mu+\nu}={^-\cB^{\mu+\nu}_{\mu+w_0(\gamma)}}$.

\medskip\noindent
{\bf CASE ii)} The canonical inclusion $\cB^{\gamma,\mu}_{\mu+\nu}\hook{} \cB^{\gamma}_{\nu}$, $a\mapsto a\cap \Gr_G^{\gamma}$ induces 
\begin{equation}
\label{great_inclusion_two_for_C-Sh}
\Hom(\VV^{\gamma}\otimes\VV^{\mu}, \VV^{\mu+\nu})\hook{} \VV^{\gamma}_{\nu}
\end{equation}
If for any weight $\tau$ of $\VV^{\gamma}$ one has $\tau+\mu\in\Lambda^+$ then 
$
\VV^{\gamma}\otimes\VV^{\mu}\,\iso\,\oplus_{\tau} \VV^{\mu+\tau}\otimes \VV^{\gamma}_{\tau}, 
$
so (\ref{great_inclusion_two_for_C-Sh}) is an isomorphism, and $\cB^{\gamma,\mu}_{\mu+\nu}=\cB^{\gamma}_{\nu}$.

\sssec{} Assume that $\varrho$ satisfies the subtop cohomology property. Recall that for $\lambda\in\Lambda^+, \mu\in\Lambda$ with $\mu\le\lambda$ we have the vector spaces $V^{\lambda}_{\mu}$ introduced in Theorem~\ref{Th_4.12.1}. It provides a decomposition
$$
\ov{\FF}(\cF_{x,\lambda})\,\iso\, \mathop{\oplus}\limits_{\mu\le\lambda, \, \lambda-\mu\in\Lambda^{\sharp}} \; \cL_{x, \mu}\otimes V^{\lambda}_{\mu},
$$
and realizes $^-\und{\cB}^{\lambda}_{\mu}$ as a canonical base of  $V^{\lambda}_{\mu}$. 
\index{$^-\und{\cB}^{\lambda}_{\mu}$}

\begin{Cor} Let $\gamma\in\Lambda^{\sharp, +}$, $\mu+\nu\in\Lambda^+, \mu\in\Lambda^+$ and $K$ denote the complex (\ref{M_C-Sh_formula}).

\noindent
i) There is a canonical inclusion $\und{\cB}^{\gamma, \mu}_{\mu+\nu}\hook{}{^-{\und{\cB}}^{\mu+\nu}_{\mu+w_0(\gamma)}}$, hence also 
$$
\H^0(K)\hook{} V^{\mu+\nu}_{\mu+w_0(\gamma)}
$$ 
If for any weight $\tau$ of $\VV^{\mu+\nu}$ one has $-w_0(\gamma)+\tau\in\Lambda^+$ then the above inclusions are a bijection and an isomorphism respectively.

\noindent
ii) There is a canonical inclusion $\und{\cB}^{\gamma, \mu}_{\mu+\nu}\hook{}\und{\cB}^{\gamma}_{\nu}$, hence also 
$$
\H^0(K)\hook{} V(\gamma)_{\nu}=V^{\gamma}_{\nu}
$$ 
If for any weight $\tau$ of $\VV^{\gamma}$ the coweight $\tau+\mu$ is dominant then the above inclusions are a bijection and an isomorphism respectively.
\end{Cor}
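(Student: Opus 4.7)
The plan is to reduce both assertions to the analogous non-twisted statements of Section~\ref{C-Sh-Non-twisted case} (CASES i) and ii)) after cutting out the subset defined by specialness of the associated crystal element.

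First I would exploit the following identification. Translation by $t^{\mu}$ yields a bijection
\[
\Irr(\Gr_B^{\nu}\cap\Gr_{B^-}^{w_0(\gamma)})\,\iso\,\Irr(\Gr_B^{\mu+\nu}\cap\Gr_{B^-}^{\mu+w_0(\gamma)}),\qquad a\mapsto b:=t^{\mu}a,
\]
under which the two descriptions of $\cB^{\gamma,\mu}_{\mu+\nu}$ correspond. The crucial point is the equality $\bar b=t^{-(\mu+\nu)}b=t^{-\nu}a=\bar a\in B_{\gg}(\nu-w_0(\gamma))$; consequently, by Corollary~\ref{Cor_allows_shifts_by_coweights}, the specialness conditions on $b$ and on $a$ agree. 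Combining Theorem~\ref{Th_4.12.1} with the same Corollary, the canonical base of $V^{\lambda}_{\mu}$ exhibited there corresponds via Andersen's theorem (\ref{bij_Andersen}) to the special elements of $\Irr(\Gr_G^{\lambda}\cap\Gr_{B^-}^{\mu})$, which is precisely how the notation $^-\und{\cB}^{\lambda}_{\mu}$ is to be read; this is the template for identifying both $V(\gamma)_{\nu}=V^{\gamma}_{\nu}$ (using Theorem~\ref{Thm_special_case_of_V_lambda_mu}) and $V^{\mu+\nu}_{\mu+w_0(\gamma)}$ with the $\Qlb$-spans of the appropriate underlined sets. Finally, Proposition~\ref{Pp_H^0_of_C-Sh-formula} supplies the canonical base $\und{\cB}^{\gamma,\mu}_{\mu+\nu}$ of $\H^0(K)$.

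For (i), the non-twisted inclusion $\cB^{\gamma,\mu}_{\mu+\nu}\hook{}{^-\cB^{\mu+\nu}_{\mu+w_0(\gamma)}}$ recalled in Section~\ref{C-Sh-Non-twisted case} corresponds, via Andersen, to the inclusion of subsets of $\Irr(\Gr_B^{\mu+\nu}\cap\Gr_{B^-}^{\mu+w_0(\gamma)})$ defined by dropping the condition $t^{-\mu}b\subset\ov{\Gr}_G^{\gamma}$ while retaining $b\subset\ov{\Gr}_G^{\mu+\nu}$. Since the additional specialness cut defining the underlined subsets is the condition on the same $\bar b$ on both sides, this inclusion restricts to $\und{\cB}^{\gamma,\mu}_{\mu+\nu}\hook{}{^-\und{\cB}^{\mu+\nu}_{\mu+w_0(\gamma)}}$, and passing to $\Qlb$-spans gives the inclusion $\H^0(K)\hook{}V^{\mu+\nu}_{\mu+w_0(\gamma)}$. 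Under the stated hypothesis that every weight $\tau$ of $\VV^{\mu+\nu}$ satisfies $-w_0(\gamma)+\tau\in\Lambda^+$, CASE i) of Section~\ref{C-Sh-Non-twisted case} already gives $\cB^{\gamma,\mu}_{\mu+\nu}={^-\cB^{\mu+\nu}_{\mu+w_0(\gamma)}}$, so after applying the same specialness cut to both sides the inclusion is a bijection and the corresponding map of vector spaces is an isomorphism.

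Part (ii) is entirely parallel: the non-twisted inclusion $\cB^{\gamma,\mu}_{\mu+\nu}\hook{}\cB^{\gamma}_{\nu}$, $a\mapsto\overline{a\cap\Gr_G^{\gamma}}$, of Section~\ref{C-Sh-Non-twisted case} is, after Andersen, the inclusion on the side $\Irr(\Gr_B^{\nu}\cap\Gr_{B^-}^{w_0(\gamma)})$ obtained by dropping the condition $t^{\mu}a\subset\ov{\Gr}_G^{\mu+\nu}$. The specialness cut is again governed by the same $\bar a\in B_{\gg}(\nu-w_0(\gamma))$, which yields the inclusion $\und{\cB}^{\gamma,\mu}_{\mu+\nu}\hook{}\und{\cB}^{\gamma}_{\nu}$, and via the template above the inclusion $\H^0(K)\hook{}V(\gamma)_{\nu}$. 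The hypothesis of CASE ii) forces the non-twisted inclusion to be an equality, from which the bijection and isomorphism follow as in (i). The main technical obstacle is purely bookkeeping: one must verify that the single specialness cut on $B_{\gg}(\nu-w_0(\gamma))$ is indeed the correct cut across all three incarnations of $\cB^{\gamma,\mu}_{\mu+\nu}$ and both target sets $^-\cB^{\mu+\nu}_{\mu+w_0(\gamma)}$ and $\cB^{\gamma}_{\nu}$, and this transparency is exactly what makes the reduction to the non-twisted CASES i) and ii) immediate.
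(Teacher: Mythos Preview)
Your argument is correct and follows essentially the same route as the paper's proof: both reduce to the non-twisted CASES i) and ii) of Section~\ref{C-Sh-Non-twisted case} after applying the specialness cut, using Proposition~\ref{Pp_H^0_of_C-Sh-formula} for $\H^0(K)$ and Theorem~\ref{Th_4.12.1} (together with Theorem~\ref{Thm_special_case_of_V_lambda_mu}) for the target spaces. The only cosmetic difference is that the paper cites \cite{L1}, Lemma~3.2 directly for the identification of $\und{\cB}^{\gamma}_{\nu}$ with a base of $V(\gamma)_{\nu}$, whereas you route this through Theorem~\ref{Thm_special_case_of_V_lambda_mu}; since the latter's proof invokes the same lemma, the content is identical.
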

\begin{proof}
By (\cite{L1}, Lemma~3.2), $\und{\cB}^{\gamma}_{\nu}$ is canonically a base of $V(\gamma)_{\nu}$. By Theorem~\ref{Thm_special_case_of_V_lambda_mu}, $V(\gamma)_{\nu}\,\iso\, V^{\gamma}_{\nu}$ canonically. Both inclusions are those of Section~\ref{C-Sh-Non-twisted case} restricted to special elements. Both claims follow from Section~\ref{C-Sh-Non-twisted case} and Proposition~\ref{Pp_H^0_of_C-Sh-formula}. 
\end{proof}

\sssec{Metaplectic Casselman-Shalika formula} According to Gaitsgory's conjecture (\cite{G1}, Conjecture~0.4), to our metaplectic data one may associate the category $\cC$ of finite-dimensional representations of the corresponding\footnote{This will not be a quantum group in the usual sense in general, but rather a Hopf algebra in a suitable braided monoidal category.} big quantum group $U_q(\check{G})$ over $\Qlb$, and $\Whit^{\kappa}_x\,\iso\, \cC$ naturally.  

 For $\lambda\in\Lambda^+$ denote by $W^{\lambda, !}$, $W^{\lambda, *}$  the corresponding standard  and costandard objects of $\cC$,  $W^{\lambda, !}$ should be thought of as a Verma module in $\cC$. Write $\D(\cC)$ for the derived category of $\cC$. Then $\cC$ is equipped with a fully faithful functor $\Fr: \Rep(\check{G}_{\zeta})\to \cC$. So, $\check{G}_{\zeta}$ should be thought of as the quantum Frobenius quotient of $U_q(\check{G})$. The notation $\Fr$ is taken from  \cite{ABBGM}.
 
\begin{Con} Given $\gamma\in\Lambda^{\sharp, +}$ and $\mu,\nu\in\Lambda$ with $\mu+\nu\in \Lambda^+$ the complex (\ref{M_C-Sh_formula}) vanishes unless $\mu\in\Lambda^+$. In the latter case
there is an isomorphism
$$
\RG_c(\Gr_B^{\nu}\cap \ov{\Gr}^{\gamma}_G, (\chi^{\nu}_{\mu})^*\cL_{\psi}\otimes (s^{\nu}_B)^*\cA^{\gamma}_{\cE})[\<\nu, 2\check{\rho}\>]\,\iso\, 
\DD\R\Hom_{\D(\cC)}(W^{\mu+\nu, !}, W^{\mu, *}\otimes \Fr(V(\gamma)))
$$
\end{Con}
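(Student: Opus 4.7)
The plan is to deduce this conjecture from Gaitsgory's conjectural equivalence $\Phi:\Whit^\kappa_x\,\iso\,\cC$ together with the compatibilities already established in this paper. As observed in Section~\ref{subsection_CSh_basics}, the left-hand side can be rewritten as
$$
CSh^{\gamma,\mu}_{\mu+\nu}\,\iso\,\DD\R\Hom_{\D\Whit^\kappa_x}(\cF_{x,\mu+\nu,!},\,\cF_{x,\mu,*}\ast V(\gamma)),
$$
and the vanishing half of the conjecture (for $\mu\notin\Lambda^+$) is already recorded there, using the formula $j_{x,\mu}^*(\cF_{x,\mu+\nu,!}\ast V(\gamma)^*)\,\iso\,\cF_{x,\mu,!}\otimes CSh^{\gamma,\mu}_{\mu+\nu}$ and the fact that $\cF_{x,\mu,!}$ is nonzero only for $\mu$ dominant. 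So the real content is the main isomorphism for $\mu\in\Lambda^+$.

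Granting $\Phi$, I would first check that it intertwines the basic structures: on the one hand, $\Phi$ is expected to send $\cF_{x,\lambda,!}$ to $W^{\lambda,!}$ and $\cF_{x,\lambda,*}$ to $W^{\lambda,*}$; on the other, the Hecke action of $\Rep(\check G_\zeta)\,\iso\,\PPerv^\natural_{G,\zeta}$ via $\H^{\ra}_G$ should correspond, under $\Phi$, to the endofunctor $(-)\otimes\Fr(V)$ on $\cC$. Given these two identifications, the desired isomorphism is purely formal:
$$
\DD\R\Hom(\cF_{x,\mu+\nu,!},\,\cF_{x,\mu,*}\ast V(\gamma))\,\iso\,\DD\R\Hom_{\D(\cC)}(W^{\mu+\nu,!},\,W^{\mu,*}\otimes\Fr(V(\gamma))).
$$
The Hecke/Frobenius intertwining can be tested on generators supplied by Corollary~\ref{Cor_free_module_over_Rep}: by Theorem~\ref{Th_Hecke_action}(ii) and Theorem~\ref{Thm_Lusztig-Steinberg_analog}, the Hecke functor $\H^{\ra}_G(\cA^\gamma_\cE,-)$ sends $\cF_{x,\lambda}$ (for $\lambda\in\bar\cM$) to $\cF_{x,\lambda+\gamma}$, matching the Lusztig--Steinberg tensor-product theorem on the quantum side; Theorem~\ref{Thm_functoriality} further verifies the analogous compatibility between $\ov{\FF}$ and $\check T_\zeta$-Hecke action on $\wt\FS^\kappa_x$, and would lift to the full picture if one similarly knows that $\ov{\FF}$ corresponds to the restriction functor $\Rep(U_q(\check G))\to\Rep(\og u_q(\check G))$.

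The main obstacle is that Gaitsgory's conjectural equivalence $\Phi$ is itself open. An unconditional proof would therefore either need to construct enough of $\Phi$ on $!$- and $*$-objects to transport the relevant $\R\Hom$ computations, or else compute the right-hand side directly on the quantum side — e.g.\ via a BGG-type resolution of $W^{\mu+\nu,!}$ and Lusztig's character formula — and match the resulting answer with the canonical base $\und{\cB}^{\gamma,\mu}_{\mu+\nu}$ of $\H^0(CSh^{\gamma,\mu}_{\mu+\nu})$ supplied by Proposition~\ref{Pp_H^0_of_C-Sh-formula}. This crystal-theoretic matching of tensor-product multiplicities between the big and the small quantum groups appears to be the hardest ingredient, which is why the statement is offered as a conjecture rather than a theorem.
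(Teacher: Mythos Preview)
The statement you are asked to prove is presented in the paper as a \emph{Conjecture}, not a theorem: the paper offers no proof. The only supporting evidence given is the subsequent Remark, which observes that for trivial metaplectic data the category $\cC$ is semisimple, $\check{G}_{\zeta}=\check{G}$, $\Fr$ is an equivalence, and the right-hand side reduces to $\Hom(\VV^{\mu}\otimes\VV^{\gamma},\VV^{\mu+\nu})$, so the conjecture specializes to (\cite{FGV}, Theorem~1).

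Your proposal is therefore not competing against a proof in the paper; rather, you have correctly diagnosed \emph{why} the paper states this as a conjecture. Your reduction to $\DD\R\Hom(\cF_{x,\mu+\nu,!},\cF_{x,\mu,*}\ast V(\gamma))$ is exactly the identification already recorded in Section~\ref{subsection_CSh_basics}, and your observation that the statement would follow formally from Gaitsgory's conjectural equivalence $\Whit^{\kappa}_x\iso\cC$ (together with the expected matching of standard/costandard objects and of Hecke action with $\otimes\Fr(V)$) is the intended heuristic. You are also right that this equivalence is the missing ingredient, and that without it one would need an independent computation on the quantum side.

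One small correction: the vanishing for $\mu\notin\Lambda^+$ is not quite ``already recorded'' in Section~\ref{subsection_CSh_basics} as a proved fact; the paper only asserts it as part of the conjecture. The argument you sketch (via $j_{x,\mu}^*$ and the fact that $\wt\gM_{x,\mu}$ supports no nonzero Whittaker sheaves unless $\mu\in\Lambda^+$) is correct and does establish this half unconditionally, but the paper does not spell it out.
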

\begin{Rem} If the metaplectic data is trivial then $\cC$ is semisimple. In this case $\Lambda^{\sharp}=\Lambda$ and $\check{G}_{\zeta}=\check{G}$. In this case $\Fr: \Rep(\check{G}_{\zeta})\to \cC$ is an equivalence. The right hand side becomes $\Hom(\VV^{\mu}\otimes \VV^{\gamma}, \VV^{\mu+\nu})$, and the above conjecture becomes (\cite{FGV}, Theorem~1), so it is true.
\end{Rem}

\section{Comparison with \cite{ABBGM}}
\label{Section_comparison_ABBGM}

\ssec{} In this section we don't need $\ell$-adic numbers, and we reserve the notation $\ell$ for another integer introduced below and referring to \cite{ABBGM}.

Let $Q\subset\Lambda$ be the coroot lattice. Let $(\cdot, \cdot): Q\otimes Q\to \ZZ$ be the canonical $W$-invariant symmetric bilinear even form. So, $(\alpha_i,\alpha_i)=2d_i$, where $d_i\in\{1,2,3\}$ is the minimal set of integers such that the matrix $d_i\<\alpha_i, \check{\alpha}_j\>$ is symmetric. 
\index{$Q, (\cdot, \cdot), d_i$}

 To fall into the setting in \cite{ABBGM} we must assume that the restriction of $\bar\kappa$ to $Q$ is a multiple of the canonical form. 
Assume there is $m\in\ZZ$ such that $\bar\kappa(a,b)=m(a, b)$ for any $a,b\in Q$. Assume also that $N=m\ell$, where $\ell$ is an integer. Assume also that $d_i$ divides $\ell$ for any $i$ and set $\delta_i=\frac{\ell}{d_i}$. Then $\bar\kappa(\alpha_i,\alpha_i)=2md_i$. So, $\frac{\bar\kappa(\alpha_i,\alpha_i)}{2N}=\frac{1}{\delta_i}$, and $\delta_i$ is the denominator of $\frac{\bar\kappa(\alpha_i,\alpha_i)}{2N}$. 
 
 Assume also given a symmetric $W$-invariant form $(\cdot, \cdot)_{\ell}: \check{\Lambda}\otimes\check{\Lambda}\to\ZZ$ such that for any $\check{\lambda}\in\check{\Lambda}, i\in \cJ$,
$$
(\check{\alpha}_i, \check{\lambda})_{\ell}=\delta_i\<\alpha_i, \check{\lambda}\>
$$
Let $\phi_{\ell}: \check{\Lambda}\to \Lambda$ be the map induced by $(\cdot, \cdot)_{\ell}$. So, $\phi_{\ell}(\check{\alpha}_i)=\delta_i\alpha_i$. Since $\bar\kappa(\alpha_i)=m d_i\check{\alpha}_i$, we get $\bar\kappa\phi_{\ell}(\check{\alpha}_i)=N\check{\alpha}_i$. 
\index{$(\cdot, \cdot)_{\ell}, \phi_{\ell}$}

 Assume in addition that the composition $\bar\kappa\comp\phi_{\ell}$ equals the multiplication by $N$. Then $\phi_{\ell}: \check{\Lambda}\hook{} \Lambda$ and $\bar\kappa: \Lambda\hook{}\check{\Lambda}$ are inclusions with finite cokernels. 
 
\begin{Lm} 1) The map $\phi_{\ell}$ takes values in $\Lambda^{\sharp}$, and $\phi_{\ell}: \check{\Lambda}\,\iso\, \Lambda^{\sharp}$ is an isomorphism.\\
2) The map $\phi_{\ell}$ gives an isomorphism of root data of $(G,T)$ with that of $(\check{G}_{\zeta}, \check{T}_{\zeta})$. So, it induces an isomorphism $G\,\iso\, \check{G}_{\zeta}$ identifying $T$ with $\check{T}_{\zeta}$. 
\end{Lm}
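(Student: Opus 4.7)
For part 1), the inclusion $\phi_{\ell}(\check{\Lambda})\subset\Lambda^{\sharp}$ is immediate: for any $\check{\lambda}\in\check{\Lambda}$ one has $\bar\kappa(\phi_{\ell}(\check{\lambda}))=N\check{\lambda}\in N\check{\Lambda}$. The map $\phi_{\ell}:\check{\Lambda}\to\Lambda^{\sharp}$ is injective since already $\phi_{\ell}:\check{\Lambda}\to\Lambda$ is (we assumed $\bar\kappa\comp\phi_{\ell}=N\cdot\id$, which forces injectivity of $\phi_{\ell}$). Note also that $\bar\kappa:\Lambda\to\check{\Lambda}$ is injective: indeed, $\bar\kappa$ is injective on the finite-index subgroup $\phi_{\ell}(\check{\Lambda})$, hence on all of $\Lambda$ since $\Lambda$ is torsion-free. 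For surjectivity, take $\lambda\in\Lambda^{\sharp}$ and write $\bar\kappa(\lambda)=N\check{\mu}$ with $\check{\mu}\in\check{\Lambda}$; then $\bar\kappa(\lambda-\phi_{\ell}(\check{\mu}))=N\check{\mu}-N\check{\mu}=0$, so by the injectivity of $\bar\kappa$ one has $\lambda=\phi_{\ell}(\check{\mu})$.

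For part 2), I will verify that $\phi_{\ell}:\check{\Lambda}\iso\Lambda^{\sharp}$ is an isomorphism of root data between $(X^*(T),R,X_*(T),R^{\vee})=(\check{\Lambda},\{\check{\alpha}_i\},\Lambda,\{\alpha_i\})$ and $(X^*(\check{T}_{\zeta}),R_{\zeta},X_*(\check{T}_{\zeta}),R^{\vee}_{\zeta})=(\Lambda^{\sharp},\{\delta_i\alpha_i\},\Hom(\Lambda^{\sharp},\ZZ),\{\check{\alpha}_i/\delta_i\})$. The roots clearly go to roots: by definition $\phi_{\ell}(\check{\alpha}_i)=\delta_i\alpha_i$. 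For the coroots I take the dual isomorphism $\psi:X_*(\check{T}_{\zeta})\iso X_*(T)=\Lambda$, characterized by $\<\psi(c),\check{\lambda}\>=\<c,\phi_{\ell}(\check{\lambda})\>$, and compute
\[
\<\psi(\check{\alpha}_i/\delta_i),\check{\lambda}\>\;=\;\tfrac{1}{\delta_i}(\check{\alpha}_i,\check{\lambda})_{\ell}\;=\;\<\alpha_i,\check{\lambda}\>
\]
for all $\check{\lambda}\in\check{\Lambda}$, which gives $\psi(\check{\alpha}_i/\delta_i)=\alpha_i$. Hence simple coroots of $\check{G}_{\zeta}$ go to simple coroots of $G$. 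This is an isomorphism of (based) root data, and by the classification theorem for reductive groups it yields an isomorphism $G\iso\check{G}_{\zeta}$ carrying $T$ to $\check{T}_{\zeta}$.

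There is no real obstacle here: once part 1) is established, part 2) reduces to a direct check on simple roots and a short duality computation on coroots. The mildly subtle point is the injectivity of $\bar\kappa$ needed in part 1); I have explained above why this follows from the hypothesis $\bar\kappa\comp\phi_{\ell}=N\cdot\id$ together with torsion-freeness of $\Lambda$.
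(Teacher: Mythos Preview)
Your proof is correct and follows essentially the same route as the paper: for part 1) you show $\phi_{\ell}(\check{\Lambda})\subset\Lambda^{\sharp}$ via $\bar\kappa\comp\phi_{\ell}=N\cdot\id$, then prove surjectivity by writing $\bar\kappa(\lambda)=N\check{\mu}$ and using injectivity of $\bar\kappa$; for part 2) you check $\phi_{\ell}(\check{\alpha}_i)=\delta_i\alpha_i$ and compute that the dual map sends $\check{\alpha}_i/\delta_i$ to $\alpha_i$. The only difference is that the paper had already asserted the injectivity of $\bar\kappa$ in the paragraph preceding the lemma, whereas you supply an argument for it; your duality computation for the coroots is also spelled out a bit more explicitly, but this is the same verification the paper indicates.
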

\begin{proof}
1) If $\check{\lambda}\in\check{\Lambda}$ then $\bar\kappa\phi_{\ell}(\check{\lambda})\in N\check{\lambda}$, so $\phi_{\ell}: \check{\lambda}\to \Lambda^{\sharp}$. Let now $\lambda\in\Lambda$ with $\bar\kappa(\lambda)\in N\check{\Lambda}$. Then $\bar\kappa \phi_{\ell}(\frac{\bar\kappa(\lambda)}{N})=\bar\kappa(\lambda)$. So, $\lambda= \phi_{\ell}(\frac{\bar\kappa(\lambda)}{N})$, because $\bar\kappa$ is injective. So, $\phi_{\ell}$ is surjective.

\smallskip\noindent
2) The roots of $(\check{G}_{\zeta}, \check{T}_{\zeta})$ are $\delta_i\alpha_i$. One has $\phi_{\ell}(\check{\alpha}_i)=\delta_i\alpha_i$, and the dual map $(\phi_{\ell})^{\check{}}: (\Lambda^{\sharp})^{\check{}}\;\iso\; \Lambda$ sends $\frac{\check{\alpha}_i}{\delta_i}$ to $\alpha_i$.
\end{proof}

\begin{Rem} In \cite{ABBGM} one moreover assumes $\ell$ even. We did not need this assumption. 
\end{Rem}

\appendix

\section{Property (C)}

\subsection{} In some cases we use the following observation. Let $i\in\cJ$, $\lambda>\alpha_i$ such that $\omega_i-\lambda$ appears as a weight of $\VV^{\omega_i}$. Then there is $\mu\in\Lambda^+$ with $\mu\le \omega_i$, $w\in W$ such that $\lambda=\omega_i-w\mu$. Then the property $\bar\kappa(\omega_i-w\mu-\alpha_i)\in N\check{\Lambda}$ is equivalent to $\bar\kappa(w^{-1}s_i(\omega_i)-\mu)\in N\check{\Lambda}$, where $s_i$ is the reflection corresponding to $\alpha_i$. So, one may first find the $W$-orbit of each $\omega_i$. Second, find for each $i$ all the dominant coweights satisfying $\mu\le \omega_i$. Third, check for each $i\in\cJ$, $\mu\le\omega_i$ dominant with $\mu\ne\nu\in W\omega_i$ the property $\bar\kappa(\nu-\mu)\notin N\check{\Lambda}$.

\medskip\noindent
{\bf Type $A_{n-1}$.} We may assume $G=\GL_n$, $B\subset G$ is the group of upper triangular matrices, $T$ is the group of diagonal matrices. So, $\Lambda=\ZZ^n$. We may assume $\bar\kappa: \Lambda\otimes\Lambda\to\ZZ$ given by $\bar\kappa=m\kappa$, where $m\in\ZZ$ and $\kappa(a,b)=\sum_{i=1}^n a_ib_i$. Then our assumption is $m\notin N\ZZ$. Since $\lambda$ is not a simple coroot, we have $n\ge 3$. We assume $\cJ=\{1,\ldots, n-1\}$ and $\omega_i=(1,\ldots, 1,0,\ldots,0)$, where $1$ appears $i$ times.
The representation $\VV^{\omega_i}$ is minuscule, for any $\mu\le\omega_i$ with $\mu\in\Lambda^+$ we have $\mu=\omega_i$. Any $\nu\in W\omega_i$ is of the form $\nu=e_{j_1}+\ldots+e_{j_i}$ for $1\le j_1<\ldots < j_i\le n$. Let $1\le k\le n$ be the smallest such that $\alpha_k=e_k-e_{k+1}$ appears in the decomposition of $\omega_i-\nu\ne 0$  into a sum of simple coroots. Then $k\le i$ and $m=\bar\kappa(\lambda, e_k)\notin N\ZZ$. We are done.

\medskip\noindent
{\bf Type $C_n$.} We may assume $G=\GSp_{2n}$, the quotient of $\Gm\times\Sp_{2n}$ by the diagonally embedded $\mu_2$. Realize $G\subset \GL_{2n}$ as the subgroup preserving up to scalar the bilinear form given by the matrix
$$
\left(
\begin{array}{cc}
0 & E_n\\
-E_n & 0
\end{array}
\right),
$$
where $E_n$ is the unit matrix of $\GL_n$. The maximal torus $T$ of $G$ is $\{(y_1,\ldots,y_{2n})\mid y_iy_{n+i} \mbox{ does not depend on  } i\}$. Let $\check{\epsilon}_i\in\check{\Lambda}$ 
be the caracter that sends a point of $T$ to $y_i$. 
The roots are
$$
\check{R}=\{\pm\check{\alpha}_{ij} \; (i< j \in 1,\ldots,n),\;
\pm\check{\beta}_{ij}\; (i\le j \in 1,\ldots,n)\},
$$
where
$\check{\alpha}_{ij}=\check{\epsilon}_i-\check{\epsilon}_j$ and 
$\check{\beta}_{ij}=\check{\epsilon}_i-\check{\epsilon}_{n+j}$. 

 We have $\Lambda=\{(a_1,\ldots,a_{2n})\mid a_i+a_{n+i} \mbox{ does not depend on }
i\}$. The weight latice is 
$$
\check{\Lambda}=\ZZ^{2n}/\{\check{\epsilon}_i+
\check{\epsilon}_{n+i}-\check{\epsilon}_j-\check{\epsilon}_{n+j}, \; i<j\} 
$$
Let $e_i$ denote the standard basis of $\ZZ^{2n}$. 
The coroots are 
$$
R=\{\pm\alpha_{ij} \; (i< j \in 1,\ldots,n),\;
\pm\beta_{ij}\; (i\le j \in 1,\ldots,n)\},
$$
where $\beta_{ij}=e_i+e_j-e_{n+i}-e_{n+j}$ for
$i<j$ and $\beta_{ii}=e_i-e_{n+i}$. Besides, $\alpha_{ij}=e_i+e_{n+j}-e_j-e_{n+i}$.   

 Fix positive roots 
$$
\check{R}^+=\{\check{\alpha}_{ij}
\;\; (i<j\in 1,\ldots,n), \; \check{\beta}_{ij} \;\; (i\le j\in 1,\ldots,n)\}
$$
Then the simple roots are $\check{\alpha}_1:=\check{\alpha}_{12},\ldots,\check{\alpha}_{n-1}:=\check{\alpha}_{n-1,n}$ and
$\check{\alpha}_n:=\check{\beta}_{n,n}$. 

 For $1\le i<n$ set $\omega_i=(1,\ldots, 1,0,\ldots, 0; -1,\ldots, -1, 0\ldots, 0)$, where $1$ appears $i$ times then $0$ appears $n-i$ times then $-1$ appears $i$ times, and $0$ appears $n-i$ times. Set $\omega_n=(1,\ldots, 1;0,\ldots,0)$, where $1$ appears $n$ times, and $0$ appears $n$ times. This is our choice of the fundamental coweights corresponding to $\check{\alpha}_i$. 
 
  For $b\in \Lambda$ write $\bar b=b_i+b_{n+i}$, this is independent of $i$. The map $\Lambda_{ab}\,\iso\,\ZZ$, $a\mapsto \bar a$ is an isomorphism. 
Let $\kappa:\Lambda\otimes\Lambda\to\ZZ$ be given by $\kappa(a,b)=\sum_{i=1}^{2n} a_i b_i$. Then $\kappa$ is $W$-invariant symmetric bilinear form. We have $\kappa(\alpha_{ij}, \alpha_{ij})=\kappa(\beta_{ij}, \beta_{ij})=4$ for $i\ne j$, and $\kappa(\beta_{ii}, \beta_{ii})=2$. We may assume $\bar\kappa=m\kappa$ for some $m\in\ZZ$. 

 Note that $\VV^{\omega_n}$ is the spinor representation of $\check{G}\,\iso\, \GSpin_{2n+1}$ of dimension $2^n$, $\VV^{\omega_1}$ is the standard representation of the quotient $\SO_{2n+1}$, and $\VV^{\omega_i}=\wedge^i (\VV^{\omega_1})$ for $1\le i<n$. We have
$0\le \omega_1\le\ldots\le\omega_{n-1}$, and if $\mu\in\Lambda$ is dominant and $\mu\le\omega_{n-1}$ then $\mu$ is in this list.  

 The assumption $\varrho(\alpha_i)\notin \ZZ$ for any simple coroots reads $2m\notin N\ZZ$. Assume $n=2$. In this case it is easy to check the desired property (C). 

 Assume now $n\ge 3$. Then the assumption $\varrho(\alpha_i)\notin \frac{1}{2}\ZZ$ for any simple coroots reads $4m\notin N\ZZ$. 
 
 First, let $1\le i<n$. Suppose $\omega_i-\lambda$ appears in $\VV^{\omega_i}$. Then $\omega_i-\lambda$ is of the form $\sum_{k=1}^j \epsilon_k \beta_{i_k, i_k}$, where $\epsilon_k=\pm 1$, $0\le j\le i$, and $1\le i_1<\ldots < i_j\le n$. Let $\lambda-\alpha_i=(a_1,\ldots, a_{2n})$. If $j<i$ then there is $1\le k\le n$ such that $a_k=1$, and $\kappa(\lambda-\alpha_i, \beta_{k,k})=2$. If $j=i$ and there is no $1\le k\le n$ with this property then there is $1\le k\le n$ such that $a_k=2$, and $\kappa(\lambda-\alpha_i, \beta_{k,k})=4$. The case $i<n$ is done.

 Let now $i=n$. The representation $\VV^{\omega_n}$ is minuscule, its weights are the $W$-orbit of $\omega_n$. The coweight $\lambda$ is of the form $\lambda=\sum_{k\in S}\beta_{k,k}$, where $S\subset \{1,\ldots, n\}$ is a subset, and $\lambda>\alpha_n=\beta_{n,n}$. So, there is $k\in S$ with $k<n$. We have $\kappa(\lambda-\alpha_n, \beta_{k,k})=2$. We are done.
 
\medskip\noindent
{\bf Type $B_n$.}  Assume $n\ge 3$, let $G=\Spin_{2n+1}$. We take $\Lambda=\{(a_1,\ldots, a_n)\in \ZZ^n\mid \sum_k a_k=0\!\mod\! 2\}$, so $\ZZ^n\subset \check{\Lambda}$. The coroots are
$$
R=\{\pm \alpha_{ij} (1\le i<j\le n), \pm \beta_{ij} (1\le i\le j\le n)\},
$$
where $\alpha_{ij}=e_i-e_j$, $\beta_{ij}=e_i+e_j$. The corresponding roots are $\check{\alpha}_{ij}=e_i-e_j$, $\check{\beta}_{ij}=e_i+e_j$ for $1\le i<j\le n$, and $\check{\beta}_{ii}=e_i$. Here $\check{\alpha}_{ij}, \check{\beta}_{ij}\in \ZZ^n\subset \check{\Lambda}$. The simple roots are
$\check{\alpha}_1=\check{\alpha}_{12}, \ldots, \check{\alpha}_{n-1}=\check{\alpha}_{n-1,n}$, $\check{\alpha}_n=\check{\beta}_{n,n}$. 

 Write $\check{G}^{sc}$ for the simply-connected cover of $\check{G}$.
 The fundamental weights of $\check{G}^{sc}$, which we refer to as the fundamental coweights of $G_{ad}$, are $\omega_i=e_1+\ldots+e_i\in \ZZ^n$ for $1\le i\le n$. We use here the canonical inclusion $\Lambda\subset \ZZ^n=\Lambda_{ad}$ as a sublattice of index 2. Here $\Lambda_{ad}$ is the coweights lattice of $G_{ad}=\SO_{2n+1}$. The Weyl group acts on $\Lambda_{ad}$ by any permutations and any sign changes. That is, it contains the maps $\Lambda_{ad}\to\Lambda_{ad}$, $\mu=(a_1,\dots, a_n)\mapsto (\epsilon_1a_1,\ldots, \epsilon_na_n)$ for any $\epsilon_k=\pm 1$. 
 
  Let $\kappa: \Lambda\otimes\Lambda\to\ZZ$ be the unique $W$-invariant symmetric bilinear form such that $\kappa(\alpha, \alpha)=2$ for a short coroot. Then $\kappa$ extends uniquely to $\kappa: \Lambda_{ad}\otimes\Lambda_{ad}\to\ZZ$ as $\kappa(a,b)=\sum_{k=1}^n a_k b_k$. We get $\kappa(\beta_{ii}, \beta_{ii})=4$ for any $1\le i\le n$, and all the other coroots are short. We may assume $\bar\kappa=m\kappa$, $m\in \ZZ$. Then the assumption of Conjecture~\ref{Con_main} reads $2m\notin N\ZZ$. 
  
  Let $\Lambda^+_{ad}$ be the dominant coweigts of $G_{ad}$ then $\Lambda^+_{ad}=\{(a_1,\ldots, a_n)\in \ZZ^n\mid a_1\ge\ldots\ge a_n\ge 0\}$. 
If $\mu\in \Lambda^+_{ad}$ and $\mu\le\omega_i$ then $\mu=(1,\ldots, 1,0,\ldots, 0)$, where $1$ appears $k$ times with $k\le i$ and $k=i\,\mod\, 2$. Any weight of $\VV^{\omega_i}$ is of the form $w\mu$, $w\in W$, where $\mu\in \Lambda^+_{ad}$ and $\mu\le\omega_i$. So, the weights of $\VV^{\omega_i}$ are of the form $\omega_i-\lambda=\sum_{r=1}^k \epsilon_r e_{j_r}$, where $0\le k\le i$, $k=i\mod 2$, and $1\le j_1<\ldots<j_k\le n$, here $\epsilon_r=\pm 1$. 

 If $1\le i<n$ then $\omega_i-\alpha_i=(1,\ldots,1,0,1,0,\ldots,0)$, where $1$ appears first $i-1$ times. If $k<i$ then $\lambda-\alpha_i$ contains an entry $1$ on some $m$-th place and $\kappa(\lambda-\alpha_i, \beta_{m,m})=2$, so $\bar\kappa(\lambda-\alpha_i)$ is not divisible by $N$ in this case. If $k=i$ and $\lambda-\alpha_i$ does not contain the entry 1 then $\lambda-\alpha_i$ is of the form $\sum_{j\in S} \beta_{jj}$ for some subset $S\subset \{1,\ldots, n\}$ that contains at most $i$ elements. Since $i<n$ there is a couple $j_1\in S, j_2\notin S$. Then $\kappa(\lambda-\alpha_i, \beta_{j_1, j_2})=2$, so $\bar\kappa(\lambda-\alpha_i)$ is not divisible by $N$ in this case.
 
 Let $i=n$ then $\omega_n-\alpha_n=(1,\ldots, 1, -1)$. Let $\omega_i-\lambda$ be as above. If $k<n$ then $k\le n-2$, and $\lambda-\alpha_n$  contains an entry $1$ at some place. As above this implies that $\bar\kappa(\lambda-\alpha_i)$ is not divisible by $N$ in this case. If $k=n$ then $\lambda-\alpha_n=\sum_{j\in S} \beta_{jj}+ ae_n$, where $S\subset \{1,\ldots, n-1\}$ is a subset, and $a=0$ or $a=-2$. If $\lambda-\alpha_n$ contains a entry $0$ then as above one shows that $\bar\kappa(\lambda-\alpha_i)$ is not divisible by $N$. The only remaning case is $\lambda-\alpha_n=(2,\ldots, 2, -2)=-\beta_{nn}+\sum_{j=1}^{n-1}\beta_{jj}$. 
 
  Recall that for any coroot $\alpha$ one has $\kappa(\alpha)=\frac{\kappa(\alpha, \alpha)}{2}\check{\alpha}$. We get $\kappa(\beta_{jj})=2\check{\beta}_{jj}$ for any $j$. So, $\kappa(\lambda-\alpha_n)=-2\check{\beta}_{nn}+2\sum_{j=1}^{n-1}\check{\beta}_{jj}$. The root lattice of $G$ is $\ZZ^n\subset \check{\Lambda}$, and $-\check{\beta}_{nn}+\sum_{j=1}^{n-1}\check{\beta}_{jj}$ is divisible in $\check{\Lambda}$, namely $\frac{1}{2}(-\check{\beta}_{nn}+\sum_{j=1}^{n-1}\check{\beta}_{jj})\in \check{\Lambda}$. So, we must require that $4m\notin N\ZZ$ to guarantee that $\bar\kappa(\lambda-\alpha_i)$ is not divisible by $N$.
We are done. 

\medskip\noindent
{\bf Type $G_2$.} Let $G$ be of type $G_2$. Let $\Lambda=\{a\in\ZZ^3\mid \sum_i a_i=0\}$ with the bilinear form $\kappa: \Lambda\otimes\Lambda\to\ZZ$ given by $\kappa(a,b)=\sum_i a_i b_i$ for $a,b\in\Lambda$. The coroots are the vectors $\mu\in\Lambda$ such that $\kappa(\mu,\mu)=2$ or $6$. The coroots are 
$$
\pm\{e_1-e_2, e_1-e_3, e_2-e_3, 2e_1-e_2-e_3, 2e_2-e_1-e_3, 2e_3-e_1-e_2\}
$$ 
The form $\kappa$ induces an inclusion $\kappa: \Lambda\hook{}\check{\Lambda}$ such that $\check{\Lambda}/\kappa(\Lambda)\,\iso\, \ZZ/3\ZZ$. The roots can be found from the property that for any coroot $\alpha$ one has $\kappa(\alpha)=\frac{\kappa(\alpha,\alpha)}{2}\check{\alpha}$. For a short coroot $\alpha$ one gets $\kappa(\alpha)=\check{\alpha}$, and for a long coroot $\alpha$ one gets $\kappa(\alpha)=3\check{\alpha}$. We get the roots
$$
\pm\{e_1-e_2, e_1-e_3, e_2-e_3, e_1, e_2, e_3\}\subset \ZZ^3/(e_1+e_2+e_3)=\check{\Lambda}
$$
The center of $G$ is trivial. Pick positive roots $\check{\alpha}_1=e_1-e_2$ and $\check{\alpha}_2=-e_1$. They correspond to simple coroots $\alpha_1=e_1-e_2$, $\alpha_2=-2e_1+e_2+e_3$. The dominant coweights are $\Lambda^+=\{a\in\Lambda\mid a_2\le a_1\le 0\}$. The fundamental coweights are $\omega_1=(0, -1,1)=2\alpha_1+\alpha_2$ and $\omega_2=(-1,-1,2)=3\alpha_1+2\alpha_2$. The positive coroots are $\{\alpha_1,\alpha_2, \alpha_2+\alpha_1, \alpha_2+2\alpha_1, \alpha_2+3\alpha_1, 3\alpha_1+2\alpha_2\}$. The representation $\VV^{\omega_2}$ is the adjoint representation of $\check{G}$, $\dim \VV^{\omega_2}=14$ and $\dim \VV^{\omega_1}=7$. We have $\omega_1\le\omega_2$. We assume $\bar\kappa=m\kappa$ for some $m\in\ZZ$. 

 The weights of $\VV^{\omega_2}$ are coroots and zero. So, for $i=2$ the coweight $\lambda$ is one of the following
\begin{multline*}
\{\alpha_1+\alpha_2, 2\alpha_1+\alpha_2, 3\alpha_1+\alpha_2, 2\alpha_1+2\alpha_2, 3\alpha_1+2\alpha_2, 4\alpha_1+2\alpha_2, 
\\ 3\alpha_1+3\alpha_2, 4\alpha_1+3\alpha_2, 5\alpha_1+3\alpha_2, 6\alpha_1+3\alpha_2, 6\alpha_1+4\alpha_2\}
\end{multline*}
Since $\kappa(\alpha_1)=\check{\alpha}_1$ and $\kappa(\check{\alpha}_2)=3\check{\alpha}_2$, we get in this case that $\kappa(\lambda-\alpha_2)$ is an element of the set
\begin{multline*}
\{\check{\alpha}_1, 2\check{\alpha}_1, 3\check{\alpha}_1, 
2\check{\alpha}_1+3\check{\alpha}_2, 3\check{\alpha}_1+3\check{\alpha}_2, 4\check{\alpha}_1+3\check{\alpha}_2, 3\check{\alpha}_1+6\check{\alpha}_2,
\\  4\check{\alpha}_1+3\check{\alpha}_2, 5\check{\alpha}_1+6\check{\alpha}_2, 6\check{\alpha}_1+6\check{\alpha}_2, 6\check{\alpha}_1+9\check{\alpha}_2\}
\end{multline*}
An element of this set may be divisible in $\check{\Lambda}$ by $2,3,6$.
So, in order to guarantee that $\frac{m}{N}\kappa(\lambda-\alpha_2)\notin \check{\Lambda}=\ZZ\check{\alpha}_1\oplus \ZZ\check{\alpha}_2$, we
must assume $6m\notin N\ZZ$. In terms of $\varrho$ this assumption reads $\varrho(\alpha_i)\notin \frac{1}{2}\ZZ$ for any simple coroot $\alpha_i$. 

 Let now $i=1$. Then $\kappa(\lambda-\alpha_1)$ is an element of the set
$$
\{3\check{\alpha}_2, \check{\alpha}_1+3\check{\alpha}_2, 2\check{\alpha}_1+3\check{\alpha}_2, 2\check{\alpha}_1+6\check{\alpha}_2, 3\check{\alpha}_1+6\check{\alpha}_2\}
$$
An element of this set may be divisible in $\check{\Lambda}$ by $2,3$. So, we must assume $2m,3m\notin N\ZZ$. Finally, it suffices to assume $6m\notin N\ZZ$. We are done. 

\medskip\noindent
{\bf Type $D_n$.} Let $G=\Spin_{2n}$ with $n\ge 4$. We take $\Lambda=\{(a_1,\ldots, a_n)\in\ZZ^n\mid \sum_j a_j=0\!\mod\! 2\}$, so $\ZZ^n\subset \check{\Lambda}$. The group $\check{\Lambda}$ is generated by $\ZZ^n$ and the element $\frac{1}{2}(1,\ldots, 1)$. The roots are 
$$
\check{R}=\{\pm\check{\alpha}_{ij}=e_i-e_j  (1\le i<j\le n), \pm\check{\beta}_{ij}=e_i+e_j (1\le i<j\le n)\}
$$
The simple roots are $\check{\alpha}_1=\check{\alpha}_{12},\ldots, \check{\alpha}_{n-1}=\check{\alpha}_{n-1,n}, \check{\alpha}_n=\check{\beta}_{n-1,n}$. The coroots are $\alpha_{ij}=e_i-e_j$, $\beta_{ij}=e_i+e_j$. The Weyl group acting on $\Lambda$ contains all the permutations, and also all the sign changes with the even number of sign changes. Let $\kappa: \Lambda\otimes\Lambda\to\ZZ$ be given by $\kappa(a,b)=\sum_{k=1}^n a_kb_k$. Then $\kappa$ is the unique $W$-invariant symmetric bilinear form such that $\kappa(\alpha,\alpha)=2$ for any coroot. Let $\bar\kappa=m\kappa$, $m\in\ZZ$. The assumption of Conjecture~\ref{Con_main} reads $m\notin N\ZZ$. 

The center of $G$ is $\ZZ/2\ZZ\times \ZZ/2\ZZ$ for $n$ even (resp., $\ZZ/4\ZZ$ for $n$ odd).
The group $\Lambda_{ad}$ is generated by $\ZZ^n$ and the vector $\frac{1}{2}(1,\ldots, 1)$. The fundamental coweights of $G_{ad}$ in $\Lambda_{ad}$ are $\omega_i=(1,\ldots, 1, 0,\ldots, 0)\in \ZZ^n$, where $1$ appears $i$ times for $1\le i\le n-2$, and 
$$
\omega_n=\frac{1}{2}(1,\ldots, 1), \;\; \omega_{n-1}=\frac{1}{2}(1,\ldots, 1, -1)
$$ 
Here $\VV^{\omega_{n-1}}$, $\VV^{\omega_n}$ are half-spin representations of $\check{G}^{sc}\,\iso\,\Spin_{2n}$. The representation $\VV^{\omega_1}$ is the standard representation of $\SO_{2n}$, and $\VV^{\omega_i}\,\iso\, \wedge^i \VV^{\omega_1}$ for $1\le i\le n-2$. Both half-spin representations are minuscule of dimension $2^{n-1}$. 

 The weights of $\VV^{\omega_n}$ (resp., of $\VV^{\omega_{n-1}}$) are $\frac{1}{2}(\epsilon_1,\ldots, \epsilon_n)$, where $\epsilon_k=\pm 1$, and the number of negative signs is even (resp., odd). 
 
 If $i=n$ then $\lambda$ is of the form $\lambda=\sum_{k\in S} e_k$, where $S\subset\{1,\ldots, n\}$, and the number of elements of $S$ is even. For $n$ odd one checks that for any such $\lambda$, $\kappa(\lambda-\alpha_n)$ is not divisible in $\check{\Lambda}$, so $\bar\kappa(\lambda-\alpha_n)\notin N\ZZ$. For $n$ even taking $\lambda=(1,\ldots, 1,0,0)$ we get $\lambda-\alpha_n=(1,\ldots, 1, -1,-1)$. For any $\mu\in\Lambda$, $\kappa(\lambda-\alpha_n, \mu)$ is even. So, we have to assume $2m\notin N\ZZ$ for $n$ even. Under this assumption one checks that $\bar\kappa(\lambda-\alpha_n)\notin N\check{\Lambda}$. 
 
 If $i=n-1$ then $\lambda-\alpha_{n-1}$ is of the form $(\epsilon_1,\ldots, \epsilon_{n-2}, 0, \epsilon_n)$, where $\epsilon_k=0$ or $1$, and the number of 1's is even; or of the form $(\epsilon_1,\ldots, \epsilon_{n-2}, -1, \epsilon_n)$, where $\epsilon_k=0$ or $1$, and the number of 1's is odd (and the element $\lambda=0$ is excluded here). In the first case $\bar\kappa(\lambda-\alpha_n)\notin N\check{\Lambda}$, and in the second case the only difficulty comes from $\lambda-\alpha_{n-1}=(1,\ldots, 1, -1, 1)$ for $n$ even. In this case our assumption $2m\notin N\ZZ$ for $n$ even guarantees that $\bar\kappa(\lambda-\alpha_n)\notin N\check{\Lambda}$. 
 
  Let now $i\le n-2$. Note that for any $a=(a_1,\ldots, a_n)\in\Lambda$, $\kappa(a)=(a_1,\ldots, a_n)\in \check{\Lambda}$. If $\mu\in\Lambda^+$ is a weight of $\VV^{\omega_i}$ then $\mu$ is of the form $(1,\ldots, 1,0,\ldots, 0)$, where $1$ appears $m\le i$ times with $i-m$ even. So, any weight of $\VV^{\omega_i}$ is of the form $\sum_{k\in S} \epsilon_k$ with $\epsilon_k=\pm 1$, where $S\subset \{1,\ldots,n\}$ is a subset of order $m\le i$ with $i-m$ even. 
We have $\omega_i-\alpha_i=(1,\ldots, 1,0,1,0,\ldots,0)$, where $1$ first appears $i-1$ times. If $\lambda-\alpha_i$ contains the entry 0 then its other entries could be only $0,1,-1, 2$. So, $\kappa(\lambda-\alpha_i)$ may be divisible at most by 2 in $\check{\Lambda}$. Since $2m\notin N\ZZ$, $\bar\kappa(\lambda-\alpha_i)\notin N\check{\Lambda}$ in this case. If $\lambda-\alpha_i$ does not contains the entry 0 and contains the entry 2 then $\kappa(\lambda-\alpha_i)$ may be divisible at most by 2. If $\lambda-\alpha_i$ does not contains the entries $0,2$ then $i=n/2$, $n$ is even and $\lambda-\alpha_i=(1,\ldots, 1, \epsilon_i, 1, \epsilon_{i+2}, \ldots, \epsilon_n)$ with $\epsilon_k=\pm 1$. Then $\kappa(\lambda-\alpha_i)$ is divisible at most by 2. We are done.

\begin{Rem} Our result for the type $D_n$ could possibly be improved by replacing $\Spin_{2n}$ with the corresponding group with connected center as in Remark~\ref{Rem_passing_to_G_semisimple}.
\end{Rem}

\medskip\noindent
{\bf Type $F_4$.} Let $I=\ZZ^4$, $e=\frac{1}{2}(e_1+e_2+e_3+e_4)\in (\frac{1}{2}\ZZ)^4$ and $\Lambda=I \cup I'$, where $I'=e+I$. So, $\Lambda\subset (\frac{1}{2}\ZZ)^4$. Let $\kappa: \Lambda\otimes\Lambda\to\ZZ$ be the symmetric bilinear form given by $\kappa(a,b)=2\sum_k a_k b_k$. Let $R$ be the set of $\mu\in \Lambda$ with $\kappa(\mu,\mu)=2$ or $4$. The coroots are
$$
R=\{\pm e_i (1\le i\le 4), \pm(e_i-e_j), \pm(e_i+e_j) (1\le i<j\le 4), \frac{1}{2}(\pm 1,\ldots, \pm 1)\}
$$
Pick $\alpha_1=\frac{1}{2}(1,-1,-1,-1)$, $\alpha_2=e_4$, $\alpha_3=e_3-e_4$, $\alpha_4=e_2-e_3$. These are simple coroots (notations from \cite{VO}), and $\Lambda$ is freely generated by $\alpha_i$. The map $\kappa: \Lambda\hook{}\check{\Lambda}$ is an inclusion. The center of $G$ is trivial. 

 We identify $\check{\Lambda}$ with a sublattice of $\QQ^4$ such that the pairing $\<,\>: \Lambda\otimes\check{\Lambda}\to\ZZ$ is the map sending $(a,b)$ to $\sum_k a_k b_k$. The fundamental weights are $\check{\omega}_1=2e_1$, $\check{\omega}_2=3e_1+e_2+e_3+e_4$, $\check{\omega}_3=2e_1+e_2+e_3$, $\check{\omega}_4=e_1+e_2$ in $\check{\Lambda}$. Then $\check{\Lambda}$ is freely generated by $\check{\omega}_i$. So, $\check{\Lambda}=\{a\in\ZZ^4\mid \sum_i a_i=0\!\mod\! 2\}$. The map $\kappa: \Lambda\to\check{\Lambda}$ sends any $a$ to $2a$. We recover the roots in $\check{\Lambda}$ from the property that $\kappa(\alpha)=\frac{\kappa(\alpha,\alpha)}{2}\check{\alpha}$ for any coroot $\alpha$. 
The roots are
$$
\check{R}=\{\pm 2e_i (1\le i\le 4), \pm(e_i-e_j), \pm(e_i+e_j) (1\le i<j\le 4),  (\pm 1,\ldots, \pm 1)\}
$$
The simple roots are $\check{\alpha}_1=(1,-1,-1,-1)$, $\check{\alpha}_2=2e_4$, $\check{\alpha}_3=e_3-e_4, \check{\alpha}_4=e_2-e_3$. 
The fundamental coweights are $\omega_1=e_1$, $\omega_2=\frac{1}{2}(3e_1+e_2+e_3+e_4)$, $\omega_3=2e_1+e_2+e_3$, $\omega_4=e_1+e_2$. The Weyl group acting on $\Lambda$ is generated by all the permutations, all the sign changes, and the element $s_1$ given by
$$
s_1(a_1,\ldots, a_4)=\frac{1}{2}(a_1+\ldots+a_4, a_1+a_2-a_3-a_4, a_1-a_2+a_3-a_4, a_1-a_2-a_3+a_4)
$$
The element $-w_0$ acts trivially on $\Lambda$. The group $W$ acts transitively on long (resp., short) coroots. We have $0\le \omega_1\le \omega_4\le\omega_2\le \omega_3$. The representation $\VV^{\omega_4}$ is the adjoint one, $\dim \VV^{\omega_2}=273, \dim \VV^{\omega_3}=1274$. The 24 positive coroots are
\begin{multline*}
R^+=\{\alpha_i (1\le i\le 4), \alpha_2+\alpha_3+\alpha_4, \alpha_2+\alpha_3, 2\alpha_1+3\alpha_2+2\alpha_3+\alpha_4,\\
2\alpha_1+2\alpha_2+\alpha_3, 2\alpha_1+2\alpha_2+\alpha_3+\alpha_4, 2\alpha_1+2\alpha_2+2\alpha_3+\alpha_4, \alpha_3+\alpha_4,\\
2\alpha_1+4\alpha_2+3\alpha_3+2\alpha_4, 2\alpha_1+4\alpha_2+3\alpha_3+\alpha_4, 
2\alpha_1+4\alpha_2+2\alpha_3+\alpha_4,\\
2\alpha_2+2\alpha_3+\alpha_4,
2\alpha_2+\alpha_3+\alpha_4,
2\alpha_2+\alpha_3,\\
\alpha_1+\alpha_2+\alpha_3+\alpha_4, 
\alpha_1+\alpha_2+\alpha_3,
\alpha_1+\alpha_2, 
\alpha_1+2\alpha_2+2\alpha_3+\alpha_4,\\
\alpha_1+3\alpha_2+2\alpha_3+\alpha_4,
\alpha_1+2\alpha_2+\alpha_3+\alpha_4,
\alpha_1+2\alpha_2+\alpha_3\}
\end{multline*} 

 Let $i=1$. The weights of $\VV^{\omega_1}$ are known from \cite{VO}, they are $\pm e_j$, $\frac{1}{2}(\pm 1,\ldots, \pm 1)$, $0$. We have $\omega_1-\alpha_1=e$. So, $\lambda-\alpha_1$ may be $\frac{1}{2}(a_1,\ldots, a_4)$, where all $a_j=1$ except one, which is $-1$ or $3$; it also may be $(a_1,\ldots, a_4)\ne 0$, where each $a_k$ is $0$ or $1$; it also maybe $e$. We see that $\kappa(\lambda-\alpha_1)$ may be divisible at most by 2. Assume $\bar\kappa=m\kappa$ with $m\in\ZZ$. The assumption of Conjecture~\ref{Con_main} says $2m\notin N\ZZ$. So, in this case $\bar\kappa(\lambda-\alpha_i)$ is not divisible by $N$. 
 
 Let $i=4$. The weights of $\VV^{\omega_4}$ are the coroots and $0$. 
We have $\omega_4=2\alpha_1+4\alpha_2+3\alpha_3+2\alpha_4$. 
If $\omega_4-\lambda$ is a weight of $\VV^{\omega_4}$ then $\lambda\le 2\omega_4$. Under our assumptions, we get $0<\lambda-\alpha_4\le 2\omega_4-\alpha_4=4\alpha_1+8\alpha_2+6\alpha_3+3\alpha_4$. 
Since $\gamma:=2\alpha_1+4\alpha_2+3\alpha_3+\alpha_4$ is a coroot, $\lambda-\alpha_4$ may take value $\omega_4+\gamma-\alpha_4=4\alpha_1+8\alpha_2+6\alpha_3+2\alpha_4$. For this $\lambda$ we see that $\kappa(\lambda-\alpha_4)=4\check{\alpha}_1+8\check{\alpha}_2+12\check{\alpha}_3+4\check{\alpha}_4$ is divisible by $4$. So, the assumption of Conjecture~\ref{Con_main} is not sufficient for our method to work in this case. We need to assume at least that $4m\notin N\ZZ$. 

 Use the method from Section~A.1. The dominant coweights $\mu\in\Lambda^+$ such that $\mu\le\omega_4$ are $\{0, \omega_1, \omega_4\}$. For $\mu=0$ we need to check that $\bar\kappa(\omega_4)\notin N\check{\Lambda}$. Since $\kappa(\omega_4)=2(e_1+e_2)$ is only divisible by $2$, and $2m\notin N\ZZ$, we see that $\bar\kappa(\omega_4)\notin N\check{\Lambda}$.
For $\mu=\omega_1$ this property is easy. The $W$-orbit through $\omega_4$ is the set of long coroots. For $\mu=\omega_4$ and a long coroot $\alpha$, $\kappa(\alpha-\mu)$ may be divisible at most by 4 in the case $\alpha=-e_1-e_2$. The assumption $4m\notin N\ZZ$ guarantees in this case that $\bar\kappa(\lambda-\alpha_i)\notin N\check{\Lambda}$.

 Let $i=2$. The dominant coweights $\mu$ such that $\mu\le \omega_2$ form the set $\{0, \omega_1,\omega_4, \omega_2\}$. The $W$-orbit through $\omega_2$ is the set  
\begin{multline*}
X_2=\{\frac{1}{2}(\pm 3, \pm 1,\pm 1,\pm 1), \frac{1}{2}(\pm 1, \pm 3,\pm 1,\pm 1), \frac{1}{2}(\pm 1, \pm 1,\pm 3,\pm 1), \frac{1}{2}(\pm 1, \pm 1,\pm 1,\pm 3), \\ (\pm 1, \pm 1, \pm 1, 0), (\pm 1, \pm 1, 0, \pm 1), (\pm 1, 0, \pm 1, \pm 1), (0, \pm 1, \pm 1, \pm 1)\},
\end{multline*}
these are all the coweights of length 6. The element $\kappa(\omega_2)$ is not divisible. For $\tau\in X_2$, $\kappa(\tau-\omega_1)$ is divisible at most by 2. For $\tau\in X_2$, $\kappa(\tau-\omega_4)$ is divisible at most by 2. For $\tau\in X_2$, $\kappa(\tau-\omega_2)$ may be divisible by 2 or 3. Namely, if $\tau=\frac{1}{2}(-3, 1,1,1)$ then $\kappa(\tau-\omega_2)=-6e_1$ is divisible in $\check{\Lambda}$ by $3$. So, we must assume $3m\notin N\ZZ$. 
 
 Let $i=3$. The set of $\mu\in\Lambda^+$ such that $\mu\le \omega_3$ is the set $\{0, \omega_1,\omega_4, \omega_2, 2\omega_1, \omega_1+\omega_4, \omega_3\}$. The $W$-orbit through $\omega_3$ is the set $X_3$ of all the coweights of length 12, it consists of $(\pm 2, \pm 1, \pm 1, 0)$ and all their permutations. The element $\kappa(\omega_3)$ is divisible by 2. For $\tau\in X_3$, $\kappa(\tau-\omega_1)$ is not divisible. For $\tau\in X_3$, $\kappa(\tau-\omega_4)$ may be divisible at most by 4. In this case our condition $4m\notin N\ZZ$ guarantees that $\bar\kappa(\lambda-\alpha_i)\notin N\check{\Lambda}$. For $\tau\in X_3$, $\kappa(\tau-\omega_2)$ may be divisible at most by 3. For $\tau\in X_3$, $\kappa(\tau-2\omega_1)$ is divisible at most by 2. For $\tau\in X_3$, $\kappa(\tau-\omega_1-\omega_4)$ is not divisible. For $\tau\in X_3$, $\kappa(\tau-\omega_3)$ may be divisible by 4 and by 6 (it is not divisible by 5 or by $r$ with $r\ge 7$). For example, if $\tau=(-1, -2, 1,0)$ then $\kappa(\tau-\omega_3)=6(-1, -1,0, 0)\in 6\check{\Lambda}$. Our condition $4m, 6m\notin N\ZZ$ guarantees that $\bar\kappa(\lambda-\alpha_i)\notin N\check{\Lambda}$. We are done. 
 
\begin{Rem} The notation from Bourbaki (\cite{B}, chapter 6, Section~4.9) for this root system are obtained from the above by passing to the opposite order in the linearly ordered set $\{1,2,3,4\}$.
\end{Rem} 

\subsection{} Assume $G$ is of type $E_8$. We follow the notations for the corresponding root system from Bourbaki (\cite{B}, chapter 6, Section~4.10). So, $\Lambda=\Lambda_1+\ZZ(\frac{1}{2}\sum_{i=1}^8 e_i)$, where $e_i$ is the canonical (orthonormal) base in $\ZZ^8$. Here $\Lambda_1=\{(a_1,\ldots, a_8)\in \ZZ^8\mid \sum a_i=0\mod 2\}$. The bilinear form $\kappa: \Lambda\otimes\Lambda\to\ZZ$ is induced from the scalar product on $\RR^8$, where $e_i$ is the orthonormal base. Then $\kappa: \Lambda\to \check{\Lambda}$ is an isomorphism. The element $w_0$ acts on $\Lambda$ as $-1$. The structure of $W$ is described in (\cite{B}, exercise 1, paragraph 4, p. 228). It contains all the permutations of $e_i$ and all the even number of sign changes (of the base elements). 
Our notations for $\omega_i$ and $\alpha_i$ is as in (\cite{B}, Section~4.10, p. 213). In particular, $\omega_8$ is the biggest coroot, so $\VV^{\omega_8}$ is the (quasi-minuscule) adjoint representation. We may assume $\bar\kappa=m\kappa$. The assumption of Conjecture~\ref{Con_main} reads $m\notin N\ZZ$. The condition $\bar\kappa(\lambda-\alpha_i)\in N\check{\Lambda}$ is equivalent to $m(\lambda-\alpha_i)\in N\Lambda$.

 We have the following inequalities 
$$
0\le \omega_8\le \omega_1\le\omega_7\le \omega_2\le\omega_6\le \omega_3\le \omega_5\le \omega_4
$$ 

 For $i=8$ we have $\omega_8=e_7+e_8$ and $\alpha_8=e_7-e_6$. So, $\omega_8-\alpha_8=e_6+e_8$, and $\omega_8-\lambda$ is either zero or a coroot. Taking $\omega_i-\lambda=-e_6-e_8$ we get $\lambda-\alpha_i=2(e_6+e_8)\in 2\Lambda$. So, we have to assume $2m\notin N\ZZ$ at least. Clearly, for $\omega_i-\lambda=\pm e_k\pm e_j$ with $k\ne j$ the element $\lambda-\alpha_i$ may be divisible at most by 2 in $\Lambda$. For $\omega_i-\lambda=\frac{1}{2}(a_1+\ldots+a_8)$ with $a_k=\pm 1$, $\sum_k a_k$ even, the element $\lambda-\alpha_8$ is not divisible. So, for $i=8$ we are done. 
 
 
 In the case $i=4$ consider $\omega_4-\alpha_4=e_2+e_4+e_5+e_6+e_7+5e_8$. Its $W$-orbit contains the element $\omega_4-\lambda=e_2+e_4+e_5+e_6+e_7-5e_8$, for such $\lambda$ we get $\lambda-\alpha_4=10e_8$. So, we must assume $10m\notin N\ZZ$. 
 
 In the case $i=5$ we get $\omega_5-\alpha_5=e_3+e_5+e_6+e_7+4e_8$. The $W$-orbit of this element contains $\omega_5-\lambda=e_3+e_5+e_6+e_7-4e_8$. For this $\lambda$ we get $\lambda-\alpha_5=8e_8$. So, we must assume $8m\notin N\ZZ$. 
 
 In the case $i=6$ we get $\omega_6-\alpha_6=e_4+e_6+e_7+3e_8$. The $W$-orbit of this element contains $\omega_6-\lambda=e_4+e_6+e_7-3e_8$. For this $\lambda$ we get $\lambda-\alpha_6=6e_8$. So, we must assume $6m\notin N\ZZ$. The above assumptions are equivalent to the property that for a simple coroot $\alpha_i$, $\varrho(\alpha_i)\notin \frac{1}{10}\ZZ, \frac{1}{8}\ZZ, \frac{1}{6}\ZZ$. 

\medskip

\section{Proof of Proposition~\ref{Pp_reformulating_subtop_coh_property}}

\subsection{} We use the notations as in Section~\ref{subsection_CSh_basics} for the Casselman-Shalika problem. 

 Properties ii) and iii) are clearly equivalent. For $\eta\in\Lambda$ one has
$$
\Gr_B^0\cap \ov{\Gr}_{B^-}^{-\lambda}\,\iso\, \Gr_B^{\eta}\cap \ov{\Gr}_{B^-}^{\eta-\lambda}
$$ 
By (\cite{R}, Proposition~3.5.1), if $-\eta$ is deep enough in the dominant chamber then 
$$
\Gr_B^{\eta}\cap \ov{\Gr}_{B^-}^{\eta-\lambda}=\Gr_B^{\eta}\cap \ov{\Gr}_G^{w_0(\eta-\lambda)}
$$
Here we assume that for each $-\lambda\le \mu\le 0$ the coweight $\eta+\mu$ is anti-dominant, and $\eta-\lambda\in\Lambda^{\sharp}$. Consider the complex
\begin{equation}
\label{complex_CSh_limit}
\RG_c(\Gr_B^{\eta}\cap \ov{\Gr}_G^{w_0(\eta-\lambda)}, 
(s^{\eta}_B)^*\cA^{w_0(\eta-\lambda)}_{\cE}\otimes (\chi^{\eta}_{-\eta})^*\cL_{\psi})[\<\eta, 2\check{\rho}\>]
\end{equation}
This complex is what should be the limiting case of the metaplectic Casselman-Shalika formula (\ref{M_C-Sh_formula}) as in (\cite{R}, Section~3). As in (\cite{FGV}, Section~8.2.4), the tensor product of $\cF_{x, -\eta}$ by (\ref{complex_CSh_limit}) is isomorphic over $\wt\gM_{x, -\eta}$ to 
$j_{x, -\eta}^*\H^{\ra}_G(\cA^{\lambda-\eta}_{\cE}, \cF_{\emptyset})$. Recall that $\H^{\ra}_G(\cA^{\lambda-\eta}_{\cE}, \cF_{\emptyset})\,\iso\, \cF_{x, \lambda-\eta}$ by Theorem~\ref{Th_Hecke_action}. 

  The contribution of the open stratum $\Gr_B^{\eta}\cap \Gr_G^{w_0(\eta-\lambda)}$ to (\ref{complex_CSh_limit}) is
\begin{equation}
\label{complex_CSh_limit_open}
\RG_c(\Gr_B^{\eta}\cap \Gr_G^{w_0(\eta-\lambda)}, (s^{\eta}_B)^*\cA^{w_0(\eta-\lambda)}_{\cE}\otimes (\chi^{\eta}_{-\eta})^*\cL_{\psi})[\<\eta, 2\check{\rho}\>]
\end{equation}

\begin{Lm} The complex (\ref{complex_CSh_limit_open}) identifies with the complex (\ref{complex_main}) shifted to the left by $\<\lambda, 2\check{\rho}\>$.
\end{Lm}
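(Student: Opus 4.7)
The strategy is to relate (\ref{complex_CSh_limit_open}) to (\ref{complex_main}) via multiplication by $t^{-\eta}$, which is a homeomorphism of $\Gr_G$. Under $t^{-\eta}$ the semi-infinite orbits transform as $\Gr_B^{\eta}\,\iso\,\Gr_B^0$ and $\Gr_{B^-}^{\eta-\lambda}\,\iso\,\Gr_{B^-}^{-\lambda}$, and the $(U(F),\chi_{-\eta})$-equivariant function $\chi^{\eta}_{-\eta}$ is carried to the $(U(F),\chi_0)$-equivariant function $\chi^0_0=\ev$, so that $(\chi^{\eta}_{-\eta})^*\cL_{\psi}$ pulls back to $\ev^*\cL_{\psi}$ on the relevant locus of $\Gr_B^0$.

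The geometric heart of the argument is to show that $\RG_c$ over the open stratum $\Gr_B^{\eta}\cap \Gr_G^{w_0(\eta-\lambda)}$ appearing in (\ref{complex_CSh_limit_open}) agrees with $\RG_c$ over $\Gr_B^{\eta}\cap \Gr_{B^-}^{\eta-\lambda}$. Both are dense open subschemes of pure dimension $\<\lambda,\check{\rho}\>$ in the common closure $\Gr_B^{\eta}\cap \ov{\Gr}_{B^-}^{\eta-\lambda}=\Gr_B^{\eta}\cap \ov{\Gr}_G^{w_0(\eta-\lambda)}$, and they share a common open dense piece $\Gr_B^{\eta}\cap \Gr_G^{w_0(\eta-\lambda)} \cap \Gr_{B^-}^{\eta-\lambda}$. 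I would argue that the two complementary loci of strictly smaller dimension contribute zero to $\RG_c$ because, along them, $(\chi^{\eta}_{-\eta})^*\cL_{\psi}$ restricts nontrivially along a unipotent subgroup of $U(F)$ (the dominance property invoked in Section~\ref{Section_121}, following \cite{G}, Section~5.6). After $t^{-\eta}$, this common open piece is identified with an open dense subscheme of $\Gr_B^0\cap \Gr_{B^-}^{-\lambda}$, and the corresponding lower-dimensional strata of the target contribute zero by the same argument.

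To compare the metaplectic twists, apply Lemma~\ref{Lm_comparing_s_B_and_gamma} with $\gamma=w_0(\eta-\lambda)$ and $\mu=\eta$: over the common open locus one has $(s^{\eta}_B)^*\cA^{w_0(\eta-\lambda)}_{\cE}\,\iso\,(\gamma^{\eta-\lambda}_{\eta})^*\cL_{\zeta^{-1}}$, shifted by $[2\<\lambda-\eta,\check{\rho}\>]$ coming from the perverse normalization of the $\IC$-sheaf on its open $G(\cO)$-orbit. By Lemma~\ref{Lm_invariance_under_translations_1} applied to the translation $\nu=-\eta$, the discrepancy map $\gamma^{\eta-\lambda}_{\eta}$ is transported, up to a scalar in $k^*$, into $\gamma^{-\lambda}_0=\gamma_G$, so the Kummer local systems match. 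Combining with Step~1 and noting the overall shift $[\<\eta,2\check{\rho}\>]$ in (\ref{complex_CSh_limit_open}), the total shift between integrands works out as $[2\<\lambda-\eta,\check{\rho}\>+\<\eta,2\check{\rho}\>]=[\<\lambda,2\check{\rho}\>]$, matching the statement.

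The main obstacle is the second step: carefully showing that the complementary open strata contribute zero to $\RG_c$, so that one may freely pass between the $U^-(F)$-orbit stratification of $\ov{\Gr}_{B^-}^{\eta-\lambda}$ and the $G(\cO)$-orbit stratification of $\ov{\Gr}_G^{w_0(\eta-\lambda)}$. A secondary subtlety is reconciling the $\cL_{\zeta}$ versus $\cL_{\zeta^{-1}}$ appearances, which I expect to absorb into the chosen normalization of $\cA^{w_0(\eta-\lambda)}_{\cE}$ together with the evident Verdier-duality identification.
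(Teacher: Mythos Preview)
Your overall strategy (translate by $t^{\eta}$, match the additive character, then compare the Kummer twist) is the same as the paper's, but you have inserted an unnecessary and shaky middle step.

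The ``geometric heart'' you describe is not needed: for $-\eta$ deep enough in the dominant chamber the two open pieces are literally equal as subschemes,
\[
\Gr_B^{\eta}\cap \Gr_G^{w_0(\eta-\lambda)} \;=\; \Gr_B^{\eta}\cap \Gr_{B^-}^{\eta-\lambda},
\]
so that $z\mapsto t^{\eta}z$ is an isomorphism $\Gr_B^{0}\cap \Gr_{B^-}^{-\lambda}\iso \Gr_B^{\eta}\cap \Gr_G^{w_0(\eta-\lambda)}$. This follows from applying the same Raskin result quoted just before the lemma to every $\mu$ with $-\lambda\le \mu\le 0$: the closed complements on the two sides match, hence so do the open strata. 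The paper simply uses this identification; there is no need to compare $\RG_c$ over two distinct open sets or to argue vanishing on ``complementary loci''. Your proposed vanishing argument is in any case not justified: the dominance property from Section~\ref{Section_121} concerns the \emph{full} intersection $\Gr_B^0\cap\Gr_{B^-}^{-\lambda}$, not lower-dimensional complements, and $\RG_c$ is certainly sensitive to lower-dimensional closed pieces.

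For the comparison of the metaplectic twist, the paper takes a softer route than your Lemma~\ref{Lm_comparing_s_B_and_gamma} plus Lemma~\ref{Lm_invariance_under_translations_1}. It simply observes that $(s^{\eta}_B)^*\cW^{w_0(\eta-\lambda)}$ is a rank-one local system whose $T(\cO)$-equivariance character is $\cL_{\zeta}$ pulled back through $T(\cO)\to T\xrightarrow{-\bar\kappa(\eta)}\Gm$, which equals the character through $-\bar\kappa(\lambda)$ since $\eta-\lambda\in\Lambda^{\sharp}$; this matches the $T(\cO)$-equivariance of $\gamma_G^*\cL_{\zeta}$ after the $T(\cO)$-equivariant translation by $t^{\eta}$. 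This argument avoids the $\cL_{\zeta}$ versus $\cL_{\zeta^{-1}}$ bookkeeping that you leave unresolved. Your route via the two lemmas can be made to work, but you would have to actually track that sign rather than absorb it into a normalization you do not specify. The shift computation you give is correct.
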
 
\begin{proof}
Recall the local system $\cW^{w_0(\eta-\lambda)}$ on $\wt\Gr_G^{w_0(\eta-\lambda)}$ defined in (\cite{L1}, Section~2.4.2). The perverse sheaf $\cA^{w_0(\eta-\lambda)}_{\cE}$ is the intermediate extension of this (shifted) local system. 
The $\Gm$-torsor $\Gra_G\times_{\Gr_G} \Gr_B^{\eta}\to \Gr_B^{\eta}$ is constant with fibre $\Omega_x^{-\frac{\bar\kappa(\eta,\eta)}{2}}-0$, and $T(\cO)$ acts on it by the character $T(\cO)\to T\toup{-\bar\kappa(\eta)}\Gm$. So, the local system $(s^{\eta})^*\cW^{w_0(\eta-\lambda)}$ over $\Gr_B^{\eta}\cap \Gr_G^{w_0(\eta-\lambda)}$ changes under the action of $T(\cO)$ by the inverse image of $\cL_{\zeta}$ under $T(\cO)\to T\toup{-\bar\kappa(\eta)}\Gm$. Since $\bar\kappa(\eta-\lambda)\in N\check{\Lambda}$, it coincides with the inverse image of $\cL_{\zeta}$ under $T(\cO)\to T\toup{-\bar\kappa(\lambda)}\Gm$. 
Since the isomorphism $\Gr_B^0\cap \Gr_{B^-}^{-\lambda}\,\iso\, \Gr_B^{\eta}\cap \Gr_G^{w_0(\eta-\lambda)}, z\mapsto t^{\eta}z$ is $T(\cO)$-equivariant, we are done. 
\end{proof}

\begin{Lm} For each $-\lambda<\mu\le 0$ the stratum $\Gr_B^{\eta}\cap \Gr_G^{w_0(\mu+\eta)}$ does not contribute to the cohomology group of (\ref{complex_CSh_limit}) in degrees $\ge -1$.
\end{Lm}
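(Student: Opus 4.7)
The plan is to isolate each stratum's contribution, prove the crude estimate that places it in degrees $\le -1$, and then save one extra degree of vanishing. The geometric reduction comes first: since $-\eta$ is deep in the dominant chamber, Anderson's theorem (\cite{A}, Proposition~3) identifies $\Gr_B^{\eta}\cap \Gr_G^{w_0(\eta+\mu)}$ with an open subscheme of $\Gr_B^{\eta}\cap \Gr_{B^-}^{\eta+\mu}$, and translation by $t^{-\eta}$ identifies the latter with an open subscheme of $\Gr_B^{0}\cap \Gr_{B^-}^{\mu}$ of pure dimension $-\<\mu,\check{\rho}\>$. By $G(\cO)$-equivariance of $\cA^{w_0(\eta-\lambda)}_{\cE}$ and the IC property applied to the stratum $\Gr_G^{w_0(\eta+\mu)}$, which sits in codimension $\<\lambda+\mu, 2\check{\rho}\>>0$, the $*$-restriction $(s^{\eta}_{B})^{*}\cA^{w_0(\eta-\lambda)}_{\cE}$ is a shifted local system in cohomological degree at most $\<\eta+\mu, 2\check{\rho}\>-1$. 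Combined with the dimension estimate on the domain and the overall shift $[\<\eta, 2\check{\rho}\>]$, this already places $K_{\mu}$ in degrees $\le -1$, so I must save one further degree.

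For $-\lambda<\mu<0$ this saving is provided by the additive character. By Lemma~\ref{Lm_2.1.6} applied with its $\mu\leftrightarrow\eta$, $\nu\leftrightarrow-\eta$, $\gamma\leftrightarrow w_{0}(\eta+\mu)$ (so $\mu+\nu=0$ and $w_{0}(\gamma)=\eta+\mu$), the dominance criterion reduces to $\phi_{i}(\bar b)>0$ for some $i\in\cJ$, which holds for every $\bar b\in B_{\gg}(-\mu)$ with $-\mu>0$ since only the unit of $B_{\gg}(0)$ is annihilated by every $f_{i}$. Hence $\chi^{\eta}_{-\eta}$ is dominant on every irreducible component of $\Gr_B^{\eta}\cap \Gr_{B^-}^{\eta+\mu}$, and a fortiori on every top-dimensional component of its open subscheme $\Gr_B^{\eta}\cap \Gr_G^{w_0(\eta+\mu)}$. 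Consequently $(\chi^{\eta}_{-\eta})^{*}\cL_{\psi}$ is nonconstant on each maximal-dimensional component, and the standard Artin--Schreier vanishing kills the top-degree compactly supported cohomology, improving the bound to degrees $\le -2$.

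The case $\mu=0$ has to be treated separately, since the stratum collapses to the single point $\{t^{\eta}\}$ and the character-based argument cannot apply there. The required vanishing instead has to come from the local (affine-Grassmannian) analog of Theorem~\ref{Thm_3.10.1}: the $*$-stalk of $\cA^{w_0(\eta-\lambda)}_{\cE}$ at $t^{w_0(\eta)}$ is controlled by the weight-$w_{0}(\lambda)$ component of $\bigoplus_{i\ge 0}\Sym^{i}(\check{\gu}^{-}_{\zeta})[2i]$; since $\lambda>0$ this weight is nonzero, only summands with $i\ge 1$ contribute, each sitting in cohomological degrees $\le -2$, and after the shift $[\<\eta, 2\check{\rho}\>]$ the contribution $K_{0}$ lies in degrees $\le -2$. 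The main obstacle is precisely this local stalk computation in the $\mu=0$ case: one has to match the affine-Grassmannian stalk with the quantum-group formula of Theorem~\ref{Thm_3.10.1}, which I expect can be done either by a direct local Satake-type calculation or by globalizing around a point $x\in X$ to reduce to the Drinfeld-compactification stratum formula already underlying Proposition~\ref{Pp_3.11.2}. The remaining ingredients (Anderson's theorem, the IC codimension estimate, and Lemma~\ref{Lm_2.1.6}) are routine.
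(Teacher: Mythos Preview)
Your treatment of the case $-\lambda<\mu<0$ is essentially the same as the paper's: both use the IC degree bound combined with nonconstancy of $(\chi^{\eta}_{-\eta})^*\cL_{\psi}$ on every top-dimensional component. The paper simply cites (\cite{FGV}, Proposition~7.1.7) for this nonconstancy, while you rederive it from Lemma~\ref{Lm_2.1.6}; the content is the same. One small imprecision: $\Gr_B^{\eta}\cap \Gr_G^{w_0(\eta+\mu)}$ is not literally an open subscheme of $\Gr_B^{\eta}\cap \Gr_{B^-}^{\eta+\mu}$; rather, their common intersection with $\Gr_{B^-}^{\eta+\mu}$ (resp.\ $\Gr_G^{w_0(\eta+\mu)}$) is open and dense in each. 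This does not affect your argument.

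The genuine gap is in the case $\mu=0$, as you yourself flag. Your proposed route via a local analog of Theorem~\ref{Thm_3.10.1} is unnecessary and not established in the paper: Theorem~\ref{Thm_3.10.1} computes $\IC_{\zeta}$ on $\Bunb_{\tilde B^-}$, not stalks of $\cA^{\gamma}_{\cE}$ on the affine Grassmannian, and there is no reason to assume $\lambda\in\Lambda^{\sharp}$ so that the relevant weight space of $\Sym(\check{\gu}^-_{\zeta})$ would even be the right object. The paper's argument is far more elementary: the $*$-stalk of $\cA^{w_0(\eta-\lambda)}_{\cE}$ at the point $t^{w_0(\eta)}$ already lies in degrees $\le \<\eta,2\check{\rho}\>-1$ by the IC estimate, and by parity (\cite{L1}, Lemma~2.2) it is concentrated in degrees of the same parity as $\<\eta-\lambda,2\check{\rho}\>$. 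Since $\lambda$ lies in the coroot lattice, $\<\lambda,2\check{\rho}\>$ is even, so this parity equals that of $\<\eta,2\check{\rho}\>$, forcing the stalk into degrees $\le \<\eta,2\check{\rho}\>-2$. After the shift $[\<\eta,2\check{\rho}\>]$ this gives degrees $\le -2$, as required.
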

\begin{proof} The $*$-restriction $\cA^{w_0(\eta-\lambda)}$ to $\wt\Gr_G^{w_0(\mu+\eta)}$ is placed in perverse degrees $<0$, that is, in usual degrees $\le \<\mu+\eta, 2\check{\rho}\>-1$. Recall that $\dim \Gr_B^{\eta}\cap \Gr_G^{w_0(\mu+\eta)}=-\<\mu, \check{\rho}\>$. 

If $\mu\ne 0$ then, by (\cite{FGV}, Proposition~7.1.7), $(\chi^{\eta}_{-\eta})^*\cL_{\psi}$ is nonconstant on each irreducible component of $\Gr_B^{\eta}\cap \Gr_G^{w_0(\mu+\eta)}$. So, in this case 
\begin{equation}
\label{complex_three}
\RG_c(\Gr_B^{\eta}\cap \Gr_G^{w_0(\mu+\eta)}, (s^{\eta}_B)^*\cA_{\cE}^{w_0(\eta-\lambda)}\otimes (\chi^{\eta}_{-\eta})^*\cL_{\psi})[\<\eta, 2\check{\rho}\>]
\end{equation}
lives in degrees $\le -2$. 

 If $\mu=0$ then $\Gr_B^{\eta}\cap \Gr_G^{w_0(\eta)}$ is a point, the $*$-restriction of $(s^{\eta}_B)^*\cA^{w_0(\eta-\lambda)}$ to this point lives in degrees $\le \<\eta, 2\check{\rho}\>-1$. Besides, it lives only in usual degrees of the same parity as $\<\eta-\lambda, 2\check{\rho}\>$ by (\cite{L1}, Lemma~2.2). Since $\<\lambda, 2\check{\rho}\>\in 2\ZZ$, it is of the same parity as $\<\eta, 2\check{\rho}\>$. So, it lives in degrees $\le \<\eta, 2\check{\rho}\>-2$.
\end{proof}

 We conclude that the subtop cohomology property is equivalent to requiring that for any $\lambda>0$, which is not a simple coroot, (\ref{complex_CSh_limit}) is placed in degrees $\le -2$. Proposition~\ref{Pp_reformulating_subtop_coh_property} is proved. 
 
\bigskip\noindent
\select{Acknowledgements.} 
I am very grateful to M. Finkelberg and V. Lafforgue for constant support and numerous discussions on the subject. The reference to Kashiwara's result (\cite{K}, Proposition~8.2) was indicated to me by M. Finkelberg. I thank P. Baumann, J. Campbell, D. Gaitsgory, J.~Kamnitzer, G. Laumon, S. Raskin for answering my questions and useful comments. I also benefited a lot from discussions with W. T. Gan, M. Weissman and thank the University of Singapore, where this work was initiated, for hospitality. The author was supported by the ANR project ANR-13-BS01-0001-01. 

\printindex

\end{document}